\title[K-stability]{On K-stability 
for Fano threefolds of rank $3$ and degree $28$}
\author{Kento Fujita} 
\date{\today}
\subjclass[2010]{Primary 14J45; Secondary 14L24}
\keywords{Fano varieties, K-stability}
\address{Department of Mathematics, Graduate School of Science, Osaka University, 
Toyonaka, Osaka 560-0043, Japan}
\email{fujita@math.sci.osaka-u.ac.jp}
\newcommand{\pr}{\mathbb{P}}
\newcommand{\Z}{\mathbb{Z}}
\newcommand{\Q}{\mathbb{Q}}
\newcommand{\R}{\mathbb{R}}
\newcommand{\C}{\mathbb{C}}
\newcommand{\D}{\mathbb{D}}
\newcommand{\B}{{\bf B}}
\newcommand{\G}{\mathbb{G}}
\newcommand{\E}{\mathbb{E}}
\newcommand{\NE}{\operatorname{NE}}
\newcommand{\Nef}{\operatorname{Nef}}
\newcommand{\Eff}{\operatorname{Eff}}
\newcommand{\Bl}{\operatorname{Bl}}
\newcommand{\Supp}{\operatorname{Supp}}
\newcommand{\Sing}{\operatorname{Sing}}
\newcommand{\Pic}{\operatorname{Pic}}
\newcommand{\CaCl}{\operatorname{CaCl}}
\newcommand{\Aut}{\operatorname{Aut}}
\newcommand{\coker}{\operatorname{coker}}
\newcommand{\lct}{\operatorname{lct}}
\newcommand{\ord}{\operatorname{ord}}
\newcommand{\vol}{\operatorname{vol}}
\newcommand{\Image}{\operatorname{Image}}
\newcommand{\interior}{\operatorname{int}}
\newcommand{\Cone}{\operatorname{Cone}}
\newcommand{\rest}{\operatorname{rest}}
\newcommand{\red}{\operatorname{red}}
\newcommand{\gen}{\operatorname{gen}}
\newcommand{\length}{\operatorname{length}}
\newcommand{\Nklt}{\operatorname{Nklt}}
\newcommand{\Gr}{\operatorname{Gr}}
\newcommand{\sC}{\mathcal{C}}
\newcommand{\sO}{\mathcal{O}}
\newcommand{\sN}{\mathcal{N}}
\newcommand{\sH}{\mathcal{H}}
\newcommand{\sF}{\mathcal{F}}
\newcommand{\sG}{\mathcal{G}}
\newcommand{\sS}{\mathcal{S}}
\newtheorem{thm}{Theorem}[section]
\newtheorem{lemma}[thm]{Lemma}
\newtheorem{proposition}[thm]{Proposition}
\newtheorem{corollary}[thm]{Corollary}
\newtheorem{claim}[thm]{Claim}
\theoremstyle{definition}
\newtheorem{definition}[thm]{Definition}
\newtheorem{remark}[thm]{Remark}
\newtheorem{notation}[thm]{Notation}
\newtheorem{example}[thm]{Example}
\newtheorem*{ack}{Acknowledgments}
\begin{document}

\maketitle 

\begin{abstract}
We show that there exists a K-stable smooth Fano threefold of the 
Picard rank $3$, the anti-canonical degree $28$ and the third Betti number $2$. 
\end{abstract}

\setcounter{tocdepth}{1}
\tableofcontents

\section{Introduction}\label{intro_section}

Let $X$ be an $n$-dimensional \emph{Fano manifold}, i.e., $X$ is a smooth projective 
variety over the complex number field $\C$ with $-K_X$ ample. It is a 
classical problem whether $X$ admits a \emph{K\"ahler-Einstein metric} or not. 
It has been known that the existence of a K\"ahler-Einstein metric 
equivalent to \emph{K-polystability} of $X$ 
(see \cite{don, tian, B, CDS1, CDS2, CDS3, tian2} and references therein). 
The condition of K-polystability is purely algebraic. However, in general, it is 
difficult to determine K-polystability of Fano manifolds. 
If $n\leq 2$, then we already know the complete answer (see \cite{Tia87, OSS}). 
However, for $n=3$, we had only few answers. 

Recently, the authors in \cite{FANO} started to understand the case $n=3$. 
It has been known that smooth Fano threefolds are classified by \cite{isk, isk2, MoMu} 
and each family is parametrized by an irreducible variety 
(see \cite{MM84, mukai, KPS} and references therein). 
The authors in \cite{FANO} considered the problem whether there exists 
a K-polystable member or not in each family. The problem is crucial from 
the moduli-theoretic viewpoint (see \cite[Hypothesis 1.2]{OSS} for example). 
The main techniques in \cite{FANO} are, 
the evaluation of \emph{$\alpha$-invariants} 
(see \cite{Tia87}) and \emph{$\delta$-invariants} (see \cite{FO, BJ}), etc. 
Especially, in order to evaluate local $\delta$-invariants, the theory of 
Ahmadinezhad--Zhuang \cite{AZ} is crucial. In fact, in the article \cite{FANO}, 
the authors interpreted the result 
\cite[Corollary 2.22]{AZ} in terms of intersection numbers 
when $X$ is a $3$-dimensional \emph{Mori dream space} (see \cite{HK}) 
and $W_{\vec{\bullet}}$ is the refinement of the complete linear series 
(see \S \ref{AZR_section}) by prime Cartier divisors $Y$ on $X$, 
or the refinement of the $W_{\vec{\bullet}}$ by Cartier divisors on $Y$. 
The authors in \cite{FANO} completely determined the above problem 
excepts for one family (denoted by ``No.\ 3.11") by using the above 
Ahmadinezhad--Zhuang's formula and so on. The family No.\ 3.11, corresponds to 
No.\ 11 of Table 3 in \cite{MoMu}, is characterized by the blowups of 
$V=\pr_{\pr^2}\left(\sO\oplus\sO(1)\right)$ with the centers smooth complete 
intersections of two members in $|-\frac{1}{2}K_V|$ (see \S \ref{311_section}). 
In order to consider the members in No.\ 3.11, we need a slight generalization 
of the formula in \cite{FANO} (see \S \ref{AZ_section}), 
and very careful analysis of the local 
$\delta$-invariants. The main result of the paper is the following: 

\begin{thm}[{see Theorem \ref{mainthm} in detail}]\label{main_thm}
There exists a K-polystable member in No.\ 11 of Table 3 in Mori--Mukai's table 
\cite{MoMu}. 
\end{thm}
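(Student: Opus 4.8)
\emph{Proof proposal.} The plan is to exhibit one especially symmetric smooth member $X$ of the family No.~3.11 and to verify its K-polystability via the $\delta$-invariant, using the Abban--Zhuang machinery in the form developed in \S\ref{AZ_section}. First I would choose the center of the blowup carefully: taking $V=\pr_{\pr^2}(\sO\oplus\sO(1))$ and picking the smooth complete intersection $C$ of two members of $|-\frac12 K_V|$ to be invariant under as large a reductive subgroup $G\subset\Aut(V)$ as possible (ideally one containing a torus), I obtain $X=\Bl_C V$ together with an induced $G$-action, and I would check that this $X$ is smooth and genuinely lies in No.~3.11. Then I would record the lattice $\Pic(X)$ of rank $3$ with its intersection form, the generators of $\Eff(X)$, $\Mov(X)$ and $\Nef(X)$, all the small $\Q$-factorial modifications and divisorial contractions, and the full Mori chamber decomposition of $X$; this bookkeeping is indispensable because every volume $\vol(-K_X-tE)$ appearing below is computed via the Zariski decomposition of $-K_X-tE$, whose shape changes as $t$ crosses the chamber walls.

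Next, by the equivariant valuative criterion (Datar--Székelyhidi, Zhuang), to prove that $X$ is K-polystable it suffices to show $\beta_X(E)=A_X(E)-S_X(E)>0$ for every $G$-invariant dreamy prime divisor $E$ over $X$, allowing $\beta_X(E)=0$ only when $E$ is induced by a product test configuration, i.e.\ by a one-parameter subgroup of $\Aut(X)$. Because of the symmetry the centers $Z=c_X(E)$ are very restricted: the $G$-invariant prime divisors on $X$ (the strict transform of the boundary divisor of $V$, the exceptional divisor of $X\to V$, the preimages of invariant lines, etc.), the finitely many $G$-invariant curves, and the $G$-fixed points. For $E$ whose center is a $G$-invariant divisor one estimates $S_X(E)$ directly by integrating the piecewise-polynomial function $t\mapsto\vol(-K_X-tE)$ over the chamber data above and compares with $A_X(E)$. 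The subtle cases are the invariant curves and points; there I would fix a $G$-invariant flag $X\supset Y\supset Z$ with $Y$ one of the natural invariant surfaces and $Z$ the relevant curve or point, and bound the local $\delta$-invariant $\delta_Z(X)$ from below by the quantities attached to the refinement $W_{\vec\bullet}$ of $|-K_X|$ first along $Y$ and then along $Z$. This is exactly the situation in which the slight generalization of the intersection-number formula of \cite{FANO} is needed, since the surface $Y$ need not be the pullback of a Cartier divisor from the base of $X\to V$, and the second step of the flag may require passing to a further blowup.

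The main obstacle, as flagged in the introduction, is the very careful analysis of the local $\delta$-invariants at the invariant curves and points. For those, the relevant two- and three-step Okounkov-body volume functions in the Abban--Zhuang formula change their polynomial description several times as the auxiliary parameter crosses the walls of the Mori chamber decomposition of $X$ and of the surfaces $Y$; one must track the Zariski decomposition through each wall, integrate the resulting piecewise-polynomial volumes exactly, and verify that the outcome is strictly larger than the log discrepancy. In the few genuinely borderline cases — where the estimate yields exactly $\beta_X(E)=0$ — I would identify the offending $G$-invariant valuation explicitly and check that it comes from a one-parameter subgroup of $\Aut(X)$, hence a product test configuration, so that $X$ is K-polystable rather than merely K-semistable; the $G$-invariance reduction together with the explicit geometry of $X$ is what rules out any other borderline valuation, and this is precisely the delicate point that the present paper has to settle beyond what was available in \cite{FANO}.
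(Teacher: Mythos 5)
Your high-level strategy---pick a member with large symmetry, invoke the equivariant valuative criterion, and estimate local $\delta$-invariants via Abban--Zhuang refinements whose volume integrands are piecewise-polynomial across the Mori chamber structure---does match the outline of the paper's proof of Theorem \ref{mainthm}. But there are two structural points where your plan would run into trouble, both stemming from overlooking a key fact the paper cites early.

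First, you propose to select $G\subset\Aut(X)$ ``ideally one containing a torus.'' This is impossible: by \cite{PCS}, \emph{every} member $X$ of No.~3.11 satisfies $\Aut^0(X)=\{1\}$, so $\Aut(X)$ is a finite group. The paper therefore works with the finite group $G=\boldsymbol{\mu}_2\times\boldsymbol{\mu}_3$ from Example \ref{CS_example}~(B). This is not a minor inconvenience: it is the reason why K-polystability and K-stability coincide for any such $X$, so the theorem one actually proves is the stronger Theorem \ref{mainthm} (K-\emph{stability}).

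Second, and relatedly, your plan to ``identify the offending $G$-invariant valuation explicitly and check that it comes from a one-parameter subgroup of $\Aut(X)$'' in borderline cases $\beta_X(E)=0$ cannot work here, since there are no one-parameter subgroups of $\Aut(X)$ at all. Fortunately that step is unnecessary: the paper uses Theorem \ref{equiv-K_thm} (Zhuang), which requires only the \emph{strict} inequality $A_X(E)/S_X(E)>1$ for all $G$-invariant dreamy prime divisors $E$, and then supplies this strict inequality everywhere. The most delicate point---the singular point $q=E_1\cap E_2\cap E_3$ in the inflection case of Remark \ref{infl_remark}~(B)---yields exactly $\delta_q(X)=64/63>1$ (Theorem \ref{6463_thm}). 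Crucially, this strict bound is achieved by a carefully chosen non-Cartier divisor $R^1\simeq\pr(1,1,2)$ extracted by a weighted blowup of $X$; the na\"ive ordinary blowup gives $A_X(F^0)/S_X(F^0)=28/27$ but $\delta\bigl(F^0;V^{F^0}_{\bullet,\bullet}\bigr)\le 112/117<1$ (Remark \ref{6463_remark}), so the refinement at $F^0$ alone does \emph{not} close the argument. Your proposal mentions ``passing to a further blowup,'' which is the right instinct, but the actual reason the paper must generalize the formulas of \cite{FANO} is that the weighted blowup $X_1$ of $X$ is a Mori dream space whose movable cone is strictly larger than its nef cone, so small $\Q$-factorial modifications of $X_1$ enter the Zariski decomposition of $-K_X-uR^1$; this never happens for a smooth Fano threefold itself, which is the setting \cite{FANO} treats. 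So the essential missing ideas in your sketch are: the finiteness of $\Aut(X)$ (hence no borderline analysis and no torus), and the specific weighted-blowup divisor that makes the bound at $q$ go through.
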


Note that, K-stability of $X$ is equivalent to K-polystability of $X$ and the condition 
$\Aut^0(X)=\{1\}$. It is known in \cite{PCS} that any member $X$ 
in No.\ 3.11 satisfies that 
$\Aut^0(X)=\{1\}$. Therefore, Theorem \ref{main_thm} especially asserts the existence 
of K-stable member in No.\ 3.11. 
In particular, by \cite{don_aut, odk_aut, BL}, general members in 
No.\ 3.11 are K-stable. Hence, together with the result in \cite{FANO}, 
we complete the main problem in \cite{FANO}. 

We organize the structure of the paper. In \S \ref{K_section}, we recall the definition 
for K-stability of Fano manifolds. Especially, we consider an equivariant version 
of a valuative criterion for K-stability of Fano varieties, established in \cite{zhuang}. 
In \S \ref{AZR_section}--\S \ref{AZ_section}, 
we review Ahmadinezhad--Zhuang's theory and give a 
slight generalization of the formulas given in \cite{FANO}. In \S \ref{311_section}, 
we see the structures of the members in No.\ 3.11. Especially, we see the important 
examples provided by Cheltsov and Shramov. 
In \S \ref{local_general_section}--\S \ref{special-II_section}, we evaluate 
local $\delta$-invariants for various points by using the formulas in \S \ref{AZ_section}. 
The sections, especially \S \ref{special-II_section}, are the hardest parts in the paper. 
In \S \ref{main_section}, we show that the Fano threefold in 
Example \ref{CS_example} \eqref{CS_example2} is K-stable by using the 
evaluations in \S \ref{local_general_section}--\S \ref{special-II_section} and by 
applying the standard techniques established in \cite{V22KE, FANO}. 
In \S \ref{appendix_section}, we see several basic properties of 
local $\delta$-invariants, as an appendix of \S \ref{AZR_section}.

\begin{ack}
The author would like to thank Carolina Araujo, Ana-Maria Castravet, Ivan Cheltsov, 
Anne-Sophie Kaloghiros, Jesus Martinez-Garcia, Constantin Shramov, 
Hendrick S\"uss and Nivedita Viswanathan for many discussions. 
In fact, many parts the paper was originally a part of the project \cite{FANO}. 
Especially, Ivan Cheltsov and Constantin Shramov provided the author 
Example \ref{CS_example}. The examples are crucial for the paper. 
The work started while the author enjoyed the AIM workshop ``K-stability and 
related topics" on January 2020. 
The author thanks the staffs of AIM for the stimulating environment. 
The author thanks Hamid Ahmadinezhad and Ziquan Zhuang 
for answering questions about the paper \cite{AZ}. 
This work was supported by JSPS KAKENHI Grant Number 18K13388.
\end{ack}

We work over the complex number field $\C$. For the minimal model program, 
we refer the readers to \cite{KoMo}. For the theory of graded linear series, 
we refer the readers to \cite{AZ}. 

\section{K-stability of log Fano pairs}\label{K_section}

The notion of \emph{K-stability} was originally introduced by \cite{tian, don}. 
In this paper, we only see its interpretations \cite{li, vst, BX}. 
See \cite{xu} for backgrounds. 

\begin{definition}\label{logfano_definition}
A pair $(X, \Delta)$ is said to be a \emph{log Fano pair} if $(X, \Delta)$ is a 
projective klt pair with $\Delta$ effective $\Q$-Weil divisor and 
$-(K_X+\Delta)$ ample. 
\end{definition}

\begin{definition}\label{volume_definition}
Let $X$ be an $n$-dimensional projective variety, let $L$ be an $\R$-Cartier 
$\R$-divisor on $X$, and let $E$ be a prime divisor over the normalization of 
$X$, i.e., there exists a resolution $\sigma\colon\tilde{X}\to X$ 
of singularities such that $E$ is a prime divisor on $\tilde{X}$. 
\begin{enumerate}
\renewcommand{\theenumi}{\arabic{enumi}}
\renewcommand{\labelenumi}{(\theenumi)}
\item\label{volume_definition1}
We set 
\[
\vol_X(L-uF):=\vol_{\tilde{X}}\left(\sigma^*L-uE\right)
\]
for any $u\in\R_{\geq 0}$, where $\vol_{\tilde{X}}$ is the volume function 
(see \cite{L1, L2}).The function is continuous over $u\in\R_{\geq 0}$, 
and identically equal to zero when $u \gg 0$. 
We set 
\[
\tau_L(E):=\sup\left\{\tau\in\R_{\geq 0}\,\,|\,\,\vol_X(L-\tau E)>0\right\}.
\]
The definitions do not depend on the choice of $\sigma$. 
(In fact, when $X$ is normal, for any line bundle $M$ on $X$, the sub-vector space 
\[
H^0\left(X, M-uE\right):=H^0\left(\tilde{X}, \sigma^*M-u E\right)\subset H^0(X, M)
\]
of $H^0(X, M)$ is not depend on the choice of $\sigma$ for any $u\in\R_{\geq 0}$. 
See also \cite[Proposition 2.2.43]{L1}.) 
If $(X, \Delta)$ is a log Fano pair, then we set 
\[
\tau_{X, \Delta}(E):=\tau_{-(K_X+\Delta)}(E). 
\]
If moreover $\Delta=0$, we simply denote $\tau_{X, \Delta}(E)$ by $\tau_X(E)$. 
\item\label{volume_definition2}
If $L$ is big, then we set 
\[
S_L(E):=\frac{1}{\vol_X(L)}\int_0^{\tau_L(E)}\vol_X(L-uE)du.
\]
If $(X, \Delta)$ is a log Fano pair, then we set 
\[
S_{X, \Delta}(E):=S_{-(K_X+\Delta)}(E). 
\]
If moreover $\Delta=0$, we simply denote $S_{X, \Delta}(E)$ by $S_X(E)$. 
We remark that if $L$ is a nef and big $\Q$-divisor, then we have
\[
\frac{1}{n+1}\tau_L(E)\leq S_L(E)\leq\frac{n}{n+1}\tau_L(E) 
\]
by \cite[Proposition 2,1 and Lemma 2.2]{pltK} and \cite[Proposition 3.11]{BJ}. 
See Corollary \ref{a-d_corollary} in detail.
\item\label{volume_definition3}
Let $\Delta$ be an $\Q$-Weil divisor on $X$, that is, $\Delta$ is a finite 
$\Q$-linear sum of subvarieties of codimension one. 
Assume that, at the generic point $\eta$ of $c_X(E)$, the variety $X$ is normal and 
$K_X+\Delta$ is $\Q$-Cartier, where 
$c_X(E)$ is the center of $E$ on $X$. 
Let $A_{X,\Delta}(E)$ be the \emph{log discrepancy} of $(X, \Delta)$ along $E$, 
that is, around a neighborhood of $\eta$, we can take the pullback 
$\sigma^*(K_X+\Delta)$ and define 
\[
A_{X, \Delta}(E):=\ord_E\left(K_{\tilde{X}}-\sigma^*(K_X+\Delta)\right)+1. 
\]
If $\Delta=0$, then we simply denote it by $A_X(E)$. 
\item\label{volume_definition4}
Assume that $(X, \Delta)$ is a log Fano pair. Take an $m\in\Z_{>0}$ with 
$-m(K_X+\Delta)$ Cartier. If the graded $\C$-algebra
\[
\bigoplus_{j,k\in\Z_{\geq 0}}H^0\left(X, -m j(K_X+\Delta)-k E\right)
\]
is finitely generated over $\C$, then the divisor $E$ is said to be a 
\emph{dreamy} prime divisor over $(X, \Delta)$. 
\end{enumerate}
\end{definition}

The following is an interpretation of the classical definition of K-stability \cite{tian, don}.

\begin{definition}[{\cite{li, vst}}]\label{K_definition}
Let $(X, \Delta)$ be a log Fano pair. The pair $(X, \Delta)$ is said to be 
\emph{K-stable} if 
\[
\frac{A_{X,\Delta}(E)}{S_{X,\Delta}(E)}>1
\]
for any dreamy prime divisor $E$ over $(X, \Delta)$. 
\end{definition}

\begin{remark}\label{K-dfn_remark}
\begin{enumerate}
\renewcommand{\theenumi}{\arabic{enumi}}
\renewcommand{\labelenumi}{(\theenumi)}
\item\label{K-dfn_remark1}
We can remove ``dreamy" in Definition \ref{K_definition}. See \cite[Corollary 4.2]{BX}. 
\item\label{K-dfn_remark2}
As we have seen in \S \ref{intro_section}, \emph{K-polystability} of log 
Fano pairs $(X, \Delta)$ is important. However, we do not define it in this paper 
since the definition is rather complicated. 
See \cite{tian, don, LX} for the original definition and see also 
\cite[Theorem 1.3]{LWX}, \cite[Theorem 3.11]{ha}. 
We remark that, K-stability of $(X, \Delta)$ 
is equivalent to K-polystability of $(X, \Delta)$ and the condition 
$\Aut^0(X, \Delta)=\{1\}$ (see \cite[Corollary 1.3]{BX} for example). 
\end{enumerate}
\end{remark}

Although we do not give the definition of K-polystability, we give an important 
sufficient condition for log Fano pairs $(X, \Delta)$ being K-polystable. 

\begin{thm}[{\cite[Corollary 4.14]{zhuang}}]\label{equiv-K_thm}
Let $(X, \Delta)$ be a log Fano pair and let $G\subset\Aut(X, \Delta)$ be a 
reductive sub-algebraic group. If 
\[
\frac{A_{X, \Delta}(E)}{S_{X, \Delta}(E)}>1
\]
for any $G$-invariant dreamy prime divisor $E$ over $(X, \Delta)$, 
then $(X, \Delta)$ is K-polystable. 
\end{thm}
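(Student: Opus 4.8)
The plan is to prove the contrapositive: from any log Fano pair $(X,\Delta)$ that fails to be K-polystable I will extract a $G$-invariant dreamy prime divisor $E$ with $A_{X,\Delta}(E)/S_{X,\Delta}(E)\leq 1$, contradicting the hypothesis. Write $\delta(X,\Delta):=\inf_E A_{X,\Delta}(E)/S_{X,\Delta}(E)$, the infimum over all prime divisors over $(X,\Delta)$. I would rely on three inputs from the literature. First, the valuative criterion (\cite{FO,BJ,li,vst}): $(X,\Delta)$ is K-stable iff $\delta(X,\Delta)>1$ and K-semistable iff $\delta(X,\Delta)\geq 1$; in particular, failing K-polystability forces $\delta(X,\Delta)\leq 1$. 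Second, the finite generation theorem of Liu--Xu--Zhuang: if $\delta(X,\Delta)\leq 1$, the infimum is attained by a quasi-monomial valuation $v$ with finitely generated associated graded ring, and, through the Rees construction together with passage to a suitable $\Q$-factorial birational model, $v$ determines a dreamy prime divisor over $(X,\Delta)$ computing $\delta(X,\Delta)$. Third, the refined valuative description of K-polystability (\cite{BX}): a K-semistable $(X,\Delta)$ fails to be K-polystable exactly when some such minimizer has $A_{X,\Delta}=S_{X,\Delta}$ and is not induced by a one-parameter subgroup of $\Aut^0(X,\Delta)$.

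The heart of the argument is an \emph{equivariant reduction}: I claim $\delta(X,\Delta)$ can be computed using only $G$-invariant valuations, and that when $\delta(X,\Delta)\leq 1$ a minimizer can be chosen $G$-invariant. Setting $\delta_G(X,\Delta):=\inf_{E\ G\text{-invariant}}A_{X,\Delta}(E)/S_{X,\Delta}(E)$, the inequality $\delta_G(X,\Delta)\geq\delta(X,\Delta)$ is immediate, so the point is the reverse one. Since $G$ is reductive it acts on the space of valuations on $X$ (equivalently, on the space of linearly bounded filtrations of the anti-log-canonical ring), with $A_{X,\Delta}$ and $S_{X,\Delta}$ $G$-invariant and with the relevant functionals convex along this action; the set of normalized $\delta$-minimizers supplied by the second input is therefore a nonempty, $G$-stable, convex subset, and an averaging/fixed-point argument for the reductive group $G$ should then yield a $G$-invariant minimizer $v$. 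Unwinding the finite generation as before produces the desired $G$-invariant dreamy prime divisor $E$ with $A_{X,\Delta}(E)/S_{X,\Delta}(E)=\delta(X,\Delta)$.

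Granting the equivariant reduction, the conclusion is immediate. If $(X,\Delta)$ is not K-polystable then $\delta(X,\Delta)\leq 1$, so the equivariant reduction furnishes a $G$-invariant dreamy prime divisor $E$ with $A_{X,\Delta}(E)/S_{X,\Delta}(E)=\delta(X,\Delta)\leq 1$, contradicting the assumed strict inequality. Hence $(X,\Delta)$ is K-polystable. I expect the main obstacle to be entirely the equivariant reduction step: both the existence and finite generation of $\delta$-minimizers and the extraction of a $G$-invariant minimizer from the reductivity of $G$ are substantial, and this is precisely the content of \cite[Corollary 4.14]{zhuang} and the results it builds on. By contrast, once the reduction is in hand, the passage to K-polystability is a formal combination of the valuative criteria.
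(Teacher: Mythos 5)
The paper does not prove this theorem; it is cited verbatim as \cite[Corollary 4.14]{zhuang}, and Remark \ref{DS_remark} explicitly points the reader to Zhuang's proof (and to \cite{DS} for the smooth Fano case). So there is no ``paper's own proof'' to compare against, and what you have written is an attempt at a proof of Zhuang's cited result.

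Your outline correctly locates the heart of the matter in the equivariant reduction (``if $\delta(X,\Delta)\le 1$ then there is a $G$-invariant dreamy minimizer''), and you are candid that this is exactly the content of \cite[Corollary 4.14]{zhuang}. But the mechanism you propose for carrying it out --- averaging or a fixed-point argument on ``the convex set of normalized $\delta$-minimizers'' over the reductive group $G$ --- does not work and is not what Zhuang does. Valuations (or linearly bounded filtrations) do not sit inside a vector space in a way that makes $A_{X,\Delta}$ and $S_{X,\Delta}$ affine; the functional $A/S$ is not convex along any natural linear structure on filtrations; and a reductive group over $\mathbb{C}$ is noncompact, so even if you had a $G$-stable convex set you would not get a fixed point by averaging (you would need compactness, or a linear action on a finite-dimensional affine space together with some positivity, none of which is available here). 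The actual route is a \emph{uniqueness} argument: one shows that the destabilizing valuation (or, more precisely, the optimal destabilizing center in Zhuang's terminology, or the unique normalized $\delta$-minimizer once one has the finite-generation input of \cite{LXZ} and the machinery of \cite{BX,LWX}) is canonically determined by $(X,\Delta)$. Since the $G$-translate of a minimizer is again a minimizer, uniqueness forces $G$-invariance --- no averaging is involved. You should replace the fixed-point/averaging paragraph with this uniqueness argument if you want a correct sketch; as written, that step is a genuine gap, and it is precisely the step that carries all the weight.
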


\begin{remark}\label{DS_remark}
If $X$ is a Fano manifold, then Theorem \ref{equiv-K_thm} is a consequence of 
\cite{li, vst} and \cite{DS}. See the proof of \cite[Corollary 4.14]{zhuang}. 
\end{remark}

We recall the notion of \emph{$\delta$-invariants} introduced in \cite{FO} and 
systematically developed in \cite{BJ}. We remark that $\delta$-invariants are 
sometimes called by \emph{stability thresholds}. 

\begin{definition}[{\cite{FO, BJ, zhuang}}]\label{delta_definition}
Let $X$ be a projective variety and let $\Delta$ be an effective 
$\Q$-Weil divisor on $X$. 
\begin{enumerate}
\renewcommand{\theenumi}{\arabic{enumi}}
\renewcommand{\labelenumi}{(\theenumi)}
\item\label{delta_definition1}
Take a big $\Q$-Cartier $\Q$-divisor $L$ on $X$ and 
a scheme-theoretic point $\eta\in X$. If $(X,\Delta)$ is klt at $\eta$ 
(in particular, $X$ is normal at $\eta$), then we set 
\[
\delta_\eta(X, \Delta;L):=\inf_{\substack{E: \text{ prime divisor}\\\text{over $X$; } 
\eta\in c_X(E)}}\frac{A_{X, \Delta}(E)}{S_L(E)}.
\] 
When $(X, \Delta)$ is a log Fano pair, we set 
\[
\delta_\eta(X, \Delta):=\delta_\eta\left(X,\Delta; -(K_X+\Delta)\right), 
\]
and call it the \emph{local $\delta$-invariant} of $(X, \Delta)$ at $\eta\in X$. 
\item\label{delta_definition2}
Assume that $(X,\Delta)$ is a klt pair. 
For any big $\Q$-Cartier $\Q$-divisor $L$ on $X$, 
we set 
\[
\delta(X, \Delta; L):=\inf_{\eta\in X}\delta_\eta(X, \Delta;L). 
\]
When $(X, \Delta)$ is a log Fano pair, we set 
\[
\delta(X, \Delta):=\delta\left(X,\Delta; -(K_X+\Delta)\right), 
\]
and call it the \emph{$\delta$-invariant} of $(X, \Delta)$. 
\end{enumerate}
\end{definition}

Clearly, if $\delta(X,\Delta)>1$, then $(X, \Delta)$ is K-stable. Moreover, 
it has been known by \cite{vst, FO, BJ} that the condition $\delta(X, \Delta)>1$ is 
equivalent to the condition $(X, \Delta)$ is \emph{uniformly K-stable}. 
In this paper, we do not discuss uniform K-stability. Recently, it has been shown 
in \cite{LXZ} that uniform K-stability of $(X,\Delta)$ is equivalent to  K-stability 
of $(X,\Delta)$.

We end with this section by recalling the notion of \emph{equivariant local 
$\alpha$-invariants} of log Fano pairs. For detail, see \cite{V22KE} for example. 

\begin{definition}\label{alpha_definition}
Let $(X, \Delta)$ be a log Fano pair and let $G\subset \Aut(X, \Delta)$ be a 
finite sub-algebraic group. 
\begin{enumerate}
\renewcommand{\theenumi}{\arabic{enumi}}
\renewcommand{\labelenumi}{(\theenumi)}
\item\label{alpha_definition1}
For any scheme-theoretic point $\eta\in X$, let 
$\alpha_{G,\eta}(X, \Delta)$ be the supremum of $\alpha\in\Q_{>0}$ such that 
$(X, \Delta+\alpha D)$ is lc at $\eta$ for any $G$-invariant and effective $\Q$-divisor 
$D\sim_\Q-(K_X+\Delta)$. If $\Delta=0$, then we simply denote it by 
$\alpha_{G,\eta}(X)$. 
\item\label{alpha_definition2}
Let $\alpha_G(X, \Delta)$ be the supremum of $\alpha\in\Q_{>0}$ such that 
$(X, \Delta+\alpha D)$ is lc for any $G$-invariant and effective $\Q$-divisor 
$D\sim_\Q-(K_X+\Delta)$. We call it the \emph{$G$-invariant $\alpha$-invariant} 
of $(X, \Delta)$. If $G=\{1\}$, we denote it by $\alpha(X, \Delta)$. 
\end{enumerate}
\end{definition}

We recall several important properties of $\alpha$-invariants. 

\begin{proposition}\label{alpha_proposition}
\begin{enumerate}
\renewcommand{\theenumi}{\arabic{enumi}}
\renewcommand{\labelenumi}{(\theenumi)}
\item\label{alpha_proposition1}
\cite[Theorem A]{BJ}
For any $n$-dimensional log Fano pair $(X, \Delta)$, we have 
\[
\frac{1}{n+1}\delta(X, \Delta)\leq\alpha(X, \Delta)\leq\frac{n}{n+1}\delta(X,\Delta). 
\]
More generally, for any scheme-theoretic point $\eta\in X$, we have 
\[
\frac{1}{n+1}\delta_\eta(X, \Delta)\leq\alpha_\eta(X, \Delta)
\leq\frac{n}{n+1}\delta_\eta(X,\Delta). 
\]
\item\label{alpha_proposition2}
\cite{FANO}
Let $X$ be an $n$-dimensional Fano manifold with 
$X\not\simeq\pr^n$, let $G\subset\Aut(X)$ be a finite 
sub-algebraic group, and let $\eta\in X$ be a scheme-theoretic point. 
If 
\[
\alpha_{G,\eta}(X)\geq \frac{n}{n+1},
\]
then we have 
\[
\frac{A_X(E)}{S_X(E)}>1
\]
for any $G$-invariant dreamy prime divisor $E$ over $X$ with $\eta\in c_X(E)$. 
\end{enumerate}
\end{proposition}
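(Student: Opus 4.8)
The plan is to compare the two inequalities in part \eqref{alpha_proposition1} applied locally at $\eta$. First I would recall the precise relationship between $\alpha$-invariants and $\delta$-invariants in the $G$-equivariant setting: by a standard equivariant refinement of \cite[Theorem A]{BJ} (one may pass to the quotient, or invoke that the infimum defining $\delta_{G,\eta}$ is computed by $G$-invariant valuations and that lc thresholds of $G$-invariant divisors control $\alpha_{G,\eta}$), we have
\[
\frac{1}{n+1}\,\delta_{G,\eta}(X)\leq\alpha_{G,\eta}(X).
\]
Combining this with the hypothesis $\alpha_{G,\eta}(X)\geq\frac{n}{n+1}$ would only give $\delta_{G,\eta}(X)\geq\frac{n(n+1)}{?}$, which is not strong enough by itself; so the key point is that the upper bound $S_X(E)\leq\frac{n}{n+1}\tau_X(E)$ from Definition \ref{volume_definition}\eqref{volume_definition2} must be used in tandem, and the inequality $\frac{A_X(E)}{S_X(E)}>1$ must be extracted directly rather than through $\delta_{G,\eta}$.

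Concretely, let $E$ be a $G$-invariant dreamy prime divisor over $X$ with $\eta\in c_X(E)$. The quantity $A_X(E)$ is, up to the normalization $-K_X$, bounded below using the lc-threshold interpretation of $\alpha_{G,\eta}$: for the $G$-invariant graded linear series attached to $E$ one produces a $G$-invariant effective $\Q$-divisor $D\sim_\Q -K_X$ whose multiplicity (or coefficient) along the image forces
\[
A_X(E)\geq\alpha_{G,\eta}(X)\cdot\tau_X(E),
\]
essentially because $\tau_X(E)$ measures the largest pseudoeffective threshold and $\alpha_{G,\eta}$ controls log canonicity of $\frac{1}{\text{that threshold}}\cdot(\text{base divisor})$ at $\eta$. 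Then, using $S_X(E)\leq\frac{n}{n+1}\tau_X(E)$, one obtains
\[
\frac{A_X(E)}{S_X(E)}\geq\frac{\alpha_{G,\eta}(X)\cdot\tau_X(E)}{\frac{n}{n+1}\tau_X(E)}=\frac{n+1}{n}\,\alpha_{G,\eta}(X)\geq\frac{n+1}{n}\cdot\frac{n}{n+1}=1.
\]
To get strict inequality I would argue that equality throughout is impossible: equality $S_X(E)=\frac{n}{n+1}\tau_X(E)$ in \cite[Proposition 3.11]{BJ} forces the volume function $\vol_X(-K_X-uE)$ to be the degree-$n$ polynomial $\vol(-K_X)\cdot(1-u/\tau)^n$, which by a Newton--Okounkov/toric-degeneration argument characterizes $X$ as a (weighted) projective space—excluded by the hypothesis $X\not\simeq\pr^n$ (here one uses that $-K_X$ is the anticanonical polarization of a Fano manifold, and the rigidity of the extremal case; this is where \cite{FANO} cites the relevant classification input). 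Alternatively, if $\alpha_{G,\eta}(X)>\frac{n}{n+1}$ strictly then strict inequality is immediate, and the boundary case $\alpha_{G,\eta}(X)=\frac{n}{n+1}$ is handled by the same rigidity statement.

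The main obstacle is precisely this last step—ruling out equality. The naive chain of inequalities is sharp, so one cannot avoid invoking either the equality characterization in the Abban--Zhuang/Blum--Jonsson volume estimate (that $S=\frac{n}{n+1}\tau$ only for divisors whose volume function is a pure power, which on a Fano forces $\pr^n$) or a direct perturbation argument showing that a $G$-invariant dreamy divisor computing equality would itself produce a $G$-invariant anticanonical divisor violating $\alpha_{G,\eta}(X)\geq\frac{n}{n+1}$ unless $X\simeq\pr^n$. I would structure the write-up so that the generic case ($\alpha_{G,\eta}(X)$ strictly above the threshold, or $\tau_X(E)$ strictly larger than what equality permits) is dispatched in one line, and then isolate the extremal case into a short lemma citing the relevant rigidity result from the literature used in \cite{FANO}.
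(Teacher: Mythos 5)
Your first step matches the paper's proof: both derive $\frac{A_X(E)}{S_X(E)}\geq\frac{n+1}{n}\,\alpha_{G,\eta}(X)\geq 1$ from the estimate $A_X(E)\geq\alpha_{G,\eta}(X)\cdot\tau_X(E)$ (for which the paper cites \cite[Lemma 2.5]{V22KE}) together with $S_X(E)\leq\frac{n}{n+1}\tau_X(E)$.

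The step that rules out equality has a genuine gap, with two distinct errors. First, you misidentify the extremal volume function: $S_X(E)=\frac{n}{n+1}\tau_X(E)$ is the case where the Okounkov body is a cone with apex over the origin, for which $\vol(-K_X-uE)=\vol(-K_X)\bigl(1-(u/\tau)^n\bigr)$; the pure power $\vol(-K_X)\bigl(1-u/\tau\bigr)^n$ that you wrote corresponds to the \emph{opposite} extreme $S_X(E)=\frac{1}{n+1}\tau_X(E)$. Second, and more seriously, neither shape of the volume function alone characterizes $\pr^n$: take $X=\pr^1\times\pr^1$ and $E$ a prime member of $|\sO(1,1)|$, so $-K_X\sim 2E$ and $\vol(-K_X-uE)=\vol(-K_X)\bigl(1-u/2\bigr)^2$ is a pure power, yet $X\not\simeq\pr^2$. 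So the ``Newton--Okounkov/toric-degeneration'' rigidity you invoke is simply not available as stated. The paper's extremal analysis is different: if $A_X(E)=S_X(E)$, then the whole chain $A_X(E)\geq\alpha_{G,\eta}(X)\tau_X(E)\geq\frac{n}{n+1}\tau_X(E)\geq S_X(E)$ collapses to equalities, forcing simultaneously $\alpha_{G,\eta}(X)=\frac{n}{n+1}$, $A_X(E)=\frac{n}{n+1}\tau_X(E)$ and $S_X(E)=\frac{n}{n+1}\tau_X(E)$; this combination is precisely the hypothesis of \cite[Theorem 4.1]{alpha}, which concludes $X\simeq\pr^n$, contradicting the assumption. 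Replacing your rigidity claim with that exact citation would repair the argument.
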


\begin{proof}
We give the proof of \eqref{alpha_proposition2} for the readers' convenience. 
By \cite[Lemma 2.5]{V22KE} and the property 
\[
S_X(E)\leq\frac{n}{n+1}\tau_X(E), 
\] 
we have 
\[
\frac{A_X(E)}{S_X(E)}\geq \frac{n+1}{n}\cdot\alpha_{G, \eta}(X)\geq 1.
\]
If $A_X(E)=S_X(E)$, then $X$ and $E$ satisfy the conditions of 
\cite[Theorem 4.1]{alpha}. Thus $X$ must be isomorphic to $\pr^n$. This leads to 
a contradiction. 
\end{proof}

\section{A review of Ahmadinezhad--Zhuang's theory}\label{AZR_section}

Recently, Ahmadinezhad and Zhuang introduced the important paper \cite{AZ}. 
We review their results. See also \S \ref{appendix_section}. 
In \S \ref{AZR_section}, we fix an $n$-dimensional projective variety $X$ unless 
otherwise stated. 

\subsection{Veronese equivalences and Okounkov bodies}\label{veronese_subsection}

Thanks to \cite[Lemma 2.24]{AZ}, it is natural to consider the following 
Veronese equivalences for graded linear series. 

\begin{definition}\label{veronese_definition}
Let us take $L_1,\dots,L_r\in\CaCl(X)\otimes_\Z\Q$. Let us take $m\in\Z_{>0}$ 
such that each $m L_i$ lifts to an element in $\CaCl(X)$. Fix such lifts and fix Cartier 
divisors (denoted also by $m L_i$) whose linear equivalence classes are $m L_i$. 
Note that the lifts $m L_i\in\CaCl(X)$ are not uniquely determined in general. 
\begin{enumerate}
\renewcommand{\theenumi}{\arabic{enumi}}
\renewcommand{\labelenumi}{(\theenumi)}
\item\label{veronese_definition1}
An \emph{$(m\Z_{\geq 0})^r$-graded linear series $V_{m\vec{\bullet}}$ on $X$ 
associated to $L_1,\dots,L_r$} consists of sub-vector spaces 
\[
V_{m\vec{a}}\subset H^0\left(X, \sO_X\left(\vec{a}\cdot m\vec{L}\right)\right)
\]
for all $\vec{a}=(a_1,\dots,a_r)\in\Z_{\geq 0}^r$ 
(where $\vec{a}\cdot m\vec{L}:=\sum_{i=1}^r a_i (m L_i)$) such that 
$V_{\vec{0}}=\C$ and 
$V_{m\vec{a}}\cdot V_{m\vec{a}'}\subset V_{m(\vec{a}+\vec{a}')}$ holds for any 
$\vec{a}$, $\vec{a}'\in\Z_{\geq 0}^r$. 
Under the setting, for any $k\in\Z_{>0}$, the \emph{$k$-th Veronese sub-series 
$V_{k m\vec{\bullet}}$ of $V_{m\vec{\bullet}}$} is defined to be the 
$(k m\Z_{\geq 0})^r$-graded linear series on $X$ associated to 
$L_1,\dots,L_r\in\CaCl(X)\otimes_\Z\Q$ 
defined naturally  by $V_{m\vec{\bullet}}$. 
\item\label{veronese_definition2}
For an $(m\Z_{\geq 0})^r$-graded (resp., $(m'\Z_{\geq 0})^r$-graded) linear series 
$V_{m\vec{\bullet}}$ (resp., $V'_{m'\vec{\bullet}}$) on $X$ associated to 
$L_1,\dots,L_r$, the series $V_{m\vec{\bullet}}$ and $V'_{m'\vec{\bullet}}$ are 
said to be \emph{Veronese equivalent} if there is a positive integer $d\in\Z_{>0}$ 
with $d\in m\Z$ and $d\in m'\Z$ such that 
$(d/m)(m L_i)\sim(d/m')(m' L_i)$ holds for each $i$ and 
\[
V_{\frac{d}{m}m\vec{\bullet}}=V'_{\frac{d}{m'}m'\vec{\bullet}}
\]
holds as $(d\Z_{\geq 0})^r$-graded linear series. 
The Veronese equivalence class of $V_{m\vec{\bullet}}$ is denoted by 
$V_{\vec{\bullet}}$. 
\end{enumerate}
\end{definition}

\begin{definition}[{\cite[\S 4.3]{LM} 
and \cite[Definition 2.11]{AZ}}]\label{bdd-ample_definition}
Let $V_{m\vec{\bullet}}$ be an $(m\Z_{\geq 0})^r$-graded linear series on $X$ 
associated to $L_1,\dots,L_r$ and let 
$V_{\vec{\bullet}}$ be its Veronese equivalence class. 
\begin{enumerate}
\renewcommand{\theenumi}{\arabic{enumi}}
\renewcommand{\labelenumi}{(\theenumi)}
\item\label{bdd-ample_definition1}
Set 
\[
\sS\left(V_{m\vec{\bullet}}\right):=\left\{m\vec{a}\in(m\Z_{\geq 0})^r\,\,|\,\,
V_{m\vec{a}}\neq 0\right\}\,\,\subset\,\,(m\Z_{\geq 0})^r, 
\]
and let $\Supp\left(V_{\vec{\bullet}}\right)\subset\R^r_{\geq 0}$ be the closure 
of the cone in $\R^r_{\geq 0}$ spanned by 
$\sS\left(V_{m\vec{\bullet}}\right)\subset(m\Z_{\geq 0})^r\subset\R^r_{\geq 0}$. 
By Lemma \ref{veronese_lemma}, the cone $\Supp\left(V_{\vec{\bullet}}\right)$ 
is independent of the choice of representatives of $V_{\vec{\bullet}}$ and the 
choices of lifts $m L_1,\dots, m L_r$. We say that 
$V_{\vec{\bullet}}$ \emph{has bounded support} if the set 
\[
\left(\{1\}\times\R_{\geq 0}^{r-1}\right)\cap\Supp\left(V_{\vec{\bullet}}\right)
\]
is bounded. 
\item\label{bdd-ample_definition2}
We say that $V_{m\vec{\bullet}}$ \emph{contains an ample series} if 
the following conditions are satisfied: 
\begin{enumerate}
\renewcommand{\theenumii}{\roman{enumii}}
\renewcommand{\labelenumii}{(\theenumii)}
\item\label{bdd-ample_definition21}
we have $\interior\left(\Supp\left(V_{\vec{\bullet}}\right)\right)\neq \emptyset$, 
\item\label{bdd-ample_definition22}
for any $m\vec{a}\in\interior\left(\Supp\left(V_{\vec{\bullet}}\right)\right)\cap
(m\Z_{\geq 0})^r$, we have $V_{p m\vec{a}}\neq 0$ for any $p\gg 0$, and 
\item\label{bdd-ample_definition23}
there exists an element 
$m\vec{a}_0\in\interior\left(\Supp\left(V_{\vec{\bullet}}\right)\right)\cap
(m\Z_{\geq 0})^r$ and there exists a decomposition $m\vec{a}_0\cdot\vec{L}\sim A+E$ 
with $A$ ample Cartier and $E$ effective Cartier such that 
\[
p E+H^0\left(X, p A\right)\subset V_{p m\vec{a}_0}
\]
holds for any $p\gg 0$.
\end{enumerate}
We say that the class $V_{\vec{\bullet}}$ \emph{contains an ample series} if 
there is a (sufficiently divisible) positive integer $m\in\Z_{>0}$ such that 
a representative $V_{m\vec{\bullet}}$ of $V_{\vec{\bullet}}$ contains an ample 
series (cf.\ Lemma \ref{veronese_lemma}). 
\end{enumerate}
\end{definition}

\begin{definition}[{\cite[\S 1, \S 4.3]{LM} and 
\cite[Definition 2.11]{AZ}}]\label{okounkov_definition}
Let $Y_\bullet$ be an admissible flag on $X$ in the sense of \cite[(1.1)]{LM}, i.e., 
\[
Y_{\bullet}\quad\colon\quad X=Y_0\supsetneq Y_1\supsetneq\cdots\supsetneq
Y_n=\{\text{point}\}
\]
is a sequence of subvarieties such that each $Y_i$ is smooth at the point $Y_n$. 
Let $V_{m\vec{\bullet}}$ be an $(m\Z_{\geq 0})^r$-graded linear series on $X$ 
associated to $L_1,\dots,L_r\in\CaCl(X)\otimes_\Z\Q$ 
which contains an ample series, and let 
$V_{\vec{\bullet}}$ be its Veronese equivalence class. As we have seen in 
\cite[(1.2)]{LM}, the flag $Y_\bullet$ gives a valuation-like function 
\[
\nu_{Y_\bullet}\colon V_{m\vec{a}}\setminus\{0\}\to\Z_{\geq 0}^n
\]
for each $\vec{a}\in\Z_{\geq 0}^r$. 
\begin{enumerate}
\renewcommand{\theenumi}{\arabic{enumi}}
\renewcommand{\labelenumi}{(\theenumi)}
\item\label{okounkov_definition1}
Let us set the sub-semigroup 
\[
\Gamma_{Y_\bullet}\left(V_{m\vec{\bullet}}\right):=\left\{
\left(m\vec{a},\nu_{Y_\bullet}(s)\right)\,\,|\,\,m\vec{a}\in(m\Z_{\geq 0})^r, \,\,
s\in V_{m\vec{a}}\setminus\{0\}\right\}\,\,\subset\,\,
(m\Z_{\geq 0})^r\times\Z_{\geq 0}^n
\]
of $\Z_{\geq 0}^r\times\Z_{\geq 0}^n$. Let 
\[
\Sigma_{Y_\bullet}\left(V_{\vec{\bullet}}\right)\subset 
\R_{\geq 0}^r\times\R_{\geq 0}^n
\]
be the closure of the cone in $\R_{\geq 0}^r\times\R_{\geq 0}^n$ spanned by 
$\Gamma_{Y_\bullet}\left(V_{m\vec{\bullet}}\right)$. Moreover, let us set 
\[
\Delta_{Y_\bullet}\left(V_{\vec{\bullet}}\right):=\left(\{1\}\times
\R_{\geq 0}^{r-1}\times\R_{\geq 0}^n\right)\cap
\Sigma_{Y_\bullet}\left(V_{\vec{\bullet}}\right)\subset \R_{\geq 0}^{r-1+n}.
\]
By Lemma \ref{veronese_lemma}, both $\Sigma_{Y_\bullet}\left(V_{\vec{\bullet}}\right)$ 
and $\Delta_{Y_\bullet}\left(V_{\vec{\bullet}}\right)$ are independent of the choice 
of representatives $V_{m\vec{\bullet}}$ of $V_{\vec{\bullet}}$ for 
$V_{m\vec{\bullet}}$ containing an ample series and of the choices of lifts 
$m L_1,\dots, m L_r$. 
We call it the \emph{Okounkov body of $V_{\vec{\bullet}}$ associated to $Y_{\bullet}$}. 
As in \cite[Definition 2.11]{AZ}, if $V_{\vec{\bullet}}$ has bounded support, then 
$\Delta_{Y_\bullet}\left(V_{\vec{\bullet}}\right)\subset\R_{\geq 0}^{r-1+n}$ is a 
compact convex body. When $L\in\CaCl(X)\otimes_\Z\Q$ is big and 
$V_{\bullet}$ is 
the class of the complete linear series of $L$ (i.e., for a sufficiently divisible 
$m\in\Z_{> 0}$, a representative $V_{m\bullet}$ is given by 
$V_{m p}=H^0(X, pm L)$ for any $p\in\Z_{\geq 0}$), 
then we simply write 
$\Sigma_{Y_\bullet}\left(L\right):=\Sigma_{Y_\bullet}\left(V_{\bullet}\right)$ and 
$\Delta_{Y_\bullet}\left(L\right):=\Delta_{Y_\bullet}\left(V_{\bullet}\right)$. 
\item\label{okounkov_definition2}
For any $l\in m\Z_{>0}$, we set 
\[
h^0\left(V_{l,m\vec{\bullet}}\right):=\sum_{\vec{a}\in\Z_{\geq 0}^{r-1}}
\dim V_{l,m\vec{a}}, 
\]
and 
\[
\vol\left(V_{\vec{\bullet}}\right):=\lim_{l\in m\Z_{>0}}
\frac{h^0\left(V_{l,m\vec{\bullet}}\right)\cdot m^{r-1}}{l^{r-1+n}/(r-1+n)!}\,\,
\in (0,\infty].
\]
By \cite[Remark 2.12]{AZ}, the limit exists. Moreover, 
by Lemma \ref{veronese_lemma}, the value $\vol\left(V_{\vec{\bullet}}\right)$ is 
independent of the choice of representatives $V_{m\vec{\bullet}}$ 
of $V_{\vec{\bullet}}$ for $V_{m\vec{\bullet}}$ containing an ample series and of the 
choices of lifts $m L_1,\dots, m L_r$. 
Moreover, by \cite[Remark 2.12]{AZ}, we have 
\[
\vol\left(V_{\vec{\bullet}}\right)=(r-1+n)!
\cdot\vol\left(\Delta_{Y_\bullet}\left(V_{\vec{\bullet}}\right)\right).
\]
If $V_{\vec{\bullet}}$ has bounded support, then $\vol\left(V_{\vec{\bullet}}\right)
\in(0,\infty)$ holds, since the Okounkov body 
$\Delta_{Y_\bullet}\left(V_{\vec{\bullet}}\right)$ is a compact convex body. 
\end{enumerate}
\end{definition}

\begin{lemma}[{cf.\ \cite[Lemma 2.24]{AZ}}]\label{veronese_lemma}
Let $W_{\vec{\bullet}}$ be a $\Z_{\geq 0}^r$-graded linear series on $X$ 
associated to Cartier divisors $L_1,\dots,L_r$. Let $Y_{\bullet}$ be an admissible 
flag on $X$. Let us take any $\vec{k}=(k_1,\dots,k_r)\in\Z_{>0}^r$. 
Let $W_{\vec{\bullet}}^{(\vec{k})}$ 
be the $\Z_{\geq 0}^r$-graded linear series on $X$ associated to 
$k_1L_1,\dots,k_r L_r$ defined by 
\[
W_{a_1,\dots,a_r}^{(\vec{k})}:=W_{k_1a_1,\dots,k_r a_r}.
\]
\begin{enumerate}
\renewcommand{\theenumi}{\arabic{enumi}}
\renewcommand{\labelenumi}{(\theenumi)}
\item\label{veronese_lemma1}
We have 
\[
\sS\left(W_{\vec{\bullet}}^{(\vec{k})}\right)
=\left\{(a_1,\dots,a_r)\in\Z_{\geq 0}^r\,\,|\,\,
(k_1a_1,\dots,k_r a_r)\in \sS\left(W_{\vec{\bullet}}\right)\right\}. 
\]
Thus, for the linear transform 
\begin{eqnarray*}
f\colon \R^r&\to&\R^r\\
(x_1,\dots,x_r)&\mapsto&(k_1x_1,\dots,k_r x_r),
\end{eqnarray*}
we have 
\[
\Supp\left(W_{\vec{\bullet}}\right)=
f\left(\Supp\left(W_{\vec{\bullet}}^{(\vec{k})}\right)\right).
\] 
In particular, $W_{\vec{\bullet}}$ has bounded support if and only if 
$W_{\vec{\bullet}}^{(\vec{k})}$ has bounded support. 
If $W_{\vec{\bullet}}$ contains an ample series, then so is 
$W_{\vec{\bullet}}^{(\vec{k})}$. 
\item\label{veronese_lemma2}
Assume that $W_{\vec{\bullet}}$ contains an ample series. 
For the linear transform 
\begin{eqnarray*}
g\colon \R^{r+n}&\to&\R^{r+n}\\
(x_1,\dots,x_{r+n})&\mapsto&(k_1x_1,\dots,k_r x_r,x_{r+1},\dots,x_{r+n}),
\end{eqnarray*}
we have 
\[
\Sigma_{Y_\bullet}\left(W_{\vec{\bullet}}\right)
=g\left(\Sigma_{Y_\bullet}\left(W_{\vec{\bullet}}^{(\vec{k})}\right)\right).
\]
Therefore, we have 
\[
\Delta_{Y_\bullet}\left(W_{\vec{\bullet}}\right)
=\bar{g}\left(\Delta_{Y_\bullet}\left(W_{\vec{\bullet}}^{(\vec{k})}\right)\right),
\]
where 
\begin{eqnarray*}
\bar{g}\colon \R^{r-1+n}&\to&\R^{r-1+n}\\
(x_1,\dots,x_{r-1+n})&\mapsto&\left(\frac{k_2}{k_1}x_1,\dots,\frac{k_r}{k_1}x_{r-1},
\frac{1}{k_1}x_r,\dots,\frac{1}{k_1}x_{r-1+n}\right).
\end{eqnarray*}
In particular, we have the equality
\[
\vol\left(W_{\vec{\bullet}}^{(\vec{k})}\right)=\frac{k_1^{r-1+n}}{k_2\cdots k_r}
\vol\left(W_{\vec{\bullet}}\right).
\]
\end{enumerate}
\end{lemma}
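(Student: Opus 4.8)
The plan is to deduce both parts directly from the definitions, using three elementary facts: $f$, $g$, and $\bar g$ are linear automorphisms (all $k_i>0$); for any $s\in W_{\vec a}\setminus\{0\}$ one has $s^m\in W_{m\vec a}\setminus\{0\}$, since a product of nonzero sections of line bundles on the variety $X$ is nonzero; and the valuation-like function $\nu_{Y_\bullet}$ is additive, $\nu_{Y_\bullet}(s\cdot t)=\nu_{Y_\bullet}(s)+\nu_{Y_\bullet}(t)$, so $\nu_{Y_\bullet}(s^m)=m\,\nu_{Y_\bullet}(s)$. Write $N:=k_1\cdots k_r$.

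For part \eqref{veronese_lemma1}: the description of $\sS(W^{(\vec k)}_{\vec\bullet})$ is immediate from $W^{(\vec k)}_{a_1,\dots,a_r}=W_{k_1a_1,\dots,k_ra_r}$, and in particular $\sS(W^{(\vec k)}_{\vec\bullet})\subseteq f^{-1}(\sS(W_{\vec\bullet}))$. Conversely, given $\vec s\in\sS(W_{\vec\bullet})$, the point $N\vec s$ again lies in $\sS(W_{\vec\bullet})$ and is coordinatewise divisible by each $k_i$, hence equals $f(\vec s')$ for some $\vec s'\in\sS(W^{(\vec k)}_{\vec\bullet})$, so $f^{-1}(\vec s)=\frac{1}{N}\vec s'\in\Cone(\sS(W^{(\vec k)}_{\vec\bullet}))$. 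Passing to closed cones (and using that $f$ is a homeomorphism) gives $\Supp(W_{\vec\bullet})=f(\Supp(W^{(\vec k)}_{\vec\bullet}))$. Since $f$ carries $\{1\}\times\R^{r-1}$ onto $\{k_1\}\times\R^{r-1}$ and $\Supp$ is a cone, one slice is bounded exactly when the other is. If $W_{\vec\bullet}$ contains an ample series, I check the three conditions for $W^{(\vec k)}_{\vec\bullet}$: condition (i) because $f$ preserves interiors; condition (ii) because $f$ maps lattice points of $\interior(\Supp(W^{(\vec k)}_{\vec\bullet}))$ to lattice points of $\interior(\Supp(W_{\vec\bullet}))$; and condition (iii) by replacing a witness $\vec a_0$ for $W_{\vec\bullet}$ with the $\vec k$-divisible point $\vec a_0'$ with $f(\vec a_0')=N\vec a_0$, rescaling its ample-plus-effective decomposition by $N$, and absorbing the factor $N$ into ``$p\gg0$''.

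For part \eqref{veronese_lemma2}: the inclusion $g(\Gamma_{Y_\bullet}(W^{(\vec k)}_{\vec\bullet}))\subseteq\Gamma_{Y_\bullet}(W_{\vec\bullet})$ is immediate from the definitions, so $g(\Sigma_{Y_\bullet}(W^{(\vec k)}_{\vec\bullet}))\subseteq\Sigma_{Y_\bullet}(W_{\vec\bullet})$. For the reverse, given $(\vec a,\nu_{Y_\bullet}(s))\in\Gamma_{Y_\bullet}(W_{\vec\bullet})$ with $s\in W_{\vec a}\setminus\{0\}$, the section $s^N\in W_{N\vec a}\setminus\{0\}$ satisfies $\nu_{Y_\bullet}(s^N)=N\,\nu_{Y_\bullet}(s)$ and $N\vec a=f(\vec a')$ with $\vec a'\in\Z_{\geq 0}^r$, so $N\cdot(\vec a,\nu_{Y_\bullet}(s))=g(\vec a',\nu_{Y_\bullet}(s^N))\in g(\Gamma_{Y_\bullet}(W^{(\vec k)}_{\vec\bullet}))$; passing to closed cones and using that $g$ is a homeomorphism gives $\Sigma_{Y_\bullet}(W_{\vec\bullet})=g(\Sigma_{Y_\bullet}(W^{(\vec k)}_{\vec\bullet}))$. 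Intersecting with the hyperplane $\{x_1=1\}$, whose $g$-preimage is $\{x_1=1/k_1\}$ and which meets the cone $\Sigma_{Y_\bullet}(W^{(\vec k)}_{\vec\bullet})$ in $\frac{1}{k_1}\Delta_{Y_\bullet}(W^{(\vec k)}_{\vec\bullet})$, one reads off the affine map $\bar g$ of the statement. The volume equality then follows from $\vol(V_{\vec\bullet})=(r-1+n)!\,\vol(\Delta_{Y_\bullet}(V_{\vec\bullet}))$ (\cite[Remark 2.12]{AZ}) and $\det\bar g=k_2\cdots k_r/k_1^{r-1+n}$; it remains valid verbatim when the volumes are infinite.

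The one genuinely non-formal point, which I expect to be the main obstacle, is the reverse inclusion in the support/cone identities: a priori $f^{-1}(\sS(W_{\vec\bullet}))$ and $g^{-1}(\Gamma_{Y_\bullet}(W_{\vec\bullet}))$ are strictly larger than $\sS(W^{(\vec k)}_{\vec\bullet})$ and $\Gamma_{Y_\bullet}(W^{(\vec k)}_{\vec\bullet})$, so one must use the semigroup structure — closure under multiplication by $N$, from products of nonzero sections being nonzero, and for part \eqref{veronese_lemma2} also the additivity of $\nu_{Y_\bullet}$ — to land back in the smaller set after rescaling by $N$, and then observe that this rescaling does not change the closed cone. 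Everything else is bookkeeping with the explicit linear maps, the only care being to pass to closures and to use that $\Supp$ and $\Sigma_{Y_\bullet}$ are cones — so that scaling commutes with slicing by an affine hyperplane — at the right moments.
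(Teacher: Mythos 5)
Your proof is correct, and for the most part runs parallel to the paper's, but there is one genuine (and worthwhile) divergence in part~(2).

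For the reverse inclusion $\Sigma_{Y_\bullet}(W_{\vec\bullet})\subseteq g\bigl(\Sigma_{Y_\bullet}(W_{\vec\bullet}^{(\vec k)})\bigr)$, the paper reduces to interior lattice points $(\vec a,\vec\nu)\in\interior(\Sigma_{Y_\bullet}(W_{\vec\bullet}))\cap\Z^{r+n}$, invokes \cite[Lemma 4.20]{LM} (that $\Gamma_{Y_\bullet}(W_{\vec\bullet})$ generates $\Z^{r+n}$) together with \cite[Lemme 1.13]{boucksom} to put a large multiple of $(\vec a,\vec\nu)$ into $\Gamma_{Y_\bullet}(W_{\vec\bullet})$, and then arranges for the multiplier to be divisible by $N=k_1\cdots k_r$. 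You instead start directly from a point of $\Gamma_{Y_\bullet}(W_{\vec\bullet})$ witnessed by some $s\in W_{m,\vec a}\setminus\{0\}$ and pass to $s^N\in W_{N\vec a}\setminus\{0\}$, using only that $X$ is irreducible (so nonzero sections have nonzero products) and the additivity $\nu_{Y_\bullet}(s\cdot t)=\nu_{Y_\bullet}(s)+\nu_{Y_\bullet}(t)$. This lands $N(\vec a,\vec\nu)$ in $\Gamma_{Y_\bullet}(W_{\vec\bullet})$ with $N\vec a$ coordinatewise divisible by $\vec k$, and the closure-of-cone argument finishes as in the paper. Your route is more elementary: it avoids \cite[Lemme 1.13]{boucksom} and \cite[Lemma 4.20]{LM} entirely, and it does not need the detour through interior lattice points, only the semigroup structure of $\Gamma_{Y_\bullet}$. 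The price is that it is essential that $\nu_{Y_\bullet}$ be multiplicative-to-additive (which it is) --- a fact the paper's argument gets for free through the cited lemmas.

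Part~(1) and the remaining bookkeeping in part~(2) (the slice by $\{x_1=1\}$ giving $\bar g$, the determinant computation, and the volume formula via $\vol(V_{\vec\bullet})=(r-1+n)!\,\vol(\Delta_{Y_\bullet}(V_{\vec\bullet}))$) follow the same route as the paper, only with more detail spelled out --- the paper declares the $\sS$ and $\Supp$ equalities ``obvious'' and verifies only the ample-plus-effective witness for the ample-series condition, whereas you also explicitly check conditions (i) and (ii) of Definition~\ref{bdd-ample_definition}. Both are sound.
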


\begin{proof}
\eqref{veronese_lemma1}
The equalities on $\sS\left(W_{\vec{\bullet}}^{(\vec{k})}\right)$ and 
$\Supp\left(W_{\vec{\bullet}}^{(\vec{k})}\right)$ are obvious. 
Assume that $W_{\vec{\bullet}}$ contains an ample series. 
Since $\sS\left(W_{\vec{\bullet}}\right)$ generates $\Z^r$ as an abelian group 
(see \cite[Lemma 4.18]{LM}), 
the semigroup $\sS\left(W_{\vec{\bullet}}^{(\vec{k})}\right)$ also generates 
$\Z^r$ as an abelian group. 
From the assumption, there is an element 
\[
\vec{a}\in\interior\left(\Supp\left(W_{\vec{\bullet}}\right)\right)\cap\Z_{\geq 0}^r
\] 
and a decomposition 
\[
\vec{a}\cdot\vec{L}=A+E
\]
with $A$ ample Cartier and $E$ effective Cartier such that 
$mE+H^0\left(X, mA\right)\subset W_{m\vec{a}}$
for any $m\gg 0$. 
After replacing $\vec{a}$ with its positive multiple if necessary, we may assume that 
\[
\vec{b}:=\left(\frac{a_1}{k_1},\dots,\frac{a_r}{k_r}\right)\in\Z_{\geq 0}^r.
\]
Set $\vec{L}^{(\vec{k})}:=(k_1L_1,\dots,k_r L_r)$. Since 
$\vec{b}\cdot\vec{L}^{(\vec{k})}=A+E$ and 
$mE+H^0(X, mA)\subset W_{m\vec{a}}=W^{(\vec{k})}_{m\vec{b}}$ for any 
$m\gg 0$, the graded linear 
series $W_{\vec{\bullet}}^{(\vec{k})}$ also contains an ample series. 

\eqref{veronese_lemma2}
Let us show that 
$\Sigma_{Y_\bullet}\left(W_{\vec{\bullet}}\right)
=g\left(\Sigma_{Y_\bullet}\left(W_{\vec{\bullet}}^{(\vec{k})}\right)\right)$. 
Since the inclusion $\supset$ is trivial, it is enough to show the converse inclusion 
$\subset$. Take any 
\[
(a_1,\dots,a_r,\nu_1,\dots,\nu_n)\in\interior\left(\Sigma_{Y_\bullet}
\left(W_{\vec{\bullet}}\right)\right)\cap\Z^{r+n}.
\]
Since both $\Sigma_{Y_\bullet}\left(W_{\vec{\bullet}}\right)$ 
and $g\left(\Sigma_{Y_\bullet}\left(W_{\vec{\bullet}}^{(\vec{k})}\right)\right)$ 
are closed convex 
cones, it is enough to show that a positive multiple of 
$(a_1/k_1,\dots,a_r/k_r,\nu_1,\dots,\nu_n)$ belongs to 
$\Gamma_{Y_\bullet}\left(W_{\vec{\bullet}}^{(\vec{k})}\right)$. 
By \cite[Lemma 4.20]{LM}, the semigroup 
$\Gamma_{Y_\bullet}\left(W_{\vec{\bullet}}\right)$ generates $\Z^{r+n}$ as an 
abelian group. By \cite[Lemme 1.13]{boucksom}, for any $m\gg 0$, 
we have $m(a_1,\dots,a_r,\nu_1,\dots,\nu_n)\in\Gamma_{Y_\bullet}
\left(W_{\vec{\bullet}}\right)$. Take $m\in \Z_{>0}$ divisible by $k_1\cdots k_r$. 
Then there exists 
\[
s\in W_{ma_1,\dots,ma_r}\setminus\{0\}
=W_{\frac{ma_1}{k_1},\dots,\frac{ma_r}{k_r}}^{(\vec{k})}\setminus\{0\}
\]
such that $\nu_{Y_\bullet}(s)=(m\nu_1,\dots,m\nu_n)$. Thus we get 
\[
m\left(\frac{a_1}{k_1},\dots,\frac{a_r}{k_r},\nu_1,\dots,\nu_n\right)\in\Gamma_{Y_\bullet}
\left(W_{\vec{\bullet}}^{(\vec{k})}\right).
\]
The remaining assertions are trivial from the above. 
\end{proof}

\begin{example}\label{pullback_example}
Let $\sigma\colon\tilde{X}\to X$ be a birational morphism between projective 
varieties. Let $V_{\vec{\bullet}}$ be the Veronese equivalence class of a graded 
linear series on $X$ associated to $L_1,\dots,L_r\in\CaCl(X)\otimes_\Z\Q$. 
Under the natural inclusion $\sO_X\hookrightarrow\sigma_*\sO_{\tilde{X}}$, 
we can naturally consider 
the pullback $\sigma^*V_{\vec{\bullet}}$ of $V_{\vec{\bullet}}$. Obviously, 
the series $V_{\vec{\bullet}}$ has bounded support if and only if the series 
$\sigma^*V_{\vec{\bullet}}$ has bounded support. Moreover, if $X$ is normal, then 
the series $V_{\vec{\bullet}}$ contains an ample series if and only if the series 
$\sigma^*V_{\vec{\bullet}}$ contains an ample series, since 
$\sO_X\simeq\sigma_*\sO_{\tilde{X}}$ holds. (We sometimes denote 
$\sigma^*V_{\vec{\bullet}}$ by $V_{\vec{\bullet}}$ if there is no confusion.)
\end{example}

We will use the following theorem in \S \ref{AZ_section}. 

\begin{thm}[{\cite[Theorem 4.21]{LM}}]\label{LM_thm}
Let $V_{\vec{\bullet}}$ be the Veronese equivalence class of a graded linear series 
on $X$ associated to $L_1,\dots,L_r\in\CaCl(X)\otimes_\Z\Q$ which contains 
an ample series. Let $Y_\bullet$ be an admissible flag on $X$ and let 
\[\xymatrix{
\Gamma_{Y_\bullet}\left(V_{\vec{\bullet}}\right) \ar@{^{(}->}[r] \ar[rd]_{pr}
& \R^r\times\R^n \ar[d]^{p_1} \\
& \R^r
}\]
be the natural projection. 
Take any 
$\vec{a}\in\interior\left(\Supp\left(V_{\vec{\bullet}}\right)\right)\cap\Q_{\geq 0}^r$. 
Let $V_{\vec{a},\bullet}$ be the Veronese equivalence class of the graded linear 
series on $X$ associated to $\vec{a}\cdot\vec{L}$ defined by 
\[
V_{\vec{a},m}:=V_{m\vec{a}}
\]
for any sufficiently divisible $m\in\Z_{\geq 0}$. 
Then the series $V_{\vec{a},\bullet}$ has bounded support and contains an ample 
series, and 
\[
{pr}^{-1}\left(\left\{\vec{a}\right\}\right)=\Delta_{Y_\bullet}\left(V_{\vec{a},\bullet}\right)
\]
holds. 
\end{thm}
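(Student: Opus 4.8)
The plan is to follow \cite[Theorem 4.21]{LM}, treating the two assertions separately. Throughout we read ${pr}$ as the projection on the closed cone, so that ${pr}^{-1}(\{\vec{a}\})$ means the fiber $\Sigma_{Y_\bullet}(V_{\vec{\bullet}})\cap(\{\vec{a}\}\times\R^n)$, and we regard $V_{\vec{a},\bullet}$ as the class (well defined by Lemma \ref{veronese_lemma}) of the one-parameter graded linear series $m\mapsto V_{\vec{a},m}=V_{m\vec{a}}$ for $m$ sufficiently divisible. Bounded support of $V_{\vec{a},\bullet}$ is then automatic: it is indexed by a single parameter, so the set $(\{1\}\times\R_{\geq0}^{0})\cap\Supp(V_{\vec{a},\bullet})$ of Definition \ref{bdd-ample_definition}\,\eqref{bdd-ample_definition1} is a single point. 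For the ample series, fix a rational interior grading and a decomposition $\vec{c}\cdot\vec{L}\sim A+E$ with $A$ ample Cartier, $E$ effective Cartier, and $pE+H^0(X,pA)\subset V_{p\vec{c}}$ for $p\gg0$, as provided by Definition \ref{bdd-ample_definition}\,\eqref{bdd-ample_definition23} for $V_{\vec{\bullet}}$. Since $\vec{a}\in\interior(\Supp(V_{\vec{\bullet}}))$ and $\Supp(V_{\vec{\bullet}})$ is a closed convex cone, $N\vec{a}-\vec{c}\in\interior(\Supp(V_{\vec{\bullet}}))$ for all $N\gg0$; enlarging $N$ so that $N\vec{a}-\vec{c}$ is an admissible integral grading, Definition \ref{bdd-ample_definition}\,\eqref{bdd-ample_definition22} gives a nonzero $t\in V_{N\vec{a}-\vec{c}}$. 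Then $N\vec{a}\cdot\vec{L}\sim A+(\DIV(t)+E)$ with $\DIV(t)+E$ effective Cartier, and $p(\DIV(t)+E)+H^0(X,pA)=t^p\cdot\bigl(pE+H^0(X,pA)\bigr)\subset t^p\cdot V_{p\vec{c}}\subset V_{pN\vec{a}}=V_{\vec{a},pN}$ for $p\gg0$; hence $V_{\vec{a},\bullet}$ contains an ample series (the one-parameter instance of \cite[Lemma 4.18]{LM}), and in particular $\vec{a}\cdot\vec{L}$ is big and $\Delta_{Y_\bullet}(V_{\vec{a},\bullet})$ is a genuine compact convex body.

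The inclusion $\Delta_{Y_\bullet}(V_{\vec{a},\bullet})\subset{pr}^{-1}(\{\vec{a}\})$ is immediate: for sufficiently divisible $m$ the equality $V_{\vec{a},m}=V_{m\vec{a}}$ identifies $\Gamma_{Y_\bullet}(V_{\vec{a},\bullet})$ with $\{(m,\nu_{Y_\bullet}(s))\mid s\in V_{m\vec{a}}\setminus\{0\}\}$, which maps into $\Gamma_{Y_\bullet}(V_{\vec{\bullet}})$ via $(m,\vec{w})\mapsto(m\vec{a},\vec{w})$; passing to closed cones and slicing at level $1$ gives the claim, under the identification $\{\vec{a}\}\times\R^n\cong\R^n$.

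For the reverse inclusion, the substantive point, I would argue as follows. Since $V_{\vec{\bullet}}$ contains an ample series, $\Sigma_{Y_\bullet}(V_{\vec{\bullet}})$ is $(r+n)$-dimensional, and since $\vec{a}\in\interior(\Supp(V_{\vec{\bullet}}))$, standard convex geometry gives that the relative interior of ${pr}^{-1}(\{\vec{a}\})$ lies in $\interior(\Sigma_{Y_\bullet}(V_{\vec{\bullet}}))$. As ${pr}^{-1}(\{\vec{a}\})$ is compact and convex and $\Delta_{Y_\bullet}(V_{\vec{a},\bullet})$ is closed, it suffices to show $\vec{\nu}\in\Delta_{Y_\bullet}(V_{\vec{a},\bullet})$ for every $(\vec{a},\vec{\nu})$ in the relative interior of ${pr}^{-1}(\{\vec{a}\})$ with $\vec{\nu}\in\Q_{\geq0}^n$, and then take closures. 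For such a point, $(\vec{a},\vec{\nu})\in\interior(\Sigma_{Y_\bullet}(V_{\vec{\bullet}}))=\interior(\Cone(\Gamma_{Y_\bullet}(V_{\vec{\bullet}})))$; by \cite[Lemma 4.20]{LM} the semigroup $\Gamma_{Y_\bullet}(V_{\vec{\bullet}})$ generates $\Z^{r+n}$ as a group, so by \cite[Lemme 1.13]{boucksom} there is $m\gg0$, with $m\vec{a}$ an admissible integral grading and $m\vec{\nu}\in\Z^n$, such that $m(\vec{a},\vec{\nu})\in\Gamma_{Y_\bullet}(V_{\vec{\bullet}})$; that is, there exists $s\in V_{m\vec{a}}\setminus\{0\}=V_{\vec{a},m}\setminus\{0\}$ with $\nu_{Y_\bullet}(s)=m\vec{\nu}$, whence $(m,m\vec{\nu})\in\Gamma_{Y_\bullet}(V_{\vec{a},\bullet})$ and $(1,\vec{\nu})\in\Sigma_{Y_\bullet}(V_{\vec{a},\bullet})$, i.e. $\vec{\nu}\in\Delta_{Y_\bullet}(V_{\vec{a},\bullet})$.

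The main obstacle is precisely this last step: converting the abstract membership $(\vec{a},\vec{\nu})\in\Sigma_{Y_\bullet}(V_{\vec{\bullet}})$ into an honest section of $V_{m\vec{a}}$ with prescribed valuation vector $m\vec{\nu}$, for which one genuinely needs the full-lattice-generation input \cite[Lemma 4.20]{LM} together with the Khovanskii-type statement \cite[Lemme 1.13]{boucksom}, accessed only after the convex-geometric reduction to interior rational points; verifying the ample-series property of the slice, though routine, likewise relies on the elementary observation that $N\vec{a}-\vec{c}$ lies in $\interior(\Supp(V_{\vec{\bullet}}))$ for $N\gg0$.
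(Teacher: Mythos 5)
Your proposal is correct and follows the same route as the paper: bounded support is immediate from the single-grading, the ample-series property is exactly the content of [LM, Lemma~4.18] (which you unpack explicitly via the interior point $N\vec{a}-\vec{c}$), and the fiber-slice identity is the content of [LM, Theorem~4.21] (which you re-derive from [LM, Lemma~4.20] plus Boucksom's Lemme~1.13, the same Khovanskii-type input the paper uses elsewhere in Lemmas~\ref{veronese_lemma} and \ref{refinement_lemma}). The paper simply cites these results rather than spelling out their proofs, so the substance agrees; the one place you should tidy is the step producing the section $t$: Definition~\ref{bdd-ample_definition}\,\eqref{bdd-ample_definition22} only guarantees $V_{p(N\vec{a}-\vec{c})}\neq 0$ for $p\gg 0$, not $V_{N\vec{a}-\vec{c}}\neq 0$ directly, so one must replace $N$ and $\vec{c}$ by suitable multiples before extracting $t$.
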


\begin{proof}
Since $V_{\vec{a},m\bullet}$ for a sufficiently divisible $m\in\Z_{>0}$ is 
$m\Z_{\geq 0}$-graded, the series obviously has bounded support. 
Moreover, the series contains an ample series by \cite[Lemma 4.18]{LM}. 
The remaining assertion follows directly from \cite[Theorem 4.21]{LM}. 
\end{proof}

\subsection{Filtrations on graded linear series}\label{filtration_subsection}

\begin{definition}\label{filter-vs_definition}
Let $W$ be a finite dimensional vector space over the complex number field. 
A \emph{filtration} $\sF$ on $W$ consists of a family 
$\{\sF^\lambda W\}_{\lambda\in\R}$ of sub-vector spaces of $W$ parametrized by 
$\R$ such that the following conditions are satisfied: 
\begin{enumerate}
\renewcommand{\theenumi}{\roman{enumi}}
\renewcommand{\labelenumi}{(\theenumi)}
\item\label{filter_definition_i}
If $\lambda<\lambda'$, then we have $\sF^{\lambda'}W\subset
\sF^\lambda W$.
\item\label{filter_definition_ii}
For any $\lambda\in\R$, we have $\sF^\lambda W
=\bigcap_{\lambda'<\lambda}\sF^{\lambda'}W$.
\item\label{filter_definition_iii}
We have $\sF^0 W=W$ and $\sF^\lambda W=0$ for $\lambda \gg 0$. 
\end{enumerate}
\end{definition}

\begin{definition}[{\cite[\S 1.3]{BC}, \cite[\S 2.5]{BJ} and 
\cite[\S 2.6]{AZ}}]\label{filter_definition}
Let $V_{\vec{\bullet}}$ be the Veronese equivalence class of a graded linear series 
on $X$ associated to $L_1,\dots,L_r\in\CaCl(X)\otimes_\Z\Q$ which 
has bounded support and contains an ample series.
We say that $\sF$ is a \emph{linearly bounded filtration of $V_{\vec{\bullet}}$} when 
there is a representative $V_{m\vec{\bullet}}$ of $V_{\vec{\bullet}}$ such that 
$\sF$ is a linearly bounded filtration on $V_{m\vec{\bullet}}$. More precisely, 
for any $m\vec{a}\in(m\Z_{\geq 0})^r$, we have a filtration 
$\left\{\sF^{\lambda}V_{m\vec{a}}\right\}_{\lambda\in\R}$ 
of $V_{m\vec{a}}$ 
such that, 
for any $\lambda$, $\lambda'\in\R$ and for any $\vec{a}$, $\vec{a}'\in\Z_{\geq 0}^r$, 
we have $\sF^\lambda V_{m\vec{a}}\cdot\sF^{\lambda'}V_{m\vec{a}'}\subset
\sF^{\lambda+\lambda'}V_{m(\vec{a}+\vec{a}')}$.
Moreover, there exists $C>0$ such that for any 
$\vec{a}=(a_1,\dots,a_r)\in\Z_{\geq 0}^r$ and for any $\lambda\geq C m a_1$, 
we have $\sF^\lambda V_{m\vec{a}}=0$.
We introduce the following notations as in \cite{BC} and \cite{AZ}. 
See also \cite{BJ}. 
\begin{enumerate}
\renewcommand{\theenumi}{\arabic{enumi}}
\renewcommand{\labelenumi}{(\theenumi)}
\item\label{filter_definition1}
For any $l\in m\Z_{>0}$, let us set 
\[
T_l\left(V_{m\vec{\bullet}};\sF\right):=\sup\left\{\lambda\in\R_{\geq 0}\,\,|\,\,
\sF^\lambda V_{l,m\vec{a}}\neq 0 \text{ for some } \vec{a}\in\Z_{\geq 0}^{r-1}\right\}
\]
and 
\[
T\left(V_{\vec{\bullet}};\sF\right):=\sup_{l\in m\Z_{>0}}
\frac{T_l\left(V_{m\vec{\bullet}};\sF\right)}{l}
=\lim_{l\in m\Z_{>0}}
\frac{T_l\left(V_{m\vec{\bullet}};\sF\right)}{l}. 
\]
As in \cite[Lemma 1.4]{BC}, the limit exists. Moreover, by Lemma \ref{filter_lemma}, 
the value $T\left(V_{\vec{\bullet}};\sF\right)$ does not depend on the choice of 
representatives of $V_{\vec{\bullet}}$ and the choices of lifts $m L_1,\dots, m L_r$. 
\item\label{filter_definition2}
For any $t\in\R_{\geq 0}$, let $V_{\vec{\bullet}}^t(=V_{\vec{\bullet}}^{\sF,t})$ be 
the Veronese equivalence class of the graded linear series on $X$ associated to 
$L_1,\dots,L_r$ which is the class of the $(m\Z_{\geq 0})^r$-graded linear series 
$V_{m\vec{\bullet}}^t$ defined by 
\[
V_{m\vec{a}}^t:=\sF^{m a_1 t}V_{m\vec{a}}\quad\left({}^\forall\vec{a}=(a_1,\dots,a_r)
\in\Z_{\geq 0}^r\right).
\]
Obviously, the series $V_{\vec{\bullet}}^t$ has 
bounded support since $V_{\vec{\bullet}}$ is so. As in \cite[Lemma 1.6]{BC} or 
\cite[Lemma 2.21]{AZ}, we have the following: 
\begin{itemize}
\item
If $t>T\left(V_{\vec{\bullet}};\sF\right)$, then $V_{\vec{\bullet}}^t=0$. 
If $t=0$, then $V^0_{\vec{\bullet}}=V_{\vec{\bullet}}$. 
\item
If $t\in[0,T\left(V_{\vec{\bullet}};\sF\right))$, then $V_{\vec{\bullet}}^t$
contains an ample series. 
\end{itemize}
\item\label{filter_definition3}
Let $Y_\bullet$ be an admissible flag on $X$. 
For any $t\in[0,T\left(V_{\vec{\bullet}};\sF\right))$, let us set 
\[
\Delta^t:=\Delta_{Y_\bullet}\left(V_{\vec{\bullet}}^t\right)
\subset\Delta:=\Delta_{Y_\bullet}\left(V_{\vec{\bullet}}\right).
\]
Moreover, let us consider the function 
\begin{eqnarray*}
G:= G_{\sF} \colon \Delta &\to& [0,T\left(V_{\vec{\bullet}};\sF\right)]\\
\vec{x}&\mapsto&\sup\left\{t\in[0,T\left(V_{\vec{\bullet}};\sF\right))\,\,|\,\,
\vec{x}\in\Delta^t\right\}
\end{eqnarray*}
as in \cite[\S 2.5]{BJ}, \cite[Lemma 2.21]{AZ}. The function $G$ is concave 
(see \cite[\S 2.5]{BJ}). Moreover, from the construction, the function does not 
depend on the choice of representatives of $V_{\vec{\bullet}}$. We set 
\begin{eqnarray*}
S\left(V_{\vec{\bullet}};\sF\right):=\frac{1}{\vol(\Delta)}\int_\Delta G(\vec{x})d\vec{x}
=\frac{1}{\vol\left(V_{\vec{\bullet}}\right)}\int_0^{T(V_{\vec{\bullet}};\sF)}
\vol\left(V_{\vec{\bullet}}^t\right)dt.
\end{eqnarray*}
The last equality is easily obtained by Fubini's theorem. 
From the concavity of the function $G$ (cf.\ \cite[Lemma 2.6]{BJ}), 
we can immediately get the inequalities
\[
\frac{1}{r+n}T(V_{\vec{\bullet}};\sF)\leq S(V_{\vec{\bullet}};\sF)\leq 
T(V_{\vec{\bullet}};\sF).
\]
\item\label{filter_definition4}
For any $l\in m\Z_{>0}$ with $h^0\left(V_{l,m\vec{\bullet}}\right)\neq 0$, we set 
\[
S_l\left(V_{m\vec{\bullet}};\sF\right):=\frac{1}{l\cdot h^0\left(V_{l,m\vec{\bullet}}\right)}
\int_0^{T_l\left(V_{m\vec{\bullet}};\sF\right)}\dim\sF^\lambda V_{l,m\vec{\bullet}}
\,\,d\lambda,
\]
where 
\[
\sF^\lambda V_{l,m\vec{\bullet}}:=\bigoplus_{\vec{a}\in\Z_{\geq 0}^{r-1}}
\sF^\lambda V_{l,m\vec{a}}.
\]
Then, by \cite[Lemma 2.9]{BJ} or \cite[Lemma 2.21]{AZ}, we have 
\[
\lim_{l\in m\Z_{>0}}S_l\left(V_{m\vec{\bullet}};\sF\right)
=S\left(V_{\vec{\bullet}};\sF\right).
\]
Indeed, we have 
\[
S_l\left(V_{m\vec{\bullet}};\sF\right)
=\frac{l^{r-1+n}/(r-1+n)!}{m^{r-1}\cdot h^0\left(V_{l,m\vec{\bullet}}\right)}\cdot\int_0^{\frac{T_l\left(V_{m\vec{\bullet}};\sF\right)}{l}}
\frac{m^{r-1}\cdot h^0\left(V^t_{l,m\vec{\bullet}}\right)}{l^{r-1+n}/(r-1+n)!}dt. 
\]
\end{enumerate}
\end{definition}

\begin{example}\label{filter_example}
Let $V_{\vec{\bullet}}$ be the Veronese equivalence class of a graded linear series 
on $X$ associated to $L_1,\dots,L_r\in\CaCl(X)\otimes_\Z\Q$ which 
has bounded support and contains an ample series. 
\begin{enumerate}
\renewcommand{\theenumi}{\arabic{enumi}}
\renewcommand{\labelenumi}{(\theenumi)}
\item\label{filter_example1}
For any linearly bounded filtration $\sF$ on $V_{\vec{\bullet}}$ and for any 
$\mu\in\R_{>0}$, we can naturally consider the linearly bounded filtration $\sF^\mu$ 
on $V_{\vec{\bullet}}$ defined by $(\sF^\mu)^\lambda V_{\vec{a}}
:=\sF^{\mu\lambda}V_{\vec{a}}$. It is obvious that 
\begin{itemize}
\item
$T(V_{\vec{\bullet}};\sF^\mu)=\mu^{-1}\cdot T(V_{\vec{\bullet}};\sF)$, 
\item
$G_{\sF^\mu}=\mu^{-1}\cdot G_{\sF}$ for any admissible flag on $X$, and 
\item
$S(V_{\vec{\bullet}};\sF^\mu)=\mu^{-1}\cdot S(V_{\vec{\bullet}};\sF)$.
\end{itemize}
\item\label{filter_example2}
Let $E$ be any prime divisor over the normalization of $X$. Then we can naturally 
define the linearly bounded filtration $\sF_E$ on $V_{\vec{\bullet}}$ with 
\[
\sF_E^\lambda V_{\vec{a}}:=\{s\in V_{\vec{a}}\,\,|\,\,\ord_E(s)\geq \lambda\}. 
\]
Moreover, we write $T(V_{\vec{\bullet}};E):=T(V_{\vec{\bullet}};\sF_E)$ and 
$S(V_{\vec{\bullet}};E):=S(V_{\vec{\bullet}};\sF_E)$. 
When $L\in\CaCl(X)\otimes_\Z\Q$ is big and $V_\bullet$ is the class of 
the complete linear series of $L$, then the value $S(V_\bullet;E)$ 
(resp., $T(V_\bullet;E)$) coincides 
with the value $S_L(E)$ (resp., $\tau_L(E)$) in Definition \ref{volume_definition}, 
even when $X$ is non-normal. See \cite[Proposition 2.2.43]{L1}.
\item\label{filter_example3}
Let $\sigma\colon\tilde{X}\to X$ be a birational morphism between normal 
projective varieties. As we have seen in Example \ref{pullback_example}, the 
class $\sigma^*V_{\vec{\bullet}}$ has bounded support and contains an ample 
series. For any prime divisor $E$ over $X$, we can naturally identify 
the filtration $\sF_E$ on $V_{\vec{\bullet}}$ and the filtration $\sF_E$ on 
$\sigma^*V_{\vec{\bullet}}$. In particular, we have the equalities 
$T(\sigma^*V_{\vec{\bullet}};E)=T(V_{\vec{\bullet}};E)$ 
and $S(\sigma^*V_{\vec{\bullet}};E)=S(V_{\vec{\bullet}};E)$. 
\end{enumerate}
\end{example}

\begin{lemma}[{cf.\ \cite[Lemma 2.24]{AZ}}]\label{filter_lemma}
Let $W_{\vec{\bullet}}$, $\vec{k}$, $Y_\bullet$ and $W_{\vec{\bullet}}^{(\vec{k})}$
be as in Lemma \ref{veronese_lemma}. 
Assume moreover that $W_{\vec{\bullet}}$ has bounded support and contains an 
ample series. 
Let $\sF$ be a linearly bounded filtration on $W_{\vec{\bullet}}$. 
The filtration $\sF$ naturally induces the filtration $\sF$ on 
$W_{\vec{\bullet}}^{(\vec{k})}$ defined by 
\[
\sF^\lambda W_{a_1,\dots,a_r}^{(\vec{k})}:=\sF^\lambda W_{k_1a_1,\dots,k_r a_r}.
\]
Then we have 
\[
G_{\sF}^{W_{\vec{\bullet}}^{(\vec{k})}}=k_1\cdot G_{\sF}^{W_{\vec{\bullet}}}
\circ\bar{g}\,\,\Big|_{\Delta_{Y_\bullet}\left(W_{\vec{\bullet}}^{(\vec{k})}\right)},
\]
where $\bar{g}$ be as in Lemma \ref{veronese_lemma}. In particular, we have 
\[
T\left(W_{\vec{\bullet}}^{(\vec{k})};\sF\right)=
k_1\cdot T\left(W_{\vec{\bullet}};\sF\right),\quad\quad
S\left(W_{\vec{\bullet}}^{(\vec{k})};\sF\right)=
k_1\cdot S\left(W_{\vec{\bullet}};\sF\right).
\]
\end{lemma}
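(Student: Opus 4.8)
The statement to prove is Lemma \ref{filter_lemma}, which compares the concave transform $G_{\sF}$, and hence the invariants $T$ and $S$, of a filtered graded linear series $W_{\vec{\bullet}}$ with those of its Veronese rescaling $W_{\vec{\bullet}}^{(\vec{k})}$. The plan is to reduce everything to the geometry of Okounkov bodies already established in Lemma \ref{veronese_lemma} and to the definition of $G_{\sF}$ via the sublevel bodies $\Delta^t$. The key observation is that the filtration $\sF$ on $W_{\vec{\bullet}}^{(\vec{k})}$ is \emph{defined} so that its $t$-twist interacts predictably with the rescaling: tracking the exponents $ma_1t$ versus $m(k_1a_1)t$ in Definition \ref{filter_definition}\eqref{filter_definition2}, one finds that $\left(W_{\vec{\bullet}}^{(\vec{k})}\right)^{t}$ is precisely the Veronese rescaling by $\vec{k}$ of $W_{\vec{\bullet}}^{k_1 t}$. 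Granting this identification, the result falls out of bookkeeping with $\bar g$.

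\emph{Step 1: the twist identity.} First I would verify the claim that, as Veronese equivalence classes,
\[
\left(W_{\vec{\bullet}}^{(\vec{k})}\right)^{t}=\left(W_{\vec{\bullet}}^{k_1 t}\right)^{(\vec{k})}
\]
for every $t\in[0,T(W_{\vec{\bullet}}^{(\vec{k})};\sF))$. This is a direct computation at the level of representatives: pick $m$ sufficiently divisible, and compare the graded pieces. On the left, in degree $\vec a$ the space is $\sF^{ma_1 t}W^{(\vec k)}_{m\vec a}=\sF^{ma_1 t}W_{m k_1 a_1,\dots,m k_r a_r}$; on the right, the degree-$\vec a$ piece of the $\vec k$-rescaling of $W_{\vec\bullet}^{k_1 t}$ is the degree-$(k_1 a_1,\dots,k_r a_r)$ piece of $W_{\vec\bullet}^{k_1 t}$, which is $\sF^{m (k_1 a_1)(k_1 t)/k_1}W_{m k_1 a_1,\dots}$ — wait, one must be careful that the rescaling in Lemma \ref{veronese_lemma} multiplies the \emph{grading index} by $\vec k$, not $m$, so the exponent normalization $ma_1$ in the twist becomes $m(k_1 a_1)$ after rescaling and the factor $k_1$ appears exactly once. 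Carrying this through carefully gives the claimed equality; this normalization check is where I expect the only genuine subtlety, so I would do it explicitly.

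\emph{Step 2: transport the Okounkov bodies and assemble $G$, $T$, $S$.} Applying $\Delta_{Y_\bullet}(-)$ to Step 1 and using Lemma \ref{veronese_lemma}\eqref{veronese_lemma2}, the sublevel body of $W_{\vec{\bullet}}^{(\vec{k})}$ at level $t$ satisfies $\Delta^t_{W^{(\vec k)}_{\vec\bullet}}=\bar g^{-1}\!\left(\Delta^{k_1 t}_{W_{\vec\bullet}}\right)$. Since $G_{\sF}$ is by definition the pointwise supremum of those $t$ with $\vec x\in\Delta^t$, for $\vec x\in\Delta_{Y_\bullet}(W_{\vec{\bullet}}^{(\vec{k})})$ we get
\[
G_{\sF}^{W_{\vec{\bullet}}^{(\vec{k})}}(\vec x)=\sup\{t\mid \bar g(\vec x)\in\Delta^{k_1 t}_{W_{\vec\bullet}}\}=\tfrac{1}{k_1}\sup\{s\mid \bar g(\vec x)\in\Delta^{s}_{W_{\vec\bullet}}\}\cdot k_1 = k_1\cdot G_{\sF}^{W_{\vec\bullet}}(\bar g(\vec x)),
\]
which is the asserted formula for $G$. (I should double-check the direction of $\bar g$ against Lemma \ref{veronese_lemma}\eqref{veronese_lemma2}, which gives $\Delta_{Y_\bullet}(W_{\vec\bullet})=\bar g(\Delta_{Y_\bullet}(W_{\vec\bullet}^{(\vec k)}))$, so $\bar g$ maps the new body onto the old one and the composition is well-defined.) Taking the supremum over $\vec x$ of both sides yields $T(W_{\vec{\bullet}}^{(\vec{k})};\sF)=k_1\, T(W_{\vec{\bullet}};\sF)$, using that $\bar g$ is a bijection between the two Okounkov bodies. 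For $S$, I would integrate: by the first displayed expression in Definition \ref{filter_definition}\eqref{filter_definition3},
\[
S(W_{\vec{\bullet}}^{(\vec{k})};\sF)=\frac{1}{\vol(\Delta')}\int_{\Delta'} k_1\, G_{\sF}^{W_{\vec\bullet}}(\bar g(\vec x))\,d\vec x,
\]
where $\Delta'=\Delta_{Y_\bullet}(W_{\vec{\bullet}}^{(\vec{k})})$, and the linear change of variables $\vec y=\bar g(\vec x)$ — whose Jacobian is the constant that also relates $\vol(\Delta')$ to $\vol(\Delta)$ — makes the normalizing factors cancel, leaving $k_1\cdot S(W_{\vec\bullet};\sF)$. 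All three assertions then follow; the main obstacle, as noted, is purely the exponent/normalization bookkeeping in Step 1, not any conceptual difficulty.
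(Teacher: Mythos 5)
Your overall strategy matches the paper's: establish a ``twist identity'' relating $\bigl(W_{\vec{\bullet}}^{(\vec{k})}\bigr)^{t}$ to a $\vec{k}$-rescaling of a twist of $W_{\vec{\bullet}}$, then transport it through the Veronese identification of Okounkov bodies from Lemma \ref{veronese_lemma}. But the twist identity you display in Step~1,
\[
\left(W_{\vec{\bullet}}^{(\vec{k})}\right)^{t}\stackrel{?}{=}\left(W_{\vec{\bullet}}^{k_1 t}\right)^{(\vec{k})},
\]
has the factor of $k_1$ on the wrong side. At degree $m\vec a$ the left side is $\sF^{m a_1 t}W_{m k_1 a_1,\dots,m k_r a_r}$ (the twist normalizes by the first grading index \emph{of the series being twisted}, namely $m a_1$), whereas the right side is $\sF^{(m k_1 a_1)(k_1 t)}W_{m k_1 a_1,\dots}=\sF^{m k_1^2 a_1 t}W_{m k_1 a_1,\dots}$, which differs by $k_1^2$ in the exponent. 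The correct identity is $\bigl(W_{\vec{\bullet}}^{(\vec{k})}\bigr)^{\sF,t}=\bigl(W_{\vec{\bullet}}^{\sF,t/k_1}\bigr)^{(\vec{k})}$, whose right side at $m\vec a$ is $\sF^{(m k_1 a_1)(t/k_1)}W_{m k_1 a_1,\dots}=\sF^{m a_1 t}W_{m k_1 a_1,\dots}$ as required. (The paper expresses this via the rescaled filtration $\sF^{k_1}$ of Example \ref{filter_example} \eqref{filter_example1} as $W^{(\vec{k}),\sF^{k_1},t}=(W^{\sF,t})^{(\vec{k})}$, which is the same statement.)

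With your identity one gets $\bar g\bigl(\Delta^t_{W^{(\vec k)}}\bigr)=\Delta^{k_1 t}_W$, so the substitution $s=k_1 t$ gives
\[
G_{\sF}^{W_{\vec{\bullet}}^{(\vec{k})}}(\vec x)=\sup\{t\mid\bar g(\vec x)\in\Delta^{k_1 t}_W\}=\tfrac{1}{k_1}\,G_{\sF}^{W_{\vec\bullet}}(\bar g(\vec x)),
\]
the reciprocal of the asserted formula. In Step~2 you compensate with an unexplained trailing ``$\cdot\,k_1$''; the chain $\sup\{t\mid\cdots\}=\tfrac{1}{k_1}\sup\{s\mid\cdots\}\cdot k_1=k_1\,G_{\sF}^{W_{\vec\bullet}}(\bar g(\vec x))$ is not consistent (the first two expressions equal $G_{\sF}^{W_{\vec\bullet}}(\bar g(\vec x))$, not $k_1\,G_{\sF}^{W_{\vec\bullet}}(\bar g(\vec x))$). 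You have two sign-of-$k_1$ errors that happen to cancel and land on the correct conclusion. Once Step~1 is fixed to $t/k_1$, the relation $\bar g\bigl(\Delta^t_{W^{(\vec k)}}\bigr)=\Delta^{t/k_1}_W$ holds and the substitution $s=t/k_1$ yields $G_{\sF}^{W_{\vec{\bullet}}^{(\vec{k})}}=k_1\,G_{\sF}^{W_{\vec\bullet}}\circ\bar g$ directly; the deductions for $T$ and $S$ (change of variable through $\bar g$, using Lemma \ref{veronese_lemma} \eqref{veronese_lemma2} for the volume ratio) then go through exactly as you describe and as in the paper.
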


\begin{proof}
From the definition, we have 
\[
W_{\vec{\bullet}}^{(\vec{k}),\sF^{k_1},t}=\left(W_{\vec{\bullet}}^{\sF,t}\right)^{(\vec{k})}.
\]
Thus, by Lemma \ref{veronese_lemma}, we get 
\[
\bar{g}\left(\Delta_{Y_\bullet}\left(W_{\vec{\bullet}}^{(\vec{k})}\right)^{\sF,t}\right)
=\bar{g}\left(\Delta_{Y_\bullet}
\left(W_{\vec{\bullet}}^{(\vec{k})}\right)^{\sF^{k_1},t/k_1}\right)
=\Delta_{Y_\bullet}\left(W_{\vec{\bullet}}\right)^{\sF,t/k_1}
\]
for any $t/k_1\in[0,T(W_{\vec{\bullet}};\sF))$. 
This implies that 
$G_{\sF}^{W_{\vec{\bullet}}^{(\vec{k})}}=k_1\cdot G_{\sF}^{W_{\vec{\bullet}}}
\circ\bar{g}$. Moreover, 
\begin{eqnarray*}
&&S\left(W_{\vec{\bullet}}^{(\vec{k})};\sF\right)=
\frac{1}{\vol\left(\Delta_{Y_\bullet}\left(W_{\vec{\bullet}}^{(\vec{k})}\right)\right)}
\int_{\Delta_{Y_\bullet}\left(W_{\vec{\bullet}}^{(\vec{k})}\right)}
G_{\sF}^{W_{\vec{\bullet}}^{(\vec{k})}}(\vec{x})d\vec{x} \\
&=&\frac{k_2\cdots k_r}{k_1^{r-1+n}}\cdot
\frac{1}{\vol\left(\Delta_{Y_\bullet}(W_{\vec{\bullet}})\right)}
\int_{\Delta_{Y_\bullet}(W_{\vec{\bullet}})}k_1\cdot G_{\sF}^{W_{\vec{\bullet}}}(\vec{y})
\cdot\frac{k_1^{r-1+n}}{k_2\cdots k_r}d\vec{y}
=k_1\cdot S\left(W_{\vec{\bullet}};\sF\right)
\end{eqnarray*}
holds. 
\end{proof}

\begin{definition}[{see \cite[Lemma 2.21]{AZ}}]\label{delta-AZ_definition}
Take an effective $\Q$-Weil divisor 
$\Delta$ on $X$. 
Let $V_{\vec{\bullet}}$ be the Veronese equivalence class of a graded linear 
series on $X$ associated to $L_1,\dots,L_r\in\CaCl(X)\otimes_\Z\Q$ 
which has bounded support and contains an ample series. 
Take any scheme-theoretic point $\eta\in X$ with $(X,\Delta)$ klt at $\eta$. 
We set 
\begin{eqnarray*}
\alpha_\eta\left(X,\Delta; V_{\vec{\bullet}}\right)
&:=&\alpha_\eta\left(V_{\vec{\bullet}}\right)
:=\inf_{\substack{E: \text{ prime divisor}\\ \text{over $X$ with }\eta\in c_X(E)}}
\frac{A_{X,\Delta}(E)}{T\left(V_{\vec{\bullet}};E\right)},\\
\delta_\eta\left(X,\Delta; V_{\vec{\bullet}}\right)
&:=&\delta_\eta\left(V_{\vec{\bullet}}\right)
:=\inf_{\substack{E: \text{ prime divisor}\\ \text{over $X$ with }\eta\in c_X(E)}}
\frac{A_{X,\Delta}(E)}{S\left(V_{\vec{\bullet}};E\right)}.
\end{eqnarray*}
If $(X,\Delta)$ is a klt pair, then we set
\begin{eqnarray*}
\alpha\left(X,\Delta; V_{\vec{\bullet}}\right)&:=&\alpha\left(V_{\vec{\bullet}}\right)
:=\inf_{\substack{E: \text{ prime divisor}\\ \text{over $X$}}}
\frac{A_{X,\Delta}(E)}{T\left(V_{\vec{\bullet}};E\right)}, \\
\delta\left(X,\Delta; V_{\vec{\bullet}}\right)&:=&\delta\left(V_{\vec{\bullet}}\right)
:=\inf_{\substack{E: \text{ prime divisor}\\ \text{over $X$}}}
\frac{A_{X,\Delta}(E)}{S\left(V_{\vec{\bullet}};E\right)}.
\end{eqnarray*}
By Definition \ref{filter_definition} \eqref{filter_definition3}, we have 
\begin{eqnarray*}
\alpha_\eta\left(V_{\vec{\bullet}}\right)\leq
&\delta_\eta\left(V_{\vec{\bullet}}\right)&\leq
(r+n)\alpha_\eta\left(V_{\vec{\bullet}}\right), \\
\alpha\left(V_{\vec{\bullet}}\right)\leq
&\delta\left(V_{\vec{\bullet}}\right)&\leq
(r+n)\alpha\left(V_{\vec{\bullet}}\right).
\end{eqnarray*}
When $L\in\CaCl(X)\otimes_\Z\Q$ is big and $V_{\bullet}$ is the class of the 
complete linear series of $L$, then the value 
$\delta_\eta\left(X,\Delta; V_{\bullet}\right)$
(resp., $\delta\left(X,\Delta; V_{\bullet}\right)$)
is nothing but the value 
$\delta_\eta\left(X,\Delta; L\right)$
(resp., $\delta\left(X,\Delta; L\right)$)
in Definition \ref{delta_definition}. 
If $(X,\Delta)$ is a log Fano pair and $V_{\bullet}$ is the class of the 
complete linear series of $-(K_X+\Delta)$, then the value 
$\alpha_\eta\left(X,\Delta; V_{\bullet}\right)$
(resp., $\alpha\left(X,\Delta; V_{\bullet}\right)$)
is nothing but the value 
$\alpha_\eta\left(X,\Delta\right)$
(resp., $\alpha\left(X,\Delta\right)$)
in Definition \ref{alpha_definition} (see \cite[Theorem C]{BJ}). 
We remark that, although we do not use it in the rest of paper, the above values 
are positive (see Proposition \ref{positivity_proposition}).
\end{definition}

We will use the following proposition in \S \ref{AZ_section}. 

\begin{proposition}\label{barycenter_proposition}
Let $V_{\vec{\bullet}}$ be the Veronese equivalence class of a graded linear series 
on $X$ associated to $L_1,\dots,L_r\in\CaCl(X)\otimes_\Z\Q$ 
which has bounded support and contains 
an ample series, and let $Y_\bullet$ be an admissible flag on $X$. 
Then $Y_1\subset X$ naturally gives a prime divisor over the normalization 
of $X$. Let us set $\sF:=\sF_{Y_1}$, and let us consider the Okounkov body 
$\Delta:=\Delta_{Y_\bullet}(V_{\vec{\bullet}})$ and let 
$G:=G_{\sF}\colon\Delta\to\R$ be as in Definition \ref{filter_definition}. 
\begin{enumerate}
\renewcommand{\theenumi}{\arabic{enumi}}
\renewcommand{\labelenumi}{(\theenumi)}
\item\label{barycenter_proposition1}
For any $t\in[0,T(V_{\vec{\bullet}};Y_1))$, we have 
\[
\Delta^t=\Delta\cap\left\{\vec{x}=(x_1,\dots,x_{r-1+n})\in\R_{\geq 0}^{r-1+n}
\,\,|\,\,x_r\geq t\right\}. 
\]
\item\label{barycenter_proposition2}
The restriction map
$G|_{\interior(\Delta)}\colon\interior(\Delta)\to\R$ is equal to the composition 
\[
\interior(\Delta)\hookrightarrow\R^{r-1+n}\xrightarrow{p_r}\R,
\]
where $p_r$ is the $r$-th projection. In particular, the value $T(V_{\vec{\bullet}};Y_1)$ 
is the maximum of the closed area $p_r(\Delta)\subset\R$, and the value 
$S(V_{\vec{\bullet}};Y_1)$ is the $r$-th coordinate of the barycenter of $\Delta$. 
\end{enumerate}
\end{proposition}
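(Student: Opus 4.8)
The plan is to unwind the definitions of the filtration $\sF_{Y_1}$ and of the Okounkov body, and to observe that the $r$-th coordinate of a point of $\Delta_{Y_\bullet}(V_{\vec\bullet})$ records exactly the order of vanishing along $Y_1$. Recall that $\nu_{Y_\bullet}(s) = (\nu_1(s),\dots,\nu_n(s))$ is built by first setting $\nu_1(s) = \ord_{Y_1}(s)$ (the vanishing order of the section $s$ along the divisor $Y_1$), then restricting $s/(\text{local equation of }Y_1)^{\nu_1(s)}$ to $Y_1$ and iterating. In the Okounkov body $\Delta_{Y_\bullet}(V_{\vec\bullet}) \subset \R^{r-1+n}$, using the indexing convention of Definition \ref{okounkov_definition}, the first $r-1$ coordinates are the ``directional'' graded coordinates and the $r$-th coordinate $x_r$ is precisely the $\nu_1$-coordinate scaled appropriately. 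For part \eqref{barycenter_proposition1}, I would show directly that the twisted series $V_{\vec\bullet}^{\sF_{Y_1},t}$ has, at level $m\vec a$, the subspace $\{s \in V_{m\vec a} \mid \ord_{Y_1}(s) \geq m a_1 t\}$; passing to the valuation semigroup $\Gamma_{Y_\bullet}$ and taking closures, this exactly slices off the half-space $\{x_r \geq t\}$, giving $\Delta^t = \Delta \cap \{x_r \geq t\}$.

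Granting part \eqref{barycenter_proposition1}, part \eqref{barycenter_proposition2} is essentially formal. By the definition of $G = G_{\sF_{Y_1}}$ in Definition \ref{filter_definition} \eqref{filter_definition3}, for $\vec x \in \Delta$ we have $G(\vec x) = \sup\{t \in [0, T(V_{\vec\bullet};Y_1)) \mid \vec x \in \Delta^t\}$. Combining with \eqref{barycenter_proposition1}, $\vec x \in \Delta^t$ iff $\vec x \in \Delta$ and $x_r \geq t$, so for $\vec x \in \Delta$ the supremum is simply $x_r$ (clipped to the relevant range); on the interior $\interior(\Delta)$ all points with $x_r$ in the open range occur and the clipping is invisible, so $G|_{\interior(\Delta)} = p_r|_{\interior(\Delta)}$. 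Then $T(V_{\vec\bullet};Y_1) = \sup_\Delta G = \max p_r(\Delta)$, which is the stated maximum of the closed area $p_r(\Delta)$; and
\[
S(V_{\vec\bullet};Y_1) = \frac{1}{\vol(\Delta)}\int_\Delta G(\vec x)\, d\vec x = \frac{1}{\vol(\Delta)}\int_\Delta x_r\, d\vec x,
\]
which is by definition the $r$-th coordinate of the barycenter of $\Delta$.

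The main obstacle, and the only genuinely technical point, is the identification in part \eqref{barycenter_proposition1} — specifically, reconciling the ``order of vanishing along $Y_1$'' filtration with the first component of the Okounkov valuation $\nu_{Y_\bullet}$, including the scaling by $m a_1$ in the definition of the twist $V^t_{m\vec a} = \sF^{m a_1 t} V_{m\vec a}$ versus the normalization that produces the slice at height $x_r = 1$ in the affine body $\{1\} \times \R^{r-1} \times \R^n$. One must be careful that $Y_1$ may pass through the singular locus of $X$ away from the flag point, so $\ord_{Y_1}$ is taken in the sense of Example \ref{filter_example2} \eqref{filter_example2} (via pullback to a normalization/resolution); since $Y_1$ is by hypothesis a subvariety of $X$ with $Y_\bullet$ admissible, $Y_1$ is generically a Cartier divisor near the flag point and the comparison with $\nu_1$ is local there. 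I would also invoke Example \ref{filter_example3} \eqref{filter_example3} and the discussion following Definition \ref{okounkov_definition} to ensure that passing to a resolution does not change $\Delta_{Y_\bullet}$, $T$, or $S$, so that the computation may be carried out after assuming $Y_1$ Cartier. Once the slicing identity $\Delta^t = \Delta \cap \{x_r \geq t\}$ is in hand, everything else is a direct application of Fubini as already recorded in Definition \ref{filter_definition} \eqref{filter_definition3}.
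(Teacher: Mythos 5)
Your strategy is correct, but you take the opposite route from the paper, and in the process you gloss over the step that is actually the technical heart of the argument. The paper proves part \eqref{barycenter_proposition2} first and deduces \eqref{barycenter_proposition1} from it: for a rational interior point $\vec x=(\vec a,\vec\nu)$ of $\Delta$, the inequality $G(\vec x)\ge\nu_1$ is shown by invoking Boucksom's density lemma [Lemme 1.13, \cite{boucksom}] to produce a section $s$ with $\nu_{Y_\bullet}(s)=l\vec\nu$, which then automatically lies in $\sF^{l\nu_1}V_{l(1,\vec a)}$; the opposite inequality $G(\vec x)\le\nu_1$ is obtained by applying that same lemma to the twisted series $V^{\nu'_1}_{\vec\bullet}$ for some $\nu'_1>\nu_1$ and deriving a contradiction with $\ord_{Y_1}(s')\ge l'\nu'_1$. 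You instead prove \eqref{barycenter_proposition1} first via a semigroup comparison and derive \eqref{barycenter_proposition2} formally, which is a valid and arguably cleaner organization: the semigroup equality $\Gamma_{Y_\bullet}(V^t_{m\vec\bullet})=\Gamma_{Y_\bullet}(V_{m\vec\bullet})\cap\{\nu_1\ge ma_1t\}$ follows immediately from the definitions, and the formal derivation of \eqref{barycenter_proposition2} from \eqref{barycenter_proposition1} that you describe (computing $G$ as a supremum, integrating $x_r$) is exactly right.

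The gap is in the phrase ``passing to the valuation semigroup $\Gamma_{Y_\bullet}$ and taking closures, this exactly slices off the half-space.'' The inclusion $\Sigma_{Y_\bullet}(V^t_{\vec\bullet})\subset\Sigma_{Y_\bullet}(V_{\vec\bullet})\cap\{\nu_1\ge a_1t\}$ is immediate, but the reverse inclusion is not a formal consequence of taking closures, because the semigroup $\Gamma_{Y_\bullet}(V_{m\vec\bullet})$ need not contain every lattice point of $\Sigma_{Y_\bullet}(V_{\vec\bullet})$. To get $\Sigma_{Y_\bullet}(V_{\vec\bullet})\cap\{\nu_1\ge a_1t\}\subset\Sigma_{Y_\bullet}(V^t_{\vec\bullet})$, you must argue that for any rational interior point of the left side, some multiple lies in $\Gamma_{Y_\bullet}(V^t_{m\vec\bullet})$, and that is precisely the content of [Lemme 1.13, \cite{boucksom}] (after observing that a section $s$ with $\nu_1(s)>ma_1 t$ automatically lies in $\sF^{ma_1 t}V_{m\vec a}$). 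Without citing that density result, your argument only proves the inclusion $\Delta^t\subset\Delta\cap\{x_r\ge t\}$ and not the equality. You should spell this out; the paper calls on the same lemma.

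One small point in the other direction: your worry that $Y_1$ may pass through the singular locus of $X$ is legitimate but easier to dismiss than you suggest. The definition of an admissible flag requires $X=Y_0$ and each $Y_i$ to be smooth at the point $Y_n$. Hence $Y_1$ is a smooth hypersurface in a smooth germ near $Y_n$, so Cartier there, and the vanishing order $\nu_1(s)$ measured near $Y_n$ agrees with $\ord_{Y_1}(s)$ at the generic point of $Y_1$ (the locus of disagreement is a proper closed subset of $Y_1$, and $Y_n$ lies outside it). No resolution is needed for this comparison, though your cited reductions via Example \ref{pullback_example} and Example \ref{filter_example} \eqref{filter_example3} would also work.
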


\begin{proof}
Fix a representative $V_{m\vec{\bullet}}$ of $V_{\vec{\bullet}}$ which contains an 
ample series. 

\eqref{barycenter_proposition2}
Since $G|_{\interior(\Delta)}$ is continuous, it is enough to show that 
$G(\vec{x})=\nu_1$ for any $\vec{x}=(\vec{a},\vec{\nu})=(a_1,\dots,a_{r-1},
\nu_1,\dots,\nu_n)\in\interior(\Delta)\cap\Q_{\geq 0}^{r-1+n}$. 
By \cite[Lemme 1.13]{boucksom}, there exists $l\in m\Z_{>0}$ such that 
$l(1,\vec{x})\in\Gamma_{Y_\bullet}(V_{m\vec{\bullet}})$, i.e., there exists a section 
$s\in V_{l(1,\vec{a})}\setminus\{0\}$ such that $\nu_{Y_\bullet}(s)=l\vec{\nu}$ holds. 
Since $s\in\sF^{l\nu_1}V_{l(1,\vec{a})}=V^{\nu_1}_{l(1,\vec{a})}$, we have 
$l(1,\vec{x})\in\Gamma_{Y_\bullet}\left(V_{m\vec{\bullet}}^{\nu_1}\right)$. 
Thus we have $\vec{x}\in\Delta^{\nu_1}$. This implies the inequality 
$G(\vec{x})\geq \nu_1$. 

Assume that $G(\vec{x})> \nu_1$. Take any $G(\vec{x})>\nu'_1> \nu_1$. By 
the concavity of $G$, we have $\vec{x}\in\interior\left(\Delta^{\nu'_1}\right)$. 
Again by \cite[Lemme 1.13]{boucksom}, there exists $l'\in m\Z_{>0}$ such that 
$l'(1,\vec{x})\in\Gamma_{Y_\bullet}\left(V_{m\vec{\bullet}}^{\nu'_1}\right)$, i.e., 
there exists a section $s'\in\sF^{l'\nu'_1}V_{l'(1,\vec{a})}\setminus\{0\}$ such that 
$\nu_{Y_\bullet}(s')=l'\vec{\nu}$ holds. Thus we get $\ord_{Y_1}(s')=l'\nu_1$. However, 
since $s'\in \sF^{l'\nu'_1}V_{l'(1,\vec{a})}$, we have 
$\ord_{Y_1}(s')\geq l'\nu'_1>l'\nu_1$, a contradiction. Thus we get 
$G(\vec{x})=\nu_1$. In particular, we get 
\[
S(V_{\vec{\bullet}};\sF)=\frac{1}{\vol(\Delta)}\int_\Delta x_r d\vec{x}.
\]
The value is nothing but the $r$-th coordinate of the barycenter of $\Delta$. 

\eqref{barycenter_proposition1}
As in Definition \ref{filter_definition}, for any $t\in[0,T(V_{\vec{\bullet}};Y_1))$, 
$\Delta^t\subset\Delta$ is a compact convex body with 
$\interior(\Delta^t)\neq\emptyset$. Thus, it is enough to show 
\[
\interior(\Delta)\cap\Delta^t
=\interior(\Delta)\cap\left\{\vec{x}=(x_1,\dots,x_{r-1+n})\in\R_{\geq 0}^{r-1+n}
\,\,|\,\,x_r\geq t\right\}. 
\]
The above is obvious from \eqref{barycenter_proposition2}. 
\end{proof}

\begin{corollary}[{cf.\ \cite[Theorem 3.2]{FO}}]\label{barycenter_corollary}
Under the assumption in Proposition \ref{barycenter_proposition}, let 
$U_r\in\R_{\geq 0}$ be the minimum of the closed area 
$p_r(\Delta)\subset\R_{\geq 0}$. Let us set $T_r:=T(V_{\vec{\bullet}};Y_1)$ just 
for simplicity. Then we have the inequalities 
\[
U_r+\frac{T_r-U_r}{r+n}\leq S(V_{\vec{\bullet}};Y_1)\leq
T_r-\frac{T_r-U_r}{r+n}. 
\]
For example, if $V_\bullet$ is the class of the complete linear series of 
a big $L\in\CaCl(X)\otimes_\Z\Q$ and if $Y_\bullet$ is an admissible flag 
with $Y_n\not\in\B_-(L)$, then we have 
\[
\frac{1}{n+1}T(V_\bullet;Y_1)\leq S(V_\bullet;Y_1)\leq
\frac{n}{n+1}T(V_\bullet;Y_1), 
\]
where $\B_-(L)$ is the \emph{restricted base locus} of $L$ 
$($see \cite{ELMNP} for the definition$)$. 
\end{corollary}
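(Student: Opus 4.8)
The plan is to reduce the statement to a convex-geometric estimate via Proposition \ref{barycenter_proposition} and then to run a Brunn--Minkowski argument. By Proposition \ref{barycenter_proposition} \eqref{barycenter_proposition2}, the number $T_r=T(V_{\vec\bullet};Y_1)$ is the maximum of $p_r(\Delta)$ and $S(V_{\vec\bullet};Y_1)$ is the $r$-th coordinate of the barycenter of the Okounkov body $\Delta=\Delta_{Y_\bullet}(V_{\vec\bullet})$; by hypothesis $U_r$ is the minimum of $p_r(\Delta)$, and $\Delta\subset\R^{r-1+n}$ is a compact convex body of full dimension $d:=r-1+n$ (full-dimensionality holds since $V_{\vec\bullet}$ contains an ample series, so $\vol(\Delta)>0$; in particular $p_r(\Delta)$ has nonempty interior, i.e.\ $U_r<T_r$). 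So it suffices to prove: for a $d$-dimensional compact convex body $\Delta\subset\R^{r-1+n}$ with $p_r(\Delta)=[U_r,T_r]$, the $r$-th coordinate of the barycenter of $\Delta$ lies in $\bigl[\,U_r+\tfrac{T_r-U_r}{d+1},\ T_r-\tfrac{T_r-U_r}{d+1}\,\bigr]$, where $d+1=r+n$. Replacing $\Delta$ by its image under the affine automorphism of $\R^{r-1+n}$ that sends the $r$-th coordinate $x_r$ to $(x_r-U_r)/(T_r-U_r)$ and fixes the others --- which transforms barycenters equivariantly --- I may assume $U_r=0$ and $T_r=1$.

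In this normalized situation, set $V(t):=\vol_d\bigl(\Delta\cap\{x_r\le t\}\bigr)$ for $t\in[0,1]$. Since $\Delta$ is convex, whenever $t=(1-\lambda)t_0+\lambda t_1$ we have the Minkowski-sum inclusion
\[
(1-\lambda)\bigl(\Delta\cap\{x_r\le t_0\}\bigr)+\lambda\bigl(\Delta\cap\{x_r\le t_1\}\bigr)\subseteq\Delta\cap\{x_r\le t\},
\]
so the Brunn--Minkowski inequality shows that $V^{1/d}$ is concave on $[0,1]$. Because $V(0)=0$ (the slice $\Delta\cap\{x_r=0\}$ lies in a hyperplane, hence has $d$-dimensional volume zero), concavity forces $V(t)\ge t^{d}V(1)$ for all $t\in[0,1]$. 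Integration by parts yields, for the $r$-th barycenter coordinate $S$, the identity $S=1-\tfrac{1}{V(1)}\int_0^1 V(t)\,dt$ (note $\vol(\Delta)=V(1)$), hence $S\le 1-\tfrac1{d+1}=\tfrac{d}{d+1}$. For the reverse bound, apply the same reasoning to the reflected body: $\widetilde V(t):=\vol_d\bigl(\Delta\cap\{x_r\ge 1-t\}\bigr)=V(1)-V(1-t)$ again has $\widetilde V^{1/d}$ concave with $\widetilde V(0)=0$, so $\int_0^1\widetilde V(t)\,dt\ge\tfrac{V(1)}{d+1}$, i.e.\ $\int_0^1 V(t)\,dt\le\tfrac{d}{d+1}V(1)$, giving $S\ge\tfrac1{d+1}$. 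Undoing the rescaling and using $d+1=r+n$ produces the claimed inequalities; this parallels \cite[Theorem 3.2]{FO}, the novelty being only that $U_r$ need not vanish.

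For the displayed example, specialize to $r=1$ and let $V_\bullet$ be the complete linear series of the big divisor $L$, so $d=n$. Then $T_1=T(V_\bullet;Y_1)=\tau_L(Y_1)$ by Example \ref{filter_example} \eqref{filter_example2}, while $U_1=\min p_1(\Delta_{Y_\bullet}(L))$ equals $\lim_{m}\tfrac1m\ord_{Y_1}\bigl(\Fix|mL|\bigr)=\ord_{Y_1}(\|L\|)$, the asymptotic order of vanishing of $L$ along the prime divisor $Y_1$. Since $Y_n\in Y_1$ and $Y_n\notin\B_-(L)$, the prime divisor $Y_1$ is not contained in $\B_-(L)$, so $\ord_{Y_1}(\|L\|)=0$ by the description of the restricted base locus in \cite{ELMNP}. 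Hence $U_1=0$, and the general inequality specializes to $\tfrac1{n+1}T(V_\bullet;Y_1)\le S(V_\bullet;Y_1)\le\tfrac{n}{n+1}T(V_\bullet;Y_1)$.

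The substantive point is the convex-geometric estimate, and within it the only step needing care is the concavity of $V^{1/d}$ --- i.e.\ the Minkowski-sum inclusion together with Brunn--Minkowski, plus the observation $V(U_r)=0$; the affine rescaling and the integration by parts are routine. In the example, the sole non-formal ingredient is the standard fact that if a prime divisor is not contained in $\B_-(L)$ then the asymptotic order of vanishing of $L$ along it is zero.
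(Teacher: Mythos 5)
Your proof is correct and follows the same overall two-step shape as the paper's: first use Proposition~\ref{barycenter_proposition} to translate the claim into a statement about the barycenter and width of the compact convex Okounkov body $\Delta$, then conclude with the estimate that the centroid of a $d$-dimensional convex body lies at depth between $1/(d+1)$ and $d/(d+1)$ along any direction. The paper simply cites \cite{hammer} for that second step, while you re-derive it via the Minkowski-sum inclusion for truncations $\Delta\cap\{x_r\le t\}$ and Brunn--Minkowski concavity of $V(t)^{1/d}$ followed by integration by parts; your derivation is complete and correct (the $\geq$ bound by reflection is carried out properly), so this is a more self-contained but not conceptually different route. For the example you also diverge in one small way: the paper deduces $U_r=0$ by citing \cite[Theorem 4.2]{CHPW}, which gives the stronger fact $\vec 0\in\Delta$ when $Y_n\notin\B_-(L)$, whereas you argue only that the first coordinate $U_1=\min p_1(\Delta)$ equals $\ord_{Y_1}(\|L\|)$, which vanishes because $Y_1\not\subset\B_-(L)$. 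That identification $\min p_1(\Delta_{Y_\bullet}(L))=\ord_{Y_1}(\|L\|)$ is true (it follows because every normalized semigroup point has first coordinate at least $\ord_{Y_1}(|mL|)/m\geq\ord_{Y_1}(\|L\|)$ by Fekete, with a sequence of such points achieving the limit, and the Okounkov body contains the normalized semigroup), but you should supply at least a sentence of justification or a reference, since it is not an immediate restatement of the definitions. Net: same logical skeleton, with the cited convex-geometry lemma proved from scratch and a slightly weaker but sufficient ingredient used in the specialization.
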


\begin{proof}
The first inequalities follow immediately from Proposition \ref{barycenter_proposition} 
and the standard fact of the barycenters of convex bodies (see \cite{hammer}). 
For the second inequalities, when $Y_n\not\in\B_-(L)$, then $\Delta$ contains 
the origin by \cite[Theorem 4.2]{CHPW}. Thus we have $U_r=0$. Since $r=1$, 
we get the assertion. 
\end{proof}

For example, if the above $L$ in Corollary \ref{barycenter_corollary} 
is nef and big, then the condition  
$Y_n\not\in\B_-(L)$ is always satisfied, 
since we have $\B_-(L)=\emptyset$ (see \cite{ELMNP}). 

\begin{corollary}[{cf.\ \cite[Proposition 2.1]{pltK} and 
\cite[Proposition 3.11]{BJ}}]\label{a-d_corollary}
Assume that $L\in\CaCl(X)\otimes_\Z\Q$ is big, and a prime divisor $E$ over 
the normalization of $X$ which satisfies that $c_X(E)\not\subset\B_-(L)$. 
Then we have the inequalities
\[
\frac{1}{n+1}\tau_L(E)\leq S_L(E)\leq\frac{n}{n+1}\tau_L(E). 
\]
\end{corollary}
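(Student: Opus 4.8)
The statement is essentially a special case of Corollary~\ref{barycenter_corollary}, and the plan is to deduce it from there by choosing an appropriate admissible flag. The key point is that the quantities $\tau_L(E)$ and $S_L(E)$ attached to a prime divisor $E$ over $X$ (Definition~\ref{volume_definition}) can be reinterpreted, via Example~\ref{filter_example}~\eqref{filter_example2}--\eqref{filter_example3}, as $T(V_\bullet;E)$ and $S(V_\bullet;E)$ where $V_\bullet$ is the Veronese equivalence class of the complete linear series of $L$. So it suffices to realize $E$ as $Y_1$ in a suitable admissible flag $Y_\bullet$ on a birational model where $E$ is a genuine prime divisor, and then invoke the second set of inequalities in Corollary~\ref{barycenter_corollary}.

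\emph{First}, I would reduce to the case where $E$ is a prime divisor on $X$ itself. By definition there is a resolution $\sigma\colon\tilde X\to X$ on which $E$ lives as a prime divisor. Replacing $X$ by $\tilde X$ and $L$ by $\sigma^*L$: the volume function and hence $\tau$ and $S_L(E)$ are unchanged by Definition~\ref{volume_definition}~\eqref{volume_definition1}, and the restricted base locus satisfies $\sigma^{-1}(\B_-(L)) \supseteq \B_-(\sigma^*L)$ minus $\sigma$-exceptional contributions; more carefully, using Example~\ref{filter_example}~\eqref{filter_example3} the filtration $\sF_E$ and the invariants $T,S$ on $V_\bullet$ and on $\sigma^*V_\bullet$ agree, so I may simply work on $\tilde X$ directly and replace the hypothesis $c_X(E)\not\subset\B_-(L)$ by the statement that the generic point of $E$ on $\tilde X$ lies outside $\B_-(\sigma^*L)$ — this is the content I need to massage, since a point of $c_X(E)$ outside $\B_-(L)$ pulls back (for a suitable $\sigma$, chosen so that $E$ dominates such a point) to a point of $E$ outside $\B_-(\sigma^*L)$.

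\emph{Second}, having $E$ a prime divisor on a normal (even smooth) projective variety $\tilde X$ with a point $\eta\in E\setminus\B_-(\sigma^*L)$, I extend $\{E=Y_1\}$ to an admissible flag $Y_\bullet$ on $\tilde X$ with $Y_n$ equal to a general point of $Y_1$, chosen inside $E\setminus\B_-(\sigma^*L)$; this is possible because $\B_-(\sigma^*L)$ is a proper closed subset and $E$ is not contained in it. Then $Y_n\not\in\B_-(\sigma^*L)$, and Corollary~\ref{barycenter_corollary} applied with $r=1$ and this flag gives
\[
\frac{1}{n+1}T(V_\bullet;Y_1)\leq S(V_\bullet;Y_1)\leq\frac{n}{n+1}T(V_\bullet;Y_1).
\]
Finally, by Example~\ref{filter_example}~\eqref{filter_example2} we have $T(V_\bullet;Y_1)=\tau_{\sigma^*L}(E)=\tau_L(E)$ and $S(V_\bullet;Y_1)=S_{\sigma^*L}(E)=S_L(E)$, yielding the claimed inequalities.

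\emph{Main obstacle.} The delicate point is purely about base loci and the choice of $\sigma$: one must verify that the hypothesis "$c_X(E)\not\subset\B_-(L)$" on the base $X$ can indeed be upgraded to "$E$ has a point outside $\B_-$" on the model $\tilde X$ carrying $E$, accounting for the behavior of restricted base loci under birational pullback (where one should appeal to a blow-up $\sigma$ that is an isomorphism over a neighborhood of the chosen point of $c_X(E)$, so that near that point nothing is changed and the local computation of $\Delta_{Y_\bullet}$ containing the origin — via \cite[Theorem 4.2]{CHPW} as used in Corollary~\ref{barycenter_corollary} — goes through). Everything else is a direct translation through Example~\ref{filter_example} and an application of Corollary~\ref{barycenter_corollary}.
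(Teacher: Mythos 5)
Your overall strategy is the same as the paper's: reduce to a resolution $\sigma\colon\tilde X\to X$ carrying $E$, show $E\not\subset\B_-(\sigma^*L)$, build an admissible flag with $Y_1=E$ and $Y_n\not\in\B_-(\sigma^*L)$, and apply Corollary~\ref{barycenter_corollary} together with Example~\ref{filter_example}~\eqref{filter_example2}--\eqref{filter_example3}. That part is fine.

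The gap is in your handling of the restricted base locus. You are suspicious of the inclusion $\B_-(\sigma^*L)\subset\sigma^{-1}(\B_-(L))$ --- you write ``minus $\sigma$-exceptional contributions'' --- and to avoid relying on it you propose to pick $\sigma$ to be an isomorphism over a neighborhood of a point of $c_X(E)$ lying outside $\B_-(L)$. That cannot work in general: if $c_X(E)$ has codimension $\geq 2$ (for instance $E$ is the exceptional divisor of the blow-up of a closed point $p$), then \emph{every} resolution on which $E$ lives as a prime divisor must modify $X$ over $p$, so there is no such $\sigma$. The local-isomorphism trick simply does not apply to the case that actually needs it.

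The fix is that the inclusion you distrusted is in fact unconditional: for any projective birational $\sigma$ from a normal variety, $\B_-(\sigma^*L)\subset\sigma^{-1}(\B_-(L))$, with no exceptional-locus caveat (this is what the paper cites as ``the proof of Lehmann, Proposition 2.5''; intuitively, the asymptotic order of vanishing along a divisorial valuation is invariant under pullback, and exceptional divisors pick up positive asymptotic multiplicity for $\sigma^*L$ only when their image already lies in $\B_-(L)$). Once you have this, the argument becomes one line: since $\sigma^{-1}(\B_-(L))\supset\B_-(\sigma^*L)$ and $E\subset\sigma^{-1}(\B_-(L))$ would force $c_X(E)=\sigma(E)\subset\B_-(L)$, the hypothesis gives $E\not\subset\B_-(\sigma^*L)$, and a generic $Y_n\in E$ avoids $\B_-(\sigma^*L)$. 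No special choice of $\sigma$ is needed. With that single replacement, your proof is correct and identical to the paper's.
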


\begin{proof}
Take a resolution $\sigma\colon\tilde{X}\to X$ of singularities with 
$E\subset\tilde{X}$. Note that 
$\B_-\left(\sigma^*L\right)\subset\sigma^{-1}\left(\B_-(L)\right)$
holds by the proof of \cite[Proposition 2.5]{lehmann}. Thus we have 
$E\not\subset\B_-\left(\sigma^*L\right)$. We can take an admissible flag 
${\tilde{Y}}_\bullet$ on $\tilde{X}$ with ${\tilde{Y}}_1=E$ and 
${\tilde{Y}}_n\not\in\B_-\left(\sigma^*L\right)$. Thus we get the assertion 
by Corollary \ref{barycenter_corollary}. 
\end{proof}

\subsection{Refinements}\label{refinement_subsection}

\begin{definition}[{cf.\ \cite[Example 2.15]{AZ}}]\label{refinement_definition}
Assume that $X$ is normal. Let $Y\subset X$ be a prime \emph{$\Q$-Cartier} 
divisor. Let $V_{m\vec{\bullet}}$ be an $(m\Z_{\geq 0})^r$-graded linear series 
on $X$ associated to $L_1,\dots,L_r\in\CaCl(X)\otimes_\Z\Q$. 
We assume that $m Y$ is Cartier. Let us define the $(m\Z_{\geq 0})^{r+1}$-graded 
linear series $V_{m\vec{\bullet}}^{(Y)}$ on $Y$ associated to 
$L_1|_Y,\dots,L_r|_Y, -Y|_Y$ as follows. 
(We note that $m L_1|_Y,\dots,m L_r|_Y, -m Y|_Y\in\CaCl(Y)$.)
For any $m(\vec{a},j)\in(m\Z_{\geq 0})^{r+1}$, we set: 
\[
V^{(Y)}_{m(\vec{a},j)}:=\Image\left(V_{m\vec{a}}\cap\left(m j Y+
H^0\left(X,m\vec{a}\cdot\vec{L}-m j Y\right)\right)\xrightarrow{\rest}
H^0\left(Y, m\vec{a}\cdot\vec{L}|_Y-m j Y|_Y\right)\right). 
\]
We call the Veronese equivalence class $V_{\vec{\bullet}}^{(Y)}$ of 
$V_{m\vec{\bullet}}^{(Y)}$ the \emph{refinement of $V_{\vec{\bullet}}$ by $Y$}. 
By Lemma \ref{refinement_lemma}, if 
$V_{\vec{\bullet}}$ has bounded support (resp., contains an ample series), then 
so is $V_{\vec{\bullet}}^{(Y)}$. 

When $V_{\vec{\bullet}}$ contains an ample 
series, by Lemmas \ref{veronese_lemma} and \ref{refinement_lemma}, 
for any admissible flag $Y_\bullet$ on $X$ with $Y_1=Y$, we have 
\[
\Sigma_{Y'_\bullet}\left(V_{\vec{\bullet}}^{(Y)}\right)
=\Sigma_{Y_\bullet}\left(V_{\vec{\bullet}}\right)
\]
under the natural identification $\R^{(r+1)+(n-1)}=\R^{r+n}$, where 
\[
Y'_{\bullet}\quad\colon\quad Y=Y_1\supsetneq Y_2\supsetneq\cdots\supsetneq
Y_n
\]
is the natural admissible flag on $Y$ induced by $Y_\bullet$. In particular, we have 
$\Delta_{Y'_\bullet}\left(V_{\vec{\bullet}}^{(Y)}\right)=
\Delta_{Y_\bullet}(V_{\vec{\bullet}})$ 
and $\vol\left(V^{(Y)}_{\vec{\bullet}}\right)=
\vol\left(V_{\vec{\bullet}}\right)$. 
\end{definition}

\begin{lemma}[{cf.\ \cite[Example 2.15 and Lemma 2.24]{AZ}}]\label{refinement_lemma}
Let $W_{\vec{\bullet}}$ be a $\Z_{\geq 0}^r$-graded linear series on an 
$n$-dimensional normal projective variety $X$ associated to Cartier divisors 
$L_1,\dots,L_r$. Let $Y\subset X$ be a prime divisor such that $e Y$ is Cartier 
for some $e\in\Z_{>0}$. 
Let $Y_{\bullet}$ be an admissible 
flag of $X$ with $Y=Y_1$. As in Definition \ref{refinement_definition}, we can naturally 
define the admissible flag $Y'_\bullet$ on $Y$ given by $Y_\bullet$. 
Let $W_{\vec{\bullet}}^{(Y,e)}$ be the $\Z_{\geq 0}^{r+1}$-graded linear series on $Y$ 
associated to $L_1|_Y,\dots,L_r|_Y$, $-eY|_Y$ defined by 
\begin{eqnarray*}
W_{\vec{a},j}^{(Y,e)}:=\Image\left(W_{\vec{a}}\cap\left(jeY
+H^0\left(X,\vec{a}\cdot\vec{L}-jeY\right)\right)\xrightarrow{\rest}
H^0\left(Y, \vec{a}\cdot\vec{L}|_Y-jeY|_Y\right)\right). 
\end{eqnarray*}
\begin{enumerate}
\renewcommand{\theenumi}{\arabic{enumi}}
\renewcommand{\labelenumi}{(\theenumi)}
\item\label{refinement_lemma1}
If $W_{\vec{\bullet}}$ has bounded support $($resp., contains an ample series$)$, 
then so is $W_{\vec{\bullet}}^{(Y,e)}$. 
\item\label{refinement_lemma2}
Assume that $W_{\vec{\bullet}}$ contains an ample series. Then we have 
\[
h\left(\Sigma_{Y'_\bullet}\left(W_{\vec{\bullet}}^{(Y,e)}\right)\right)
=\Sigma_{Y_\bullet}\left(W_{\vec{\bullet}}\right),
\]
where $h$ is defined by 
\begin{eqnarray*}
h\colon \R^{(r+1)+(n-1)}&\to&\R^{r+n}\\
(x_1,\dots,x_{r+n})&\mapsto&(x_1,\dots,x_r,e x_{r+1},x_{r+2},\dots,x_{r+n}).
\end{eqnarray*}
In particular, we have the equality 
\[
\vol\left(W_{\vec{\bullet}}^{(Y,e)}\right)=\frac{1}{e}\vol\left(W_{\vec{\bullet}}\right).
\]
\end{enumerate}
\end{lemma}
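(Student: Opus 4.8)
The plan is to reduce everything to the "Veronese" Lemma \ref{veronese_lemma}, exactly as \cite[Lemma 2.24]{AZ} does, and to the unweighted version of the refinement (the case $e=1$), which is essentially \cite[Example 2.15]{AZ}. First I would handle the case $e=1$. Here the $(r+1)$-st grading variable $j$ counts the order of vanishing along $Y$, and the map $W_{\vec a}\cap(jY+H^0(X,\vec a\cdot\vec L-jY))\xrightarrow{\rest}H^0(Y,\dots)$ realizes, for fixed $\vec a$, the graded pieces of the "refinement by the divisor $Y$" in the sense of \cite[Example 2.15]{AZ}. For the statement on bounded support and containing an ample series in part \eqref{refinement_lemma1}, I would argue: bounded support of $W_{\vec\bullet}$ gives a uniform bound $j\le Ca_1$ on the nonzero pieces (the pullback $\sigma^*(\vec a\cdot\vec L)-jE_Y$ must stay pseudoeffective, where $E_Y$ is the strict transform / valuation divisor of $Y$), so $W^{(Y,1)}_{\vec\bullet}$ has bounded support; and if $W_{\vec\bullet}$ contains an ample series with witness $\vec a_0\cdot\vec L\sim A+E$, $pE+H^0(X,pA)\subset W_{p\vec a_0}$, then restricting to $Y$ and taking $j$ in an interior direction (using that $A|_Y$ is ample on $Y$, after possibly enlarging $E$ by a multiple of $Y$) produces a witness on $Y$; here I would invoke \cite[Lemma 4.18]{LM} to pass between "generates the lattice" and "contains an ample series". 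The identity of cones $\Sigma_{Y'_\bullet}(W^{(Y,1)}_{\vec\bullet})=\Sigma_{Y_\bullet}(W_{\vec\bullet})$ in \eqref{refinement_lemma2} is the content already recorded inside Definition \ref{refinement_definition}: a section $s\in W_{\vec a}$ with $\nu_{Y_\bullet}(s)=(j,\nu_2,\dots,\nu_n)$ has $\ord_Y(s)=j$, hence lies in the $j$-th level of the filtration by $Y$ and its restriction to $Y$ has $\nu_{Y'_\bullet}(\bar s)=(\nu_2,\dots,\nu_n)$; conversely any element of $\Gamma_{Y'_\bullet}(W^{(Y,1)}_{\vec\bullet})$ lifts. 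Matching the semigroups on a Zariski-dense set of lattice points (using \cite[Lemme 1.13]{boucksom} and \cite[Lemma 4.20]{LM} that the relevant semigroups generate the full lattice, as in the proof of Lemma \ref{veronese_lemma}) gives equality of their closures, i.e.\ of the cones. Since in the case $e=1$ the map $h$ is the identity, the volume equality is immediate from $\vol=(r+n)!\cdot\vol(\Delta_{Y_\bullet})$.

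Next I would deduce the general $e$ from the case $e=1$ by a Veronese rescaling in the last coordinate. Observe directly from the definitions that
\[
W^{(Y,e)}_{a_1,\dots,a_r,j}=W^{(Y,1)}_{a_1,\dots,a_r,ej},
\]
i.e.\ $W^{(Y,e)}_{\vec\bullet}=\bigl(W^{(Y,1)}_{\vec\bullet}\bigr)^{(\vec k)}$ with $\vec k=(1,\dots,1,e)\in\Z_{>0}^{r+1}$ (the grading divisors match: $-eY|_Y=e\cdot(-Y|_Y)$). Then part \eqref{refinement_lemma1} for $W^{(Y,e)}_{\vec\bullet}$ follows from the $e=1$ case together with Lemma \ref{veronese_lemma}\eqref{veronese_lemma1}, which says a Veronese rescaling preserves bounded support and containing an ample series. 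For \eqref{refinement_lemma2}, Lemma \ref{veronese_lemma}\eqref{veronese_lemma2} applied with this $\vec k$ gives $\Sigma_{Y'_\bullet}(W^{(Y,1)}_{\vec\bullet})=g\bigl(\Sigma_{Y'_\bullet}(W^{(Y,e)}_{\vec\bullet})\bigr)$, where $g$ multiplies the $(r+1)$-st coordinate by $e$ and fixes the rest — this $g$ is precisely the map $h$ of the present lemma (after the identification $\R^{(r+1)+(n-1)}=\R^{r+n}$, the "$(r+1)$-st" coordinate is the $x_{r+1}$ appearing in $h$). Combining with the $e=1$ identity $\Sigma_{Y'_\bullet}(W^{(Y,1)}_{\vec\bullet})=\Sigma_{Y_\bullet}(W_{\vec\bullet})$ yields $h\bigl(\Sigma_{Y'_\bullet}(W^{(Y,e)}_{\vec\bullet})\bigr)=\Sigma_{Y_\bullet}(W_{\vec\bullet})$, as claimed. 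The volume identity $\vol(W^{(Y,e)}_{\vec\bullet})=\tfrac1e\vol(W_{\vec\bullet})$ then drops out of the last displayed formula in Lemma \ref{veronese_lemma}\eqref{veronese_lemma2}: with $r$ replaced by $r+1$, $n$ by $n-1$, and $k_{r+1}=e$ (all other $k_i=1$), the factor $k_1^{r-1+n}/(k_2\cdots k_{r+1})$ equals $1/e$.

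The step I expect to be the main obstacle is the careful bookkeeping in the $e=1$ case when $X$ is singular and $Y$ is only $\Q$-Cartier: one must check that "contains an ample series" genuinely descends to $Y$, which requires producing an honest ample-plus-effective decomposition on $Y$ out of the one on $X$ while staying in the interior of $\Supp(W^{(Y,1)}_{\vec\bullet})$ and respecting the grading by $j$. Concretely, starting from $\vec a_0\cdot\vec L\sim A+E$ on $X$, one restricts to $Y$ and must move a suitable positive multiple of $Y$ into the effective part so that the chosen interior multi-degree $(\vec a_0,j_0)$ can be realized with $A|_Y$ still ample on $Y$; the $\Q$-Cartier hypothesis on $Y$ (with $eY$ Cartier) is exactly what makes $-eY|_Y$ a legitimate Cartier grading divisor and lets this go through. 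All the cone-equality arguments are then "generic lattice point plus \cite[Lemme 1.13]{boucksom}" as in Lemma \ref{veronese_lemma}, and the volume formulas are formal consequences of $\vol=(r+n)!\vol(\Delta_{Y_\bullet})$.
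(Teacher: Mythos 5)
The Veronese reduction to the case $e=1$ does not survive scrutiny, and this is a genuine gap rather than a bookkeeping issue. For your rescaling identity
\[
W^{(Y,e)}_{a_1,\dots,a_r,j}=W^{(Y,1)}_{a_1,\dots,a_r,ej}
\]
to be an instance of Lemma \ref{veronese_lemma}, the right-hand side $W^{(Y,1)}_{\vec\bullet}$ must itself be a $\Z_{\geq 0}^{r+1}$-graded linear series on $Y$ associated to \emph{Cartier} divisors $L_1|_Y,\dots,L_r|_Y,-Y|_Y$; Lemma \ref{veronese_lemma} is stated only for such series. But $-Y|_Y$ is Cartier on $Y$ precisely when $Y$ is Cartier on $X$. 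When $Y$ is merely $\Q$-Cartier with $eY$ Cartier, the graded piece $W^{(Y,1)}_{\vec a,j}$ with $j\notin e\Z$ would require a restriction map
\[
H^0\bigl(X,\vec a\cdot\vec L-jY\bigr)\longrightarrow H^0\bigl(Y,\vec a\cdot\vec L|_Y-jY|_Y\bigr),
\]
and neither the target nor the restriction is well defined, since $jY$ is then only Weil: $\sO_X(-jY)|_Y$ need not coincide with $\sO_Y(-jY|_Y)$, and $jY|_Y$ is not a bona fide divisor on $Y$. So for $Y$ non-Cartier the object you want to reduce to simply does not exist, and the sole Cartier representative of its would-be Veronese class is $W^{(Y,e)}_{\vec\bullet}$ itself --- which makes the reduction circular. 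You flag this at the end as ``careful bookkeeping when $Y$ is only $\Q$-Cartier,'' but the problem is upstream of bookkeeping: the $e=1$ case is not a case at all in that setting.

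The paper therefore proves the lemma directly for general $e$, and the place where $e$ matters cannot be scaled away. In the ample-series verification, the Cartier divisor $eY$ is used to arrange that $A_i(-jY)$ (resp.\ $A-leY$) be globally generated (resp.\ very ample) for $j=0,\dots,2e$ (resp.\ $l=0,1,2$), so that the restriction maps to $Y$ are surjective; and the semigroup elements that witness interiority are $(km_0\vec x_i,\lceil kc_i/e\rceil)$ and $(km_0\vec x_i,\lceil kc_i/e\rceil+1)$ with $c_i=\ord_Y E_i$, where the ceiling by $e$ is unavoidable. Likewise the cone identity in part \eqref{refinement_lemma2} is carried out by explicitly tracking $\nu_1=je$ and $\nu_1/e$ on sections for sufficiently divisible $m\in e\Z_{>0}$. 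Your sketch of the $e=1$ case (bounded support via intersection with an ample $H$, ample series via restricting $A+E$ to $Y$ and invoking \cite[Lemma 4.18]{LM}, cones via \cite[Lemme 1.13]{boucksom}) is in the right spirit and matches the paper's outline when $Y$ is Cartier, so the repair is to run exactly those steps while carrying $e$ along --- that is, to do what the paper does --- rather than to try to factor it out through Lemma \ref{veronese_lemma}.
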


\begin{proof}
\eqref{refinement_lemma1}
Assume that $W_{\vec{\bullet}}$ has bounded support. 
There exists a positive integer $M>0$ such that $W_{\vec{a}}=0$ for any 
$\vec{a}=(a_1,\dots,a_r)\in\Z_{\geq 0}^r$ with $a_i\geq M a_1$ 
for some $2\leq i\leq r$. Take an ample Cartier divisor $H$ on $X$. Let $N>0$ be 
a sufficiently big positive integer satisfying 
\[
\left(L_1+x_2L_2+\cdots +x_r L_r-N e Y\right)\cdot H^{\cdot n-1}<0
\]
for any $x_2,\dots,x_r\in[0,M]$. Then, we can immediately show that 
$W_{\vec{a},j}^{(Y,e)}=0$ for any $(\vec{a},j)\in\Z_{\geq 0}^{r+1}$ with 
$a_i\geq M a_1$ for some $2\leq i\leq r$ or $j\geq N a_1$. Thus 
$W_{\vec{\bullet}}^{(Y,e)}$ also has bounded support. 

Assume that $W_{\vec{\bullet}}$ contains an ample series. 
Take $\vec{x}_1,\dots,\vec{x}_r\in\interior
\left(\Supp\left(W_{\vec{\bullet}}\right)\right)\cap\Z_{\geq 0}^{r}$ such that 
$\vec{x}_1,\dots,\vec{x}_r$ form a basis of $\Z^r$. 
Since $W_{\vec{\bullet}}$ contains an ample series, we have 
$m\vec{x}_i\in\sS(W_{\vec{\bullet}})$ for any $1\leq i\leq r$ and for any $m\gg 0$. 
By \cite[Lemma 4.18]{LM}, there exists 
$m_0\in\Z_{>0}$ and there exists a decomposition 
\[
m_0\vec{x}_i\cdot\vec{L}=A_i+E_i
\]
for any $1\leq i\leq r$ with $A_i$ ample and $E_i$ effective such that 
$kE_i+H^0(kA_i)\subset W_{km_0\vec{x}_i}$ for any $k\in\Z_{>0}$. 
After replacing $m_0$ sufficiently divisible, we may further assume that 
$A_i(-jY)$ is globally generated for any $1\leq i\leq r$ and for any $j=0,1,\dots, 2e$. 
Set $c_i:=\ord_YE_i$. For any $k\in\Z_{>0}$ and for any $1\leq i\leq r$, we have 
\[
\left(km_0\vec{x}_i,\lceil kc_i/e\rceil\right),\,\,\,\left(km_0\vec{x}_i,
\lceil kc_i/e\rceil+1\right)\in \sS\left(W_{\vec{\bullet}}^{(Y,e)}\right).
\]
Moreover, for any $m\gg 0$, we have $(m\vec{x}_i,c)\in
\sS\left(W_{\vec{\bullet}}^{(Y,e)}\right)$ for some $c\in\Z_{\geq 0}$. 
Therefore $\sS\left(W_{\vec{\bullet}}^{(Y,e)}\right)$ generates $\Z^{r+1}$ as an 
abelian group. 

Let us consider the condition (iii) in \cite[Definition 4.17]{LM}. 
Take any element 
$\vec{x}\in\interior\left(\Supp\left(W_{\vec{\bullet}}\right)\right)\cap
\left(\{1\}\times\Q^{r-1}_{\geq 0}\right)$. 
By \cite[lemma 4.18]{LM}, there is a sufficiently divisible $m\in \Z_{>0}$ and a 
decomposition $m\vec{x}\cdot\vec{L}=A+E$ with $A$ 
ample and $E$ effective such that $kE+H^0(X, kA)\subset W_{km\vec{x}}$ holds 
for any $k\in\Z_{>0}$. 
Moreover, we may further assume that, for any $l\in\{0,1,2\}$, 
$A-leY$ is very ample and the restriction homomorphism 
\[
H^0(X, k(A-leY))\to H^0(Y, k(A-leY)|_Y)
\]
is surjective for any $k\in\Z_{>0}$. Let us set $c:=\ord_YE$. 
For any $k\in e\Z_{\gg 0}$, the restriction homomorphism  
\[
kE+kleY+H^0\left(X, k(A-leY)\right)\to
k(E-cY)|_Y+H^0\left(Y, k(A-leY)|_Y\right)
\]
is surjective for any $l\in\{0,1,2\}$. Thus we have
\[
k(E-cY)|_Y+H^0\left(Y, k(A-leY)|_Y\right)\subset W^{(Y,e)}_{km\vec{x},k(c+le)}.
\]
In particular, we have 
\[
\left(\vec{x}, \frac{1}{m}(c+ye)\right)\in\Supp\left(W_{\vec{\bullet}}^{(Y,e)}\right)
\]
for any $y\in[0,2]$. 
If we take $\vec{x}$ generally, then we have 
\[
\left(\vec{x}, \frac{1}{m}(c+e)\right)\in\interior\left(
\Supp\left(W_{\vec{\bullet}}^{(Y,e)}\right)\right)\cap\Q^{r+1}_{\geq 0}.
\]
The decomposition 
\[
m\left(\vec{x}\cdot\vec{L}-\frac{1}{m}(c+e)Y|_Y\right)\sim_\Q
(A-eY)|_Y+(E-cY)|_Y
\]
with $(A-eY)|_Y$ ample and $(E-cY)|_Y$ effective satisfies that, for any sufficiently 
divisible $k\in\Z_{>0}$, we have the condition (iii) in \cite[Definition 4.17]{LM}. 
Therefore $W^{(Y,e)}_{\vec{\bullet}}$ contains an ample series. 

\eqref{refinement_lemma2}
Take any element 
\[
(a_1,\dots,a_r,\nu_1,\nu_2,\dots,\nu_n)\in\Gamma_{Y'_\bullet}\left(
W_{\vec{\bullet}}^{(Y,e)}\right).
\]
There is a nonzero element 
$s_1\in W^{(Y,e)}_{a_1,\dots,a_r,\nu_1}$ such that $\nu_{Y'_\bullet}(s_1)=
(\nu_2,\dots,\nu_n)$. From the definition of $W^{(Y,e)}_{\vec{\bullet}}$, 
there is a nonzero element $s\in W_{a_1,\dots,a_r}$ such that 
$\nu_1(s)=\nu_1\cdot e$ and the image of $s$ with respects to the 
restriction homomorphism 
\[
W_{a_1,\dots,a_r}\cap\left(\nu_1\cdot e Y+
H^0\left(X,\vec{a}\cdot\vec{L}-\nu_1\cdot eY\right)\right)
\to H^0\left(Y, \vec{a}\cdot\vec{L}|_Y-\nu_1\cdot eY|_Y\right)
\]
is equal to $s_1$. Since $\nu_{Y_\bullet}(s)=(\nu_1\cdot e,\nu_2,\dots,\nu_n)$, 
we have 
\[
(a_1,\dots,a_r,\nu_1\cdot e,\nu_2,\dots,\nu_n)\in\Gamma_{Y_\bullet}
\left(W_{\vec{\bullet}}\right). 
\]
This gives the inclusion 
$h\left(\Sigma_{Y'_\bullet}\left(W_{\vec{\bullet}}^{(Y,e)}\right)\right)
\subset\Sigma_{Y_\bullet}\left(W_{\vec{\bullet}}\right)$. 
For the converse inclusion, since both are closed convex cones, it is enough to 
prove that there is some $m\in\Z_{>0}$ such that 
$m(a_1,\dots,a_r,\nu_1,\dots,\nu_n)\in 
h\left(\Sigma_{Y'_\bullet}\left(W_{\vec{\bullet}}^{(Y,e)}\right)\right)$
holds for any element
\[
(a_1,\dots,a_r,\nu_1,\dots,\nu_n)\in\interior\left(\Sigma_{Y_\bullet}
\left(W_{\vec{\bullet}}\right)\right)\cap\Z^{n+r}. 
\]
For any sufficiently divisible $m\in e\Z_{>0}$, we have 
\[
m(a_1,\dots,a_r,\nu_1,\dots,\nu_n)\in \Gamma_{Y_\bullet}\left(W_{\vec{\bullet}}\right).
\]
Thus there is a nonzero element 
$s\in W_{ma_1,\dots,ma_r}$ such that $\nu_{Y_\bullet}(s)=(m\nu_1,m\nu_2,\dots,
m\nu_n)$. The section $s$ vanishes along $Y$ exactly $m\nu_1$ times. Thus 
the image $s_1$ of $s$ with respects to the restriction homomorphism 
\[
W_{ma_1,\dots,ma_r}\cap\left(\frac{m\nu_1}{e}\cdot eY
+H^0\left(X, m\vec{a}\cdot\vec{L}-\frac{m\nu_1}{e}\cdot eY\right)\right)
\to H^0\left(Y, m\vec{a}\cdot\vec{L}|_Y-\frac{m\nu_1}{e}\cdot eY|_Y\right)
\]
gives a nonzero element in $W^{(Y,e)}_{ma_1,\dots,ma_r,\frac{m\nu_1}{e}}$. 
From the definition of $\nu_{Y_\bullet}(s)$, we have 
$\nu_{Y'_\bullet}(s_1)=(m\nu_2,\dots,m\nu_n)$. 
This means that the element 
$\left(ma_1,\dots,ma_r,\frac{m\nu_1}{e},m\nu_2,\dots,m\nu_n\right)$
belongs to $\Gamma_{Y'_\bullet}\left(W_{\vec{\bullet}}^{(Y,e)}\right)$. 
Thus we get the assertion. 
\end{proof}

\begin{remark}\label{refinement_remark}
Let $\sigma\colon\tilde{X}\to X$ be a birational morphism between normal projective 
varieties, let $Y\subset X$ be a prime $\Q$-Cartier divisor on $X$ such that 
$\tilde{Y}:=\sigma^{-1}_*Y$ is also $\Q$-Cartier. Let us set 
$\sigma^*Y=:\tilde{Y}+\Sigma$. Take any $(m\Z_{\geq 0})^r$-graded linear series 
$V_{m\vec{\bullet}}$ on $X$ associated to $L_1,\dots,L_r\in\CaCl(X)\otimes_\Z\Q$. 
Assume moreover that both $m Y$ and $m\tilde{Y}$ are Cartier. Let us compare 
$(\sigma|_{\tilde{Y}})^*\left(V_{m\vec{\bullet}}^{(Y)}\right)$ and 
$\left(\sigma^*V_{m\vec{\bullet}}\right)^{(\tilde{Y})}$. 

Take any $(\vec{a},j)\in(m\Z_{\geq 0})^{r+1}$. We note that the inclusion 
\[
H^0\left(\tilde{X},\sigma^*\left(\vec{a}\cdot\vec{L}-j Y\right)\right)
\xrightarrow{\cdot j\Sigma}
H^0\left(\tilde{X},\sigma^*\left(\vec{a}\cdot\vec{L}\right)-j\tilde{Y}\right)
\]
is an isomorphism. Moreover, we have the following commutative diagram: 
\[
\xymatrix{
V_{\vec{a}}\cap\left(jY+H^0\left(X,\vec{a}\cdot\vec{L}-j Y\right)\right)
 \ar[r]^-{\rest_Y}
\ar[d]_{\sigma^*\simeq}
& H^0\left(Y, \vec{a}\cdot\vec{L}|_Y-j Y|_Y\right) 
\ar@{_{(}-{>}}[d]^{\sigma|_{\tilde{Y}}^*} \\
\sigma^*V_{\vec{a}}\cap\left(j\sigma^*Y+
H^0\left(\tilde{X},\sigma^*\left(\vec{a}\cdot\vec{L}-j Y\right)\right)\right)
 \ar[r]^-{\rest_{\tilde{Y}}} \ar[d]_{\simeq}
& H^0\left(\tilde{Y}, \sigma^*\left(\vec{a}\cdot\vec{L}|_Y-j Y|_Y\right)\right) 
\ar@{_{(}-{>}}[d]^{\cdot j\Sigma|_{\tilde{Y}}} \\
\sigma^*V_{\vec{a}}\cap\left(j\tilde{Y}+H^0\left(
\tilde{X}, \sigma^*\left(\vec{a}\cdot\vec{L}\right)-j\tilde{Y}\right)\right) 
\ar[r]^-{\rest_{\tilde{Y}}}& 
H^0\left(\tilde{Y}, \sigma^*\left(\vec{a}\cdot\vec{L}|_Y\right)
-j\tilde{Y}|_{\tilde{Y}}\right).
}
\]
This implies that 
\[
\left(\sigma^*V_{\vec{a},j}\right)^{(\tilde{Y})}=(\sigma|_{\tilde{Y}})^*V^{(Y)}_{\vec{a},j}
+j\left(\Sigma|_{\tilde{Y}}\right)
\]
for any $(\vec{a},j)\in(m\Z_{\geq 0})^{r+1}$. 
\end{remark}

\begin{remark}\label{plt_remark}
In this paper, we essentially consider only the linear equivalence classes of 
\emph{Cartier} divisors by taking Veronese sub-series. 
However, although we do not treat in this paper, on normal projective varieties $X$, 
it is important to consider the linear equivalence classes of $\Q$-Cartier 
$\Q$-divisors in order to consider the theory of graded linear series. 
In fact, for considering the proof of Theorem \ref{AZ_thm} by the authors in \cite{AZ}, 
it is essential to consider the refinements of 
$\Z_{\geq 0}^r$-graded linear series on $X$ associated to Cartier divisors by 
possibly non-Cartier prime $\Q$-Cartier divisors $Y$ on $X$ such that 
the linear equivalence classes $-Y|_Y$ of $\Q$-Cartier divisors are well-behaved 
(cf.\ Definition \ref{plt_definition}). 
See \cite{AZ} for detail. See also Theorem \ref{AZ-R_thm}. 
\end{remark}

\begin{definition}[{\cite[Definition 1.1]{pltK} and \cite[\S 2.3]{AZ}}]\label{plt_definition}
Let $(X,\Delta)$ be a (possibly non-projective) 
klt pair with $\Delta$ effective $\Q$-Weil divisor. 
A prime divisor $Y$ over $X$ is said to be \emph{plt-type} over $(X, \Delta)$ 
if there is a projective birational morphism $\sigma\colon\tilde{X}\to X$ 
between normal varieties with $Y\subset\tilde{X}$ prime divisor such that 
$-Y$ is a $\sigma$-ample $\Q$-Cartier divisor on $\tilde{X}$ and the pair 
$(\tilde{X}, \tilde{\Delta}+Y)$ is a plt pair, where the $\Q$-Weil divisor $\tilde{\Delta}$ 
on $\tilde{X}$ is defined to be the equation 
\[
K_{\tilde{X}}+\tilde{\Delta}+\left(1-A_{X,\Delta}(Y)\right)Y
=\sigma^*\left(K_X+\Delta\right).
\]
The morphism $\sigma$ is uniquely determined by $Y$. We call the morphism 
the \emph{plt-blowup associated to $Y$}. We can naturally take the klt pair 
$(Y, \Delta_Y)$ defined by 
\[
K_Y+\Delta_Y:=\left(K_{\tilde{X}}+\tilde{\Delta}+Y\right)\Big|_Y.
\]
We note that, although we do not treat it in this paper, 
we can canonically define the linear equivalence class of a 
$\Q$-Cartier $\Q$-divisor $-Y|_Y$ by 
\cite[Definitions A.2 and A.4]{HLS} (see also \cite[Lemma 2.7]{AZ}). 
\end{definition}

The following theorem is very important in this paper. For the proof, 
see \cite[Theorem 3.3]{AZ}, or see \S \ref{adj_subsection} for an alternative proof. 
Note that we can easily reduce to the case $L_1,\dots,L_r\in\CaCl(X)$ 
by Lemmas \ref{filter_lemma} and \ref{refinement_lemma}. 
We remark that \cite[Theorem 3.3]{AZ} treats 
much more general situations.

\begin{thm}[{\cite[Theorem 3.3]{AZ}, see also Theorem \ref{AZ-R_thm}}]\label{AZ_thm}
Let $(X,\Delta)$ be a projective klt pair with $\Delta$ effective $\Q$-Weil divisor, 
let $\eta\in X$ be a scheme-theoretic point, let $Y$ be a plt-type prime divisor 
over $(X,\Delta)$ with the associated plt-blowup $\sigma\colon\tilde{X}\to X$ 
satisfying $\eta\in c_X(Y)$, 
and let $V_{\vec{\bullet}}$ be the Veronese equivalence class of a graded linear series 
on $X$ associated to $L_1,\dots,L_r\in\CaCl(X)\otimes_\Z\Q$ which has bounded 
support and contains an ample series. Let 
$W_{\vec{\bullet}}$ be the refinement of $\sigma^*V_{\vec{\bullet}}$ by 
$Y\subset\tilde{X}$. Let $\Delta_Y$ on $Y$ be as in Definition \ref{plt_definition}. 
Then we have the inequality
\[
\delta_\eta\left(X,\Delta;V_{\vec{\bullet}}\right)
\geq \min\left\{\frac{A_{X,\Delta}(Y)}{S(V_{\vec{\bullet}};Y)},\quad
\inf_{\eta'}\delta_{\eta'}\left(Y, \Delta_Y;W_{\vec{\bullet}}\right)
\right\}, 
\]
where the infimum runs over all scheme-theoretic points $\eta'\in Y\subset\tilde{X}$
with $\sigma(\eta')=\eta$. 
\end{thm}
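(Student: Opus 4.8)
The plan is to reduce the estimate to a statement purely about graded linear series and their refinements, and then apply the Abban--Zhuang machinery (\cite[Corollary 2.22]{AZ}) together with the comparison of Okounkov bodies for refinements established in Lemma \ref{refinement_lemma} and Definition \ref{refinement_definition}. First I would fix a prime divisor $E$ over $X$ with $\eta\in c_X(E)$ and try to bound $A_{X,\Delta}(E)/S(V_{\vec\bullet};E)$ from below by the right-hand side. By Example \ref{filter_example} \eqref{filter_example3} and the hypothesis that $\sigma$ is the plt-blowup associated to $Y$, I may pass to $\tilde X$ and replace $V_{\vec\bullet}$ by $\sigma^*V_{\vec\bullet}$ and $E$ by its strict transform; the log discrepancy $A_{X,\Delta}(E)$ equals $A_{\tilde X,\tilde\Delta+(1-A_{X,\Delta}(Y))Y}(E)$, so writing $A_{\tilde X,\tilde\Delta}(E)$ using the plt pair $(\tilde X,\tilde\Delta+Y)$ and separating the contribution of $\ord_E(Y)$ will be the bookkeeping that feeds the adjunction.

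Next I would dichotomize on whether $E=Y$ or $E\neq Y$. If $E=Y$, then $A_{X,\Delta}(Y)/S(V_{\vec\bullet};Y)$ is exactly the first term in the minimum, so there is nothing to prove. If $E\neq Y$, the key is to compare $S(\sigma^*V_{\vec\bullet};E)$ with a ``two-step'' expected vanishing order: first filter $\sigma^*V_{\vec\bullet}$ by $\sF_Y$ to produce the refinement $W_{\vec\bullet}$ of $\sigma^*V_{\vec\bullet}$ by $Y$ (which by Lemma \ref{refinement_lemma} has bounded support and contains an ample series and has the same Okounkov body), and then filter $W_{\vec\bullet}$ by $E|_Y$ (or more precisely by the divisor over $Y$ induced by $E$, using that $\eta'\in c_Y(E|_Y)$ for the relevant $\eta'$). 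Concretely, this is where \cite[Corollary 2.22]{AZ} enters: it gives an inequality of the shape
\[
\frac{A_{\tilde X,\tilde\Delta}(E)}{S(\sigma^*V_{\vec\bullet};E)}
\;\ge\;\min\left\{\frac{A_{\tilde X,\tilde\Delta}(Y)}{S(V_{\vec\bullet};Y)},\;
\inf_{\eta'}\frac{A_{Y,\Delta_Y}(E_{\eta'})}{S(W_{\vec\bullet};E_{\eta'})}\right\},
\]
with $\Delta_Y$ coming from adjunction as in Definition \ref{plt_definition}; the second term is $\ge\inf_{\eta'}\delta_{\eta'}(Y,\Delta_Y;W_{\vec\bullet})$ by definition of the local $\delta$-invariant. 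Taking the infimum over all $E$ then yields exactly the asserted lower bound for $\delta_\eta(X,\Delta;V_{\vec\bullet})$.

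The main obstacle I expect is the careful identification of the filtration/adjunction data: one must check that the refinement $W_{\vec\bullet}$ of $\sigma^*V_{\vec\bullet}$ by $Y$ is precisely the graded linear series on $Y$ whose $S$-invariants control the second term, that the divisor $-Y|_Y$ on $Y$ appearing as the last of the $L_i|_Y$ is the $\Q$-Cartier class from Definition \ref{plt_definition}, and that the log discrepancy $A_{Y,\Delta_Y}(E_{\eta'})$ really matches $A_{\tilde X,\tilde\Delta+Y}(E)$ minus the $Y$-weight via adjunction (the plt hypothesis guarantees klt-ness of $(Y,\Delta_Y)$, so $\delta_{\eta'}(Y,\Delta_Y;W_{\vec\bullet})$ makes sense). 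A secondary technical point is the reduction to $L_1,\dots,L_r\in\CaCl(X)$, which is handled by Lemma \ref{filter_lemma} and Lemma \ref{refinement_lemma} (Veronese rescaling does not change the ratios $A/S$), and the harmless passage between $Y$ being only $\Q$-Cartier and a Cartier multiple $eY$, which only rescales the last coordinate and leaves all $\delta$- and $S$-ratios invariant. Once these identifications are in place, the inequality is a direct consequence of \cite[Corollary 2.22]{AZ}; the alternative self-contained route is deferred to \S\ref{adj_subsection}.
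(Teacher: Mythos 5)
Your proposal captures the right high-level mechanism — pass to the plt blowup, peel off the $Y$-weight from the discrepancy, filter the graded linear series by $Y$ to produce the refinement $W_{\vec{\bullet}}$, and use adjunction on $Y$ — but as written it is not a proof, and there are two concrete problems.

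First, you attribute the core inequality to \cite[Corollary 2.22]{AZ}, but that corollary is the computational result that interprets $S$-invariants via Okounkov bodies; it is not the adjunction-type statement. The adjunction statement is \cite[Theorem 3.3]{AZ}, which is precisely the theorem being proved here, so ``Corollary 2.22 gives an inequality of the shape \dots'' is either a misattribution or, if you meant Theorem 3.3, a circular citation. The paper itself acknowledges this by citing \cite[Theorem 3.3]{AZ} as the source and offering an independent proof in \S \ref{adj_subsection} only (Theorem \ref{AZ-R_thm}).

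Second, and more substantively, the divisor-by-divisor formulation you outline does not close. For a fixed prime divisor $E$ over $X$ with $\eta\in c_X(E)$, the object ``$E_{\eta'}$'' over $Y$ that would make your inequality
\[
\frac{A_{\tilde X,\tilde\Delta}(E)}{S(\sigma^*V_{\vec\bullet};E)}
\;\ge\;\min\left\{\frac{A_{\tilde X,\tilde\Delta}(Y)}{S(V_{\vec\bullet};Y)},\;
\inf_{\eta'}\frac{A_{Y,\Delta_Y}(E_{\eta'})}{S(W_{\vec\bullet};E_{\eta'})}\right\}
\]
meaningful is not defined. When $c_{\tilde X}(E)\subset Y$, the filtration $\bar{\sF}_E$ induced on $W_{\vec{\bullet}}$ is generally \emph{not} a divisorial filtration on $Y$, so you cannot form an $A/S$ ratio attached to a prime divisor over $Y$; when $c_{\tilde X}(E)\not\subset Y$ the restriction carries no information. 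This is exactly why the paper's alternative proof (Theorem \ref{AZ-R_thm}) does not argue $E$-by-$E$. Instead it works with $m$-$(Y,e)$-subbasis type $\Q$-divisors $D$ of $V_{\vec{\bullet}}$: one writes $\sigma^*D=S_m(W_{\vec{\bullet}};\bar{\sG})\,Y+\tilde D$, shows via the local $\delta$-invariant of $(Y,\Delta_Y;W_{\vec{\bullet}})$ that $(Y,\Delta_Y+\lambda'_m D_Y)$ is lc, applies inversion of adjunction to deduce that $(\tilde X,\tilde\Delta+Y+\lambda'_m\tilde D)$ is lc, corrects the coefficient of $Y$ using the first term of the minimum, and concludes that $(X,\Delta+\lambda_m D)$ is lc at $\eta$; then one identifies $\delta_{\eta,m}^{(Y,e)}$ with an infimum of $A/S_m$ (Proposition \ref{delta-m_proposition}) and lets $m\to\infty$ (Lemma \ref{delta-concave_lemma} and Proposition \ref{delta-concave_proposition}). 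The individual bound on $A_{X,\Delta}(E)/S(V_{\vec{\bullet}};E)$ then follows \emph{a posteriori}, but it is a corollary of the lct argument, not its starting point. Your sketch omits the subbasis-type divisors, the inversion-of-adjunction step, and the $m\to\infty$ limit, which are where the real work lives.
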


\section{Ahmadinezhad--Zhuang's theory on Mori dream spaces}\label{AZ_section}

We calculate the values in \S \ref{AZR_section} when $X$ is a \emph{Mori 
dream space} (see\cite{HK}). Since any log Fano pair is a Mori dream space 
\cite[Corollary 1.3.2]{BCHM}, we can apply the computations in \S \ref{AZ_section} 
for various situations in order to evaluate local $\delta$-invariants for log Fano pairs. 
Many statements in this section are similar to the statements in 
\cite{FANO}. However, the situations we consider are more complicated than 
the situations in \cite{FANO}. 

In this section, we fix: 
\begin{itemize}
\item
an $n$-dimensional Mori dream space $X$ (in the sense of \cite[Definition 1.10]{HK}), 
\item
a big $\Q$-divisor $L$ on $X$, 
\item
the Veronese equivalence class $V_\bullet$ of the complete linear series of $L$, 
\item
a prime divisor 
$Y\subset X$ (note that $Y$ is $\Q$-Cartier since $X$ is $\Q$-factorial), and
\item
the refinement $W_{\bullet,\bullet}$ of $V_\bullet$ by $Y$, i.e., 
$W_{\bullet,\bullet}=V_{\vec{\bullet}}^{(Y)}$. 
\end{itemize}
Moreover, let us set 
\begin{eqnarray*}
\tau_-&:=&\ord_Y N_\sigma(X,L), \\
\tau_+&:=&\max\left\{u\in\R_{\geq 0}\,\,|\,\,L-uY\text{ is pseudo-effective}\right\},
\end{eqnarray*}
where $N_\sigma(X, L)$ is the negative part of the Nakayama--Zariski decomposition 
of $L$ (see \cite[Chapter III]{N}).

\subsection{Basics of Mori dream spaces}\label{MDS_subsection}

We recall basic theories of Mori dream spaces and Nakayama--Zariski decompositions.

\begin{lemma}[{cf.\ \cite{okawa}}]\label{okawa_lemma}
\begin{enumerate}
\renewcommand{\theenumi}{\arabic{enumi}}
\renewcommand{\labelenumi}{(\theenumi)}
\item\label{okawa_lemma1}
The values $\tau_-$, $\tau_+$ are rational numbers with $\tau_-<\tau_+$. 
\item\label{okawa_lemma2}
If $u\in[0,\tau_-)$, then we have $Y\subset\Supp N_\sigma\left(X,L-u Y\right)$. 
If $u\in[\tau_-,\tau_+]$, then we have 
$Y\not\subset\Supp N_\sigma\left(X,L-u Y\right)$. 
\item\label{okawa_lemma3}
There exists 
\begin{itemize}
\item
a finite sequence $\tau_-=\tau_0<\cdots<\tau_I=\tau_+$ of rational numbers, and 
\item
a finite set $\left\{X_1,\dots,X_I\right\}$ of small $\Q$-factorial modifications of $X$
\end{itemize}
such that, for any $1\leq i\leq I$ and for any $u\in[\tau_{i-1},\tau_i]$, we have
the following:
\begin{enumerate}
\renewcommand{\theenumii}{\roman{enumii}}
\renewcommand{\labelenumii}{(\theenumii)}
\item\label{okawa_lemma31}
the positive part $P_\sigma\left(X_i,(L-u Y)_{X_i}\right)$ is semiample on $X_i$, where 
$(L-uY)_{X_i}$ is the strict transform of $L-uY$ on $X_i$, 
\item\label{okawa_lemma32}
both $N_\sigma\left(X_i,(L-\tau_{i-1} Y)_{X_i}\right)$ and 
$N_\sigma\left(X_i,(L-\tau_i Y)_{X_i}\right)$ are $\Q$-divisors, and we have 
\begin{eqnarray*}
&&N_\sigma\left(X_i,(L-u Y)_{X_i}\right)\\
&=&\frac{\tau_i-u}{\tau_i-\tau_{i-1}}N_\sigma\left(X_i,(L-\tau_{i-1} Y)_{X_i}\right)
+\frac{u-\tau_i}{\tau_i-\tau_{i-1}}N_\sigma\left(X_i,(L-\tau_i Y)_{X_i}\right),
\end{eqnarray*}
and
\item\label{okawa_lemma33}
if $u\in(\tau_{i-1},\tau_i]$ and $u<\tau_+$, 
then $P_\sigma\left(X_i,(L-u Y)_{X_i}\right)|_{Y_i}$ is 
semiample and big on $Y_i$, where $Y_i$ is the strict transform of $Y$ on $X_i$.
\end{enumerate}
\end{enumerate}
\end{lemma}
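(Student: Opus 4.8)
The plan is to analyze the one-parameter family $L-uY$ for $u \in [0,\tau_+]$ through the lens of the Nakayama--Zariski decomposition, exploiting the Mori dream space structure to make everything rational and piecewise-linear. First I would establish \eqref{okawa_lemma1}: rationality of $\tau_+$ follows because the pseudo-effective cone of a Mori dream space is rational polyhedral (in the sense of \cite{HK}), so $\tau_+$ is the parameter at which the ray $L-uY$ meets a rational supporting face; rationality of $\tau_-$ follows since $N_\sigma(X,L)$ is a $\Q$-divisor (the Nakayama--Zariski and Zariski-type decompositions coincide with the rational ones on a Mori dream space, by \cite{okawa} together with \cite[Chapter III]{N}). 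The strict inequality $\tau_-<\tau_+$ holds because for $u$ slightly larger than $\tau_-$ the class $L-uY$ is still big: by definition $\ord_Y N_\sigma(X,L)=\tau_-$ means $L-\tau_- Y$ still has $Y$ in the stable base locus with multiplicity zero, hence is big, and bigness is an open condition.

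Next I would prove \eqref{okawa_lemma2}, which is essentially the definition of $\tau_-$ unwound. For $u\in[0,\tau_-)$, write $N_\sigma(X,L)=\tau_- Y + N'$ with $N'$ effective and $Y\not\subset\Supp N'$ (this decomposition of the negative part is exactly what $\ord_Y N_\sigma(X,L)=\tau_-$ encodes); then $L-uY = P_\sigma(X,L) + (\tau_- - u)Y + N'$, and since $P_\sigma(X,L)$ is movable while $(\tau_- - u)Y + N'$ is the negative part (its class pairs negatively against the relevant movable curves, as $Y$ is rigid in this range), one gets $Y\subset\Supp N_\sigma(X,L-uY)$ by uniqueness of the Nakayama--Zariski decomposition. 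For $u\in[\tau_-,\tau_+]$, the coefficient of $Y$ in the negative part has dropped to zero and cannot become negative, so $Y\not\subset\Supp N_\sigma(X,L-uY)$; here I would invoke the monotonicity/lower-semicontinuity of $\sigma_Y$ along the segment (the function $u\mapsto\ord_Y N_\sigma(X,L-uY)$ is convex and eventually decreasing to $0$ at $u=\tau_-$, then stays $0$).

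The heart of the argument is \eqref{okawa_lemma3}, the piecewise-linear chamber decomposition. On a Mori dream space, the cone $\Mov(X)$ (or the relevant big cone) admits a finite fan structure into Mori chambers, each corresponding to a small $\Q$-factorial modification $X_i$ on which the movable part becomes semiample, by the Mori chamber decomposition of \cite{HK}. Intersecting this fan with the ray $\{L-uY\}$ yields the finite subdivision $\tau_-=\tau_0<\cdots<\tau_I=\tau_+$: on each subinterval $[\tau_{i-1},\tau_i]$ the class $(L-uY)_{X_i}$ lies in a single Mori chamber of $X_i$, so $P_\sigma(X_i,(L-uY)_{X_i})$ is semiample, giving \eqref{okawa_lemma31}. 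The linearity \eqref{okawa_lemma32} of the negative part $N_\sigma(X_i,(L-uY)_{X_i})$ in $u$ is then standard: within a fixed Mori chamber the Nakayama--Zariski decomposition is linear in the class (the movable part is cut out by a fixed finite set of linear conditions, equivalently by the extremal contraction on $X_i$), so $N_\sigma$ is an affine function of $u$ on $[\tau_{i-1},\tau_i]$, and being affine on a closed segment it equals the asserted convex combination of its endpoint values, which are $\Q$-divisors by rationality of the $\tau_i$ and of the decomposition on a Mori dream space. Finally \eqref{okawa_lemma33} follows from \eqref{okawa_lemma2}: for $u\in(\tau_{i-1},\tau_i]$ with $u<\tau_+$ we have $Y_i\not\subset\Supp N_\sigma$, so $Y_i$ is not contracted and $P_\sigma(X_i,(L-uY)_{X_i})|_{Y_i}$ makes sense; it is semiample as the restriction of a semiample divisor, and it is big because $u<\tau_+$ forces $L-uY$ to be big with $Y$ not in its diminished base locus, so the restriction to $Y$ stays big (e.g.\ by the behaviour of volumes, $\vol_X(L-uY)$ has positive derivative in $u$ as long as $u<\tau_+$, and this derivative computes essentially $\vol_{Y}$ of the restriction up to a constant).

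The main obstacle I anticipate is \eqref{okawa_lemma3}: one must be careful that the Mori chamber walls meeting the ray are genuinely finite in number and that the small modifications $X_i$ can be chosen uniformly on each closed subinterval (including the endpoints, where the class sits on a wall). This requires citing the Mori dream space machinery precisely — that $\Eff(X)$ is rational polyhedral and decomposes into finitely many Mori chambers, each with an associated rational contraction — and then checking that restricting this finite fan to a line produces a finite rational subdivision, after which the linearity and rationality of $N_\sigma$ on each piece are routine consequences of working inside a single chamber. The interaction with the small modifications (isomorphisms in codimension one) means $Y$ and its strict transforms $Y_i$ must be tracked carefully, but since these modifications are isomorphisms away from codimension two, divisorial data like $\ord_Y$ and restrictions to $Y$ transfer without change.
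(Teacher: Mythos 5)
Your treatment of parts (1), (2), and (3)(i)--(ii) matches the paper's in strategy and in substance: the paper simply cites \cite[\S 2.3, Prop.\ 2.13]{okawa} and \cite[Ch.\ III, Lemmas 1.4, 1.8, Cor.\ 1.9]{N} for exactly the Mori-chamber / Nakayama--Zariski facts you invoke, so nothing more is needed there.

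Where you diverge is \eqref{okawa_lemma33}, and there you have a real gap. You claim that for $u<\tau_+$, since $Y$ is not contained in the diminished base locus $\B_-(L-uY)$, the restriction $P_\sigma(X_i,(L-uY)_{X_i})|_{Y_i}$ must be big, offering the volume-differentiation formula as justification. This does not quite close: the volume differentiation theorem of Lazarsfeld--Musta\c{t}\u{a} / Boucksom--Favre--Jonsson computes $\frac{d}{du}\vol_X(L-uY)$ as a restricted volume $\vol_{X|Y}$, and requires $Y\not\subset\B_+(L-uY)$ (the \emph{augmented} base locus), which is a strictly stronger hypothesis than $Y\not\subset\B_-$. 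Moreover, $\vol_{X|Y}>0$ is not the same as $\vol_Y(P_\sigma|_Y)>0$ without further comparison. So the step ``$Y\not\subset\B_-$ hence the restriction is big'' is asserting essentially what needs to be proved. The paper instead argues by contradiction: since $P_\sigma(X_i,(L-uY)_{X_i})$ is semiample and big, it factors as $\mu^*A$ for a birational $\mu\colon X_i\to X'$ with $A$ ample and $N_\sigma$ $\mu$-exceptional; if the restriction to $Y_i$ were not big, $Y_i$ would also be $\mu$-exceptional, and then $H^0(m(L-uY)_{X_i})\simeq H^0(m(L-(u-\varepsilon)Y)_{X_i})$ for small $\varepsilon>0$, forcing $Y\subset\Supp N_\sigma(X,L-(u-\varepsilon)Y)$ and contradicting part \eqref{okawa_lemma2} because $u-\varepsilon\geq\tau_-$. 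You should replace your volume heuristic with an argument of this kind, or at minimum verify the stronger $\B_+$ condition before invoking restricted-volume positivity.

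One smaller inaccuracy in your part (2): you write $N_\sigma(X,L)=\tau_-Y+N'$ and then assert that $P_\sigma(X,L)+(\tau_--u)Y+N'$ \emph{is} the Nakayama--Zariski decomposition of $L-uY$ because ``it pairs negatively against the relevant movable curves.'' That verification is precisely the content of \cite[Ch.\ III, Lemma 1.8 and Cor.\ 1.9]{N}; it is worth citing rather than re-deriving, since the characterization of $N_\sigma$ is not simply by negativity against curves.
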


\begin{proof}
\eqref{okawa_lemma1} The properties $\tau_-$, $\tau_+\in\Q$ are trivial 
(see \cite[\S 2.3]{okawa}). By \cite[Chapter III, Lemma 1.4 (4)]{N}, the 
$\Q$-divisor $L-\tau_-Y$ is big with $\ord_Y N_\sigma(L-\tau_- Y)=0$. 
Thus we have $\tau_-<\tau_+$. 

\eqref{okawa_lemma2} Trivial from 
\cite[Chapter III, Lemma 1.8 and Corollary 1.9]{N}. 

\eqref{okawa_lemma3} The properties \eqref{okawa_lemma31} and 
\eqref{okawa_lemma32} are direct corollaries of \cite[Proposition 2.13]{okawa}. 
Let us consider \eqref{okawa_lemma33}. Since 
$P_\sigma\left(X_i, (L-u Y)_{X_i}\right)$ is semiample and big, it is enough to show 
that $P_\sigma\left(X_i, (L-u Y)_{X_i}\right)|_{Y_i}$ is big. Assume not. 
We may assume that $u\in\Q$. 
Since $P_\sigma\left(X_i, (L-u Y)_{X_i}\right)$ is semiample, 
as in \cite[\S 2.3]{okawa}, there is a projective 
birational morphism $\mu\colon X_i\to X'$ and an ample $\Q$-divisor $A$ on $X'$ 
such that we have $P_\sigma\left(X_i, (L-u Y)_{X_i}\right)=\mu^*A$ and 
the $\Q$-divisor $N_\sigma\left(X_i, (L-u Y)_{X_i}\right)$ is $\mu$-exceptional. 
From the assumption, $Y_i$ is also $\mu$-exceptional. Therefore, for any 
$0<\varepsilon\ll 1$ and for any sufficiently divisible $m\in\Z_{>0}$, we have 
\begin{eqnarray*}
H^0\left(X_i,m(L-u Y)_{X_i}\right)&\simeq& H^0\left(X_i,m\mu^*A\right)\\
\simeq H^0\left(X_i,m(\mu^*A+N_\sigma\left(X_i, (L-u Y)_{X_i}\right)+\varepsilon 
Y\i)\right)&=&H^0\left(X_i, m(L-(u-\varepsilon)Y)_{X_i}\right).
\end{eqnarray*}
This implies that $Y\subset\Supp N_\sigma\left(X, L-(u-\varepsilon)Y\right)$, 
a contradiction. 
\end{proof}

\begin{notation}\label{MDS1_notation}
Let us fix a common resolution 
$\sigma_i\colon\tilde{X}\to X_i$ of $X_0,X_1,\dots,X_I$ 
with $\tilde{X}$ normal and $\Q$-factorial, where $X_0:=X$ and 
$X_1,\dots,X_I$ are in Lemma \ref{okawa_lemma} \eqref{okawa_lemma3}. 
We set $\sigma:=\sigma_0$ and $\tilde{Y}:=\sigma^{-1}_*Y$. 
Moreover, for any $u\in[0,\tau_+]$, let 
\[
\sigma^*(L-u Y)=P(u)+N(u)
\]
be the Nakayama--Zariski decomposition of $\sigma^*(L-u Y)$, i.e., 
\begin{eqnarray*}
P(u)&:=&P_\sigma\left(\tilde{X},\sigma^*(L-u Y)\right),\\
N(u)&:=&N_\sigma\left(\tilde{X},\sigma^*(L-u Y)\right).
\end{eqnarray*}
\end{notation}

We remark that, if $X$ is a smooth Fano threefold, then there is no small 
$\Q$-factorial modification of $X$ by \cite{mori}. Thus we have $I=0$ 
and we can take $\sigma\colon\tilde{X}\to X$ as the identity morphism 
when $X$ is a smooth Fano threefold. See also \cite{FANO}.

\begin{remark}\label{NZ_resol_remark}
By \cite[Chapter III, Lemma 2.5]{N} and Lemma \ref{okawa_lemma}, we have: 
\begin{itemize}
\item
$P(u)$ is semiample 
and $\tilde{Y}\not\subset\Supp N(u)$ for any $u\in[\tau_-,\tau_+]$, 
and
\item
we have
\[
N(u)=\frac{\tau_i-u}{\tau_i-\tau_{i-1}}N(\tau_{i-1})
+\frac{u-\tau_{i-1}}{\tau_i-\tau_{i-1}}N(\tau_i)
\]
for any $1\leq i\leq I$ and for any $u\in[\tau_{i-1},\tau_i]$, and 
\item
the $\R$-divisor $P(u)|_{\tilde{Y}}$ is semiample and big for any $u\in(\tau_-,\tau_+)$. 
\end{itemize}
\end{remark}

\begin{lemma}\label{gordan_lemma}
There exists $($a sufficiently divisible$)$ $m_0\in\Z_{>0}$ such that: 
\begin{itemize}
\item
we have $m_0\tau_i\in\Z_{\geq 0}$ for any $0\leq i\leq I$, where $\tau_i$ is 
as in Lemma \ref{okawa_lemma} \eqref{okawa_lemma3}, and 
\item
for any $(a,j)\in\left(m_0\Z_{\geq 0}\right)^2\setminus\{(0,0)\}$ with 
$\tau_-\leq j/a\leq \tau_+$, both 
\[
a N\left(\frac{j}{a}\right)\quad\text{and}\quad a P\left(\frac{j}{a}\right)
\]
are Cartier divisors. 
\end{itemize}
\end{lemma}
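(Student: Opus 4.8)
The plan is to produce $m_0$ as a product of several auxiliary integers, each taking care of one requirement, and then invoke a Gordan-type finiteness statement for the piecewise-linear dependence of $P(u)$ and $N(u)$ on $u$. First I would choose $m_1\in\Z_{>0}$ so that $m_1\tau_i\in\Z_{\geq 0}$ for every $0\leq i\leq I$; this is possible since each $\tau_i$ is rational by Lemma \ref{okawa_lemma} \eqref{okawa_lemma1}. Next, by Remark \ref{NZ_resol_remark}, for each $1\leq i\leq I$ the function $u\mapsto N(u)$ is affine linear on $[\tau_{i-1},\tau_i]$, interpolating between $N(\tau_{i-1})$ and $N(\tau_i)$; moreover $N(\tau_{i-1})$ and $N(\tau_i)$ are $\Q$-divisors by Lemma \ref{okawa_lemma} \eqref{okawa_lemma32}. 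Hence I would pick $m_2\in\Z_{>0}$ so that $m_2 N(\tau_i)$ is an integral (Cartier, using $\Q$-factoriality of $\tilde X$) divisor for every $0\leq i\leq I$, and also $m_2(\tau_i-\tau_{i-1})\in\Z_{>0}$.

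The key computation is then the following: take $(a,j)\in(m_0\Z_{\geq 0})^2\setminus\{(0,0)\}$ with $\tau_-\leq j/a\leq\tau_+$ and write $u=j/a$. There is a unique $i$ with $u\in[\tau_{i-1},\tau_i]$, and by Remark \ref{NZ_resol_remark}
\[
a N(u)=\frac{a(\tau_i-u)}{\tau_i-\tau_{i-1}}N(\tau_{i-1})
+\frac{a(u-\tau_{i-1})}{\tau_i-\tau_{i-1}}N(\tau_i)
=\frac{a\tau_i-j}{\tau_i-\tau_{i-1}}N(\tau_{i-1})
+\frac{j-a\tau_{i-1}}{\tau_i-\tau_{i-1}}N(\tau_i).
\]
If $m_0$ is a multiple of $m_1 m_2$, then $a\tau_i$, $a\tau_{i-1}$, $j$ are all integers, $\tau_i-\tau_{i-1}=(m_2\tau_i-m_2\tau_{i-1})/m_2$ has denominator dividing $m_2$, and $a$ is divisible by $m_2$; a short bookkeeping of denominators shows the two rational coefficients above lie in $\Z$ after we absorb one more factor of $m_2$, so $a N(u)$ is a $\Z$-linear combination of the Cartier divisors $m_2 N(\tau_{i-1})$, $m_2 N(\tau_i)$ divided by nothing, hence Cartier. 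Since $\sigma^*(L-uY)=\sigma^*L-u\tilde Y-u\Sigma$ for the fixed $\R$-divisor decomposition, I would also require $m_0$ to clear the denominators of $\sigma^*L$, of $\tilde Y$, and of $\Sigma:=\sigma^*Y-\tilde Y$; then $a\sigma^*(L-uY)=a\sigma^*L-j\tilde Y-j\Sigma$ is Cartier on $\tilde X$ for all admissible $(a,j)$, and therefore $a P(u)=a\sigma^*(L-uY)-a N(u)$ is Cartier as well.

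Setting $m_0$ to be any common multiple of $m_1$, $m_2^2$ and the denominator-clearing integer for $\sigma^*L,\tilde Y,\Sigma$ — and further sufficiently divisible as needed — gives the claim. The only mildly delicate point, and the one I would write out carefully rather than wave at, is the denominator count in the displayed formula for $a N(u)$: one has to check that divisibility of $a$ by $m_2$ really does compensate for the denominator $\tau_i-\tau_{i-1}$ uniformly in $i$, which is why I build in the extra factor $m_2$ (so $m_2^2$ overall). Everything else is routine clearing of denominators; there is no geometric obstacle, since semiampleness and the linear interpolation of $N(u)$ are already supplied by Lemma \ref{okawa_lemma} and Remark \ref{NZ_resol_remark}.
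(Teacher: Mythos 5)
You take a genuinely different route from the paper: where the paper invokes Gordan's lemma to extract a finite Hilbert basis of the monoid $\sC_i\cap\Z_{\geq 0}^2$, clears the Cartier denominators of $a_k N(j_k/a_k)$ at those finitely many generators, and then uses the $\Z_{\geq 0}$-additivity of $(a,j)\mapsto aN(j/a)$ on that monoid, you instead try to control the two interpolation coefficients directly and clear their denominators by brute force. The plan is reasonable in principle (the data is finite, so some $m_0$ certainly exists), but the specific bookkeeping you claim is wrong, and it is wrong at exactly the point you flag as delicate.

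The problem is that dividing by $\tau_i-\tau_{i-1}$ introduces the \emph{numerator} of $\tau_i-\tau_{i-1}$ as a new denominator, and nothing in your choices of $m_1$ and $m_2$ controls that integer. Concretely: your hypotheses on $m_2$ are that $m_2 N(\tau_i)$ is Cartier and $m_2(\tau_i-\tau_{i-1})\in\Z$, and your claim is that taking $m_0$ a multiple of $m_1 m_2^2$ forces $\tfrac{a\tau_i-j}{\tau_i-\tau_{i-1}}\in m_2\Z$. Try $\tau_{i-1}=0$, $\tau_i=7$ (so $m_1=1$ and any $m_2$ satisfies $m_2(\tau_i-\tau_{i-1})\in\Z$), suppose both $N(0)$ and $N(7)$ have Cartier index $3$ so $m_2=3$, take $m_0=m_1m_2^2=9$, and $(a,j)=(9,9)$. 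Then $\tfrac{a\tau_i-j}{\tau_i-\tau_{i-1}}=\tfrac{63-9}{7}=\tfrac{54}{7}$, which is not even an integer, so $9\,N(1)$ has no reason to be Cartier. The extra factor of $m_2$ does nothing here because the obstructing integer is $m_1(\tau_i-\tau_{i-1})=7$, which is coprime to $m_2$. What you actually need built into $m_0$ is something like $m_2\cdot\operatorname{lcm}_{1\le i\le I}\bigl(m_1(\tau_i-\tau_{i-1})\bigr)$ in addition to $m_1$ and the clearing integer for $\sigma^*L$, $\tilde Y$, $\Sigma$. That repair is easy once seen, but it is precisely the content of the step you said you would write out carefully; as written, your ``absorb one more factor of $m_2$'' assertion is false. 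The paper's Gordan's-lemma argument avoids this entirely by never exposing the coefficient $\tfrac{a\tau_i-j}{\tau_i-\tau_{i-1}}$.
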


\begin{proof}
Fix any $1\leq i\leq I$ and set 
\[
\sC_i:=\Cone\left((1,\tau_{i-1}),(1,\tau_i)\right)\subset\R^2_{\geq 0}.
\]
By Gordan's lemma, there exist 
\[
(a_1,j_1),\dots,(a_N,j_N)\in\sC_i\cap\Z_{\geq 0}^2
\]
such that, every element $(a,j)\in\sC_i\cap\Z_{\geq 0}^2$ can be expressed by a 
$\Z_{\geq 0}$-linear sum 
\[
(a,j)=\sum_{k=1}^N c_k(a_k,j_k)
\]
of $(a_1,j_1),\dots,(a_N,j_N)$. Take $m_0\in\Z_{> 0}$ such that 
\[
m_0 a_1 N\left(\frac{j_1}{a_1}\right),\dots,m_0 a_N N\left(\frac{j_N}{a_N}\right)
\]
are Cartier divisors. Then, for any $(a,j)\in\sC_i\cap\Z_{\geq 0}^2$, the $\Q$-divisor 
\[
m_0 a\cdot N\left(\frac{m_0 j}{m_0 a}\right)
=m_0\cdot a N\left(\frac{j}{a}\right)
=m_0\sum_{k=1}^N c_k a_k N\left(\frac{j_k}{a_k}\right)
\]
is a Cartier divisor by Remark \ref{NZ_resol_remark}. 
\end{proof}

\begin{definition}\label{divisorial-restriction_definition}
Under Notation \ref{MDS1_notation}, let us set the Veronese equivalent 
class $V_{\bullet,\bullet}^{\tilde{Y}}$ 
of $(m_0\Z_{\geq 0})^2$-graded linear series $V^{\tilde{Y}}_{m_0\bullet,m_0\bullet}$ 
on $\tilde{Y}$ associated to $\sigma^*(L|_Y)$, $\sigma^*(-Y|_Y)$ 
(for a sufficiently divisible $m_0\in\Z_{>0}$ as in Lemma \ref{gordan_lemma}) 
defined by: 
\[
V_{a,j}^{\tilde{Y}}:=\begin{cases}
a N\left(\frac{j}{a}\right)|_{\tilde{Y}}
+H^0\left(\tilde{Y},a P\left(\frac{j}{a}\right)|_{\tilde{Y}}\right) 
& \text{if }j\in[a\tau_-,a\tau_+],\\
0 & \text{otherwise},
\end{cases}
\]
for any $(a,j)\in(m_0\Z_{\geq 0})^2$. We call it the \emph{divisorial restriction 
of $V_\bullet$ by $\tilde{Y}\subset\tilde{X}$}. It is obvious that 
$V^{\tilde{Y}}_{\bullet,\bullet}$ 
has bounded support with 
\[
\Supp\left(V_{\bullet,\bullet}^{\tilde{Y}}\right)=\Cone\left(
(1,\tau_-), (1,\tau_+)\right)\subset\R_{\geq 0}^2, 
\]
and contains an ample series 
(see also Remark \ref{divisorial-restriction_remark}). 
\end{definition}

\begin{remark}\label{divisorial-restriction_remark}
For any sufficiently divisible $a$, $j\in m_0 \Z_{\geq 0}$, we have 
$W_{a,j}\subset V^{\tilde{Y}}_{a,j}$ as linear series on $\tilde{Y}$, 
where we regard $W_{\bullet,\bullet}$ as $(\sigma|_{\tilde{Y}})^*W_{\bullet,\bullet}$ 
(see Example \ref{pullback_example}). In fact, when $j>a\tau_+$, then 
\[
H^0\left(\tilde{X},\sigma^*(a L-j Y)\right)=0.
\]
Thus we have $W_{a,j}= V^{\tilde{Y}}_{a,j}=0$. When $j\leq a\tau_+$, then we have 
\[
H^0\left(\tilde{X},\sigma^*(a L-j Y)\right)
=a N\left(\frac{j}{a}\right)+H^0\left(\tilde{X}, a P\left(\frac{j}{a}\right)\right).
\]
Thus, when $j\in[0, a\tau_-)$, then the restriction homomorphism is the zero map 
by Lemma \ref{okawa_lemma} \eqref{okawa_lemma2} and thus 
$W_{a,j}= V^{\tilde{Y}}_{a,j}=0$. 
When $j\in[a\tau_-,a\tau_+]$, then the restriction homomorphism 
factors through
\[
j\sigma^*Y+
a N\left(\frac{j}{a}\right)+H^0\left(\tilde{X}, a P\left(\frac{j}{a}\right)\right)
\to V^{\tilde{Y}}_{a,j}\subset 
H^0\left(\tilde{Y}, a \sigma^*(L|_Y-j Y|_Y)\right).
\]
Moreover, from the above, we get the inequality 
\[
\dim\left(V^{\tilde{Y}}_{a,j}/W_{a,j}\right)\leq 
h^1\left(\tilde{X}, aP\left(\frac{j}{a}\right)-\tilde{Y}\right).
\]
\end{remark}

We recall the following easy proposition given in \cite{FANO}: 

\begin{proposition}[{see \cite{FANO}}]\label{vanish_proposition}
Let $Z$ be an $n$-dimensional projective variety, let $a\in\Z_{>0}$ and 
let $A$, $B$ be Cartier divisors on $Z$ with $A$ nef and big and $A+a B$ nef. 
Then, for any coherent sheaf $\sF$ on $Z$ and for any $i>0$, we have 
\[
\sum_{j=0}^{m a}h^i\left(Z, \sF\otimes\sO_Z(m A+j B)\right)=O(m^{n-i})
\]
as $m\to\infty$. 
\end{proposition}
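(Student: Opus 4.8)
The plan is to reduce the vanishing estimate to an Euler–characteristic argument combined with a Fujita-type vanishing theorem. First I would fix an ample Cartier divisor $H$ on $Z$. By Fujita's vanishing theorem, there exists $m_1\in\Z_{>0}$ such that for every nef Cartier divisor $D$ on $Z$ and every $i>0$ we have $H^i(Z,\sF\otimes\sO_Z(m_1 H+D))=0$; more precisely, applying this to the coherent sheaf $\sF$ one gets uniform vanishing $H^i(Z,\sF\otimes\sO_Z(P))=0$ for all $i>0$ whenever $P-m_1H$ is nef. The key observation is that, for $0\leq j\leq ma$, the divisor $mA+jB$ together with a correction term of the form $cH$ (for a fixed $c$ independent of $m,j$) becomes "positive enough'' only when $m$ is large, and the failure of positivity is confined to a region of size $O(1)$ near the boundary values of $j/m$; this is what produces the $O(m^{n-i})$ bound rather than $O(m^{n+1-i})$.

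Concretely, I would argue as follows. Since $A$ is nef and big and $A+aB$ is nef, for each real $t\in[0,1]$ the $\R$-divisor $A+taB$ is nef (it is a convex combination of the nef divisors $A$ and $A+aB$). Hence $mA+jB=m\bigl(A+\tfrac{j}{ma}\cdot aB\bigr)$ is, up to the scaling factor $m$, a nef class for each admissible $j$. The point is then: for a fixed sufficiently large integer $k$, the divisor $kA$ is already big, so $mA+jB+? $ — here the cleanest route is to write $mA + jB = (m-m_1')A + \bigl(m_1'A+jB\bigr)$ where $m_1'$ is chosen so that $m_1'A$ dominates $m_1 H$ (possible since $A$ is ample-ish: more carefully, since $A$ is big and nef, some multiple $m_1'A$ is of the form $m_1H+(\text{effective})$, and one absorbs the effective part into $\sF$ by a standard exact-sequence dévissage). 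Then $m_1'A+jB$ is nef for $0\le j\le m_1'a$; for the remaining range $j>m_1'a$ one instead splits off a multiple of $A+aB$. Tracking this, one finds that $H^i(Z,\sF\otimes\sO_Z(mA+jB))=0$ for all $0\le j\le ma$ once $m\gg 0$, except possibly for $j$ in an interval of length $O(1)$ (independent of $m$) at each end, where one bounds $h^i$ crudely by $O(m^{n-i})$ using that $h^i$ of a coherent sheaf twisted by a Cartier divisor in an at-most-$(n-i)$-dimensional family of bounded-degree twists grows like $O(m^{n-i})$ — or more simply, $h^i(Z,\sF\otimes\sO_Z(mA+jB))\le h^0$ of a suitable sheaf which is $O(m^n)$, and there are only $O(1)$ such $j$; to get the sharper $O(m^{n-i})$ one uses Serre vanishing / Riemann–Roch for the individual terms. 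Summing over the $O(m)$ values of $j$ that contribute zero and the $O(1)$ values that contribute $O(m^{n})$ would only give $O(m^n)$, so the sharper per-term bound is genuinely needed.

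The cleaner execution, and the one I would actually write, avoids the boundary subtlety by induction on $\dim Z$: take a general very ample divisor (or use a resolution and a general fibre) and use the exact sequences $0\to\sF(-H')\to\sF\to\sF|_{H'}\to 0$ to reduce the exponent, the base case $\dim Z=0$ being trivial since then $h^i=0$ for $i>0$. One twists the sequence by $mA+jB$, sums over $j$, and applies the inductive hypothesis on $H'$ (an $(n-1)$-dimensional variety) together with control of $\sum_j h^i(Z,\sF(-H')\otimes\sO_Z(mA+jB))$, which is handled by the same inductive scheme after noting $\sF(-H')$ is again coherent; the nef/big hypotheses on $A$ and $A+aB$ restrict compatibly to $H'$ provided $H'$ is chosen general, and one uses that $A|_{H'}$ is still nef and big. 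The main obstacle is precisely making the boundary-of-range analysis (the values of $j$ near $0$ and near $ma$, where positivity degenerates because $A$ alone or $A+aB$ alone need not be big) rigorous and uniform in $m$ — everything away from an $O(1)$-sized neighbourhood of the endpoints is handled by Fujita vanishing, but the endpoints require either the dimension induction above or a separate Serre-vanishing estimate for each of boundedly many twists, and one must be careful that the correction multiples of $A$ and of $A+aB$ are chosen independently of $j$ so that the "good range'' really does have length $ma - O(1)$.
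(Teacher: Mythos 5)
Your plan---Fujita vanishing plus Kodaira's lemma and a d\'evissage, then a separate bound for an $O(1)$-sized range of $j$---is the right shape and matches the method the paper cites. But you misread the hypotheses at the endpoints: you say positivity degenerates at $j$ near $0$ and near $ma$ ``because $A$ alone or $A+aB$ alone need not be big,'' but $A$ \emph{is} assumed nef and big. Once the Fujita-type statement for nef-and-big $A$ is in hand (there is $c_0$ with $H^i(Z,\sF(cA+D))=0$ for $c\geq c_0$, all nef Cartier $D$, all $i>0$), writing $mA+jB=c_0A+\bigl((m-c_0)A+jB\bigr)$ gives vanishing for \emph{all} $j\le(m-c_0)a$; only the last $c_0 a$ values of $j$ survive, and only because $A+aB$ need not be big. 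There, setting $j=ma-s$ with $0\le s<c_0a$, one has $\sF(mA+jB)=\sF(-sB)\otimes\sO_Z\bigl(m(A+aB)\bigr)$, and the standard estimate $h^i\bigl(\sG(mN)\bigr)=O(m^{n-i})$ for a coherent sheaf $\sG$ and nef divisor $N$ finishes the argument.

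The larger gap is that your proposed ``cleaner execution'' by induction on $\dim Z$ does not close. Twisting $0\to\sF(-H')\to\sF\to\sF|_{H'}\to 0$ by $\sO_Z(mA+jB)$ and summing over $j$, the inductive hypothesis on the $(n-1)$-dimensional $H'$ controls $\sum_j h^i(\sF|_{H'}(mA+jB))$, but $\sum_j h^i(\sF(-H')(mA+jB))$ lives on the same $Z$ of the same dimension with merely a different coherent sheaf; applying ``the same inductive scheme'' to $\sF(-H')$ just produces $\sF(-2H')$, and so on, with no decreasing quantity---an infinite regress, not an induction. Dimension induction does real work in this problem, but only inside the two sublemmas your first route actually needs (the Fujita statement for nef-and-big $A$, obtained from ordinary Fujita via Kodaira's lemma and a d\'evissage supported on the effective error divisor, and the $O(m^{n-i})$ estimate for nef twists): in each of those the exact-sequence step passes to sheaves supported on a divisor, so the inductive call genuinely lands on a strictly smaller-dimensional scheme.
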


\begin{proof}
This is an easy consequence of Takao Fujita's vanishing theorem (see 
\cite[Corollary 3.9.3]{fujino}). 
For the proof, see \cite{FANO}. 
\end{proof}

\subsection{Refinements on Mori dream spaces}
\label{AZ1_subsection}

In \S \ref{AZ1_subsection}, we show the following slight generalization of the formula 
obtained in \cite{FANO}. 

\begin{thm}[{cf.\ \cite{FANO}}]\label{AZ1_thm}
Let $V^{\tilde{Y}}_{\bullet,\bullet}$ be the divisorial restriction of $V_\bullet$ 
by $\tilde{Y}\subset\tilde{X}$, where $\sigma\colon\tilde{X}\to X$ be 
as in Notation \ref{MDS1_notation}. 
\begin{enumerate}
\renewcommand{\theenumi}{\arabic{enumi}}
\renewcommand{\labelenumi}{(\theenumi)}
\item\label{AZ1_thm1}
We have
\[
\vol(L)=\vol\left(W_{\bullet,\bullet}\right)
=\vol\left(V^{\tilde{Y}}_{\bullet,\bullet}\right). 
\]
\item\label{AZ1_thm2}
For any prime divisor $E$ over the normalization of $Y$, we have 
\begin{eqnarray*}
&&S\left(W_{\bullet,\bullet};E\right)
=S\left(V^{\tilde{Y}}_{\bullet,\bullet};E\right)\\
&=&\frac{n}{\vol(L)}\int_{\tau_-}^{\tau_+}
\left(\left(P(u)|_{\tilde{Y}}\right)^{\cdot n-1}\cdot 
\ord_E\left(N(u)|_{\tilde{Y}}\right)+\int_0^\infty
\vol_{\tilde{Y}}\left(P(u)|_{\tilde{Y}}-vE\right)dv\right)du.
\end{eqnarray*}
\end{enumerate}
\end{thm}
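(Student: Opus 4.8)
The plan is to prove \eqref{AZ1_thm1} and \eqref{AZ1_thm2} simultaneously by first analyzing the divisorial restriction $V^{\tilde{Y}}_{\bullet,\bullet}$, whose graded pieces are written explicitly in Definition \ref{divisorial-restriction_definition} in terms of the Nakayama--Zariski decomposition $\sigma^*(L-uY)=P(u)+N(u)$, and then transferring the result to $W_{\bullet,\bullet}$ by a squeeze argument. The key point for the transfer is Remark \ref{divisorial-restriction_remark}: we have $(\sigma|_{\tilde{Y}})^*W_{a,j}\subset V^{\tilde{Y}}_{a,j}$ for all sufficiently divisible $(a,j)$, together with the cohomological bound $\dim\bigl(V^{\tilde{Y}}_{a,j}/W_{a,j}\bigr)\leq h^1\bigl(\tilde{X},aP(j/a)-\tilde{Y}\bigr)$. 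Since both series have the same bounded support $\Cone\bigl((1,\tau_-),(1,\tau_+)\bigr)$ and both contain an ample series, it suffices to show that the cumulative error $\sum_{j}\dim\bigl(V^{\tilde{Y}}_{l,j}/W_{l,j}\bigr)$ is $o(l^{n})$ as $l\to\infty$, which then forces $\vol(W_{\bullet,\bullet})=\vol(V^{\tilde{Y}}_{\bullet,\bullet})$; and likewise, since the filtrations $\sF_E$ on the two series agree on the subspaces and the quotients are negligible, the Okounkov bodies $\Delta_{Y'_\bullet}$ and the concave functions $G_{\sF_E}$ coincide up to a measure-zero discrepancy, giving $S(W_{\bullet,\bullet};E)=S(V^{\tilde{Y}}_{\bullet,\bullet};E)$. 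The bound $\sum_j h^1\bigl(\tilde{X},aP(j/a)-\tilde{Y}\bigr)=O(l^{n-1})$ is exactly of the shape handled by Proposition \ref{vanish_proposition}: after subdividing $[\tau_-,\tau_+]$ into the intervals $[\tau_{i-1},\tau_i]$ on which $N(u)$ is linear in $u$ and $P(u)$ varies linearly between semiample Cartier divisors (Remark \ref{NZ_resol_remark}, Lemma \ref{gordan_lemma}), on each piece one sets $A:=m_0a_k P(j_k/a_k)$ (nef and big, after perturbing $L$ slightly or using that $P$ is big) and an appropriate nef $B$, and applies Takao Fujita vanishing.

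**Computing the volume and the $S$-invariant of $V^{\tilde{Y}}_{\bullet,\bullet}$.** The heart of \eqref{AZ1_thm2} is the computation on $V^{\tilde{Y}}_{\bullet,\bullet}$ itself. I would use Theorem \ref{LM_thm} (the Lazarsfeld--Musta\c{t}\u{a} slicing theorem): the Okounkov body $\Delta_{Y'_\bullet}(V^{\tilde{Y}}_{\bullet,\bullet})$ fibers over the support $p_1(\Delta)=[\tau_-,\tau_+]$ (the first coordinate being the slope $u=j/a$), and the fiber over $u$ is the Okounkov body of the single-graded series $V^{\tilde{Y}}_{(1,u),\bullet}$ associated to $P(u)|_{\tilde{Y}}$ twisted by the fixed part $N(u)|_{\tilde{Y}}$. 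By Definition \ref{divisorial-restriction_definition}, for $u\in(\tau_-,\tau_+)$ this single-graded series is $aN(u)|_{\tilde{Y}}+H^0(\tilde{Y},aP(u)|_{\tilde{Y}})$, whose volume is $\vol_{\tilde{Y}}(P(u)|_{\tilde{Y}})=(P(u)|_{\tilde{Y}})^{\cdot n-1}$ since $P(u)|_{\tilde{Y}}$ is semiample and big (Remark \ref{NZ_resol_remark}). Integrating $\vol_{\tilde{Y}}(P(u)|_{\tilde{Y}})$ over $u\in[\tau_-,\tau_+]$ recovers $\vol(L)$: indeed $\int_{\tau_-}^{\tau_+}(P(u)|_{\tilde{Y}})^{\cdot n-1}du=\int_0^{\tau_+}\vol_{\tilde{X}}(\sigma^*L-uY)\,du\cdot$(normalization)$=\vol(L)$ by the standard derivative-of-volume formula $\tfrac{d}{du}\vol_{\tilde{X}}(\sigma^*L-uY)=-n\,\vol_{\tilde{Y}}(P(u)|_{\tilde{Y}})$ for $u\geq\tau_-$ (the negative part $N(u)$ for $u<\tau_-$ contributes the missing piece $\int_0^{\tau_-}$, but for $u<\tau_-$ one has $Y\subset\Supp N(u)$ and the restricted volume vanishes, so actually $\vol(L)=n\int_0^{\tau_+}\vol_{\tilde{Y}}(P(u)|_{\tilde Y})du$ — one checks the bookkeeping carefully). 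For the $S$-invariant, apply Proposition \ref{barycenter_proposition}/Definition \ref{filter_definition}\eqref{filter_definition3} with the filtration $\sF_E$ induced on each slice: $S(V^{\tilde{Y}}_{\bullet,\bullet};E)=\tfrac{1}{\vol(L)}\int_{\tau_-}^{\tau_+}\Bigl(\int_0^\infty \dim\{s\in V^{\tilde Y}_{(1,u),\bullet}:\ord_E(s)\geq v\}\,dv\Bigr)du$ appropriately normalized; splitting $\ord_E(s)=\ord_E(N(u)|_{\tilde Y})+\ord_E(\text{moving part})$ gives the constant contribution $(P(u)|_{\tilde Y})^{\cdot n-1}\cdot\ord_E(N(u)|_{\tilde Y})$ plus $\int_0^\infty\vol_{\tilde Y}(P(u)|_{\tilde Y}-vE)\,dv$, which is exactly the claimed formula after multiplying by $n$ from the dimension count of the $(n-1)$-dimensional slices (the factor $n=(r-1+n)$ with $r=2$ appears from $\vol(V_{\vec\bullet})=(r-1+n)!\vol(\Delta)$ versus the fiberwise $(n-1)!$). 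The equality $S(W_{\bullet,\bullet};E)=S(V^{\tilde Y}_{\bullet,\bullet};E)$ then follows from the squeeze argument above, since $W_{\bullet,\bullet}$ is sandwiched between two series with the same asymptotic graded dimensions and the same $\sF_E$-filtration.

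**Main obstacle.** The routine parts are the slicing via Theorem \ref{LM_thm} and the Fujita-vanishing estimate. The genuinely delicate point is the transfer from $V^{\tilde{Y}}_{\bullet,\bullet}$ to $W_{\bullet,\bullet}$ for the \emph{filtered} invariant $S(-;E)$, not just for the volume: one must know that the filtration $\sF_E$ on $W_{\bullet,\bullet}$ and on $V^{\tilde{Y}}_{\bullet,\bullet}$ induce asymptotically the same concave function $G_{\sF_E}$ on the common Okounkov body. For the volume, the inclusion $W\subset V^{\tilde Y}$ plus a negligible-quotient bound is enough by monotonicity. For $S(-;E)$ one needs the same bound to hold uniformly after further filtering by $\sF_E$, i.e. $\sum_j\dim\bigl(\sF_E^\lambda V^{\tilde Y}_{l,j}/\sF_E^\lambda W_{l,j}\bigr)\leq\sum_j\dim\bigl(V^{\tilde Y}_{l,j}/W_{l,j}\bigr)=o(l^n)$ uniformly in $\lambda$, which does follow since $\sF_E$ is a filtration and the total quotient dominates each filtered quotient; combined with the linear boundedness of $T_l(V^{\tilde Y}_{l,\bullet};E)/l$ (so the $v$-integral has bounded range), dominated convergence gives the equality of the limits. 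I would also need to handle the degenerate endpoints $u=\tau_-$ (where $P(\tau_-)|_{\tilde Y}$ may fail to be big, but this is a measure-zero set so it does not affect the integral) and $u=\tau_+$ (where the slice collapses), and to check that the normalization constant $m_0$ from Lemma \ref{gordan_lemma} and the two Veronese rescalings do not leak factors into the final formula — this is bookkeeping but must be done honestly using Lemmas \ref{veronese_lemma} and \ref{filter_lemma}.
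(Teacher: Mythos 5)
Your proposal is essentially correct and uses the same two key ideas as the paper's proof: the cohomological bound $\dim(V^{\tilde Y}_{a,j}/W_{a,j})\le h^1(\tilde X,aP(j/a)-\tilde Y)$ from Remark \ref{divisorial-restriction_remark} together with Proposition \ref{vanish_proposition} (Fujita vanishing) to get the $O(a^{n-1})$ squeeze bound, and the observation that the filtered quotient $V^{\tilde Y,t}_{a,j}/W^t_{a,j}$ injects into the unfiltered quotient $V^{\tilde Y}_{a,j}/W_{a,j}$, so the same bound transfers to the $\sF_E$-filtered counts uniformly in the filtration parameter. Those are precisely the two steps the paper carries out.

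Where you diverge is in how you evaluate $S(V^{\tilde Y}_{\bullet,\bullet};E)$ once the reduction is made. You go through Theorem \ref{LM_thm} (the Lazarsfeld--Musta\c{t}\u{a} slicing of the Okounkov body over $[\tau_-,\tau_+]$) combined with Proposition \ref{barycenter_proposition}, plus a derivative-of-volume formula to check the normalization. The paper instead evaluates $S_a(V^{\tilde Y}_{m_0\vec\bullet};\sF_E)$ directly from Definition \ref{filter_definition}(4), splitting each $\ord_E$ into the fixed contribution $a\cdot\ord_E(N(j/a)|_{\tilde Y})$ (constant over the whole space $H^0(\tilde Y,aP(j/a)|_{\tilde Y})$) plus the filtration on the moving part, and then passes to the limit $a\to\infty$ by Riemann-sum considerations. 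The paper's route is more elementary and avoids invoking the LM slicing theorem and the barycenter proposition in this spot; your route is also valid but pulls in heavier machinery, and the derivative-of-volume tangent is not needed at all, since $\vol(L)=\vol(V_\bullet)=\vol(W_{\bullet,\bullet})$ already holds by Definition \ref{okounkov_definition}, so part (1) reduces to the quotient estimate with no reference to restricted volumes. Incidentally, your parenthetical $\vol(L)=n\int_0^{\tau_+}\vol_{\tilde Y}(P(u)|_{\tilde Y})\,du$ has the wrong lower limit; the correct identity coming from the divisorial restriction is $\vol(L)=n\int_{\tau_-}^{\tau_+}(P(u)|_{\tilde Y})^{\cdot n-1}\,du$ (the slices vanish for $u<\tau_-$ by Definition \ref{divisorial-restriction_definition}), but as you acknowledge, this is bookkeeping and not a fundamental flaw.
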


\begin{proof}
The proof is essentially same as the proof in the paper \cite{FANO}. 
Fix a sufficiently divisible $m_0\in\Z_{>0}$ as in Lemma \ref{gordan_lemma}. 
Let us consider the differences between the two representatives 
$W_{m_0\bullet,m_0\bullet}$ and $V_{m_0\bullet,m_0\bullet}^{\tilde{Y}}$. 

\eqref{AZ1_thm1}
We already know the equality 
$\vol\left(V_\bullet\right)=\vol\left(W_{\bullet,\bullet}\right)$ by Definition 
\ref{okounkov_definition}. Moreover, 
$\vol(L)=\vol\left(V_\bullet\right)$ holds from the definition of $\vol(L)$. Thus, 
it is enough to show the equality 
$\vol\left(W_{\bullet,\bullet}\right)=\vol\left(V^{\tilde{Y}}_{\bullet,\bullet}\right)$. 
For any $a\in m_0^2\Z_{>0}$, by Remark \ref{divisorial-restriction_remark}, we have
\begin{eqnarray*}
0&\leq& 
h^0\left(V^{\tilde{Y}}_{a,m_0\bullet}\right)-h^0\left(W_{a,m_0\bullet}\right)
=\sum_{j\in m_0\Z_{\geq 0}}h^0\left(V^{\tilde{Y}}_{a,j}/W_{a,j}\right)\\
&\leq&\sum_{i=1}^I\sum_{j\in[a\tau_{i-1},a\tau_i]\cap m_0\Z}
h^0\left(V^{\tilde{Y}}_{a,j}/W_{a,j}\right)\\
&\leq&\sum_{i=1}^I\sum_{j\in[a\tau_{i-1},a\tau_i]\cap m_0\Z}
h^1\left(\tilde{X},a P\left(\frac{j}{a}\right)-\tilde{Y}\right).
\end{eqnarray*}
By Remark \ref{NZ_resol_remark} and Lemma \ref{gordan_lemma}, for any 
$1\leq i\leq I$, there exists a Cartier divisor $B^i_{\tilde{X}}$ on $\tilde{X}$ such that 
\[
a P\left(\frac{j}{a}\right)=a P(\tau_{i-1})+\frac{j-a\tau_{i-1}}{m_0}B^i_{\tilde{X}}
\]
holds for any $j\in[a\tau_{i-1},a \tau_i]\cap m_0\Z$. 
Therefore, by Proposition \ref{vanish_proposition}, we have 
\[
0\leq h^0\left(V^{\tilde{Y}}_{a,m_0\bullet}\right)-h^0\left(W_{a,m_0\bullet}\right)\leq 
O(a^{n-1})\quad(\text{as }a\in m_0^2\Z_{>0}\to\infty),
\]
since $P(\tau_i)$ is nef for any $0\leq i\leq I$ and also big when $i<I$. 
Thus we get the equality 
$\vol\left(W_{\bullet,\bullet}\right)=\vol\left(V^{\tilde{Y}}_{\bullet,\bullet}\right)$. 

\eqref{AZ1_thm2}
We can take $\tau'\in\Z_{>0}$ such that 
\[
\vol_{\tilde{Y}}\left(\tilde{Y}, P(u)|_{\tilde{Y}}-v E\right)=0
\]
for any $u\in[\tau_-,\tau_+]$ and for any $v\geq \tau'$. 
For any $t\in\R_{\geq 0}$, the graded linear series $V^{\tilde{Y},t}_{\bullet,\bullet}$ 
and $W^{t}_{\bullet,\bullet}$ in Definition \ref{filter_definition} satisfy 
\[
W^{t}_{a,j}=\sF_E^{a t}W_{a,j}\subset \sF_E^{a t}V^{\tilde{Y}}_{a,j}
=V^{\tilde{Y}, t}_{a,j}
\]
for any $(a,j)\in(m_0\Z_{\geq 0})^2$. Moreover, since we have the natural inclusion 
\[
V^{\tilde{Y},t}_{a,j}/W^{t}_{a,j}\hookrightarrow V^{\tilde{Y}}_{a,j}/W_{a,j}, 
\]
we have 
\begin{eqnarray*}
0\leq h^0\left(V^{\tilde{Y},t}_{a,m_0\bullet}\right)-h^0\left(W^t_{a,m_0\bullet}\right)
\leq\sum_{j\in m_0\Z_{\geq 0}}h^0\left( V^{\tilde{Y}}_{a,j}/W_{a,j}\right)
\leq O(a^{n-1})\quad(\text{as }a\in m_0^2\Z_{>0}\to\infty)
\end{eqnarray*}
by \eqref{AZ1_thm1}. Thus we get 
$\vol\left(W^{t}_{\bullet,\bullet}\right)
=\vol\left(V^{\tilde{Y},t}_{\bullet,\bullet}\right)$. 

For any $a\in m_0^2\Z_{>0}$, we have
\begin{eqnarray*}
S_a\left(V^{\tilde{Y}}_{m_0\vec{\bullet}};\sF_E\right)&=&
\frac{1}{a h^0\left(V^{\tilde{Y}}_{a,m_0\bullet}\right)}
\sum_{j\in[a\tau_-,a\tau_+]\cap m_0\Z}\Biggl(
a\cdot\ord_E\left(N\left(\frac{j}{a}\right)|_{\tilde{Y}}\right)\cdot
h^0\left(\tilde{Y}, a P\left(\frac{j}{a}\right)|_{\tilde{Y}}\right)\\
&&+\sum_{k=0}^{a\tau'}h^0\left(\tilde{Y}, a 
P\left(\frac{j}{a}\right)|_{\tilde{Y}}-k E\Biggr)
\right).
\end{eqnarray*}
This implies that 
\begin{eqnarray*}
&&S\left(V^{\tilde{V}}_{\bullet,\bullet};E\right)\cdot\frac{\vol(L)}{n}
=S\left(V^{\tilde{V}}_{\bullet,\bullet};E\right)\cdot
\lim_{a\in m_0^2\Z_{>0}}\frac{m_0\cdot 
h^0\left(V^{\tilde{Y}}_{a,m_0\bullet}\right)}{na^n/n!}\\
&=&\lim_{a\in m_0^2\Z_{>0}}\Biggl(\frac{m_0}{a}\sum_{j\in[a\tau_-,a\tau_+]\cap m_0\Z}
\ord_E\left(N\left(\frac{j}{a}\right)|_{\tilde{Y}}\right)
\frac{h^0\left(\tilde{Y}, a P\left(\frac{j}{a}\right)|_{\tilde{Y}}\right)}{a^{n-1}/(n-1)!}\\
&&+\frac{m_0}{a^2}\sum_{j\in[a\tau_-,a\tau_+]\cap m_0\Z}\sum_{k=0}^{a\tau'}
\frac{h^0\left(\tilde{Y}, a P\left(\frac{j}{a}\right)|_{\tilde{Y}}-k E\right)}
{a^{n-1}/(n-1)!}\Biggr)\\
&=&\int_{\tau_-}^{\tau_+}
\left(\left(P(u)|_{\tilde{Y}}\right)^{\cdot n-1}\cdot 
\ord_E\left(N(u)|_{\tilde{Y}}\right)+\int_0^{\tau'}
\vol_{\tilde{Y}}\left(P(u)|_{\tilde{Y}}-vE\right)dv\right)du.
\end{eqnarray*}
Thus we have completed the proof of \eqref{AZ1_thm2}. 
\end{proof}

\begin{corollary}\label{AZ1_corollary}
Assume that there is an effective $\Q$-divisor $\Delta$ on $X$ such that 
the pair $(X, \Delta+Y)$ is a plt pair.
Set 
\[
K_Y+\Delta_Y:=(K_X+\Delta+Y)|_Y. 
\]
Then we have 
\[
\delta\left(Y,\Delta_Y; W_{\bullet,\bullet}\right)
=\delta\left(Y,\Delta_Y; V^{\tilde{Y}}_{\bullet,\bullet}\right)
\]
and 
\[
\delta_\eta\left(Y,\Delta_Y; W_{\bullet,\bullet}\right)
=\delta_\eta\left(Y,\Delta_Y; V^{\tilde{Y}}_{\bullet,\bullet}\right)
\]
for any scheme-theoretic point $\eta\in Y$, where we regard 
$V^{\tilde{Y}}_{\bullet,\bullet}$ as the Veronese equivalent class of 
a graded linear series \emph{on} $Y$ under the isomorphism 
$\sO_Y\simeq\left(\sigma|_{\tilde{Y}}\right)_*\sO_{\tilde{Y}}$. 
In particular, we have the inequality
\[
\delta_\eta\left(X,\Delta;L\right)\geq\min\left\{\frac{A_{X,\Delta}(Y)}{S_L(Y)},
\,\,\delta_\eta\left(Y,\Delta_Y; V^{\tilde{Y}}_{\bullet,\bullet}\right)
\right\}.
\]
\end{corollary}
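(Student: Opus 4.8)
The plan is to separate the statement into two parts that rest on the two main theorems already established: the equality of the $\delta$-invariants attached to $W_{\bullet,\bullet}$ and to $V^{\tilde{Y}}_{\bullet,\bullet}$, which will come from Theorem \ref{AZ1_thm} \eqref{AZ1_thm2}, and the displayed inequality, which is an instance of Theorem \ref{AZ_thm}.

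First I would extract two consequences of the hypothesis that $(X,\Delta+Y)$ is plt: the prime divisor $Y$ is normal, so that ``prime divisor over the normalization of $Y$'' just means ``prime divisor over $Y$''; and $\coeff_Y(\Delta)=0$, so $A_{X,\Delta}(Y)=1$ and the pair $(Y,\Delta_Y)$ defined by adjunction is klt. Then, for any prime divisor $E$ over $Y$, Theorem \ref{AZ1_thm} \eqref{AZ1_thm2} gives
\[
S\left(W_{\bullet,\bullet};E\right)=S\left(V^{\tilde{Y}}_{\bullet,\bullet};E\right),
\]
where, thanks to Example \ref{filter_example3} and the normality of $Y$ (which yields $\sO_Y\simeq(\sigma|_{\tilde{Y}})_*\sO_{\tilde{Y}}$), both sides may be computed on $Y$ after transporting $V^{\tilde{Y}}_{\bullet,\bullet}$ along $\sigma|_{\tilde{Y}}$. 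Dividing $A_{Y,\Delta_Y}(E)$ by these equal quantities and taking the infimum over all $E$ with $\eta\in c_Y(E)$ (respectively over all $E$) then gives, via Definition \ref{delta-AZ_definition},
\[
\delta_\eta\left(Y,\Delta_Y;W_{\bullet,\bullet}\right)=\delta_\eta\left(Y,\Delta_Y;V^{\tilde{Y}}_{\bullet,\bullet}\right)
\]
together with its global analogue.

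Next, for the final inequality, I would check that $Y$ is a plt-type prime divisor over $(X,\Delta)$ whose associated plt-blowup is the identity $\id\colon X\to X$: the divisor $-Y$ is $\Q$-Cartier since $X$ is $\Q$-factorial and is trivially $\id$-ample, the pair produced by the equation $K_X+\tilde{\Delta}+(1-A_{X,\Delta}(Y))Y=\id^*(K_X+\Delta)$ is precisely $(X,\Delta+Y)$ because $A_{X,\Delta}(Y)=1$, hence it is plt, and the boundary that Definition \ref{plt_definition} attaches to $Y$ is exactly $\Delta_Y$, given by $K_Y+\Delta_Y=(K_X+\Delta+Y)|_Y$. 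Since $\eta\in Y=c_X(Y)$, Theorem \ref{AZ_thm} applies with $V_{\vec{\bullet}}$ taken to be the complete linear series $V_\bullet$ of $L$ (it has bounded support because $r=1$ and contains an ample series since $L$ is big) and with its refinement by $Y\subset X$, which is nothing but $W_{\bullet,\bullet}$; the infimum over points $\eta'$ lying over $\eta$ collapses to $\eta'=\eta$. Combining the resulting inequality with $\delta_\eta(X,\Delta;L)=\delta_\eta(X,\Delta;V_\bullet)$, with $S(V_\bullet;Y)=S_L(Y)$ (Example \ref{filter_example2}), and with the equality established in the previous paragraph yields the claim.

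The essential input is all in Theorems \ref{AZ1_thm} and \ref{AZ_thm}, so what remains is mostly careful bookkeeping; the one point I would be most careful about is that two different birational morphisms appear --- the common resolution $\sigma\colon\tilde{X}\to X$ of Notation \ref{MDS1_notation} used to build the divisorial restriction $V^{\tilde{Y}}_{\bullet,\bullet}$, versus the plt-blowup (here $\id$) needed to invoke Theorem \ref{AZ_thm} --- and that $W_{\bullet,\bullet}$ is regarded on $\tilde{Y}$ in Theorem \ref{AZ1_thm} (via the pullback of Remark \ref{divisorial-restriction_remark}) but on $Y$ in the statement to be proved; reconciling these, using the invariance of both $S(\,\cdot\,;E)$ and $A_{Y,\Delta_Y}(E)$ under $\sigma|_{\tilde{Y}}$, is exactly what Example \ref{filter_example3} and the normality of $Y$ supply.
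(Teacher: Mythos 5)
Your proposal is correct and matches the paper's (one-line) proof, which simply cites Theorems \ref{AZ_thm} and \ref{AZ1_thm}; you have filled in exactly the bookkeeping the author had in mind. The key points you identify are all needed: that the plt hypothesis forces $\coeff_Y\Delta=0$ (so $A_{X,\Delta}(Y)=1$) and $Y$ normal, that the plt-blowup associated to $Y$ is the identity so the refinement in Theorem \ref{AZ_thm} is $W_{\bullet,\bullet}$ and the infimum over $\eta'$ collapses, and that the $S$-equality from Theorem \ref{AZ1_thm} \eqref{AZ1_thm2} transports across $\sigma|_{\tilde{Y}}$ via Example \ref{filter_example3}.
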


\begin{proof}
Follows immediately from Theorems \ref{AZ_thm} and \ref{AZ1_thm}. 
\end{proof}

\subsection{Taking the refinements twice on $3$-dimensional Mori 
dream spaces}\label{AZ2_subsection}

In \S \ref{AZ2_subsection}, 
we consider a slight generalization of the result in \cite{FANO}. 
The authors in \cite{FANO} only consider the case $X$ is a smooth Fano threefold. 
The case is relatively easy since the movable cone of $X$ is 
equal to the nef cone of $X$. 
However, in order to consider Theorem \ref{main_thm}, we must consider 
(weighted)  blowups of smooth Fano threefolds. Thus we must consider 
more complicated situations than \cite{FANO}. 

In \S \ref{AZ2_subsection}, we further assume that $n=3$. Moreover, 
the prime divisors $\tilde{Y}$ and $Y$ in Notation \ref{MDS1_notation} are 
assumed to be normal. In this case, the series 
$V_{\bullet,\bullet}^{\tilde{Y}}$ on $\tilde{Y}$ 
can be regarded as a series \emph{on $Y$}. 
Moreover, let us fix: 
\begin{itemize}
\item
a projective birational morphism $\nu\colon Y'\to Y$ with $Y'$ normal, 
\item
a prime $\Q$-Cartier divisor $C\subset Y'$ such that $C$ is a smooth 
projective curve, and 
\item
a common resolution
\[\xymatrix{
 & \bar{Y} \ar[dr]^\theta \ar[dl]_\gamma & \\
 Y' \ar[dr]_\nu & & \tilde{Y} \ar[dl]^{\sigma|_{\tilde{Y}}} \\
 & Y & 
}\]
with $\bar{Y}$ normal and $\Q$-factorial.
\end{itemize}
Let us set $\bar{C}:=\gamma^{-1}_*C$ and 
$\gamma^*C=:\bar{C}+\Sigma$. Let 
$W_{\bullet,\bullet,\bullet}^{Y',C}$ (resp., 
$W_{\bullet,\bullet,\bullet}^{\bar{Y},\bar{C}}$) be the refinement of 
$V_{\bullet,\bullet}^{\tilde{Y}}$ on $Y'$ by $C\subset Y'$
(resp., $V_{\bullet,\bullet}^{\tilde{Y}}$ on $\bar{Y}$ by $\bar{C}\subset\bar{Y}$).

\begin{remark}\label{refinement-AZ_remark}
By Remark \ref{refinement_remark}, for any sufficiently divisible $a,j,k\in\Z_{\geq 0}$, 
we have 
\[
W^{\bar{Y},\bar{C}}_{a,j,k}=W^{Y',C}_{a,j,k}+k\left(\Sigma|_{\bar{C}}\right), 
\]
where we regard $W_{\bullet,\bullet,\bullet}^{Y',C}$ as a series on $\bar{C}$ 
under the isomorphism $\gamma|_{\bar{C}}\colon\bar{C}\to C$. 
In particular, we have 
$\Supp\left(W_{\bullet,\bullet,\bullet}^{\bar{Y},\bar{C}}\right)=
\Supp\left(W_{\bullet,\bullet,\bullet}^{Y', C}\right)$. 
\end{remark}

\begin{notation}\label{MDS2_notation}
\begin{enumerate}
\renewcommand{\theenumi}{\arabic{enumi}}
\renewcommand{\labelenumi}{(\theenumi)}
\item\label{MDS2_notation1}
For any $u\in[\tau_-, \tau_+]$, since $\tilde{Y}\not\subset\Supp N(u)$ 
(see Remark \ref{NZ_resol_remark}), we can set 
\[
\theta^*\left(N(u)|_{\tilde{Y}}\right)=: d(u) \bar{C}+N'(u), 
\]
where $d(u):=\ord_{\bar{C}}\left(\theta^*\left(N(u)|_{\tilde{Y}}\right)\right)$. 
(By Lemma \ref{okawa_lemma}, $\theta^*\left(N(u)|_{\tilde{Y}}\right)$ 
is a $\Q$-divisor when $u\in\Q$.)
\item\label{MDS2_notation2}
For any $u\in[\tau_-,\tau_+]$, since $P(u)|_{\tilde{Y}}$ is semiample, we can define 
\[
t(u):=\max\left\{t\in\R_{\geq 0}\,\,\Big|\,\,
\theta^*\left(P(u)|_{\tilde{Y}}\right)-t\bar{C}\text{ is 
pseudo-effective}\right\}.
\]
For any $v\in[0,t(u)]$, let us set 
\begin{eqnarray*}
P(u,v) &:=& P_\sigma\left(\bar{Y}, \theta^*\left(P(u)|_{\tilde{Y}}\right)
-v\bar{C}\right),\\ 
N(u,v) &:=& N_\sigma\left(\bar{Y}, \theta^*\left(P(u)|_{\tilde{Y}}\right)
-v\bar{C}\right),
\end{eqnarray*}
and define 
\[
\bar{\Delta}^{\Supp}:=\left\{(u,v)\in\R^2_{\geq 0}\,\,\Big|\,\,
u\in[\tau_-,\tau_+], \,\,v\in[d(u),d(u)+t(u)]\right\}\subset\R^{2}_{\geq 0}. 
\]
\end{enumerate}
\end{notation}

\begin{lemma}\label{Delta_lemma}
\begin{enumerate}
\renewcommand{\theenumi}{\arabic{enumi}}
\renewcommand{\labelenumi}{(\theenumi)}
\item\label{Delta_lemma1}
The function $d\colon [\tau_-,\tau_+]\to\R_{\geq 0}$ is continuous and convex. 
\item\label{Delta_lemma2}
The function $d+t\colon [\tau_-,\tau_+]\to\R_{\geq 0}$ is continuous and concave. 
\end{enumerate}
In particular, $\bar{\Delta}^{\Supp}\subset\R^2_{\geq 0}$ is a closed and 
convex set. 
\end{lemma}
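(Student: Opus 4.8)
The plan is to prove the two parts of Lemma \ref{Delta_lemma} separately and then deduce the convexity of $\bar\Delta^{\Supp}$ as a direct set-theoretic consequence. For \eqref{Delta_lemma1}, recall from Notation \ref{MDS2_notation} \eqref{MDS2_notation1} that $d(u)=\ord_{\bar C}\bigl(\theta^*(N(u)|_{\tilde Y})\bigr)$, and from Remark \ref{NZ_resol_remark} that on each interval $[\tau_{i-1},\tau_i]$ the divisor $N(u)$ is the linear interpolation of the $\Q$-divisors $N(\tau_{i-1})$ and $N(\tau_i)$, and likewise $N(u)|_{\tilde Y}$ interpolates linearly since $\tilde Y\not\subset\Supp N(u)$ for $u\in[\tau_-,\tau_+]$. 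Pulling back by $\theta$ and taking $\ord_{\bar C}$ is linear, so on each $[\tau_{i-1},\tau_i]$ the function $d$ is affine (in particular continuous), and at the breakpoints $\tau_i$ the two affine pieces agree because $N(\tau_i)|_{\tilde Y}$ is common to the two adjacent interpolations. So $d$ is a continuous piecewise-affine function; convexity will then follow if I check that the slope is nondecreasing across each $\tau_i$. This is where I need a genuine argument rather than bookkeeping, and I expect it to be the main obstacle.

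For the slope comparison I would argue as follows. The quantity $d(u)$ equals $\ord_{\bar C}$ of the negative Nakayama--Zariski part of $\sigma^*(L-uY)|_{\tilde Y}$, pulled back to $\bar Y$; equivalently, up to the contributions of the $\sigma$-exceptional locus which are themselves piecewise-affine, $d(u)$ measures the failure of $\sigma^*(L-uY)$ to be ``nef along $\bar C$''. The cleaner route is to invoke convexity of Zariski-type decompositions directly: for a pseudo-effective class, the negative-part coefficient function along a fixed prime divisor is a convex function of the class (this is \cite[Chapter III]{N}; the function $t\mapsto\ord_E N_\sigma(D+tD')$ is convex), and restriction to $\tilde Y$ followed by $\ord_{\bar C}\circ\theta^*$ preserves this because, on the locus $[\tau_-,\tau_+]$ where $\tilde Y$ is not in the support, restriction commutes with taking $P_\sigma$ and $N_\sigma$ by \cite[Chapter III, Lemma 2.5]{N} (already cited in Remark \ref{NZ_resol_remark}). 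Hence $u\mapsto \ord_{\bar C}\bigl(\theta^*(N(u)|_{\tilde Y})\bigr)$ is convex on $[\tau_-,\tau_+]$, which is \eqref{Delta_lemma1}.

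For \eqref{Delta_lemma2}, observe that $d(u)+t(u)=\max\{\,t\in\R_{\ge0}\mid \theta^*(\sigma^*(L-uY)|_{\tilde Y})-t\bar C\text{ is pseudo-effective}\,\}$, because $\theta^*(P(u)|_{\tilde Y})=\theta^*(\sigma^*(L-uY)|_{\tilde Y})-\theta^*(N(u)|_{\tilde Y})$ and $\theta^*(N(u)|_{\tilde Y})=d(u)\bar C+N'(u)$ with $N'(u)$ effective not containing $\bar C$; adding the constant $d(u)$ back in, $d(u)+t(u)$ is the pseudo-effective threshold of $\theta^*\sigma^*(L-uY)|_{\tilde Y}$ along $\bar C$. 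Now $u\mapsto \sigma^*(L-uY)|_{\tilde Y}$ is an affine map into $\ND(\bar Y)_\R$ (after pullback), the pseudo-effective cone is convex, and $C\mapsto\sup\{t: C-t\bar C\in\Eff\}$ is a concave function on the pseudo-effective cone (it is the support-function-type quantity associated to a supporting hyperplane; equivalently it is concave because $\Eff$ is convex and $\bar C$ is fixed). Continuity on the closed interval follows from concavity together with the fact that the values are finite (which holds since $L-\tau_+Y$ is still pseudo-effective and $\bar C$ is a nonzero class). This gives \eqref{Delta_lemma2}. Finally, $\bar\Delta^{\Supp}=\{(u,v): \tau_-\le u\le\tau_+,\ d(u)\le v\le d(u)+t(u)\}$ is the region between the graph of a convex function and the graph of a concave function over an interval, hence it is closed (both $d$ and $d+t$ are continuous) and convex (the subgraph of a concave function and the epigraph of a convex function are each convex, and their intersection is convex); this completes the proof.
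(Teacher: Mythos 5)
Your proof of part \eqref{Delta_lemma1} is essentially the paper's argument: the paper reduces the convexity of $d$ to the convexity of the Nakayama--Zariski negative part (its Claim \ref{Delta_claim}, $N((1-s)u+su')\leq (1-s)N(u)+sN(u')$), and then applies the linear operations of restricting to $\tilde Y$, pulling back by $\theta$, and taking $\ord_{\bar C}$. Your remarks about piecewise-affineness and needing to check the slope are unnecessary once you invoke the convexity of $N(\cdot)$ directly, but the argument is correct.

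Part \eqref{Delta_lemma2} has a genuine gap. You assert the identity
\[
d(u)+t(u)=\max\bigl\{\,s\in\R_{\geq 0}\ \bigl|\ \theta^*\bigl(\sigma^*(L-uY)|_{\tilde Y}\bigr)-s\bar C\ \text{pseudo-effective}\,\bigr\},
\]
and the rest of your argument (affine family, concavity of the pseudo-effective threshold, intersection of an epigraph and a subgraph) rests entirely on it. Only one direction of this identity is clear. Writing
\[
\theta^*\bigl(\sigma^*(L-uY)|_{\tilde Y}\bigr)-\bigl(d(u)+t(u)\bigr)\bar C
=\Bigl(\theta^*\bigl(P(u)|_{\tilde Y}\bigr)-t(u)\bar C\Bigr)+N'(u),
\]
the right-hand side is pseudo-effective plus effective, so the threshold on the left is at least $d(u)+t(u)$. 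The reverse inequality is false in general: adding the effective divisor $N'(u)$ can strictly increase the pseudo-effective threshold along $\bar C$. A toy example: on $\bar Y=\Bl_{2}\pr^2$ with $\bar C=L$ the strict transform of the line through the two centers, $\theta^*(P(u)|_{\tilde Y})=H$ (pullback of a line) and $N'(u)=2H-E_1-E_2$ (a conic through both centers, not containing $L$); then $t(u)=1$ but the pseudo-effective threshold of $H+N'(u)$ along $L$ is $3$. Since $N'(u)$ in Notation \ref{MDS2_notation} is just the effective part of $\theta^*(N(u)|_{\tilde Y})$ away from $\bar C$ and carries no negativity or orthogonality constraints on $\bar Y$, nothing in the setup rules this out. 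The correct quantity $d(u)+t(u)$ is not the pseudo-effective threshold of the full class but rather the analogous threshold for the restricted linear series $V^{\tilde Y}_{\bullet,\bullet}$, and this distinction is exactly why the paper must keep track of $N'(u)$ explicitly: it shows that $\theta^*(\sigma^*(L-uY)|_{\tilde Y})-N'(u)-(d(u)+t(u))\bar C$ is pseudo-effective, takes a convex combination, and then uses the divisor inequality $N'(\bar u)\leq (1-s)N'(u)+sN'(u')$ (a consequence of Claim \ref{Delta_claim}) to replace $(1-s)N'(u)+sN'(u')$ by $N'(\bar u)$ before translating back into the inequality $d(\bar u)+t(\bar u)\geq (1-s)(d(u)+t(u))+s(d(u')+t(u'))$. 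To repair your argument you would need to redo the convex-combination step with the $N'$ terms carried along, which is precisely the paper's proof.

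Given \eqref{Delta_lemma1} and \eqref{Delta_lemma2}, your final deduction that $\bar\Delta^{\Supp}$ is closed and convex is correct.
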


\begin{proof}
\eqref{Delta_lemma1}
The continuity is trivial by Lemma \ref{okawa_lemma}. The convexity follows from the 
following: 

\begin{claim}\label{Delta_claim}
For all $\tau_-\leq u<u'\leq \tau_+$ and for all $s\in[0,1]$, we have 
\[
N\left((1-s)u+su'\right)\leq (1-s)N(u)+s N(u'). 
\]
\end{claim}

\begin{proof}[Proof of Claim \ref{Delta_claim}]
Trivial since 
\begin{eqnarray*}
N\left((1-s)u+su'\right)&=&N_\sigma\left(\tilde{X}, (1-s)\sigma^*(L-u Y)
+s \sigma^*(L-u' Y)\right)\\
&\leq& 
(1-s) N_\sigma\left(\tilde{X}, \sigma^*(L-u Y)\right)
+s N_\sigma\left(\tilde{X}, \sigma^*(L-u' Y)\right)
\end{eqnarray*}
(see \cite[Chapter III, Definition 1.1]{N}). 
\end{proof}

\eqref{Delta_lemma2}
The continuity of $d+t$ is trivial by \eqref{Delta_lemma1}. Let us show the concavity. 
Take any $\tau_-\leq u<u'\leq \tau_+$ and $s\in[0,1]$. Note that 
\[
\theta^*\left(\sigma^*(L-u Y)|_{\tilde{Y}}\right)-N'(u)=
\theta^*\left(P(u)|_{\tilde{Y}}\right)+d(u)\bar{C}.
\]
Thus 
\[
\theta^*\left(\sigma^*(L-u Y)|_{\tilde{Y}}\right)-N'(u)-(d(u)+t(u))\bar{C}
\]
is pseudo-effective. By Claim \ref{Delta_claim}, we have 
\begin{eqnarray*}
&&\theta^*\left(\sigma^*\left(L-\left((1-s)u+s u'\right)Y\right)|_{\tilde{Y}}\right)
-\left((1-s)N'(u)+sN'(u')\right)\\
&&-\left((1-s)(d(u)+t(u))+s(d(u')+t(u'))\right)\bar{C}\\
&\leq&\theta^*\left(\sigma^*\left(L-\left((1-s)u+s'u\right)Y\right)|_{\tilde{Y}}\right)
-N'\left((1-s)u+su'\right)\\
&&-\left((1-s)(d(u)+t(u))+s(d(u')+t(u'))\right)\bar{C}. 
\end{eqnarray*}
This implies that 
\begin{eqnarray*}
\theta^*\left(P\left((1-s)u+s u'\right)|_{\tilde{Y}}\right)-
\left((1-s)\left(d(u)+t(u)\right)+s\left(d(u')+t(u')\right)
-d\left((1-s)u+s u'\right)\right)\bar{C}
\end{eqnarray*}
is pseudo-effective. Thus we get the assertion. 
\end{proof}

\begin{lemma}\label{interior_lemma}
\begin{enumerate}
\renewcommand{\theenumi}{\arabic{enumi}}
\renewcommand{\labelenumi}{(\theenumi)}
\item\label{interior_lemma1}
For any $u\in[\tau_-,\tau_+]$, we have $N(u,0)=0$. In particular, for any 
$v\in[0,t(u)]$, we have $\bar{C}\not\subset \Supp N(u,v)$. 
\item\label{interior_lemma2}
For any $(u,v)\in\interior\left(\bar{\Delta}^{\Supp}\right)$, we have 
$\left(P(u,v)\cdot\bar{C}\right)>0$. 
\end{enumerate}
\end{lemma}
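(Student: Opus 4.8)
The plan is to handle the two assertions in turn, using throughout that $\bar{Y}$ is a normal $\Q$-factorial projective surface, so that on $\bar{Y}$ the Nakayama--Zariski decomposition agrees with the classical Zariski decomposition and the positive part of a pseudo-effective divisor is nef.

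For \eqref{interior_lemma1}: since $u\in[\tau_-,\tau_+]$, Remark \ref{NZ_resol_remark} gives that $P(u)$ is semiample on $\tilde{X}$; hence $P(u)|_{\tilde{Y}}$ is semiample on $\tilde{Y}$, and therefore $\theta^*\bigl(P(u)|_{\tilde{Y}}\bigr)$ is semiample, in particular nef, on $\bar{Y}$. A nef divisor has trivial negative part, so $N(u,0)=N_\sigma\bigl(\bar{Y},\theta^*(P(u)|_{\tilde{Y}})\bigr)=0$. For the ``in particular'' statement, note that $\ord_{\bar{C}}N(u,0)=0$ while $t(u)$ is by definition the pseudo-effective threshold of $\theta^*(P(u)|_{\tilde{Y}})$ along $\bar{C}$; applying \cite[Chapter III, Lemma 1.8 and Corollary 1.9]{N} exactly as in the proof of Lemma \ref{okawa_lemma} \eqref{okawa_lemma2} then yields $\bar{C}\not\subset\Supp N(u,v)$ for every $v\in[0,t(u)]$.

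For \eqref{interior_lemma2}: let $(u,v)\in\interior(\bar{\Delta}^{\Supp})$. By Lemma \ref{Delta_lemma} and the description of $\bar{\Delta}^{\Supp}$ this forces $u\in(\tau_-,\tau_+)$ and the parameter governing $P(u,v)$ to lie strictly inside $(0,t(u))$; in particular $P(u)|_{\tilde{Y}}$ is big as well as semiample, so $D:=\theta^*(P(u)|_{\tilde{Y}})$ is nef and big on $\bar{Y}$, and $D-v\bar{C}$ is big, being a positive combination of the big class $D$ and the pseudo-effective class $D-t(u)\bar{C}$. Put $g(v):=\vol_{\bar{Y}}(D-v\bar{C})$. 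I will use three standard facts: (a) $g$ is $C^1$ on the open big range with $g'(v)=-2\,P(u,v)\cdot\bar{C}$, the positive intersection product of $D-v\bar{C}$ on the surface $\bar{Y}$ being the Zariski positive part $P(u,v)$; (b) $g$ is non-increasing, since $\bar{C}$ is effective; (c) $g^{1/2}$ is concave. Also $g(0)=\vol_{\bar{Y}}(D)=(D^2)>0$. The key point is that $g(v)<g(0)$ for every $v>0$ in the big range: if $g(v)=g(0)$, then in the Zariski decomposition $D-v\bar{C}=P(u,v)+N(u,v)$ the divisor $P(u,v)$ is nef and big with $P(u,v)^2=g(v)=D^2$ and $P(u,v)\le D$, so the Hodge index theorem forces $P(u,v)\equiv D$; hence $v\bar{C}+N(u,v)=D-P(u,v)\equiv 0$, which is absurd since this is a nonzero effective divisor on a projective surface. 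Granting this, suppose $g'(v_0)=0$, i.e.\ $P(u,v_0)\cdot\bar{C}=0$, for some interior $v_0$. By (b), $g'\le 0$ throughout; by (c), since $g^{1/2}$ is concave and $(g^{1/2})'(v_0)=0$, we get $(g^{1/2})'\ge 0$, hence $g'\ge 0$, on $(0,v_0)$. Thus $g'\equiv 0$ on $(0,v_0)$, so $g$ is constant equal to $g(0)$ there, contradicting the key point. Therefore $P(u,v)\cdot\bar{C}=-\tfrac12 g'(v)>0$.

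The main obstacle is \eqref{interior_lemma2}: ruling out $P(u,v)\cdot\bar{C}=0$ at an interior point. The Hodge index computation above is what achieves this, but it has to be combined with the monotonicity and concavity of the volume function to exclude a ``flat'' interval of $g$; one also needs that the relevant positive part is genuinely nef, which is where the hypothesis that $\bar{Y}$ is a normal $\Q$-factorial projective surface enters, and one must respect the interiority hypothesis, since the conclusion genuinely fails on the boundary $v\in\{0,t(u)\}$ (for example when $\bar{C}$ is $\theta$-exceptional, so that $D\cdot\bar{C}=0$).
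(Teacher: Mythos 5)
Your proof of part (1) is essentially the same as the paper's: observe $N(u,0)=0$ because $\theta^*\bigl(P(u)|_{\tilde Y}\bigr)$ is nef, then invoke Nakayama's monotonicity of negative-part supports (\cite[Chapter~III, Corollary~1.9]{N}) exactly as in Lemma~\ref{okawa_lemma}~\eqref{okawa_lemma2}.

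Your proof of part (2) is correct, but it takes a genuinely different route from the paper's. The paper argues directly with uniqueness of Zariski decomposition: assuming $P(u,v)\cdot\bar C=0$, the Hodge index theorem (applied to the nef and big $P(u,v)$ which is orthogonal to every component of $N(u,v)$ and to $\bar C$) shows that $\Supp\bigl(N(u,v)+\bar C\bigr)$ has negative-definite intersection matrix; then for $0<\varepsilon\ll 1$ the manifest orthogonal decomposition $\theta^*(P(u)|_{\tilde Y})-(v-\varepsilon)\bar C=P(u,v)+\bigl(N(u,v)+\varepsilon\bar C\bigr)$ is already the Zariski decomposition, so $\bar C\subset\Supp N(u,v-\varepsilon)$, contradicting part (1). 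Your argument instead passes through the volume function $g(v)=\vol_{\bar Y}\bigl(D-v\bar C\bigr)$: you apply the Hodge index theorem in a different place (to show $g(v)=g(0)$ would force $P(u,v)\equiv D$, hence a nonzero effective numerically trivial divisor), and then combine monotonicity with the concavity of $g^{1/2}$ and the differentiability formula $g'(v)=-2\,P(u,v)\cdot\bar C$ to exclude a vanishing derivative at an interior point. Both arguments are surface-specific. The paper's is shorter and more elementary, relying only on the characterization of Zariski decomposition; yours is longer but displays the phenomenon structurally through the behavior of the volume function, which some readers may find more conceptual, and it makes explicit that the failure of the inequality at $v=0$ is exactly the failure of strict monotonicity of $g$ at the endpoint. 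One small caution: in invoking $g'(v)=-2\,P(u,v)\cdot\bar C$ you should make sure to cite the surface form of the differentiability of the volume (which on a surface follows from the classical Zariski decomposition, not just from the general Boucksom--Favre--Jonsson result), since you are using it as a black box.
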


\begin{proof}
\eqref{interior_lemma1}
Since $\theta^*\left(P(u)|_{\tilde{Y}}\right)$ is nef, we have $N(u,0)=0$. If 
$\bar{C}\subset\Supp N(u,v)$, then, by \cite[Chapter III, Corollary 1.9]{N}, 
we must have $\bar{C}\subset\Supp N(u,0)$. This leads to a contradiction. 

\eqref{interior_lemma2}
Assume that $\left(P(u,v)\cdot\bar{C}\right)=0$. The Hodge index theorem implies that 
the intersection matrix of the support of $N(u,v)+\bar{C}$ is negative definite. 
Thus, for any $0<\varepsilon \ll 1$, 
\[
\theta^*\left(P(u)|_{\tilde{Y}}\right)-(v-\varepsilon)C
=P(u,v)+\left(N(u,v)+\varepsilon \bar{C}\right)
\]
gives the Zariski decomposition. This leads to a contradiction to \eqref{interior_lemma1}.
\end{proof}

\begin{proposition}\label{interior_proposition}
\begin{enumerate}
\renewcommand{\theenumi}{\arabic{enumi}}
\renewcommand{\labelenumi}{(\theenumi)}
\item\label{interior_proposition1}
We have 
\begin{eqnarray*}
{\bar{\Delta}}^{\Supp}=
\Supp\left(W^{\bar{Y},\bar{C}}_{\bullet,\bullet,\bullet}\right)\cap
\left(\{1\}\times\R^2_{\geq 0}\right)=
\Supp\left(W^{Y',C}_{\bullet,\bullet,\bullet}\right)\cap
\left(\{1\}\times\R^2_{\geq 0}\right).
\end{eqnarray*}
\item\label{interior_proposition2}
Take any closed point $p\in C$ and let us consider the Okounkov body 
$\Delta_{C_\bullet}\left(W_{\bullet,\bullet,\bullet}^{Y',C}\right)\subset\R_{\geq 0}^3$ 
of $W_{\bullet,\bullet,\bullet}^{Y',C}$ associated to the admissible flag 
\[
C_{\bullet}\quad\colon\quad C\supsetneq \{ p\}.
\]
Let 
\[
pr\colon\Delta_{C_\bullet}\left(W_{\bullet,\bullet,\bullet}^{Y',C}\right)
\to\bar{\Delta}^{\Supp}
\]
be the natural projection as in Theorem \ref{LM_thm}. Then, for any 
$(u,v)\in\interior\left(\bar{\Delta}^{\Supp}\right)$, the inverse image 
$pr^{-1}\left((u,v)\right)\subset\R^1$ is equal to the closed area
\begin{eqnarray*}
&&\Bigl[
\ord_p\left(\left(N'(u)+N(u,v-d(u))-v\Sigma\right)|_{\bar{C}}\right),\\
&&\ord_p\left(\left(N'(u)+N(u,v-d(u))-v\Sigma\right)|_{\bar{C}}\right)
+\left(P(u,v-d(u))\cdot\bar{C}\right)\Bigr]\subset\R_{\geq 0}.
\end{eqnarray*}
\end{enumerate}
\end{proposition}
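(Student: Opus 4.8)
The plan is to follow the corresponding computation in \cite{FANO}, performing everything on the common resolution $\bar{Y}$ and then transferring to $Y'$ and to $C$ by means of Remark \ref{refinement-AZ_remark}. First I would reduce, via Lemmas \ref{filter_lemma} and \ref{refinement_lemma}, to the case in which all the divisors involved are Cartier, so that $W^{\bar Y,\bar C}_{\bullet,\bullet,\bullet}$ and $W^{Y',C}_{\bullet,\bullet,\bullet}$ are honest $\Z_{\geq 0}^3$-graded linear series; by Lemma \ref{refinement_lemma} \eqref{refinement_lemma1} both have bounded support and contain ample series, so their supports are the closures of the cones spanned by $\sS(W^{\bar Y,\bar C}_{\bullet,\bullet,\bullet})$ resp.\ $\sS(W^{Y',C}_{\bullet,\bullet,\bullet})$, and these two cones agree by Remark \ref{refinement-AZ_remark}. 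Thus for \eqref{interior_proposition1} it suffices to show $\overline{\Cone\bigl(\sS(W^{\bar Y,\bar C}_{\bullet,\bullet,\bullet})\bigr)}\cap(\{1\}\times\R^2_{\geq 0})=\bar{\Delta}^{\Supp}$, and throughout I identify $C$ with $\bar C$ (and $p$ with its preimage) via the isomorphism $\gamma|_{\bar C}\colon\bar C\to C$.

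For the inclusion $\subset$ in \eqref{interior_proposition1}: unwinding Definitions \ref{divisorial-restriction_definition} and \ref{refinement_definition} and using $\theta_*\sO_{\bar Y}=\sO_{\tilde Y}$, the space $\theta^*V^{\tilde Y}_{a,j}$ vanishes unless $j/a\in[\tau_-,\tau_+]$, in which case it is the set of sections whose divisor contains the fixed part $a\theta^*(N(j/a)|_{\tilde Y})=a\,d(j/a)\bar C+a N'(j/a)$; since $\ord_{\bar C}N'(u)=0$, any such section has $\bar C$-order $a\,d(j/a)+\ord_{\bar C}(s_0)$ with $s_0\in H^0(\bar Y,a\theta^*(P(j/a)|_{\tilde Y}))\setminus\{0\}$, and $\ord_{\bar C}(s_0)/a\leq t(j/a)$ because $a\theta^*(P(j/a)|_{\tilde Y})-\ord_{\bar C}(s_0)\bar C$ is effective, hence pseudo-effective. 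As $W^{\bar Y,\bar C}_{a,j,k}\neq 0$ forces the existence of such a section of $\bar C$-order exactly $k$, we get $k/a\in[d(j/a),d(j/a)+t(j/a)]$, i.e.\ $(a,j,k)\in\Cone(\bar{\Delta}^{\Supp})$, which is closed and convex by Lemma \ref{Delta_lemma}. For the reverse inclusion, by continuity of $d$ and of $d+t$ and convexity of $\bar{\Delta}^{\Supp}$ it is enough to produce, for every rational $u\in(\tau_-,\tau_+)$ and every sufficiently divisible $a$, points $(a,au,a\,d(u))$ and $(a,au,a\,d(u)+\mu_a)$ in $\sS(W^{\bar Y,\bar C}_{\bullet,\bullet,\bullet})$ with $\mu_a/a\to t(u)$: the first because $\theta^*(P(u)|_{\tilde Y})$, being semiample (Remark \ref{NZ_resol_remark}), has a section not vanishing along $\bar C$; the second with $\mu_a:=\max\{\ord_{\bar C}(s_0)\}$, using that $\theta^*(P(u)|_{\tilde Y})$ is also big (Remark \ref{NZ_resol_remark}) and that for a big divisor the pseudo-effectivity threshold along $\bar C$ is the asymptotic order of vanishing of global sections along $\bar C$. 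This gives \eqref{interior_proposition1}, and then $\Supp(W^{Y',C}_{\bullet,\bullet,\bullet})\cap(\{1\}\times\R^2_{\geq 0})=\bar{\Delta}^{\Supp}$ as well.

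For \eqref{interior_proposition2}, fix $(u,v)\in\interior(\bar{\Delta}^{\Supp})\cap\Q^2_{\geq 0}$ and set $v'':=v-d(u)\in(0,t(u))$; by \eqref{interior_proposition1} we have $\vec a:=(1,u,v)\in\interior\bigl(\Supp(W^{Y',C}_{\bullet,\bullet,\bullet})\bigr)\cap\Q^3_{\geq 0}$, so Theorem \ref{LM_thm} gives $pr^{-1}(\{(u,v)\})=\Delta_{C_\bullet}(W^{Y',C}_{\vec a,\bullet})$, the Okounkov body of the graded linear series on the curve $C$ with $W^{Y',C}_{\vec a,m}=W^{Y',C}_{m,mu,mv}$. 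By Remark \ref{refinement-AZ_remark} the series $W^{\bar Y,\bar C}_{\vec a,\bullet}$ is obtained from $W^{Y',C}_{\vec a,\bullet}$ by adding the fixed divisor $m v\,\Sigma|_{\bar C}$ in degree $m$, so $\Delta_{C_\bullet}(W^{Y',C}_{\vec a,\bullet})=\Delta_{\bar C_\bullet}(W^{\bar Y,\bar C}_{\vec a,\bullet})-v\,\ord_p(\Sigma|_{\bar C})$ as intervals. Finally, unwinding Definitions \ref{divisorial-restriction_definition} and \ref{refinement_definition} on $\bar Y$ and decomposing $\theta^*(P(u)|_{\tilde Y})-v''\bar C=P(u,v'')+N(u,v'')$ into its Nakayama--Zariski parts on the normal $\Q$-factorial surface $\bar Y$, for sufficiently divisible $m$ one obtains
\[
W^{\bar Y,\bar C}_{m,mu,mv}=m\bigl(N'(u)+N(u,v'')\bigr)\big|_{\bar C}+\Image\Bigl(H^0\bigl(\bar Y,m P(u,v'')\bigr)\xrightarrow{\rest}H^0\bigl(\bar C,m P(u,v'')|_{\bar C}\bigr)\Bigr),
\]
where the first summand is a genuine effective divisor on $\bar C$ since $\bar C\not\subset\Supp N'(u)$ and, by Lemma \ref{interior_lemma} \eqref{interior_lemma1}, $\bar C\not\subset\Supp N(u,v'')$. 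Since $P(u,v'')$ is nef (so $\B_-(P(u,v''))=\emptyset$) and big (because $v''<t(u)$ and $u\in(\tau_-,\tau_+)$), a standard asymptotic argument on the curve $\bar C$ shows that, after dividing by $m$, the orders of vanishing at $p$ of the image subspace fill out precisely the interval $[0,(P(u,v'')\cdot\bar C)]$, the upper endpoint being $\deg(P(u,v'')|_{\bar C})=(P(u,v'')\cdot\bar C)>0$ by Lemma \ref{interior_lemma} \eqref{interior_lemma2}. Hence $\Delta_{\bar C_\bullet}(W^{\bar Y,\bar C}_{\vec a,\bullet})$ is the interval with left endpoint $\ord_p\bigl((N'(u)+N(u,v''))|_{\bar C}\bigr)$ and length $(P(u,v'')\cdot\bar C)$, and subtracting $v\,\ord_p(\Sigma|_{\bar C})$ yields the formula in \eqref{interior_proposition2}.

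The main obstacle is in the two asymptotic inputs on the curves. For \eqref{interior_proposition1} one needs that for a big divisor the pseudo-effectivity threshold along $\bar C$ is attained asymptotically by orders of vanishing of global sections; for \eqref{interior_proposition2} one needs that the restricted linear series $\Image\bigl(H^0(\bar Y,m P(u,v''))\to H^0(\bar C,m P(u,v'')|_{\bar C})\bigr)$ asymptotically realizes every order of vanishing at $p$ up to the full degree $(P(u,v'')\cdot\bar C)$. Both are instances of the Lazarsfeld--Mustață description of (restricted) Okounkov bodies, but some care is required here because $\bar Y$ is only normal and $\Q$-factorial and $P(u,v'')$ is only nef and big rather than semiample; the points to exploit are $\B_-(P(u,v''))=\emptyset$ (to rule out fixed components of the restricted series) together with a Fujita-type vanishing as in Proposition \ref{vanish_proposition} to control the $h^1$-terms, so that $\dim\Image\bigl(H^0(\bar Y,m P(u,v''))\to H^0(\bar C,m P(u,v'')|_{\bar C})\bigr)=m(P(u,v'')\cdot\bar C)+o(m)$ and hence the restricted series on $\bar C$ is asymptotically of full degree.
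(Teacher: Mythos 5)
Your proposal is correct and follows essentially the same route as the paper: the core of both (1) and (2) is the explicit computation of $W^{\bar{Y},\bar{C}}_{m,mu,mv}$ as (a fixed divisor) $+$ (the image of $H^0(\bar{Y},mP(u,v''))\to H^0(\bar{C},mP(u,v'')|_{\bar{C}}))$, combined with the bound $\coker(\rest)\hookrightarrow H^1(\bar{Y},mP(u,v'')-\bar{C})=O(1)$ from Fujita's vanishing, then a shift via Remark~\ref{refinement-AZ_remark}, and Theorem~\ref{LM_thm} for the fiber of the Okounkov body. The only genuine deviation is in the reverse inclusion of (1), where you produce explicit support points via the pseudo-effectivity threshold $t(u)$ rather than reading nonvanishing off the cokernel estimate as the paper does; this is a valid and slightly more self-contained alternative, though you must (and in effect do) repeat the same cokernel computation in part (2) anyway, so nothing is really saved.
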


\begin{proof}
\eqref{interior_proposition1}
Take any $(u,v)\in\R^2_{\geq 0}\setminus\bar{\Delta}^{\Supp}$ 
with $(u,v)\in\Q^2$. If $u\not\in[\tau_-,\tau_+]$, then $V_{m,mu}^{\tilde{Y}}=0$ 
for $m\in\Z_{>0}$ sufficiently divisible. Thus we have 
$W^{\bar{Y},\bar{C}}_{m,mu,mv}=0$. If $u\in[\tau_-,\tau_+]$ but 
$v\not\in[d(u), d(u)+t(u)]$, then $W^{\bar{Y}, \bar{C}}_{m,mu,mv}=0$ 
for $m\in\Z_{>0}$ sufficiently divisible. Indeed, 
\begin{itemize}
\item
if $v<d(u)$, then the homomorphism 
\begin{eqnarray*}
&&\theta^*V^{\tilde{Y}}_{m,mu}\cap 
\left(mv\bar{C}+H^0\left(\bar{Y}, m\theta^*\left(\sigma^*(L-u Y)|_{\tilde{Y}}\right)
-mv\bar{C}\right)\right)\\
&\xrightarrow{\rest}&
H^0\left(\bar{C}, m\theta^*\sigma^*(L-u Y)|_{\bar{C}}-mv\bar{C}|_{\bar{C}}\right)
\end{eqnarray*}
is a zero map since any member in 
$\theta^*V^{\tilde{Y}}_{m,mu}$ vanishes along $\bar{C}$ 
of order at least $m d(v)>mv$, 
\item
if $v>d(u)+t(u)$, then 
\[
\theta^*V^{\tilde{Y}}_{m,mu}\cap \left(mv\bar{C}+
H^0\left(\bar{Y}, m\theta^*\left(\sigma^*(L-u Y)|_{\bar{Y}}\right)
-mv\bar{C}\right)\right)=0
\]
since 
\[
\ord_{\bar{C}}\left(\theta^*\left(m N(u)|_{\tilde{Y}}\right)\right)=m d(u)
\]
and 
\[
m\left(\theta^*\left(P(u)|_{\tilde{Y}}\right)-(v-d(u))\bar{C}\right)
\]
is not pseudo-effective.
\end{itemize}
Thus we get the inclusion 
${\bar{\Delta}}^{\Supp}\supset
\Supp\left(W_{\bullet,\bullet,\bullet}^{\bar{Y},\bar{C}}\right)\cap
\left(\{1\}\times\R_{\geq 0}^2\right)$. 

Take any $(u,v)\in\interior\left(\bar{\Delta}^{\Supp}\right)\cap\Q^2$. 
For any sufficiently divisible $m\in\Z_{>0}$, since 
\begin{eqnarray*}
&&\theta^*V^{\tilde{Y}}_{m,mu}\cap\left(mv \bar{C}
+H^0\left(\bar{Y}, m\left(\theta^*\left(\sigma^*(L-u Y)|_{\tilde{Y}}\right)
-v\bar{C}\right)\right)\right)\\
&=&\left(m\left(d(u)\bar{C}+N'(u)\right)+H^0\left(
\bar{Y}, m \theta^*\left(P(u)|_{\tilde{Y}}\right)\right)\right)\\
&&\cap\left(mv\bar{C}+H^0\left(\bar{Y}, 
m\left(\theta^*\left(\sigma^*(L-u Y)|_{\tilde{Y}}\right)-v\bar{C}\right)\right)\right)\\
&=&m\left(v\bar{C}+N'(u)+N(u,v-d(u))\right)
+H^0\left(\bar{Y}, m P(u, v-d(u))\right),
\end{eqnarray*}
we have 
\begin{eqnarray*}
W^{\bar{Y}, \bar{C}}_{m,mu,mv}&=&
\Image\Biggl(m\left(v\bar{C}+
N'(u)+N(u,v-d(u))\right)+H^0\left(\bar{Y}, m P(u,v-d(u))\right)\\
&&\xrightarrow{\rest}m\left(N'(u)|_{\bar{C}}+N(u,v-d(u))|_{\bar{C}}\right)
+H^0\left(\bar{C},m P(u,v-d(u))|_{\bar{C}}\right)\Biggr).
\end{eqnarray*}
The above homomorphism $\rest$ satisfies that 
\[
\coker(\rest)\subset H^1\left(\bar{Y}, m P(u,v-d(u))-\bar{C}\right). 
\]
Since $P(u, v-d(u))$ is a nef and big $\Q$-divisor, we get 
$\dim\left(\coker(\rest)\right)\leq O(1)$ as $m\to\infty$ 
by Takao Fujita's vanishing theorem \cite[Corollary 3.9.3]{fujino}. 
Moreover, let us recall that 
\[
W_{m,mu,m v}^{\bar{Y},\bar{C}}=W_{m,mu,m v}^{Y',C}+m v\left(\Sigma|_{\bar{C}}\right). 
\]
Thus, the Okounkov body of the series $W_{(1,u,v),\bullet}^{Y',C}$ (given 
in Theorem \ref{LM_thm}) associated to $C_\bullet$ is equal to the closed area 
given in the assertion of Proposition \ref{interior_proposition} 
\eqref{interior_proposition2}. By Lemma \ref{interior_lemma}, 
we have $\left(P(u,v-d(u))\cdot\bar{C}\right)>0$. 
Thus we get the inclusion 
${\bar{\Delta}}^{\Supp}\subset\Supp\left(W_{\bullet,\bullet,\bullet}^{Y',C}\right)
\cap\left(\{1\}\times\R_{\geq 0}^2\right)$. Thus we get the assertion. 

\eqref{interior_proposition2}
We may assume that $(u,v)\in\interior\left(\bar{\Delta}^{\Supp}\right)\cap\Q^2$. 
The assertion follows immediately from Theorem \ref{LM_thm} and 
the proof of Proposition 
\ref{interior_proposition} \eqref{interior_proposition1}.
\end{proof}

\begin{definition}[{cf.\ \cite{FANO}}]\label{fixed-order_definition}
Under Notation, \ref{MDS2_notation}, let us set 
\begin{eqnarray*}
&&F_p\left(W_{\bullet,\bullet,\bullet}^{Y',C}\right)\\
&:=&\frac{6}{\vol(L)}\int_{\tau_-}^{\tau_+}\int_0^{t(u)}
\left(P(u,v)\cdot\bar{C}\right)\cdot\ord_p\left(\left(
N'(u)+N(u,v)-(v+d(u))\Sigma\right)|_{\bar{C}}
\right)dvdu
\end{eqnarray*}
for any closed point $p\in C$. 
\end{definition}

Theorem \ref{AZ1_thm} and the following Theorem \ref{AZ2_thm} are crucial 
in this paper. 

\begin{thm}[{cf.\ \cite{FANO}}]\label{AZ2_thm}
Under Notation \ref{MDS2_notation}, we have 
\[
S\left(W^{Y',C}_{\bullet,\bullet,\bullet}; p\right)
=\frac{3}{\vol(L)}\int_{\tau_-}^{\tau_+}\int_0^{t(u)}
\left(\left(P(u,v)\cdot\bar{C}\right)\right)^2dvdu
+F_p\left(W^{Y',C}_{\bullet,\bullet,\bullet}\right)
\]
for any closed point $p\in C$. 
\end{thm}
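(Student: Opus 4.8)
The plan is to realize $S\!\left(W_{\bullet,\bullet,\bullet}^{Y',C};p\right)$ as (a coordinate of) the barycenter of the Okounkov body $\Delta:=\Delta_{C_\bullet}\!\left(W_{\bullet,\bullet,\bullet}^{Y',C}\right)$ appearing in Proposition \ref{interior_proposition} \eqref{interior_proposition2}, and then to integrate fiberwise over the projection $pr\colon\Delta\to\bar{\Delta}^{\Supp}$ by Fubini's theorem. First I would record the prerequisites for the general machinery: $W_{\bullet,\bullet,\bullet}^{Y',C}$ has bounded support and contains an ample series (Definition \ref{refinement_definition} applied to the divisorial restriction $V_{\bullet,\bullet}^{\tilde{Y}}$, together with Proposition \ref{interior_proposition} \eqref{interior_proposition1}), and $S\!\left(W_{\bullet,\bullet,\bullet}^{Y',C};p\right)=S\!\left(W_{\bullet,\bullet,\bullet}^{Y',C};\sF_p\right)$ by Example \ref{filter_example} \eqref{filter_example2}. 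Since $C$ is a smooth projective curve, the flag $C_\bullet\colon C\supsetneq\{p\}$ is admissible with $Y_1=\{p\}$, so $\sF_{Y_1}=\sF_p$; as here $r=3$ and $n=1$, Proposition \ref{barycenter_proposition} \eqref{barycenter_proposition2} gives
\[
S\!\left(W_{\bullet,\bullet,\bullet}^{Y',C};p\right)=\frac{1}{\vol(\Delta)}\int_\Delta x_3\,d\vec{x},
\]
the third (hence last) coordinate of the barycenter of $\Delta\subset\R^3_{\geq 0}$, where $x_1,x_2$ are the normalized grading coordinates (matching the base point $(u,v)\in\bar{\Delta}^{\Supp}$) and $x_3$ records $\ord_p$.

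Next I would feed in the explicit shape of $\Delta$. By Proposition \ref{interior_proposition} \eqref{interior_proposition1} the projection $pr\colon\Delta\to\bar{\Delta}^{\Supp}$ is onto, and by Proposition \ref{interior_proposition} \eqref{interior_proposition2} its fiber over $(u,v)\in\interior\!\left(\bar{\Delta}^{\Supp}\right)$ is the interval $[g_1(u,v),\,g_1(u,v)+g_2(u,v)]$, where
\[
g_2(u,v):=\left(P(u,v-d(u))\cdot\bar{C}\right),\qquad g_1(u,v):=\ord_p\!\left(\bigl(N'(u)+N(u,v-d(u))-v\Sigma\bigr)\big|_{\bar{C}}\right);
\]
since $\bar{\Delta}^{\Supp}$ is convex (Lemma \ref{Delta_lemma}) its boundary is Lebesgue-null, so this determines $\Delta$ up to measure zero. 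I would also need $\vol(\Delta)=\vol(L)/6$: from Definition \ref{okounkov_definition} \eqref{okounkov_definition2} with $r-1+n=3$ one has $\vol\!\left(W_{\bullet,\bullet,\bullet}^{Y',C}\right)=6\,\vol(\Delta)$, while Remark \ref{refinement-AZ_remark} shows $W_{a,j,k}^{Y',C}$ and $W_{a,j,k}^{\bar{Y},\bar{C}}$ differ only by a translation (so have equal volume), and volume-preservation of refinements (Definition \ref{refinement_definition}) together with $\vol\!\left(V_{\bullet,\bullet}^{\tilde{Y}}\right)=\vol(L)$ from Theorem \ref{AZ1_thm} \eqref{AZ1_thm1} gives $\vol\!\left(W_{\bullet,\bullet,\bullet}^{Y',C}\right)=\vol(L)$.

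The remaining step is the computation. By Fubini,
\[
\int_\Delta x_3\,d\vec{x}=\int_{\bar{\Delta}^{\Supp}}\int_{g_1(u,v)}^{g_1(u,v)+g_2(u,v)}w\,dw\,du\,dv=\int_{\bar{\Delta}^{\Supp}}\Bigl(g_1(u,v)g_2(u,v)+\tfrac12 g_2(u,v)^2\Bigr)du\,dv.
\]
Applying the change of variables $v\mapsto v+d(u)$ (Jacobian $1$), which replaces $\bar{\Delta}^{\Supp}$ by $\{u\in[\tau_-,\tau_+],\ v\in[0,t(u)]\}$ and turns $g_2$ into $(P(u,v)\cdot\bar{C})$ and $g_1$ into $\ord_p\!\left((N'(u)+N(u,v)-(v+d(u))\Sigma)|_{\bar{C}}\right)$, and then multiplying by $6/\vol(L)$, the $\tfrac12 g_2^2$ term becomes $\tfrac{3}{\vol(L)}\int_{\tau_-}^{\tau_+}\int_0^{t(u)}(P(u,v)\cdot\bar{C})^2\,dv\,du$ and the $g_1g_2$ term becomes exactly $F_p\!\left(W_{\bullet,\bullet,\bullet}^{Y',C}\right)$ by Definition \ref{fixed-order_definition}, which is the asserted identity.

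I expect no genuine obstacle here — the argument is essentially that of \cite{FANO} transported to this slightly more elaborate two-step refinement — but the step demanding the most care is the bookkeeping around the barycenter: keeping straight which coordinate of $\Delta$ is the $\ord_p$-coordinate when $r=3$, correctly tracking how ``$-v\Sigma$'' in $g_1$ turns into ``$-(v+d(u))\Sigma$'' under the shift $v\mapsto v+d(u)$, and justifying $\vol\!\left(W_{\bullet,\bullet,\bullet}^{Y',C}\right)=\vol(L)$ despite $C$ being only $\Q$-Cartier on $Y'$. Measurability and integrability of $d$, $t$, $N'$, $P(u,v)$, $N(u,v)$ (and hence of $g_1,g_2$) are inherited from Lemmas \ref{okawa_lemma}, \ref{Delta_lemma} and \ref{interior_lemma}, so no new analytic difficulty arises.
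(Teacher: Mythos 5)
Your proposal is correct and follows essentially the same route as the paper: realize $S\bigl(W_{\bullet,\bullet,\bullet}^{Y',C};p\bigr)$ as the $x_3$-coordinate of the barycenter via Proposition \ref{barycenter_proposition}, slice by $pr$ using Proposition \ref{interior_proposition}, and integrate fiberwise, with the shift $v\mapsto v+d(u)$ matching $F_p$ in Definition \ref{fixed-order_definition}. The only additions you make beyond the paper's terse computation are spelling out the normalization $\vol(\Delta)=\vol(L)/6$ and the measurability remarks, both of which are correct and implicitly used in the paper.
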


\begin{proof}
As in Proposition \ref{interior_proposition}, let us consider the Okounkov body 
$\Delta_{C_\bullet}\left(W_{\bullet,\bullet,\bullet}^{Y',C}\right)$ and the projection 
$pr\colon\Delta_{C_\bullet}\left(W_{\bullet,\bullet,\bullet}^{Y',C}\right)
\to\bar{\Delta}^{\Supp}$. 
From Proposition \ref{barycenter_proposition}, we have
\begin{eqnarray*}
S\left(W_{\bullet,\bullet,\bullet}^{Y',C};p\right)
&=&\frac{1}{\vol\left(\Delta_{C_\bullet}\left(
W_{\bullet,\bullet,\bullet}^{Y',C}\right)\right)}
\int_{\Delta_{C_\bullet}\left(W_{\bullet,\bullet,\bullet}^{Y',C}\right)} 
x_3\,\, d\vec{x} \\
&=&\frac{6}{\vol(L)}\int_{(u,v)\in\Delta^{\Supp}}\int_{x\in pr^{-1}\left((u,v)\right)}
x\,\, dx dvdu\\
&=&\frac{6}{\vol(L)}\int_{\tau_-}^{\tau_+}\int_{d(u)}^{d(u)+t(u)}
\Biggl(\frac{1}{2}\left(\left(P(u,v-d(u))\cdot\bar{C}\right)\right)^2\\
&&+\left(P(u,v-d(u))\cdot\bar{C}\right)\ord_p
\left(\left(N'(u)+N(u,v-d(u))-v\Sigma\right)|_{\bar{C}}\right)\Biggr)dvdu.
\end{eqnarray*}
Thus we get the assertion. 
\end{proof}

As a consequence, we get the following corollary. We frequently use it 
in order to prove Theorem \ref{mainthm}. 

\begin{corollary}\label{reduction_corollary}
Assume that there exists a projective klt pair $(X',\Delta')$ with $\Delta'$ 
effective $\Q$-Weil divisor and a big $\Q$-Cartier $\Q$-divisor $L'$ on $X'$ 
such that $Y$ is plt-type over $(X',\Delta')$, the associated plt-blowup 
is equal to $\mu\colon X\to X'$, and $L=\mu^*L'$. Set 
\begin{eqnarray*}
K_X+\Delta+\left(1-A_{X',\Delta'}(Y)\right)Y&:=&
\mu^*\left(K_{X'}+\Delta'\right), \\
K_Y+\Delta_Y &:=&(K_X+\Delta+Y)|_Y.
\end{eqnarray*}
Assume moreover that $\nu\colon Y'\to Y$ is the plt-blowup of the plt-type 
prime divisor $C$ over $(Y,\Delta_Y)$. Set 
\begin{eqnarray*}
K_{Y'}+\Delta_{Y'}+\left(1-A_{Y,\Delta_Y}(C)\right)C&:=&
\nu^*\left(K_Y+\Delta_Y\right), \\
K_C+\Delta_C &:=&\left(K_{Y'}+\Delta_{Y'}+C\right)|_C.
\end{eqnarray*}
\begin{enumerate}
\renewcommand{\theenumi}{\arabic{enumi}}
\renewcommand{\labelenumi}{(\theenumi)}
\item\label{reduction_corollary1}
For any closed point $q\in Y$ with $q\in c_Y(C)$, we have 
\[
\delta_q\left(Y,\Delta_Y;V_{\bullet,\bullet}^{\tilde{Y}}\right)
\geq \min\left\{\frac{A_{Y,\Delta_Y}(C)}{S\left(V_{\bullet,\bullet}^{\tilde{Y}};C\right)}, 
\,\,\inf_{\substack{p\in C\\ \nu(p)=q}}
\frac{A_{C,\Delta_C}(p)}{S\left(W_{\bullet,\bullet,\bullet}^{Y',C};p\right)}
\right\}.
\]
\item\label{reduction_corollary2}
For any closed point $q'\in X'$ with $q'\in c_{X'}(Y)$, we have
\[
\delta_{q'}\left(X',\Delta';L'\right)
\geq \min\left\{\frac{A_{X',\Delta'}(Y)}{S_{L'}(Y)}, 
\,\,\inf_{\substack{q\in Y\\ \mu(q)=q'}}
\delta_q\left(Y,\Delta_Y;V_{\bullet,\bullet}^{\tilde{Y}}\right)
\right\}.
\]
\end{enumerate}
\end{corollary}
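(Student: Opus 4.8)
The plan is to obtain both inequalities as instances of the adjunction estimate Theorem \ref{AZ_thm}, combined with the identification of $\delta$-invariants provided by Corollary \ref{AZ1_corollary}; the two parts are proved in exactly parallel ways, \eqref{reduction_corollary2} being the reduction from $(X',\Delta')$ to $(Y,\Delta_Y)$ and \eqref{reduction_corollary1} the further reduction from $(Y,\Delta_Y)$ to $(C,\Delta_C)$. In each case the only genuinely non-formal points are checking that the relevant pair is klt — which is the adjunction content already packaged into Definition \ref{plt_definition} — and passing from the infimum over scheme-theoretic points occurring in Theorem \ref{AZ_thm} to the infimum over closed points stated here.

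For \eqref{reduction_corollary2} I would apply Theorem \ref{AZ_thm} to the projective klt pair $(X',\Delta')$, the point $\eta=q'\in c_{X'}(Y)$, the plt-type prime divisor $Y$ over $(X',\Delta')$ with associated plt-blowup $\mu\colon X\to X'$, and the complete linear series $V_\bullet$ of the big $\Q$-divisor $L'$, which has bounded support and contains an ample series. Since $L=\mu^*L'$ and $\mu$ is birational with $X'$ normal, $\mu^*V_\bullet$ is exactly the complete linear series of $L$ fixed in \S\ref{AZ_section}, so its refinement by $Y\subset X$ is the series $W_{\bullet,\bullet}$ of that section and the $\Delta_Y$ produced by Definition \ref{plt_definition} agrees with the one in the statement; moreover $S(V_\bullet;Y)=S_{L'}(Y)$ and $\delta_{q'}(X',\Delta';V_\bullet)=\delta_{q'}(X',\Delta';L')$ by Example \ref{filter_example} and Definition \ref{delta-AZ_definition}. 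Theorem \ref{AZ_thm} thus gives
\[
\delta_{q'}(X',\Delta';L')\ \geq\ \min\left\{\frac{A_{X',\Delta'}(Y)}{S_{L'}(Y)},\ \inf_{\eta'}\delta_{\eta'}(Y,\Delta_Y;W_{\bullet,\bullet})\right\},
\]
with $\eta'$ running over scheme-theoretic points of $Y$ over $q'$. Because $(X,\Delta+Y)$ is plt (that is what it means for $\mu$ to be a plt-blowup), Corollary \ref{AZ1_corollary} lets me replace $W_{\bullet,\bullet}$ by $V_{\bullet,\bullet}^{\tilde{Y}}$ in this infimum. Finally, for a non-closed $\eta'$ and any closed point $q\in\overline{\{\eta'\}}$ — which then lies over $q'$ — the condition $\eta'\in c_Y(E)$ is more restrictive than $q\in c_Y(E)$, so $\delta_{\eta'}(Y,\Delta_Y;V_{\bullet,\bullet}^{\tilde{Y}})\geq\delta_q(Y,\Delta_Y;V_{\bullet,\bullet}^{\tilde{Y}})$; since closed points over $q'$ are themselves scheme-theoretic points over $q'$, the infimum over scheme-theoretic points over $q'$ coincides with the infimum over closed points over $q'$ (this reduction is one of the basic properties recorded in \S\ref{appendix_section}). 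This yields \eqref{reduction_corollary2}.

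For \eqref{reduction_corollary1} I would repeat the argument one dimension lower, applying Theorem \ref{AZ_thm} to the projective klt pair $(Y,\Delta_Y)$ (klt because $(X,\Delta+Y)$ is plt), the point $\eta=q\in c_Y(C)$, the plt-type prime divisor $C$ over $(Y,\Delta_Y)$ with plt-blowup $\nu\colon Y'\to Y$, and the graded linear series $V_{\bullet,\bullet}^{\tilde{Y}}$ regarded on the normal surface $Y$ — which has bounded support and contains an ample series by Definition \ref{divisorial-restriction_definition} and Example \ref{pullback_example}. The refinement of $\nu^*V_{\bullet,\bullet}^{\tilde{Y}}$ by $C\subset Y'$ is $W_{\bullet,\bullet,\bullet}^{Y',C}$, and the pair $\Delta_C$ produced by Definition \ref{plt_definition} coincides with the one in the statement, so Theorem \ref{AZ_thm} gives
\[
\delta_q(Y,\Delta_Y;V_{\bullet,\bullet}^{\tilde{Y}})\ \geq\ \min\left\{\frac{A_{Y,\Delta_Y}(C)}{S(V_{\bullet,\bullet}^{\tilde{Y}};C)},\ \inf_{\eta'}\delta_{\eta'}(C,\Delta_C;W_{\bullet,\bullet,\bullet}^{Y',C})\right\},
\]
with $\eta'$ running over scheme-theoretic points of $C$ over $q$. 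Now $C$ is a smooth projective curve, so its only non-closed point is the generic point, over which no prime divisor is centered and the local $\delta$-invariant is $+\infty$; and for a closed point $p\in C$ the only prime divisor over $C$ with center $\{p\}$ is $\ord_p$, whence $\delta_p(C,\Delta_C;W_{\bullet,\bullet,\bullet}^{Y',C})=A_{C,\Delta_C}(p)/S(W_{\bullet,\bullet,\bullet}^{Y',C};p)$. Therefore the last infimum equals $\inf_{p\in C,\ \nu(p)=q}A_{C,\Delta_C}(p)/S(W_{\bullet,\bullet,\bullet}^{Y',C};p)$, which is \eqref{reduction_corollary1}.

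I do not anticipate a real obstacle: the corollary is essentially a dictionary entry translating Theorems \ref{AZ_thm} and \ref{AZ1_thm}--\ref{AZ2_thm} of \cite{AZ} into the exact form used repeatedly in the later sections. The most delicate piece of bookkeeping is making sure the divisorial restriction $V_{\bullet,\bullet}^{\tilde{Y}}$, which a priori lives on $\tilde{Y}$, can legitimately be treated as a graded linear series on the normal surface $Y$; but this is already arranged in \S\ref{AZ2_subsection} via $\sO_Y\simeq(\sigma|_{\tilde{Y}})_*\sO_{\tilde{Y}}$ and Example \ref{pullback_example}, and Corollary \ref{AZ1_corollary} is stated in precisely this normalization.
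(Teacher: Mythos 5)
Your proof is correct and follows exactly the same route as the paper, which simply states that the corollary ``follows immediately from Theorem \ref{AZ_thm} and Corollary \ref{AZ1_corollary}.'' You have correctly identified the two non-formal bookkeeping points — that $(X,\Delta+Y)$ being plt (built into Definition \ref{plt_definition}) supplies the klt hypotheses and makes Corollary \ref{AZ1_corollary} applicable to swap $W_{\bullet,\bullet}$ for $V^{\tilde{Y}}_{\bullet,\bullet}$, and that the infimum over scheme-theoretic points in Theorem \ref{AZ_thm} can be replaced by the infimum over closed points because $\delta_{\eta'}\geq\delta_q$ for any closed $q$ in the closure of $\eta'$ — as well as the observation that on the curve $C$ the local $\delta$-invariant at a closed point $p$ reduces to $A_{C,\Delta_C}(p)/S(W^{Y',C}_{\bullet,\bullet,\bullet};p)$ since $\ord_p$ is the only divisorial valuation centered there.
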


\begin{proof}
Follows immediately from Theorem \ref{AZ_thm} and Corollary 
\ref{AZ1_corollary}.
\end{proof}

\section{Fano threefolds of No.\ 3.11}\label{311_section}

Let us explain the family No.\ 3.11, i.e., the family No.\ 11 in Table $3$ in 
Mori--Mukai's list \cite{MoMu}. 

Set $P:=\pr^3$, let $\sC^P\subset P$ be a smooth curve given by the complete 
intersection of two quadric surfaces. Take a point $p^P\in\sC^P$ and 
let $l^P\subset P$ be the tangent line of $\sC^P$ at $p^P$. 
Let us consider the blowup 
\[
\sigma^V\colon V\to P
\]
of $P$ at $p^P$ and let $E_2^V\subset V$ be the exceptional divisor. 
Set $\sC^V:=(\sigma^V)_*^{-1}\sC^P$, $l^V:=(\sigma^V)_*^{-1}l^P$. We know that 
\[
V\simeq\pr_{\pr^2}(\sO\oplus\sO(1))\xrightarrow{\pi^V}\pr^2
\]
and $l^V$ is a fiber of $\pi^V$. Moreover, since $\sC^P$ is the complete intersection 
of two quadrics, the restriction morphism 
\[
\pi^V|_{\sC^V}\colon\sC^V\to\pi^V\left(\sC^V\right)
\]
is an isomorphism. Let us set 
$q^{\sC}:=\pi^V(l^V)\in\pr^2$ and $\sC:=\pi^V\left(\sC^V\right)\subset\pr^2$. 
Then $q^{\sC}\in\sC$, and $\sC\subset\pr^2$ is a smooth cubic curve. 
Set $E_3^V:=(\pi^V)^{-1}(\sC)\subset V$. 

Let $\sigma_1\colon X\to V$ be the blowup along $\sC^V$ and let 
$E_1\subset X$ be the exceptional divisor. Let us set 
\begin{eqnarray*}
E_2&:=&(\sigma_1)^{-1}_* E_2^V, \\
E_3&:=&(\sigma_1)^{-1}_* E_3^V,\\
l&:=&(\sigma_1)^{-1}_*l^V.
\end{eqnarray*}
By \cite{MM84}, $X$ is a smooth Fano threefold such that $\rho(X)=3$, 
$(-K_X)^{\cdot 3}=28$ and $B_3(X)=2$. 
Since the pair $\left(V, E_2^V+E_3^V\right)$ is a log smooth pair and 
$\sC^V\subset E_3^V$ is a smooth curve intersecting with $E_2^V$ transversely 
at one point, the pair $\left(X, E_1+E_2+E_3\right)$ is also a log smooth pair. 
Set $q:=E_1\cap E_2\cap E_3$. Moreover, let $V_\bullet$ be the Veronese 
equivalence class of the complete linear series of $-K_X$ on $X$. 

By \cite[\S III-3]{matsuki}, there exists the commutative diagram: 
\[\xymatrix{
& \Bl_{{\sC}^P}P \ar[dr] \ar[dl] & \\
P=\pr^3 &  & \pr^1 \\
& X \ar[dl]_{\sigma_1} \ar[uu]_{\sigma_2} \ar[dr]^{\sigma_3} & \\
V \ar[uu]^{\sigma^V} \ar[dr]_{\pi^V} & & \pr^1\times\pr^2 
\ar[uu]_{\operatorname{pr}_1} 
\ar[dl]^{\operatorname{pr}_2} \\
& \pr^2 & 
}\]
where $\sigma_i$ is the birational morphism whose exceptional set is $E_i$ for 
$1\leq i\leq 3$, and $\Bl_{\sC^P}P$  is the blowup of $P$ along $\sC^P$. 
For $1\leq i\leq 3$, 
let $H_i\in\Pic(X)$ be the pull-back of $\sO_{\pr^i}(1)$ on $\pr^i$ and let 
$l_i\subset E_i$ be a curve contracted by $\sigma_i$. 
By \cite[\S III-3]{matsuki} and \cite[\S 10]{div-stability}, we have 
\begin{eqnarray*}
\Pic(X)&=&\Z[H_1]\oplus \Z[H_2]\oplus \Z[H_3], \\
\left(H_i\cdot l_j\right) &=& \delta_{ij} \quad (1\leq i,j\leq 3),\\
E_1&\sim& -H_1+H_2+H_3, \\
E_2&\sim& -H_2+H_3, \\
E_3&\sim& H_1+2H_2-H_3, \\
-K_X&\sim& H_1+H_2+H_3, \\
\Nef(X)&=&\R_{\geq 0}[H_1]+\R_{\geq 0}[H_2]+\R_{\geq 0}[H_3],\\
\Eff(X)&=&\R_{\geq 0}[H_1]+\R_{\geq 0}[E_1]+\R_{\geq 0}[E_2]+\R_{\geq 0}[E_3],
\end{eqnarray*}
where $\Nef(X)$ is the nef cone of $X$ and $\Eff(X)$ is the pseudo-effective 
cone of $X$. Note that $E_3\sim -E_1-3E_2+3H_3$. Thus the divisor 
$(\sigma^V\circ\sigma_1)_*E_3$ is the cubic surface with 
$\sC^P\subset(\sigma^V\circ\sigma_1)_*E_3$ and vanishes at $p^P$ of order 
at least $3$. 
We can easily check that 
\[\begin{matrix}
H_1^{\cdot 2}\equiv 0, & \left(H_1\cdot H_2^{\cdot 2}\right)=1, & 
\left(H_1\cdot H_2\cdot H_3\right)=2, & \left(H_1\cdot H_3^{\cdot 2}\right)=2,\\
\left(H_2^{\cdot 3}\right)=0, & \left(H_2^{\cdot 2}\cdot H_3\right)=1, &
\left(H_2\cdot H_3^{\cdot 2}\right)=1, & \left(H_3^{\cdot 3}\right)=1.
\end{matrix}\]

\begin{remark}\label{PCSD_remark}
\begin{enumerate}
\renewcommand{\theenumi}{\arabic{enumi}}
\renewcommand{\labelenumi}{(\theenumi)}
\item\label{PCSD_remark1}
By \cite{PCS}, for any such $X$, we have $\Aut^0(X)=\{1\}$. 
\item\label{PCSD_remark2}
By \cite[\S 10]{div-stability}, for any prime divisor $D$ \emph{on} $X$, 
the inequality 
\[
\frac{A_X(D)}{S_X(D)}>1
\]
holds. 
\end{enumerate}
\end{remark}

\begin{remark}\label{infl_remark}
There are two possibilities: 
\begin{enumerate}
\renewcommand{\theenumi}{\Alph{enumi}}
\renewcommand{\labelenumi}{(\theenumi)}
\item\label{infl_remark1}
The point $q^{\sC}\in\sC$ is not an inflection point of $\sC\subset\pr^2$. 
\item\label{infl_remark2}
The point $q^{\sC}\in\sC$ is an inflection point of $\sC\subset\pr^2$. 
\end{enumerate}
\end{remark}

In fact, we have explicit examples. The following examples 
provided by Cheltsov and Shramov, especially Example \ref{CS_example} 
\eqref{CS_example2}, are very important in this paper. 

\begin{example}[{Cheltsov and Shramov, see also \cite{FANO}}]\label{CS_example}
\begin{enumerate}
\renewcommand{\theenumi}{\Alph{enumi}}
\renewcommand{\labelenumi}{(\theenumi)}
\item\label{CS_example1}
Set $G:=\boldsymbol{\mu}_4$ acting $P=\pr^3_{xyzt}$ with 
\[
\begin{bmatrix}x\\y\\z\\t\end{bmatrix}
\mapsto
\begin{bmatrix}x\\ \sqrt{-1}y\\-z\\ -\sqrt{-1}t\end{bmatrix}.
\]
Let us set 
\[
\sC^P:=(xy+zt=0)\cap (x^2+z^2+yt=0), \quad\quad 
p^P:=\begin{bmatrix}0\\0\\0\\1\end{bmatrix}\in\sC^P.
\]
Then, since $p^P$ and $\sC^P$ are $G$-invariant, we have $G\subset\Aut(X)$. 
Moreover, the cubic curve $\sC\subset\pr^2_{xyz}$ is defined by the equation 
\[
\left(y(xy+zt)-z(x^2+z^2+yt)=\right)xy^2-x^2z-z^3=0,
\]
and $q^\sC$ corresponds to the point 
\[\begin{bmatrix}1\\0\\0\end{bmatrix}\in\pr^2_{xyz}.\]
Clearly, $q^\sC\in\sC$ is not an inflection point. 
Thus the $X$ satisfies Remark \ref{infl_remark} \eqref{infl_remark1}. 
\item\label{CS_example2}
Set $G:=\boldsymbol{\mu}_2 \times\boldsymbol{\mu}_3$ acting $P=\pr^3_{xyzt}$ with 
\[
\boldsymbol{\mu}_2\colon\begin{bmatrix}x\\y\\z\\t\end{bmatrix}
\mapsto
\begin{bmatrix}-x\\ y\\z\\ t\end{bmatrix},\quad\quad
\boldsymbol{\mu}_3\colon\begin{bmatrix}x\\y\\z\\t\end{bmatrix}
\mapsto
\begin{bmatrix}x\\ y\\\omega z\\ \omega^2 t\end{bmatrix},\]
where $\omega:=e^{2\pi\sqrt{-1}/3}$. Let us set 
\[
\sC^P:=(yz+t^2=0)\cap (x^2+y^2+zt=0), \quad\quad 
p^P:=\begin{bmatrix}0\\0\\1\\0\end{bmatrix}\in\sC^P.
\]
Then, since $p^P$ and $\sC^P$ are $G$-invariant, we have $G\subset\Aut(X)$. 
Moreover, the cubic curve $\sC\subset\pr^2_{xyt}$ is defined by the equation 
\[
\left(t(yz+t^2)-y(x^2+y^2+zt)=\right)t^3-x^2y-y^3=0,
\]
and $q^\sC$ corresponds to the point 
\[\begin{bmatrix}1\\0\\0\end{bmatrix}\in\pr^2_{xyt}.\]
Clearly, $q^\sC\in\sC$ is an inflection point. 
\end{enumerate}
\end{example}

\begin{remark}\label{G-inv_remark}
We see the action $G\curvearrowright X$ in Example \ref{CS_example} 
\eqref{CS_example2}. 
\begin{enumerate}
\renewcommand{\theenumi}{\arabic{enumi}}
\renewcommand{\labelenumi}{(\theenumi)}
\item\label{G-inv_remark1}
The set of $G$-invariant points in $P=\pr^3_{xyzt}$ is 
\[
\left\{
\begin{bmatrix}1\\0\\0\\0\end{bmatrix},\,\, 
\begin{bmatrix}0\\1\\0\\0\end{bmatrix},\,\, 
\begin{bmatrix}0\\0\\1\\0\end{bmatrix}=p^P,\,\, 
\begin{bmatrix}0\\0\\0\\1\end{bmatrix}
\right\}. 
\]
We note that 
\[
\begin{bmatrix}1\\0\\0\\0\end{bmatrix},\,\, 
\begin{bmatrix}0\\1\\0\\0\end{bmatrix},\,\, 
\begin{bmatrix}0\\0\\0\\1\end{bmatrix}\not\in\sC^P.
\]
Let $p_x$, $p_y$, $p_t\in X$ be the inverse images of 
\[
\begin{bmatrix}1\\0\\0\\0\end{bmatrix},\,\, 
\begin{bmatrix}0\\1\\0\\0\end{bmatrix},\,\, 
\begin{bmatrix}0\\0\\0\\1\end{bmatrix}\in P,
\]
respectively. Then, obviously, we have 
\[
\left\{p\in X\setminus E_2\,\,|\,\,\text{$G$-invariant}\right\}=\{p_x, p_y, p_t\}. 
\]
Moreover, we have $p_x\in E_3\setminus E_1$ and 
$p_y$, $p_t\in X\setminus (E_1\cup E_3)$. 
\item\label{G-inv_remark2}
Let $H_x$, $H_y$, $H_z$, $H_t\subset X$ be the strict transforms of the planes 
$(x=0)$, $(y=0)$, $(z=0)$, $(t=0)\subset P=\pr^3_{xyzt}$, respectively. 
Then we have: 
\begin{itemize}
\item
a prime divisor $D\in|H_2|$ is $G$-invariant if and only if $D=H_x$, $H_y$ or 
$H_t$, and 
\item
a prime divisor $D\in |H_3|$ is $G$-invariant if and only if $D=H_z$. 
\end{itemize}
\end{enumerate}
\end{remark}

\begin{remark}\label{H1_remark}
\begin{enumerate}
\renewcommand{\theenumi}{\arabic{enumi}}
\renewcommand{\labelenumi}{(\theenumi)}
\item\label{H1_remark1}
The members $Q\in|H_1|$ are characterized by the strict transforms of the 
quadrics $Q^P\subset P$ passing through $\sC^P$. Note that 
$Q^P\simeq\pr^1\times\pr^1$ or $\pr(1,1,2)$, 
and $\sC^P\cap\Sing Q^P=\emptyset$. 
\begin{itemize}
\item
If $Q$ is smooth, then $Q$ is isomorphic to the del Pezzo surface of degree $7$, 
and the union of the negative curves on $Q$ is equal to the set 
$(E_2\cup E_3)\cap Q$. 
\item
If $Q$ is singular, then $Q$ has exactly $2$ negative curves with the self 
intersection numbers $-1/2$ and $-1$. 
\end{itemize}
\item\label{H1_remark2}
For any closed point $p\in X$, there uniquely exists $Q_p\in |H_1|$ with $p\in Q_p$, 
since $|H_1|$ induces the del Pezzo fibration 
$\operatorname{pr}_1\circ\sigma_3\colon X\to\pr^1$. 
\end{enumerate}
\end{remark}

\begin{example}\label{CS-H1_example}
If $X$ is in Example \ref{CS_example} \eqref{CS_example2}, then 
we can write 
\[
|H_1|=\left\{Q_\lambda\,\,|\,\,\lambda\in\pr^1\right\}, 
\]
where $Q_\lambda$ is the strict transform of the quadric 
\[
Q_\lambda^P:=\left(yz+t^2-\lambda(x^2+y^2+zt)=0\right)\subset P=\pr^3_{xyzt}.
\]
We have the following properties:
\begin{itemize}
\item
The divisor $Q_\lambda$ is $G$-invariant if and only if $\lambda=0$ or $\infty$. 
\item
The divisor $Q_\lambda$ is singular if and only if 
\[
\lambda=0, \,\,1,\,\,\omega\,\,\text{ or }\,\,\omega^2
\]
(recall that $\omega=e^{2\pi\sqrt{-1}/3}$). 
\item
If $p=p_t$, then the $Q_p$ in Remark \ref{H1_remark} \eqref{H1_remark2} is equal to 
$Q_\infty$. If $p=p_x$ or $p_y$, then the $Q_p$ 
in Remark \ref{H1_remark} \eqref{H1_remark2} is equal to 
$Q_0$. Moreover, $Q_0$ is singular at $p_x$. Since $l^P=(y=t=0)\subset P$, 
the curve $l\subset Q_0$ is the negative curve with 
$\left(l^{\cdot 2}\right)=-1/2$. 
\end{itemize}
\end{example}

\section{Local $\delta$-invariants for general points}\label{local_general_section}

\begin{proposition}\label{5651_proposition}
Let $X$ be as in \S \ref{311_section}. Take a closet point 
$p\in X\setminus(E_2\cup E_3)$. If the divisor $Q_p\in |H_1|$ in 
Remark \ref{H1_remark} \eqref{H1_remark2} is smooth, then we have 
the inequality
\[
\delta_p(X)\geq\frac{56}{51}.
\]
\end{proposition}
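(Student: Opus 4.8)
The strategy is to apply the refinement machinery of \S\ref{AZ_section} twice: first restrict the complete linear series of $-K_X$ to the del Pezzo surface $Q:=Q_p$, then restrict the resulting series on $Q$ to a suitable curve through $p$. Since $Q\in|H_1|$ and $H_1$ is nef but not big (indeed $H_1^{\cdot 2}\equiv 0$), the divisor $Q$ is \emph{not} $\Q$-Cartier-positive in the naive sense; however $-K_X\sim H_1+H_2+H_3$ with $-K_X$ ample, so I first want to compute $S_X(Q)$ and verify $A_X(Q)/S_X(Q)>56/51$. The value $S_X(Q)=S_X(H_1)$ is obtained from $\tau_X(H_1)$ and the volume function $\vol_X(-K_X-uH_1)$; using the intersection table and the cone data ($\Eff(X)$, $\Nef(X)$) one finds the Zariski chambers of $-K_X-uH_1$ explicitly, and since $X$ is a smooth Fano threefold we have $I=0$ in Lemma~\ref{okawa_lemma}, so $\sigma$ can be taken to be the identity. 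This reduces, via Corollary~\ref{reduction_corollary}\eqref{reduction_corollary2} (or directly Theorem~\ref{AZ_thm} with $\Delta=0$, $\Delta'=0$), the problem to bounding $\delta_p(Q,0;V^{Q}_{\bullet,\bullet})$ from below by $56/51$, where $V^{Q}_{\bullet,\bullet}$ is the divisorial restriction of $V_\bullet$ to $Q$.

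Next, since $p\in X\setminus(E_2\cup E_3)$ and $Q$ is a smooth del Pezzo surface of degree $7$ whose negative curves lie in $(E_2\cup E_3)\cap Q$ (Remark~\ref{H1_remark}\eqref{H1_remark1}), the point $p$ avoids all negative curves of $Q$. The plan is to take $C\subset Q$ to be a general member of an appropriate base-point-free pencil on $Q$ through $p$ — e.g. the strict transform of a line, so that $C\simeq\pr^1$ and $(C^{\cdot 2})_Q$ is controlled — so that $C$ is a smooth rational curve passing through $p$ with $p$ a general point of $C$. Then apply Theorem~\ref{AZ2_thm}: this gives
\[
S\left(W^{Q,C}_{\bullet,\bullet,\bullet};p\right)
=\frac{3}{\vol(-K_X)}\int_{\tau_-}^{\tau_+}\int_0^{t(u)}\left(P(u,v)\cdot\bar C\right)^2\,dv\,du
+F_p\left(W^{Q,C}_{\bullet,\bullet,\bullet}\right),
\]
and for $p$ general on $C$ the correction term $F_p$ vanishes (or is at worst bounded by the contribution of $\Sigma$, which is zero when $\nu$ can be taken trivial). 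One then needs to compute the two-variable Zariski-decomposition data $P(u)|_{\tilde Y}$, $t(u)$, $P(u,v)$ on $Q$ — this is a finite computation on a degree-$7$ del Pezzo surface using the intersection numbers above — evaluate the double integral, compare $A_{C,\Delta_C}(p)=1$ (since $p$ is a smooth point of the curve $C$ and $\Delta_C$ has no component at $p$ for general $p$) against the resulting $S$-value, and check the bound $1/S\ge 56/51$, i.e. $S\le 51/56$. Combining via Corollary~\ref{reduction_corollary}\eqref{reduction_corollary1} then Corollary~\ref{reduction_corollary}\eqref{reduction_corollary2} (equivalently, iterating Theorem~\ref{AZ_thm}) yields
\[
\delta_p(X)\ge\min\left\{\frac{A_X(Q)}{S_X(Q)},\ \frac{A_{Q,\Delta_Q}(C)}{S\left(V^{Q}_{\bullet,\bullet};C\right)},\ \frac{A_{C,\Delta_C}(p)}{S\left(W^{Q,C}_{\bullet,\bullet,\bullet};p\right)}\right\}\ge\frac{56}{51}.
\]

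\textbf{Main obstacle.} The delicate part is the explicit determination of the Nakayama--Zariski decomposition of $\sigma^*(-K_X-uH_1)|_Q$ as a function of $u$ on the del Pezzo surface $Q$, and then the second decomposition $\theta^*(P(u)|_Q)-v\bar C$ as a function of $(u,v)$, together with identifying the chamber walls $\tau_-,\tau_+,t(u),d(u)$. Because $H_1$ sits on the boundary of $\Nef(X)$, the pseudoeffective threshold $\tau_+$ and the big locus interact with the negative curves of $Q$ in a way that must be tracked carefully; once the piecewise-polynomial formulas for $(P(u,v)\cdot\bar C)$ are in hand, the remaining double integral is routine but must be done exactly to land the sharp constant $56/51$. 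I also need to confirm that for \emph{general} $p$ in $X\setminus(E_2\cup E_3)$ with $Q_p$ smooth, the curve $C$ and point $p$ genuinely satisfy the hypotheses (smoothness of $C$, generality of $p\in C$, triviality of the relevant $\Sigma$ and $\Delta_C$-coefficients at $p$), so that $A_{C,\Delta_C}(p)=1$ exactly.
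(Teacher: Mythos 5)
Your overall strategy (refine $V_\bullet$ first by $Q=Q_p$, then by a $0$-curve $B$ on $Q$ through $p$, and apply Corollary~\ref{reduction_corollary}) is exactly the route the paper takes, and steps like $S_X(Q)=11/16$ and the double-integral computation are routine. The critical gap is in how you dismiss the correction term $F_p$.

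You write that ``for $p$ general on $C$ the correction term $F_p$ vanishes (or is at worst bounded by the contribution of $\Sigma$, which is zero when $\nu$ can be taken trivial).'' This is wrong on two counts. First, $p$ is a fixed point and $C$ is chosen \emph{through} $p$, so you have no freedom to put $p$ in general position on $C$. Second, and more seriously, $F_p$ in Definition~\ref{fixed-order_definition} receives contributions not only from $\Sigma$ but from $N'(u)|_{\bar C}$ and $N(u,v)|_{\bar C}$. In this situation $\nu=\theta=\operatorname{id}$, so $N'(u)=N(u)|_{Q_p}=(u-1)\,E_1|_{Q_p}$ for $u\in[1,2]$, and the hypothesis $p\in X\setminus(E_2\cup E_3)$ does \emph{not} exclude $p\in E_1$. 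When $p\in E_1|_{Q_p}$, the order $\ord_p\left(E_1|_{Q_p}\cap B\right)$ is positive, and (after a careful choice of $B\sim e_0+e_1$ versus $B\sim e_0+e_2$ to ensure $\ord_p\leq 1$, using that the two fibers through $p$ have distinct tangent directions there) the resulting $F_p$ contributes exactly $\frac{6}{28}\cdot\frac{7}{24}=\frac{1}{16}$. Adding this to the quadratic double integral $\frac{95}{112}$ yields $S\left(W^{Q_p,B}_{\bullet,\bullet,\bullet};p\right)\le\frac{51}{56}$, which is precisely what produces the stated constant $\frac{56}{51}$. Dropping $F_p$ would give you the better bound $\frac{112}{95}$, but that is only correct when $p\notin E_1$; as a proof of the proposition it would be wrong.

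Two further points you should pin down. (i) Your curve $C$ must be a specific $0$-curve in the class $e_0+e_1$ through $p$, and the choice between $e_0+e_1$ and $e_0+e_2$ matters for controlling $\ord_p\left(\sC^Q|_B\right)$; ``an appropriate base-point-free pencil'' is not enough to run the argument cleanly. (ii) You also need the $N(u,v)$-part of $F_p$ to vanish at $p$; this works because the only negative component that appears is $e_2$, and $e_2\subset E_2\cup E_3$ forces $p\notin e_2$. Neither observation is automatic for a ``general pencil''.
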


\begin{proof}
We note that $\tau_X(Q_p)=2$. For any $u\in[0,2]$, let us set 
\begin{eqnarray*}
P(u)&:=&P_\sigma(X, -K_X-uQ_p),\\
N(u)&:=&N_\sigma(X, -K_X-uQ_p).
\end{eqnarray*}
By \S \ref{311_section}, we have the following: 
\begin{itemize}
\item
If $u\in[0,1]$, then 
\begin{eqnarray*}
N(u)&=&0,\\
P(u)&\sim_\R&(1-u)H_1+H_2+H_3.
\end{eqnarray*}
\item
If $u\in[1,2]$, then 
\begin{eqnarray*}
N(u)&=&(u-1)E_1,\\
P(u)&\sim_\R&(2-u)H_2+(2-u)H_3.
\end{eqnarray*}
\end{itemize}
Therefore we get 
\[
S_X(Q_p)=\frac{1}{28}\left(\int_0^1\left((1-u)H_1+H_2+H_3\right)^{\cdot 3}du
+\int_1^2\left((2-u)H_2+(2-u)H_3\right)^{\cdot 3}du\right)=\frac{11}{16}.
\]
Let $e_0$, $e_1$, $e_2\subset Q_p$ be mutually distinct $(-1)$-curves with 
$e_1\cap e_2=\emptyset$. Set $\sC^Q:=E_1|_{Q_p}$. Then $\sC^Q$ is 
a smooth curve with 
\[
\sC^Q\sim 3e_0+2e_1+2e_2. 
\]
Since $H_1|_{Q_p}\sim 0$, $H_2|_{Q_p}\sim e_0+e_1+e_2$ and 
$H_3|_{Q_p}\sim 2e_0+e_1+e_2$, we have the following: 
\begin{itemize}
\item
If $u\in[0,1]$, then 
\begin{eqnarray*}
N(u)|_{Q_p}&=&0,\\
P(u)|_{Q_p}&\sim_\R&3e_0+2e_1+2e_2.
\end{eqnarray*}
\item
If $u\in[1,2]$, then 
\begin{eqnarray*}
N(u)|_{Q_p}&=&(u-1)\sC^Q,\\
P(u)|_{Q_p}&\sim_\R&(2-u)(3e_0+2e_1+2e_2).
\end{eqnarray*}
\end{itemize}
Take the smooth curve $B\subset Q_p$ with $B\sim e_0+e_1$ and $p\in B$. 
After replacing $e_1$ and $e_2$ if necessary, we may assume that 
$\ord_p\left(\sC^Q|_B\right)\leq 1$. 
Let us set 
\begin{eqnarray*}
P(u,v)&:=&P_\sigma\left(Q_p, P(u)|_{Q_p}-vB\right), \\
N(u,v)&:=&N_\sigma\left(Q_p, P(u)|_{Q_p}-vB\right).
\end{eqnarray*}

\begin{itemize}
\item
Assume that $u\in[0,1]$. 
\begin{itemize}
\item
If $v\in[0,1]$, then we have 
\begin{eqnarray*}
N(u,v)&=&0, \\
P(u,v)&\sim_\R&(3-v)e_0+(2-v)e_1+2e_2,
\end{eqnarray*}
and $(P(u,v))^{\cdot 2}=7-4v$.
\item
If $v\in[1,2]$, then we have 
\begin{eqnarray*}
N(u,v)&=&(v-1)e_2, \\
P(u,v)&\sim_\R&(3-v)e_0+(2-v)e_1+(3-v)e_2,
\end{eqnarray*}
and $(P(u,v))^{\cdot 2}=(2-v)(4-v)$.
\end{itemize}
\item
Assume that $u\in[1,2]$. 
\begin{itemize}
\item
If $v\in[0,2-u]$, then we have 
\begin{eqnarray*}
N(u,v)&=&0, \\
P(u,v)&\sim_\R&(6-3u-v)e_0+(4-2u-v)e_1+(4-2u)e_2,
\end{eqnarray*}
and $(P(u,v))^{\cdot 2}=(2-u)(14-7u-4v)$.
\item
If $v\in[2-u,4-2u]$, then we have 
\begin{eqnarray*}
N(u,v)&=&(-2+u+v)e_2, \\
P(u,v)&\sim_\R&(6-3u-v)e_0+(4-2u-v)e_1+(6-3u-v)e_2,
\end{eqnarray*}
and $(P(u,v))^{\cdot 2}=(4-2u-v)(8-4u-v)$.
\end{itemize}
\end{itemize}
Hence, by Theorem \ref{AZ1_thm}, we get
\begin{eqnarray*}
&&S\left(V^{Q_p}_{\bullet,\bullet};B\right)\\
&=&\frac{3}{28}\Biggl(
\int_0^1\left(\int_0^1(7-4v)dv+\int_1^2(2-v)(4-v)dv\right)du\\
&&+\int_1^2\left(\int_0^{2-u}(2-u)(14-7u-4v)dv
+\int_{2-u}^{4-2u}(4-2u-v)(8-4u-v)dv\right)du\Biggr)\\
&=&\frac{95}{112}.
\end{eqnarray*}
Moreover, by Theorem \ref{AZ2_thm}, we get 
\begin{eqnarray*}
S\left(W^{Q_p, B}_{\bullet,\bullet,\bullet};p\right)
&\leq&\frac{3}{28}\Biggl(\int_0^1\left(
\int_0^1 2^2dv+\int_1^2 (3-v)^2dv\right)du\\
&&+\int_1^2\left(\int_0^{2-u}(4-2u)^2dv
+\int_{2-u}^{4-2u}(6-3u-v)^2dv\right)du\Biggr)\\
&&+\frac{6}{28}\left(\int_1^2
\left(\int_0^{2-u}(u-1)(4-2u)dv+\int_{2-u}^{4-2u}(u-1)(6-3u-v)dv\right)du\right)\\
&=&\frac{95}{112}+\frac{1}{16}=\frac{51}{56}.
\end{eqnarray*}
Therefore, we get the inequality 
\begin{eqnarray*}
\delta_p(X)&\geq&\min\left\{\frac{A_X(Q_p)}{S_X(Q_p)},\,\,\, 
\frac{A_{Q_p}(B)}{S\left(V^{Q_p}_{\bullet,\bullet}; B\right)},\,\,\, 
\frac{A_B(p)}{S\left(W^{Q_p,B}_{\bullet,\bullet,\bullet};p\right)}\right\}
\geq\min\left\{\frac{16}{11},\,\,\frac{112}{95},\,\,\frac{56}{51}\right\}=\frac{56}{51}
\end{eqnarray*}
by Corollary \ref{reduction_corollary}.
\end{proof}

\begin{corollary}\label{5651_corollary}
Let $G\curvearrowright X$ be as in Example \ref{CS_example} \eqref{CS_example2}. 
Then we have 
\[
\delta_{p_t}(X)\geq \frac{56}{51},
\]
where $p_t\in X$ is given in Remark \ref{G-inv_remark}. 
\end{corollary}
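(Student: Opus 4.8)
The plan is to deduce Corollary \ref{5651_corollary} directly from Proposition \ref{5651_proposition}. First I would check that the point $p_t \in X$ lies in $X \setminus (E_2 \cup E_3)$: by Remark \ref{G-inv_remark} \eqref{G-inv_remark1} we have $p_y, p_t \in X \setminus (E_1 \cup E_3)$, and in particular $p_t \notin E_3$; moreover $p_t$ is the inverse image of the point $[0:0:0:1] \in P = \pr^3_{xyzt}$, which is distinct from $p^P = [0:0:1:0]$, so $p_t$ is not contained in the exceptional divisor $E_2$ of $\sigma^V$. Hence $p_t \in X \setminus (E_2 \cup E_3)$.

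Next I would identify the divisor $Q_{p_t} \in |H_1|$ through $p_t$ in the sense of Remark \ref{H1_remark} \eqref{H1_remark2}. By Example \ref{CS-H1_example}, since $p = p_t$, this divisor is $Q_\infty$, the strict transform of the quadric $Q_\infty^P = (x^2 + y^2 + zt = 0) \subset P$. It remains to verify that $Q_\infty$ is smooth as an abstract surface. Again by Example \ref{CS-H1_example}, $Q_\lambda$ is singular precisely when $\lambda \in \{0, 1, \omega, \omega^2\}$, and $\infty$ is not in this list; hence $Q_{p_t} = Q_\infty$ is smooth.

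With both hypotheses of Proposition \ref{5651_proposition} verified for the point $p = p_t$, the proposition yields $\delta_{p_t}(X) \geq \frac{56}{51}$, which is exactly the claimed inequality. I expect no obstacle here: the entire content of the corollary is the routine verification that $p_t$ avoids $E_2 \cup E_3$ and that its distinguished $|H_1|$-member is smooth, both of which are immediate consequences of the explicit description in Section \ref{311_section} and Examples \ref{CS_example}, \ref{CS-H1_example}. The corollary is essentially a worked instance of the proposition for one of the $G$-invariant points that will later be needed in the global K-stability argument.
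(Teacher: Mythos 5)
Your proposal matches the paper's proof, which simply cites Remark \ref{G-inv_remark} \eqref{G-inv_remark1} and Example \ref{CS-H1_example} and calls the verification trivial; you have merely spelled out the two checks (that $p_t \notin E_2 \cup E_3$ and that $Q_{p_t}=Q_\infty$ is smooth) before invoking Proposition \ref{5651_proposition}. This is correct and takes essentially the same approach.
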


\begin{proof}
Trivial from Remark \ref{G-inv_remark} \eqref{G-inv_remark1} and 
Example \ref{CS-H1_example}.
\end{proof}

\begin{proposition}\label{112107_proposition}
Let $X$ be as in \S \ref{311_section}. Take a closed point 
$p_0\in X\setminus(E_1\cup E_2)$. Assume that there is a smooth member 
$S\in |H_3|$ with $-K_S$ ample such that 
\begin{itemize}
\item
$p_0\in S$, and 
\item
any $(-1)$-curve in $S$ does not pass through $p_0$.
\end{itemize}
Then we have the inequality 
\[
\delta_{p_0}(X)\geq \frac{112}{107}. 
\]
\end{proposition}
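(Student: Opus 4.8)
The plan is to mimic the proof of Proposition \ref{5651_proposition}, but now using the divisor $S\in|H_3|$ in place of $Q_p\in|H_1|$ and then performing the two successive refinements dictated by Corollary \ref{reduction_corollary}. First I would compute $\tau_X(S)=\tau_X(H_3)$ by finding the largest $u$ with $-K_X-uH_3=H_1+H_2+(1-u)H_3$ pseudo-effective; using the description of $\Eff(X)$ and $\Nef(X)$ in \S\ref{311_section}, I expect this to break into a few linear segments in $u$, with the Nakayama--Zariski positive parts $P(u)$ piecewise linear in $u$ and negative parts $N(u)$ supported on (strict transforms of) the $E_i$. Then $S_X(S)=\frac{1}{28}\int_0^{\tau}\vol_X(-K_X-uH_3)\,du$ is a routine piecewise-polynomial integral, and I expect it to come out to a value comfortably less than $\frac{107}{112}$, so that $A_X(S)/S_X(S)=1/S_X(S)$ clears the bound.

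Next I would restrict to $S$. Since $S\in|H_3|$ is smooth with $-K_S$ ample, $S$ is a (weak) del Pezzo surface, and I would identify $H_i|_S$ in terms of a standard basis of $(-1)$-curves (and the relevant exceptional curves $E_i|_S$), exactly as $\sC^Q\sim 3e_0+2e_1+2e_2$ was used before. The key input is the hypothesis that \emph{no $(-1)$-curve passes through $p_0$}: this lets me choose an auxiliary smooth curve $B\subset S$ through $p_0$ with small intersection with the negative part divisors, so that the fixed-part contributions $\ord_p(N(u)|_S)$, etc., vanish or are controlled. I then compute $S(V^{\tilde S}_{\bullet,\bullet};B)$ via Theorem \ref{AZ1_thm} as a double integral of $\vol_S(P(u)|_S-vB)$, again piecewise polynomial in $(u,v)$, and finally $S(W^{S,B}_{\bullet,\bullet,\bullet};p_0)$ via Theorem \ref{AZ2_thm}, which equals the ``leading'' double integral of $(P(u,v)\cdot\bar B)^2$ plus the correction term $F_{p_0}$. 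With $B$ chosen so that the points involved miss the negative curves, the correction term $F_{p_0}$ should be small (or a controlled contribution from the $E_i$ only), and the final bound
\[
\delta_{p_0}(X)\geq\min\left\{\frac{A_X(S)}{S_X(S)},\ \frac{A_S(B)}{S(V^{\tilde S}_{\bullet,\bullet};B)},\ \frac{A_B(p_0)}{S(W^{S,B}_{\bullet,\bullet,\bullet};p_0)}\right\}
\]
from Corollary \ref{reduction_corollary} should evaluate to $\tfrac{112}{107}$, with the third term being the binding one.

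The main obstacle will be the bookkeeping for the Zariski decompositions on the surface $S$: because $p_0$ lies in $X\setminus(E_1\cup E_2)$ but may lie on $E_3$, and because the position of $B$ relative to the finitely many $(-1)$-curves and relative to $E_3|_S$ varies, the $(u,v)$-region $\bar\Delta^{\Supp}$ will decompose into several polygonal chambers on which $P(u,v)$ has different expressions. Getting all the chamber walls right — and in particular verifying that the hypothesis ``no $(-1)$-curve through $p_0$'' really does force $\ord_{p_0}$ of the various restricted fixed parts to be as small as needed — is where the care is required; the integrals themselves are elementary once the chambers are pinned down. A secondary point is to check that such an auxiliary curve $B$ through $p_0$ with the required properties exists on the del Pezzo surface $S$ (e.g.\ a member of a suitable conic or line pencil avoiding the bad curves), which should follow from the fact that $-K_S$ is ample and $p_0$ is a general enough point of $S$ in the sense guaranteed by the hypothesis.
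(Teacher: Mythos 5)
Your outline correctly identifies the first refinement $S \in |H_3|$ and the structure of the argument via Corollary \ref{reduction_corollary}, but the second refinement you propose is \emph{not} the one the paper uses, and the difference matters. You propose to pick a curve $B \subset S$ through $p_0$, in direct analogy with the choice $B \sim e_0 + e_1$ on the degree-$7$ del Pezzo $Q_p$ in Proposition \ref{5651_proposition}, and you defer to a ``secondary point'' the existence of a suitable such $B$. The paper instead takes the blowup $\varepsilon_0\colon \tilde S \to S$ at $p_0$ (so that by the hypothesis that no $(-1)$-curve passes through $p_0$ the surface $\tilde S$ is a genuine del Pezzo of degree $4$, realized as the blowup of the plane $S^P$ at the five points $p_0, p_1, \dots, p_4$) and refines by the exceptional curve $e_0 \subset \tilde S$. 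This is a plt-type prime divisor \emph{over} $S$, not a curve \emph{on} $S$, and it has log discrepancy $A_S(e_0)=2$.

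This is not a cosmetic difference. The paper's computation shows that the binding term in Corollary \ref{reduction_corollary} is the middle one, namely
\[
\frac{A_S(e_0)}{S(V^S_{\bullet,\bullet};e_0)} \;=\; \frac{2}{107/56} \;=\; \frac{112}{107},
\]
and the factor $A_S(e_0)=2$ is doing essential work. With your proposed curve $B \subset S$ through $p_0$ one has $A_S(B)=1$, so the corresponding term is $1/S(V^S_{\bullet,\bullet};B)$ and you would need to exhibit a specific $B$ with $S(V^S_{\bullet,\bullet};B)\le 107/112$. Nothing in your outline produces such a $B$, and the hypothesis on $(-1)$-curves, which you invoke to control the fixed-part orders, is used by the paper for something quite different: it guarantees that the $\R$-divisors $\varepsilon_0^*(P(u)|_S)-v e_0$ on the degree-$4$ del Pezzo $\tilde S$ have Zariski decompositions described uniformly in terms of the $(-1)$-curves $l_{0i}$, $\tilde C$ (and $\tilde{\sC}$), giving clean chambers in $(u,v)$. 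Until you specify $B$, verify the chamber structure of $P(u)|_S - vB$, and check that both $1/S(V^S;B)$ and $1/S(W^{S,B};p_0)$ clear $112/107$, the argument has a gap; given that the paper's route via the blowup at $p_0$ is the one that the hypothesis was evidently designed to enable, I would reorganize the proof around $\tilde S$ and $e_0$ rather than a curve on $S$.
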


\begin{proof}
Let $S^P\subset P$ be the strict transform of $S$ on $P$. Then $S^P$ is a plane 
with $p^P\not\in S^P$,
$\sC^P\cap S^P=\{p_1,\dots, p_4\}$, 
and $S$ is the blowup of $S^P$ along the points $p_1,\dots, p_4$ 
from the assumption. 
Let $\varepsilon\colon\tilde{S}\to S^P$ be the composition of 
the blowup $\varepsilon_0\colon\tilde{S}\to S$ at $p_0\in S$ and the natural morphism 
$S\to S^P$. From the assumption, $\tilde{S}$ is a smooth del Pezzo surface of 
degree $4$. Let $e_0,\dots,e_4\subset\tilde{S}$ be the $\varepsilon$-exceptional 
curves with $\varepsilon(e_i)=p_i$, let $l_{ij}\subset\tilde{S}$ $(0\leq i<j\leq 4)$ be 
the strict transform of the line passing through $p_i$ and $p_j$, and let 
$\tilde{C}\subset\tilde{S}$ be the strict transform of the conic passing through 
$p_0,\dots,p_4$. Moreover, let $\tilde{\sC}\subset\tilde{S}$ be the 
strict transform of $E_3|_S\subset S$. 
Note that $\tau_X(S)=3/2$. For $u\in[0,3/2]$, let us set 
\begin{eqnarray*}
P(u)&:=&P_\sigma(X, -K_X-uS),\\
N(u)&:=&N_\sigma(X, -K_X-uS).
\end{eqnarray*}
Then, 
\begin{itemize}
\item
if $u\in[0,1]$, then 
\begin{eqnarray*}
N(u)&=&0,\\
P(u)&\sim_\R&H_1+H_2+(1-u)H_3,
\end{eqnarray*}
\item
If $u\in[1,3/2]$, then 
\begin{eqnarray*}
N(u)&=&(u-1)E_3,\\
P(u)&\sim_\R&(2-u)H_1+(3-2u)H_2.
\end{eqnarray*}
\end{itemize}
Thus we get
\begin{eqnarray*}
S_X(S)&=&\frac{1}{28}\left(\int_0^1\left(H_1+H_2+(1-u)H_3\right)^{\cdot 3}du
+\int_1^{\frac{3}{2}}\left((2-u)H_1+(3-2u)H_2\right)^{\cdot 3}du\right)\\
&=&\frac{227}{448}.
\end{eqnarray*}
Let us set 
\begin{eqnarray*}
P(u,v)&:=&P_\sigma\left(\tilde{S}, \varepsilon_0^*\left(P(u)|_{S}\right)-ve_0\right), \\
N(u,v)&:=&N_\sigma\left(\tilde{S}, \varepsilon_0^*\left(P(u)|_{S}\right)-ve_0\right).
\end{eqnarray*}
Note that 
\[
\varepsilon_0^*\left(P(u)|_{S}\right)\sim_\R
\begin{cases}
\varepsilon^*\sO(4-u)-(e_1+\cdots+e_4) & \text{if }u\in[0,1],\\ 
\varepsilon^*\sO(7-4u)-(2-u)(e_1+\cdots+e_4) & \text{if }u\in\left[1,\frac{3}{2}\right].
\end{cases}
\]

\begin{itemize}
\item
Assume that $u\in[0,1]$. 
\begin{itemize}
\item
If $v\in[0,3-u]$, then 
\begin{eqnarray*}
N(u,v)&=&0, \\
P(u,v)&\sim_\R&\varepsilon^*\sO(4-u)-ve_0-(e_1+\cdots+e_4),
\end{eqnarray*}
and $(P(u,v))^{\cdot 2}=(4-u)^2-4-v^2$. 
\item
If $v\in[3-u,4-2u]$, then 
\begin{eqnarray*}
N(u,v)&=&(-3+u+v)(l_{01}+\cdots+l_{04}), \\
P(u,v)&\sim_\R&\varepsilon^*\sO(16-5u-4v)-(12-4u-3v)e_0
-(4-u-v)(e_1+\cdots+e_4),
\end{eqnarray*}
and $(P(u,v))^{\cdot 2}=(4-u-v)(12-5u-3v)$. 
\item
If $v\in\left[4-2u,\frac{8-3u}{2}\right]$, then 
\begin{eqnarray*}
N(u,v)&=&(-3+u+v)(l_{01}+\cdots+l_{04})+(-4+2u+v)\tilde{C}, \\
P(u,v)&\sim_\R&\varepsilon^*\sO(24-9u-6v)-(16-6u-4v)e_0
-(8-3u-2v)(e_1+\cdots+e_4),
\end{eqnarray*}
and $(P(u,v))^{\cdot 2}=(8-3u-2v)^2$. 
\end{itemize}
\item
Assume that $u\in\left[1,\frac{3}{2}\right]$. 
\begin{itemize}
\item
If $v\in[0,6-4u]$, then 
\begin{eqnarray*}
N(u,v)&=&0, \\
P(u,v)&\sim_\R&\varepsilon^*\sO(7-4u)-ve_0-(2-u)(e_1+\cdots+e_4),
\end{eqnarray*}
and $(P(u,v))^{\cdot 2}=(7-4u)^2-4(2-u)^2-v^2$. 
\item
If $v\in[6-4u,5-3u]$, then 
\begin{eqnarray*}
N(u,v)&=&(-6+4u+v)\tilde{C}, \\
P(u,v)&\sim_\R&\varepsilon^*\sO(19-12u-2v)-(6-4u)e_0
-(8-5u-v)(e_1+\cdots+e_4),
\end{eqnarray*}
and $(P(u,v))^{\cdot 2}=(3-2u)(23-14u-4v)$. 
\item
If $v\in\left[5-3u,\frac{13-8u}{2}\right]$, then 
\begin{eqnarray*}
N(u,v)&=&(-5+3u+v)(l_{01}+\cdots+l_{04})+(-6+4u+v)\tilde{C}, \\
P(u,v)&\sim_\R&\varepsilon^*\sO(39-24u-6v)-(26-16u-4v)e_0
-(13-8u-2v)(e_1+\cdots+e_4),
\end{eqnarray*}
and $(P(u,v))^{\cdot 2}=(13-8u-2v)^2$. 
\end{itemize}
\end{itemize}
Note that $\ord_{e_0}(E_3|_S)\leq 1$. Therefore, we have 
\begin{eqnarray*}
&&S\left(V^S_{\bullet,\bullet};e_0\right)\\
&\leq&
\frac{3}{28}\Biggl(
\int_0^1\biggl(\int_0^{3-u}\left((4-u)^2-4-v^2\right)dv\\
&&+\int_{3-u}^{4-2u}(4-u-v)(12-5u-3v)dv
+\int_{4-2u}^{\frac{8-3u}{2}}(8-3u-2v)^2dv\biggr)du\\
&+&\int_1^{\frac{3}{2}}\biggl((u-1)\left((7-4u)^2-4(2-u)^2\right)
+\int_0^{6-4u}\left((7-4u)^2-4(2-u)^2-v^2\right)dv\\
&&+\int_{6-4u}^{5-3u}(3-2u)(23-14u-4v)dv
+\int_{5-3u}^{\frac{13-8u}{2}}(13-8u-2v)^2dv\biggr)du\Biggr)
=\frac{107}{56}.
\end{eqnarray*}
Moreover, for any $p\in e_0$, since $l_{01}|_{e_0},\dots,l_{04}|_{e_0}$ are mutually 
distinct, and $\ord_p\left(l_{0i}|_{e_0}\right)$, $\ord_p\left(\tilde{C}|_{e_0}\right)$, 
$\ord_p\left(\tilde{\sC}|_{e_0}\right)\leq 1$, 
we have 
\begin{eqnarray*}
&&F_p\left(W^{\tilde{S},e_0}_{\bullet,\bullet,\bullet}\right)\\
&\leq&\frac{6}{28}\Biggl(\int_0^1\biggl(\int_{3-u}^{4-2u}(12-4u-3v)(-3+u+v)dv\\
&&+\int_{4-2u}^{\frac{8-3u}{2}}(16-6u-4v)(-3+u+v)dv\biggr)du\\
&&+\int_1^{\frac{3}{2}}\int_{5-3u}^{\frac{13-8u}{2}}(26-16u-4v)(-5+3u+v)dvdu\Biggr)\\
&+&\frac{6}{28}\Biggl(\int_0^1\int_{4-2u}^{\frac{8-3u}{2}}(16-6u-4v)(-4+2u+v)dvdu\\
&&+\int_1^{\frac{3}{2}}\biggl(\int_{6-4u}^{5-3u}(6-4u)(-6+4u+v)dv
+\int_{5-3u}^{\frac{13-8u}{2}}(26-16u-4v)(-6+4u+v)dv\biggr)du\Biggr)\\
&+&\frac{6}{28}\Biggl(\int_1^{\frac{3}{2}}\biggl(\int_0^{6-4u}v(u-1)dv
+\int_{6-4u}^{5-3u}(6-4u)(u-1)dv\\
&&+\int_{5-3u}^{\frac{13-8u}{2}}(26-16u-4v)(u-1)dv\biggr)du\Biggr)
=\frac{27}{448}+\frac{5}{448}+\frac{1}{64}=\frac{39}{448}.
\end{eqnarray*}
Therefore, we get 
\begin{eqnarray*}
&&S\left(W^{\tilde{S},e_0}_{\bullet,\bullet,\bullet};p\right)\\
&\leq&\frac{39}{448}+\frac{3}{28}\Biggl(\int_0^1\biggl(\int_0^{3-u}v^2dv
+\int_{3-u}^{4-2u}(12-4u-3v)^2dv+\int_{4-2u}^{\frac{8-3u}{2}}(16-6u-4v)^2dv\biggr)du\\
&&+\int_1^{\frac{3}{2}}\int_0^{6-4u}v^2dv+\int_{6-4u}^{5-3u}(6-4u)^2dv
+\int_{5-3u}^{\frac{13-8u}{2}}(26-16u-4v)^2dv\biggr)du\Biggr)
=\frac{407}{448}.
\end{eqnarray*}
As a consequence, we get the inequality
\begin{eqnarray*}
\delta_{p_0}(X)\geq \min\left\{\frac{A_X(S)}{S_X(S)},\,\,\,
\frac{A_S(e_0)}{S\left(V^S_{\bullet,\bullet};e_0\right)},\,\,\,
\inf_{p\in e_0}\frac{A_{e_0}(p)}{S\left(W^{\tilde{S},e_0}_{\bullet,\bullet,\bullet};
p\right)}\right\}\geq\min\left\{\frac{448}{227},\,\,\frac{112}{107},\,\,
\frac{448}{407}\right\}=\frac{112}{107}
\end{eqnarray*}
by Corollary \ref{reduction_corollary}.
\end{proof}

\begin{corollary}\label{112107_corollary}
Let $G\curvearrowright X$ be as in Example \ref{CS_example} \eqref{CS_example2}. 
If $p_0=p_y$ or $p_0\in l\setminus\{p_x,q\}$, then we have 
\[
\delta_{p_0}(X)\geq\frac{112}{107}.
\]
\end{corollary}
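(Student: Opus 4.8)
The plan is to deduce the corollary directly from Proposition \ref{112107_proposition}: for each point $p_0$ listed it suffices to exhibit a smooth member $S\in|H_3|$ with $-K_S$ ample, $p_0\in S$, and no $(-1)$-curve of $S$ passing through $p_0$. First I recall from \S \ref{311_section} that a general member of $|H_3|$ is the strict transform of a plane $S^P\subset P=\pr^3$ with $p^P\notin S^P$, so that $S$ is the blowup of $S^P\simeq\pr^2$ at the four points $\sC^P\cap S^P$. No three of these four points can be collinear, because a line meeting the complete intersection $\sC^P$ of two quadrics in three points would lie on both quadrics, hence inside $\sC^P$, contradicting that $\sC^P$ is a smooth elliptic curve. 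Consequently, as soon as $S^P$ is transversal to $\sC^P$, the surface $S$ is a smooth del Pezzo surface of degree $5$, $-K_S$ is automatically ample, and its ten $(-1)$-curves are the four exceptional curves $e_i$ over $p_i:=\sC^P\cap S^P$ together with the strict transforms $l_{ij}$ of the six lines $\overline{p_ip_j}$.

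Next I would check applicability of Proposition \ref{112107_proposition}, i.e.\ that $p_0\in X\setminus(E_1\cup E_2)$. For $p_0=p_y$ this is Remark \ref{G-inv_remark} \eqref{G-inv_remark1}. For $p_0\in l\setminus\{p_x,q\}$ I would use that $l^V$ is a fibre of $\pi^V$ and meets $\sC^V$ only at the point of $E_2^V$ lying over $p^P$ to conclude $l\cap(E_1\cup E_2)=\{q\}$; moreover the image $p_0'$ of $p_0$ under $\sigma^V\circ\sigma_1$ lies neither on $\sC^P$ nor at $p^P$ (in the coordinates of Example \ref{CS_example} \eqref{CS_example2} one has $p_0'=[0:1:0:0]$ or $p_0'=[1:0:c:0]$ with $c\neq0$), so $\sigma^V\circ\sigma_1$ is a local isomorphism near $p_0$ and $p_0\in S$ is equivalent to $p_0'\in S^P$.

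It then remains to choose $S^P$ through $p_0'$. I claim a general plane $S^P$ with $p_0'\in S^P$ and $p^P\notin S^P$ works: transversality of $S^P$ to $\sC^P$ is a nonempty open condition on this two-dimensional linear system of planes; the requirement that $p_0'$ avoid all six lines $\overline{p_ip_j}$ fails only when $S^P$ contains one of the finitely many secant lines of $\sC^P$ through $p_0'$, which is a proper closed condition; and the order-$\le 1$ conditions on $l_{0i}|_{e_0}$, $\tilde{C}|_{e_0}$, $\tilde{\sC}|_{e_0}$ appearing in the proof of Proposition \ref{112107_proposition} are again generic in $S^P$ once $p_0$ (equivalently $e_0=\varepsilon_0^{-1}(p_0)$) is fixed. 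Hence such an $S$ exists and Proposition \ref{112107_proposition} yields $\delta_{p_0}(X)\ge 112/107$. The step needing the most care is this last one, since $p_0'$ in the explicit example is not an arbitrary point of $P$ relative to $\sC^P$; I would settle it either by a short coordinate computation showing that the family of planes through $p_0'$ is not contained in the union of the bad loci, or by writing down one explicit plane $S^P$ with all the required properties.
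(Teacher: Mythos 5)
Your plan — reduce to Proposition~\ref{112107_proposition} by producing a suitable plane $S^P$ through $p_0'$ — is exactly the approach the paper takes, and the preliminary observations (no trisecants of $\sC^P$ because it is a complete intersection of two quadrics; $l\cap(E_1\cup E_2)=\{q\}$; $p_0'\notin\sC^P$) are correct. But the proposal stops short where the actual work begins, and you say so yourself. The claim that there are ``finitely many secant lines of $\sC^P$ through $p_0'$'' is not innocuous here precisely because $p_0'$ is not a general point: it lies on the quadric $Q_0^P$ through $\sC^P$. If $p_0'$ lay on a \emph{smooth} quadric of the pencil, the projection of $\sC^P$ from $p_0'$ would be a $2$-to-$1$ map onto a conic, giving a one-parameter family of secants through $p_0'$ and breaking your openness argument. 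You are saved only by the fact that $Q_0^P$ is a cone and $p_0'$ is not its vertex, so the projection from $p_0'$ is still birational; but that is a verification you defer rather than carry out, and as written the ``proper closed condition'' step is a gap.

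The paper closes this gap in a cleaner way that your proposal misses: since $p_0'\in Q_0^P$ and $S^P$ does not pass through the vertex $p_x^P$, the section $Q_0^P\cap S^P$ is a \emph{smooth conic} containing \emph{all five} of $p_0',p_1,\dots,p_4$, and a line meets a smooth conic in at most two points — so no three of the five can be collinear, automatically. With that observation the only thing left to check is that $S^P\cap\sC^P$ consists of four distinct points, which the paper does directly (a discriminant computation for $p_0\in l\setminus\{p_x,q\}$; an explicit plane for $p_0=p_y$). I would encourage you to redo your genericity argument by invoking the smooth conic $Q_0^P\cap S^P$ rather than counting secants; it makes the general-position condition free and reduces the whole verification to transversality of $S^P$ with $\sC^P$, which is ordinary Bertini.
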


\begin{proof}
Assume that $p_0=p_y$. Take the plane 
\[
S^P:=(t-\sqrt[3]{2}x=0)\subset P. 
\]
Then, under the natural isomorphism $S^P\simeq\pr^2_{xyz}$, the point $p_0$ 
corresponds to the point 
\[
\begin{bmatrix}0\\1\\0\end{bmatrix}\in\pr^2_{xyz},
\]
and the intersection $S^P\cap \sC^P$ is determined by the equations 
\[
yz+\sqrt[3]{4}x^2=0, \quad\quad x^2+y^2+\sqrt[3]{2}xz=0.
\]
Hence the points $p_1,\dots,p_4\in\pr^2$ in Proposition \ref{112107_proposition} 
corresponds to the points 
\[
\begin{bmatrix}0\\0\\1\end{bmatrix},\quad
\begin{bmatrix}1\\1\\-\sqrt[3]{4}\end{bmatrix},\quad
\begin{bmatrix}4\\2(-1+\sqrt{-7})\\\sqrt[3]{4}(1+\sqrt{-7})\end{bmatrix},\quad
\begin{bmatrix}4\\2(-1-\sqrt{-7})\\\sqrt[3]{4}(1-\sqrt{-7})\end{bmatrix}.
\]
We can easily check that no $3$ points among $p_0,\dots,p_4$ are collinear. 

Now assume that $p_0\in l\setminus\{p_x,q\}$.
There exists $c\in\C^*$ such that we can write 
\[
p_0=\begin{bmatrix}1\\0\\c\\0\end{bmatrix}\in\pr^2_{xyzt}.
\]
Take a general plane 
\[
S^P=(z=cx+ay+bt)\subset P
\]
passing through $p_0$ (for $a$, $b\in \C^*$ general). 
Then, under the natural isomorphism $S^P\simeq\pr^2_{xyt}$, the point 
$p_0$ corresponds to the point 
\[
\begin{bmatrix}1\\0\\0\end{bmatrix}\in\pr^2_{xyt},
\]
and the intersection $S^P\cap \sC^P$ is determined by the equations 
\[
y(cx+ay+bt)+t^2=0, \quad\quad x^2+y^2+(cx+ay+bt)t=0.
\]
By Bertini's theorem, $S^P\cap \sC^P$ consists of $4$ distinct points 
$\{p_1,\dots,p_4\}$. We can write 
\[
p_i=\begin{bmatrix}x_i\\1\\t_i\end{bmatrix}\quad(1\leq i\leq 4), 
\]
where $t_1,\dots,t_4$ are the roots of the polynomial 
\[
f(t):=t^4+(2b-c^2)t^3+(2a+b^2)t^2+2abt+a^2+c^2.
\]
Since the discriminant 
\begin{eqnarray*}
&&c^4(256a^2+256a^5-768a^3b-128ab^2-128a^4b^2
+640a^2b^3+16b^4+16a^3b^4-176ab^5+16b^7\\
&&+256c^2+544a^3c^2-768abc^2+144a^4bc^2-648a^2b^2c^2
+288b^3c^2-4a^3b^3c^2
+192ab^4c^2\\
&&-4b^6c^2
+288ac^4-27a^4c^4+360a^2bc^4
-504b^2c^4-36ab^3c^4-54a^2c^6+216bc^6-27c^8)
\end{eqnarray*}
of $f(t)$ is nonzero for general $a$, $b\in\C^*$, the values $t_1,\dots,t_4$ are 
mutually distinct. Hence 
$p_0$, $p_i$, $p_j$ for $1\leq i<j\leq 4$ are not collinear. 
Moreover, since $\{p_0,\dots,p_4\}\subset Q_0^P\cap S^P$ and 
$Q_0^P\cap S^P$ is a smooth conic, no $3$ points 
among $\{p_0,\dots,p_4\}$ are not collinear. 
\end{proof}

\begin{remark}
Let $G\curvearrowright X$ be as in Example \ref{CS_example} \eqref{CS_example2}. 
For any smooth $S\in|H_3|$ with $p_x\in S$, the point $p_x$ is contained in 
a $(-1)$-curve in $S$. Indeed, the intersection $S^P\cap Q_0^P$ must be the 
union of $2$ lines passing through $p_x$. Note that, in $Q_0^P$, the curve $\sC^P$ 
tangents to $l^P$ at $p^P$. We will discuss the case in \S \ref{special-I_section}. 
\end{remark}

\section{Local $\delta$-invariants for points in $E_2$}\label{local_E2_section}

In this section, we prove the following: 

\begin{proposition}\label{112109_proposition}
Let $X$ be as in \S \ref{311_section}. Take any closed point 
$p\in E_2\setminus\{q\}$. Then we have the inequality 
\[
\delta_p(X)\geq\frac{112}{109}.
\]
\end{proposition}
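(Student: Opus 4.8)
The plan is to apply Corollary \ref{reduction_corollary} with $Y:=E_2$. Since $X$ is a smooth Fano threefold it admits no small $\Q$-factorial modification, so in Notation \ref{MDS1_notation} we may take $\sigma=\id$ and $\tilde{Y}=Y=E_2$; the pair $(X,E_2)$ is plt (indeed log smooth) with $A_X(E_2)=1$, and by adjunction $\Delta_{E_2}=0$. First I would compute the Nakayama--Zariski decomposition $-K_X-uE_2=P(u)+N(u)$ for $u\in[0,\tau_X(E_2)]$ from the cone data of \S\ref{311_section}: one finds $\tau_X(E_2)=2$, with $N(u)=0$ and $P(u)\sim_\R H_1+(1+u)H_2+(1-u)H_3$ for $u\in[0,1]$, and $N(u)=(u-1)E_3$, $P(u)\sim_\R(2-u)H_1+(3-u)H_2$ for $u\in[1,2]$. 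The usual volume integral then gives $S_X(E_2)=51/56$, so $A_X(E_2)/S_X(E_2)=56/51\geq 112/109$.

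Now fix a closed point $p\in E_2\setminus\{q\}$. Recall from \S\ref{311_section} that $E_2$ is the blowup of $E_2^V\simeq\pr^2$ at one point, that $E_1|_{E_2}$ is its $(-1)$-curve, that $E_3|_{E_2}$ is the strict transform of a smooth cubic, and that $q$ is the unique point lying on both of these curves. Let $\nu\colon Y'\to E_2$ be the blowup at $p$, with exceptional curve $C$; then $A_{E_2}(C)=2$, $\nu$ is the plt-blowup of the plt-type divisor $C$ over $(E_2,0)$, and $\Delta_{Y'}=0$, $\Delta_C=0$, so $A_C(\mathfrak{p})=1$ for every closed $\mathfrak{p}\in C$. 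Here $\tau_-=0$ and $\tau_+=2$. For each $u\in[\tau_-,\tau_+]$ I would restrict $P(u)$ to $E_2$, pull it back to $Y'$, and compute the Zariski decomposition $\nu^*(P(u)|_{E_2})-vC=P(u,v)+N(u,v)$ for $v\in[0,t(u)]$, tracking how the moving part travels through the cone of curves of the rational surface $Y'$. Feeding $(P(u),N(u))$ and the $(P(u,v),N(u,v))$ into Theorem \ref{AZ1_thm} yields $S(V^{E_2}_{\bullet,\bullet};C)$, and into Theorem \ref{AZ2_thm} together with the fixed-order term $F_\mathfrak{p}$ of Definition \ref{fixed-order_definition} yields an upper bound for $S(W^{Y',C}_{\bullet,\bullet,\bullet};\mathfrak{p})$ valid for every $\mathfrak{p}\in C$. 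One then checks $2/S(V^{E_2}_{\bullet,\bullet};C)\geq 112/109$ and $1/S(W^{Y',C}_{\bullet,\bullet,\bullet};\mathfrak{p})\geq 112/109$ for all $\mathfrak{p}\in C$, and concludes via Corollary \ref{reduction_corollary} that $\delta_p(X)\geq 112/109$.

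The main obstacle is the nested Zariski decomposition on $Y'$ together with the uniform bound on $F_\mathfrak{p}$: the shape of $\nu^*(P(u)|_{E_2})-vC$ genuinely depends on the position of $p$ relative to $E_1|_{E_2}$, $E_3|_{E_2}$, and the distinguished fibres of the ruling of $E_2\simeq\mathbb{F}_1$, so the integrals must be run through several sub-cases. Crucially, the hypothesis $p\neq q$ is exactly what guarantees that no two of the curves appearing in the fixed parts $N'(u)$, $N(u,v)$, $\Sigma$ are tangent at $p$, hence that their strict transforms meet the exceptional curve $C$ at distinct points to order at most $1$; this is the input needed to keep $S(W^{Y',C}_{\bullet,\bullet,\bullet};\mathfrak{p})\leq 109/112$ uniformly over $\mathfrak{p}\in C$. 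At $p=q$ these estimates degrade, and that point requires the separate, harder treatment carried out in \S\ref{special-II_section}.
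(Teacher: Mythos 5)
Your plan is a genuine departure from the paper. The paper never blows up $E_2$: it takes $Y'=E_2$ and $\nu=\id$, and refines $V^{E_2}_{\bullet,\bullet}$ by the \emph{fiber} $l_2\subset E_2$ of the ruling of $E_2\simeq\mathbb{F}_1$ through $p$, with $A_{E_2}(l_2)=1$. You instead extract the exceptional curve $C$ of $\Bl_p E_2$, with $A_{E_2}(C)=2$. Both are legitimate instances of Corollary \ref{reduction_corollary}, and the two routes produce different numerics. A quick check of the second link in your chain for a generic $p\notin s\cup E_3|_{E_2}$ gives $S\left(V^{E_2}_{\bullet,\bullet};C\right)=\tfrac{25}{16}$, hence $A_{E_2}(C)/S=\tfrac{32}{25}$, versus the paper's $A_{E_2}(l_2)/S\left(V^{E_2}_{\bullet,\bullet};l_2\right)=\tfrac{28}{25}$; both are safely above $\tfrac{112}{109}$. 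The paper's choice has the advantage that $l_2$ is already a Cartier divisor on the smooth surface $E_2$, so no blowup and no $\Sigma$-correction term of Remark \ref{refinement-AZ_remark} appears, and the Zariski decomposition on $E_2$ stays inside the two-ray cone $\langle s,l_2\rangle$; your route forces the Zariski decomposition to move around the three-ray cone $\langle \tilde{s},\tilde{l}_2,C\rangle$ on $\Bl_p E_2$, and the shape of that cone itself changes when $p\in s$ (where $\tilde{s}$ becomes a $(-2)$-curve). So you pay for the weaker second constraint with more case work.

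The gap, as you yourself flag, is that you stop at ``one then checks.'' For a statement whose content is a specific numerical inequality, those two checks are the entire proof: you must carry out the nested Zariski decompositions $P(u,v)$, $N(u,v)$ on $\Bl_p E_2$, compute $S\left(V^{E_2}_{\bullet,\bullet};C\right)$ and $S\left(W^{Y',C}_{\bullet,\bullet,\bullet};\mathfrak{p}\right)$ explicitly, and verify the bounds separately in the cases $p$ generic, $p\in s\setminus\{q\}$ (where the blown-up surface is different), and $p\in E_3|_{E_2}\setminus\{q\}$ (where the fixed-order term $d(u)=\ord_C(N(u)|_{E_2})$ is nonzero for $u\in[1,2]$). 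Nothing guarantees in advance that your different flag reproduces exactly the threshold $\tfrac{112}{109}$; indeed from the spot check it looks as though it actually yields the stronger bound $\tfrac{28}{25}$ for generic $p$, but this must be established, not assumed, uniformly in $\mathfrak{p}\in C$ and in all positions of $p$. Finally, your explanation of why $p\neq q$ matters is not quite the mechanism at work: it is not primarily a tangency condition among the fixed-part curves. The point is that $q$ is the unique intersection $s\cap(E_3|_{E_2})$ on $E_2$; away from $q$ at most one of the two special curves $s$ and $E_3|_{E_2}$ passes through $p$, so their contributions to the fixed-order term cannot accumulate, whereas at $q$ both contribute and already the second-level invariant $\delta_q\left(V^{E_2}_{\bullet,\bullet}\right)$ drops to $\tfrac{112}{111}$ or $\tfrac{112}{113}$ (Proposition \ref{112113_proposition}), below the target $\tfrac{112}{109}$, which is why $q$ requires the entirely different analysis of \S\ref{special-II_section}.
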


\begin{proof}
The divisor $E_2$ is isomorphic to the Hirzebruch surface 
$\pr_{\pr^1}(\sO\oplus\sO(1))$. Let $s\subset E_2$ be the $(-1)$-curve 
and let $l_2\subset E_2$ be the fiber of $E_2/\pr^1$ with $p\in l_2$. 
Note that $q\in s$. Let us set $\sC^E:=E_3|_{E_2}$. Then $\sC^E$ is a smooth curve 
with $\sC^E\sim 2s+3l_2$. Note that $\tau_X(E_2)=2$. 

For any $u\in[0,2]$, let us set 
\begin{eqnarray*}
P(u)&:=&P_\sigma(X, -K_X-uE_2),\\
N(u)&:=&N_\sigma(X, -K_X-uE_2).
\end{eqnarray*}
Then we have the following: 
\begin{itemize}
\item
If $u\in[0,1]$, then 
\begin{eqnarray*}
N(u)&=&0,\\
P(u)&\sim_\R&H_1+(1+u)H_2+(1-u)H_3.
\end{eqnarray*}
\item
If $u\in[1,2]$, then 
\begin{eqnarray*}
N(u)&=&(u-1)E_3,\\
P(u)&\sim_\R&(2-u)H_1+(3-u)H_2.
\end{eqnarray*}
\end{itemize}
Therefore we get 
\begin{eqnarray*}
S_X(E_2)&=&\frac{1}{28}\Biggl(\int_0^1\left(H_1+(1+u)H_2+(1-u)H_3\right)^{\cdot 3}du\\
&&+\int_1^2\left((2-u)H_1+(3-u)H_2\right)^{\cdot 3}du\Biggr)=\frac{51}{56}.
\end{eqnarray*}
Note that, 
\begin{itemize}
\item
if $u\in[0,1]$, then 
\begin{eqnarray*}
N(u)|_{E_2}&=&0,\\
P(u)|_{E_2}&\sim_\R&(1+u)s+(2+u)l_2,
\end{eqnarray*}
\item
if $u\in[1,2]$, then 
\begin{eqnarray*}
N(u)|_{E_2}&=&(u-1)\sC^E,\\
P(u)|_{E_2}&\sim_\R&(3-u)s+(5-2u)l_2.
\end{eqnarray*}
\end{itemize}
Let us set 
\begin{eqnarray*}
P(u,v)&:=&P_\sigma\left(E_2, P(u)|_{E_2}-vl_2\right), \\
N(u,v)&:=&N_\sigma\left(E_2, P(u)|_{E_2}-vl_2\right).
\end{eqnarray*}

\begin{itemize}
\item
Assume that $u\in[0,1]$. 
\begin{itemize}
\item
If $v\in[0,1]$, then we have 
\begin{eqnarray*}
N(u,v)&=&0, \\
P(u,v)&\sim_\R&(1+u)s+(2+u-v)l_2,
\end{eqnarray*}
and $(P(u,v))^{\cdot 2}=(1+u)(3+u-2v)$.
\item
If $v\in[1,2+u]$, then we have 
\begin{eqnarray*}
N(u,v)&=&(v-1)s, \\
P(u,v)&\sim_\R&(2+u-v)s+(2+u-v)l_2,
\end{eqnarray*}
and $(P(u,v))^{\cdot 2}=(2+u-v)^2$.
\end{itemize}
\item
Assume that $u\in[1,2]$. 
\begin{itemize}
\item
If $v\in[0,2-u]$, then we have 
\begin{eqnarray*}
N(u,v)&=&0, \\
P(u,v)&\sim_\R&(3-u)s+(5-2u-v)l_2,
\end{eqnarray*}
and $(P(u,v))^{\cdot 2}=(3-u)(7-3u-2v)$.
\item
If $v\in[2-u,5-2u]$, then we have 
\begin{eqnarray*}
N(u,v)&=&(-2+u+v)s, \\
P(u,v)&\sim_\R&(5-2u-v)s+(5-2u-v)l_2,
\end{eqnarray*}
and $(P(u,v))^{\cdot 2}=(5-2u-v)^2$.
\end{itemize}
\end{itemize}
Hence we get
\begin{eqnarray*}
S\left(V^{E_2}_{\bullet,\bullet};l_2\right)
&=&\frac{3}{28}\Biggl(
\int_0^1\left(\int_0^1(1+u)(3+u-2v)dv+\int_1^{2+u}(2+u-v)^2dv\right)du\\
&&+\int_1^2\left(\int_0^{2-u}(3-u)(7-3u-2v)dv
+\int_{2-u}^{5-2u}(5-2u-v)^2dv\right)du\Biggr)\\
&=&\frac{25}{28}.
\end{eqnarray*}
Moreover, we have 
\begin{eqnarray*}
F_{s|_{l_2}}\left(W^{E_2,l_2}_{\bullet,\bullet,\bullet}\right)&=&\frac{6}{28}
\Biggl(\int_0^1\int_1^{2+u}(2+u-v)(v-1)dvdu\\
&&+\int_1^2\int_{2-u}^{5-2u}(5-2u-v)(-2+u+v)dvdu\Biggr)=\frac{15}{56}, \\
F_{\red(\sC^E|_{l_2})}\left(W^{E_2,l_2}_{\bullet,\bullet,\bullet}\right)&=&
\ord_p\left(\sC^E|_{l_2}\right)\cdot\frac{6}{28}
\Biggl(\int_1^2\biggl(\int_0^{2-u}(3-u)(u-1)dv\\
&&+\int_{2-u}^{5-2u}(5-2u-v)(u-1)dv\biggr)du\Biggr)=\ord_p\left(\sC^E|_{l_2}\right)
\cdot\frac{17}{112}.
\end{eqnarray*}
Since $\ord_p\left(\sC^E|_{l_2}\right)\leq 2$ and $p\neq q$, we have
\[
F_p\left(W^{E_2,l_2}_{\bullet,\bullet,\bullet}\right)
\leq\max\left\{\frac{15}{56},\,\,2\cdot\frac{17}{112}\right\}=\frac{17}{56}.
\]
Therefore we get 
\begin{eqnarray*}
S\left(W^{E_2, l_2}_{\bullet,\bullet,\bullet};p\right)
&\leq&\frac{17}{56}
+\frac{3}{28}\Biggl(\int_0^1\left(
\int_0^1 (1+u)^2dv+\int_1^{2+u} (2+u-v)^2dv\right)du\\
&&+\int_1^2\left(\int_0^{2-u}(3-u)^2dv
+\int_{2-u}^{5-2u}(5-2u-v)^2dv\right)du\Biggr)\\
&=&\frac{17}{56}+\frac{75}{112}=\frac{109}{112}.
\end{eqnarray*}
Therefore, we get the inequality 
\begin{eqnarray*}
\delta_p(X)&\geq&\min\left\{\frac{A_X(E_2)}{S_X(E_2)},\,\,\, 
\frac{A_{E_2}(l_2)}{S\left(V^{E_2}_{\bullet,\bullet}; l_2\right)},\,\,\, 
\frac{A_{l_2}(p)}{S\left(W^{E_2,l_2}_{\bullet,\bullet,\bullet};p\right)}\right\}
=\min\left\{\frac{56}{51},\,\,\frac{28}{25},\,\,\frac{112}{109}\right\}=\frac{112}{109}
\end{eqnarray*}
by Corollary \ref{reduction_corollary}.
\end{proof}

If $p=q$, then the value $\delta_q\left(V^{E_2}_{\bullet,\bullet}\right)$ cannot be big. 
We do not use the following Proposition \ref{112113_proposition} later. 
However, we can recognize the importance of 
the arguments in \S \ref{special-II_section} from 
Proposition \ref{112113_proposition} \eqref{112113_proposition2}. 

\begin{proposition}\label{112113_proposition}
Let $X$ be as in \S \ref{311_section}. 
\begin{enumerate}
\renewcommand{\theenumi}{\arabic{enumi}}
\renewcommand{\labelenumi}{(\theenumi)}
\item\label{112113_proposition1}
If $X$ satisfies Remark \ref{infl_remark} \eqref{infl_remark1}, then we have 
$\delta_q\left(V^{E_2}_{\bullet,\bullet}\right)=\frac{112}{111}$.
\item\label{112113_proposition2}
If $X$ satisfies Remark \ref{infl_remark} \eqref{infl_remark2}, then we have 
$\delta_q\left(V^{E_2}_{\bullet,\bullet}\right)=\frac{112}{113}$.
\end{enumerate}
\end{proposition}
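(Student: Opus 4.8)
The plan is to reduce to a local $\delta$-invariant computation on the surface $E_2$ and then to apply the refinement machinery of \S\ref{AZ_section} twice. Since $(X,E_2)$ is log smooth (in particular plt), the adjunction boundary on $E_2$ is $\Delta_{E_2}=0$, so by Corollary \ref{AZ1_corollary} we have $\delta_q(V^{E_2}_{\bullet,\bullet})=\delta_q(E_2,0;V^{E_2}_{\bullet,\bullet})$, and it suffices to compute this at $q\in E_2$. Throughout I would keep the notation from the proof of Proposition \ref{112109_proposition}: $E_2\cong\F_1$ with $(-1)$-curve $s=E_1|_{E_2}$ (so $q\in s$), ruling fibre $l_2$ through $q$, and $\sC^E:=E_3|_{E_2}\sim 2s+3l_2$; and $P(u):=P_\sigma(X,-K_X-uE_2)$, $N(u):=N_\sigma(X,-K_X-uE_2)$ with $N(u)|_{E_2}=(u-1)^+\sC^E$ and $P(u)|_{E_2}$ as listed there.

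First I would record the local picture at $q$. By \S\ref{311_section}, $E_2$ is the blow-up $\Bl_{q^{\sC}}\pr^2$, the fibre $l_2$ through $q$ is the strict transform of the tangent line $T:=T_{q^{\sC}}\sC$, and $\sC^E$ is the strict transform of $\sC$; therefore the local intersection number $(\sC^E\cdot l_2)_q$ equals $(\sC\cdot T)_{q^{\sC}}-1$, which is $1$ in case \eqref{112113_proposition1} (Remark \ref{infl_remark}\eqref{infl_remark1}) and $2$ in case \eqref{112113_proposition2} (Remark \ref{infl_remark}\eqref{infl_remark2}). Thus the three smooth branches $s,l_2,\sC^E$ through $q$ are pairwise transverse when $q^{\sC}$ is not an inflection point, whereas $\sC^E$ is tangent to $l_2$ at $q$ (but still transverse to $s$) when it is — and this single difference is what produces the two answers.

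Next I would pass to the blow-up $\varphi\colon\hat Y\to E_2$ of $q$ with exceptional curve $f$. One computes the Nakayama--Zariski decompositions of $\varphi^*(P(u)|_{E_2})-vf$ on $\hat Y$: their negative parts are supported on the strict transforms of $s$ and $l_2$, and the fixed part contributes $(u-1)^+$ times the strict transform of $\sC^E$; these data are the same in both cases (the position of $\widehat{\sC^E}$ relative to $\hat s$ and to $f$ does not change). Theorem \ref{AZ1_thm} then gives $S(V^{E_2}_{\bullet,\bullet};f)$, and Theorem \ref{AZ2_thm} gives $S(W^{\hat Y,f}_{\bullet,\bullet,\bullet};p)$ for $p\in f$, where $W^{\hat Y,f}_{\bullet,\bullet,\bullet}$ is the refinement of $\varphi^*V^{E_2}_{\bullet,\bullet}$ by $f$. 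The case distinction reappears precisely here: in case \eqref{112113_proposition2} the points at which $\widehat{l_2}$ and at which $\widehat{\sC^E}$ meet $f$ coincide at one point $p_0\in f$, so the fixed-order term $F_{p_0}$ collects the base contributions of \emph{both} $\widehat{l_2}$ (through the $N(u,v)$-decompositions) and $\widehat{\sC^E}$ (through $N(u)|_{E_2}$), whereas in case \eqref{112113_proposition1} these two contributions sit at distinct points of $f$. Carrying out the (routine but lengthy) integrals, I expect to obtain $A_{E_2}(f)=2$, $S(V^{E_2}_{\bullet,\bullet};f)=\tfrac{111}{56}$, and $\max_{p\in f}S(W^{\hat Y,f}_{\bullet,\bullet,\bullet};p)=\tfrac{111}{112}$ in case \eqref{112113_proposition1} but $\max_{p\in f}S(W^{\hat Y,f}_{\bullet,\bullet,\bullet};p)=\tfrac{113}{112}$, attained at $p_0$, in case \eqref{112113_proposition2}.

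Finally, Corollary \ref{reduction_corollary} applied on $E_2$, then along $f$ (here $A_f=1$ since $f\cong\pr^1$ has zero different), yields
\[
\delta_q(V^{E_2}_{\bullet,\bullet})\ \ge\ \min\Bigl\{\frac{A_{E_2}(f)}{S(V^{E_2}_{\bullet,\bullet};f)},\ \inf_{p\in f}\frac{1}{S(W^{\hat Y,f}_{\bullet,\bullet,\bullet};p)}\Bigr\},
\]
which equals $\tfrac{112}{111}$ in case \eqref{112113_proposition1} and $\tfrac{112}{113}$ in case \eqref{112113_proposition2}. For the reverse inequality: in case \eqref{112113_proposition1} the divisor $f$ itself realises the ratio $A_{E_2}(f)/S(V^{E_2}_{\bullet,\bullet};f)=\tfrac{112}{111}$; in case \eqref{112113_proposition2} one exhibits the minimiser as the divisorial valuation over $q$ given by the $(2,1)$-weighted blow-up of $q$ in the common tangent direction of $l_2$ and $\sC^E$ (equivalently, the exceptional divisor of $\Bl_{p_0}\hat Y\to\hat Y$), which has $A_{E_2}=3$ and, by the same formulas, $S=\tfrac{339}{112}$, hence ratio $\tfrac{112}{113}$. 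The main obstacle is the bookkeeping — tracking Nakayama--Zariski decompositions through the two (in case \eqref{112113_proposition2}, three) successive blow-ups, and pinning down the constants $\tfrac{111}{56}$, $\tfrac{113}{112}$, $\tfrac{339}{112}$ exactly rather than merely bounding them, so that the upper and lower estimates meet; this is exactly the delicate local analysis carried out in \S\ref{special-II_section}.
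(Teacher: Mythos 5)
Your approach to case~\eqref{112113_proposition1} matches the paper's: blow up $q\in E_2$ to get $\tilde E_2$ with exceptional $\tilde e_1$ (your $f$), compute $S(V^{E_2}_{\bullet,\bullet};\tilde e_1)=\tfrac{111}{56}$ via Theorem~\ref{AZ1_thm}, then bound $S(W^{\tilde E_2,\tilde e_1}_{\bullet,\bullet,\bullet};p)$ via Theorem~\ref{AZ2_thm} and conclude with Corollary~\ref{reduction_corollary}. (Your predicted value $\max_p S(W;p)=\tfrac{111}{112}$ is off --- the paper gets $\le\tfrac{15}{16}=\tfrac{105}{112}$, with the maximum $F_p$ being $F_{p_s}=\tfrac{115}{224}$ --- but since any smaller value only strengthens the lower bound, this does not affect the conclusion.)

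In case~\eqref{112113_proposition2} there is a genuine gap. You claim that $\max_{p\in f}S(W^{\hat Y,f}_{\bullet,\bullet,\bullet};p)=\tfrac{113}{112}$, attained at the common point $p_0$ of $\tilde l_2$, $\tilde\sC^E$ on $\tilde e_1$, so that Corollary~\ref{reduction_corollary} applied to the \emph{first} blow-up already yields $\delta_q\ge\min\{\tfrac{112}{111},\tfrac{112}{113}\}=\tfrac{112}{113}$. This is incorrect. As you rightly observe, the only change from case~\eqref{112113_proposition1} is that the points $p_l$ and $p_\sC$ on $\tilde e_1$ merge into a single point $p_0$; the Nakayama--Zariski decompositions $P(u,v)$, $N(u,v)$ and the first term $\tfrac{3}{\vol}\int\int(P(u,v)\cdot\tilde e_1)^2=\tfrac{95}{224}$ are unchanged because they only see divisor classes. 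Since $F_p$ is additive in the components of the negative/fixed parts, the merger gives
\[
F_{p_0}=F_{p_l}+F_{p_\sC}=\frac{15}{32}+\frac{17}{112}=\frac{139}{224},\qquad
S\left(W^{\tilde E_2,\tilde e_1}_{\bullet,\bullet,\bullet};p_0\right)=\frac{139}{224}+\frac{95}{224}=\frac{117}{112},
\]
not $\tfrac{113}{112}$. Hence the reduction through $\tilde e_1$ only gives $\delta_q\ge\min\{\tfrac{112}{111},\tfrac{112}{117}\}=\tfrac{112}{117}$, which undershoots the true value. (Compare Remark~\ref{6463_remark}, where the same phenomenon occurs for the ordinary blow-up of $q\in X$.) To close the gap the paper discards the ordinary blow-up and instead refines directly by the plt-type prime divisor realised by the $(1,2)$-weighted blow-up $\varepsilon'\colon E'_2\to E_2$ with exceptional $e'_2$ (equivalently, the divisor $\hat e_2$ obtained after a \emph{second} ordinary blow-up at $p_0$, which contracts to $e'_2$). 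One must then work on $E'_2$ with the boundary $\tfrac12 p'_e$ coming from the different, use $A_{E_2}(\hat e_2)=3$ and $S(V^{E_2}_{\bullet,\bullet};\hat e_2)=\tfrac{339}{112}$ for the upper bound (you do identify this divisor, correctly, as the minimiser), and separately verify that $S(W^{E'_2,e'_2}_{\bullet,\bullet,\bullet};p')$ stays small enough --- in particular at $p'_e$ one must compare against $A_{e'_2,\frac12 p'_e}(p'_e)=\tfrac12$ --- so that the reduction bound via $e'_2$ equals $\tfrac{112}{113}$. The lesson is that Corollary~\ref{reduction_corollary} can strictly undershoot when the chosen chain of refinements passes through a non-optimal divisor; once the minimiser is a weighted blow-up, the reduction must be run through that weighted blow-up itself (a plt-blowup), not an ordinary one.
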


\begin{proof}
Let $\varepsilon\colon\tilde{E}_2\to E_2$ be the blowup at $q$ and let 
$\tilde{e}_1\subset\tilde{E}_2$ be the exceptional divisor. 
There are exactly $3$ negative curves on $\tilde{E}_2$: 
\begin{itemize}
\item
the strict transform $\tilde{l}_2$ of the fiber $l_2$ of $E_2/\pr^1$ passing through 
the point $q$, 
\item
the curve $\tilde{e}_1$, and 
\item
the strict transform $\tilde{s}$ of the $(-1)$-curve $s\subset E_2$. 
\end{itemize}
The intersection form of $\tilde{l}_2$, $\tilde{e}_1$ and $\tilde{s}$ on $\tilde{E}_2$ is 
given by the symmetric matrix 
\[\begin{pmatrix}
-1&1&0\\
1&-1&1\\
0&1&-2
\end{pmatrix}.\]
Let $\tilde{\sC}^E\subset\tilde{E}_2$ be the strict transform of $\sC^E:=E_3|_{E_2}$. 
Note that $\tilde{\sC}^E\sim 3\tilde{l}_2+4\tilde{e}_1+2\tilde{s}$. 

\eqref{112113_proposition1}
In this case, 
\[
\tilde{l}_2|_{\tilde{e}_1},\quad\tilde{\sC}^E|_{\tilde{e}_1},\quad\tilde{s}|_{\tilde{e}_1}
\]
are mutually distinct reduced points. Let us denote them by 
$p_l$, $p_\sC$, $p_s$, respectively. 
Let $P(u)$, $N(u)$ $(u\in[0,2])$ be as in the proof of 
Proposition \ref{112109_proposition}. Let us set 
\begin{eqnarray*}
P(u,v)&:=&P_\sigma\left(\tilde{E}_2, \varepsilon^*\left(P(u)|_{E_2}\right)
-v\tilde{e}_1\right), \\
N(u,v)&:=&N_\sigma\left(\tilde{E}_2, \varepsilon^*\left(P(u)|_{E_2}\right)
-v\tilde{e}_1\right).
\end{eqnarray*}

\begin{itemize}
\item
Assume that $u\in[0,1]$. 
\begin{itemize}
\item
If $v\in[0,1]$, then we have 
\begin{eqnarray*}
N(u,v)&=&0, \\
P(u,v)&\sim_\R&(2+u)\tilde{l}_2+(3+2u-v)\tilde{e}_1+(1+u)\tilde{s},
\end{eqnarray*}
and $(P(u,v))^{\cdot 2}=3+4u+u^2-v^2$.
\item
If $v\in[1,1+u]$, then we have 
\begin{eqnarray*}
N(u,v)&=&\frac{v-1}{2}\tilde{s}, \\
P(u,v)&\sim_\R&(2+u)\tilde{l}_2+(3+2u-v)\tilde{e}_1+\frac{3+2u-v}{2}\tilde{s},
\end{eqnarray*}
and 
\[
(P(u,v))^{\cdot 2}=\frac{7}{2}+4u+u^2-v-\frac{v^2}{2}.
\]
\item
If $v\in[1+u,3+2u]$, then we have 
\begin{eqnarray*}
N(u,v)&=&(-1-u+v)\tilde{l}_2+\frac{v-1}{2}\tilde{s}, \\
P(u,v)&\sim_\R&(3+2u-v)\tilde{l}_2+(3+2u-v)\tilde{e}_1+\frac{3+2u-v}{2}\tilde{s},
\end{eqnarray*}
and 
\[
(P(u,v))^{\cdot 2}=\frac{1}{2}(3+2u-v)^2.
\]
\end{itemize}
\item
Assume that $u\in[1,2]$. 
\begin{itemize}
\item
If $v\in[0,2-u]$, then we have 
\begin{eqnarray*}
N(u,v)&=&0, \\
P(u,v)&\sim_\R&(5-2u)\tilde{l}_2+(8-3u-v)\tilde{e}_1+(3-u)\tilde{s},
\end{eqnarray*}
and $(P(u,v))^{\cdot 2}=21-16u+3u^2-v^2$.
\item
If $v\in[2-u,3-u]$, then we have 
\begin{eqnarray*}
N(u,v)&=&\frac{-2+u+v}{2}\tilde{s}, \\
P(u,v)&\sim_\R&(5-2u)\tilde{l}_2+(8-3u-v)\tilde{e}_1+\frac{8-3u-v}{2}\tilde{s},
\end{eqnarray*}
and 
\[
(P(u,v))^{\cdot 2}=23+\frac{7u^2}{2}+u(-18+v)-2v-\frac{v^2}{2}.
\]
\item
If $v\in[3-u,8-3u]$, then we have 
\begin{eqnarray*}
N(u,v)&=&(-3+u+v)\tilde{l}_2+\frac{-2+u+v}{2}\tilde{s}, \\
P(u,v)&\sim_\R&(8-3u-v)\tilde{l}_2+(8-3u-v)\tilde{e}_1+\frac{8-3u-v}{2}\tilde{s},
\end{eqnarray*}
and 
\[
(P(u,v))^{\cdot 2}=\frac{1}{2}(8-3u-v)^2.
\]
\end{itemize}
\end{itemize}
Hence we get
\begin{eqnarray*}
&&S\left(V^{E_2}_{\bullet,\bullet};\tilde{e}_1\right)\\
&=&\frac{3}{28}\Biggl(
\int_0^1\biggl(\int_0^1(3+4u+u^2-v^2)dv\\
&&+\int_1^{1+u}\left(\frac{7}{2}+4u+u^2-v-\frac{v^2}{2}\right)dv
+\int_{1+u}^{3+2u}\frac{1}{2}(3+2u-v)^2dv\biggr)du\\
&+&\int_1^2\biggl((u+1)(21-16u+3u^2)+\int_0^{2-u}(21-16u+3u^2-v^2)dv\\
&&+\int_{2-u}^{3-u}\left(23+\frac{7u^2}{2}+u(-18+v)-2v-\frac{v^2}{2}\right)dv
+\int_{3-u}^{8-3u}\frac{1}{2}(8-3u-v)^2dv\biggr)du\Biggr)\\
&=&\frac{111}{56}.
\end{eqnarray*}
This implies that 
\[
\delta_q\left(V^{E_2}_{\bullet,\bullet}\right)\leq
\frac{A_{E_2}\left(\tilde{e}_1\right)}{S\left(V^{E_2}_{\bullet,\bullet};\tilde{e}_1\right)}
=\frac{112}{111}.
\]
Moreover, we have 
\begin{eqnarray*}
F_{p_l}\left(W^{\tilde{E}_2,\tilde{e}_1}_{\bullet,\bullet,\bullet}\right)
&=&\frac{6}{28}\Biggl(\int_0^1\int_{1+u}^{3+2u}\frac{3+2u-v}{2}(-1-u+v)dvdu\\
&&+\int_1^2\int_{3-u}^{8-3u}\frac{8-3u-v}{2}(-3+u+v)dvdu\Biggr)=\frac{15}{32}, 
\end{eqnarray*}
\begin{eqnarray*}
F_{p_\sC}\left(W^{\tilde{E}_2,\tilde{e}_1}_{\bullet,\bullet,\bullet}\right)
&=&\frac{6}{28}\Biggl(\int_1^2\biggl(\int_0^{2-u}v(u-1)dv
+\int_{2-u}^{3-u}\frac{2-u+v}{2}(u-1)dv\\
&&+\int_{3-u}^{8-3u}\frac{8-3u-v}{2}(u-1)dv\biggr)du\Biggr)=\frac{17}{112}, 
\end{eqnarray*}
\begin{eqnarray*}
F_{p_s}\left(W^{\tilde{E}_2,\tilde{e}_1}_{\bullet,\bullet,\bullet}\right)
&=&\frac{6}{28}\Biggl(\int_0^1\biggl(\int_1^{1+u}\frac{1+v}{2}\cdot\frac{v-1}{2}dv
+\int_{1+u}^{3+2u}\frac{3+2u-v}{2}\cdot\frac{v-1}{2}dv\biggr)du\\
&&+\int_1^2\biggl(\int_{2-u}^{3-u}\frac{2-u+v}{2}\cdot\frac{-2+u+v}{2}dv\\
&&+\int_{3-u}^{8-3u}\frac{8-3u-v}{2}\cdot
\frac{-2+u+v}{2}dv\biggr)du\Biggr)=\frac{115}{224}. 
\end{eqnarray*}
Thus we get 
\begin{eqnarray*}
&&S\left(W^{\tilde{E}_2, \tilde{e}_1}_{\bullet,\bullet,\bullet};p\right)\\
&\leq&\frac{115}{224}
+\frac{3}{28}\Biggl(\int_0^1\biggl(
\int_0^1 v^2dv+\int_1^{1+u} \left(\frac{1+v}{2}\right)^2dv
+\int_{1+u}^{3+2u}\left(\frac{3+2u-v}{2}\right)^2dv\biggr)du\\
&&+\int_1^2\biggl(\int_0^{2-u}v^2dv+\int_{2-u}^{3-u}\left(\frac{2-u+v}{2}\right)^2dv
+\int_{3-u}^{8-3u}\left(\frac{8-3u-v}{2}\right)^2dv\biggr)du\Biggr)\\
&=&\frac{115}{224}+\frac{95}{224}=\frac{15}{16}
\end{eqnarray*}
for any $p\in\tilde{e}_1$. 
As a consequence, we get the inequality
\begin{eqnarray*}
\delta_q\left(V^{E_2}_{\bullet,\bullet}\right)\geq \min\left\{
\frac{A_{E_2}\left(\tilde{e}_1\right)}{S\left(V^{\tilde{E}_2}_{\bullet,\bullet};
\tilde{e}_1\right)},\,\,\,
\inf_{p\in \tilde{e}_1}\frac{A_{\tilde{e}_1}(p)}
{S\left(W^{\tilde{E}_2,\tilde{e}_1}_{\bullet,\bullet,\bullet};p\right)}
\right\}\geq\min\left\{\frac{112}{111},\,\,
\frac{16}{15}\right\}=\frac{112}{111}
\end{eqnarray*}
by Corollary \ref{reduction_corollary}.

\eqref{112113_proposition2}
In this case, we have $\tilde{l}_2\cap\tilde{e}_1\cap\tilde{\sC}^E\neq\emptyset$. 
Let 
\[
\varepsilon\colon\hat{E}_2\to\tilde{E}_2
\]
be the blowup at $\tilde{l}_2\cap\tilde{e}_1\cap\tilde{\sC}^E$ and let 
$\hat{e}_2\subset\hat{E}_2$ be the exceptional divisor. Then 
there are exactly $4$ negative curves on $\tilde{E}_2$:
\begin{itemize}
\item
the strict transform $\hat{l}_2$ of $\tilde{l}_2$, 
\item
the curve $\hat{e}_2$, 
\item
the strict transform $\hat{e}_1$ of $\tilde{e}_1$, and 
\item
the strict transform $\hat{s}$ of $\tilde{s}$. 
\end{itemize}
The intersection form of $\hat{l}_2$, $\hat{e}_2$, $\hat{e}_1$ and $\hat{s}$ 
on $\hat{E}_2$ is given by the symmetric matrix
\[
\begin{pmatrix}
-2&1&0&0\\
1&-1&1&0\\
0&1&-2&1\\
0&0&1&-2
\end{pmatrix}.
\]
Moreover, we can contract $\hat{e}_1$ and gives the commutative diagram 
\[\xymatrix{
\hat{E}_2 \ar[rr]^{\gamma} \ar[dr]_{\hat{\varepsilon}} & & E'_2 \ar[dl]^{\varepsilon'}\\
&E_2,&
}\]
where $\hat{\varepsilon}:=\varepsilon\circ\varepsilon_2$, 
$\gamma$ is the contraction 
of $\hat{e}_1$ and $\varepsilon'$ is the extraction of $e'_2:=\gamma_*\hat{e}_2$. 
Clearly, the morphism $\varepsilon'$ is a plt-blowup. 
Let us set $\hat{\sC}^E:=(\varepsilon_2)^{-1}_*\tilde{\sC}^E$. Then 
\[
\hat{p}_l:=\hat{l}_2|_{\hat{e}_2},\quad
\hat{p}_{\sC}:=\hat{\sC}^E|_{\hat{e}_2},\quad
\hat{p}_e:=\hat{e}_1|_{\hat{e}_2}
\]
are mutually distinct reduced points. Let $p'_l$, $p'_{\sC}$, $p'_e\in e'_2$ 
be the images of those points, respectively. 
We have
\begin{eqnarray*}
\gamma^*e'_2&=&\hat{e}_2+\frac{1}{2}\hat{e}_1,\\
\left(K_{E'_2}+e'_2\right)|_{e'_2}&=&K_{e'_2}+\frac{1}{2}p'_e.
\end{eqnarray*}
Let $P(u)$, $N(u)$ $(u\in[0,2])$ be as in the proof of 
Proposition \ref{112109_proposition}. Let us set 
\begin{eqnarray*}
P(u,v)&:=&P_\sigma\left(\hat{E}_2, {\hat{\varepsilon}}^*\left(P(u)|_{E_2}\right)
-v\hat{e}_2\right), \\
N(u,v)&:=&N_\sigma\left(\hat{E}_2, {\hat{\varepsilon}}^*\left(P(u)|_{E_2}\right)
-v\hat{e}_2\right).
\end{eqnarray*}

\begin{itemize}
\item
Assume that $u\in[0,1]$. 
\begin{itemize}
\item
If $v\in[0,1+u]$, then we have 
\begin{eqnarray*}
N(u,v)&=&\frac{v}{2}\hat{e}_1, \\
P(u,v)&\sim_\R&(2+u)\hat{l}_2+(5+3u-v)\hat{e}_2
+\frac{6+4u-v}{2}\hat{e}_1+(1+u)\hat{s},
\end{eqnarray*}
and 
\[
(P(u,v))^{\cdot 2}=3+4u+u^2-\frac{v^2}{2}.
\]
\item
If $v\in[1+u,2]$, then we have 
\begin{eqnarray*}
N(u,v)&=&\frac{-1-u+v}{2}\hat{l}_2+\frac{v}{2}\hat{e}_1, \\
P(u,v)&\sim_\R&\frac{5+3u-v}{2}\hat{l}_2+(5+3u-v)\hat{e}_2
+\frac{6+4u-v}{2}\hat{e}_1+(1+u)\hat{s},
\end{eqnarray*}
and 
\[
(P(u,v))^{\cdot 2}=\frac{1}{2}(1+u)(7+3u-2v).
\]
\item
If $v\in[2,5+3u]$, then we have 
\begin{eqnarray*}
N(u,v)&=&\frac{-1-u+v}{2}\hat{l}_2+\frac{2v-1}{3}\hat{e}_1+\frac{v-2}{3}\hat{s}, \\
P(u,v)&\sim_\R&\frac{5+3u-v}{2}\hat{l}_2+(5+3u-v)\hat{e}_2
+\frac{2}{3}(5+3u-v)\hat{e}_1+\frac{5+3u-v}{3}\hat{s},
\end{eqnarray*}
and 
\[
(P(u,v))^{\cdot 2}=\frac{1}{6}(5+3u-v)^2.
\]
\end{itemize}
\item
Assume that $u\in[1,2]$. 
\begin{itemize}
\item
If $v\in[0,4-2u]$, then we have 
\begin{eqnarray*}
N(u,v)&=&\frac{v}{2}\hat{e}_1, \\
P(u,v)&\sim_\R&(5-2u)\hat{l}_2+(13-5u-v)\hat{e}_2
+\frac{16-6u-v}{2}\hat{e}_1+(3-u)\hat{s},
\end{eqnarray*}
and 
\[
(P(u,v))^{\cdot 2}=21-16u+3u^2-\frac{v^2}{2}.
\]
\item
If $v\in[4-2u,3-u]$, then we have 
\begin{eqnarray*}
N(u,v)&=&\frac{-2+u+2v}{3}\hat{e}_1+\frac{-4+2u+v}{3}\hat{s}, \\
P(u,v)&\sim_\R&(5-2u)\hat{l}_2+(13-5u-v)\hat{e}_2
+\frac{2}{3}(13-5u-v)\hat{e}_1+\frac{13-5u-v}{3}\hat{s},
\end{eqnarray*}
and 
\[
(P(u,v))^{\cdot 2}=\frac{1}{3}\left(71+11u^2+2u(-28+v)-4v-v^2\right).
\]
\item
If $v\in[3-u,13-5u]$, then we have 
\begin{eqnarray*}
N(u,v)&=&\frac{-3+u+v}{2}\hat{l}_2
+\frac{-2+u+2v}{3}\hat{e}_1+\frac{-4+2u+v}{3}\hat{s}, \\
P(u,v)&\sim_\R&\frac{13-5u-v}{2}\hat{l}_2+(13-5u-v)\hat{e}_2
+\frac{2}{3}(13-5u-v)\hat{e}_1+\frac{13-5u-v}{3}\hat{s},
\end{eqnarray*}
and 
\[
(P(u,v))^{\cdot 2}=\frac{1}{6}(13-5u-v)^2.
\]
\end{itemize}
\end{itemize}
Hence we get
\begin{eqnarray*}
&&S\left(V^{E_2}_{\bullet,\bullet};\hat{e}_2\right)\\
&=&\frac{3}{28}\Biggl(
\int_0^1\biggl(\int_0^{1+u}\left(3+4u+u^2-\frac{v^2}{2}\right)dv\\
&&+\int_{1+u}^2\frac{1}{2}(1+u)(7+3u-2v)dv
+\int_2^{5+3u}\frac{1}{6}(5+3u-v)^2dv\biggr)du\\
&+&\int_1^2\biggl(2(u-1)(21-16u+3u^2)+\int_0^{4-2u}
\left(21-16u+3u^2-\frac{v^2}{2}\right)dv\\
&&+\int_{4-2u}^{3-u}\frac{1}{3}\left(71+11u^2+2u(-28+v)-4v-v^2\right)dv
+\int_{3-u}^{13-5u}\frac{1}{6}(13-5u-v)^2dv\biggr)du\Biggr)\\
&=&\frac{339}{112}.
\end{eqnarray*}
This implies that 
\[
\delta_q\left(V^{E_2}_{\bullet,\bullet}\right)\leq
\frac{A_{E_2}\left(\hat{e}_2\right)}{S\left(V^{E_2}_{\bullet,\bullet};\hat{e}_2\right)}
=\frac{112}{113}.
\]
Moreover, we have 
\begin{eqnarray*}
F_{p'_l}\left(W^{E'_2,e'_2}_{\bullet,\bullet,\bullet}\right)
&=&\frac{6}{28}\Biggl(\int_0^1\biggl(
\int_{1+u}^2\frac{1+u}{2}\cdot\frac{-1-u+v}{2}dv\\
&&+\int_2^{5+3u}\frac{5+3u-v}{6}\cdot\frac{-1-u+v}{2}dv\biggr)du\\
&&+\int_1^2\int_{3-u}^{13-5u}\frac{13-5u-v}{6}\cdot\frac{-3+u+v}{2}dvdu\Biggr)
=\frac{839}{1344}, 
\end{eqnarray*}
\begin{eqnarray*}
F_{p'_\sC}\left(W^{E'_2,e'_2}_{\bullet,\bullet,\bullet}\right)
&=&\frac{6}{28}\Biggl(\int_1^2\biggl(\int_0^{4-2u}\frac{v}{2}(u-1)dv
+\int_{4-2u}^{3-u}\frac{2-u+v}{3}(u-1)dv\\
&&+\int_{3-u}^{13-5u}\frac{13-5u-v}{6}(u-1)dv\biggr)du\Biggr)=\frac{17}{112}, 
\end{eqnarray*}
\begin{eqnarray*}
F_{p'_s}\left(W^{E'_2,e'_2}_{\bullet,\bullet,\bullet}\right)
&=&\frac{6}{28}\Biggl(\int_0^1\int_2^{5+3u}\frac{5+3u-v}{6}\cdot\frac{v-2}{6}dvdu\\
&&+\int_1^2\biggl(\int_{4-2u}^{3-u}\frac{2-u+v}{3}\cdot\frac{v+2u-4}{6}dv\\
&&+\int_{3-u}^{13-5u}
\frac{13-5u-v}{6}\cdot\frac{v+2u-4}{6}dv\biggr)du\Biggr)=\frac{269}{1344}. 
\end{eqnarray*}
Thus we get 
\begin{eqnarray*}
&&S\left(W^{E'_2, e'_2}_{\bullet,\bullet,\bullet};p'\right)\\
&=&F_{p'}\left(W^{E'_2,e'_2}_{\bullet,\bullet,\bullet}\right)
+\frac{3}{28}\Biggl(\int_0^1\biggl(
\int_0^{1+u} \left(\frac{v}{2}\right)^2dv\\
&&+\int_{1+u}^2 \left(\frac{1+u}{2}\right)^2dv
+\int_2^{5+3u}\left(\frac{5+3u-v}{6}\right)^2dv\biggr)du\\
&+&\int_1^2\biggl(\int_0^{4-2u}\left(\frac{v}{2}\right)^2dv
+\int_{4-2u}^{3-u}\left(\frac{2-u+v}{3}\right)^2dv
+\int_{3-u}^{13-5u}\left(\frac{13-5u-v}{6}\right)^2dv\biggr)du\Biggr)\\
&=&F_{p'}\left(W^{E'_2,e'_2}_{\bullet,\bullet,\bullet}\right)+\frac{361}{1344}
\begin{cases}
=\frac{15}{32} & \text{if }p'=p'_e,\\
\leq \frac{277}{366} & \text{otherwise},
\end{cases}
\end{eqnarray*}
for any $p'\in e'_2$. 
As a consequence, we get the inequality
\begin{eqnarray*}
\delta_q\left(E_2;V^{E_2}_{\bullet,\bullet}\right)\geq \min\left\{
\frac{A_{E_2}\left(\hat{e}_2\right)}{S\left(V^{E_2}_{\bullet,\bullet};
\hat{e}_2\right)},\,\,\,
\inf_{p'\in e'_2}\frac{A_{e'_2,\frac{1}{2}p'_e}(p')}
{S\left(W^{E'_2, e'_2}_{\bullet,\bullet,\bullet};p'\right)}
\right\}\geq\min\left\{\frac{112}{113},\,\,
\frac{16}{15}\right\}=\frac{112}{113}
\end{eqnarray*}
by Corollary \ref{reduction_corollary}. 
\end{proof}

\section{Local $\delta$-invariants for special points, I}\label{special-I_section}

\begin{proposition}\label{112103_proposition}
Let $X$ be as in \S \ref{311_section}. Let us take a closed point 
$p\in X\setminus\left(E_1\cup E_2\right)$ and let $Q\in|H_1|$ with $p\in Q$. 
Assume that $Q$ is singular at $p$. 
Let $l_2$, $l_3\subset Q$ be the negative curves on $Q$ with 
$\left(l_2^{\cdot 2}\right)=-1$ and $\left(l_3^{\cdot 2}\right)=-1/2$. 
We assume that $l_2\cap l_3\cap\left(E_1|_Q\right)\neq\emptyset$.
Then we have the inequality 
\[
\delta_p(X)\geq\frac{112}{103}.
\]
\end{proposition}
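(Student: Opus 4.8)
The plan is to run the two-step reduction of Corollary \ref{reduction_corollary} along the flag $Q\supset C\supset\{p'\}$, exactly as in the proof of Proposition \ref{5651_proposition}, but adapted to the fact that $Q$ is a \emph{singular} del Pezzo surface whose singular point is $p$ itself.

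First, since $Q\in|H_1|$, the outer data are identical to those in Proposition \ref{5651_proposition}: $\tau_X(Q)=2$, the Nakayama--Zariski decompositions $P(u),N(u)$ of $-K_X-uQ$ are as listed there, and $S_X(Q)=11/16$, so $A_X(Q)/S_X(Q)=16/11$. By \S\ref{311_section} and Remark \ref{H1_remark}, $Q$ is the strict transform of a singular quadric $Q^P\simeq\pr(1,1,2)$ with $\sC^P\cap\Sing Q^P=\emptyset$; since $p\notin E_1\cup E_2$, the contraction $\sigma^V\circ\sigma_1$ is an isomorphism near the vertex of $Q^P$, so $Q$ is a normal del Pezzo surface of degree $7$ with a single $A_1$ singularity, located exactly at $p$. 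In particular $(X,Q)$ is plt with $K_Q=(K_X+Q)|_Q$ and different $\Delta_Q=0$, and (as $X$ is a smooth Fano threefold, $\sigma$ in Notation \ref{MDS1_notation} is the identity and $\tilde Q=Q$ is normal) Corollary \ref{reduction_corollary} \eqref{reduction_corollary2} gives
\[
\delta_p(X)\ \geq\ \min\left\{\tfrac{16}{11},\ \delta_p\!\left(Q;V^{Q}_{\bullet,\bullet}\right)\right\}.
\]

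Next I would take $\nu\colon Y'\to Q$ to be the minimal resolution, i.e.\ the blowup of $Q$ at $p$, and let $C\subset Y'$ be the exceptional $(-2)$-curve. Since the resolution of an $A_1$ point is crepant, $\nu$ is the plt-blowup of the plt-type divisor $C$ over $(Q,0)$, with $\tilde\Delta_{Y'}=0$, $\Delta_C=0$, $A_Q(C)=1$ and $C\simeq\pr^1$; moreover $Y'$ is smooth, hence $\Q$-factorial, so one may take $Y'$ itself as the common resolution of \S\ref{AZ2_subsection} with $\theta=\nu$, $\gamma=\id$, $\bar C=C$ and $\Sigma=0$. As $c_Q(C)=\{p\}$, Corollary \ref{reduction_corollary} \eqref{reduction_corollary1} reduces the estimate to
\[
\delta_p\!\left(Q;V^{Q}_{\bullet,\bullet}\right)\ \geq\ \min\left\{\frac{1}{S\!\left(V^{Q}_{\bullet,\bullet};C\right)},\ \inf_{p'\in C}\frac{1}{S\!\left(W^{Y',C}_{\bullet,\bullet,\bullet};p'\right)}\right\},
\]
where $W^{Y',C}_{\bullet,\bullet,\bullet}$ is the refinement of $V^{Q}_{\bullet,\bullet}$ on $Y'$ by $C$. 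Thus it remains to show $S\!\left(V^{Q}_{\bullet,\bullet};C\right)\leq 103/112$ and $\sup_{p'\in C}S\!\left(W^{Y',C}_{\bullet,\bullet,\bullet};p'\right)=103/112$, which forces the minimum above to equal $112/103<16/11$. These two quantities are computed from Theorems \ref{AZ1_thm}\eqref{AZ1_thm2} and \ref{AZ2_thm}: on $Y'$ (a one-point blowup of $\F_2$, with $C$ the negative section) one identifies the strict transforms of the three negative curves $l_2$ ($(-1)$, disjoint from $p$ hence from $C$), $l_3$ ($(-1/2)$, passing through $p$ hence meeting $C$ at the single point $C\cap\tilde l_3$), and $\sC^Q:=E_1|_Q$ (disjoint from $p$ since $p\notin E_1$, hence from $C$); the hypothesis $l_2\cap l_3\cap(E_1|_Q)\neq\emptyset$ fixes their mutual incidence, hence the chamber structure. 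Since $N(u)|_Q$ is supported on $\sC^Q$ for $u\in[1,2]$ and misses $p$, one has $d(u)\equiv 0$ in Notation \ref{MDS2_notation}, so the region is $\bar\Delta^{\Supp}=\{0\le u\le 2,\ 0\le v\le t(u)\}$; I would then compute the decompositions $P(u,v),N(u,v)$ of $\theta^*(P(u)|_Q)-vC$ chamber by chamber (the walls coming from $\tilde l_2$, $\tilde l_3$, $\sC^Q$ and further extracted curves, the $(-1/2)$-curve forcing denominator $2$), insert the resulting piecewise-polynomial intersection numbers into the integral formulas, and use that each relevant curve meets $C$ transversally so that the fixed-order contributions $F_{p'}$ are obtained by maximizing the relevant $\ord_{p'}(\,\cdot\,|_C)\le 1$ over the finitely many distinguished points of $C$.

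The main obstacle is precisely this two-parameter chamber analysis on the resolution of the singular del Pezzo surface $Q$: the $(-1/2)$-curve $l_3$ and the triple incidence $l_2\cap l_3\cap(E_1|_Q)\neq\emptyset$ produce more chambers and rational coefficients than in the smooth case of Proposition \ref{5651_proposition}, and one must verify carefully that $\sup_{p'\in C}S\!\left(W^{Y',C}_{\bullet,\bullet,\bullet};p'\right)$ is attained at $p'=C\cap\tilde l_3$ with value exactly $103/112$ and that $S\!\left(V^{Q}_{\bullet,\bullet};C\right)$ does not exceed $103/112$. Everything else --- the two reductions via Corollary \ref{reduction_corollary}, the identification of $\Delta_Q=\Delta_C=0$, and the values $A_X(Q)=A_Q(C)=A_C(p')=1$ --- is formal.
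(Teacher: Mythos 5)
Your two-step reduction along $Q\supset C$, with $C$ the exceptional $(-2)$-curve of the minimal resolution $\nu\colon Y'\to Q$, cannot close: the asserted inequality $S(V^Q_{\bullet,\bullet};C)\leq 103/112$ is false. Since $\nu$ is crepant, $\nu^*(-K_Q)=-K_{Y'}$. The weak del Pezzo $Y'$ of degree $7$ has exactly three negative curves, namely $C$ and two $(-1)$-curves $\tilde l_2$, $\tilde l_3$ (strict transforms of $l_2$, $l_3$) with $\tilde l_3\cdot C=1$ and $\tilde l_2\cap C=\emptyset$; computing the Zariski decomposition of $-K_{Y'}-vC$ gives $\vol_{Y'}(-K_{Y'}-vC)=7-2v^2$ on $[0,1]$, $8-2v-v^2$ on $[1,2]$, and $0$ for $v\geq 2$, so that $\int_0^\infty\vol_{Y'}(-K_{Y'}-vC)\,dv=19/3+8/3=9$. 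As $P(u)|_Q=-K_Q$ for $u\in[0,1]$, $P(u)|_Q\sim_\R(2-u)(-K_Q)$ for $u\in[1,2]$, and $\ord_C(\nu^*(N(u)|_Q))=0$ because $\sC^Q=E_1|_Q$ misses $p$, Theorem \ref{AZ1_thm} yields
\[
S(V^Q_{\bullet,\bullet};C)=\frac{3}{28}\left(\int_0^1 9\,du+\int_1^2 9(2-u)^3\,du\right)=\frac{3}{28}\cdot\frac{45}{4}=\frac{135}{112}.
\]
Since $A_Q(C)=1$ and $c_Q(C)=\{p\}$, this already shows $\delta_p(Q;V^Q_{\bullet,\bullet})\leq 112/135<1$, so the middle term in Corollary \ref{reduction_corollary} is at most $112/135$ and no further analysis of $W^{Y',C}_{\bullet,\bullet,\bullet}$ (nor the substitution of any other curve over $Q$ through $p$) can restore a bound $\geq 112/103$.

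This is why the paper's proof does not refine by $Q$ at all: it blows up the threefold $X$ at $p$, producing $\rho_p\colon X^1\to X$ with exceptional $D^1\simeq\pr^2$ and $A_X(D^1)=3$. Tracking the Zariski chambers of $\rho_p^*(-K_X)-uD^1$ over $u\in[0,4]$ requires several small $\Q$-factorial modifications and birational contractions (Steps~1--5 of the paper's proof), since $X^1$ is no longer a smooth Fano threefold; this yields $S_X(D^1)=289/112$, hence $A_X(D^1)/S_X(D^1)=336/289>112/103$. The second refinement is by the line $g^1\subset D^1$, by the exceptional curve $\tilde l_3^+$ over $D^1$, and by lines $r^1\subset D^1$ through $p_{l_3^+}$ (Steps~7--9), giving $\delta(D^1;V^{D^1}_{\bullet,\bullet})\geq 112/103$ and hence $\delta_p(X)\geq\min\{336/289,112/103\}=112/103$. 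The essential point your proposal misses is that the first refinement divisor must be taken in the ambient threefold rather than in $Q$, precisely because the $A_1$-singularity of the quadric fibre at $p$ forces $\delta_p(Q;V^Q_{\bullet,\bullet})<1$.
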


\begin{proof}
The following proof is divided into 10 numbers of steps since the proof is long. 

\noindent\underline{\textbf{Step 1}}\\
Set $\sC_1:=E_1|_Q$. Then we have $\sC_1\sim -K_Q$. Note that $p\in l_3$. 
Let us set $p_0:=l_2\cap l_3\cap \sC_1$. Since $E_1$ is isomorphic to 
$\sC\times\pr^1$, the curve $\sC_1\subset E_1$ is the fiber of the projection 
$\sC\times\pr^1\to\pr^1$ passing through $p_0$. Let $l_1\subset E_1$ be the 
fiber of the projection $\sC\times\pr^1\to\sC$ passing through $p_0$. 
Let 
\[
\rho_p\colon X^1\to X
\]
be the blowup at $\in X$ and let $D^1\subset X^1$ be the exceptional divisor. 
Set $Q^1:=\left(\rho_p\right)^{-1}_*Q$, $E_1^1:=\left(\rho_p\right)^{-1}_*E_1$. 
Obviously, we have $A_X(D^1)=3$. Moreover, since 
\begin{eqnarray*}
-K_X\sim 2Q+E_1,\quad \rho_p^*Q=Q^1+2D^1,\quad\rho^*_pE_1=E_1^1, 
\end{eqnarray*}
we have $\tau_X(D^1)\geq 4$. Moreover, for $0<\varepsilon\ll 1$, 
\[
\rho_p^*(-K_X)-\varepsilon D^1\sim_\R 2Q^1+E_1^1+(4-\varepsilon)D^1
\]
is ample. Therefore, for any $u\in(0,4)$ and for any irreducible curve 
$C^1\subset X^1$ with $C^1\not\subset Q^1\cup E_1^1\cup D^1$, 
we have
\[
\left(\left(\rho_p^*(-K_X)-uD^1\right)\cdot C^1\right)
=\left(\left(2Q^1+E_1^1+(4-u)D^1\right)\cdot C^1\right)>0.
\]
Note that the pair $\left(X^1, Q^1+E_1^1+D^1\right)$ is log smooth. In particular, 
the pair 
\[
\left(X^1, \frac{2}{3}Q^1+\frac{1}{3}E_1^1\right)
\]
is klt. Since 
\[
-\left(K_{X^1}+\frac{2}{3}Q^1+\frac{1}{3}E_1^1\right)\sim_\Q
\frac{2}{3}\left(\rho_p^*(-K_X)-D^1\right)
\]
is nef and big, the variety $X^1$ is a Mori dream space by 
\cite[Corollary 1.3.2]{BCHM}. 

\noindent\underline{\textbf{Step 2}}\\
Let $l_2^1$, $l_3^1$, $\sC_1^1\subset Q^1$ be the strict transforms of 
$l_2$, $l_3$, $\sC_1\subset Q$, 
respectively. Let $l_1^1\subset E_1^1$ be the strict transform of $l_1\subset E_1$. 
Moreover, let $e^1\subset Q^1$ be the exceptional curve of the morphism 
$Q^1\to Q$. Let $g^1\subset D^1(\simeq\pr^2)$ be the tangent line of the conic 
$e^1\subset D^1$ at $e^1\cap l_3^1$. 
Then we have the following intersection numbers: 
\begin{center}
\renewcommand\arraystretch{1.4}
\begin{tabular}{c|ccc|c}
$\cdot$ & $Q^1$ & $E_1^1$ & $D^1$ & $2Q^1+E_1^1+(4-u)D^1$\\ \hline
$l_2^1$ & $0$ & $1$ & $0$ & $1$\\
$l_3^1$ & $-2$ & $1$ & $1$ & $1-u$\\
$e^1$ & $4$ & $0$ & $-2$ & $2u$\\
$l_1^1$ & $1$ & $-1$ & $0$ & $1$\\
$\sC_1^1$ & $0$ & $7$ & $0$ & $7$\\
$g^1$ & $2$ & $0$ & $-1$ & $u$
\end{tabular}
\end{center}
Hence, for $u\in[0,1]$, the $\R$-divisor $\rho_p^*(-K_X)-uD^1$ is nef,
\[
\vol_{X^1}\left(\rho_p^*(-K_X)-uD^1\right)=28-u^3,
\]
and the divisor $\rho_p^*(-K_X)-D^1$ contracts the curve $l_3^1\subset X^1$. 

\noindent\underline{\textbf{Step 3}}\\
Note that 
\[
\sN_{l_3^1/X^1}\simeq\sO_{\pr^1}(-1)\oplus\sO_{\pr^1}(-2), 
\]
where $\sN_{l_3^1/X^1}$ is the normal bundle of $l_3^1\subset X^1$. Let
\[
\phi^{112}\colon X^{112}\to X^1
\]
be the blowup along $l_3^1\subset X^1$ and let $\E^{112}_1\subset X^{112}$ 
be the exceptional divisor. 
Note that $\E^{112}_1$ is isomorphic to $\pr_{\pr^1}(\sO\oplus\sO(1))$. 
Let $l^{112}_3\subset \E^{112}_1$ be the $(-1)$-curve. 
Since 
\[
\sN_{l_3^{112}/X^{112}}\simeq\sO_{\pr^1}(-1)^{\oplus 2}, 
\]
we can take Atiyah's flop
\[
X^{112}\xleftarrow{\phi^{12}}X^{12}\xrightarrow{\phi^{+12}}X^{122}.
\]
More precisely, the morphism $\phi^{12}$ is the blowup along 
$l_3^{112}\subset X^{112}$. Let $\E_2^{12}\subset X^{12}$ be the exceptional divisor. 
Since $\E_2^{12}\simeq\pr^1\times\pr^1$ with 
$-\E_2^{12}|_{\E_2^{12}}\simeq\sO(1,1)$, we can (analytically) contract 
to a complex manifold $X^{112}$, where the image $l_3^{+112}$ of $\E_2^{12}$ is 
a smooth rational curve, $\phi^{+12}$ is the blowup along 
$l_3^{+112}\subset X^{122}$, and $\phi^{12}$ and $\phi^{+12}$ are mutually different. 
Set $\E_1^{12}:=\left(\phi^{12}\right)^{-1}_*\E_1^{112}$ and 
$\E_1^{122}:=\left(\phi^{+12}\right)_*\E_1^{12}$. Then 
$\E_1^{122}\simeq\pr^2$ with $-\E_1^{122}|_{\E_1^{122}}\simeq\sO_{\pr^2}(2)$. 
By Grauert--Fujiki--Ancona--Vantan's contraction theorem 
(see \cite[Proposition 7.4]{HP}), there is the contraction 
\[
\phi^{122}\colon X^{122}\to X^2
\]
of $\E^{122}_1\subset X^{122}$ to a point. Set $\phi^1:=\phi^{112}\circ\phi^{12}$, 
$\phi^{+1}:=\phi^{122}\circ\phi^{+12}$ and 
$\chi^1:=\phi^{+1}\circ\left(\phi^1\right)^{-1}$. 
We get the commutative diagram 
\[\xymatrix{
& X^{12} \ar[dl]_{\phi^{12}} \ar[dr]^{\phi^{+12}} \ar[ddl]^{\phi^1} 
\ar[ddr]_{\phi^{+1}} &\\
X^{112} \ar[d]_{\phi^{112}} & & X^{122} \ar[d]^{\phi^{122}}\\
X^1 \ar@{-->}[rr]_{\chi^1} & & X^2.
}\]
Set $l_3^{+2}:=\left(\phi^{122}\right)_*l_3^{+122}$, and 
$l_3^{+12}:=\left(\left(\phi^1\right)^{-1}_*D^1\right)|_{\E_2^{12}}$. Then 
$l_3^{+12}\subset \E_2^{12}$ is an irreducible curve with 
$\left(\phi^{+1}\right)_*l_3^{+12}=l_3^{+2}$ and 
$\left(\phi^1\right)_*l_3^{+12}=0$.
Let us set 
\begin{eqnarray*}
Q^{12}:=\left(\phi^1\right)^{-1}_*Q^1,\quad
E_1^{12}:=\left(\phi^1\right)^{-1}_*E_1^1,\quad
D^{12}:=\left(\phi^1\right)^{-1}_*D^1,\\
Q^{2}:=\left(\phi^{+1}\right)_*Q^{12},\quad
E_1^{2}:=\left(\phi^{+1}\right)_*E_1^{12},\quad
D^{2}:=\left(\phi^{+1}\right)_*D^{12},\\
\end{eqnarray*}
and 
\begin{eqnarray*}
l_2^{12}:=\left(\phi^1\right)^{-1}_*l_2^1,\,\,
e^{12}:=\left(\phi^1\right)^{-1}_*e^1,\,\,
l_1^{12}:=\left(\phi^1\right)^{-1}_*l_1^1,\,\,
\sC_1^{12}:=\left(\phi^1\right)^{-1}_*\sC_1^1,\,\,
g^{12}:=\left(\phi^1\right)^{-1}_*g^1,\\
l_2^{2}:=\left(\phi^{+1}\right)_*l_2^{12},\,\,
e^{2}:=\left(\phi^{+1}\right)_*e^{12},\,\,
l_1^{2}:=\left(\phi^{+1}\right)_*l_1^{12},\,\,
\sC_1^{2}:=\left(\phi^{+1}\right)_*\sC_1^{12},\,\,
g^{2}:=\left(\phi^{+1}\right)_*g^{12}.
\end{eqnarray*}
Note that 
$\left(\phi^1\right)^*Q^1=Q^{12}+\E_1^{12}+2\E_2^{12}$ and 
$\left(\phi^{+1}\right)^*Q^2=Q^{12}$. Moreover, the restriction 
$\chi^1|_{Q^1}\colon Q^1\to Q^2$ is the contraction of $l_3^1$. 
In particular, we have $Q^2\simeq\pr_{\pr^1}(\sO\oplus\sO(1))$. 
Note that 
$\left(\phi^1\right)^*E_1^1=E_1^{12}$ and 
$\left(\phi^{+1}\right)^*E_1^2=E_1^{12}+(1/2)\E_1^{12}+\E_2^{12}$. 
Moreover, the restriction 
$\left(\chi^1\right)^{-1}|_{E_1^2}\colon E_1^2\to E_1^1$ is 
the weighted blowup at $p_0$ with the weights $\ord\left(\sC_1^1\right)=2$ and 
$\ord\left(l_1^1\right)=1$. 
Note that 
$\left(\phi^1\right)^*D^1=D^{12}$ and 
$\left(\phi^{+1}\right)^*D^2=D^{12}+(1/2)\E_1^{12}+\E_2^{12}$. 
Moreover, the morphism 
$\phi^{112}\colon\left(\phi^{112}\right)^{-1}_*D^1\to D^1$
is the blowup at $g^1\cap l_3^1$, the morphism 
$\phi^{12}\colon D^{12}\to \left(\phi^{112}\right)^{-1}_*D^1$ is the blowup at 
the intersection of the strict transforms of $e^1$ and $g^1$, and 
the morphism $\phi^{+1}\colon D^{12}\to D^2$ is the blowdown of the curve 
$\E_1^{12}|_{D^{12}}$. Let $h^{12}\subset D^{12}$ be the 
strict transform of the exceptional divisor of the morphism 
$\phi^{112}\colon\left(\phi^{112}\right)^{-1}_*D^1\to D^1$, i.e., $h^{12}
=\E_1^{12}|_{D^{12}}$. 

On $X^{12}$, we get the following intersection numbers: 
\begin{center}
\renewcommand\arraystretch{1.4}
\begin{tabular}{c|cc|ccc|c}
$\cdot$ & $\E_1^{12}$ & $\E_2^{12}$ & 
$\left(\phi^{+1}\right)^*Q^2$ & $\left(\phi^{+1}\right)^*E_1^2$ & 
$\left(\phi^{+1}\right)^*D^2$ & 
$\left(\phi^{+1}\right)^*\left(2Q^2+E_1^2+(4-u)D^2\right)$ \\ \hline
$l_2^{12}$ & $0$ & $1$ & $-2$ & $2$ & $1$& $2-u$\\
$l_3^{+12}$ & $1$ & $-1$ & $1$ & $-1/2$ & $-1/2$& $\frac{1}{2}(u-1)$\\
$e^{12}$ & $0$ & $1$ & $2$ & $1$ & $-1$&$1+u$\\
$l_1^{12}$ & $1$ & $0$ & $0$ & $-1/2$ & $1/2$& $\frac{1}{2}(3-u)$\\
$\sC_1^{12}$ & $0$ & $1$ & $-2$ & $8$ & $1$& $8-u$\\
$g^{12}$ & $0$ & $1$ & $0$ & $1$ & $0$& $1$\\
$h^{12}$ & $-2$ & $1$ & $0$ & $0$ & $0$& $0$
\end{tabular}
\end{center}
Thus, for $u\in(1,2)$, the $\R$-divisor $2Q^2+E_1^2+(4-u)D^2$ is ample on $X^2$. 
In particular, $X^2$ is projective and $\chi^1$ is a small $\Q$-factorial modification 
of $X^1$. In particular, for $u\in [1,2]$, we have 
\begin{eqnarray*}
&&\vol_{X^1}\left(\rho_p^*(-K_X)-uD^1\right)
=\left(\left(\phi^{+1}\right)^*
\left(2Q^2+E_1^2+(4-u)D^2\right)\right)^{\cdot 3}\\
&=&\left(\left(\phi^{1}\right)^*
\left(2Q^1+E_1^1+(4-u)D^1\right)
+\frac{1-u}{2}\left(\E_1^{12}+2\E_2^{12}\right)\right)^{\cdot 3}
=28-u^3+\frac{1}{2}(u-1)^3.
\end{eqnarray*}

\noindent\underline{\textbf{Step 4}}\\
For $u=2$, the divisor $2Q^2+E_1^2+(4-2)D^2$ gives the birational contraction 
\[
\sigma^2\colon X^2\to Y^2. 
\]
Note that the exceptional set of $\sigma^2$ is 
$Q^2\left(\simeq\pr_{\pr^1}(\sO\oplus\sO(1))\right)$ and the restriction 
$\sigma^2|_{Q^2}\colon Q^2\to B^2$ is the $\pr^1$-fibration. 
We have 
\[
\left(\sigma^2\right)^*\left(\sigma^2\right)_*
\left(2Q^2+E_1^2+(4-u)D^2\right)=\frac{6-u}{2}Q^2+E_1^2+(4-u)D^2.
\]
By Step 3, the $\R$-divisor 
$\left(\sigma^2\right)_*\left(2Q^2+E_1^2+(4-u)D^2\right)$ is ample for $u\in(2,3)$. 
Moreover, for $u=3$, the divisor 
$\left(\sigma^2\right)_*\left(2Q^2+E_1^2+(4-3)D^2\right)$ contracts the 
curve $\left(\sigma^2\right)_*l_1^2\subset Y^2$. 
Thus, for $u\in[2,3]$, we have 
\begin{eqnarray*}
\vol_{X^1}\left(\rho_p^*(-K_X)-uD^1\right)
&=&\left(\left(\frac{6-u}{2}Q^2+E_1^2+(4-u)D^2\right)\right)^{\cdot 3}\\
&=&28-u^3+\frac{1}{2}(u-1)^3+\frac{1}{2}(u-2)^2(u+7).
\end{eqnarray*}
We note that $l_1^2$ and $Q^2$ are mutually disjoint.

\noindent\underline{\textbf{Step 5}}\\
Set $X^{223}:=X^{122}$, $\phi^{223}:=\phi^{122}$ and 
$\D^{223}_1:=\E_1^{122}\simeq\pr^2$. The variety $X^{223}$ is smooth, and 
the strict transform $l_1^{223}\subset X^{223}$ of $l_1^2\subset X^2$ 
satisfies that 
$\sN_{l_1^{223}/X^{223}}\simeq\sO_{\pr^1}(-1)^{\oplus 2}$. 
Thus we can take Atiyah's flop 
\[
X^{223}\xleftarrow{\phi^{23}}X^{23}\xrightarrow{\phi^{+23}}X^{233}
\]
of $l_1^{223}\subset X^{223}$. Let $\D^{23}_2\subset X^{23}$ be the 
exceptional divisor of $\phi^{23}$, and let $l_1^{+233}\subset X^{233}$ be 
the image of $\D_2^{23}\simeq\pr^1\times\pr^1$. 
Let us set $\D_1^{23}:=\left(\phi^{23}\right)^{-1}_*\D^{223}_1$ and 
$\D_1^{233}:=\left(\phi^{+23}\right)_*\D_1^{23}$. 
Then $\D_1^{233}\simeq\pr_{\pr^1}(\sO\oplus\sO(1))$ and any fiber 
of $\D_1^{233}/\pr^1$ intersects $\D_1^{233}$ with $-1$. 
Thus we get the contraction 
\[
\phi^{233}\colon X^{233}\to X^3
\]
of $\D_1^{233}$, where $X^3$ is a complex manifold and the image 
$l_1^{+3}\subset X^3$ of $\D_1^{233}$ is a smooth rational curve such that 
$\phi^{233}$ is the blowup along $l_1^{+3}\subset X^3$. Let us set 
$l^{+23}:=\D_1^{23}|_{\D_2^{23}}$. 
Set $\phi^2:=\phi^{223}\circ\phi^{23}$, $\phi^{+2}:=\phi^{233}\circ\phi^{+23}$ and 
$\chi^2:=\phi^{+2}\circ\left(\phi^2\right)^{-1}$. 
We get the commutative diagram 
\[\xymatrix{
& X^{23} \ar[dl]_{\phi^{23}} \ar[dr]^{\phi^{+23}} \ar[ddl]^{\phi^2} 
\ar[ddr]_{\phi^{+2}} &\\
X^{223} \ar[d]_{\phi^{223}} & & X^{233} \ar[d]^{\phi^{233}}\\
X^2 \ar@{-->}[rr]_{\chi^2} & & X^3.
}\]
Let us set 
\begin{eqnarray*}
Q^{23}:=\left(\phi^2\right)^{-1}_*Q^2,\quad
E_1^{23}:=\left(\phi^2\right)^{-1}_*E_1^2,\quad
D^{23}:=\left(\phi^2\right)^{-1}_*D^2,\\
Q^{3}:=\left(\phi^{+2}\right)_*Q^{23},\quad
E_1^{3}:=\left(\phi^{+2}\right)_*E_1^{23},\quad
D^{3}:=\left(\phi^{+2}\right)_*D^{23},\\
\end{eqnarray*}
and 
\begin{eqnarray*}
l_3^{+23}:=\left(\phi^2\right)^{-1}_*l_3^{+2},\quad
l_2^{23}:=\left(\phi^2\right)^{-1}_*l_2^2,\quad
e^{23}:=\left(\phi^2\right)^{-1}_*e^2,,\\
\sC_1^{23}:=\left(\phi^2\right)^{-1}_*\sC_1^2,\quad
g^{23}:=\left(\phi^2\right)^{-1}_*g^2,\quad
h^{23}:=\left(\phi^{23}\right)^{-1}_*\left(\phi^{+12}\right)_*h^{12},\\
l_3^{+3}:=\left(\phi^{+2}\right)_*l_3^{+23},\quad
l_2^{3}:=\left(\phi^{+2}\right)_*l_2^{23},\quad
e^{3}:=\left(\phi^{+2}\right)_*e^{23},\\
\sC_1^{3}:=\left(\phi^{+2}\right)_*\sC_1^{23},\quad
g^{3}:=\left(\phi^{+2}\right)_*g^{23},\quad
h^{3}:=\left(\phi^{+2}\right)_*h^{23}.
\end{eqnarray*}
Since $\chi^2$ is an isomorphism around a neighborhood of $Q^2$, we can also get 
the contraction 
\[
\sigma^3\colon X^3\to Y^3
\]
of $Q^3$ to a curve $B^3$ as in $\sigma^2$. 
Obviously, we have 
$\left(\phi^2\right)^*Q^2=Q^{23}$ and 
$\left(\phi^{+2}\right)^*Q^3=Q^{23}$. 
Note that $\left(\phi^2\right)^*E_1^2=E_1^{23}+(1/2)\D_1^{23}+\D_2^{23}$ 
and $\left(\phi^{+2}\right)^*E_1^3=E_1^{23}$. 
Moreover, $\chi^2\colon E_1^2\to E_1^3$ is the contraction of $l_1^2\subset E_1^2$. 
Note that $\left(\phi^2\right)^*D^2=D^{23}+(1/2)\D_1^{23}$ 
and $\left(\phi^{+2}\right)^*D^3=D^{23}+\D_1^{23}+\D_2^{23}$. 
Moreover, $\phi^2\colon D^{23}\to D^2$ is isomorphic to 
$\phi^{+1}\colon D^{12}\to D^2$, and the morphism $\phi^{+2}\colon D^{23}\to D^3$ 
is an isomorphism. 

On $X^{23}$, we get the following intersection numbers: 
\begin{center}
\renewcommand\arraystretch{1.4}
\begin{tabular}{c|cc|ccc|c}
$\cdot$ & $\D_1^{23}$ & $\D_2^{23}$ & 
$\left(\phi^{+2}\right)^*Q^3$ & $\left(\phi^{+2}\right)^*E_1^3$ & 
$\left(\phi^{+2}\right)^*D^3$ & 
$\left(\phi^{+2}\right)^*\left(\frac{6-u}{2}Q^3+E_1^3+(4-u)D^3\right)$ \\ \hline
$l_2^{23}$ & $0$ & $0$ & $-2$ & $2$ & $1$& $0$\\
$l_3^{+23}$ & $1$ & $0$ & $1$ & $-1$ & $0$& $\frac{1}{2}(4-u)$\\
$e^{23}$ & $0$ & $0$ & $2$ & $1$ & $-1$&$3$\\
$l_1^{+23}$ & $0$ & $-1$ & $0$ & $1$ & $-1$& $u-3$\\
$\sC_1^{23}$ & $0$ & $0$ & $-2$ & $8$ & $1$& $6$\\
$g^{23}$ & $0$ & $0$ & $0$ & $1$ & $0$& $1$\\
$h^{23}$ & $-2$ & $0$ & $0$ & $1$ & $-1$& $u-3$
\end{tabular}
\end{center}
Thus, for $u\in(3,4)$, the $\R$-divisor 
\[
\left(\sigma^3\right)_*\left(\frac{6-u}{2}Q^3+E_1^3+(4-u)D^3\right)
\]
is ample. Thus $Y^3$ and $X^3$ are projective, and $\chi^2\circ\chi^1$ is 
a small $\Q$-factorial modification of $X^1$. In particular, for 
$u\in[3,4]$, we have
\begin{eqnarray*}
&&\vol_{X^1}\left(\rho_p^*(-K_X)-uD^1\right)
=\left(\left(\phi^{+2}\right)^*
\left(\frac{6-u}{2}Q^3+E_1^3+(4-u)D^3\right)\right)^{\cdot 3}\\
&=&\left(\left(\phi^2\right)^*
\left(\frac{6-u}{2}Q^2+E_1^2+(4-u)D^2\right)
+\frac{3-u}{2}\left(\D_1^{23}+2\D_2^{23}\right)\right)^{\cdot 3}\\
&=&\frac{1}{2}(7-u)(4-u)(2+u).
\end{eqnarray*}
Therefore, we have $\tau_X\left(D^1\right)=4$, and 
\begin{eqnarray*}
S_X\left(D^1\right)&=&\frac{1}{28}\Biggl(\int_0^1(28-u^3)du
+\int_1^2\left(28-u^3+\frac{1}{2}(u-1)^3\right)du\\
&&+\int_2^3\left(28-u^3+\frac{1}{2}(u-1)^3+\frac{1}{2}(u-2)^2(u+7)\right)du\\
&&+\int_3^4\frac{1}{2}(7-u)(4-u)(2+u)du\Biggr)=\frac{289}{112}.
\end{eqnarray*}

\noindent\underline{\textbf{Step 6}}\\
Let $\psi^{12}\colon\tilde{X}\to X^{12}$ be the blowup along $l_1^{12}\subset X^{12}$. 
Then we get the natural morphism $\psi^{23}\colon\tilde{X}\to X^{23}$. 
Set $\tilde{D}:=\left(\psi^{12}\right)^{-1}_*D^{12}$, 
$\tilde{h}:=\left(\psi^{12}\right)^{-1}_*h^{12}$, 
$\tilde{l}^+_3:=\left(\psi^{12}\right)^{-1}_*l_3^{+12}$, 
$\tilde{g}:=\left(\psi^{12}\right)^{-1}_*g^{12}$, 
$\tilde{e}:=\left(\psi^{12}\right)^{-1}_*e^{12}$, 
and $\gamma:=\left(\phi^1\circ\psi^{12}\right)|_{\tilde{D}}\colon \tilde{D}\to D^1$. 
Note that $\tilde{D}\simeq D^{12}$ and $\tilde{D}\simeq D^{23}$. Moreover, 
we have
\begin{eqnarray*}
\left(\left(\psi^{12}\right)^*\E_1^{12}\right)|_{\tilde{D}}=\tilde{h},\quad
\left(\left(\psi^{12}\right)^*\E_2^{12}\right)|_{\tilde{D}}=\tilde{l}^+_3,\quad
\left(\left(\psi^{23}\right)^*\D_1^{23}\right)|_{\tilde{D}}=\tilde{h},\quad
\left(\left(\psi^{23}\right)^*\D_2^{23}\right)|_{\tilde{D}}=0.
\end{eqnarray*}
We remark that $\tilde{e}\sim\tilde{h}+2\tilde{l}^+_3+2\tilde{g}$. 
The intersection form of $\tilde{h}$, $\tilde{l}_3^+$ and $\tilde{g}$ on $\tilde{D}$ 
is given by the symmetric matrix 
\[\begin{pmatrix}
-2&1&0\\
1&-1&1\\
0&1&-1
\end{pmatrix}.\]
For any $u\in[0,4]$, let us set 
\begin{eqnarray*}
P(u)&:=&P_\sigma\left(\tilde{X}, \left(\phi^1\circ\psi^{12}\right)^*
\left(\rho_p^*(-K_X)-uD^1\right)\right)\Big|_{\tilde{D}},\\
N(u)&:=&N_\sigma\left(\tilde{X}, \left(\phi^1\circ\psi^{12}\right)^*
\left(\rho_p^*(-K_X)-uD^1\right)\right)\Big|_{\tilde{D}}.
\end{eqnarray*}
Note that 
\[
Q^1|_{\tilde{D}}=\tilde{e}+\tilde{h}+2\tilde{l}_3^+, \quad
E_1^1|_{\tilde{D}}=0,\quad
D^1|_{\tilde{D}}\sim-\tilde{h}-2\tilde{l}_3^+-\tilde{g}. 
\]
Thus we get
\[
Q^2|_{\tilde{D}}=\tilde{e}, \quad
E_1^2|_{\tilde{D}}=\frac{1}{2}\tilde{h}+\tilde{l}_3^+,\quad
D^2|_{\tilde{D}}\sim_\Q-\frac{1}{2}\tilde{h}-\tilde{l}_3^+-\tilde{g} 
\]
and
\[
Q^3|_{\tilde{D}}=\tilde{e}, \quad
E_1^3|_{\tilde{D}}=\tilde{l}_3^+,\quad
D^3|_{\tilde{D}}\sim_\Q-\tilde{l}_3^+-\tilde{g}. 
\]
In particular, 
\begin{itemize}
\item
if $u\in[0,1]$, then 
\begin{eqnarray*}
N(u)&=&0,\\
P(u)&\sim_\R&u\tilde{h}+2u\tilde{l}^+_3+u\tilde{g},
\end{eqnarray*}
\item
if $u\in[1,2]$, then 
\begin{eqnarray*}
N(u)&=&\frac{u-1}{2}\tilde{h}+(u-1)\tilde{l}_3^+,\\
P(u)&\sim_\R&\frac{1+u}{2}\tilde{h}+(1+u)\tilde{l}^+_3+u\tilde{g},
\end{eqnarray*}
\item
if $u\in[2,3]$, then 
\begin{eqnarray*}
N(u)&=&\frac{u-1}{2}\tilde{h}+(u-1)\tilde{l}_3^++\frac{u-2}{2}\tilde{e},\\
P(u)&\sim_\R&\frac{3}{2}\tilde{h}+3\tilde{l}^+_3+2\tilde{g},
\end{eqnarray*}
\item
if $u\in[3,4]$, then 
\begin{eqnarray*}
N(u)&=&(u-2)\tilde{h}+(u-1)\tilde{l}_3^++\frac{u-2}{2}\tilde{e},\\
P(u)&\sim_\R&\frac{6-u}{2}\tilde{h}+3\tilde{l}^+_3+2\tilde{g}.
\end{eqnarray*}
\end{itemize}

\noindent\underline{\textbf{Step 7}}\\
Note that $A_{D^1}\left(g^1\right)=1$ and 
$\gamma^*g^1=\tilde{g}+\tilde{h}+2\tilde{l}_3^+$. Let us set 
$\tilde{p}_{l_3^+}:=\tilde{l}_3^+|_{\tilde{g}}$, 
$p_{l_3^+}:=\gamma\left(\tilde{p}_{l_3^+}\right)$, and 
\begin{eqnarray*}
P(u,v)&:=&P_\sigma\left(\tilde{D}, P(u)-v\tilde{g}\right), \\
N(u,v)&:=&N_\sigma\left(\tilde{D}, P(u)-v\tilde{g}\right).
\end{eqnarray*}

\begin{itemize}
\item
Assume that $u\in[0,1]$. 
\begin{itemize}
\item
If $v\in[0,u]$, then we have 
\begin{eqnarray*}
N(u,v)&=&v\tilde{h}+2v\tilde{l}_3^+, \\
P(u,v)&\sim_\R&(u-v)\tilde{h}+2(u-v)\tilde{l}_3^++(u-v)\tilde{g},
\end{eqnarray*}
and $(P(u,v))^{\cdot 2}=(u-v)^2$.
\end{itemize}
\item
Assume that $u\in[1,2]$. 
\begin{itemize}
\item
If $v\in\left[0,\frac{u-1}{2}\right]$, then we have 
\begin{eqnarray*}
N(u,v)&=&0, \\
P(u,v)&\sim_\R&\frac{1+u}{2}\tilde{h}+(1+u)\tilde{l}_3^++(u-v)\tilde{g},
\end{eqnarray*}
and \[
(P(u,v))^{\cdot 2}=\frac{1}{2}\left(-1+2u+u^2-4v-2v^2\right).
\]
\item
If $v\in\left[\frac{u-1}{2},u\right]$, then we have 
\begin{eqnarray*}
N(u,v)&=&\frac{1-u+2v}{2}\tilde{h}+(1-u+2v)\tilde{l}_3^+, \\
P(u,v)&\sim_\R&(u-v)\tilde{h}+2(u-v)\tilde{l}_3^++(u-v)\tilde{g},
\end{eqnarray*}
and $(P(u,v))^{\cdot 2}=(u-v)^2$. 
\end{itemize}
\item
Assume that $u\in[2,3]$. 
\begin{itemize}
\item
If $v\in\left[0,\frac{1}{2}\right]$, then we have 
\begin{eqnarray*}
N(u,v)&=&0, \\
P(u,v)&\sim_\R&\frac{3}{2}\tilde{h}+3\tilde{l}_3^++(2-v)\tilde{g},
\end{eqnarray*}
and \[
(P(u,v))^{\cdot 2}=\frac{7}{2}-2v-v^2.
\]
\item
If $v\in\left[\frac{1}{2},2\right]$, then we have 
\begin{eqnarray*}
N(u,v)&=&\frac{-1+2v}{2}\tilde{h}+(-1+2v)\tilde{l}_3^+, \\
P(u,v)&\sim_\R&(2-v)\tilde{h}+2(2-v)\tilde{l}_3^++(2-v)\tilde{g},
\end{eqnarray*}
and $(P(u,v))^{\cdot 2}=(2-v)^2$. 
\end{itemize}
\item
Assume that $u\in[3,4]$. 
\begin{itemize}
\item
If $v\in\left[0,\frac{4-u}{2}\right]$, then we have 
\begin{eqnarray*}
N(u,v)&=&0, \\
P(u,v)&\sim_\R&\frac{6-u}{2}\tilde{h}+3\tilde{l}_3^++(2-v)\tilde{g},
\end{eqnarray*}
and \[
(P(u,v))^{\cdot 2}=3u-\frac{u^2}{2}-(1+v)^2.
\]
\item
If $v\in\left[\frac{4-u}{2},\frac{u-2}{2}\right]$, then we have 
\begin{eqnarray*}
N(u,v)&=&\frac{-4+u+2v}{2}\tilde{l}_3^+, \\
P(u,v)&\sim_\R&\frac{6-u}{2}\tilde{h}+\frac{10-u-2v}{2}\tilde{l}_3^++(2-v)\tilde{g},
\end{eqnarray*}
and \[
(P(u,v))^{\cdot 2}=\frac{1}{4}(6-u)(2+u-4v).
\]
\item
If $v\in\left[\frac{u-2}{2},2\right]$, then we have 
\begin{eqnarray*}
N(u,v)&=&\frac{2-u+2v}{2}\tilde{h}+(-1+2v)\tilde{l}_3^+, \\
P(u,v)&\sim_\R&(2-v)\tilde{h}+2(2-v)\tilde{l}_3^++(2-v)\tilde{g},
\end{eqnarray*}
and $(P(u,v))^{\cdot 2}=(2-v)^2$. 
\end{itemize}
\end{itemize}
Hence we get
\begin{eqnarray*}
S\left(V^{D^1}_{\bullet,\bullet};g^1\right)
&=&\frac{3}{28}\Biggl(
\int_0^1\int_0^u(u-v)^2dvdu\\
&+&\int_1^2\biggl(\int_0^{\frac{u-1}{2}}\frac{1}{2}(-1+2u+u^2-4v-2v^2)dv
+\int_{\frac{u-1}{2}}^{u}(u-v)^2dv\biggr)du\\
&+&\int_2^3\biggl(\int_0^{\frac{1}{2}}\left(\frac{7}{2}-2v-v^2\right)dv
+\int_{\frac{1}{2}}^2 (2-v)^2dv\biggr)du\\
&+&\int_3^4\biggl(\int_0^{\frac{4-u}{2}}\left(3u-\frac{u^2}{2}-(1+v)^2\right)dv
+\int_{\frac{4-u}{2}}^{\frac{u-2}{2}}\frac{1}{4}(6-u)(2+u-4v)dv\\
&&+\int_{\frac{u-2}{2}}^2 (2-v)^2dv\biggr)du\Biggr)=\frac{307}{448}.
\end{eqnarray*}
Moreover, $F_{p'}\left(W^{D^1,g^1}_{\bullet,\bullet,\bullet}\right)$ is nonzero 
only if $p'=p_{l_3^+}$. Thus, for any $p'\in g^1\setminus p_{l_3^+}$, we have
\begin{eqnarray*}
S\left(W^{D^1,g^1}_{\bullet,\bullet,\bullet};p'\right)
&=&\frac{3}{28}\Biggl(\int_0^1\int_0^u(u-v)^2dvdu
+\int_1^2\biggl(\int_0^{\frac{u-1}{2}}(1+v)^2dv
+\int_{\frac{u-1}{2}}^u(u-v)^2dv\biggr)du\\
&+&\int_2^3\biggl(\int_0^{\frac{1}{2}}(1+v)^2dv+\int_{\frac{1}{2}}^2(2-v)^2dv\biggr)du\\
&+&\int_3^4\biggl(\int_0^{\frac{4-u}{2}}(1+v)^2dv
+\int_{\frac{4-u}{2}}^{\frac{u-2}{2}}\left(\frac{6-u}{2}\right)^2dv
+\int_{\frac{u-2}{2}}^2(2-v)^2dv\biggr)du\Biggr)\\
&=&\frac{227}{448}. 
\end{eqnarray*}
Therefore, we get 
\begin{eqnarray*}
\delta_{p'}\left(D^1;V^{D^1}_{\bullet,\bullet}\right)\geq
\min\left\{\frac{A_{D^1}\left(g^1\right)}{S\left(V^{D^1}_{\bullet,\bullet};g^1\right)},\,\,\,
\frac{A_{g^1}\left(p'\right)}{S\left(W^{D^1,g^1}_{\bullet,\bullet,\bullet};p'\right)}\right\}
=\min\left\{\frac{448}{307},\,\,\,\frac{448}{227}\right\}=\frac{448}{307}
\end{eqnarray*}
by Corollary \ref{reduction_corollary}.

\noindent\underline{\textbf{Step 8}}\\
Let $\gamma_0\colon D'\to D^1$ be the extraction of $\tilde{l}_3^+\subset\tilde{D}$ 
over $D^1$. Let $\gamma'\colon \tilde{D}\to D'$ be the natural morphism 
and let us set $l^{'+}_3:=\gamma'_*\tilde{l}_3^+$. 
Set $\tilde{p}_h:=\tilde{h}|_{\tilde{l}_3^+}$, $\tilde{p}_e:=\tilde{e}|_{\tilde{l}_3^+}$ 
and $\tilde{p}_g:=\tilde{g}|_{\tilde{l}_3^+}$. Then the points 
$\tilde{p}_h$, $\tilde{p}_e$ and $\tilde{p}_g$ are mutually distinct and reduced. Set 
$p'_h:=\gamma'\left(\tilde{p}_h\right)$, 
$p'_e:=\gamma'\left(\tilde{p}_e\right)$ and 
$p'_g:=\gamma'\left(\tilde{p}_g\right)$. 
We have 
\begin{eqnarray*}
\left(K_{D'}+l^{'+}_3\right)|_{l^{'+}_3}=K_{l^{'+}_3}+\frac{1}{2}p'_h,\quad
(\gamma')^* l^{'+}_3=\tilde{l}^+_3+\frac{1}{2}\tilde{h}. 
\end{eqnarray*}
Let us set 
\begin{eqnarray*}
P(u,v)&:=&P_\sigma\left(\tilde{D}, P(u)-v\tilde{l}^+_3\right), \\
N(u,v)&:=&N_\sigma\left(\tilde{D}, P(u)-v\tilde{l}^+_3\right).
\end{eqnarray*}

\begin{itemize}
\item
Assume that $u\in[0,1]$. 
\begin{itemize}
\item
If $v\in[0,u]$, then we have 
\begin{eqnarray*}
N(u,v)&=&\frac{v}{2}\tilde{h}, \\
P(u,v)&\sim_\R&\frac{2u-v}{2}\tilde{h}+(2u-v)\tilde{l}_3^++u\tilde{g},
\end{eqnarray*}
and 
\[
(P(u,v))^{\cdot 2}=u^2-\frac{v^2}{2}.
\]
\item
If $v\in[u,2u]$, then we have 
\begin{eqnarray*}
N(u,v)&=&\frac{v}{2}\tilde{h}+(v-u)\tilde{g}, \\
P(u,v)&\sim_\R&\frac{2u-v}{2}\tilde{h}+(2u-v)\tilde{l}_3^++(2u-v)\tilde{g},
\end{eqnarray*}
and 
\[
(P(u,v))^{\cdot 2}=\frac{1}{2}(2u-v)^2.
\]
\end{itemize}
\item
Assume that $u\in[1,2]$. 
\begin{itemize}
\item
If $v\in\left[0,1\right]$, then we have 
\begin{eqnarray*}
N(u,v)&=&\frac{v}{2}\tilde{h}, \\
P(u,v)&\sim_\R&\frac{1+u-v}{2}\tilde{h}+(1+u-v)\tilde{l}_3^++u\tilde{g},
\end{eqnarray*}
and \[
(P(u,v))^{\cdot 2}=\frac{1}{2}\left(-1+2u+u^2+2v-2uv-v^2\right).
\]
\item
If $v\in\left[1,1+u\right]$, then we have 
\begin{eqnarray*}
N(u,v)&=&\frac{v}{2}\tilde{h}+(v-1)\tilde{g}, \\
P(u,v)&\sim_\R&\frac{1+u-v}{2}\tilde{h}+(1+u-v)\tilde{l}_3^++(1+u-v)\tilde{g},
\end{eqnarray*}
and \[
(P(u,v))^{\cdot 2}=\frac{1}{2}(1+u-v)^2.
\] 
\end{itemize}
\item
Assume that $u\in[2,3]$. 
\begin{itemize}
\item
If $v\in\left[0,1\right]$, then we have 
\begin{eqnarray*}
N(u,v)&=&\frac{v}{2}\tilde{h}, \\
P(u,v)&\sim_\R&\frac{3-v}{2}\tilde{h}+(3-v)\tilde{l}_3^++2\tilde{g},
\end{eqnarray*}
and \[
(P(u,v))^{\cdot 2}=\frac{1}{2}(7-2v-v^2).
\]
\item
If $v\in\left[1,3\right]$, then we have 
\begin{eqnarray*}
N(u,v)&=&\frac{v}{2}\tilde{h}+(v-1)\tilde{g}, \\
P(u,v)&\sim_\R&\frac{3-v}{2}\tilde{h}+(3-v)\tilde{l}_3^++(3-v)\tilde{g},
\end{eqnarray*}
and 
\[
(P(u,v))^{\cdot 2}=\frac{1}{2}(3-v)^2.
\] 
\end{itemize}
\item
Assume that $u\in[3,4]$. 
\begin{itemize}
\item
If $v\in\left[0,u-3\right]$, then we have 
\begin{eqnarray*}
N(u,v)&=&0, \\
P(u,v)&\sim_\R&\frac{6-u}{2}\tilde{h}+(3-v)\tilde{l}_3^++2\tilde{g},
\end{eqnarray*}
and \[
(P(u,v))^{\cdot 2}=\frac{1}{2}\left(-2+6u-u^2-8v+2uv-2v^2\right).
\]
\item
If $v\in\left[u-3,1\right]$, then we have 
\begin{eqnarray*}
N(u,v)&=&\frac{-u+3+v}{2}\tilde{h}, \\
P(u,v)&\sim_\R&\frac{3-v}{2}\tilde{h}+(3-v)\tilde{l}_3^++2\tilde{g},
\end{eqnarray*}
and \[
(P(u,v))^{\cdot 2}=\frac{1}{2}(7-2v-v^2).
\]
\item
If $v\in\left[1,3\right]$, then we have 
\begin{eqnarray*}
N(u,v)&=&\frac{-u+3+v}{2}\tilde{h}+(v-1)\tilde{g}, \\
P(u,v)&\sim_\R&\frac{3-v}{2}\tilde{h}+(3-v)\tilde{l}_3^++(3-v)\tilde{g},
\end{eqnarray*}
and 
\[
(P(u,v))^{\cdot 2}=\frac{1}{2}(3-v)^2.
\] 
\end{itemize}
\end{itemize}
Hence we get
\begin{eqnarray*}
&&S\left(V^{D^1}_{\bullet,\bullet};\tilde{l}_3^+\right)\\
&=&\frac{3}{28}\Biggl(
\int_0^1\biggl(\int_0^u\left(u^2-\frac{v^2}{2}\right)dv
+\int_u^{2u}\frac{1}{2}(2u-v)^2dv\biggr)du\\
&+&\int_1^2\biggl((u-1)\frac{1}{2}(-1+2u+u^2)\\
&&+\int_0^1\frac{1}{2}(-1+2u+u^2+2v-2uv-v^2)dv
+\int_1^{1+u}\frac{1}{2}(1+u-v)^2dv\biggr)du\\
&+&\int_2^3\biggl((u-1)\frac{7}{2}+
\int_0^1\frac{1}{2}\left(7-2v-v^2\right)dv
+\int_1^3 \frac{1}{2}(3-v)^2dv\biggr)du\\
&+&\int_3^4\biggl((u-1)\frac{1}{2}(-2+6u-u^2)+
\int_0^{u-3}\frac{1}{2}\left(-2+6u-u^2-8v+2uv-2v^2\right)dv\\
&&+\int_{u-3}^1\frac{1}{2}(7-2v-v^2)dv
+\int_1^3 \frac{1}{2}(3-v)^2dv\biggr)du\Biggr)=\frac{309}{112}.
\end{eqnarray*}
Moreover, 
\begin{eqnarray*}
F_{p'_h}\left(W^{D',l^{'+}_3}_{\bullet,\bullet,\bullet}\right)
=\frac{6}{28}\Biggl(\int_3^4\int_0^{u-3}\frac{4-u+2v}{2}\cdot\frac{u-3-v}{2}dvdu\Biggr)
=\frac{3}{448}, 
\end{eqnarray*}
\begin{eqnarray*}
F_{p'_e}\left(W^{D',l^{'+}_3}_{\bullet,\bullet,\bullet}\right)
&=&\frac{6}{28}\Biggl(\int_2^3\biggl(\int_0^1\frac{1+v}{2}\cdot\frac{u-2}{2}dv
+\int_1^3\frac{3-v}{2}\cdot\frac{u-2}{2}dv\biggr)du\\
&+&\int_3^4\biggl(\int_0^{u-3}\frac{4-u+2v}{2}\cdot\frac{u-2}{2}dv
+\int_{u-3}^1\frac{1+v}{2}\cdot\frac{u-2}{2}dv\\
&&+\int_1^3\frac{3-v}{2}\cdot\frac{u-2}{2}dv\biggr)du\Biggr)
=\frac{23}{64}, 
\end{eqnarray*}
\begin{eqnarray*}
F_{p'_g}\left(W^{D',l^{'+}_3}_{\bullet,\bullet,\bullet}\right)
&=&\frac{6}{28}\Biggl(\int_0^1\int_u^{2u}\frac{2u-v}{2}(v-u)dvdu
+\int_1^2\int_1^{1+u}\frac{1+u-v}{2}(v-1)dvdu\\
&+&\int_2^4\int_1^3\frac{3-v}{2}(v-1)dvdu\Biggr)
=\frac{5}{14}.
\end{eqnarray*}
Therefore, for any $p'\in l^{'+}_3$, we have 
\begin{eqnarray*}
&&S\left(W^{D',l^{'+}_3}_{\bullet,\bullet,\bullet};p'\right)\\
&=&F_{p'}\left(W^{D',l^{'+}_3}_{\bullet,\bullet,\bullet}\right)
+\frac{3}{28}\Biggl(\int_0^1\biggl(\int_0^u\left(\frac{v}{2}\right)^2dv
+\int_u^{2u}\left(\frac{2u-v}{2}\right)^2dv\biggr)du\\
&+&\int_1^2\biggl(\int_0^1\left(\frac{-1+u+v}{2}\right)^2dv
+\int_1^{1+u}\left(\frac{1+u-v}{2}\right)^2dv\biggr)du\\
&+&\int_2^3\biggl(\int_0^1\left(\frac{1+v}{2}\right)^2dv
+\int_1^3\left(\frac{3-v}{2}\right)^2dv\biggr)du\\
&+&\int_3^4\biggl(\int_0^{u-3}\left(\frac{4-u+2v}{2}\right)^2dv
+\int_{u-3}^1\left(\frac{1+v}{2}\right)^2dv
+\int_1^3\left(\frac{3-v}{2}\right)^2dv\biggr)du\Biggr)\\
&=&F_{p'}\left(W^{D',l^{'+}_3}_{\bullet,\bullet,\bullet}\right)
+\frac{21}{64}\begin{cases}
=\frac{75}{224} & \text{if }p'=p'_h, \\
\leq\frac{11}{16} & \text{otherwise.}
\end{cases} 
\end{eqnarray*}
Therefore, we get 
\begin{eqnarray*}
\delta_{p_{l_3^+}}\left(D^1;V^{D^1}_{\bullet,\bullet}\right)
&\geq&\min\left\{\frac{A_{D^1}\left(\tilde{l}^+_3\right)}
{S\left(V^{D^1}_{\bullet,\bullet};\tilde{l}^+_3\right)},\,\,\,
\inf_{p'\in\tilde{l}^{'+}_3}
\frac{A_{l^{'+}_3,\frac{1}{2}p'_h}\left(p'\right)}
{S\left(W^{D',l^{'+}_3}_{\bullet,\bullet,\bullet};p'\right)}\right\}\\
&=&\min\left\{\frac{112}{103},\,\,\,\frac{112}{75},\,\,\,\frac{16}{11}\right\}
=\frac{112}{103}
\end{eqnarray*}
by Corollary \ref{reduction_corollary}.

\noindent\underline{\textbf{Step 9}}\\
Let $p^1\in D^1$ be any closed point with $p^1\not\in g^1$. Take the line 
$r^1\subset D^1$ passing through $p^1$ and $p_{l_3^+}$, and let us set 
$\tilde{r}:=\gamma_*^{-1}r^1$. Note that 
$\tilde{r}\sim \tilde{l}_3^++\tilde{g}$ and 
$\gamma^*r^1=\tilde{r}+\tilde{h}+\tilde{l}_3^+$. 
Set $q_e:=e^1|_{r^1}$. Then $q_e\in r^1$ is a reduced point with $q_e\neq p_{l_3^+}$. 
Let us set 
\begin{eqnarray*}
P(u,v)&:=&P_\sigma\left(\tilde{D}, P(u)-v\tilde{r}\right), \\
N(u,v)&:=&N_\sigma\left(\tilde{D}, P(u)-v\tilde{r}\right).
\end{eqnarray*}

\begin{itemize}
\item
Assume that $u\in[0,1]$. 
\begin{itemize}
\item
If $v\in[0,u]$, then we have 
\begin{eqnarray*}
N(u,v)&=&v\tilde{h}+v\tilde{l}_3^+, \\
P(u,v)&\sim_\R&(u-v)\tilde{h}+(2u-2v)\tilde{l}_3^++(u-v)\tilde{g},
\end{eqnarray*}
and 
$(P(u,v))^{\cdot 2}=(u-v)^2$. 
\end{itemize}
\item
Assume that $u\in[1,2]$. 
\begin{itemize}
\item
If $v\in\left[0,u-1\right]$, then we have 
\begin{eqnarray*}
N(u,v)&=&\frac{v}{2}\tilde{h}, \\
P(u,v)&\sim_\R&\frac{1+u-v}{2}\tilde{h}+(1+u-v)\tilde{l}_3^++(u-v)\tilde{g},
\end{eqnarray*}
and \[
(P(u,v))^{\cdot 2}=\frac{1}{2}\left(-1+2u+u^2-2v-2uv+v^2\right).
\]
\item
If $v\in\left[u-1,u\right]$, then we have 
\begin{eqnarray*}
N(u,v)&=&\frac{1-u+2v}{2}\tilde{h}+(1-u+v)\tilde{l}_3^+, \\
P(u,v)&\sim_\R&(u-v)\tilde{h}+(2u-2v)\tilde{l}_3^++(u-v)\tilde{g},
\end{eqnarray*}
and 
$(P(u,v))^{\cdot 2}=(u-v)^2$. 
\end{itemize}
\item
Assume that $u\in[2,3]$. 
\begin{itemize}
\item
If $v\in\left[0,1\right]$, then we have 
\begin{eqnarray*}
N(u,v)&=&\frac{v}{2}\tilde{h}, \\
P(u,v)&\sim_\R&\frac{3-v}{2}\tilde{h}+(3-v)\tilde{l}_3^++(2-v)\tilde{g},
\end{eqnarray*}
and \[
(P(u,v))^{\cdot 2}=\frac{1}{2}(7-6v+v^2).
\]
\item
If $v\in\left[1,2\right]$, then we have 
\begin{eqnarray*}
N(u,v)&=&\frac{-1+2v}{2}\tilde{h}+(v-1)\tilde{l}_3^+, \\
P(u,v)&\sim_\R&(2-v)\tilde{h}+(4-2v)\tilde{l}_3^++(2-v)\tilde{g},
\end{eqnarray*}
and 
$(P(u,v))^{\cdot 2}=(2-v)^2$. 
\end{itemize}
\item
Assume that $u\in[3,4]$. 
\begin{itemize}
\item
If $v\in\left[0,u-3\right]$, then we have 
\begin{eqnarray*}
N(u,v)&=&0, \\
P(u,v)&\sim_\R&\frac{6-u}{2}\tilde{h}+(3-v)\tilde{l}_3^++(2-v)\tilde{g},
\end{eqnarray*}
and \[
(P(u,v))^{\cdot 2}=\frac{1}{2}\left(-2+6u-u^2-12v+2uv\right).
\]
\item
If $v\in\left[u-3,1\right]$, then we have 
\begin{eqnarray*}
N(u,v)&=&\frac{-u+3+v}{2}\tilde{h}, \\
P(u,v)&\sim_\R&\frac{3-v}{2}\tilde{h}+(3-v)\tilde{l}_3^++(2-v)\tilde{g},
\end{eqnarray*}
and \[
(P(u,v))^{\cdot 2}=\frac{1}{2}(7-6v+v^2).
\]
\item
If $v\in\left[1,2\right]$, then we have 
\begin{eqnarray*}
N(u,v)&=&\frac{2-u+2v}{2}\tilde{h}+(v-1)\tilde{l}_3^+, \\
P(u,v)&\sim_\R&(2-v)\tilde{h}+(4-2v)\tilde{l}_3^++(2-v)\tilde{g},
\end{eqnarray*}
and 
$(P(u,v))^{\cdot 2}=(2-v)^2$.
\end{itemize}
\end{itemize}
Hence we get
\begin{eqnarray*}
&&S\left(V^{D^1}_{\bullet,\bullet};r^1\right)\\
&=&\frac{3}{28}\Biggl(
\int_0^1\int_0^u(u-v)^2dvdu\\
&+&\int_1^2\biggl(\int_0^{u-1}\frac{1}{2}(-1+2u+u^2-2v-2uv+v^2)dv
+\int_{u-1}^{u}(u-v)^2dv\biggr)du\\
&+&\int_2^3\biggl(\int_0^1\frac{1}{2}\left(7-6v+v^2\right)dv
+\int_1^2 (2-v)^2dv\biggr)du\\
&+&\int_3^4\biggl(\int_0^{u-3}\frac{1}{2}\left(-2+6u-u^2-12v+2uv\right)dv\\
&&+\int_{u-3}^1\frac{1}{2}(7-6v+v^2)dv
+\int_1^2 (2-v)^2dv\biggr)du\Biggr)=\frac{75}{112}.
\end{eqnarray*}
Moreover, 
\begin{eqnarray*}
&&F_{q_e}\left(W^{D^1,r^1}_{\bullet,\bullet,\bullet}\right)\\
&=&\frac{6}{28}\Biggl(\int_2^3\biggl(\int_0^1\frac{3-v}{2}\cdot\frac{u-2}{2}dv
+\int_1^2 (2-v)\frac{u-2}{2}dv\biggr)du\\
&+&\int_3^4\biggl(\int_0^{u-3}\frac{6-u}{2}\cdot\frac{u-2}{2}dv
+\int_{u-3}^1\frac{3-v}{2}\cdot\frac{u-2}{2}dv
+\int_1^2(2-v)\frac{u-2}{2}dv\biggr)du\Biggr)
=\frac{23}{64}. 
\end{eqnarray*}
Thus we have 
\begin{eqnarray*}
&&S\left(W^{D^1,r^1}_{\bullet,\bullet,\bullet};p^1\right)\\
&\leq&\frac{23}{64}
+\frac{3}{28}\Biggl(\int_0^1\int_0^u(u-v)^2dvdu
+\int_1^2\biggl(\int_0^{u-1}\left(\frac{u+1-v}{2}\right)^2dv
+\int_{u-1}^u\left(u-v\right)^2dv\biggr)du\\
&+&\int_2^3\biggl(\int_0^1\left(\frac{3-v}{2}\right)^2dv
+\int_1^2\left(2-v\right)^2dv\biggr)du\\
&+&\int_3^4\biggl(\int_0^{u-3}\left(\frac{6-u}{2}\right)^2dv
+\int_{u-3}^1\left(\frac{3-v}{2}\right)^2dv
+\int_1^2\left(2-v\right)^2dv\biggr)du\Biggr)\\
&=&\frac{23}{64}+\frac{227}{448}=\frac{97}{112}.
\end{eqnarray*}
Therefore, we get 
\begin{eqnarray*}
\delta_{p^1}\left(D^1;V^{D^1}_{\bullet,\bullet}\right)\geq
\min\left\{\frac{A_{D^1}\left(r^1\right)}{S\left(V^{D^1}_{\bullet,\bullet};r^1\right)},\,\,\,
\frac{A_{r^1}\left(p^1\right)}{S\left(W^{D^1,r^1}_{\bullet,\bullet,\bullet};p^1\right)}\right\}
=\min\left\{\frac{112}{75},\,\,\,\frac{112}{97}\right\}=\frac{112}{97}
\end{eqnarray*}
by Corollary \ref{reduction_corollary}.

\noindent\underline{\textbf{Step 10}}\\
By Steps 7, 8 and 9, we get 
\[
\delta\left(D^1;V^{D^1}_{\bullet,\bullet}\right)\geq
\min\left\{\frac{448}{307},\,\,\,\frac{112}{103},\,\,\,\frac{112}{97}\right\}
=\frac{112}{103}. 
\]
Thus, by Step 5, we have 
\[
\delta_p(X)\geq\min\left\{\frac{A_X\left(D^1\right)}{S_X\left(D^1\right)},\,\,\,
\delta\left(D^1;V^{D^1}_{\bullet,\bullet}\right)\right\}
=\min\left\{\frac{336}{289},\,\,\,\frac{112}{103}\right\}=\frac{112}{103}
\]
by Corollary \ref{reduction_corollary}.
\end{proof}

\begin{example}\label{112103_example}
Assume that $X$ satisfies Remark \ref{infl_remark} \eqref{infl_remark2}. 
Then, the divisor $Q\in|H_1|$ with $q\in Q$ is singular, and the singular point 
$p\in Q$ satisfies the assumptions in Proposition \ref{112103_proposition}. 
(See Claim \ref{6463_claim} in Theorem \ref{6463_thm}, Step 1.) 
\end{example}

\section{Local $\delta$-invariants for special points, II}\label{special-II_section}

\begin{thm}\label{6463_thm}
Let $X$ be as in \S \ref{311_section}. 
Assume that $X$ satisfies Remark \ref{infl_remark} \eqref{infl_remark2}. 
Then we have the equality 
\[
\delta_q(X)=\frac{64}{63}. 
\]
\end{thm}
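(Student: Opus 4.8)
The plan is to compute $\delta_q(X)$ by the usual two-sided estimate: an upper bound via one well-chosen divisor over $X$ whose center contains $q$, and a matching lower bound via the inductive machinery of Corollary \ref{reduction_corollary}, stratifying all divisors with center through $q$. For the upper bound, I would produce an explicit prime divisor $F$ over $X$ with $A_X(F)/S_X(F)=64/63$; the natural candidate is obtained by a weighted blowup of $X$ at $q$ adapted to the flag $E_2\supset E_2\cap E_3\supset\{q\}$ (or equivalently an iterated ordinary blowup extracting a divisor over the triple point $q=E_1\cap E_2\cap E_3$). Since $X$ is inflectional (Remark \ref{infl_remark}\eqref{infl_remark2}), the geometry near $q$ degenerates: the curve $\mathcal C^V$ is tangent to $l^V$ at $p^V$, and correspondingly $\mathcal C^E=E_3|_{E_2}$ has a higher-order contact with the fiber $l_2$ at $q$. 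This extra tangency is exactly what drags the $\delta$-invariant down to $64/63<112/109$ (compare Proposition \ref{112109_proposition}, valid for $p\in E_2\setminus\{q\}$, and Proposition \ref{112113_proposition}\eqref{112113_proposition2}, which already shows $\delta_q(V^{E_2}_{\bullet,\bullet})=112/113$). I would identify $F$ as a divisor $\hat e$ appearing on a suitable resolution $\hat E_2\to E_2$ analogous to the one built in Proposition \ref{112113_proposition}\eqref{112113_proposition2}, and compute $S_X(F)$ directly via the Nakayama--Zariski decomposition of $-K_X-uF$, running $u$ through the Mori chamber decomposition along the ray spanned by $F$.

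For the lower bound, I would apply Corollary \ref{reduction_corollary} with the plt-type divisor $Y=E_2$ first: one needs $A_X(E_2)/S_X(E_2)\geq 64/63$, which follows from $S_X(E_2)=51/56$ computed in Proposition \ref{112109_proposition}, since $56/51>64/63$. So it suffices to bound $\delta_q(E_2,\Delta_{E_2};V^{E_2}_{\bullet,\bullet})$ from below by $64/63$ for every scheme-theoretic point of $E_2$ lying over $q$ — but here the relevant center is just the closed point $q\in E_2$, so I must bound $\delta_q(V^{E_2}_{\bullet,\bullet})$. This is where the real work lies: unlike Proposition \ref{112113_proposition}, which only asserts $\delta_q(V^{E_2}_{\bullet,\bullet})=112/113$ and hence is \emph{too small}, I need a finer stratification to recover $64/63$. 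Wait — $112/113<1<64/63$, so Proposition \ref{112113_proposition}\eqref{112113_proposition2} already implies $\delta_q(X)\leq\delta_q(V^{E_2}_{\bullet,\bullet})=112/113<64/63$? That would contradict the theorem, so I must \emph{not} restrict to $V^{E_2}_{\bullet,\bullet}$: the correct reading is that $\delta_q(X)$ is computed by divisors over $X$, not over the surface $E_2$, and the divisor realizing the infimum does not have its $X$-center contained in $E_2$. Thus the right first step is to choose $Y$ to be a different plt-type divisor — the exceptional divisor $D$ of the blowup of $X$ at the point $q$ itself (so $c_X(D)=\{q\}$), exactly as in the strategy of Proposition \ref{112103_proposition}. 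Then $\delta_q(X)\geq\min\{A_X(D)/S_X(D),\ \delta(D;V^D_{\bullet,\bullet})\}$, and one computes on the surface $D\simeq\mathbb P^2$ (or a blowup thereof) a nested refinement by a curve and a point.

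Concretely, I would: (1) build the plt-blowup $\rho\colon X'\to X$ of $q$, identify $D\simeq\mathbb P^2$, and run the Mori program on $-K_X-uD$ to the pseudo-effective threshold, getting $S_X(D)$ as a piecewise-polynomial integral; check $A_X(D)/S_X(D)\geq 64/63$ (I expect $A_X(D)=3$ and $S_X(D)$ slightly less than $63/22$); (2) on $D$, run through the finitely many "special" directions (the strict transforms of $E_1|_D$, $E_2|_D$, $E_3|_D$, and the line through their mutual intersections, which in the inflectional case acquire extra tangency) and for each relevant curve $C\subset$ (a blowup of) $D$ apply Theorem \ref{AZ2_thm} to compute $S(W^{\cdots}_{\bullet,\bullet,\bullet};p)$; (3) take the minimum over all points $p$, identifying the one point where the value $S=63/64\cdot A$ is attained — this is the inflection point direction — and conclude $\delta(D;V^D_{\bullet,\bullet})=64/63$, hence $\delta_q(X)=64/63$ together with the upper bound from step (1) of the first paragraph. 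The main obstacle is step (2)--(3): in the inflectional case the Nakayama--Zariski decomposition of $\theta^*(P(u)|_{\tilde D})-v\bar C$ will have \emph{more} chambers (the extra tangency forces an additional exceptional curve to enter the negative part at an intermediate $v$), so the integrals in Theorems \ref{AZ1_thm} and \ref{AZ2_thm} split into many more pieces than in Propositions \ref{112109_proposition}--\ref{112103_proposition}, and I expect several auxiliary weighted blowups (as in Steps 1--10 of Proposition \ref{112103_proposition}) plus flops to make every positive part semiample. Carefully tracking the $\Q$-coefficients through these flops, and checking that the minimizing point is exactly the one dictated by the inflection, is the delicate part; everything else is bookkeeping with Corollary \ref{reduction_corollary}.
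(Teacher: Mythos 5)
Your overall scheme (two-sided estimate, refinement by a divisor with center $q$, then inductive application of Corollary~\ref{reduction_corollary}) is the right shape, but the concrete plan in your second paragraph commits to the wrong extraction and would fail.

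You propose to take $Y$ to be the exceptional divisor $D\simeq\pr^2$ of the \emph{ordinary} blowup of $X$ at $q$ and then bound $\delta(D;V^D_{\bullet,\bullet})$ from below by $64/63$. This cannot work: the paper records (Remark~\ref{6463_remark}) that for the ordinary blowup one has $A_X(F^0)/S_X(F^0)=28/27$ but $\delta\left(F^0;V^{F^0}_{\bullet,\bullet}\right)\leq 112/117<1<64/63$, so the lower bound produced by Corollary~\ref{reduction_corollary} via $F^0$ is strictly less than $1$ and therefore useless. (Your numerics are also off: $A_X(F^0)=3$ and $S_X(F^0)=81/28$, not ``slightly less than $63/22$''; the ratio is $28/27$, not $64/63$.) The divisor that actually works is a \emph{weighted} blowup of $X$ at $q$ with weights $(1,1,2)$, realized in the paper as a plt-blowup $\rho_1\colon X_1\to X$ with exceptional divisor $R^1\simeq\pr(1,1,2)$; it satisfies $A_X(R^1)=4$, $S_X(R^1)=63/16$ (computed through a long chain of flops and divisorial contractions, Steps 3--9), hence $A_X(R^1)/S_X(R^1)=64/63$. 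You do gesture at ``a weighted blowup of $X$ at $q$ adapted to the flag'' in your first paragraph, but you never commit to the correct weights or to the correct flag (which uses the common tangent direction of $l$ and $s_0$ --- equivalently the trace of $E_3$ on the exceptional $\pr^2$ --- not $E_2\supset E_2\cap E_3$), and your concrete plan reverts to the ordinary blowup.

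A second, smaller misexpectation: you predict that the minimizing divisor will be found on the refined surface, with $\delta(D;V^D_{\bullet,\bullet})$ exactly $64/63$. In fact the minimum $64/63$ is achieved by $R^1$ itself; on the surface $R^1$ the paper proves the much stronger bound $\delta\left(R^1;V^{\tilde R}_{\bullet,\bullet}\right)=112/109>64/63$ (Steps 12--18). So the delicate part of the argument is not locating an equality case on the refined surface, but constructing the small $\Q$-factorial modifications of $X_1$ needed to compute $S_X(R^1)$, plus the lengthy chamber-by-chamber integration on the twelve-curve configuration in $\tilde R$. Without identifying $R^1$ (or a divisor Veronese-equivalent to it), the inductive bound cannot reach $64/63$.
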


\begin{proof}
The following proof is divided into 18 numbers of steps since the proof is very long.

\noindent\underline{\textbf{Step 1}}\\
Let $Q\in |H_1|$ be the divisor with $q\in Q$, and let $T\in |H_2|$ be the 
pull-back of the tangent line of $\sC\subset \pr^2$ at $q^\sC\in\sC$. 

\begin{claim}\label{6463_claim}
\begin{enumerate}
\renewcommand{\theenumi}{\arabic{enumi}}
\renewcommand{\labelenumi}{(\theenumi)}
\item\label{6463_claim1}
The divisor $Q$ is singular and the point $q\in Q$ is the intersection of the 
$2$ negative curves. 
One is $l\subset Q$ with $\left(l^{\cdot 2}\right)=-1/2$. 
\item\label{6463_claim2}
Let us set 
\[
T^V:=\left(\sigma_1\right)_*T\left(\simeq\pr_{\pr^1}(\sO\oplus\sO(1))\right), 
\]
and let $s_0^V\subset T^V$ be the $(-1)$-curve. The divisor $T$ has one 
$A_2$-singularity and the minimal resolution of $T$ is obtained by the composition of: 
\begin{enumerate}
\renewcommand{\theenumii}{\roman{enumii}}
\renewcommand{\labelenumii}{(\theenumii)}
\item\label{6463_claim21}
the blowup of $T^V$ at the intersection of $l^V$ and $s_0^V$, 
\item\label{6463_claim22}
the blowup at a point in the exceptional divisor of the morphism \eqref{6463_claim21} 
which does not pass through the strict transforms of $l^V$, $s_0^V$, and 
\item\label{6463_claim23}
the blowup at a point in the exceptional divisor of the morphism \eqref{6463_claim22} 
which does not pass through the strict transform of the 
exceptional divisor of the morphism \eqref{6463_claim21}.
\end{enumerate}
Moreover, the strict transform $s_0\subset X$ of $s_0^V$ is the negative curve 
in $Q$ with $\left(s_0^{\cdot 2}\right)_Q=-1$, and the exceptional divisor of 
the morphism \eqref{6463_claim23} corresponds to the $(-1)$-curve in $E_2$. 
\end{enumerate}
\end{claim}

\begin{proof}[Proof of Claim \ref{6463_claim}]
\eqref{6463_claim1}
If $Q$ is smooth, then $Q$ is isomorphic to a del Pezzo surface of degree $7$,  
$E_3|_Q$ is the disjoint union of two $(-1)$-curves on $Q$, the morphism 
$Q\to\left(\sigma_1\right)_*Q$ is an isomorphism, and the morphism 
$\pi^V\circ\sigma_1\colon Q\to \pr^2$ is birational and contracts $E_3|_Q$. 
On the other hand, $Q|_{E_2}$ is the fiber of the $\pr^1$-fibration $E_2/\pr^1$ 
passing through the point $q$. Moreover, $E_2|_Q$ is the $(-1)$-curve on $Q$ 
such that $\Supp\left(E_2|_Q\right)\not\subset\Supp\left(E_3|_Q\right)$. 
Thus, the image of $E_2|_Q$ in $\pr^2$ is a line passing through $\sC$ at 
$2$ points. This contradicts to the assumption in Remark \ref{infl_remark}
\eqref{infl_remark2}. Thus $Q$ is singular. Since $s_0:=E_2|_Q$ is a $(-1)$-curve 
in $Q$ and a fiber of $E_2/\pr^1$, and since $l\subset Q$, the remaining 
assertions are trivial. 

\eqref{6463_claim2}
Set $T^P:=\left(\sigma^V\right)_*T^V$. Then the morphism $T\to T^P$ is the 
blowup of the plane $T^P$ along the subscheme $\sC^P\cap T^P$. 
From the assumption, we have 
$\left(\sC^P\cap T^P\right)_{\operatorname{red}}=\{p^P\}$. 
For a general quadric $Q_{\gen}^P\subset P$ with $\sC^P\subset Q_{\gen}^P$, 
the scheme-theoretic intersection $Q_{\gen}^P\cap T^P$ is a smooth conic. 
(Indeed, if not, for any quadric $Q^P\subset P$ with $\sC^P\subset Q^P$, 
the intersection $Q^P\cap T^P$ is a union of two lines in $T^P$ passing through 
$p^P$. This implies that $E_3|_{Q_{\gen}}\subset T$ for a general 
$Q_{\gen}\in|H_1|$ since $E_3|_{Q_{\gen}}\subset Q_{\gen}$ is a disjoint union of 
two $(-1)$-curves. However, this implies that $E_3\subset T$, a contradiction.)
Thus the scheme $\sC^P\cap T^P\subset T^P$ is of length $4$ and is contained 
in a smooth conic $Q_{\gen}^P\cap T^P$. 
Thus the assertions follows from \cite[Lemmas 2.3 and 2.4]{NdP}. 
\end{proof}

\noindent\underline{\textbf{Step 2}}\\
Let 
\[
\rho_0\colon X_0\to X
\]
be the blowup of $X$ at $q\in X$ and let $F^0\subset X_0$ be the exceptional 
divisor. Let us set 
$Q^0:=(\rho_0)^{-1}_*Q$, 
$T^0:=(\rho_0)^{-1}_*T$, 
$E_2^0:=(\rho_0)^{-1}_*E_2$, and 
$l^0:=(\rho_0)^{-1}_*l$, 
$s_0^0:=(\rho_0)^{-1}_*s_0$, where $s_0\subset X$ is as in Claim \ref{6463_claim}. 
By Claim \ref{6463_claim}, the divisor $T^0$ is the minimal resolution of $T$. 
Let $t_1^0$, $t_2^0$, $t_3^0\subset T^0$ be the exceptional divisors of 
the morphisms \eqref{6463_claim21}, \eqref{6463_claim22}, \eqref{6463_claim23} 
in Claim \ref{6463_claim} \eqref{6463_claim2}, respectively. 
The morphism $E_2^0\to E_2$ is the blowup at $q\in E_2$. Let $e_2^0\subset E_2^0$ 
be the exceptional divisor. 
Then, 
\[
t_1^0,\,\,t_2^0,\,\,e_2^0\subset F^0\left(\simeq\pr^2\right)
\]
are mutually distinct lines. 
Moreover, since $t_1^0\subset F^0$ is the line passing through 
$s_0^0\cap F^0$ and $l^0\cap F^0$, the morphism $Q^0\to Q$ is the blowup at 
$q\in Q$ such that the exceptional divisor is nothing but the curve $t_1^0$. 
Since $l\subset E_3$ and $E_3$ tangents to $s_0$, we get 
$\left(\left(\rho_0\right)^{-1}_*E_3\right)|_{F^0}=t_1^0$. 
Note that 
\begin{eqnarray*}
\left(\rho_0\right)^*Q=Q^0+F^0,\quad
\left(\rho_0\right)^*T=T^0+2F^0,\quad
\left(\rho_0\right)^*E_2=E_2^0+F^0,\quad
-K_X\sim Q+2T+E_2.
\end{eqnarray*}
In particular, we get 
\[
\left(\rho_0\right)^*(-K_X)\sim Q^0+2T^0+E_2^0+6F^0.
\]

\noindent\underline{\textbf{Step 3}}\\
Let 
\[
\psi_{11}\colon X_{11}\to X_0
\]
be the blowup along the curve $t_1^0\subset X_0$ and let $R^{11}\subset X_{11}$ 
be the exceptional divisor. Since $\sN_{t_1^0/X_0}
\simeq\sO_{\pr^1}(1)\oplus\sO_{\pr^1}(-1)$, the divisor $R^{11}$ is 
isomorphic to $\pr_{\pr^1}(\sO\oplus\sO(2))$. 
Set 
$Q^{11}:=\left(\psi_{11}\right)^{-1}_*Q^0$, 
$T^{11}:=\left(\psi_{11}\right)^{-1}_*T^0$, 
$E_2^{11}:=\left(\psi_{11}\right)^{-1}_*E_2^0$, 
$F^{11}:=\left(\psi_{11}\right)^{-1}_*F^0$. 
Note that $A_X\left(R^{11}\right)=4$. Moreover, since 
\begin{eqnarray*}
\left(\rho_0\circ\psi_{11}\right)^*Q=Q^{11}+F^1+2R^{11}, \quad
\left(\rho_0\circ\psi_{11}\right)^*T=T^{11}+2F^1+3R^{11}, \\
\left(\rho_0\circ\psi_{11}\right)^*E_2=E_2^{11}+F^1+R^{11}, \quad
\left(\psi_{11}\right)^*F^0=F^{11}+R^{11}, 
\end{eqnarray*}
we have $\tau_X\left(R^{11}\right)\geq 9$. 

The morphisms 
\[
\psi_{11}|_{Q^{11}}\colon Q^{11}\to Q^0, \quad
\psi_{11}|_{T^{11}}\colon T^{11}\to T^0, \quad
\psi_{11}|_{F^{11}}\colon F^{11}\to F^0
\] 
are isomorphisms. Let 
\[
r_1^{11}\subset Q^{11},\quad r_2^{11}\subset T^{11},\quad s_R^{11}\subset F^{11}
\] 
be the inverse images of $t_1^0$, respectively. 
Moreover, let us set 
$l^{11}:=\left(\psi_{11}\right)^{-1}_*l^0$, 
$s_0^{11}:=\left(\psi_{11}\right)^{-1}_*s_0^0$, 
$t_2^{11}:=\left(\psi_{11}\right)^{-1}_*t_2^0$, 
$t_3^{11}:=\left(\psi_{11}\right)^{-1}_*t_3^0$, 
$e_2^{11}:=\left(\psi_{11}\right)^{-1}_*e_2^0$. 
The morphism $E_2^{11}\to E_2^0$ is the blowup of at the point $s_0^0\cap e_2^0$. 
Let $f_2^{11}\subset E_2^{11}$ be the exceptional divisor. 
On the surface $R^{11}\simeq\pr_{\pr^1}(\sO\oplus\sO(2))$, 
\begin{itemize}
\item
$r_1^{11}\subset R^{11}$ is a section of $R^{11}/t_1^0$ with 
$\left(r_1^{11}\right)_{R^{11}}^{\cdot 2}=2$, 
\item
$r_2^{11}\subset R^{11}$ is a section of $R^{11}/t_1^0$ with 
$\left(r_2^{11}\right)_{R^{11}}^{\cdot 2}=4$, 
\item
$f_2^{11}\subset R^{11}$ is a fiber of $R^{11}/t_1^0$, and 
\item
$s_R^{11}\subset R^{11}$ is the $(-2)$-curve on $R^{11}$. 
\end{itemize}
Note that $\left(r_1^{11}\cdot r_2^{11}\right)_{R^{11}}=3$ and $T|_Q=2l+s_0$. 
Thus we have: 
\begin{itemize}
\item
$r_1^{11}$ and $r_2^{11}$ on $R^{11}$ meet at the two points $l^{11}\cap R^{11}$ and 
$s_0^{11}\cap R^{11}$. Moreover, we have
\[
\length_{l^{11}\cap R^{11}}\left(r_1^{11}\cap r_2^{11}\right)=2,\quad 
\length_{s_0^{11}\cap R^{11}}\left(r_1^{11}\cap r_2^{11}\right)=1, 
\]
\item
$r_2^{11}$ and $s_R^{11}$ on $R^{11}$ meet transversely at the point 
$t_2^{11}\cap R^{11}$, and 
\item
$f_2^{11}$ on $R^{11}$ passes through the points $s_0^{11}\cap R^{11}$ 
and $e_2^{11}\cap R^{11}$. 
\end{itemize}
Let $f_R^{11}$, $f_S^{11}\subset R^{11}$ be the fibers of $R^{11}/t_1^0$ 
passing through the points $l^{11}\cap R^{11}$, $t_2^{11}\cap R^{11}$, 
respectively. 
Moreover, let $r_3^{11}\subset R^{11}$ be the unique section of $R^{11}/t_1^0$ 
with $\left(r_3^{11}\right)_{R^{11}}^{\cdot 2}=2$ such that $r_2^{11}$ and $r_3^{11}$
on $R^{11}$ meet at the point $l^{11}\cap R^{11}$ of length $3$. 

We remark that 
\begin{eqnarray*}
T^{11}|_{Q^{11}}=2l^{11}+s_0^{11},\quad
E_2^{11}|_{Q^{11}}=s_0^{11},\quad
F^{11}\cap Q^{11}=\emptyset, \\
E_2^{11}|_{T^{11}}=s_0^{11}+t_3^{11},\quad
F^{11}|_{T^{11}}=t_2^{11},\quad
F^{11}|_{E_2^{11}}=e_2^{11}.
\end{eqnarray*}
Let $Q_{\gen}\in |H_1|$ be a general member. Then 
\[
\frac{2}{3}\left(\left(\rho_0\right)^*(-K_X)-F^0\right)\sim_\Q
-\left(K_{X_0}+\frac{1}{3}\left(\rho_0\right)^*Q_{\gen}+\frac{2}{3}T^0
+\frac{1}{3}E_2^0+\frac{1}{3}F^0\right)
\]
is nef and big. Thus, the pair 
\[
\left(X_{11}, \frac{1}{3}\left(\rho_0\circ\psi_{11}\right)^*Q_{\gen}
+\frac{2}{3}T^{11}+\frac{1}{3}E_2^{11}+\frac{1}{3}F^{11}\right)
\]
is a klt pair with the anti-log canonical divisor nef and big. 
Therefore, the variety $X_{11}$ is a Mori dream space 
by \cite[Corollary 1.3.2]{BCHM}.

\noindent\underline{\textbf{Step 4}}\\
We can contract the divisor $F^{11}\subset X_{11}$ to a point and get the morphism 
$\phi_{11}\colon X_{11}\to X_1$ with $X_1$ normal projective and $\Q$-factorial. 
Let 
\[\xymatrix{
 & X_{11} \ar[dl]_{\psi_{11}} \ar[dr]^{\phi_{11}} & \\
X_0 \ar[dr]_{\rho_0} & & X_1 \ar[dl]^{\rho_1} \\
 & X & 
}\]
be the induced diagram. The morphism $\rho_1$ is a weighted blowup of the 
weights $(1,1,2)$, and $R^1:=\left(\phi_{11}\right)_*R^{11}\simeq\pr(1,1,2)$ is the 
exceptional divisor of $\rho_1$. Note that 
$\left(\phi_{11}\right)^*R^1=R^{11}+(1/2)F^{11}$. Let us set 
$Q^1:=\left(\phi_{11}\right)_*Q^{11}$, 
$T^1:=\left(\phi_{11}\right)_*T^{11}$, 
$E_2^1:=\left(\phi_{11}\right)_*E_2^{11}$. 
Then we get 
\[
\left(\rho_1\right)^*(-K_X)-uR^1\sim_\R Q^1+2T^1+E_2^1+(9-u)R^1
\]
and 
\[
\left(\phi_{11}\right)^*\left(\left(\rho_1\right)^*(-K_X)-uR^1\right)
\sim_\R Q^{11}+2T^{11}+E_2^{11}+\frac{12-u}{2}F^{11}+(9-u)R^{11}
\]
for any $u\in \R$. Note that 
$\left(\phi_{11}\right)^*Q^1=Q^{11}$, 
$\left(\phi_{11}\right)^*T^1=T^{11}+(1/2)F^{11}$, 
$\left(\phi_{11}\right)^*E_2^1=E_2^{11}+(1/2)F^{11}$. 
Therefore, for any $u\in(0,9)$ and for any irreducible curve 
$C^1\subset X_1$ with $C^1\not\subset Q^1\cup T^1\cup E_2^1\cup R^1$, we have 
$\left(\left((\rho_1)^*(-K_X)-uR^1\right)\cdot C^1\right)>0$. 

On $X_{11}$, we get the following intersection numbers: 
\begin{center}
\renewcommand\arraystretch{1.4}
\begin{tabular}{c|ccccc|c}
$\cdot$ & $(\phi_{11})^*Q^1$ & $(\phi_{11})^*T^1$ & 
$(\phi_{11})^*E_2^1$ & $(\phi_{11})^*R^1$ & 
$F^{11}$ & 
{\tiny$Q^{11}+2T^{11}+E_2^{11}+\frac{12-u}{2}F^{11}+(9-u)R^{11}$} \\ \hline
{\tiny $f_2^{11},f_R^{11},f_S^{11}$} 
& $1$ & $3/2$ & $1/2$ & $-1/2$ & $1$& $u/2$\\
{\tiny$s_R^{11},e_2^{11},t_2^{11}$} 
& $0$ & $0$ & $0$ & $0$ & $-2$ & $0$\\
{\tiny $r_1^{11},r_3^{11}$} 
& $2$ & $3$ & $1$ & $-1$ & $0$ & $u$\\
{\tiny $r_2^{11}$} 
& $3$ & $9/2$ & $3/2$ & $-3/2$ & $1$& $3u/2$\\
{\tiny $l^{11}$} 
& $-2$ & $-3$ & $0$ & $1$ & $0$& $1-u$\\
{\tiny $s_0^{11}$} 
& $-2$ & $-2$ & $-2$ & $1$ & $0$& $1-u$\\
{\tiny $t_3^{11}$} 
& $0$ & $-3/2$ & $-1/2$ & $1/2$ & $1$& $(2-u)/2$
\end{tabular}
\end{center}
Thus, for $u\in[0,1]$, the $\R$-divisor $(\rho_1)^*(-K_X)-uR^1$ is nef and 
\[
\vol_{X_1}\left((\rho_1)^*(-K_X)-uR^1\right)=28-\frac{u^3}{2}.
\]
(We note that $-R^1|_{R^1}\sim_\Q\sO_{\pr(1,1,2)}(1)$.)
Moreover, for $u=1$, the divisor $(\rho_1)^*(-K_X)-R^1$ contracts 
the strict transforms of $l^{11}$ and $s_0^{11}$ on $X_1$. 
Note that $l^{11}$, $s_0^{11}$ and $F^{11}$ are mutually disjoint, and 
\begin{eqnarray*}
\sN_{l^{11}/X_{11}}\simeq\sO_{\pr^1}(-1)\oplus\sO_{\pr^1}(-3), \quad
\sN_{s_0^{11}/X_{11}}\simeq\sO_{\pr^1}(-2)^{\oplus 2}. 
\end{eqnarray*}
Since $X_{11}$ is a Mori dream space, $X_1$ is also a Mori dream space. 
Moreover, the morphism $\rho_1$ is a plt-blowup of $X$ and 
$(K_{X_1}+R^1)|_{R^1}=K_{R^1}$
holds.

\noindent\underline{\textbf{Step 5}}\\
Let 
\[
\phi_{1112}\colon X_{1112}\to X_{11}
\]
be the blowup along $l^{11}\sqcup s_0^{11}\subset X_{11}$. Let 
$\E_{l,1}^{1112}$, $\E_s^{1112}\subset X_{1112}$ be the exceptional divisors 
over $l^{11}$, $s_0^{11}$, respectively. Note that 
$\E_{l,1}^{1112}\simeq\pr_{\pr^1}(\sO\oplus\sO(2))$ and 
$\E_s^{1112}\simeq\pr^1\times\pr^1$. 
Let $l^{1112}\subset\E_{l,1}^{1112}$ be the $(-2)$-curve. Then, we have 
$\sN_{l^{1112}/X_{1112}}\simeq\sO_{\pr^1}(-1)\oplus\sO_{\pr^1}(-2)$. 
Let 
\[
\phi_{112}\colon X_{112}\to X_{1112}
\]
be the blowup along $l^{1112}\subset X_{1112}$ and let $\E_{l,2}^{112}\subset X_{112}$ 
be the exceptional divisor. Then we have 
$\E_{l,2}^{112}\simeq\pr_{\pr^1}(\sO\oplus\sO(1))$. 
Let $l^{112}\subset \E_{l,2}^{112}$ be the $(-1)$-curve. Then we have 
$\sN_{l^{112}/X_{112}}\simeq\sO_{\pr^1}(-1)^{\oplus 2}$. Moreover, 
$l^{112}$ and the strict transform of $\E_{l,1}^{1112}$ on $X_{112}$ are disjoint. 
We can consider Atiyah's flop
\[
X_{112}\xleftarrow{\phi_{12}}X_{12}\xrightarrow{\phi_{12}^+}X_{122}
\]
from $l^{112}\subset X_{112}$. Let $\E_{l,3}^{12}\subset X_{12}$ be the 
exceptional divisor of $\phi_{12}$, and let $l^{+122}\subset X_{122}$ be the 
image of $\E_{l,3}^{12}$. Note that $X_{122}$ is a complex manifold, 
$l^{+122}\subset X_{122}$ is a smooth rational curve and $\phi^+_{12}$ is the 
blowup along $l^{+122}\subset X_{122}$. We set 
$\phi_1:=\phi_{11}\circ\phi_{1112}\circ\phi_{112}\circ\phi_{12}$. 
Let us set 
\begin{eqnarray*}
\E_s^{12}:=\left(\phi_{112}\circ\phi_{12}\right)^{-1}_*\E_s^{1112}, \quad
\E_{l,1}^{12}:=\left(\phi_{112}\circ\phi_{12}\right)^{-1}_*\E_{l,1}^{1112}, \\
\E_{l,2}^{12}:=\left(\phi_{12}\right)^{-1}_*\E_{l,2}^{112}, \quad
F^{12}:=\left(\phi_{1112}\circ\phi_{112}\circ\phi_{12}\right)^{-1}_*F^{11}.
\end{eqnarray*}
Since $\E_{l,2}^{122}:=\left(\phi_{12}^+\right)_*\E_{l,2}^{12}\simeq\pr^2$ with 
$-\E_{l,2}^{122}|_{\E_{l,2}^{122}}\simeq\sO_{\pr^2}(2)$, we can analytically 
contract $\E_{l,2}^{122}$ to a point. We denote the contraction by 
\[
\phi_{122}\colon X_{122}\to X_{1222}. 
\]
Then $\E_{l,1}^{1222}:=\left(\phi_{122}\circ\phi_{12}^+\right)_*\E_{l,1}^{12}
\simeq\pr(1,1,2)$ is a prime $\Q$-Cartier divisor in the normal analytic space 
$X_{1222}$ with $-\E_{l,1}^{1222}|_{\E_{l,1}^{1222}}\sim_\Q\sO_{\pr(1,1,2)}(3)$. 
Thus, by \cite[Proposition 7.4]{HP}, we can analytically contract 
$\E_{l,1}^{1222}$ to a point. Let 
\[
\phi_{1222}\colon X_{1222}\to X_{22}
\]
be the composition of the contraction of $\E_{l,1}^{1222}$ and the contraction of 
$\E_s^{1222}:=\left(\phi_{122}\circ\phi^+_{12}\right)_*\E_s^{12}$ to $\pr^1$
whose fibration is different from the fibration $\E_s^{1112}/s_0^{11}$. Let 
$s^{+22}\subset X_{22}$ be the image of $\E_s^{1222}$ and let 
$l^{+22}\subset X_{22}$ be the image of the curve $l^{+122}$. 
Finally, let 
\[
\phi_{22}\colon X_{22}\to X_2
\]
be the contraction of 
$F^{22}:=\left(\phi_{1222}\circ\phi_{122}\circ\phi^+_{12}\right)_*F^{12}$ 
to a point. (Note that $F^{11}$ and $l^{11}\sqcup s_0^{11}$ are disjoint.)
Set $\phi^+_1:=\phi_{22}\circ\phi_{1222}\circ\phi_{122}\circ\phi_{12}^+$ and 
$\chi_1:=\phi^+_1\circ(\phi_1)^{-1}$. We get the following commutative diagram: 
\[\xymatrix{
X_{112} \ar[d]_{\phi_{112}}
& X_{12} \ar[l]_{\phi_{12}} \ar[r]^{\phi_{12}^+} \ar[dddl]^{\phi_1} \ar[dddr]_{\phi_1^+}
& X_{122} \ar[d]^{\phi_{122}} \\
X_{1112} \ar[d]_{\phi_{1112}} & & X_{1222} \ar[d]^{\phi_{1222}} \\
X_{11} \ar[d]_{\phi_{11}} & & X_{22} \ar[d]^{\phi_{22}} \\
X_1 \ar@{-->}[rr]_{\chi_1} & & X_2. 
}\]

\noindent\underline{\textbf{Step 6}}\\
Set $Q^{1112}:=(\phi_{1112})^{-1}_*Q^{11}$. Then $Q^{1112}$ is isomorphic to 
the minimal resolution of $Q^{11}$ and the strict transform 
$\left(\phi_{1112}|_{Q^{1112}}\right)^{-1}_* l^{11}$ is equal to $l^{1112}$. 
Let $h_{1,Q}^{1112}\subset Q^{1112}$ be the exceptional divisor over $Q^{11}$. 
Then we have $\E_{l,1}^{1112}|_{Q^{1112}}=l^{1112}+h_{1,Q}^{1112}$. 
Set $Q^{112}:=(\phi_{112})^{-1}_*Q^{1112}$ and $Q^{12}:=(\phi_{12})^{-1}_*Q^{112}$. 
Then we have $Q^{112}\simeq Q^{1112}$, and the curves 
$\left(\phi_{112}|_{Q^{112}}\right)^{-1}_* l^{1112}$ and 
$\left((\phi_{112})^{-1}_*\E_{l,1}^{1112}\right)|_{\E_{l,2}^{112}}$ on $\E_{l,2}^{112}$ 
meet transversally 
at one point in $\left(\phi_{112}|_{Q^{112}}\right)^{-1}_* h_{1,Q}^{1112}$. 
Moreover, $Q^{112}$ and $l^{112}$ are mutually disjoint. In particular,  we have 
$Q^{12}\simeq Q^{112}$. Set 
\[
l_Q^{12}:=\left(\left(\phi_{112}\circ\phi_{12}\right)|_{Q^{12}}\right)^{-1}_* l^{1112}, 
\quad
s_{0,Q}^{12}:=\E_s^{12}|_{Q^{12}}, \quad
h_{1,Q}^{12}:=\left(\left(\phi_{112}\circ\phi_{12}\right)|_{Q^{12}}\right)^{-1}_* 
h_{1,Q}^{1112}.
\]
Let us set $Q^2:=(\phi_1^+)_*Q^{12}\subset X_2$ and 
$Q^{22}:=(\phi_{22})^{-1}_*Q^2\subset X_{22}$. 
Then $Q^2$ is obtained by the 
contractions of the curves $h_{1,Q}^{12}\cup l_Q^{12}$ and $s_{0,Q}^{12}$. 
Thus we have $Q^{22}\simeq Q^2\simeq\pr(1,2,3)$. Moreover, we have
\begin{eqnarray*}
(\phi_1)^*Q^1&=&Q^{12}+\E_{l,1}^{12}+2\E_{l,2}^{12}+2\E_{l,3}^{12}+\E_s^{12},\\
(\phi_1^+)^*Q^2&=&Q^{12}+\frac{1}{3}\E_{l,1}^{12}+\frac{2}{3}\E_{l,2}^{12}.
\end{eqnarray*}

Set $T^{1112}:=(\phi_{1112})^{-1}_*T^{11}$. Then $T^{1112}\simeq T^{11}$ and 
the curve $\left(\phi_{1112}|_{T^{1112}}\right)^{-1}_*l^{11}$ is equal to $l^{1112}$. 
Set $T^{112}:=(\phi_{112})^{-1}_*T^{1112}$ and $T^{12}:=(\phi_{12})^{-1}_*T^{112}$. 
Then we have $T^{12}\simeq T^{112}\simeq T^{1112}$. Moreover, the curve 
$\left(\phi_{112}|_{T^{112}}\right)^{-1}_*l^{1112}$ is equal to $l^{112}$. 
Set $l_T^{12}:=\E_{l,3}^{12}|_{T^{12}}$ and $s_{0,T}^{12}:=\E_s^{12}|_{T^{12}}$. 
The morphism 
$T^{12}\to T^{22}:=\left(\phi_{1222}\circ\phi_{122}\circ\phi_{12}^+\right)_*T^{12}$ is 
the contractions of the curves $l_T^{12}$ and $s_{0,T}^{12}$. Moreover, 
the morphism $T^{22}\to T^2:=(\phi^+_1)_*T^{12}$ is the contraction of the 
strict transform of $t_2^{11}$. We get 
\begin{eqnarray*}
(\phi_1)^*T^1&=&T^{12}+\frac{1}{2}F^{12}
+\E_{l,1}^{12}+2\E_{l,2}^{12}+3\E_{l,3}^{12}+\E_s^{12},\\
(\phi_1^+)^*T^2&=&T^{12}+\frac{1}{2}F^{12}.
\end{eqnarray*}

Set $E_2^{12}:=(\phi_1)^{-1}_*E_2^1$ and $E_2^2:=(\phi_1^+)_*E_2^{12}$. Note that 
$E_2^1$ and $l^{11}$ are disjoint. Thus we have $E_2^{12}\simeq E_2^1$. 
Set $s_{0,E_2}^{12}:=\E_s^{12}|_{E_2}$. Moreover, the morphism 
\[
E_2^{12}\to E_2^{22}:=\left(\phi_{1222}\circ\phi_{122}\circ\phi_{12}^+\right)_*E_2^{12}
\]
is the contraction of the curve $s_{0,E_2}^{12}\subset E_2^{12}$, and the morphism 
$E_2^{22}\to E_2^2$ is the contraction of the strict transform of $e_2^{11}$. 
We get 
\begin{eqnarray*}
(\phi_1)^*E_2^1&=&E_2^{12}+\frac{1}{2}F^{12}+\E_s^{12},\\
(\phi_1^+)^*E_2^2&=&E_2^{12}+\frac{1}{2}F^{12}.
\end{eqnarray*}

Set $R^{1112}:=(\phi_{1112})^{-1}_*R^{11}$. Then $R^{1112}\to R^{11}$ is the blowup 
along the (reduced) points $l^{11}\cap R^{11}$ and $s_0^{11}\cap R^{11}$. 
Let $h_1^{1112}$, $s^{+1112}\subset R^{1112}$ be the exceptional divisors 
over $l^{11}\cap R^{11}\in R^{11}$, $s_0^{11}\cap R^{11}\in R^{11}$, 
respectively. Note that $h_1^{1112}$ and $h_{1,Q}^{1112}$ are disjoint fibers 
of $\E_{l,1}^{1112}/l^{11}$. 
Moreover, for any $1\leq i\leq 3$, the curve 
$\left(\phi_{1112}|_{R^{1112}}\right)^{-1}_*r_i^{11}$ contains the point 
$l^{1112}\cap R^{1112}$. Let us set $R^{112}:=(\phi_{112})^{-1}_*R^{1112}$. Then 
$R^{112}\to R^{1112}$ is the blowup at the (reduced) point $l^{1112}\cap R^{1112}$. 
Let $h_2^{112}\subset R^{112}$ be the exceptional divisor over $R^{1112}$. 
Note that the curves
\[
\left(\left(\phi_{1112}\circ\phi_{112}\right)|_{R^{112}}\right)^{-1}_*r_2^{11}
\text{ and }
\left(\left(\phi_{1112}\circ\phi_{112}\right)|_{R^{112}}\right)^{-1}_*r_3^{11}
\]
transversally meet at the point $l^{112}\cap R^{112}$. 
Set $R^{12}:=(\phi_{12})^{-1}_*R^{112}$. Then $R^{12}\to R^{112}$ is the blowup 
at the (reduced) point $l^{112}\cap R^{112}$. Let $l^{+12}\subset R^{12}$ be the 
exceptional divisor over $R^{112}$. Let 
\[
h_1^{12}, h_2^{12}, s^{+12}, s_R^{12}, f_2^{12}, f_R^{12}, f_S^{12}, r_1^{12}, r_2^{12}, r_3^{12}
\subset R^{12}
\]
be the strict transforms of the curves
\[
h_1^{1112}, h_2^{112}, s^{+1112}, s_R^{11}, f_2^{11}, f_R^{11}, f_S^{11}, 
r_1^{11}, r_2^{11}, r_3^{11},
\]
respectively. The morphism 
\[
R^{12}\to R^{22}:=\left(\phi_{1222}\circ\phi_{122}\circ\phi_{12}^+\right)_*R^{12}
\]
is the contractions of the curves $h_1^{12}$ and $h_2^{12}$. Moreover, the morphism 
$R^{22}\to R^2:=(\phi^+_1)_*R^{12}$ is the contraction of the strict transform of 
$s_R^{12}$. We get 
\begin{eqnarray*}
(\phi_1)^*R^1&=&R^{12}+\frac{1}{2}F^{12},\\
(\phi_1^+)^*R^2&=&R^{12}+\frac{1}{2}F^{12}+
\frac{1}{3}\left(\E_{l,1}^{12}+2\E_{l,2}^{12}+3\E_{l,3}^{12}\right)+\frac{1}{2}\E_s^{12}.
\end{eqnarray*}

Let $t_2^{12}$, $t_3^{12}$, $e_2^{12}\subset X_{12}$ be the strict transforms of 
$t_2^{11}$, $t_3^{11}$, $e_2^{11}\subset X_{11}$, respectively. 
On $X_{12}$, we get the following intersection numbers: 
\begin{center}
\renewcommand\arraystretch{1.4}
\begin{tabular}{c|cccc|c|cccc}
$\cdot$ & $\E_{l,1}^{12}$ & $\E_{l,2}^{12}$ & 
$\E_{l,3}^{12}$ & $\E_s^{12}$ & $F^{12}$ & $(\phi_1^+)^*Q^2$ & 
$(\phi_1^+)^*T^2$ & $(\phi_1^+)^*E_2^2$ & $(\phi_1^+)^*R^2$\\ \hline
$f_2^{12}$ & $0$ & $0$ & $0$ & $1$ & $1$ & $0$ & $1/2$ & $-1/2$ & $0$ \\
$f_R^{12}$ & $1$ & $0$ & $0$ & $0$ & $1$ & $1/3$ & $1/2$ & $1/2$ & $-1/6$ \\
$f_S^{12}$ & $0$ & $0$ & $0$ & $0$ & $1$ & $1$ & $3/2$ & $1/2$ & $-1/2$ \\
{\tiny $s_R^{12},e_2^{12},t_2^{12}$} & 
$0$ & $0$ & $0$ & $0$ & $-2$ & $0$ & $0$ & $0$ & $0$ \\
$r_1^{12}$ & $0$ & $1$ & $0$ & $1$ & $0$ & $-1/3$ & $0$ & $0$ & $1/6$ \\
$r_2^{12}$ & $0$ & $0$ & $1$ & $1$ & $1$ & $0$ & $1/2$ & $1/2$ & $0$ \\
$r_3^{12}$ & $0$ & $0$ & $1$ & $0$ & $0$ & $0$ & $0$ & $1$ & $0$ \\
$l^{+12}$ & $0$ & $1$ & $-1$ & $0$ & $0$ & $2/3$ & $1$ & $0$ & $-1/3$ \\
$s^{+12}$ & $0$ & $0$ & $0$ & $-1$ & $0$ & $1$ & $1$ & $1$ & $-1/2$ \\
$t_3^{12}$ & $0$ & $0$ & $0$ & $0$ & $1$ & $0$ & $-3/2$ & $-1/2$ & $1/2$ 
\end{tabular}
\end{center}
Thus, for $u\in(1,2)$, the $\R$-divisor $Q^2+2T^2+E_2^2+(9-u)R^2$ is ample. 
Hence $X_2$ is projective and $\chi_1$ is a small $\Q$-factorial modification 
of $X_1$. Moreover, for $u=2$, the divisor $Q^2+2T^2+E_2^2+(9-2)R^2$ 
contracts the curve $t_3^2:=(\phi_1^+)_*t_3^{12}$. 
For $u\in[1,2]$ we get 
\begin{eqnarray*}
&&\vol_{X_1}\left((\rho_1)^*(-K_X)-uR^1\right)\\
&=&\left(Q^2+2T^2+E_2^2+(9-u)R^2\right)^{\cdot 3}\\
&=&\left((\phi_1)^*\left(Q^1+2T^1+E_2^1+(9-u)R^1\right)+
\frac{1-u}{3}\left(\E_{l,1}^{12}+2\E_{l,2}^{12}+3\E_{l,3}^{12}\right)
+\frac{1-u}{2}\E_s^{12}\right)^{\cdot 3}\\
&=&28-\frac{u^3}{2}+\frac{7}{12}(u-1)^3.
\end{eqnarray*}

\noindent\underline{\textbf{Step 7}}\\
Let $t_2^{22}$, $t_3^{22}\subset X_{22}$ be the strict transforms of 
$t_2^{12}$, $t_3^{12}\subset X_{12}$, respectively. Since 
$F^{12}\cup t_3^{12}$ and 
$\E_{l,1}^{12}\cup\E_{l,2}^{12}\cup\E_{l,3}^{12}\cup\E_s^{12}$ are mutually disjoint, 
the variety $X_{22}$ is smooth around a neighborhood of $F^{22}\cup t_3^{22}$. 
Note that $\sN_{t_3^{22}/X_{22}}\simeq\sO_{\pr^1}(-1)\oplus\sO_{\pr^1}(-2)$ 
by focusing on $T^{22}$. Let 
\[
\phi_{2223}\colon X_{2223}\to X_{22}
\]
be the blowup along $t_3^{22}\subset X_{22}$ and let $\D_{t,1}^{2223}\subset X_{2223}$ 
be the exceptional divisor. Note that 
$\D_{t,1}^{2223}\simeq\pr_{\pr^1}(\sO\oplus\sO(1))$. 
Let $t_3^{2223}\subset \D_{t,1}^{2223}$ be the $(-1)$-curve. 
Note that $T^{2223}:=(\phi_{2223})^{-1}_*T^{22}\to T^{22}$ is an isomorphism 
and $\left(\phi_{2223}|_{T^{2223}}\right)^{-1}_*t_3^{22}=t_3^{2223}$. 
Thus 
\[F^{2223}:=(\phi_{2223})^{-1}_*F^{22}
\simeq\pr_{\pr^1}(\sO\oplus\sO(1))
\]
and 
$t_2^{2223}:=(\phi_{2223})^{-1}_*t_2^{22}\subset F^{2223}$ is the fiber 
of the $\pr^1$-fibration of $F^{2223}$ with 
$t_2^{2223}\cap t_3^{2223}\neq \emptyset$. 
Since $\sN_{t_3^{2223}/X_{2223}}\simeq\sO_{\pr^1}(-1)^{\oplus 2}$, we get 
Atiyah's flop 
\[
X_{2223}\xleftarrow{\phi_{223}}X_{223}\xrightarrow{\phi_{223}^+}X_{2233}
\]
from $t_3^{2223}\subset X_{2223}$. 
Let $\D_{t,2}^{223}\subset X_{223}$ be the exceptional divisor of $\phi_{223}$ and 
let $t_3^{+2233}\subset X_{2233}$ be the image of $\D_{t,1}^{223}$ on $X_{2233}$. 
Set $\D_{t,1}^{223}:=(\phi_{223})^{-1}_*\D_{t,1}^{2223}$ and 
$\D_{t,1}^{2233}:=(\phi_{223}^+)_*\D_{t,1}^{223}$. 
Then $\D_{t,1}^{2233}\simeq\pr^2$ and 
$-\D_{t,1}^{2233}|_{\D_{t,1}^{2233}}\simeq\sO_{\pr^2}(2)$. Thus we can contract 
$\D_{t,1}^{2233}\subset X_{2233}$ to a point. We denote the contraction by 
\[
\phi_{2233}\colon X_{2233}\to X_{23}. 
\]
Set $\psi_2:=\phi_{2223}\circ\phi_{223}$ and $\psi_2^+:=\phi_{2233}\circ\phi_{223}^+$. 
We get the commutative diagram 
\[\xymatrix{
& X_{223} \ar[dl]_{\phi_{223}} \ar[dr]^{\phi_{223}^+} \ar[ddl]^{\psi_2} 
\ar[ddr]_{\psi^{+}_2} &\\
X_{2223} \ar[d]_{\phi_{2223}} & & X_{2233} \ar[d]^{\phi_{2233}}\\
X_{22}  & & X_{23}.
}\]

Set $t_3^{+223}:=\D_{t,2}^{223}|_{F^{223}}$, where $F^{223}:=(\psi_2)^{-1}_*F^{22}$. 
Note that the analytic space $X_{23}$ is smooth around a neighborhood of 
$t_2^{23}:=(\psi_2^+)_*t_2^{223}$, where $t_2^{223}\subset X_{223}$ 
is the strict transform of $t_2^{2223}\subset X_{2223}$. 
Moreover, we have $\sN_{t_2^{23}/X_{23}}\simeq\sO_{\pr^1}(-1)^{\oplus 2}$. 
Thus we can take Atiyah's flop
\[
X_{23}\xleftarrow{\phi_{233}}X_{233}\xrightarrow{\phi_{233}^+}X_{33}
\]
from $t_2^{23}\subset X_{23}$. Let $\E_t^{233}\subset X_{233}$ be the exceptional 
divisor of $\phi_{233}$ and let $t^{+33}\subset X_{33}$ be the image of $\E_t^{233}$. 
Let us set $F^{23}:=(\psi_2^+)_*F^{223}$, $F^{233}:=(\phi_{233})^{-1}_*F^{23}$ 
and $F^{33}:=(\phi_{233}^+)_*F^{233}$. 
Let $t_3^{+233}\subset X_{233}$ be the strict transform of $t_3^{+223}\subset X_{223}$. 
The divisor $F^{33}\subset X_{33}$ is 
$\Q$-Cartier in the analytic space $X_{33}$ and $F^{33}\simeq\pr(1,1,2)$ with 
$-F^{33}|_{F^{33}}\sim_\Q\sO_{\pr(1,1,2)}(3)$. 
Thus we can contract $F^{33}\subset X_{33}$ to a point and let us denote the 
morphism by 
\[
\phi_{33}\colon X_{33}\to X_{3}.
\]
We set 
\[
\chi_2:= \phi_{33}\circ\phi_{233}^+\circ(\phi_{233})^{-1}\circ\psi^+_2\circ
(\psi_2)^{-1}\circ(\phi_{22})^{-1}\colon X_2\dashrightarrow X_3.
\]

\noindent\underline{\textbf{Step 8}}\\
On $X_{22}$, recall that $Q^{22}$ and $t_3^{22}\cup t_2^{22}$ are mutually disjoint. 
Thus we have $Q^{223}:=(\psi_2)^{-1}_*Q^{22}=(\psi_2)^*Q^{22}=(\psi_2^+)^*Q^{23}$, 
where $Q^{23}:=(\psi_2^+)_*Q^{223}$, and $Q^{233}:=(\phi_{233})^{-1}_*Q^{23}
=(\phi_{233})^*Q^{23}=(\phi_{233}^+)^*Q^{33}$, where 
$Q^{33}:=(\phi_{233}^+)_*Q^{233}\simeq\pr(1,2,3)$. 

As we already observed in Step 7, we have 
$T^{223}:=(\phi_{223})^{-1}_*T^{2223}\simeq T^{22}$. Moreover, 
$T^{223}\to T^{23}:=(\psi^+_2)_*T^{223}$ is the contraction of 
$\D_{t,2}^{223}|_{T^{223}}$. We get 
\begin{eqnarray*}
(\psi_2)^*T^{22}&=&T^{223}+\D_{t,1}^{223}+2\D_{t,2}^{223},\\
(\psi_2^+)^*T^{23}&=&T^{223}.
\end{eqnarray*}
Note that $T^{233}:=(\phi_{233})^{-1}_*T^{23}\simeq T^{23}$ and 
$T^{233}\to T^{33}:=(\phi_{233}^+)_*T^{233}$ is the contraction of the curve 
$\E_t^{233}|_{T^{233}}$. Moreover, we have $T^{33}\simeq\pr(1,1,2)$. 
We get 
\begin{eqnarray*}
(\phi_{233})^*T^{23}&=&T^{233}+\E_t^{233},\\
(\phi^+_{233})^*T^{33}&=&T^{233}.
\end{eqnarray*}

Note that $E_2^{223}:=(\psi_2)^{-1}_*E_2^{22}\simeq E_2^{22}$, and the morphism 
$E_2^{223}\to E_2^{23}:=(\psi^+_2)_*E_2^{223}$ is the contraction of the curve 
$\D_{t,1}^{223}|_{E_2^{223}}$. We get 
\begin{eqnarray*}
(\psi_2)^*E_2^{22}&=&E_2^{223}+\D_{t,1}^{223}+\D_{t,2}^{223},\\
(\psi_2^+)^*E_2^{23}&=&E_2^{223}+\frac{1}{2}\D_{t,1}^{223}.
\end{eqnarray*}
Note that $E_2^{23}$ and $t_2^{23}$ are mutually disjoint. Thus 
$E_2^{233}:=(\phi_{233})^{-1}_*E_2^{23}=(\phi_{233})^*E_2^{23}
=(\phi_{233}^+)^*E_2^{33}$, where $E_2^{33}:=(\phi_{233}^+)_*E_2^{233}$. 

As we already observed in Step 7, we have 
\begin{eqnarray*}
(\psi_2)^*F^{22}&=&F^{223},\\
(\psi_2^+)^*F^{23}&=&F^{223}+\frac{1}{2}\D_{t,1}^{223}+\D_{t,2}^{223},
\end{eqnarray*}
and
\begin{eqnarray*}
(\phi_{233})^*F^{23}&=&F^{233}+\E_t^{233},\\
(\phi^+_{233})^*F^{33}&=&F^{233}.
\end{eqnarray*}

Since $R^{22}$ and $t_3^{22}$ on $X_{22}$ are mutually disjoint, we have 
$R^{223}:=(\psi_2)^{-1}_*R^{22}=(\psi_2)^*R^{22}=(\psi_2^+)^*R^{23}$, where 
$R^{23}:=(\psi_2^+)_*R^{223}$. 
Note that $R^{12}\to R^{22}(\simeq R^{23})$ is the contractions of the curves 
$h_1^{12}$, $h_2^{12}\subset R^{12}$. The morphism 
$R^{233}:=(\phi_{233})^{-1}_*R^{23}\to R^{23}$ is the blowup at the 
(reduced) point corresponds to the intersection $f_S^{12}\cap r_2^{12}\cap s_R^{12}$. 
Let $t^{+233}\subset R^{233}$ be the exceptional divisor over $R^{23}$. The curve 
$t^{+233}$ is equal to $\E_t^{233}|_{R^{233}}$. 
We note that the morphism $R^{233}\to R^{33}:=(\phi^+_{233})_*R^{233}$
is an isomorphism. We get 
\begin{eqnarray*}
(\phi_{233})^*R^{23}&=&R^{233},\\
(\phi^+_{233})^*R^{33}&=&R^{233}+\E_t^{233}.
\end{eqnarray*}

Let 
\[
f_2^{223}, f_R^{223}, f_S^{223}, s_R^{223}, e_2^{223}, r_1^{223}, r_2^{223}, r_3^{223}, 
l^{+223}, s^{+223}\subset X_{223}
\]
and 
\[
f_2^{233}, f_R^{233}, f_S^{233}, s_R^{233}, e_2^{233}, r_1^{233}, r_2^{233}, r_3^{233}, 
l^{+233}, s^{+233}\subset X_{233}
\]
be the strict transforms of
\[
f_2^{12}, f_R^{12}, f_S^{12}, s_R^{12}, e_2^{12}, r_1^{12}, r_2^{12}, r_3^{12}, 
l^{+12}, s^{+12}\subset X_{12}
\]
respectively. On $X_{223}$, we get the following intersection numbers: 
\begin{center}
\renewcommand\arraystretch{1.4}
\begin{tabular}{c|cc|ccccc}
$\cdot$ & $\D_{t,1}^{223}$ & $\D_{t,2}^{223}$ & 
$(\psi_2^+)^*Q^{23}$ & 
$(\psi_2^+)^*T^{23}$ & $(\psi_2^+)^*E_2^{23}$ & 
$(\psi_2^+)^*F^{23}$ & $(\psi_2^+)^*R^{23}$\\ \hline
$f_2^{223}$ & $0$ & $0$ & $0$ & $0$ & $-1$ & $1$ & $-1/2$ \\
$f_R^{223}$ & $0$ & $0$ & $1/3$ & $0$ & $0$ & $1$ & $-2/3$ \\
$f_S^{223}$ & $0$ & $0$ & $1$ & $1$ & $0$ & $1$ & $-1$ \\
$s_R^{223}$ & $0$ & $0$ & $0$ & $1$ & $1$ & $-2$ & $1$ \\
$e_2^{223}$ & $1$ & $0$ & $0$ & $0$ & $1/2$ & $-3/2$ & $1$ \\
$t_2^{223}$ & $0$ & $1$ & $0$ & $-1$ & $0$ & $-1$ & $1$ \\
$r_1^{223}$ & $0$ & $0$ & $-1/3$ & $0$ & $0$ & $0$ & $1/6$ \\
$r_2^{223}$ & $0$ & $0$ & $0$ & $0$ & $0$ & $1$ & $-1/2$ \\
$r_3^{223}$ & $0$ & $0$ & $0$ & $0$ & $1$ & $0$ & $0$ \\
$l^{+223}$ & $0$ & $0$ & $2/3$ & $1$ & $0$ & $0$ & $-1/3$ \\
$s^{+223}$ & $0$ & $0$ & $1$ & $1$ & $1$ & $0$ & $-1/2$ \\
$t_3^{+223}$ & $1$ & $-1$ & $0$ & $1$ & $1/2$ & $-1/2$ & $0$ 
\end{tabular}
\end{center}
On $X_{233}$, we get the following intersection numbers: 
\begin{center}
\renewcommand\arraystretch{1.4}
\begin{tabular}{c|c|ccccc}
$\cdot$ & $\E_{t}^{233}$ &  
$(\phi_{233}^+)^*Q^{33}$ & 
$(\phi_{233}^+)^*T^{33}$ & $(\phi_{233}^+)^*E_2^{33}$ & 
$(\phi_{233}^+)^*F^{33}$ & $(\phi_{233}^+)^*R^{33}$\\ \hline
$f_2^{233}$ & $0$ & $0$ & $0$ & $-1$ & $1$ & $-1/2$  \\
$f_R^{233}$ & $0$ & $1/3$ & $0$ & $0$ & $1$ & $-2/3$ \\
$f_S^{233}$ & $1$ & $1$ & $0$ & $0$ & $0$ & $0$  \\
$s_R^{233}$ & $1$ & $0$ & $0$ & $1$ & $-3$ & $2$  \\
$e_2^{233}$ & $0$ & $0$ & $0$ & $1/2$ & $-3/2$ & $1$ \\
$t^{+233}$ & $-1$ & $0$ & $1$ & $0$ & $1$ & $-1$  \\
$r_1^{233}$ & $0$ & $-1/3$ & $0$ & $0$ & $0$ & $1/6$  \\
$r_2^{233}$ & $1$ & $0$ & $-1$ & $0$ & $0$ & $1/2$  \\
$r_3^{233}$ & $0$ & $0$ & $0$ & $1$ & $0$ & $0$ \\
$l^{+233}$ & $0$ & $2/3$ & $1$ & $0$ & $0$ & $-1/3$ \\
$s^{+233}$ & $0$ & $1$ & $1$ & $1$ & $0$ & $-1/2$  \\
$t_3^{+233}$ & $1$ & $0$ & $0$ & $1/2$ & $-3/2$ & $1$ 
\end{tabular}
\end{center}
Note that $F^{33}$, $Q^{33}$ and $T^{33}$ on $X_{33}$ are mutually disjoint. Set 
$Q^3:=(\phi_{33})_*Q^{33}$, 
$T^3:=(\phi_{33})_*T^{33}$, 
$E_2^3:=(\phi_{33})_*E_2^{33}$, 
$R^3:=(\phi_{33})_*R^{33}$. 
Then we have 
$(\phi_{33})^*E_2^3=E_2^{33}+(1/3)F^{33}$ and
$(\phi_{33})^*R^3=E_2^{33}+(2/3)R^{33}$. 
We note that $Q^3$, $T^3$ and $E_2^3$ on $X_{3}$ are mutually disjoint. 
Moreover, we have 
\[
(\phi_{33})^*\left(Q^3+2T^3+E_2^3+(9-u)R^3\right)
=Q^{33}+2T^{33}+E_2^{33}+\frac{19-2u}{3}F^{33}+(9-u)R^{33}.
\]
From the above table, the $\R$-divisor $Q^3+2T^3+E_2^3+(9-u)R^3$ is ample 
for $u\in(2,5)$. In particular, $X_3$ is projective and $\chi_2\circ\chi_1$ is 
a small $\Q$-factorial modification of $X_1$. 
Moreover, for $u\in[2,5]$, we have
\begin{eqnarray*}
&&\vol_{X_1}\left((\rho_1)^*(-K_X)-uR^1\right)\\
&=&\left(\left(Q^{33}+2T^{33}+E_2^{33}+\frac{12-u}{2}F^{33}
+(9-u)R^{33}\right)+\frac{2-u}{6}F^{33}\right)^{\cdot 3}\\
&=&\left((\phi_{233})^*\left(Q^{23}+2T^{23}+E_2^{23}+\frac{12-u}{2}F^{23}
+(9-u)R^{23}\right)
+\frac{2-u}{2}\E_t^{233}\right)^{\cdot 3}\\
&&+\frac{1}{48}(2-u)^3\\
&=&\left((\psi_2)^*\left(Q^{22}+2T^{22}+E_2^{22}
+\frac{12-u}{2}F^{22}+(9-u)R^{22}\right)
+\frac{2-u}{4}\left(\D_{t,1}^{223}+2\D_{t,2}^{223}\right)\right)^{\cdot 3}\\
&&-\frac{5}{48}(2-u)^3\\
&=&28-\frac{u^3}{2}+\frac{7}{12}(u-1)^3+\frac{1}{6}(u-2)^3.
\end{eqnarray*}

\noindent\underline{\textbf{Step 9}}\\
We remark that 
\[\begin{cases}
Q^3\simeq\pr(1,2,3) & \text{with }-Q^3|_{Q^3}\sim_\Q\sO_{\pr(1,2,3)}(2), \\
T^3\simeq\pr(1,1,2) & \text{with }-T^3|_{T^3}\sim_\Q\sO_{\pr(1,1,2)}(2), \\
E_2^3\simeq\pr(1,2,3) & \text{with }-E_2^3|_{E_2^3}\sim_\Q\sO_{\pr(1,2,3)}(4).
\end{cases}\]
Thus, for $u=5$, the divisor $Q^3+2T^3+E_2^3+(9-5)R^3$ on $X_3$ contracts 
the disjoint union of $T^3$ and $E_2^3$ and get the contraction morphism 
\[
\rho_3\colon X_3\to X_4. 
\]
Note that the divisors $Q^4:=(\rho_3)_*Q^3$ and $R^4:=(\rho_3)_*R^3$ satisfy that 
$Q^3=(\rho_3)^*Q^4$ and $R^3=(\rho_3)^*R^4-(1/2)T^3-(1/4)E_2^3$. 
In particular, we get 
\[
(\rho_3)^*\left(Q^4+(9-u)R^4\right)
=Q^3+\frac{9-u}{2}T^3+\frac{9-u}{4}E_2^3+(9-u)R^3.
\]
From the table in Step 8, for $u\in (5,7)$, the $\R$-divisor 
$Q^4+(9-u)R^4$ is ample on $X_4$. In particular, for $u\in[5,7]$, we get 
\begin{eqnarray*}
&&\vol_{X_1}\left((\rho_1)^*(-K_X)-uR^1\right)\\
&=&\left(\left(Q^3+2T^3+E_2^3+(9-u)R^3\right)
+\frac{5-u}{2}T^3+\frac{5-u}{4}E_2^3\right)^{\cdot 3}\\
&=&28-\frac{u^3}{2}+\frac{7}{12}(u-1)^3+\frac{1}{6}(u-2)^3-\frac{7}{24}(u-5)^3.
\end{eqnarray*}
Moreover, for $u=7$, the divisor $Q^4+(9-7)R^4$ contracts $Q^4$ to a point. 
Let us denote the contraction by 
\[
\rho_4\colon X_4\to X_5. 
\]
Set $R^5:=(\rho_4)_*R^4$. Then we have $(\rho_4)^*R^5=R^4+(1/2)Q^4$. 
Thus we have 
\[
(\rho_4\circ\rho_3)^*\left((9-u)R^5\right)
=\frac{9-u}{2}Q^3+\frac{9-u}{2}T^3+\frac{9-u}{4}E_2^3+(9-u)R^3.
\]
Obviously, the $\R$-divisor $(9-u)R^5$ is ample for $u\in(7,9)$. 
Moreover, for $u\in[7,9]$, we get 
\begin{eqnarray*}
&&\vol_{X_1}\left((\rho_1)^*(-K_X)-uR^1\right)\\
&=&\left(\left(Q^4+(9-u)R^4\right)
+\frac{7-u}{2}Q^4\right)^{\cdot 3}\\
&=&28-\frac{u^3}{2}+\frac{7}{12}(u-1)^3+\frac{1}{6}(u-2)^3
-\frac{7}{24}(u-5)^3-\frac{1}{12}(u-7)^3\\
&=&\frac{1}{8}(9-u)^3.
\end{eqnarray*}
In particular, we get the equality $\tau_X(R^1)=9$. As a consequence, 
we get the following commutative diagram: 
\[\xymatrix@C=12pt@R=36pt{
 & & X_{12} \ar[dl]_{\phi_{12}} \ar[dr]^{\phi_{12}^+} 
 \ar@/_24pt/[dddd]^{\phi_1} \ar@/_30pt/[ddddrr]_{\phi_1^+} & & & & & & & & \\
 & X_{112} \ar[d]_{\phi_{112}} & & X_{122} \ar[d]^{\phi_{122}} & & & 
X_{223} \ar[dl]_{\phi_{223}} \ar[dr]^{\phi_{223}^+} 
\ar@/^24pt/[ddll]^{\psi_2} \ar@/_24pt/[ddrr]_{\psi_2^+} & & & &  \\
 & X_{1112} \ar[d]_{\phi_{1112}} & & X_{1222} \ar[dr]_{\phi_{1222}} & & 
 X_{2223} \ar[dl]_{\phi_{2223}} & &  X_{2233} \ar[dr]^{\phi_{2233}} & & 
 X_{233} \ar[dl]^{\phi_{233}} \ar[dr]^{\phi_{233}^+}  & \\
 & X_{11} \ar[dl]_{\psi_{11}} \ar[dr]_{\phi_{11}} & & & 
X_{22} \ar[d]^{\phi_{22}} & & & & X_{23}  & & X_{33} \ar[d]^{\phi_{33}} \\
X_0 \ar[dr]_{\rho_0} & & X_1 \ar[dl]^{\rho_1} \ar@{-->}[rr]_{\chi_1} & & 
X_2 \ar@{-->}[rrrrrr]_{\chi_2} & & & & & & X_3 \ar[d]^{\rho_3} \\
 & X & & & & & & & & & X_4\ar[d]^{\rho_4} \\
 & & & & & & & & & & X_5.
}\]
Moreover, we get
\begin{eqnarray*}
S_X(R^1)&=&\frac{1}{28}\Biggl(\int_0^1\left(28-\frac{u^3}{2}\right)du
+\int_1^2\left(28-\frac{u^3}{2}+\frac{7}{12}(u-1)^3\right)du\\
&&+\int_2^5\left(28-\frac{u^3}{2}+\frac{7}{12}(u-1)^3+\frac{1}{6}(u-2)^3\right)du\\
&&+\int_5^7\left(28-\frac{u^3}{2}+\frac{7}{12}(u-1)^3
+\frac{1}{6}(u-2)^3-\frac{7}{24}(u-5)^3\right)du\\
&&+\int_7^9\frac{1}{8}(9-u)^3du\Biggr)=\frac{63}{16}.
\end{eqnarray*}
Therefore we get the inequality 
\[
\delta_q(X)\leq\frac{A_X(R^1)}{S_X(R^1)}=\frac{64}{63}. 
\]

\noindent\underline{\textbf{Step 10}}\\
Recall that the rational map 
\[
\psi_2^+\circ(\psi_2)^{-1}\colon X_{22}\dashrightarrow X_{23}
\]
is an isomorphism around a neighborhood of $R^{22}\subset X_{22}$. 
Take the blowup $\phi_{123}\colon X_{123}\to X_{12}$ along the curve 
$t_2^{12}\subset X_{12}$ and let $\E_t^{123}\subset X_{123}$ be the exceptional 
divisor. Set $R^{123}:=(\phi_{123})^{-1}_*R^{12}$. The divisor $\E_t^{123}$ is the 
strict transform of the divisor $\E_t^{233}\subset X_{233}$. Moreover, the curve 
$t^{+123}=\E_t^{123}|_{R^{123}}$ is the strict transform of $t^{+233}\subset X_{233}$. 
Thus there exists a common resolution $\tilde{X}$ of $X_{123}$, $X_{223}$ and 
$X_{233}$ such that the morphism $\tilde{X}\to X_{123}$ is an isomorphism around 
a neighborhood of the strict transform of $R^{123}$. Let us denote the natural 
morphisms by: 
\[\xymatrix{
 & \tilde{X} \ar[dl]_{\sigma_{123}} \ar[ddl]^{\sigma_{12}} \ar[dd]^{\sigma_{223}} 
 \ar[ddr]^{\sigma_{233}}& \\
X_{123}\ar[d]_{\phi_{123}} & & \\
X_{12} & X_{223} & X_{233}.
}\]
Set $\tilde{R}:=(\sigma_{123})^{-1}_*R^{123}$. 
Let $\tilde{t}^+\subset\tilde{R}$ be the strict transform of $t^{+123}\subset R^{123}$. 
Then the morphism $\sigma_{123}|_{\tilde{R}}\colon\tilde{R}\to R^{12}$ is the 
blowup along the (reduced) point $t^{12}_2\cap R^{12}$ with the exceptional divisor 
$\tilde{t}^+$. Let 
\[
\tilde{s}_R,\tilde{f}_S, \tilde{f}_R, \tilde{h}_1, \tilde{h}_2, \tilde{l}^+, 
\tilde{f}_2, \tilde{s}^+, \tilde{r}_1, \tilde{r}_2, \tilde{r}_3\subset\tilde{R}
\]
be the strict transforms of 
\[
s_R^{12}, f_S^{12}, f_R^{12}, h_1^{12}, h_2^{12}, l^{+12}, f_2^{12}, s^{+12}, 
r_1^{12}, r_2^{12}, r_3^{12}\subset R^{12}, 
\]
respectively. Let $\gamma\colon\tilde{R}\to R^1$ be the natural morphism. 
Moreover, let $p_v^1\in R^1$ be the vertex of the cone $R^1\simeq\pr(1,1,2)$ and 
let $p_h^1\in R^1$ be the image of $\tilde{h}_1$ (or $\tilde{h}_2$) on $R^1$. 

The following claim is trivial: 

\begin{claim}\label{R_claim}
The Kleiman--Mori cone $\overline{\NE}\left(\tilde{R}\right)$ of $\tilde{R}$ is 
spanned by the classes of the following $12$ negative curves 
\[
\tilde{s}_R, \tilde{t}^+, \tilde{f}_S, \tilde{f}_R, \tilde{h}_1, \tilde{h}_2, \tilde{l}^+, 
\tilde{f}_2, \tilde{s}^+, \tilde{r}_1, \tilde{r}_2, \tilde{r}_3.
\]
\end{claim}

\begin{proof}[Proof of Claim \ref{R_claim}]
Consider the contractions $\nu\colon \tilde{R}\to \pr^2$ of the curves 
\[
\tilde{t}^+,\quad\tilde{f}_R\cup\tilde{h}_1,\quad\tilde{l}^+,\quad\tilde{f}_2,
\quad\tilde{r}_1.
\]
Then $\nu_*\tilde{s}_R$ is the line passing through 
$\nu\left(\tilde{t}^+\right)$, 
$\nu\left(\tilde{h}_1\right)$, 
$\nu\left(\tilde{f}_2\right)$, and 
$\nu_*\tilde{h}_2$ is the line passing through 
$\nu\left(\tilde{h}_1\right)$, 
$\nu\left(\tilde{r}_1\right)$, 
$\nu\left(\tilde{l}^+\right)$.  
Note that 
\[
\nu^*\left(-\left(K_{\pr^2}+\nu_*\tilde{s}_R+\nu_*\tilde{h}_2\right)\right)
=-\left(K_{\tilde{R}}+\tilde{s}_R+\tilde{h}_2+\tilde{f}_R+\tilde{h}_2\right)
\]
is nef and big. Thus, by \cite[Proposition 3.3]{NdP}, the cone 
$\overline{\NE}\left(\tilde{R}\right)$ is spanned by the classes of 
finitely many negative curves. 
Take any irreducible curve $\tilde{C}\subset\tilde{R}$ spanning an extremal ray 
of $\overline{\NE}\left(\tilde{R}\right)$. We may assume that 
\[
\tilde{C}\neq\tilde{s}_R, \tilde{t}^+, \tilde{f}_R, \tilde{h}_1, \tilde{h}_2, 
\tilde{l}^+, \tilde{f}_2, \tilde{r}_1.
\]
In particular, $\nu_*\tilde{C}$ is not a point. Moreover, since 
\[
\left(\left(\tilde{s}_R+\tilde{h}_2+\tilde{f}_R+\tilde{h}_2\right)\cdot\tilde{C}\right)\geq 0,
\]
the extremal ray spanned by $\tilde{C}$ is $K_{\tilde{R}}$-negative. 
Thus we get $\left(-K_{\tilde{R}}\cdot \tilde{C}\right)=1$ and 
\[
\left(\left(\tilde{s}_R+\tilde{h}_2+\tilde{f}_R+\tilde{h}_2\right)\cdot\tilde{C}\right)=0.
\]
This implies that 
\[
\deg\nu_*\tilde{C}
=\left(\nu^*\left(-\left(K_{\pr^2}+\nu_*\tilde{s}_R+\nu_*\tilde{h}_2\right)\right)
\cdot\tilde{C}\right)=1.
\]
Since $\tilde{C}$ is a negative curve, we must have $\tilde{C}=\tilde{f}_2$, 
$\tilde{s}^+$, $\tilde{r}_2$ or $\tilde{r}_3$. 
\end{proof}

The intersection form of 
\[
\tilde{s}_R, \tilde{t}^+, \tilde{f}_S, \tilde{f}_R, \tilde{h}_1, \tilde{h}_2, \tilde{l}^+, 
\tilde{f}_2, \tilde{s}^+, \tilde{r}_1, \tilde{r}_2, \tilde{r}_3
\]
on $\tilde{R}$ is given by the symmetric matrix
\[\begin{pmatrix}
-3&1&0&1&0&0&0&1&0&0&0&0 \\
1&-1&1&0&0&0&0&0&0&0&1&0 \\
0&1&-1&0&0&0&0&0&0&1&0&1 \\
1&0&0&-1&1&0&0&0&0&0&0&0 \\
0&0&0&1&-2&1&0&0&0&0&0&0 \\
0&0&0&0&1&-2&1&0&0&1&0&0 \\
0&0&0&0&0&1&-1&0&0&0&1&1 \\
1&0&0&0&0&0&0&-1&1&0&0&1 \\
0&0&0&0&0&0&0&1&-1&1&1&0 \\
0&0&1&0&0&1&0&0&1&-1&0&0 \\
0&1&0&0&0&0&1&0&1&0&-1&0 \\
0&0&1&0&0&0&1&1&0&0&0&-1
\end{pmatrix}.\]
From now on, we write
\begin{eqnarray*}
&&[a_1,a_2,a_3,a_4,a_5,a_6,a_7,a_8,a_9,a_{10},a_{11},a_{12}]\\
&:=&a_1\tilde{s}_R+a_2\tilde{t}^++a_3\tilde{f}_S+a_4\tilde{f}_R
+a_5\tilde{h}_1+a_6\tilde{h_2}\\
&+&a_7\tilde{l}^++a_8\tilde{f}_2+a_9\tilde{s}^+
+a_{10}\tilde{r}_1+a_{11}\tilde{r}_2+a_{12}\tilde{r}_3
\end{eqnarray*}
for any $a_1,\dots,a_{12}\in \R$.

\noindent\underline{\textbf{Step 11}}\\
For any $u\in[0,9]$, let us set 
\begin{eqnarray*}
P(u) &:=& P_\sigma\left(\tilde{X}, (\phi_1\circ\sigma_{12})^*
\left((\rho_1)^*(-K_X)-uR^1\right)\right)\Big|_{\tilde{R}}, \\
N(u) &:=& N_\sigma\left(\tilde{X}, (\phi_1\circ\sigma_{12})^*
\left((\rho_1)^*(-K_X)-uR^1\right)\right)\Big|_{\tilde{R}}.
\end{eqnarray*}
We know that 
\begin{eqnarray*}
F^{11}|_{\tilde{R}}&=&\tilde{s}_R+\tilde{t}^+,\\
Q^{11}|_{\tilde{R}}&=&\tilde{r}_1+\tilde{h}_1+2\tilde{h}_2+2\tilde{l}^++\tilde{s}^+, \\
T^{11}|_{\tilde{R}}&=&\tilde{r}_2+\tilde{t}^+
+\tilde{h}_1+2\tilde{h}_2+3\tilde{l}^++\tilde{s}^+, \\
E_2^{11}|_{\tilde{R}}&=&\tilde{f}_2+\tilde{s}^+, \\
R^{11}|_{\tilde{R}}&\sim_\Q&-\tilde{s}_R-2\tilde{t}^+-\tilde{f}_S.
\end{eqnarray*}
Thus we get 
\begin{eqnarray*}
F^{22}|_{\tilde{R}}&=&\tilde{s}_R+\tilde{t}^+,\\
Q^{22}|_{\tilde{R}}&=&\tilde{r}_1+\frac{1}{3}\tilde{h}_1+\frac{2}{3}\tilde{h}_2, \\
T^{22}|_{\tilde{R}}&=&\tilde{r}_2+\tilde{t}^+, \\
E_2^{22}|_{\tilde{R}}&=&\tilde{f}_2, \\
R^{22}|_{\tilde{R}}&\sim_\Q&-\tilde{s}_R-2\tilde{t}^+-\tilde{f}_S
+\frac{1}{3}\tilde{h}_1+\frac{2}{3}\tilde{h}_2+\tilde{l}^++\frac{1}{2}\tilde{s}^+
\end{eqnarray*}
and 
\begin{eqnarray*}
F^{33}|_{\tilde{R}}&=&\tilde{s}_R,\\
Q^{33}|_{\tilde{R}}&=&\tilde{r}_1+\frac{1}{3}\tilde{h}_1+\frac{2}{3}\tilde{h}_2, \\
T^{33}|_{\tilde{R}}&=&\tilde{r}_2, \\
E_2^{33}|_{\tilde{R}}&=&\tilde{f}_2, \\
R^{33}|_{\tilde{R}}&\sim_\Q&-\tilde{s}_R-\tilde{t}^+-\tilde{f}_S
+\frac{1}{3}\tilde{h}_1+\frac{2}{3}\tilde{h}_2+\tilde{l}^++\frac{1}{2}\tilde{s}^+.
\end{eqnarray*}
In particular, we can determine $P(u)$ and $N(u)$. 
\begin{itemize}
\item
If $u\in [0,1]$, then 
\begin{eqnarray*}
P(u)&\sim_\R&\left[\frac{u-6}{2},\frac{3u-20}{2},u-9,0,3,6,8,1,4,1,2,0\right],\\
N(u)&=&\left[0,0,0,0,0,0,0,0,0,0,0,0\right].
\end{eqnarray*}
\item
If $u\in [1,2]$, then 
\begin{eqnarray*}
P(u)&\sim_\R&\left[\frac{u-6}{2},\frac{3u-20}{2},u-9,0,\frac{10-u}{3},\frac{20-2u}{3},
9-u,1,\frac{9-u}{2},1,2,0\right],\\
N(u)&=&\left[0,0,0,0,\frac{u-1}{3},\frac{2u-2}{3},u-1,0,\frac{u-1}{2},0,0,0\right].
\end{eqnarray*}
\item
If $u\in [2,5]$, then 
\begin{eqnarray*}
P(u)&\sim_\R&\left[\frac{u-8}{3},u-9,u-9,0,\frac{10-u}{3},\frac{20-2u}{3},
9-u,1,\frac{9-u}{2},1,2,0\right],\\
N(u)&=&\left[\frac{u-2}{6},\frac{u-2}{2},0,0,
\frac{u-1}{3},\frac{2u-2}{3},u-1,0,\frac{u-1}{2},0,0,0\right].
\end{eqnarray*}
\item
If $u\in [5,7]$, then 
\begin{eqnarray*}
P(u)&\sim_\R&\left[\frac{u-9}{4},u-9,u-9,0,\frac{10-u}{3},\frac{20-2u}{3},
9-u,\frac{9-u}{4},\frac{9-u}{2},1,\frac{9-u}{2},0\right],\\
N(u)&=&\left[\frac{u-3}{4},\frac{u-2}{2},0,0,\frac{u-1}{3},\frac{2u-2}{3},
u-1,\frac{u-5}{4},\frac{u-1}{2},0,\frac{u-5}{2},0\right].
\end{eqnarray*}
\item
If $u\in [7,9]$, then 
\begin{eqnarray*}
P(u)&\sim_\R&\left[\frac{u-9}{4},u-9,u-9,0,\frac{9-u}{2},9-u,
9-u,\frac{9-u}{4},\frac{9-u}{2},\frac{9-u}{2},\frac{9-u}{2},0\right],\\
N(u)&=&\left[\frac{u-3}{4},\frac{u-2}{2},0,0,\frac{u-3}{2},u-3,
u-1,\frac{u-5}{4},\frac{u-1}{2},\frac{u-7}{2},\frac{u-5}{2},0\right].
\end{eqnarray*}
\end{itemize}

\noindent\underline{\textbf{Step 12}}\\
Set 
\[
\varepsilon_s:=\phi_{11}|_{R^{11}}\colon R^{11}\to R^1
\]
and let $\gamma_s\colon \tilde{R}\to R^{11}$ be the natural morphism. 
Then the morphism $\varepsilon_s$ is a plt-blowup with the 
exceptional divisor $s_R^{11}:=(\gamma_s)_*\tilde{s}_R$. Note that 
$(\gamma_s)^*s_R^{11}=\tilde{s}_R+\tilde{t}^+$. Set 
$p_2^{s_R}:=\tilde{t}^+|_{\tilde{s}_R}$, 
$p_4^{s_R}:=\tilde{f}_R|_{\tilde{s}_R}$, and 
$p_8^{s_R}:=\tilde{f}_2|_{\tilde{s}_R}$. 
Then $p_2^{s_R}$, $p_4^{s_R}$, $p_8^{s_R}\in\tilde{s}_R$ 
are mutually distinct reduced points. Set 
$p_2^{11}:=\gamma_s\left(p_2^{s_R}\right)$, 
$p_4^{11}:=\gamma_s\left(p_4^{s_R}\right)$, 
$p_8^{11}:=\gamma_s\left(p_8^{s_R}\right)$, and let us set 
\begin{eqnarray*}
P(u,v) &:=& P_\sigma\left(\tilde{R}, P(u)-v\tilde{s}_R\right), \\
N(u,v) &:=& N_\sigma\left(\tilde{R}, P(u)-v\tilde{s}_R\right),
\end{eqnarray*}
where $P(u)$, $N(u)$ are as in Step 11. 

\begin{itemize}
\item
Assume that $u\in[0,1]$. 
\begin{itemize}
\item
If $v\in\left[0,\frac{u}{2}\right]$, then 
\begin{eqnarray*}
N(u,v)&=&\left[0,v,0,0,0,0,0,0,0,0,0,0\right], \\
P(u,v)&\sim_\R&\left[\frac{u-2v-6}{2},\frac{3u-2v-20}{2},u-9,0,3,6,8,1,4,1,2,0\right],
\end{eqnarray*}
and 
\[
\left(P(u,v)^{\cdot 2}\right)=\frac{1}{2}(u-2v)(u+2v).
\]
\end{itemize}
\item
Assume that $u\in[1,2]$. 
\begin{itemize}
\item
If $v\in\left[0,\frac{1}{2}\right]$, then 
\begin{eqnarray*}
N(u,v)&=&\left[0,v,0,0,0,0,0,0,0,0,0,0\right], \\
P(u,v)&\sim_\R&\left[\frac{u-2v-6}{2},\frac{3u-2v-20}{2},u-9,0,
\frac{10-u}{3},\frac{20-2u}{3},9-u,1,\frac{9-u}{2},1,2,0\right],
\end{eqnarray*}
and 
\[
\left(P(u,v)^{\cdot 2}\right)=\frac{1}{12}(-7+14u-u^2-24v^2).
\]
\item
If $v\in\left[\frac{1}{2},\frac{2+u}{6}\right]$, then 
\begin{eqnarray*}
N(u,v)&=&\left[0,v,0,0,0,0,0,\frac{2v-1}{2},0,0,0,0\right], \\
P(u,v)&\sim_\R&\biggl[\frac{u-2v-6}{2},\frac{3u-2v-20}{2},u-9,0,
\frac{10-u}{3},\frac{20-2u}{3},\\
&&\quad 9-u,\frac{3-2v}{2},\frac{9-u}{2},1,2,0\biggr],
\end{eqnarray*}
and 
\[
\left(P(u,v)^{\cdot 2}\right)=\frac{1}{12}(-4+14u-u^2-12v-12v^2).
\]
\item
If $v\in\left[\frac{2+u}{6},\frac{u}{2}\right]$, then 
\begin{eqnarray*}
N(u,v)&=&\left[0,v,0,\frac{-2-u+6v}{2},\frac{-2-u+6v}{3},\frac{-2-u+6v}{6},0,
\frac{2v-1}{2},0,0,0,0\right], \\
P(u,v)&\sim_\R&\biggl[\frac{u-2v-6}{2},\frac{3u-2v-20}{2},u-9,\frac{2+u-6v}{2},
4-2v,\frac{14-u-2v}{2},\\
&&\quad 9-u,\frac{3-2v}{2},\frac{9-u}{2},1,2,0\biggr],
\end{eqnarray*}
and 
\[
\left(P(u,v)^{\cdot 2}\right)=\frac{1}{2}(u-2v)(3-2v).
\]
\end{itemize}
\item
Assume that $u\in[2,3]$. 
\begin{itemize}
\item
If $v\in\left[0,\frac{u-2}{3}\right]$, then 
\begin{eqnarray*}
N(u,v)&=&\left[0,0,0,0,0,0,0,0,0,0,0,0\right], \\
P(u,v)&\sim_\R&\left[\frac{u-3v-8}{3},u-9,u-9,0,
\frac{10-u}{3},\frac{20-2u}{3},9-u,1,\frac{9-u}{2},1,2,0\right],
\end{eqnarray*}
and 
\[
\left(P(u,v)^{\cdot 2}\right)=\frac{1}{12}(-15+22u-3u^2-36v^2).
\]
\item
If $v\in\left[\frac{u-2}{3},\frac{5-u}{6}\right]$, then 
\begin{eqnarray*}
N(u,v)&=&\left[0,\frac{2-u+3v}{3},0,0,0,0,0,0,0,0,0,0\right], \\
P(u,v)&\sim_\R&\left[\frac{u-3v-8}{3},\frac{-29+4u-3v}{3},u-9,0,
\frac{10-u}{3},\frac{20-2u}{3},9-u,1,\frac{9-u}{2},1,2,0\right],
\end{eqnarray*}
and 
\[
\left(P(u,v)^{\cdot 2}\right)=\frac{1}{36}(-29+50u-5u^2+48v-24uv-72v^2).
\]
\item
If $v\in\left[\frac{5-u}{6},\frac{2}{3}\right]$, then 
\begin{eqnarray*}
N(u,v)&=&\left[0,\frac{2-u+3v}{3},0,0,0,0,0,\frac{-5+u+6v}{6},0,0,0,0\right], \\
P(u,v)&\sim_\R&\biggl[\frac{u-3v-8}{3},\frac{-29+4u-3v}{3},u-9,0,
\frac{10-u}{3},\frac{20-2u}{3},\\
&&\quad 9-u,\frac{11-u-6v}{6},\frac{9-u}{2},1,2,0\biggr],
\end{eqnarray*}
and 
\[
\left(P(u,v)^{\cdot 2}\right)=\frac{1}{9}(-1+10u-u^2-3v-3uv-9v^2).
\]
\item
If $v\in\left[\frac{2}{3},\frac{1+u}{3}\right]$, then 
\begin{eqnarray*}
N(u,v)&=&\left[0,\frac{2-u+3v}{3},0,3v-2,\frac{-4+6v}{3},\frac{-2+3v}{3},
0,\frac{-5+u+6v}{6},0,0,0,0\right], \\
P(u,v)&\sim_\R&\biggl[\frac{u-3v-8}{3},\frac{-29+4u-3v}{3},u-9,2-3v,
\frac{14-u-6v}{3},\frac{22-2u-3v}{3},\\
&&\quad 9-u,\frac{11-u-6v}{6},\frac{9-u}{2},1,2,0\biggr],
\end{eqnarray*}
and 
\[
\left(P(u,v)^{\cdot 2}\right)=\frac{1}{9}(1+u-3v)(11-u-6v).
\]
\end{itemize}
\item
Assume that $u\in[3,4]$. 
\begin{itemize}
\item
If $v\in\left[0,\frac{5-u}{6}\right]$, then 
\begin{eqnarray*}
N(u,v)&=&\left[0,0,0,0,0,0,0,0,0,0,0,0\right], \\
P(u,v)&\sim_\R&\left[\frac{u-3v-8}{3},u-9,u-9,0,
\frac{10-u}{3},\frac{20-2u}{3},9-u,1,\frac{9-u}{2},1,2,0\right],
\end{eqnarray*}
and 
\[
\left(P(u,v)^{\cdot 2}\right)=\frac{1}{12}(-15+22u-3u^2-36v^2).
\]
\item
If $v\in\left[\frac{5-u}{6},\frac{u-2}{3}\right]$, then 
\begin{eqnarray*}
N(u,v)&=&\left[0,0,0,0,0,0,0,\frac{-5+u+6v}{6},0,0,0,0\right], \\
P(u,v)&\sim_\R&\biggl[\frac{u-3v-8}{3},u-9,u-9,0,
\frac{10-u}{3},\frac{20-2u}{3},\\
&&\quad 9-u,\frac{11-u-6v}{6},\frac{9-u}{2},1,2,0\biggr],
\end{eqnarray*}
and 
\[
\left(P(u,v)^{\cdot 2}\right)=\frac{1}{9}(-5+14u-2u^2-15v+3uv-18v^2).
\]
\item
If $v\in\left[\frac{u-2}{3},\frac{2}{3}\right]$, then 
\begin{eqnarray*}
N(u,v)&=&\left[0,\frac{2-u+3v}{3},0,0,0,0,0,\frac{-5+u+6v}{6},0,0,0,0\right], \\
P(u,v)&\sim_\R&\biggl[\frac{u-3v-8}{3},\frac{-29+4u-3v}{3},u-9,0,
\frac{10-u}{3},\frac{20-2u}{3},\\
&&\quad 9-u,\frac{11-u-6v}{6},\frac{9-u}{2},1,2,0\biggr],
\end{eqnarray*}
and 
\[
\left(P(u,v)^{\cdot 2}\right)=\frac{1}{9}(-1+10u-u^2-3v-3uv-9v^2).
\]
\item
If $v\in\left[\frac{2}{3},\frac{11-u}{6}\right]$, then 
\begin{eqnarray*}
N(u,v)&=&\left[0,\frac{2-u+3v}{3},0,3v-2,\frac{-4+6v}{3},\frac{-2+3v}{3},
0,\frac{-5+u+6v}{6},0,0,0,0\right], \\
P(u,v)&\sim_\R&\biggl[\frac{u-3v-8}{3},\frac{-29+4u-3v}{3},u-9,2-3v,
\frac{14-u-6v}{3},\frac{22-2u-3v}{3},\\
&&\quad 9-u,\frac{11-u-6v}{6},\frac{9-u}{2},1,2,0\biggr],
\end{eqnarray*}
and 
\[
\left(P(u,v)^{\cdot 2}\right)=\frac{1}{9}(1+u-3v)(11-u-6v).
\]
\end{itemize}
\item
Assume that $u\in[4,5]$. 
\begin{itemize}
\item
If $v\in\left[0,\frac{5-u}{6}\right]$, then 
\begin{eqnarray*}
N(u,v)&=&\left[0,0,0,0,0,0,0,0,0,0,0,0\right], \\
P(u,v)&\sim_\R&\left[\frac{u-3v-8}{3},u-9,u-9,0,
\frac{10-u}{3},\frac{20-2u}{3},9-u,1,\frac{9-u}{2},1,2,0\right],
\end{eqnarray*}
and 
\[
\left(P(u,v)^{\cdot 2}\right)=\frac{1}{12}(-15+22u-3u^2-36v^2).
\]
\item
If $v\in\left[\frac{5-u}{6},\frac{2}{3}\right]$, then 
\begin{eqnarray*}
N(u,v)&=&\left[0,0,0,0,0,0,0,\frac{-5+u+6v}{6},0,0,0,0\right], \\
P(u,v)&\sim_\R&\biggl[\frac{u-3v-8}{3},u-9,u-9,0,
\frac{10-u}{3},\frac{20-2u}{3},\\
&&\quad 9-u,\frac{11-u-6v}{6},\frac{9-u}{2},1,2,0\biggr],
\end{eqnarray*}
and 
\[
\left(P(u,v)^{\cdot 2}\right)=\frac{1}{9}(-5+14u-2u^2-15v+3uv-18v^2).
\]
\item
If $v\in\left[\frac{2}{3},\frac{u-2}{3}\right]$, then 
\begin{eqnarray*}
N(u,v)&=&\left[0,0,0,3v-2,\frac{-4+6v}{3},\frac{-2+3v}{3},0,
\frac{-5+u+6v}{6},0,0,0,0\right], \\
P(u,v)&\sim_\R&\biggl[\frac{u-3v-8}{3},u-9,u-9,2-3v,
\frac{14-u-6v}{3},\frac{22-2u-3v}{3},\\
&&\quad 9-u,\frac{11-u-6v}{6},\frac{9-u}{2},1,2,0\biggr],
\end{eqnarray*}
and 
\[
\left(P(u,v)^{\cdot 2}\right)=\frac{1}{9}(7+14u-2u^2-51v+3uv+9v^2).
\]
\item
If $v\in\left[\frac{u-2}{3},\frac{11-u}{6}\right]$, then 
\begin{eqnarray*}
N(u,v)&=&\left[0,\frac{2-u+3v}{3},0,3v-2,\frac{-4+6v}{3},\frac{-2+3v}{3},
0,\frac{-5+u+6v}{6},0,0,0,0\right], \\
P(u,v)&\sim_\R&\biggl[\frac{u-3v-8}{3},\frac{-29+4u-3v}{3},u-9,2-3v,
\frac{14-u-6v}{3},\frac{22-2u-3v}{3},\\
&&\quad 9-u,\frac{11-u-6v}{6},\frac{9-u}{2},1,2,0\biggr],
\end{eqnarray*}
and 
\[
\left(P(u,v)^{\cdot 2}\right)=\frac{1}{9}(1+u-3v)(11-u-6v).
\]
\end{itemize}
\item
Assume that $u\in[5,7]$. 
\begin{itemize}
\item
If $v\in\left[0,\frac{13-u}{12}\right]$, then 
\begin{eqnarray*}
N(u,v)&=&\left[0,0,0,0,0,0,0,v,0,0,0,0\right], \\
P(u,v)&\sim_\R&\biggl[\frac{u-4v-9}{4},u-9,u-9,0,
\frac{10-u}{3},\frac{20-2u}{3},\\
&&\quad 9-u,\frac{9-u-4v}{4},\frac{9-u}{2},1,\frac{9-u}{2},0\biggr],
\end{eqnarray*}
and 
\[
\left(P(u,v)^{\cdot 2}\right)=\frac{1}{24}(145-26u+u^2-48v^2).
\]
\item
If $v\in\left[\frac{13-u}{12},\frac{9-u}{4}\right]$, then 
\begin{eqnarray*}
N(u,v)&=&\left[0,0,0,\frac{-13+u+12v}{4},\frac{-13+u+12v}{6},\frac{-13+u+12v}{12},
0,v,0,0,0,0\right], \\
P(u,v)&\sim_\R&\biggl[\frac{u-4v-9}{4},u-9,u-9,\frac{13-u-12v}{4},
\frac{11-u-4v}{2},\frac{31-3u-4v}{4},\\
&&\quad 9-u,\frac{9-u-4v}{4},
\frac{9-u}{2},1,\frac{9-u}{2},0\biggr],
\end{eqnarray*}
and 
\[
\left(P(u,v)^{\cdot 2}\right)=\frac{1}{16}(17-u-4v)(9-u-4v).
\]
\end{itemize}
\item
Assume that $u\in[7,9]$. 
\begin{itemize}
\item
If $v\in\left[0,\frac{9-u}{4}\right]$, then 
\begin{eqnarray*}
N(u,v)&=&\left[0,0,0,0,0,0,
0,v,0,0,0,0\right], \\
P(u,v)&\sim_\R&\biggl[\frac{u-4v-9}{4},u-9,u-9,0,
\frac{9-u}{2},9-u,\\
&&\quad 9-u,\frac{9-u-4v}{4},
\frac{9-u}{2},\frac{9-u}{2},\frac{9-u}{2},0\biggr],
\end{eqnarray*}
and 
\[
\left(P(u,v)^{\cdot 2}\right)=\frac{1}{8}(9-u+4v)(9-u-4v).
\]
\end{itemize}
\end{itemize}

Therefore, we get 
\begin{eqnarray*}
&&S\left(V^{\tilde{R}}_{\bullet,\bullet}; \tilde{s}_R\right)\\
&=&\frac{3}{28}\Biggl(\int_0^1\int_0^{\frac{u}{2}}\frac{1}{2}(u-2v)(u+2v)dvdu\\
&+&\int_1^2\biggl(\int_0^{\frac{1}{2}}\frac{1}{12}(-7+14u-u^2-24v^2)dv
+\int_{\frac{1}{2}}^{\frac{2+u}{6}}\frac{1}{12}(-4+14u-u^2-12v-12v^2)dv\\
&&+\int_{\frac{2+u}{6}}^{\frac{u}{2}}\frac{1}{2}(u-2v)(3-2v)dv\biggr)du\\
&+&\int_2^3\biggl(\frac{u-2}{6}\cdot\frac{1}{12}(-15+22u-3u^2)
+\int_0^{\frac{u-2}{3}}\frac{1}{12}(-15+22u-3u^2-36v^2)dv\\
&&+\int_{\frac{u-2}{3}}^{\frac{5-u}{6}}\frac{1}{36}(-29+50u-5u^2+48v-24uv-72v^2)dv\\
&&+\int_{\frac{5-u}{6}}^{\frac{2}{3}}\frac{1}{9}(-1+10u-u^2-3v-3uv-9v^2)dv
+\int_{\frac{2}{3}}^{\frac{1+u}{3}}\frac{1}{9}(1+u-3v)(11-u-6v)dv\biggr)du\\
&+&\int_3^4\biggl(\frac{u-2}{6}\cdot\frac{1}{12}(-15+22u-3u^2)
+\int_0^{\frac{5-u}{6}}\frac{1}{12}(-15+22u-3u^2-36v^2)dv\\
&&+\int_{\frac{5-u}{6}}^{\frac{u-2}{3}}\frac{1}{9}(-5+14u-2u^2-15v+3uv-18v^2)dv\\
&&+\int_{\frac{u-2}{3}}^{\frac{2}{3}}\frac{1}{9}(-1+10u-u^2-3v-3uv-9v^2)dv
+\int_{\frac{2}{3}}^{\frac{11-u}{6}}\frac{1}{9}(1+u-3v)(11-u-6v)dv\biggr)du\\
&+&\int_4^5\biggl(\frac{u-2}{6}\cdot\frac{1}{12}(-15+22u-3u^2)
+\int_0^{\frac{5-u}{6}}\frac{1}{12}(-15+22u-3u^2-36v^2)dv\\
&&+\int_{\frac{5-u}{6}}^{\frac{2}{3}}\frac{1}{9}(-5+14u-2u^2-15v+3uv-18v^2)dv\\
&&+\int_{\frac{2}{3}}^{\frac{u-2}{3}}\frac{1}{9}(7+14u-2u^2-51v+3uv+9v^2)dv
+\int_{\frac{u-2}{3}}^{\frac{11-u}{6}}\frac{1}{9}(1+u-3v)(11-u-6v)dv\biggr)du\\
&+&\int_5^7\biggl(\frac{u-3}{4}\cdot\frac{1}{24}(145-26u+u^2)
+\int_0^{\frac{13-u}{12}}\frac{1}{24}(145-26u+u^2-48v^2)dv\\
&&+\int_{\frac{13-u}{12}}^{\frac{9-u}{4}}\frac{1}{16}(17-u-4v)(9-u-4v)dv\biggr)du\\
&+&\int_7^9\biggl(\frac{u-3}{4}\cdot\frac{1}{8}(9-u)^2
+\int_0^{\frac{9-u}{4}}\frac{1}{8}(9-u+4v)(9-u-4v)dv\biggr)du\Biggr)
=\frac{207}{224}. 
\end{eqnarray*}
Moreover, we have
\begin{eqnarray*}
&&F_{p_2^{11}}\left(W^{R^{11}, s_R^{11}}_{\bullet,\bullet,\bullet}\right)\\
&=&\frac{6}{28}\Biggl(\int_2^3\int_0^{\frac{u-2}{3}}3v\frac{u-3v-2}{3}dvdu\\
&+&\int_3^4\biggl(\int_0^{\frac{5-u}{6}}3v\frac{u-3v-2}{3}dv
+\int_{\frac{5-u}{6}}^{\frac{u-2}{3}}\frac{5-u+12v}{6}\cdot\frac{u-3v-2}{3}dv\biggr)du\\
&+&\int_4^5\biggl(\int_0^{\frac{5-u}{6}}3v\frac{u-3v-2}{3}dv
+\int_{\frac{5-u}{6}}^{\frac{2}{3}}\frac{5-u+12v}{6}\cdot\frac{u-3v-2}{3}dv\\
&&+\int_{\frac{2}{3}}^{\frac{u-2}{3}}\frac{17-u-6v}{6}\cdot\frac{u-3v-2}{3}dv\biggr)du\\
&+&\int_5^7\biggl(\int_0^{\frac{13-u}{12}}2v\frac{u-4v-1}{4}dv
+\int_{\frac{13-u}{12}}^{\frac{9-u}{4}}\frac{13-u-4v}{4}
\cdot\frac{u-4v-1}{4}dv\biggr)du\\
&+&\int_7^9\int_0^{\frac{9-u}{4}}2v\frac{u-4v-1}{4}dvdu\Biggr)
=\frac{15}{56},
\end{eqnarray*}
\begin{eqnarray*}
&&F_{p_4^{11}}\left(W^{R^{11}, s_R^{11}}_{\bullet,\bullet,\bullet}\right)\\
&=&\frac{6}{28}\Biggl(\int_1^2\int_{\frac{2+u}{6}}^{\frac{u}{2}}
\frac{3+u-4v}{2}\cdot\frac{-2-u+6v}{2}dvdu\\
&+&\int_2^3\int_{\frac{2}{3}}^{\frac{1+u}{3}}\frac{13+u-12v}{6}(3v-2)dvdu
+\int_3^4\int_{\frac{2}{3}}^{\frac{11-u}{6}}\frac{13+u-12v}{6}(3v-2)dvdu\\
&+&\int_4^5\biggl(\int_{\frac{2}{3}}^{\frac{u-2}{3}}\frac{17-u-6v}{6}(3v-2)dv
+\int_{\frac{u-2}{3}}^{\frac{11-u}{6}}\frac{13+u-12v}{6}(3v-2)dv\biggr)du\\
&+&\int_5^7\int_{\frac{13-u}{12}}^{\frac{9-u}{4}}\frac{13-u-4v}{4}
\cdot\frac{-13+u+12v}{4}dvdu\Biggr)=\frac{23}{112},
\end{eqnarray*}
and
\begin{eqnarray*}
&&F_{p_8^{11}}\left(W^{R^{11}, s_R^{11}}_{\bullet,\bullet,\bullet}\right)\\
&=&\frac{6}{28}\Biggl(\int_1^2\biggl(\int_{\frac{1}{2}}^{\frac{2+u}{6}}
\frac{1+2v}{2}\cdot\frac{2v-1}{2}dv+\int_{\frac{2+u}{6}}^{\frac{u}{2}}
\frac{3+u-4v}{2}\cdot\frac{2v-1}{2}dv\biggr)du\\
&+&\int_2^3\biggl(\int_{\frac{5-u}{6}}^{\frac{2}{3}}
\frac{1+u+6v}{6}\cdot\frac{-5+u+6v}{6}dv+\int_{\frac{2}{3}}^{\frac{1+u}{3}}
\frac{13+u-12v}{6}\cdot\frac{-5+u+6v}{6}dv\biggr)du\\
&+&\int_3^4\biggl(\int_{\frac{5-u}{6}}^{\frac{u-2}{3}}
\frac{5-u+12v}{6}\cdot\frac{-5+u+6v}{6}dv+\int_{\frac{u-2}{3}}^{\frac{2}{3}}
\frac{1+u+6v}{6}\cdot\frac{-5+u+6v}{6}dv\\
&&+\int_{\frac{2}{3}}^{\frac{11-u}{6}}\frac{13+u-12v}{6}
\cdot\frac{-5+u+6v}{6}dv\biggr)du\\
&+&\int_4^5\biggl(\int_{\frac{5-u}{6}}^{\frac{2}{3}}
\frac{5-u+12v}{6}\cdot\frac{-5+u+6v}{6}dv+\int_{\frac{2}{3}}^{\frac{u-2}{3}}
\frac{17-u-6v}{6}\cdot\frac{-5+u+6v}{6}dv\\
&&+\int_{\frac{u-2}{3}}^{\frac{11-u}{6}}
\frac{13+u-12v}{6}\cdot\frac{-5+u+6v}{6}dv\biggr)du\\
&+&\int_5^7\biggl(\int_0^{\frac{13-u}{12}}2v\frac{u+4v-5}{4}dv
+\int_{\frac{13-u}{12}}^{\frac{9-u}{4}}
\frac{13-u-4v}{4}\cdot\frac{u+4v-5}{4}dv\biggr)du\\
&+&\int_7^9\int_0^{\frac{9-u}{4}}2v\frac{u+4v-5}{4}dvdu\Biggr)=\frac{25}{56}. 
\end{eqnarray*}
Therefore, for any closed point $p^{11}\in s_R^{11}\subset R^{11}$, we get 
the following inequality: 
\begin{eqnarray*}
&&S\left(W^{R^{11},s_R^{11}}_{\bullet,\bullet,\bullet}; p^{11}\right)\\
&\leq&\frac{25}{56}+\frac{3}{28}\Biggl(\int_0^1\int_0^{\frac{u}{2}}(2v)^2dvdu\\
&+&\int_1^2\biggl(\int_0^{\frac{1}{2}}(2v)^2dv+\int_{\frac{1}{2}}^{\frac{2+u}{6}}
\left(\frac{1+2v}{2}\right)^2dv+\int_{\frac{2+u}{6}}^{\frac{u}{2}}
\left(\frac{3+u-4v}{2}\right)^2dv\biggr)du\\
&+&\int_2^3\biggl(\int_0^{\frac{u-2}{3}}(3v)^2dv+\int_{\frac{u-2}{3}}^{\frac{5-u}{6}}
\left(\frac{-2+u+6v}{3}\right)^2dv\\
&&+\int_{\frac{5-u}{6}}^{\frac{2}{3}}\left(\frac{1+u+6v}{6}\right)^2dv
+\int_{\frac{2}{3}}^{\frac{1+u}{3}}\left(\frac{13+u-12v}{6}\right)^2dv\biggr)du\\
&+&\int_3^4\biggl(\int_0^{\frac{5-u}{6}}(3v)^2dv+\int_{\frac{5-u}{6}}^{\frac{u-2}{3}}
\left(\frac{5-u+12v}{6}\right)^2dv\\
&&+\int_{\frac{u-2}{3}}^{\frac{2}{3}}\left(\frac{1+u+6v}{6}\right)^2dv
+\int_{\frac{2}{3}}^{\frac{11-u}{6}}\left(\frac{13+u-12v}{6}\right)^2dv\biggr)du\\
&+&\int_4^5\biggl(\int_0^{\frac{5-u}{6}}(3v)^2dv+\int_{\frac{5-u}{6}}^{\frac{2}{3}}
\left(\frac{5-u+12v}{6}\right)^2dv\\
&&+\int_{\frac{2}{3}}^{\frac{u-2}{3}}\left(\frac{17-u-6v}{6}\right)^2dv
+\int_{\frac{u-2}{3}}^{\frac{11-u}{6}}\left(\frac{13+u-12v}{6}\right)^2dv\biggr)du\\
&+&\int_5^7\biggl(\int_0^{\frac{13-u}{12}}(2v)^2dv
+\int_{\frac{13-u}{12}}^{\frac{9-u}{4}}\left(\frac{13-u-4v}{4}\right)^2dv\biggr)du\\
&+&\int_7^9\int_0^{\frac{9-u}{4}}(2v)^2dvdu\Biggr)=\frac{25}{56}+\frac{13}{28}
=\frac{51}{56}. 
\end{eqnarray*}
In particular, we get the inequality
\begin{eqnarray*}
\delta_{p_v^1}\left(R^1;V^{\tilde{R}}_{\bullet,\bullet}\right)\geq
\min\left\{\frac{A_{R^1}\left(\tilde{s}_R\right)}
{S\left(V^{\tilde{R}}_{\bullet,\bullet};\tilde{s}_R\right)},\,\,\,
\inf_{p^{11}\in s_R^{11}}\frac{A_{s_R^{11}}\left(p^{11}\right)}
{S\left(W^{R^{11},s_R^{11}}_{\bullet,\bullet,\bullet}; p^{11}\right)}\right\}
=\min\left\{\frac{224}{207},\,\,\,\frac{56}{51}\right\}=\frac{224}{207}
\end{eqnarray*}
by Corollary \ref{reduction_corollary}.

\noindent\underline{\textbf{Step 13}}\\
Let us set $f_S^1:=\gamma_*\tilde{f}_S\subset R^1$. Moreover, set 
$\tilde{p}_{r_1}:=\tilde{r}_1|_{\tilde{f}_S}$, 
$\tilde{p}_{r_3}:=\tilde{r}_3|_{\tilde{f}_S}$, 
$p_{r_1}^1:=\gamma\left(\tilde{p}_{r_1}\right)$, 
$p_{r_3}^1:=\gamma\left(\tilde{p}_{r_3}\right)$. 
Note that $\tilde{p}_{r_1}$ and $\tilde{p}_{r_3}$ are mutually distinct 
reduced points. Moreover, the pair $\left(R^1, f_S^1\right)$ is plt with 
\[
\left(K_{R^1}+f_S^1\right)|_{f_S^1}=K_{f_S^1}+\frac{1}{2}p_v^1,\quad
\gamma^*f_S^1=\frac{1}{2}\tilde{s}_R+\frac{3}{2}\tilde{t}^++\tilde{f}_S. 
\]
Let us set 
\begin{eqnarray*}
P(u,v) &:=& P_\sigma\left(\tilde{R}, P(u)-v\tilde{f}_S\right), \\
N(u,v) &:=& N_\sigma\left(\tilde{R}, P(u)-v\tilde{f}_S\right),
\end{eqnarray*}
where $P(u)$, $N(u)$ are as in Step 11. 
\begin{itemize}
\item
Assume that $u\in[0,1]$. 
\begin{itemize}
\item
If $v\in\left[0,u\right]$, then 
\begin{eqnarray*}
N(u,v)&=&\left[\frac{v}{2},\frac{3v}{2},0,0,0,0,0,0,0,0,0,0\right], \\
P(u,v)&\sim_\R&\left[\frac{u-v-6}{2},\frac{3u-3v-20}{2},u-v-9,0,3,6,8,1,4,1,2,0\right],
\end{eqnarray*}
and 
\[
\left(P(u,v)^{\cdot 2}\right)=\frac{1}{2}(u-v)^2.
\]
\end{itemize}
\item
Assume that $u\in[1,2]$. 
\begin{itemize}
\item
If $v\in\left[0,\frac{7-u}{6}\right]$, then 
\begin{eqnarray*}
N(u,v)&=&\left[\frac{v}{2},\frac{3v}{2},0,0,0,0,0,0,0,0,0,0\right], \\
P(u,v)&\sim_\R&\biggl[\frac{u-v-6}{2},\frac{3u-3v-20}{2},u-v-9,0,
\frac{10-u}{3},\frac{20-2u}{3},\\
&&\quad 9-u,1,\frac{9-u}{2},1,2,0\biggr],
\end{eqnarray*}
and 
\[
\left(P(u,v)^{\cdot 2}\right)=\frac{1}{12}(-7+14u-u^2-12uv+6v^2).
\]
\item
If $v\in\left[\frac{7-u}{6},1\right]$, then 
\begin{eqnarray*}
N(u,v)&=&\left[\frac{v}{2},\frac{3v}{2},0,0,\frac{-7+u+6v}{6},\frac{-7+u+6v}{3},
0,0,0,\frac{-7+u+6v}{2},0,0\right], \\
P(u,v)&\sim_\R&\biggl[\frac{u-v-6}{2},\frac{3u-3v-20}{2},u-v-9,0,
\frac{9-u-2v}{2},9-u-2v,\\
&&\quad 9-u,1,\frac{9-u}{2},\frac{9-u-6v}{2},2,0\biggr],
\end{eqnarray*}
and 
\[
\left(P(u,v)^{\cdot 2}\right)=\frac{7}{2}(1-v)^2.
\]
\end{itemize}
\item
Assume that $u\in\left[2,\frac{11}{3}\right]$. 
\begin{itemize}
\item
If $v\in\left[0,\frac{u-2}{3}\right]$, then 
\begin{eqnarray*}
N(u,v)&=&\left[0,0,0,0,0,0,0,0,0,0,0,0\right], \\
P(u,v)&\sim_\R&\left[\frac{u-8}{3},u-9,u-v-9,0,
\frac{10-u}{3},\frac{20-2u}{3},9-u,1,\frac{9-u}{2},1,2,0\right],
\end{eqnarray*}
and 
\[
\left(P(u,v)^{\cdot 2}\right)=\frac{1}{12}(-15+22u-3u^2-24v-12v^2).
\]
\item
If $v\in\left[\frac{u-2}{3},\frac{7-u}{6}\right]$, then 
\begin{eqnarray*}
N(u,v)&=&\left[\frac{-u+3v+2}{6},\frac{-u+3v+2}{2},0,0,0,0,0,0,0,0,0,0\right], \\
P(u,v)&\sim_\R&\biggl[\frac{u-v-6}{2},\frac{3u-3v-20}{2},u-v-9,0,
\frac{10-u}{3},\frac{20-2u}{3},\\
&&\quad 9-u,1,\frac{9-u}{2},1,2,0\biggr],
\end{eqnarray*}
and 
\[
\left(P(u,v)^{\cdot 2}\right)=\frac{1}{12}(-7+14u-u^2-12uv+6v^2).
\]
\item
If $v\in\left[\frac{7-u}{6},1\right]$, then 
\begin{eqnarray*}
N(u,v)&=&\biggl[\frac{-u+3v+2}{6},\frac{-u+3v+2}{2},0,0,\frac{-7+u+6v}{6},
\frac{-7+u+6v}{3},\\
&&\quad 0,0,0,\frac{-7+u+6v}{2},0,0\biggr], \\
P(u,v)&\sim_\R&\biggl[\frac{u-v-6}{2},\frac{3u-3v-20}{2},u-v-9,0,
\frac{9-u-2v}{2},9-u-2v,\\
&&\quad 9-u,1,\frac{9-u}{2},\frac{9-u-6v}{2},2,0\biggr],
\end{eqnarray*}
and 
\[
\left(P(u,v)^{\cdot 2}\right)=\frac{7}{2}(1-v)^2.
\]
\end{itemize}
\item
Assume that $u\in\left[\frac{11}{3},5\right]$. 
\begin{itemize}
\item
If $v\in\left[0,\frac{7-u}{6}\right]$, then 
\begin{eqnarray*}
N(u,v)&=&\left[0,0,0,0,0,0,0,0,0,0,0,0\right], \\
P(u,v)&\sim_\R&\left[\frac{u-8}{3},u-9,u-v-9,0,
\frac{10-u}{3},\frac{20-2u}{3},9-u,1,\frac{9-u}{2},1,2,0\right],
\end{eqnarray*}
and 
\[
\left(P(u,v)^{\cdot 2}\right)=\frac{1}{12}(-15+22u-3u^2-24v-12v^2).
\]
\item
If $v\in\left[\frac{7-u}{6},\frac{u-2}{3}\right]$, then 
\begin{eqnarray*}
N(u,v)&=&\left[0,0,0,0,\frac{-7+u+6v}{6},\frac{-7+u+6v}{3},0,0,0,
\frac{-7+u+6v}{2},0,0\right], \\
P(u,v)&\sim_\R&\biggl[\frac{u-8}{3},u-9,u-v-9,0,
\frac{9-u-2v}{2},9-u-2v,\\
&&\quad 9-u,1,\frac{9-u}{2},\frac{9-u-6v}{2},2,0\biggr],
\end{eqnarray*}
and 
\[
\left(P(u,v)^{\cdot 2}\right)=\frac{1}{6}(17+4u-u^2-54v+6uv+12v^2).
\]
\item
If $v\in\left[\frac{u-2}{3},1\right]$, then 
\begin{eqnarray*}
N(u,v)&=&\biggl[\frac{-u+3v+2}{6},\frac{-u+3v+2}{2},0,0,\frac{-7+u+6v}{6},
\frac{-7+u+6v}{3},\\
&&\quad 0,0,0,\frac{-7+u+6v}{2},0,0\biggr], \\
P(u,v)&\sim_\R&\biggl[\frac{u-v-6}{2},\frac{3u-3v-20}{2},u-v-9,0,
\frac{9-u-2v}{2},9-u-2v,\\
&&\quad 9-u,1,\frac{9-u}{2},\frac{9-u-6v}{2},2,0\biggr],
\end{eqnarray*}
and 
\[
\left(P(u,v)^{\cdot 2}\right)=\frac{7}{2}(1-v)^2.
\]
\end{itemize}
\item
Assume that $u\in[5,7]$. 
\begin{itemize}
\item
If $v\in\left[0,\frac{7-u}{6}\right]$, then 
\begin{eqnarray*}
N(u,v)&=&\left[0,0,0,0,0,0,0,0,0,0,0,0\right], \\
P(u,v)&\sim_\R&\biggl[\frac{u-9}{4},u-9,u-v-9,0,
\frac{10-u}{3},\frac{20-2u}{3},\\
&&\quad 9-u,\frac{9-u}{4},\frac{9-u}{2},1,\frac{9-u}{2},0\biggr],
\end{eqnarray*}
and 
\[
\left(P(u,v)^{\cdot 2}\right)=\frac{1}{24}(145-26u+u^2-48v-24v^2).
\]
\item
If $v\in\left[\frac{7-u}{6},\frac{9-u}{4}\right]$, then 
\begin{eqnarray*}
N(u,v)&=&\left[0,0,0,0,\frac{-7+u+6v}{6},\frac{-7+u+6v}{3},
0,0,0,\frac{-7+u+6v}{2},0,0\right], \\
P(u,v)&\sim_\R&\biggl[\frac{u-9}{4},u-9,u-v-9,0,
\frac{9-u-2v}{2},9-u-2v,\\
&&\quad 9-u,\frac{9-u}{4},
\frac{9-u}{2},\frac{9-u-6v}{2},\frac{9-u}{2},0\biggr],
\end{eqnarray*}
and 
\[
\left(P(u,v)^{\cdot 2}\right)=\frac{1}{8}(9-u-4v)^2.
\]
\end{itemize}
\item
Assume that $u\in[7,9]$. 
\begin{itemize}
\item
If $v\in\left[0,\frac{9-u}{4}\right]$, then 
\begin{eqnarray*}
N(u,v)&=&\left[0,0,0,0,v,2v,
0,0,0,3v,0,0\right], \\
P(u,v)&\sim_\R&\biggl[\frac{u-9}{4},u-9,u-v-9,0,
\frac{9-u-2v}{2},9-u-2v,\\
&&\quad 9-u,\frac{9-u}{4},
\frac{9-u}{2},\frac{9-u-6v}{2},\frac{9-u}{2},0\biggr],
\end{eqnarray*}
and 
\[
\left(P(u,v)^{\cdot 2}\right)=\frac{1}{8}(9-u-4v)^2.
\]
\end{itemize}
\end{itemize}
Therefore we get 
\begin{eqnarray*}
&&S\left(V^{\tilde{R}}_{\bullet,\bullet}; \tilde{f}_S\right)\\
&=&\frac{3}{28}\Biggl(\int_0^1\int_0^u\frac{1}{2}(u-v)^2dvdu\\
&+&\int_1^2\biggl(\int_0^{\frac{7-u}{6}}\frac{1}{12}(-7+14u-u^2-12uv+6v^2)dv
+\int_{\frac{7-u}{6}}^1\frac{7}{2}(1-v)^2dv\biggr)du\\
&+&\int_2^{\frac{11}{3}}\biggl(\int_0^{\frac{u-2}{3}}
\frac{1}{12}(-15+22u-3u^2-24v-12v^2)dv\\
&&+\int_{\frac{u-2}{3}}^{\frac{7-u}{6}}\frac{1}{12}(-7+14u-u^2-12uv+6v^2)dv
+\int_{\frac{7-u}{6}}^1\frac{7}{2}(1-v)^2dv\biggr)du\\
&+&\int_{\frac{11}{3}}^5\biggl(\int_0^{\frac{7-u}{6}}
\frac{1}{12}(-15+22u-3u^2-24v-12v^2)dv\\
&&+\int_{\frac{7-u}{6}}^{\frac{u-2}{3}}\frac{1}{6}(17+4u-u^2-54v+6uv+12v^2)dv
+\int_{\frac{u-2}{3}}^1\frac{7}{2}(1-v)^2dv\biggr)du\\
&+&\int_5^7\biggl(\int_0^{\frac{7-u}{6}}\frac{1}{24}(145-26u+u^2-48v-24v^2)dv
+\int_{\frac{7-u}{6}}^{\frac{9-u}{4}}\frac{1}{8}(9-u-4v)^2dv\biggr)du\\
&+&\int_7^9\int_0^{\frac{9-u}{4}}\frac{1}{8}(9-u-4v)^2dvdu\Biggr)
=\frac{3}{8}. 
\end{eqnarray*}
Moreover, we have
\begin{eqnarray*}
&&F_{p_{r_1}^1}\left(W^{R^1, f_S^1}_{\bullet,\bullet,\bullet}\right)\\
&=&\frac{6}{28}\Biggl(\int_1^{\frac{11}{3}}\int_{\frac{7-u}{6}}^1
\frac{7}{2}(1-v)\frac{-7+u+6v}{2}dvdu\\
&+&\int_{\frac{11}{3}}^5\biggl(\int_{\frac{7-u}{6}}^{\frac{u-2}{3}}
\frac{9-u-4v}{2}\cdot\frac{-7+u+6v}{2}dv+\int_{\frac{u-2}{3}}^1
\frac{7}{2}(1-v)\frac{-7+u+6v}{2}dv\biggr)du\\
&+&\int_5^7\int_{\frac{7-u}{6}}^{\frac{9-u}{4}}
\frac{9-u-4v}{2}\cdot\frac{-7+u+6v}{2}dvdu
+\int_7^9\int_0^{\frac{9-u}{4}}\frac{9-u-4v}{2}\cdot\frac{-7+u+6v}{2}dvdu\Biggr)\\
&=&\frac{103}{504}
\end{eqnarray*}
and $F_{p_{r_3}^1}\left(W^{R^1, f_S^1}_{\bullet,\bullet,\bullet}\right)=0$. 
Thus, for any closed point $p^1\in f_S^1\setminus\left\{p_v^1\right\}$, we have 
\begin{eqnarray*}
&&S\left(W^{{R^1},f_S^1}_{\bullet,\bullet,\bullet}; p^1\right)\\
&\leq&\frac{103}{504}+\frac{3}{28}\Biggl(\int_0^1\int_0^u
\left(\frac{u-v}{2}\right)^2dvdu\\
&+&\int_1^2\biggl(\int_0^{\frac{7-u}{6}}\left(\frac{u-v}{2}\right)^2dv
+\int_{\frac{7-u}{6}}^1\left(\frac{7}{2}(1-v)\right)^2dv\biggr)du\\
&+&\int_2^{\frac{11}{3}}\biggl(\int_0^{\frac{u-2}{3}}(1+v)^2dv
+\int_{\frac{u-2}{3}}^{\frac{7-u}{6}}\left(\frac{u-v}{2}\right)^2dv
+\int_{\frac{7-u}{6}}^1\left(\frac{7}{2}(1-v)\right)^2dv\biggr)du\\
&+&\int_{\frac{11}{3}}^5\biggl(\int_0^{\frac{7-u}{6}}(1+v)^2dv
+\int_{\frac{7-u}{6}}^{\frac{u-2}{3}}\left(\frac{9-u-4v}{2}\right)^2dv
+\int_{\frac{u-2}{3}}^1\left(\frac{7}{2}(1-v)\right)^2dv\biggr)du\\
&+&\int_5^7\biggl(\int_0^{\frac{7-u}{6}}(1+v)^2dv
+\int_{\frac{7-u}{6}}^{\frac{9-u}{4}}\left(\frac{9-u-4v}{2}\right)^2dv\biggr)du\\
&+&\int_7^9\int_0^{\frac{9-u}{4}}\left(\frac{9-u-4v}{2}\right)^2dvdu\Biggr)
=\frac{103}{504}+\frac{487}{1008}=\frac{11}{16}.
\end{eqnarray*}
In particular, we get the inequality
\begin{eqnarray*}
\delta_{p^1}\left(R^1; V^{\tilde{R}}_{\bullet,\bullet}\right)\geq
\min\left\{\frac{A_{R^1}\left(f_S^1\right)}
{S\left(V^{\tilde{R}}_{\bullet,\bullet};\tilde{f}_S\right)},\,\,\,
\frac{A_{f_S^1,\frac{1}{2}p_t^1}\left(p^1\right)}
{S\left(W^{R^1,f_S^1}_{\bullet,\bullet,\bullet}; p^1\right)}\right\}
\geq\min\left\{\frac{8}{3},\,\,\,\frac{16}{11}\right\}=\frac{16}{11}
\end{eqnarray*}
by Corollary \ref{reduction_corollary}.

\noindent\underline{\textbf{Step 14}}\\
Let us set $f_R^1:=\gamma_*\tilde{f}_R$. Note that the pair 
$(R^1,f_R^1)$ is plt and $\left(K_{R^1}+f_R^1\right)|_{f_R^1}=K_{f_R^1}+(1/2)p_v^1$. 
Let us set 
\begin{eqnarray*}
P(u,v) &:=& P_\sigma\left(\tilde{R}, P(u)-v\tilde{f}_R\right), \\
N(u,v) &:=& N_\sigma\left(\tilde{R}, P(u)-v\tilde{f}_R\right),
\end{eqnarray*}
where $P(u)$, $N(u)$ are as in Step 11. 
\begin{itemize}
\item
Assume that $u\in[0,1]$. 
\begin{itemize}
\item
If $v\in\left[0,u\right]$, then 
\begin{eqnarray*}
N(u,v)&=&\left[\frac{v}{2},\frac{v}{2},0,0,v,v,v,0,0,0,0,0\right], \\
P(u,v)&\sim_\R&\left[\frac{u-v-6}{2},\frac{3u-v-20}{2},u-9,-v,3-v,6-v,
8-v,1,4,1,2,0\right],
\end{eqnarray*}
and 
\[
\left(P(u,v)^{\cdot 2}\right)=\frac{1}{2}(u-v)^2.
\]
\end{itemize}
\item
Assume that $u\in[1,2]$. 
\begin{itemize}
\item
If $v\in\left[0,u-1\right]$, then 
\begin{eqnarray*}
N(u,v)&=&\left[\frac{v}{2},\frac{v}{2},0,0,\frac{2v}{3},\frac{v}{3},0,0,0,0,0,0\right], \\
P(u,v)&\sim_\R&\biggl[\frac{u-v-6}{2},\frac{3u-v-20}{2},u-9,-v,
\frac{10-u-2v}{3},\frac{20-2u-v}{3},\\
&&\quad 9-u,1,\frac{9-u}{2},1,2,0\biggr],
\end{eqnarray*}
and 
\[
\left(P(u,v)^{\cdot 2}\right)=\frac{1}{12}(-7+14u-u^2-8v-4uv+2v^2).
\]
\item
If $v\in\left[u-1,1\right]$, then 
\begin{eqnarray*}
N(u,v)&=&\left[\frac{v}{2},\frac{v}{2},0,0,\frac{1-u+3v}{3},\frac{2-2u+3v}{3},
1-u+v,0,0,0,0,0\right], \\
P(u,v)&\sim_\R&\biggl[\frac{u-v-6}{2},\frac{3u-v-20}{2},u-9,-v,
3-v,6-v,\\
&&\quad 8-v,1,\frac{9-u}{2},1,2,0\biggr],
\end{eqnarray*}
and 
\[
\left(P(u,v)^{\cdot 2}\right)=\frac{1}{4}(-1+2u+u^2-4uv+2v^2).
\]
\item
If $v\in\left[1,\frac{1+u}{2}\right]$, then 
\begin{eqnarray*}
N(u,v)&=&\left[\frac{2v-1}{2},\frac{2v-1}{2},0,0,\frac{1-u+3v}{3},\frac{2-2u+3v}{3},
1-u+v,v-1,0,0,0,0\right], \\
P(u,v)&\sim_\R&\biggl[\frac{u-v-6}{2},\frac{3u-v-20}{2},u-9,-v,
3-v,6-v,\\
&&\quad 8-v,2-v,\frac{9-u}{2},1,2,0\biggr],
\end{eqnarray*}
and 
\[
\left(P(u,v)^{\cdot 2}\right)=\frac{1}{4}(1+u-2v)^2.
\]
\end{itemize}
\item
Assume that $u\in\left[2,3\right]$. 
\begin{itemize}
\item
If $v\in\left[0,u-2\right]$, then 
\begin{eqnarray*}
N(u,v)&=&\left[\frac{v}{3},0,0,0,\frac{2v}{3},\frac{v}{3},0,0,0,0,0,0\right], \\
P(u,v)&\sim_\R&\left[\frac{u-v-8}{3},u-9,u-9,-v,
\frac{10-u-2v}{3},\frac{20-2u-v}{3},9-u,1,\frac{9-u}{2},1,2,0\right],
\end{eqnarray*}
and 
\[
\left(P(u,v)^{\cdot 2}\right)=\frac{1}{12}(-15+22u-3u^2-16v).
\]
\item
If $v\in\left[u-2,1\right]$, then 
\begin{eqnarray*}
N(u,v)&=&\left[\frac{-u+3v+2}{6},\frac{-u+v+2}{2},0,0,\frac{2v}{3},\frac{v}{3},
0,0,0,0,0,0\right], \\
P(u,v)&\sim_\R&\biggl[\frac{u-v-6}{2},\frac{3u-v-20}{2},u-9,-v,
\frac{10-u-2v}{3},\frac{20-2u-v}{3},\\
&&\quad 9-u,1,\frac{9-u}{2},1,2,0\biggr],
\end{eqnarray*}
and 
\[
\left(P(u,v)^{\cdot 2}\right)=\frac{1}{12}(-7+14u-u^2-8v-4uv+2v^2).
\]
\item
If $v\in\left[1,u-1\right]$, then 
\begin{eqnarray*}
N(u,v)&=&\biggl[\frac{-u+6v-1}{6},\frac{-u+2v+1}{2},0,0,\frac{2v}{3},
\frac{v}{3},\\
&&\quad 0,v-1,0,0,0,0\biggr], \\
P(u,v)&\sim_\R&\biggl[\frac{u-2v-5}{2},\frac{3u-2v-19}{2},u-9,-v,
\frac{10-u-2v}{3},\frac{20-2u-v}{3},\\
&&\quad 9-u,2-v,\frac{9-u}{2},1,2,0\biggr],
\end{eqnarray*}
and 
\[
\left(P(u,v)^{\cdot 2}\right)=\frac{1}{12}(-1+14u-u^2-20v-4uv+8v^2).
\]
\item
If $v\in\left[u-1,\frac{1+u}{2}\right]$, then 
\begin{eqnarray*}
N(u,v)&=&\biggl[\frac{-u+6v-1}{6},\frac{-u+2v+1}{2},0,0,\frac{1-u+3v}{3},
\frac{2-2u+3v}{3},\\
&&\quad 1-u+v,v-1,0,0,0,0\biggr], \\
P(u,v)&\sim_\R&\biggl[\frac{u-2v-5}{2},\frac{3u-2v-19}{2},u-9,-v,
3-v,6-v,\\
&&\quad 8-v,2-v,\frac{9-u}{2},1,2,0\biggr],
\end{eqnarray*}
and 
\[
\left(P(u,v)^{\cdot 2}\right)=\frac{1}{4}(1+u-2v)^2.
\]
\end{itemize}
\item
Assume that $u\in\left[3,4\right]$. 
\begin{itemize}
\item
If $v\in\left[0,\frac{5-u}{2}\right]$, then 
\begin{eqnarray*}
N(u,v)&=&\left[\frac{v}{3},0,0,0,\frac{2v}{3},\frac{v}{3},0,0,0,0,0,0\right], \\
P(u,v)&\sim_\R&\left[\frac{u-v-8}{3},u-9,u-9,-v,
\frac{10-u-2v}{3},\frac{20-2u-v}{3},9-u,1,\frac{9-u}{2},1,2,0\right],
\end{eqnarray*}
and 
\[
\left(P(u,v)^{\cdot 2}\right)=\frac{1}{12}(-15+22u-3u^2-16v).
\]
\item
If $v\in\left[\frac{5-u}{2},\frac{u-1}{2}\right]$, then 
\begin{eqnarray*}
N(u,v)&=&\left[\frac{u-5+6v}{12},0,0,0,\frac{2v}{3},\frac{v}{3},
0,\frac{-5+u+2v}{4},0,0,0,0\right], \\
P(u,v)&\sim_\R&\biggl[\frac{u-2v-9}{4},u-9,u-9,-v,
\frac{10-u-2v}{3},\frac{20-2u-v}{3},\\
&&\quad 9-u,\frac{9-u-2v}{4},\frac{9-u}{2},1,2,0\biggr],
\end{eqnarray*}
and 
\[
\left(P(u,v)^{\cdot 2}\right)=\frac{1}{24}(-5+34u-5u^2-52v+4uv+4v^2).
\]
\item
If $v\in\left[\frac{u-1}{2},\frac{7-u}{2}\right]$, then 
\begin{eqnarray*}
N(u,v)&=&\biggl[\frac{-u+6v-1}{6},\frac{-u+2v+1}{2},0,0,\frac{2v}{3},
\frac{v}{3},\\
&&\quad 0,v-1,0,0,0,0\biggr], \\
P(u,v)&\sim_\R&\biggl[\frac{u-2v-5}{2},\frac{3u-2v-19}{2},u-9,-v,
\frac{10-u-2v}{3},\frac{20-2u-v}{3},\\
&&\quad 9-u,2-v,\frac{9-u}{2},1,2,0\biggr],
\end{eqnarray*}
and 
\[
\left(P(u,v)^{\cdot 2}\right)=\frac{1}{12}(-1+14u-u^2-20v-4uv+8v^2).
\]
\item
If $v\in\left[\frac{7-u}{2},2\right]$, then 
\begin{eqnarray*}
N(u,v)&=&\biggl[\frac{-u+6v-1}{6},\frac{-u+2v+1}{2},0,0,\frac{u+6v-7}{6},
\frac{u+3v-7}{3},\\
&&\quad 0,v-1,0,\frac{-7+u+2v}{2},0,0\biggr], \\
P(u,v)&\sim_\R&\biggl[\frac{u-2v-5}{2},\frac{3u-2v-19}{2},u-9,-v,
\frac{9-u-2v}{2},9-u-v,\\
&&\quad 9-u,2-v,\frac{9-u}{2},\frac{9-u-2v}{2},2,0\biggr],
\end{eqnarray*}
and 
\[
\left(P(u,v)^{\cdot 2}\right)=(2-v)^2.
\]
\end{itemize}
\item
Assume that $u\in\left[4,5\right]$. 
\begin{itemize}
\item
If $v\in\left[0,\frac{5-u}{2}\right]$, then 
\begin{eqnarray*}
N(u,v)&=&\left[\frac{v}{3},0,0,0,\frac{2v}{3},\frac{v}{3},0,0,0,0,0,0\right], \\
P(u,v)&\sim_\R&\left[\frac{u-v-8}{3},u-9,u-9,-v,
\frac{10-u-2v}{3},\frac{20-2u-v}{3},9-u,1,\frac{9-u}{2},1,2,0\right],
\end{eqnarray*}
and 
\[
\left(P(u,v)^{\cdot 2}\right)=\frac{1}{12}(-15+22u-3u^2-16v).
\]
\item
If $v\in\left[\frac{5-u}{2},\frac{7-u}{2}\right]$, then 
\begin{eqnarray*}
N(u,v)&=&\left[\frac{u-5+6v}{12},0,0,0,\frac{2v}{3},\frac{v}{3},
0,\frac{-5+u+2v}{4},0,0,0,0\right], \\
P(u,v)&\sim_\R&\biggl[\frac{u-2v-9}{4},u-9,u-9,-v,
\frac{10-u-2v}{3},\frac{20-2u-v}{3},\\
&&\quad 9-u,\frac{9-u-2v}{4},\frac{9-u}{2},1,2,0\biggr],
\end{eqnarray*}
and 
\[
\left(P(u,v)^{\cdot 2}\right)=\frac{1}{24}(-5+34u-5u^2-52v+4uv+4v^2).
\]
\item
If $v\in\left[\frac{7-u}{2},\frac{u-1}{2}\right]$, then 
\begin{eqnarray*}
N(u,v)&=&\biggl[\frac{u-5+6v}{12},0,0,0,\frac{u+6v-7}{6},
\frac{u+3v-7}{3},\\
&&\quad 0,\frac{-5+u+2v}{4},0,\frac{-7+u+2v}{2},0,0\biggr], \\
P(u,v)&\sim_\R&\biggl[\frac{u-2v-9}{4},u-9,u-9,-v,
\frac{9-u-2v}{2},9-u-v,\\
&&\quad 9-u,\frac{9-u-2v}{4},\frac{9-u}{2},\frac{9-u-2v}{2},2,0\biggr],
\end{eqnarray*}
and 
\[
\left(P(u,v)^{\cdot 2}\right)=\frac{1}{8}(31+2u-u^2-36v+4uv+4v^2).
\]
\item
If $v\in\left[\frac{u-1}{2},2\right]$, then 
\begin{eqnarray*}
N(u,v)&=&\biggl[\frac{-u+6v-1}{6},\frac{-u+2v+1}{2},0,0,\frac{u+6v-7}{6},
\frac{u+3v-7}{3},\\
&&\quad 0,v-1,0,\frac{-7+u+2v}{2},0,0\biggr], \\
P(u,v)&\sim_\R&\biggl[\frac{u-2v-5}{2},\frac{3u-2v-19}{2},u-9,-v,
\frac{9-u-2v}{2},9-u-v,\\
&&\quad 9-u,2-v,\frac{9-u}{2},\frac{9-u-2v}{2},2,0\biggr],
\end{eqnarray*}
and 
\[
\left(P(u,v)^{\cdot 2}\right)=(2-v)^2.
\]
\end{itemize}
\item
Assume that $u\in[5,7]$. 
\begin{itemize}
\item
If $v\in\left[0,\frac{7-u}{2}\right]$, then 
\begin{eqnarray*}
N(u,v)&=&\left[\frac{v}{2},0,0,0,\frac{2v}{3},\frac{v}{3},0,\frac{v}{2},0,0,0,0\right], \\
P(u,v)&\sim_\R&\biggl[\frac{u-2v-9}{4},u-9,u-9,-v,
\frac{10-u-2v}{3},\frac{20-2u-v}{3},\\
&&\quad 9-u,\frac{9-u-2v}{4},\frac{9-u}{2},1,\frac{9-u}{2},0\biggr],
\end{eqnarray*}
and 
\[
\left(P(u,v)^{\cdot 2}\right)=\frac{1}{24}(145-26u+u^2-52v+4uv+4v^2).
\]
\item
If $v\in\left[\frac{7-u}{2},\frac{9-u}{2}\right]$, then 
\begin{eqnarray*}
N(u,v)&=&\left[\frac{v}{2},0,0,0,\frac{u+6v-7}{6},\frac{u+3v-7}{3},
0,\frac{v}{2},0,\frac{-7+u+2v}{2},0,0\right], \\
P(u,v)&\sim_\R&\biggl[\frac{u-2v-9}{4},u-9,u-9,-v,
\frac{9-u-2v}{2},9-u-v,\\
&&\quad 9-u,\frac{9-u-2v}{4},\frac{9-u}{2},\frac{9-u-2v}{2},\frac{9-u}{2},0\biggr],
\end{eqnarray*}
and 
\[
\left(P(u,v)^{\cdot 2}\right)=\frac{1}{8}(9-u-2v)^2.
\]
\end{itemize}
\item
Assume that $u\in[7,9]$. 
\begin{itemize}
\item
If $v\in\left[0,\frac{9-u}{2}\right]$, then 
\begin{eqnarray*}
N(u,v)&=&\left[\frac{v}{2},0,0,0,v,v,
0,\frac{v}{2},0,v,0,0\right], \\
P(u,v)&\sim_\R&\biggl[\frac{u-2v-9}{4},u-9,u-9,-v,
\frac{9-u-2v}{2},9-u-v,\\
&&\quad 9-u,\frac{9-u-2v}{4},
\frac{9-u}{2},\frac{9-u-2v}{2},\frac{9-u}{2},0\biggr],
\end{eqnarray*}
and 
\[
\left(P(u,v)^{\cdot 2}\right)=\frac{1}{8}(9-u-2v)^2.
\]
\end{itemize}
\end{itemize}
Therefore, we get 
\begin{eqnarray*}
&&S\left(V^{\tilde{R}}_{\bullet,\bullet}; \tilde{f}_R\right)\\
&=&\frac{3}{28}\Biggl(\int_0^1\int_0^u\frac{1}{2}(u-v)^2dvdu\\
&+&\int_1^2\biggl(\int_0^{u-1}\frac{1}{12}(-7+14u-u^2-8v-4uv+2v^2)dv\\
&&+\int_{u-1}^1\frac{1}{4}(-1+2u+u^2-4uv+2v^2)dv
+\int_1^{\frac{1+u}{2}}\frac{1}{4}(1+u-2v)^2dv\biggr)du\\
&+&\int_2^3\biggl(\int_0^{u-1}\frac{1}{12}(-15+22u-3u^2-16v)dv\\
&&+\int_{u-2}^1\frac{1}{12}(-7+14u-u^2-8v-4uv+2v^2)dv\\
&&+\int_1^{u-1}\frac{1}{12}(-1+14u-u^2-20v-4uv+8v^2)dv
+\int_{u-1}^{\frac{1+u}{2}}\frac{1}{4}(1+u-2v)^2dv\biggr)du\\
&+&\int_3^4\biggl(\int_0^{\frac{5-u}{2}}\frac{1}{12}(-15+22u-3u^2-16v)dv\\
&&+\int_{\frac{5-u}{2}}^{\frac{u-1}{2}}\frac{1}{24}(-5+34u-5u^2-52v+4uv+4v^2)dv\\
&&+\int_{\frac{u-1}{2}}^{\frac{7-u}{2}}\frac{1}{12}(-1+14u-u^2-20v-4uv+8v^2)dv
+\int_{\frac{7-u}{2}}^2(2-v)^2dv\biggr)du\\
&+&\int_4^5\biggl(\int_0^{\frac{5-u}{2}}\frac{1}{12}(-15+22u-3u^2-16v)dv\\
&&+\int_{\frac{5-u}{2}}^{\frac{7-u}{2}}\frac{1}{24}(-5+34u-5u^2-52v+4uv+4v^2)dv\\
&&+\int_{\frac{7-u}{2}}^{\frac{u-1}{2}}\frac{1}{8}(31+2u-u^2-36v+4uv+4v^2)dv
+\int_{\frac{u-1}{2}}^2(2-v)^2dv\biggr)du\\
&+&\int_5^7\biggl(\int_0^{\frac{7-u}{2}}\frac{1}{24}(145-26u+u^2-52v+4uv+4v^2)dv
+\int_{\frac{7-u}{2}}^{\frac{9-u}{2}}\frac{1}{8}(9-u-2v)^2dv\biggr)du\\
&+&\int_7^9\int_0^{\frac{9-u}{2}}\frac{1}{8}(9-u-2v)^2dvdu\Biggr)
=\frac{75}{112}. 
\end{eqnarray*}
On the other hand, for any closed point 
$p^1\in f_R^1\setminus\left\{p_v^1, p_h^1\right\}$, we have
\begin{eqnarray*}
&&S\left(W^{{R^1},f_R^1}_{\bullet,\bullet,\bullet}; p^1\right)\\
&\leq&\frac{3}{28}\Biggl(\int_0^1\int_0^u\left(\frac{u-v}{2}\right)^2dvdu\\
&+&\int_1^2\biggl(\int_0^{u-1}\left(\frac{2+u-v}{6}\right)^2dv
+\int_{u-1}^1\left(\frac{u-v}{2}\right)^2dv
+\int_1^{\frac{1+u}{2}}\left(\frac{1+u-2v}{2}\right)^2dv\biggr)du\\
&+&\int_2^3\biggl(\int_0^{u-2}\left(\frac{2}{3}\right)^2dv
+\int_{u-2}^1\left(\frac{2+u-v}{6}\right)^2dv\\
&&+\int_1^{u-1}\left(\frac{5+u-4v}{6}\right)^2dv
+\int_{u-1}^{\frac{1+u}{2}}\left(\frac{1+u-2v}{2}\right)^2dv\biggr)du\\
&+&\int_3^4\biggl(\int_0^{\frac{5-u}{2}}\left(\frac{2}{3}\right)^2dv
+\int_{\frac{5-u}{2}}^{\frac{u-1}{2}}\left(\frac{13-u-2v}{12}\right)^2dv\\
&&+\int_{\frac{u-1}{2}}^{\frac{7-u}{2}}\left(\frac{5+u-4v}{6}\right)^2dv
+\int_{\frac{7-u}{2}}^2(2-v)^2dv\biggr)du\\
&+&\int_4^5\biggl(\int_0^{\frac{5-u}{2}}\left(\frac{2}{3}\right)^2dv
+\int_{\frac{5-u}{2}}^{\frac{7-u}{2}}\left(\frac{13-u-2v}{12}\right)^2dv\\
&&+\int_{\frac{7-u}{2}}^{\frac{u-1}{2}}\left(\frac{9-u-2v}{4}\right)^2dv
+\int_{\frac{u-1}{2}}^2(2-v)^2dv\biggr)du\\
&+&\int_5^7\biggl(\int_0^{\frac{7-u}{2}}\left(\frac{13-u-2v}{12}\right)^2dv
+\int_{\frac{7-u}{2}}^{\frac{9-u}{2}}\left(\frac{9-u-2v}{4}\right)^2dv\biggr)du\\
&+&\int_7^9\int_0^{\frac{9-u}{2}}\left(\frac{9-u-2v}{4}\right)^2dvdu\Biggr)
=\frac{29}{112}. 
\end{eqnarray*}
Therefore, we get the inequality
\begin{eqnarray*}
\delta_{p^1}\left(R^1; V^{\tilde{R}}_{\bullet,\bullet}\right)\geq
\min\left\{\frac{A_{R^1}\left(f_R^1\right)}
{S\left(V^{\tilde{R}}_{\bullet,\bullet};\tilde{f}_R\right)},\,\,\,
\frac{A_{f_R^1,\frac{1}{2}p_v^1}\left(p^1\right)}
{S\left(W^{R^1,f_R^1}_{\bullet,\bullet,\bullet}; p^1\right)}\right\}
=\min\left\{\frac{112}{75},\,\,\,\frac{112}{29}\right\}=\frac{112}{75}
\end{eqnarray*}
by Corollary \ref{reduction_corollary}.

\noindent\underline{\textbf{Step 15}}\\
Let $\theta\colon\hat{R}\to R^1$ be the extraction of the divisor 
$\tilde{h}_2\subset\tilde{R}$, let $\gamma_h\colon\tilde{R}\to \hat{R}$ be the 
natural morphism, and let us set $\hat{h}_2:=(\gamma_h)_*\tilde{h}_2$. 
Let us set 
$\tilde{p}_{h,5}:=\tilde{h}_1|_{\tilde{h}_2}$, 
$\tilde{p}_{h,7}:=\tilde{l}^+|_{\tilde{h}_2}$, 
$\tilde{p}_{h,10}:=\tilde{r}_1|_{\tilde{h}_2}$. 
Moreover, we set 
$\hat{p}_{h,5}:=\gamma_h\left(\tilde{p}_{h,5}\right)$, 
$\hat{p}_{h,7}:=\gamma_h\left(\tilde{p}_{h,7}\right)$, 
$\hat{p}_{h,10}:=\gamma_h\left(\tilde{p}_{h,10}\right)$. 
We know that the morphism $\theta$ is a plt-blowup with 
\[
A_{R^1}\left(\tilde{h}_2\right)=3,\quad 
\left(K_{\hat{R}}+\hat{h}_2\right)|_{\hat{h}_2}=K_{\hat{h}_2}+\frac{1}{2}\hat{p}_{h,5}, \quad
(\gamma_h)^*\hat{h}_2=\frac{1}{2}\tilde{h}_1+\tilde{h}_2+\tilde{l}^+. 
\]
Let us set 
\begin{eqnarray*}
P(u,v) &:=& P_\sigma\left(\tilde{R}, P(u)-v\tilde{h}_2\right), \\
N(u,v) &:=& N_\sigma\left(\tilde{R}, P(u)-v\tilde{h}_2\right),
\end{eqnarray*}
where $P(u)$, $N(u)$ are as in Step 11. 
\begin{itemize}
\item
Assume that $u\in[0,1]$. 
\begin{itemize}
\item
If $v\in\left[0,u\right]$, then 
\begin{eqnarray*}
N(u,v)&=&\left[0,0,0,0,\frac{v}{2},0,v,0,0,0,0,0\right], \\
P(u,v)&\sim_\R&\left[\frac{u-6}{2},\frac{3u-20}{2},u-9,0,\frac{6-v}{2},6-v,
8-v,1,4,1,2,0\right],
\end{eqnarray*}
and 
\[
\left(P(u,v)^{\cdot 2}\right)=\frac{1}{2}(u-v)(u+v).
\]
\end{itemize}
\item
Assume that $u\in[1,2]$. 
\begin{itemize}
\item
If $v\in\left[0,\frac{u-1}{3}\right]$, then 
\begin{eqnarray*}
N(u,v)&=&\left[0,0,0,0,\frac{v}{2},0,0,0,0,0,0,0\right], \\
P(u,v)&\sim_\R&\biggl[\frac{u-6}{2},\frac{3u-20}{2},u-9,0,
\frac{20-2u-3v}{6},\frac{20-2u-3v}{3},\\
&&\quad 9-u,1,\frac{9-u}{2},1,2,0\biggr],
\end{eqnarray*}
and 
\[
\left(P(u,v)^{\cdot 2}\right)=\frac{1}{12}(-7+14u-u^2-18v^2).
\]
\item
If $v\in\left[\frac{u-1}{3},\frac{7-u}{6}\right]$, then 
\begin{eqnarray*}
N(u,v)&=&\left[0,0,0,0,\frac{v}{2},0,
\frac{-u+1+3v}{3},0,0,0,0,0\right], \\
P(u,v)&\sim_\R&\biggl[\frac{u-6}{2},\frac{3u-20}{2},u-9,0,
\frac{20-2u-3v}{6},\frac{20-2u-3v}{3},\\
&&\quad \frac{26-2u-3v}{3},1,\frac{9-u}{2},1,2,0\biggr],
\end{eqnarray*}
and 
\[
\left(P(u,v)^{\cdot 2}\right)=\frac{1}{36}(-17+34u+u^2+24v-24uv-18v^2).
\]
\item
If $v\in\left[\frac{7-u}{6},\frac{2+u}{3}\right]$, then 
\begin{eqnarray*}
N(u,v)&=&\left[0,0,0,0,\frac{v}{2},0,
\frac{-u+1+3v}{3},0,0,\frac{-7+u+6v}{6},0,0\right], \\
P(u,v)&\sim_\R&\biggl[\frac{u-6}{2},\frac{3u-20}{2},u-9,0,
\frac{20-2u-3v}{6},\frac{20-2u-3v}{3},\\
&&\quad \frac{26-2u-3v}{3},1,\frac{9-u}{2},\frac{13-u-6v}{6},2,0\biggr],
\end{eqnarray*}
and 
\[
\left(P(u,v)^{\cdot 2}\right)=\frac{1}{18}(2+u-3v)(8+u-3v).
\]
\end{itemize}
\item
Assume that $u\in\left[2,3\right]$. 
\begin{itemize}
\item
If $v\in\left[0,\frac{u-1}{3}\right]$, then 
\begin{eqnarray*}
N(u,v)&=&\left[0,0,0,0,\frac{v}{2},0,0,0,0,0,0,0\right], \\
P(u,v)&\sim_\R&\biggl[\frac{u-8}{3},u-9,u-9,0,
\frac{20-2u-3v}{6},\frac{20-2u-3v}{3},9-u,1,\frac{9-u}{2},1,2,0\biggr],
\end{eqnarray*}
and 
\[
\left(P(u,v)^{\cdot 2}\right)=\frac{1}{12}(-15+22u-3u^2-18v^2).
\]
\item
If $v\in\left[\frac{u-1}{3},\frac{7-u}{6}\right]$, then 
\begin{eqnarray*}
N(u,v)&=&\left[0,0,0,0,\frac{v}{2},0,\frac{-u+1+3v}{3},0,0,0,0,0\right], \\
P(u,v)&\sim_\R&\biggl[\frac{u-8}{3},u-9,u-9,0,
\frac{20-2u-3v}{6},\frac{20-2u-3v}{3},\\
&&\quad \frac{26-2u-3v}{3},1,\frac{9-u}{2},1,2,0\biggr],
\end{eqnarray*}
and 
\[
\left(P(u,v)^{\cdot 2}\right)=\frac{1}{36}(-41+58u-5u^2+24v-24uv-18v^2).
\]
\item
If $v\in\left[\frac{7-u}{6},\frac{4}{3}\right]$, then 
\begin{eqnarray*}
N(u,v)&=&\left[0,0,0,0,\frac{v}{2},0,\frac{-u+1+3v}{3},0,0,\frac{-7+u+6v}{6},0,0\right], \\
P(u,v)&\sim_\R&\biggl[\frac{u-8}{3},u-9,u-9,0,
\frac{20-2u-3v}{6},\frac{20-2u-3v}{3},\\
&&\quad \frac{26-2u-3v}{3},1,\frac{9-u}{2},
\frac{13-u-6v}{6},2,0\biggr],
\end{eqnarray*}
and 
\[
\left(P(u,v)^{\cdot 2}\right)=\frac{1}{18}(4+22u-2u^2-30v-6uv+9v^2).
\]
\item
If $v\in\left[\frac{4}{3},\frac{u+2}{3}\right]$, then 
\begin{eqnarray*}
N(u,v)&=&\left[\frac{3v-4}{3},0,0,3v-4,2v-2,0,
\frac{-u+1+3v}{3},0,0,\frac{-7+u+6v}{6},0,0\right], \\
P(u,v)&\sim_\R&\biggl[\frac{u-3v-4}{3},u-9,u-9,4-3v,
\frac{16-u-6v}{3},\frac{20-2u-3v}{3},\\
&&\quad \frac{26-2u-3v}{3},1,\frac{9-u}{2},
\frac{13-u-6v}{6},2,0\biggr],
\end{eqnarray*}
and 
\[
\left(P(u,v)^{\cdot 2}\right)=\frac{1}{9}(2+u-3v)(13-u-6v).
\]
\end{itemize}
\item
Assume that $u\in\left[3,5\right]$. 
\begin{itemize}
\item
If $v\in\left[0,\frac{7-u}{6}\right]$, then 
\begin{eqnarray*}
N(u,v)&=&\left[0,0,0,0,\frac{v}{2},0,0,0,0,0,0,0\right], \\
P(u,v)&\sim_\R&\biggl[\frac{u-8}{3},u-9,u-9,0,
\frac{20-2u-3v}{6},\frac{20-2u-3v}{3},9-u,1,\frac{9-u}{2},1,2,0\biggr],
\end{eqnarray*}
and 
\[
\left(P(u,v)^{\cdot 2}\right)=\frac{1}{12}(-15+22u-3u^2-18v^2).
\]
\item
If $v\in\left[\frac{7-u}{6},\frac{u-1}{3}\right]$, then 
\begin{eqnarray*}
N(u,v)&=&\left[0,0,0,0,\frac{v}{2},0,0,0,0,\frac{-7+u+6v}{6},0,0\right], \\
P(u,v)&\sim_\R&\biggl[\frac{u-8}{3},u-9,u-9,0,
\frac{20-2u-3v}{6},\frac{20-2u-3v}{3},\\
&&\quad 9-u,1,\frac{9-u}{2},\frac{13-u-6v}{6},2,0\biggr],
\end{eqnarray*}
and 
\[
\left(P(u,v)^{\cdot 2}\right)=\frac{1}{18}(2+26u-4u^2-42v+6uv-9v^2).
\]
\item
If $v\in\left[\frac{u-1}{3},\frac{4}{3}\right]$, then 
\begin{eqnarray*}
N(u,v)&=&\left[0,0,0,0,\frac{v}{2},0,\frac{-u+1+3v}{3},0,0,\frac{-7+u+6v}{6},0,0\right], \\
P(u,v)&\sim_\R&\biggl[\frac{u-8}{3},u-9,u-9,0,
\frac{20-2u-3v}{6},\frac{20-2u-3v}{3},\\
&&\quad \frac{26-2u-3v}{3},1,\frac{9-u}{2},
\frac{13-u-6v}{6},2,0\biggr],
\end{eqnarray*}
and 
\[
\left(P(u,v)^{\cdot 2}\right)=\frac{1}{18}(4+22u-2u^2-30v-6uv+9v^2).
\]
\item
If $v\in\left[\frac{4}{3},\frac{13-u}{6}\right]$, then 
\begin{eqnarray*}
N(u,v)&=&\left[\frac{3v-4}{3},0,0,3v-4,2v-2,0,
\frac{-u+1+3v}{3},0,0,\frac{-7+u+6v}{6},0,0\right], \\
P(u,v)&\sim_\R&\biggl[\frac{u-3v-4}{3},u-9,u-9,4-3v,
\frac{16-u-6v}{3},\frac{20-2u-3v}{3},\\
&&\quad \frac{26-2u-3v}{3},1,\frac{9-u}{2},
\frac{13-u-6v}{6},2,0\biggr],
\end{eqnarray*}
and 
\[
\left(P(u,v)^{\cdot 2}\right)=\frac{1}{9}(2+u-3v)(13-u-6v).
\]
\end{itemize}
\item
Assume that $u\in[5,7]$. 
\begin{itemize}
\item
If $v\in\left[0,\frac{7-u}{6}\right]$, then 
\begin{eqnarray*}
N(u,v)&=&\left[0,0,0,0,\frac{v}{2},0,0,0,0,0,0,0\right], \\
P(u,v)&\sim_\R&\biggl[\frac{u-9}{4},u-9,u-9,0,
\frac{20-2u-3v}{6},\frac{20-2u-3v}{3},\\
&&\quad 9-u,\frac{9-u}{4},\frac{9-u}{2},1,\frac{9-u}{2},0\biggr],
\end{eqnarray*}
and 
\[
\left(P(u,v)^{\cdot 2}\right)=\frac{1}{24}(145-26u+u^2-36v^2).
\]
\item
If $v\in\left[\frac{7-u}{6},\frac{13-u}{6}\right]$, then 
\begin{eqnarray*}
N(u,v)&=&\left[0,0,0,0,\frac{v}{2},0,0,0,0,\frac{-7+u+6v}{6},0,0\right], \\
P(u,v)&\sim_\R&\biggl[\frac{u-9}{4},u-9,u-9,0,
\frac{20-2u-3v}{6},\frac{20-2u-3v}{3},\\
&&\quad 9-u,\frac{9-u}{4},\frac{9-u}{2},\frac{13-u-6v}{6},\frac{9-u}{2},0\biggr],
\end{eqnarray*}
and 
\[
\left(P(u,v)^{\cdot 2}\right)=\frac{1}{72}(41-5u+6v)(13-u-6v).
\]
\end{itemize}
\item
Assume that $u\in[7,9]$. 
\begin{itemize}
\item
If $v\in\left[0,\frac{9-u}{2}\right]$, then 
\begin{eqnarray*}
N(u,v)&=&\left[0,0,0,0,\frac{v}{2},0,
0,0,0,v,0,0\right], \\
P(u,v)&\sim_\R&\biggl[\frac{u-9}{4},u-9,u-9,0,
\frac{9-u-v}{2},9-u-v,\\
&&\quad 9-u,\frac{9-u}{4},
\frac{9-u}{2},\frac{9-u-2v}{2},\frac{9-u}{2},0\biggr],
\end{eqnarray*}
and 
\[
\left(P(u,v)^{\cdot 2}\right)=\frac{1}{8}(9-u+2v)(9-u-2v).
\]
\end{itemize}
\end{itemize}
Therefore, we get 
\begin{eqnarray*}
&&S\left(V^{\tilde{R}}_{\bullet,\bullet}; \tilde{h}_2\right)\\
&=&\frac{3}{28}\Biggl(\int_0^1\int_0^u\frac{1}{2}(u-v)(u+v)dvdu\\
&+&\int_1^2\biggl(\frac{2u-2}{3}\cdot\frac{1}{12}(-7+14u-u^2)
+\int_0^{\frac{u-1}{3}}\frac{1}{12}(-7+14u-u^2-18v^2)dv\\
&&+\int_{\frac{u-1}{3}}^{\frac{7-u}{6}}\frac{1}{36}(-17+34u+u^2+24v-24uv-18v^2)dv\\
&&+\int_{\frac{7-u}{6}}^{\frac{2+u}{3}}\frac{1}{18}(2+u-3v)(8+u-3v)\biggr)du\\
&+&\int_2^3\biggl(\frac{2u-2}{3}\cdot\frac{1}{12}(-15+22u-3u^2)
+\int_0^{\frac{u-1}{3}}\frac{1}{12}(-15+22u-3u^2-18v^2)dv\\
&&+\int_{\frac{u-1}{3}}^{\frac{7-u}{6}}\frac{1}{36}(-41+58u-5u^2+24v-24uv-18v^2)dv\\
&&+\int_{\frac{7-u}{6}}^{\frac{4}{3}}\frac{1}{18}(4+22u-2u^2-30v-6uv+9v^2)dv\\
&&+\int_{\frac{4}{3}}^{\frac{u+2}{3}}\frac{1}{9}(2+u-3v)(13-u-6v)dv\biggr)du\\
&+&\int_3^5\biggl(\frac{2u-2}{3}\cdot\frac{1}{12}(-15+22u-3u^2)
+\int_0^{\frac{7-u}{6}}\frac{1}{12}(-15+22u-3u^2-18v^2)dv\\
&&+\int_{\frac{7-u}{6}}^{\frac{u-1}{3}}\frac{1}{18}(2+26u-4u^2-42v+6uv-9v^2)dv\\
&&+\int_{\frac{u-1}{3}}^{\frac{4}{3}}\frac{1}{18}(4+22u-2u^2-30v-6uv+9v^2)dv\\
&&+\int_{\frac{4}{3}}^{\frac{13-u}{6}}\frac{1}{9}(2+u-3v)(13-u-6v)dv\biggr)du\\
&+&\int_5^7\biggl(\frac{2u-2}{3}\cdot\frac{1}{24}(145-26u+u^2)
+\int_0^{\frac{7-u}{6}}\frac{1}{24}(145-26u+u^2-36v^2)dv\\
&&+\int_{\frac{7-u}{6}}^{\frac{13-u}{6}}\frac{1}{72}(41-5u+6v)(13-u-6v)dv\biggr)du\\
&+&\int_7^9\biggl((u-3)\frac{1}{8}(9-u)^2+\int_0^{\frac{9-u}{2}}
\frac{1}{8}(9-u+2v)(9-u-2v)dv\biggr)du\Biggr)=\frac{309}{112}. 
\end{eqnarray*}
Moreover, we have
\begin{eqnarray*}
&&F_{\hat{p}_{h,5}}\left(W^{\hat{R}, \hat{h}_2}_{\bullet,\bullet,\bullet}\right)\\
&=&\frac{6}{28}\Biggl(\int_2^3\int_{\frac{4}{3}}^{\frac{u+2}{3}}
\frac{17+u-12v}{6}\cdot\frac{3v-4}{2}dvdu
+\int_3^5\int_{\frac{4}{3}}^{\frac{13-u}{6}}\frac{17+u-12v}{6}
\cdot\frac{3v-4}{2}dvdu\Biggr)\\
&=&\frac{3}{448},
\end{eqnarray*}
\begin{eqnarray*}
&&F_{\hat{p}_{h,7}}\left(W^{\hat{R}, \hat{h}_2}_{\bullet,\bullet,\bullet}\right)\\
&=&\frac{6}{28}\Biggr(\int_1^3\int_0^{\frac{u-1}{3}}\frac{3v}{2}
\cdot\frac{u-1-3v}{3}dvdu\\
&+&\int_3^5\biggl(\int_0^{\frac{7-u}{6}}\frac{3v}{2}\cdot\frac{u-1-3v}{3}dv
+\int_{\frac{7-u}{6}}^{\frac{u-1}{3}}\frac{7-u+3v}{6}\cdot\frac{u-1-3v}{3}dv\biggr)du\\
&+&\int_5^7\biggl(\int_0^{\frac{7-u}{6}}\frac{3v}{2}\cdot\frac{u-1-3v}{3}dv
+\int_{\frac{7-u}{6}}^{\frac{13-u}{6}}\frac{7-u+3v}{6}\cdot\frac{u-1-3v}{3}dv\biggr)du\\
&+&\int_7^9\int_0^{\frac{9-u}{2}}\frac{v}{2}(2-v)dvdu\Biggr)=\frac{5}{14}, 
\end{eqnarray*}
\begin{eqnarray*}
&&F_{\hat{p}_{h,10}}\left(W^{\hat{R}, \hat{h}_2}_{\bullet,\bullet,\bullet}\right)\\
&=&\frac{6}{28}\Biggr(\int_1^2\int_{\frac{7-u}{6}}^{\frac{2+u}{3}}
\frac{5+u-3v}{6}\cdot\frac{-7+u+6v}{6}dvdu\\
&+&\int_2^3\biggl(\int_{\frac{7-u}{6}}^{\frac{4}{3}}
\frac{5+u-3v}{6}\cdot\frac{-7+u+6v}{6}dv+\int_{\frac{4}{3}}^{\frac{u+2}{3}}
\frac{17+u-12v}{6}\cdot\frac{-7+u+6v}{6}dv\biggr)du\\
&+&\int_3^5\biggl(\int_{\frac{7-u}{6}}^{\frac{u-1}{3}}
\frac{7-u+3v}{6}\cdot\frac{-7+u+6v}{6}dv+\int_{\frac{u-1}{3}}^{\frac{4}{3}}
\frac{5+u-3v}{6}\cdot\frac{-7+u+6v}{6}dv\\
&&+\int_{\frac{4}{3}}^{\frac{13-u}{6}}\frac{17+u-12v}{6}\cdot\frac{-7+u+6v}{6}dv
\biggr)du\\
&+&\int_5^7\int_{\frac{7-u}{6}}^{\frac{13-u}{6}}
\frac{7-u+3v}{6}\cdot\frac{-7+u+6v}{6}dvdu
+\int_7^9\int_0^{\frac{9-u}{2}}\frac{v}{2}\cdot\frac{u+2v-7}{2}dvdu\Biggr)=\frac{23}{64}. 
\end{eqnarray*}
Thus, for any closed point $\hat{p}\in\hat{h}_2$, we have 
\begin{eqnarray*}
&&S\left(W^{\hat{R},\hat{h}_2}_{\bullet,\bullet,\bullet}; \hat{p}\right)\\
&=&F_{\hat{p}}\left(W^{\hat{R}, \hat{h}_2}_{\bullet,\bullet,\bullet}\right)
+\frac{3}{28}\Biggl(\int_0^1\int_0^u\left(\frac{v}{2}\right)^2dvdu\\
&+&\int_1^2\biggl(\int_0^{\frac{u-1}{3}}\left(\frac{3v}{2}\right)^2dv
+\int_{\frac{u-1}{3}}^{\frac{7-u}{6}}\left(\frac{2u-2+3v}{6}\right)^2dv
+\int_{\frac{7-u}{6}}^{\frac{2+u}{3}}\left(\frac{5+u-3v}{6}\right)^2dv\biggr)du\\
&+&\int_2^3\biggl(\int_0^{\frac{u-1}{3}}\left(\frac{3v}{2}\right)^2dv
+\int_{\frac{u-1}{3}}^{\frac{7-u}{6}}\left(\frac{2u-2+3v}{6}\right)^2dv\\
&&+\int_{\frac{7-u}{6}}^{\frac{4}{3}}\left(\frac{5+u-3v}{6}\right)^2dv
+\int_{\frac{4}{3}}^{\frac{2+u}{3}}\left(\frac{17+u-12v}{6}\right)^2dv\biggr)du\\
&+&\int_3^5\biggl(\int_0^{\frac{7-u}{6}}\left(\frac{3v}{2}\right)^2dv
+\int_{\frac{7-u}{6}}^{\frac{u-1}{3}}\left(\frac{7-u+3v}{6}\right)^2dv\\
&&+\int_{\frac{u-1}{3}}^{\frac{4}{3}}\left(\frac{5+u-3v}{6}\right)^2dv
+\int_{\frac{4}{3}}^{\frac{13-u}{6}}\left(\frac{17+u-12v}{6}\right)^2dv\biggr)du\\
&+&\int_5^7\biggl(\int_0^{\frac{7-u}{6}}\left(\frac{3v}{2}\right)^2dv
+\int_{\frac{7-u}{6}}^{\frac{13-u}{6}}\left(\frac{7-u+3v}{6}\right)^2dv\biggr)du
+\int_7^9\int_0^{\frac{9-u}{2}}\left(\frac{v}{2}\right)^2dvdu\Biggr)\\
&=&F_{\hat{p}}\left(W^{\hat{R}, \hat{h}_2}_{\bullet,\bullet,\bullet}\right)
+\frac{21}{64}
\begin{cases}
=\frac{75}{224} & \text{if }\hat{p}=\hat{p}_{h,5}, \\
\leq\frac{11}{16} & \text{otherwise}.
\end{cases}
\end{eqnarray*}
In particular, we get the inequality
\begin{eqnarray*}
\delta_{p_h^1}\left(R^1; V^{\tilde{R}}_{\bullet,\bullet}\right)\geq
\min\left\{\frac{A_{R^1}\left(\tilde{h}_2\right)}
{S\left(V^{\tilde{R}}_{\bullet,\bullet};\tilde{h}_2\right)},\,\,\,
\inf_{\hat{p}\in\hat{h}_2}\frac{A_{\hat{h}_2,\frac{1}{2}\hat{p}_{h,5}}\left(\hat{p}\right)}
{S\left(W^{\hat{R},\hat{h}_2}_{\bullet,\bullet,\bullet}; \hat{p}\right)}\right\}
=\min\left\{\frac{112}{109},\,\,\,\frac{112}{75},\,\,\,\frac{16}{11}\right\}
=\frac{112}{109}
\end{eqnarray*}
by Corollary \ref{reduction_corollary}.

\noindent\underline{\textbf{Step 16}}\\
Let us set $f_2^1:=\gamma_*\tilde{f}_2$, 
$\tilde{p}_9:=\tilde{s}^+|_{\tilde{f}_2}$, 
$\tilde{p}_{12}:=\tilde{r}_3|_{\tilde{f}_2}$, and 
$p_9^1:=\gamma\left(\tilde{p}_9\right)$, 
$p_{12}^1:=\gamma\left(\tilde{p}_{12}\right)$. 
Note that the pair $(R^1, f_2^1)$ is plt and 
$\gamma^*f_2^1=(1/2)\tilde{s}_R+(1/2)\tilde{t}^++\tilde{f}_2+\tilde{s}^+$. 
Let us set 
\begin{eqnarray*}
P(u,v) &:=& P_\sigma\left(\tilde{R}, P(u)-v\tilde{f}_2\right), \\
N(u,v) &:=& N_\sigma\left(\tilde{R}, P(u)-v\tilde{f}_2\right),
\end{eqnarray*}
where $P(u)$, $N(u)$ are as in Step 11. 
\begin{itemize}
\item
Assume that $u\in[0,1]$. 
\begin{itemize}
\item
If $v\in\left[0,u\right]$, then 
\begin{eqnarray*}
N(u,v)&=&\left[\frac{v}{2},\frac{v}{2},0,0,0,0,0,0,v,0,0,0\right], \\
P(u,v)&\sim_\R&\left[\frac{u-v-6}{2},\frac{3u-v-20}{2},u-9,0,3,6,
8,1-v,4-v,1,2,0\right],
\end{eqnarray*}
and 
\[
\left(P(u,v)^{\cdot 2}\right)=\frac{1}{2}(u-v)^2.
\]
\end{itemize}
\item
Assume that $u\in[1,2]$. 
\begin{itemize}
\item
If $v\in\left[0,\frac{u-1}{2}\right]$, then 
\begin{eqnarray*}
N(u,v)&=&\left[\frac{v}{2},\frac{v}{2},0,0,0,0,0,0,0,0,0,0\right], \\
P(u,v)&\sim_\R&\biggl[\frac{u-v-6}{2},\frac{3u-v-20}{2},u-9,0,
\frac{10-u}{3},\frac{20-2u}{3},
 9-u,1-v,\frac{9-u}{2},1,2,0\biggr],
\end{eqnarray*}
and 
\[
\left(P(u,v)^{\cdot 2}\right)=\frac{1}{12}(-7+14u-u^2-12v-6v^2).
\]
\item
If $v\in\left[\frac{u-1}{2},1\right]$, then 
\begin{eqnarray*}
N(u,v)&=&\left[\frac{v}{2},\frac{v}{2},0,0,0,0,
0,0,\frac{1-u+2v}{2},0,0,0\right], \\
P(u,v)&\sim_\R&\biggl[\frac{u-v-6}{2},\frac{3u-v-20}{2},u-9,0,
\frac{10-u}{3},\frac{20-2u}{3},
 9-u,1-v,4-v,1,2,0\biggr],
\end{eqnarray*}
and 
\[
\left(P(u,v)^{\cdot 2}\right)=\frac{1}{6}(-2+4u+u^2-6uv+3v^2).
\]
\item
If $v\in\left[1,\frac{2+u}{3}\right]$, then 
\begin{eqnarray*}
N(u,v)&=&\left[\frac{v}{2},\frac{v}{2},0,0,0,0,
0,0,\frac{1-u+2v}{2},0,0,v-1\right], \\
P(u,v)&\sim_\R&\biggl[\frac{u-v-6}{2},\frac{3u-v-20}{2},u-9,0,
\frac{10-u}{3},\frac{20-2u}{3},\\
&&\quad 9-u,1-v,4-v,1,2,1-v\biggr],
\end{eqnarray*}
and 
\[
\left(P(u,v)^{\cdot 2}\right)=\frac{1}{6}(2+u-3v)^2.
\]
\end{itemize}
\item
Assume that $u\in\left[2,3\right]$. 
\begin{itemize}
\item
If $v\in\left[0,u-2\right]$, then 
\begin{eqnarray*}
N(u,v)&=&\left[\frac{v}{3},0,0,0,0,0,0,0,0,0,0,0\right], \\
P(u,v)&\sim_\R&\left[\frac{u-v-8}{3},u-9,u-9,0,
\frac{10-u}{3},\frac{20-2u}{3},9-u,1-v,\frac{9-u}{2},1,2,0\right],
\end{eqnarray*}
and 
\[
\left(P(u,v)^{\cdot 2}\right)=\frac{1}{12}(-15+22u-3u^2-20v+4uv-8v^2).
\]
\item
If $v\in\left[u-2,\frac{u-1}{2}\right]$, then 
\begin{eqnarray*}
N(u,v)&=&\left[\frac{-u+3v+2}{6},\frac{-u+v+2}{2},0,0,0,0,
0,0,0,0,0,0\right], \\
P(u,v)&\sim_\R&\biggl[\frac{u-v-6}{2},\frac{3u-v-20}{2},u-9,0,
\frac{10-u}{3},\frac{20-2u}{3},
 9-u,1-v,\frac{9-u}{2},1,2,0\biggr],
\end{eqnarray*}
and 
\[
\left(P(u,v)^{\cdot 2}\right)=\frac{1}{12}(-7+14u-u^2-12v-6v^2).
\]
\item
If $v\in\left[\frac{u-1}{2},1\right]$, then 
\begin{eqnarray*}
N(u,v)&=&\left[\frac{-u+3v+2}{6},\frac{-u+v+2}{2},0,0,0,0,
0,0,\frac{1-u+2v}{2},0,0,0\right], \\
P(u,v)&\sim_\R&\biggl[\frac{u-v-6}{2},\frac{3u-v-20}{2},u-9,0,
\frac{10-u}{3},\frac{20-2u}{3},
 9-u,1-v,4-v,1,2,0\biggr],
\end{eqnarray*}
and 
\[
\left(P(u,v)^{\cdot 2}\right)=\frac{1}{6}(-2+4u+u^2-6uv+3v^2).
\]
\item
If $v\in\left[1,\frac{2+u}{3}\right]$, then 
\begin{eqnarray*}
N(u,v)&=&\left[\frac{-u+3v+2}{6},\frac{-u+v+2}{2},0,0,0,0,
0,0,\frac{1-u+2v}{2},0,0,v-1\right], \\
P(u,v)&\sim_\R&\biggl[\frac{u-v-6}{2},\frac{3u-v-20}{2},u-9,0,
\frac{10-u}{3},\frac{20-2u}{3},\\
&&\quad 9-u,1-v,4-v,1,2,1-v\biggr],
\end{eqnarray*}
and 
\[
\left(P(u,v)^{\cdot 2}\right)=\frac{1}{6}(2+u-3v)^2.
\]
\end{itemize}
\item
Assume that $u\in\left[3,4\right]$. 
\begin{itemize}
\item
If $v\in\left[0,1\right]$, then 
\begin{eqnarray*}
N(u,v)&=&\left[\frac{v}{3},0,0,0,0,0,0,0,0,0,0,0\right], \\
P(u,v)&\sim_\R&\left[\frac{u-v-8}{3},u-9,u-9,0,
\frac{10-u}{3},\frac{20-2u}{3},9-u,1-v,\frac{9-u}{2},1,2,0\right],
\end{eqnarray*}
and 
\[
\left(P(u,v)^{\cdot 2}\right)=\frac{1}{12}(-15+22u-3u^2-20v+4uv-8v^2).
\]
\item
If $v\in\left[1,\frac{u-1}{2}\right]$, then 
\begin{eqnarray*}
N(u,v)&=&\left[\frac{v}{3},0,0,0,0,0,
0,0,0,0,0,v-1\right], \\
P(u,v)&\sim_\R&\biggl[\frac{u-v-8}{3},u-9,u-9,0,
\frac{10-u}{3},\frac{20-2u}{3},
 9-u,1-v,\frac{9-u}{2},1,2,1-v\biggr],
\end{eqnarray*}
and 
\[
\left(P(u,v)^{\cdot 2}\right)=\frac{1}{12}(-3+22u-3u^2-44v+4uv+4v^2).
\]
\item
If $v\in\left[\frac{u-1}{2},u-2\right]$, then 
\begin{eqnarray*}
N(u,v)&=&\left[\frac{v}{3},0,0,0,0,0,
0,0,\frac{1-u+2v}{2},0,0,v-1\right], \\
P(u,v)&\sim_\R&\biggl[\frac{u-v-8}{3},u-9,u-9,0,
\frac{10-u}{3},\frac{20-2u}{3},
 9-u,1-v,4-v,1,2,1-v\biggr],
\end{eqnarray*}
and 
\[
\left(P(u,v)^{\cdot 2}\right)=\frac{2}{3}(u-2v)(2-v).
\]
\item
If $v\in\left[u-2,\frac{2+u}{3}\right]$, then 
\begin{eqnarray*}
N(u,v)&=&\left[\frac{-u+3v+2}{6},\frac{-u+v+2}{2},0,0,0,0,
0,0,\frac{1-u+2v}{2},0,0,v-1\right], \\
P(u,v)&\sim_\R&\biggl[\frac{u-v-6}{2},\frac{3u-v-20}{2},u-9,0,
\frac{10-u}{3},\frac{20-2u}{3},\\
&&\quad 9-u,1-v,4-v,1,2,1-v\biggr],
\end{eqnarray*}
and 
\[
\left(P(u,v)^{\cdot 2}\right)=\frac{1}{6}(2+u-3v)^2.
\]
\end{itemize}
\item
Assume that $u\in\left[4,5\right]$. 
\begin{itemize}
\item
If $v\in\left[0,1\right]$, then 
\begin{eqnarray*}
N(u,v)&=&\left[\frac{v}{3},0,0,0,0,0,0,0,0,0,0,0\right], \\
P(u,v)&\sim_\R&\left[\frac{u-v-8}{3},u-9,u-9,0,
\frac{10-u}{3},\frac{20-2u}{3},9-u,1-v,\frac{9-u}{2},1,2,0\right],
\end{eqnarray*}
and 
\[
\left(P(u,v)^{\cdot 2}\right)=\frac{1}{12}(-15+22u-3u^2-20v+4uv-8v^2).
\]
\item
If $v\in\left[1,\frac{u-1}{2}\right]$, then 
\begin{eqnarray*}
N(u,v)&=&\left[\frac{v}{3},0,0,0,0,0,
0,0,0,0,0,v-1\right], \\
P(u,v)&\sim_\R&\biggl[\frac{u-v-8}{3},u-9,u-9,0,
\frac{10-u}{3},\frac{20-2u}{3},
 9-u,1-v,\frac{9-u}{2},1,2,1-v\biggr],
\end{eqnarray*}
and 
\[
\left(P(u,v)^{\cdot 2}\right)=\frac{1}{12}(-3+22u-3u^2-44v+4uv+4v^2).
\]
\item
If $v\in\left[\frac{u-1}{2},2\right]$, then 
\begin{eqnarray*}
N(u,v)&=&\left[\frac{v}{3},0,0,0,0,0,
0,0,\frac{1-u+2v}{2},0,0,v-1\right], \\
P(u,v)&\sim_\R&\biggl[\frac{u-v-8}{3},u-9,u-9,0,
\frac{10-u}{3},\frac{20-2u}{3},
 9-u,1-v,4-v,1,2,1-v\biggr],
\end{eqnarray*}
and 
\[
\left(P(u,v)^{\cdot 2}\right)=\frac{2}{3}(u-2v)(2-v).
\]
\end{itemize}
\item
Assume that $u\in[5,7]$. 
\begin{itemize}
\item
If $v\in\left[0,\frac{9-u}{4}\right]$, then 
\begin{eqnarray*}
N(u,v)&=&\left[\frac{v}{3},0,0,0,0,0,0,0,0,0,0,0\right], \\
P(u,v)&\sim_\R&\biggl[\frac{3u-4v-27}{12},u-9,u-9,0,
\frac{10-u}{3},\frac{20-2u}{3},\\
&&\quad 9-u,\frac{9-u-4v}{4},\frac{9-u}{2},1,\frac{9-u}{2},0\biggr],
\end{eqnarray*}
and 
\[
\left(P(u,v)^{\cdot 2}\right)=\frac{1}{24}(145-26u+u^2-16v^2).
\]
\item
If $v\in\left[\frac{9-u}{4},\frac{13-u}{4}\right]$, then 
\begin{eqnarray*}
N(u,v)&=&\left[\frac{v}{3},0,0,0,0,0,
0,0,0,0,0,\frac{-9+u+4v}{4}\right], \\
P(u,v)&\sim_\R&\biggl[\frac{3u-4v-27}{12},u-9,u-9,0,
\frac{10-u}{3},\frac{20-2u}{3},\\
&&\quad 9-u,\frac{9-u-4v}{4},\frac{9-u}{2},1,\frac{9-u}{2},
\frac{9-u-4v}{4}\biggr],
\end{eqnarray*}
and 
\[
\left(P(u,v)^{\cdot 2}\right)=\frac{1}{48}(13-u-4v)(41-5u-4v).
\]
\end{itemize}
\item
Assume that $u\in[7,9]$. 
\begin{itemize}
\item
If $v\in\left[0,\frac{9-u}{4}\right]$, then 
\begin{eqnarray*}
N(u,v)&=&\left[\frac{v}{3},0,0,0,0,0,
0,0,0,0,0,0\right], \\
P(u,v)&\sim_\R&\biggl[\frac{3u-4v-27}{12},u-9,u-9,0,
\frac{9-u}{2},9-u,\\
&&\quad 9-u,\frac{9-u-4v}{4},
\frac{9-u}{2},\frac{9-u}{2},\frac{9-u}{2},0\biggr],
\end{eqnarray*}
and 
\[
\left(P(u,v)^{\cdot 2}\right)=\frac{1}{24}(243-54u+3u^2-16v^2).
\]
\item
If $v\in\left[\frac{9-u}{4},\frac{27-3u}{4}\right]$, then 
\begin{eqnarray*}
N(u,v)&=&\left[\frac{v}{3},0,0,0,0,0,
0,0,0,0,0,\frac{-9+u+4v}{4}\right], \\
P(u,v)&\sim_\R&\biggl[\frac{3u-4v-27}{12},u-9,u-9,0,
\frac{9-u}{2},9-u,\\
&&\quad 9-u,\frac{9-u-4v}{4},
\frac{9-u}{2},\frac{9-u}{2},\frac{9-u}{2},\frac{9-u-4v}{4}\biggr],
\end{eqnarray*}
and 
\[
\left(P(u,v)^{\cdot 2}\right)=\frac{1}{48}(27-3u-4v)^2.
\]
\end{itemize}
\end{itemize}
Therefore, we get 
\begin{eqnarray*}
&&S\left(V^{\tilde{R}}_{\bullet,\bullet}; \tilde{f}_2\right)\\
&=&\frac{3}{28}\Biggl(\int_0^1\int_0^u\frac{1}{2}(u-v)^2dvdu\\
&+&\int_1^2\biggl(\int_0^{\frac{u-1}{2}}\frac{1}{12}(-7+14u-u^2-12v-6v^2)dv\\
&&+\int_{\frac{u-1}{2}}^1\frac{1}{6}(-2+4u+u^2-6uv+3v^2)dv
+\int_1^{\frac{2+u}{3}}\frac{1}{6}(2+u-3v)^2dv\biggr)du\\
&+&\int_2^3\biggl(\int_0^{u-2}\frac{1}{12}(-15+22u-3u^2-20v+4uv-8v^2)dv\\
&&+\int_{u-2}^{\frac{u-1}{2}}\frac{1}{12}(-7+14u-u^2-12v-6v^2)dv\\
&+&\int_{\frac{u-1}{2}}^1\frac{1}{6}(-2+4u+u^2-6uv+3v^2)dv
+\int_1^{\frac{2+u}{3}}\frac{1}{6}(2+u-3v)^2dv\biggr)du\\
&+&\int_3^4\biggl(\int_0^1\frac{1}{12}(-15+22u-3u^2-20v+4uv-8v^2)dv\\
&&+\int_1^{\frac{u-1}{2}}\frac{1}{12}(-3+22u-3u^2-44v+4uv+4v^2)dv\\
&+&\int_{\frac{u-1}{2}}^{u-2}\frac{2}{3}(u-2v)(2-v)dv
+\int_{u-2}^{\frac{2+u}{3}}\frac{1}{6}(2+u-3v)^2dv\biggr)du\\
&+&\int_4^5\biggl(\int_0^1\frac{1}{12}(-15+22u-3u^2-20v+4uv-8v^2)dv\\
&&+\int_1^{\frac{u-1}{2}}\frac{1}{12}(-3+22u-3u^2-44v+4uv+4v^2)dv
+\int_{\frac{u-1}{2}}^2\frac{2}{3}(u-2v)(2-v)dv\biggr)du\\
&+&\int_5^7\biggl(\frac{u-5}{4}\cdot\frac{1}{24}(145-26u+u^2)
+\int_0^{\frac{9-u}{4}}\frac{1}{24}(145-26u+u^2-16v^2)dv\\
&&+\int_{\frac{9-u}{4}}^{\frac{13-u}{4}}\frac{1}{48}(13-u-4v)(41-5u-4v)dv\biggr)du\\
&+&\int_7^9\biggl(\frac{u-5}{4}\cdot\frac{1}{24}(243-54u+3u^2)
+\int_0^{\frac{9-u}{4}}\frac{1}{24}(243-54u+3u^2-16v^2)dv\\
&&+\int_{\frac{9-u}{4}}^{\frac{27-3u}{4}}\frac{1}{48}(27-3u-4v)^2dv\biggr)du\Biggr)
=\frac{51}{56}.
\end{eqnarray*}
Moreover, we have
\begin{eqnarray*}
&&F_{p_9^1}\left(W^{R^1, f_2^1}_{\bullet,\bullet,\bullet}\right)\\
&=&\frac{6}{28}\Biggl(\int_1^2\int_0^{\frac{u-1}{2}}
\frac{1+v}{2}\cdot\frac{u-1-2v}{2}dvdu\\
&+&\int_2^3\biggl(\int_0^{u-2}\frac{5-u+4v}{6}\cdot\frac{u-1-2v}{2}dv
+\int_{u-2}^{\frac{u-1}{2}}\frac{1+v}{2}\cdot\frac{u-1-2v}{2}dv\biggr)du\\
&+&\int_3^5\biggl(\int_0^1\frac{5-u+4v}{6}\cdot\frac{u-1-2v}{2}dv
+\int_1^{\frac{u-1}{2}}\frac{11-u-2v}{6}\cdot\frac{u-1-2v}{2}dv\biggr)du\\
&+&\int_5^7\biggl(\int_0^{\frac{9-u}{4}}\frac{2v}{3}\cdot\frac{u+3-4v}{4}dv
+\int_{\frac{9-u}{4}}^{\frac{13-u}{4}}\frac{27-3u-4v}{12}\cdot\frac{u+3-4v}{4}dv
\biggr)du\\
&+&\int_7^9\biggl(\int_0^{\frac{9-u}{4}}\frac{2v}{3}\cdot\frac{u+3-4v}{4}dv
+\int_{\frac{9-u}{4}}^{\frac{27-3u}{4}}\frac{27-3u-4v}{12}\cdot\frac{u+3-4v}{4}dv
\biggr)du\Biggr)
=\frac{839}{1344},
\end{eqnarray*}
\begin{eqnarray*}
&&F_{p_{12}^1}\left(W^{R^1, f_2^1}_{\bullet,\bullet,\bullet}\right)\\
&=&\frac{6}{28}\Biggl(\int_1^3\int_1^{\frac{2+u}{3}}\frac{2+u-3v}{2}(v-1)dvdu\\
&+&\int_3^4\biggl(\int_1^{\frac{u-1}{2}}\frac{11-u-2v}{6}(v-1)dv
+\int_{\frac{u-1}{2}}^{u-2}\frac{4+u-4v}{3}(v-1)dv\\
&&+\int_{u-2}^{\frac{2+u}{3}}\frac{2+u-3v}{2}(v-1)dv\biggr)du\\
&+&\int_4^5\biggl(\int_1^{\frac{u-1}{2}}\frac{11-u-2v}{6}(v-1)dv
+\int_{\frac{u-1}{2}}^2\frac{4+u-4v}{3}(v-1)dv\biggr)du\\
&+&\int_5^7\int_{\frac{9-u}{4}}^{\frac{13-u}{4}}
\frac{27-3u-4v}{12}\cdot\frac{-9+u+4v}{4}dvdu\\
&+&\int_7^9\int_{\frac{9-u}{4}}^{\frac{27-3u}{4}}
\frac{27-3u-4v}{12}\cdot\frac{-9+u+4v}{4}dvdu\Biggr)=\frac{17}{112}.
\end{eqnarray*}
Thus, for any closed point $p^1\in f_2^1\setminus\left\{p_v^1 \right\}$, 
we get
\begin{eqnarray*}
&&S\left(W^{{R^1},f_2^1}_{\bullet,\bullet,\bullet}; p^1\right)\\
&\leq&\frac{839}{1344}+\frac{3}{28}\Biggl(\int_0^1\int_0^u
\left(\frac{u-v}{2}\right)^2dvdu\\
&+&\int_1^2\biggl(\int_0^{\frac{u-1}{2}}\left(\frac{1+v}{2}\right)^2dv
+\int_{\frac{u-1}{2}}^1\left(\frac{u-v}{2}\right)^2dv
+\int_1^{\frac{2+u}{3}}\left(\frac{2+u-3v}{2}\right)^2dv\biggr)du\\
&+&\int_2^3\biggl(\int_0^{u-2}\left(\frac{5-u+4v}{6}\right)^2dv
+\int_{u-2}^{\frac{u-1}{2}}\left(\frac{1+v}{2}\right)^2dv\\
&&+\int_{\frac{u-1}{2}}^1\left(\frac{u-v}{2}\right)^2dv
+\int_1^{\frac{2+u}{3}}\left(\frac{2+u-3v}{2}\right)^2dv\biggr)du\\
&+&\int_3^4\biggl(\int_0^1\left(\frac{5-u+4v}{6}\right)^2dv
+\int_1^{\frac{u-1}{2}}\left(\frac{11-u-2v}{6}\right)^2dv\\
&&+\int_{\frac{u-1}{2}}^{u-2}\left(\frac{4+u-4v}{3}\right)^2dv
+\int_{u-2}^{\frac{2+u}{3}}\left(\frac{2+u-3v}{2}\right)^2dv\biggr)du\\
&+&\int_4^5\biggl(\int_0^1\left(\frac{5-u+4v}{6}\right)^2dv
+\int_1^{\frac{u-1}{2}}\left(\frac{11-u-2v}{6}\right)^2dv
+\int_{\frac{u-1}{2}}^2\left(\frac{4+u-4v}{3}\right)^2dv\biggr)du\\
&+&\int_5^7\biggl(\int_0^{\frac{9-u}{4}}\left(\frac{2v}{3}\right)^2dv
+\int_{\frac{9-u}{4}}^{\frac{13-u}{4}}\left(\frac{27-3u-4v}{12}\right)^2dv\biggr)du\\
&+&\int_7^9\biggl(\int_0^{\frac{9-u}{4}}\left(\frac{2v}{3}\right)^2dv
+\int_{\frac{9-u}{4}}^{\frac{27-3u}{4}}\left(\frac{27-3u-4v}{12}\right)^2dv\biggr)
du\Biggr)=\frac{839}{1344}+\frac{361}{1344}=\frac{25}{28}. 
\end{eqnarray*}
In particular, we get the inequality
\begin{eqnarray*}
\delta_{p^1}\left(R^1; V^{\tilde{R}}_{\bullet,\bullet}\right)\geq
\min\left\{\frac{A_{R^1}\left(\tilde{f}_2\right)}
{S\left(V^{\tilde{R}}_{\bullet,\bullet};\tilde{f}_2\right)},\,\,\,
\frac{A_{f_2^1,\frac{1}{2}p_v^1}\left(p^1\right)}
{S\left(W^{R^1,f_2^1}_{\bullet,\bullet,\bullet}; p^1\right)}\right\}
\geq\min\left\{\frac{56}{51},\,\,\,\frac{28}{25}\right\}
=\frac{56}{51}
\end{eqnarray*}
by Corollary \ref{reduction_corollary}.

\noindent\underline{\textbf{Step 17}}\\
Take any closed point $p^1\in R^1\setminus\left(f_S^1\cup f_R^1\cup f_2^1\right)$. 
The morphism $\gamma\colon\tilde{R}\to R^1$ is an isomorphism over $p^1$. 
Take the line $f^1\subset R^1$ 
(i.e., $f^1\sim_\Q\sO_{\pr(1,1,2)}(1)$ on $R^1\simeq\pr(1,1,2)$) passing through
$p_v^1$ and $p^1$. Then the strict transform $\tilde{f}:=\gamma^{-1}_*f^1$ 
is linearly equivalent to $\tilde{t}^++\tilde{f}_S$. Let us set 
\[
q_{r_1}:=\gamma\left(\tilde{r}_1|_{\tilde{f}}\right), \quad
q_{r_2}:=\gamma\left(\tilde{r}_2|_{\tilde{f}}\right), \quad
q_{r_3}:=\gamma\left(\tilde{r}_3|_{\tilde{f}}\right).
\]
Then the points $q_{r_1}$, $q_{r_2}$, $q_{r_3}\in f^1$ are mutually distinct. 
Let us set 
\begin{eqnarray*}
P(u,v) &:=& P_\sigma\left(\tilde{R}, P(u)-v\tilde{f}\right), \\
N(u,v) &:=& N_\sigma\left(\tilde{R}, P(u)-v\tilde{f}\right),
\end{eqnarray*}
where $P(u)$, $N(u)$ are as in Step 11. 
\begin{itemize}
\item
Assume that $u\in[0,1]$. 
\begin{itemize}
\item
If $v\in\left[0,u\right]$, then 
\begin{eqnarray*}
N(u,v)&=&\left[\frac{v}{2},\frac{v}{2},0,0,0,0,0,0,0,0,0,0\right], \\
P(u,v)&\sim_\R&\left[\frac{u-v-6}{2},\frac{3u-3v-20}{2},u-v-9,0,3,6,
8,1,4,1,2,0\right],
\end{eqnarray*}
and 
\[
\left(P(u,v)^{\cdot 2}\right)=\frac{1}{2}(u-v)^2.
\]
\end{itemize}
\item
Assume that $u\in[1,2]$. 
\begin{itemize}
\item
If $v\in\left[0,\frac{7-u}{6}\right]$, then 
\begin{eqnarray*}
N(u,v)&=&\left[\frac{v}{2},\frac{v}{2},0,0,0,0,0,0,0,0,0,0\right], \\
P(u,v)&\sim_\R&\biggl[\frac{u-v-6}{2},\frac{3u-3v-20}{2},u-v-9,0,
\frac{10-u}{3},\frac{20-2u}{3},
 9-u,1,\frac{9-u}{2},1,2,0\biggr],
\end{eqnarray*}
and 
\[
\left(P(u,v)^{\cdot 2}\right)=\frac{1}{12}(-7+14u-u^2-12uv+6v^2).
\]
\item
If $v\in\left[\frac{7-u}{6},1\right]$, then 
\begin{eqnarray*}
N(u,v)&=&\left[\frac{v}{2},\frac{v}{2},0,0,\frac{-7+u+6v}{6},\frac{-7+u+6v}{3},
0,0,0,\frac{-7+u+6v}{2},0,0\right], \\
P(u,v)&\sim_\R&\biggl[\frac{u-v-6}{2},\frac{3u-3v-20}{2},u-v-9,0,
\frac{9-u-2v}{2},9-u-2v,\\
&&\quad 9-u,1,\frac{9-u}{2},\frac{9-u-6v}{2},2,0\biggr],
\end{eqnarray*}
and 
\[
\left(P(u,v)^{\cdot 2}\right)=\frac{7}{2}(1-v)^2.
\]
\end{itemize}
\item
Assume that $u\in\left[2,\frac{19}{7}\right]$. 
\begin{itemize}
\item
If $v\in\left[0,u-2\right]$, then 
\begin{eqnarray*}
N(u,v)&=&\left[\frac{v}{3},0,0,0,0,0,0,0,0,0,0,0\right], \\
P(u,v)&\sim_\R&\left[\frac{u-v-8}{3},u-v-9,u-v-9,0,
\frac{10-u}{3},\frac{20-2u}{3},9-u,1,\frac{9-u}{2},1,2,0\right],
\end{eqnarray*}
and 
\[
\left(P(u,v)^{\cdot 2}\right)=\frac{1}{12}(-15+22u-3u^2-8v-8uv+4v^2).
\]
\item
If $v\in\left[u-2,\frac{7-u}{6}\right]$, then 
\begin{eqnarray*}
N(u,v)&=&\left[\frac{-u+3v+2}{6},\frac{-u+v+2}{2},0,0,0,0,
0,0,0,0,0,0\right], \\
P(u,v)&\sim_\R&\biggl[\frac{u-v-6}{2},\frac{3u-3v-20}{2},u-v-9,0,
\frac{10-u}{3},\frac{20-2u}{3},\\
&&\quad 9-u,1,\frac{9-u}{2},1,2,0\biggr],
\end{eqnarray*}
and 
\[
\left(P(u,v)^{\cdot 2}\right)=\frac{1}{12}(-7+14u-u^2-12uv+6v^2).
\]
\item
If $v\in\left[\frac{7-u}{6},1\right]$, then 
\begin{eqnarray*}
N(u,v)&=&\biggl[\frac{-u+3v+2}{6},\frac{-u+v+2}{2},0,0,\frac{-7+u+6v}{6},
\frac{-7+u+6v}{3},\\
&&\quad 0,0,0,\frac{-7+u+6v}{2},0,0\biggr], \\
P(u,v)&\sim_\R&\biggl[\frac{u-v-6}{2},\frac{3u-3v-20}{2},u-v-9,0,
\frac{9-u-2v}{2},9-u-2v,\\
&&\quad 9-u,1,\frac{9-u}{2},\frac{9-u-6v}{2},2,0\biggr],
\end{eqnarray*}
and 
\[
\left(P(u,v)^{\cdot 2}\right)=\frac{7}{2}(1-v)^2.
\]
\end{itemize}
\item
Assume that $u\in\left[\frac{19}{7},3\right]$. 
\begin{itemize}
\item
If $v\in\left[0,\frac{7-u}{6}\right]$, then 
\begin{eqnarray*}
N(u,v)&=&\left[\frac{v}{3},0,0,0,0,0,0,0,0,0,0,0\right], \\
P(u,v)&\sim_\R&\left[\frac{u-v-8}{3},u-v-9,u-v-9,0,
\frac{10-u}{3},\frac{20-2u}{3},9-u,1,\frac{9-u}{2},1,2,0\right],
\end{eqnarray*}
and 
\[
\left(P(u,v)^{\cdot 2}\right)=\frac{1}{12}(-15+22u-3u^2-8v-8uv+4v^2).
\]
\item
If $v\in\left[\frac{7-u}{6},u-2\right]$, then 
\begin{eqnarray*}
N(u,v)&=&\left[\frac{v}{3},0,0,0,\frac{-7+u+6v}{6},\frac{-7+u+6v}{3},
0,0,0,\frac{-7+u+6v}{2},0,0\right], \\
P(u,v)&\sim_\R&\biggl[\frac{u-v-8}{3},u-v-9,u-v-9,0,
\frac{9-u-2v}{2},9-u-2v,\\
&&\quad 9-u,1,\frac{9-u}{2},\frac{9-u-6v}{2},2,0\biggr],
\end{eqnarray*}
and 
\[
\left(P(u,v)^{\cdot 2}\right)=\frac{1}{6}(17+4u-u^2-46v+2uv+20v^2).
\]
\item
If $v\in\left[u-2,1\right]$, then 
\begin{eqnarray*}
N(u,v)&=&\biggl[\frac{-u+3v+2}{6},\frac{-u+v+2}{2},0,0,\frac{-7+u+6v}{6},
\frac{-7+u+6v}{3},\\
&&\quad 0,0,0,\frac{-7+u+6v}{2},0,0\biggr], \\
P(u,v)&\sim_\R&\biggl[\frac{u-v-6}{2},\frac{3u-3v-20}{2},u-v-9,0,
\frac{9-u-2v}{2},9-u-2v,\\
&&\quad 9-u,1,\frac{9-u}{2},\frac{9-u-6v}{2},2,0\biggr],
\end{eqnarray*}
and 
\[
\left(P(u,v)^{\cdot 2}\right)=\frac{7}{2}(1-v)^2.
\]
\end{itemize}
\item
Assume that $u\in\left[3,4\right]$. 
\begin{itemize}
\item
If $v\in\left[0,\frac{7-u}{6}\right]$, then 
\begin{eqnarray*}
N(u,v)&=&\left[\frac{v}{3},0,0,0,0,0,0,0,0,0,0,0\right], \\
P(u,v)&\sim_\R&\left[\frac{u-v-8}{3},u-v-9,u-v-9,0,
\frac{10-u}{3},\frac{20-2u}{3},9-u,1,\frac{9-u}{2},1,2,0\right],
\end{eqnarray*}
and 
\[
\left(P(u,v)^{\cdot 2}\right)=\frac{1}{12}(-15+22u-3u^2-8v-8uv+4v^2).
\]
\item
If $v\in\left[\frac{7-u}{6},\frac{5-u}{2}\right]$, then 
\begin{eqnarray*}
N(u,v)&=&\left[\frac{v}{3},0,0,0,\frac{-7+u+6v}{6},\frac{-7+u+6v}{3},
0,0,0,\frac{-7+u+6v}{2},0,0\right], \\
P(u,v)&\sim_\R&\biggl[\frac{u-v-8}{3},u-v-9,u-v-9,0,
\frac{9-u-2v}{2},9-u-2v,\\
&&\quad 9-u,1,\frac{9-u}{2},\frac{9-u-6v}{2},2,0\biggr],
\end{eqnarray*}
and 
\[
\left(P(u,v)^{\cdot 2}\right)=\frac{1}{6}(17+4u-u^2-46v+2uv+20v^2).
\]
\item
If $v\in\left[\frac{5-u}{2},\frac{9-u}{6}\right]$, then 
\begin{eqnarray*}
N(u,v)&=&\biggl[\frac{u+6v-5}{12},0,0,0,\frac{-7+u+6v}{6},\frac{-7+u+6v}{3},\\
&&\quad 0,\frac{-5+u+2v}{4},0,\frac{-7+u+6v}{2},\frac{-5+u+2v}{2},0\biggr], \\
P(u,v)&\sim_\R&\biggl[\frac{u-2v-9}{4},u-v-9,u-v-9,0,
\frac{9-u-2v}{2},9-u-2v,\\
&&\quad 9-u,\frac{9-u-2v}{4},\frac{9-u}{2},\frac{9-u-6v}{2},\frac{9-u-2v}{2},0\biggr],
\end{eqnarray*}
and 
\[
\left(P(u,v)^{\cdot 2}\right)=\frac{1}{8}(9-u-6v)^2.
\]
\end{itemize}
\item
Assume that $u\in\left[4,5\right]$. 
\begin{itemize}
\item
If $v\in\left[0,\frac{5-u}{2}\right]$, then 
\begin{eqnarray*}
N(u,v)&=&\left[\frac{v}{3},0,0,0,0,0,0,0,0,0,0,0\right], \\
P(u,v)&\sim_\R&\left[\frac{u-v-8}{3},u-v-9,u-v-9,0,
\frac{10-u}{3},\frac{20-2u}{3},9-u,1,\frac{9-u}{2},1,2,0\right],
\end{eqnarray*}
and 
\[
\left(P(u,v)^{\cdot 2}\right)=\frac{1}{12}(-15+22u-3u^2-8v-8uv+4v^2).
\]
\item
If $v\in\left[\frac{5-u}{2},\frac{7-u}{6}\right]$, then 
\begin{eqnarray*}
N(u,v)&=&\left[\frac{u+6v-5}{12},0,0,0,0,0,
0,\frac{-5+u+2v}{4},0,0,\frac{-5+u+2v}{2},0\right], \\
P(u,v)&\sim_\R&\biggl[\frac{u-2v-9}{4},u-v-9,u-v-9,0,
\frac{10-u}{3},\frac{20-2u}{3},\\
&&\quad 9-u,\frac{9-u-2v}{4},\frac{9-u}{2},1,\frac{9-u-2v}{2},0\biggr],
\end{eqnarray*}
and 
\[
\left(P(u,v)^{\cdot 2}\right)=\frac{1}{24}(145-26u+u^2-156v+12uv+36v^2).
\]
\item
If $v\in\left[\frac{7-u}{6},\frac{9-u}{6}\right]$, then 
\begin{eqnarray*}
N(u,v)&=&\biggl[\frac{u+6v-5}{12},0,0,0,\frac{-7+u+6v}{6},\frac{-7+u+6v}{3},\\
&&\quad 0,\frac{-5+u+2v}{4},0,\frac{-7+u+6v}{2},\frac{-5+u+2v}{2},0\biggr], \\
P(u,v)&\sim_\R&\biggl[\frac{u-2v-9}{4},u-v-9,u-v-9,0,
\frac{9-u-2v}{2},9-u-2v,\\
&&\quad 9-u,\frac{9-u-2v}{4},\frac{9-u}{2},\frac{9-u-6v}{2},\frac{9-u-2v}{2},0\biggr],
\end{eqnarray*}
and 
\[
\left(P(u,v)^{\cdot 2}\right)=\frac{1}{8}(9-u-6v)^2.
\]
\end{itemize}
\item
Assume that $u\in[5,7]$. 
\begin{itemize}
\item
If $v\in\left[0,\frac{7-u}{6}\right]$, then 
\begin{eqnarray*}
N(u,v)&=&\left[\frac{v}{2},0,0,0,0,0,0,\frac{v}{2},0,0,v,0\right], \\
P(u,v)&\sim_\R&\biggl[\frac{u-2v-9}{4},u-v-9,u-v-9,0,
\frac{10-u}{3},\frac{20-2u}{3},\\
&&\quad 9-u,\frac{9-u-2v}{4},\frac{9-u}{2},1,\frac{9-u-2v}{2},0\biggr],
\end{eqnarray*}
and 
\[
\left(P(u,v)^{\cdot 2}\right)=\frac{1}{24}(145-26u+u^2-156v+12uv+36v^2).
\]
\item
If $v\in\left[\frac{7-u}{6},\frac{9-u}{6}\right]$, then 
\begin{eqnarray*}
N(u,v)&=&\left[\frac{v}{2},0,0,0,\frac{-7+u+6v}{6},\frac{-7+u+6v}{3},
0,\frac{v}{2},0,\frac{-7+u+6v}{2},v,0\right], \\
P(u,v)&\sim_\R&\biggl[\frac{u-2v-9}{4},u-v-9,u-v-9,0,
\frac{9-u-2v}{2},9-u-2v,\\
&&\quad 9-u,\frac{9-u-2v}{4},\frac{9-u}{2},\frac{9-u-6v}{2},\frac{9-u-2v}{2},0\biggr],
\end{eqnarray*}
and 
\[
\left(P(u,v)^{\cdot 2}\right)=\frac{1}{8}(9-u-6v)^2.
\]
\end{itemize}
\item
Assume that $u\in[7,9]$. 
\begin{itemize}
\item
If $v\in\left[0,\frac{9-u}{6}\right]$, then 
\begin{eqnarray*}
N(u,v)&=&\left[\frac{v}{2},0,0,0,v,2v,
0,\frac{v}{2},0,3v,v,0\right], \\
P(u,v)&\sim_\R&\biggl[\frac{u-2v-9}{4},u-v-9,u-v-9,0,
\frac{9-u-2v}{2},9-u-2v,\\
&&\quad 9-u,\frac{9-u-2v}{4},\frac{9-u}{2},\frac{9-u-6v}{2},\frac{9-u-2v}{2},0\biggr],
\end{eqnarray*}
and 
\[
\left(P(u,v)^{\cdot 2}\right)=\frac{1}{8}(9-u-6v)^2.
\]
\end{itemize}
\end{itemize}
Therefore, we get 
\begin{eqnarray*}
&&S\left(V^{\tilde{R}}_{\bullet,\bullet}; \tilde{f}\right)\\
&=&\frac{3}{28}\Biggl(\int_0^1\int_0^u\frac{1}{2}(u-v)^2dvdu\\
&+&\int_1^2\biggl(\int_0^{\frac{7-u}{6}}\frac{1}{12}(-7+14u-u^2-12uv+6v^2)dv
+\int_{\frac{7-u}{6}}^2\frac{7}{2}(1-v)^2dv\biggr)du\\
&+&\int_2^{\frac{19}{7}}\biggl(\int_0^{u-2}
\frac{1}{12}(-15+22u-3u^2-8v-8uv+4v^2)dv\\
&&+\int_{u-2}^{\frac{7-u}{6}}\frac{1}{12}(-7+14u-u^2-12uv+6v^2)dv
+\int_{\frac{7-u}{6}}^2\frac{7}{2}(1-v)^2dv\biggr)du\\
&+&\int_{\frac{19}{7}}^3\biggl(\int_0^{\frac{7-u}{6}}
\frac{1}{12}(-15+22u-3u^2-8v-8uv+4v^2)dv\\
&&+\int_{\frac{7-u}{6}}^{u-2}\frac{1}{6}(17+4u-u^2-46v+2uv+20v^2)dv
+\int_{u-2}^1\frac{7}{2}(1-v)^2dv\biggr)du\\
&+&\int_3^4\biggl(\int_0^{\frac{7-u}{6}}
\frac{1}{12}(-15+22u-3u^2-8v-8uv+4v^2)dv\\
&&+\int_{\frac{7-u}{6}}^{\frac{5-u}{2}}\frac{1}{6}(17+4u-u^2-46v+2uv+20v^2)dv
+\int_{\frac{5-u}{2}}^{\frac{9-u}{6}}\frac{1}{8}(9-u-6v)^2dv\biggr)du\\
&+&\int_4^5\biggl(\int_0^{\frac{5-u}{2}}
\frac{1}{12}(-15+22u-3u^2-8v-8uv+4v^2)dv\\
&&+\int_{\frac{5-u}{2}}^{\frac{7-u}{6}}
\frac{1}{24}(145-26u+u^2-156v+12uv+36v^2)dv
+\int_{\frac{7-u}{6}}^{\frac{9-u}{6}}\frac{1}{8}(9-u-6v)^2dv\biggr)du\\
&+&\int_5^7\biggl(\int_0^{\frac{7-u}{6}}
\frac{1}{24}(145-26u+u^2-156v+12uv+36v^2)dv
+\int_{\frac{7-u}{6}}^{\frac{9-u}{6}}\frac{1}{8}(9-u-6v)^2dv\biggr)du\\
&+&\int_7^9\int_0^{\frac{9-u}{6}}\frac{1}{8}(9-u-6v)^2dvdu\Biggr)
=\frac{5}{16}. 
\end{eqnarray*}
Moreover, we have
\begin{eqnarray*}
&&F_{q_{r_1}}\left(W^{R^1, f^1}_{\bullet,\bullet,\bullet}\right)\\
&=&\frac{6}{28}\Biggl(\int_1^{\frac{19}{7}}\int_{\frac{7-u}{6}}^1
\frac{7}{2}(1-v)\frac{-7+u+6v}{2}dvdu\\
&+&\int_{\frac{19}{7}}^3\biggl(\int_{\frac{7-u}{6}}^{u-2}
\frac{23-u-20v}{6}\cdot\frac{-7+u+6v}{2}dv+\int_{u-2}^1
\frac{7}{2}(1-v)\frac{-7+u+6v}{2}dv\biggr)du\\
&+&\int_3^4\biggl(\int_{\frac{7-u}{6}}^{\frac{5-u}{2}}
\frac{23-u-20v}{6}\cdot\frac{-7+u+6v}{2}dv+\int_{\frac{5-u}{2}}^{\frac{9-u}{6}}
\frac{3}{4}(9-u-6v)\frac{-7+u+6v}{2}dv\biggr)du\\
&+&\int_4^7\int_{\frac{7-u}{6}}^{\frac{9-u}{6}}
\frac{3}{4}(9-u-6v)\frac{-7+u+6v}{2}dvdu
+\int_7^9\int_0^{\frac{9-u}{6}}\frac{3}{4}(9-u-6v)\frac{-7+u+6v}{2}dvdu\Biggr)\\
&=&\frac{149}{1568}, 
\end{eqnarray*}
\begin{eqnarray*}
&&F_{q_{r_2}}\left(W^{R^1, f^1}_{\bullet,\bullet,\bullet}\right)\\
&=&\frac{6}{28}\Biggl(\int_3^4\int_{\frac{5-u}{2}}^{\frac{9-u}{6}}
\frac{3}{4}(9-u-6v)\frac{-5+u+2v}{2}dvdu\\
&+&\int_4^5\biggl(\int_{\frac{5-u}{2}}^{\frac{7-u}{6}}
\frac{13-u-6v}{4}\cdot\frac{-5+u+2v}{2}dv+\int_{\frac{7-u}{6}}^{\frac{9-u}{6}}
\frac{3}{4}(9-u-6v)\frac{-5+u+2v}{2}dv\biggr)du\\
&+&\int_5^7\biggl(\int_0^{\frac{7-u}{6}}
\frac{13-u-6v}{4}\cdot\frac{-5+u+2v}{2}dv+\int_{\frac{7-u}{6}}^{\frac{9-u}{6}}
\frac{3}{4}(9-u-6v)\frac{-5+u+2v}{2}dv\biggr)du\\
&+&\int_7^9\int_0^{\frac{9-u}{6}}\frac{3}{4}(9-u-6v)\frac{-5+u+2v}{2}dvdu\Biggr)
=\frac{23}{112}, 
\end{eqnarray*}
and $F_{q_{r_3}}\left(W^{R^1, f^1}_{\bullet,\bullet,\bullet}\right)=0$. 
Therefore, we get 
\begin{eqnarray*}
&&S\left(W^{{R^1},f^1}_{\bullet,\bullet,\bullet}; p^1\right)\\
&\leq&\frac{23}{112}+\frac{3}{28}\Biggl(\int_0^1\int_0^u
\left(\frac{u-v}{2}\right)^2dvdu\\
&+&\int_1^2\biggl(\int_0^{\frac{7-u}{6}}\left(\frac{u-v}{2}\right)^2dv
+\int_{\frac{7-u}{6}}^1\left(\frac{7}{2}(1-v)\right)^2dv\biggr)du\\
&+&\int_2^{\frac{19}{7}}\biggl(\int_0^{u-2}\left(\frac{u+1-v}{3}\right)^2dv
+\int_{u-2}^{\frac{7-u}{6}}\left(\frac{u-v}{2}\right)^2dv
+\int_{\frac{7-u}{6}}^1\left(\frac{7}{2}(1-v)\right)^2dv\biggr)du\\
&+&\int_{\frac{19}{7}}^3\biggl(\int_0^{\frac{7-u}{6}}\left(\frac{u+1-v}{3}\right)^2dv
+\int_{\frac{7-u}{6}}^{u-2}\left(\frac{23-u-20v}{6}\right)^2dv\\
&&+\int_{u-2}^1\left(\frac{7}{2}(1-v)\right)^2dv\biggr)du\\
&+&\int_3^4\biggl(\int_0^{\frac{7-u}{6}}\left(\frac{u+1-v}{3}\right)^2dv
+\int_{\frac{7-u}{6}}^{\frac{5-u}{2}}\left(\frac{23-u-20v}{6}\right)^2dv\\
&&+\int_{\frac{5-u}{2}}^{\frac{9-u}{6}}\left(\frac{3}{4}(9-u-6v)\right)^2dv\biggr)du\\
&+&\int_4^5\biggl(\int_0^{\frac{5-u}{2}}\left(\frac{u+1-v}{3}\right)^2dv
+\int_{\frac{5-u}{2}}^{\frac{7-u}{6}}\left(\frac{13-u-6v}{4}\right)^2dv\\
&&+\int_{\frac{7-u}{6}}^{\frac{9-u}{6}}\left(\frac{3}{4}(9-u-6v)\right)^2dv\biggr)du\\
&+&\int_5^7\biggl(\int_0^{\frac{7-u}{6}}\left(\frac{13-u-6v}{4}\right)^2dv
+\int_{\frac{7-u}{6}}^{\frac{9-u}{6}}\left(\frac{3}{4}(9-u-6v)\right)^2dv\biggr)du\\
&+&\int_7^9\int_0^{\frac{9-u}{6}}\left(\frac{3}{4}(9-u-6v)\right)^2dvdu\Biggr)
=\frac{23}{112}+\frac{929}{1568}=\frac{1251}{1568}. 
\end{eqnarray*}
In particular, we get the inequality
\begin{eqnarray*}
\delta_{p^1}\left(R^1; V^{\tilde{R}}_{\bullet,\bullet}\right)\geq
\min\left\{\frac{A_{R^1}\left(\tilde{f}\right)}
{S\left(V^{\tilde{R}}_{\bullet,\bullet};\tilde{f}\right)},\,\,\,
\frac{A_{f^1,\frac{1}{2}p_v^1}\left(p^1\right)}
{S\left(W^{R^1,f^1}_{\bullet,\bullet,\bullet}; p^1\right)}\right\}
\geq\min\left\{\frac{16}{5},\,\,\,\frac{1568}{1251}\right\}
=\frac{1568}{1251}
\end{eqnarray*}
by Corollary \ref{reduction_corollary}.

\noindent\underline{\textbf{Step 18}}\\
By Steps 12--17, we get 
\[
\delta\left(R^1; V^{\tilde{R}}_{\bullet,\bullet}\right)\geq\min
\left\{\frac{224}{207},\,\,\frac{16}{11},\,\,\frac{112}{75},\,\,
\frac{112}{109},\,\,\frac{56}{51},\,\,\frac{1568}{1251}\right\}=\frac{112}{109}. 
\]
Moreover,  by Step 15, we get the equality 
\[
\delta\left(R^1; V^{\tilde{R}}_{\bullet,\bullet}\right)=\frac{112}{109} 
\]
by looking at the divisor $\tilde{h}_2\subset\tilde{R}$. Therefore, together with 
Step 9, we get the inequality 
\[
\delta_q(X)\geq \min\left\{\frac{A_X(R^1)}{S_X(R^1)},\,\,\,
\delta\left(R^1; V^{\tilde{R}}_{\bullet,\bullet}\right)\right\}
=\min\left\{\frac{64}{63},\,\,\frac{112}{109}\right\}=\frac{64}{63}
\]
by Corollary \ref{reduction_corollary}.

As a consequence, we have completed the proof of Theorem \ref{6463_thm}.
\end{proof}

\begin{remark}\label{6463_remark}
On might ask to evaluate $\delta\left(F^0; V^{F^0}_{\bullet,\bullet}\right)$ 
in order to evaluate $\delta_q(X)$. However, one can check that 
\[
\frac{A_X(F^0)}{S_X(F^0)}=\frac{28}{27} \quad \text{but}\quad
\delta\left(F^0; V^{F^0}_{\bullet,\bullet}\right)\leq\frac{112}{117}.
\]
That is why we consider the divisor $R^1$ over $X$. 
\end{remark}

\section{Main theorem}\label{main_section}

In this section, we prove the following: 

\begin{thm}\label{mainthm}
The Fano threefold given in Example \ref{CS_example} \eqref{CS_example2} is K-stable. 
\end{thm}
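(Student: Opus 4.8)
The plan is to apply the equivariant valuative criterion, Theorem \ref{equiv-K_thm}, with $G=\boldsymbol{\mu}_2\times\boldsymbol{\mu}_3$ acting on $X$ as in Example \ref{CS_example} \eqref{CS_example2}. Since $\Aut^0(X)=\{1\}$ by Remark \ref{PCSD_remark} \eqref{PCSD_remark1}, it suffices to show $A_X(E)/S_X(E)>1$ for every $G$-invariant dreamy prime divisor $E$ over $X$; combined with $\Aut^0(X)=\{1\}$ this gives K-polystability, hence K-stability by Remark \ref{K-dfn_remark} \eqref{K-dfn_remark2}. Because K-stability is local, it is enough to prove that for every $G$-invariant dreamy prime divisor $E$ with center $c_X(E)\ni p$ for some closed point $p\in X$, the inequality holds; and since $G$ acts on $X$, it suffices to check, for each $G$-orbit of closed points $p\in X$, that either $\delta_p(X)>1$ or (when $\delta_p(X)$ might not be $>1$) that $\alpha_{G,p}(X)\geq 3/4$ so that Proposition \ref{alpha_proposition} \eqref{alpha_proposition2} applies (note $X\not\simeq\pr^3$).

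The first step is to organize the closed points of $X$ into finitely many classes according to their position relative to the divisors $E_1,E_2,E_3$ and the del Pezzo fibration $|H_1|$, and to determine which classes contain $G$-invariant points. By Remark \ref{G-inv_remark}, the only $G$-invariant points outside $E_2$ are $p_x\in E_3\setminus E_1$ and $p_y,p_t\in X\setminus(E_1\cup E_3)$, together with the $G$-invariant points lying on $E_1\cup E_2$; I would also need to track the $G$-invariant curves (such as $l\subset Q_0$) and the points on them. Then I would invoke the local estimates already proved: $\delta_{p_t}(X)\geq 56/51>1$ by Corollary \ref{5651_corollary}; $\delta_{p_y}(X)\geq 112/107>1$ and $\delta_{p_0}(X)\geq 112/107>1$ for $p_0\in l\setminus\{p_x,q\}$ by Corollary \ref{112107_corollary}; $\delta_p(X)\geq 112/109>1$ for $p\in E_2\setminus\{q\}$ by Proposition \ref{112109_proposition}; $\delta_p(X)\geq 112/103>1$ for the singular point of a singular $Q\in|H_1|$ when the three negative/special curves meet, which by Example \ref{112103_example} covers $p=p_x$ (here $p_x$ is the singular point of $Q_0$ with the curves $l,s_0,E_1|_{Q_0}$ concurrent); $\delta_p(X)\geq 56/51>1$ for $p\notin E_2\cup E_3$ lying on a smooth $Q_p\in|H_1|$ by Proposition \ref{5651_proposition}; and, crucially, $\delta_q(X)=64/63>1$ at the special point $q=E_1\cap E_2\cap E_3$ by Theorem \ref{6463_thm}, which uses that $X$ satisfies Remark \ref{infl_remark} \eqref{infl_remark2}. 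For the remaining general points not covered by a local $\delta$-estimate $>1$ — for instance points $p\in E_1\setminus(E_2\cup E_3)$, or points on $E_3$ away from $p_x$ — I would either produce a local $\delta$-estimate by the same refinement technique (Corollary \ref{reduction_corollary}) or, where that is hard, bound $\alpha_{G,p}(X)\geq 3/4$ by exhibiting the $G$-invariant anticanonical divisors explicitly (they are spanned by $H_x,H_y,H_t,H_z$ and the pencil $\{Q_\lambda\}$ together with their $G$-invariant combinations) and checking log canonicity along the relevant centers.

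The main obstacle is the bookkeeping: one must be certain that every $G$-invariant dreamy prime divisor $E$ has its center containing some closed point from one of the finitely many classes handled above, and that no $G$-invariant point has been overlooked — in particular the points of $E_1$, the intersection curves $E_1\cap E_3$, $E_2\cap E_3$, and the fibers of the various fibrations through $G$-fixed points. A secondary, but already-resolved, difficulty is the point $q$ itself: the naive divisor $F^0$ (the blowup of $q$) only gives $A_X(F^0)/S_X(F^0)=28/27$ with $\delta(F^0;V^{F^0}_{\bullet,\bullet})\leq 112/117<1$ (Remark \ref{6463_remark}), so the delicate weighted-blowup analysis producing $R^1$ in Theorem \ref{6463_thm} is what makes $\delta_q(X)=64/63>1$; it is essential that the inflection hypothesis \eqref{infl_remark2} holds for Example \ref{CS_example} \eqref{CS_example2}, which it does since $q^\sC$ is an inflection point of $\sC\subset\pr^2_{xyt}$. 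Assembling all these inputs, every $G$-invariant dreamy prime divisor $E$ over $X$ satisfies $A_X(E)/S_X(E)>1$, so $X$ is K-polystable by Theorem \ref{equiv-K_thm}, and K-stable by Remark \ref{K-dfn_remark} \eqref{K-dfn_remark2} since $\Aut^0(X)=\{1\}$.
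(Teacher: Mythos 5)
Your overall strategy (equivariant valuative criterion via Theorem \ref{equiv-K_thm}, local $\delta$-estimates, $\alpha$-invariant fallback) matches the paper's, and your inventory of which local estimates from \S\ref{local_general_section}--\S\ref{special-II_section} feed in is accurate. However, your proposed reduction — ``check, for each $G$-orbit of closed points $p$, that $\delta_p(X)>1$ or $\alpha_{G,p}(X)\geq 3/4$'' — is not what the paper does and would not easily close the argument. There are infinitely many $G$-orbits of closed points on the curves $E_1\cap E_3$, $E_2\cap E_3$, etc., and there is no estimate in the paper giving $\delta_p(X)>1$ or $\alpha_{G,p}(X)\geq 3/4$ at a closed point for all of them. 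What you call ``the main obstacle'' (bookkeeping) is in fact the substantive content of the remaining case, and it needs a genuine idea.

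The idea the paper uses, and which your proposal omits, is this: after handling $Z:=c_X(E)$ when $Z$ meets $E_2\cup l$ (the local $\delta$-estimates), when $Z$ is a divisor (\cite[\S 10]{div-stability}), and when $Z$ contains one of the $G$-fixed points $p_x,p_y,p_t$, one is left with $Z$ a $G$-invariant curve disjoint from $E_2\cup l$ that contains no $G$-fixed point. Since every action of $G=\boldsymbol{\mu}_2\times\boldsymbol{\mu}_3$ on $\pr^1$ has a fixed point, such a $Z$ must be \emph{non-rational}. One then argues at the generic point $\eta_Z$ (not at closed points): assuming $\alpha_{G,\eta_Z}(X)<3/4$, one obtains a $G$-invariant $D\sim_\Q -K_X$ with $(X,\alpha D)$ lc but not klt at $\eta_Z$ for some $\alpha<3/4$; $\Nklt(X,\alpha D)$ is $G$-invariant, cannot be one-dimensional near $\eta_Z$ because that would force $Z$ rational (\cite[Corollary 4.2]{V22KE}), so a $G$-irreducible divisor $D_0\leq\alpha D$ with $Z\subset D_0$ and $D_0\sim H_1,H_2$, or $H_3$ (by $\Eff(X)$) appears. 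Using the $G$-invariant members listed in Remark \ref{G-inv_remark} and Example \ref{CS-H1_example}, one shows case by case that a non-rational $G$-invariant curve $Z$ avoiding $l$ cannot lie in the loci forced by Proposition \ref{5651_proposition} — each candidate locus decomposes into rational curves (or contains a point of $l$) — a contradiction. Hence $\alpha_{G,\eta_Z}(X)\geq 3/4$ and Proposition \ref{alpha_proposition} \eqref{alpha_proposition2} applies at $\eta_Z$. Without the rationality constraint and the $\Nklt$ structure argument, your plan to ``exhibit the $G$-invariant anticanonical divisors and check log canonicity'' does not reduce to a finite verification.
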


\begin{proof}
Note that $\Aut^0(X)=\{1\}$ by \cite{PCS}. 
Take any $G$-invariant dreamy prime divisor $E$ over $X$, 
where $G=\boldsymbol{\mu}_2 \times\boldsymbol{\mu}_3$ is as in Example 
\ref{CS_example} \eqref{CS_example2}. 
By Theorem \ref{equiv-K_thm}, 
it is enough to show the inequality 
\[
\frac{A_X(E)}{S_X(E)}>1.
\]
Let $Z:=c_X(E)\subset X$ be the center of $E$ on $X$. 
Note that the variety $Z$ is $G$-invariant. 
If $Z\cap\left(E_2\cup l\right)\neq \emptyset$, then, 
by Corollary \ref{112107_corollary}, Propositions \ref{112109_proposition}, 
\ref{112103_proposition} 
and Theorem \ref{6463_thm}, we have $A_X(E)>S_X(E)$. 
Thus we may assume that $Z\cap\left(E_2\cup l\right)=\emptyset$. 
Note that $E_2\cup l$ is the inverse image of $l^P\subset P$. 

If $Z$ is a divisor (i.e., if $E$ is a prime divisor on $X$), then we have
$A_X(E)>S_X(E)$ by \cite[\S 10]{div-stability}. If $Z$ contains a $G$-invariant point, 
then the point must be one of $p_x$, $p_y$ or $p_t$ in Remark \ref{G-inv_remark}. 
By Corollaries \ref{5651_corollary}, \ref{112107_corollary} and Proposition 
\ref{112103_proposition}, we have $A_X(E)>S_X(E)$. 

Thus we may further assume that $Z$ is a $G$-invariant curve such that 
$Z$ does not contain any $G$-invariant point on $X$. In particular, 
$Z$ must be a non-rational curve, since any action $G\curvearrowright\pr^1$ 
must have a fixed point. We remark that 
$Z^P:=\left(\sigma^V\circ\sigma_1\right)_*Z\subset P$ is also a $G$-invariant 
non-rational curve with $Z^P\cap l^P=\emptyset$. 

Let $\eta_Z\in Z$ be the generic point of $Z$. 
We assume that 
\[
\alpha_{G, \eta_Z}(X)<\frac{3}{4}.
\]
Then there exists a positive rational number $\alpha\in(0,3/4)\cap\Q$ 
and an effective $G$-invariant $\Q$-divisor $D\sim_\Q -K_X$ 
such that the pair $(X, \alpha D)$ is lc but not klt at $\eta_Z\in X$. 
Let $\Nklt(X, \alpha D)$ be the non-klt locus of the pair $(X, \alpha D)$ 
(see \cite[2.3.11]{fujino}). From the construction, $\Nklt(X, \alpha D)$ contains $Z$ 
and is $G$-invariant. 
If $\Nklt(X, \alpha D)$ is one-dimensional around a neighborhood of $\eta_Z\in X$, 
then $Z$ must be a rational curve by \cite[Corollary 4.2]{V22KE}. Thus 
there exists a $G$-irreducible effective $\Z$-divisor $D_0$ with 
$D_0\subset \Nklt(X, \alpha D)$ and $Z\subset D_0$. 
The divisor $D_0$ satisfies that $D_0\leq \alpha D$. In particular, 
\[
-K_X-\frac{4}{3}D_0
\]
is big. From the structure of $\Eff(X)$ in \S \ref{311_section} 
(see also \cite[\S 10]{div-stability}), we have one of 
$D_0\sim H_1$, $H_2$ or $H_3$. 
(Note that $D_0\neq E_2$ since $Z\cap E_2=\emptyset$.)

Assume that $D_0\sim H_1$. By Example \ref{CS-H1_example}, 
we have $D_0=Q_0$ or $Q_\infty$. 
Since the morphism 
\[
Q_0\setminus(l\cup E_2)\to Q_0^P\setminus l^P
\]
is an isomorphism and $Q_0^P\setminus l^P$ is affine, we have $D_0\neq Q_0$. 
Assume that $D_0=Q_\infty$. Note that $Q_\infty$ is smooth. 
Then we have either $A_X(E)>S_X(E)$ or 
$Z$ must be contained in the union of the $3$ negative curves in $Q_\infty$ 
by Proposition \ref{5651_proposition}. However, since $Z$ is a non-rational curve, 
we must have $A_X(E)>S_X(E)$. 

Assume that $D_0\sim H_2$. Then $D_0=H_x$, $H_y$ or $H_t$ by Remark 
\ref{G-inv_remark}. Since $l\subset H_y$, $H_t$ and $H_y^P\setminus l^P$ 
and $H_t^P\setminus l^P$ are affine, we must have $D_0=H_x$. 
Assume that $A_X(E)\leq S_X(E)$.
By Example \ref{CS-H1_example} and Proposition \ref{5651_proposition}, 
the curve $Z$ is contained in 
\[
H_x\cap\left(E_3\cup Q_0\cup Q_1\cup Q_\omega\cup Q_{\omega^2}\right).
\]
Thus, under the natural isomorphism 
$H_x^P\simeq\pr^2_{yzt}$, the curve $Z^P$ is contained in the locus
\begin{eqnarray*}
&&(t^3-y^3=0)\cup\left(yz+t^2-(y^2+zt)=0\right)\\
&\cup&
\left(yz+t^2-\omega(y^2+zt)=0\right)
\cup
\left(yz+t^2-\omega^2(y^2+zt)=0\right).
\end{eqnarray*}
The locus is a union of rational curves. This leads to a contradiction. 
Thus we have $A_X(E)>S_X(E)$. 

Assume that $D_0\sim H_3$. Then $D_0=H_z$ by Remark \ref{G-inv_remark}. 
Assume that $A_X(E)\leq S_X(E)$.
By Example \ref{CS-H1_example} and Proposition \ref{5651_proposition}, 
$Z$ is contained in 
\[
H_z\cap\left(E_3\cup Q_0\cup Q_1\cup Q_\omega\cup Q_{\omega^2}\right).
\]
Since $Z^P$ is a non-rational curve, $Z^P$ must be equal to 
\[
H_z^P\cap E_3=(t^3-x^2y-y^3=0)
\]
under the natural isomorphism $H_z^P\simeq\pr^2_{xyt}$. 
However, in this case, we have $p_x\in Z$. This leads to a contradiction since 
$Z\cap l=\emptyset$. Thus we have $A_X(E)>S_X(E)$. 

Therefore, we may assume that 
\[
\alpha_{G, \eta_Z}(X)\geq \frac{3}{4}.
\]
In this case, we have $A_X(E)>S_X(E)$ 
by Proposition \ref{alpha_proposition} \eqref{alpha_proposition2}. 

As a consequence, we have completed the proof of Theorem \ref{mainthm}. 
\end{proof}

\section{Appendix}\label{appendix_section}

In this section, we see several basic properties of local $\delta$-invariants. 

\subsection{Positivity of local $\delta$-invariants}\label{positivity_subsection}

We show that the local $\delta$-invariant for a graded linear series 
under some mild conditions is always positive. 

\begin{proposition}[{cf.\ \cite[Theorem A]{BJ}}]\label{positivity_proposition}
Let $X$ be a projective variety, let $\Delta$ be an effective $\Q$-Weil 
divisor on $X$, 
and let $V_{\vec{\bullet}}$ be the Veronese equivalence class of a 
graded linear series on $X$ associated to $L_1,\dots,L_r\in\CaCl(X)\otimes_\Z\Q$ 
which has bounded support and contains an ample series. 
\begin{enumerate}
\renewcommand{\theenumi}{\arabic{enumi}}
\renewcommand{\labelenumi}{(\theenumi)}
\item\label{positivity_proposition1}
Take a scheme-theoretic point $\eta\in X$ such that $(X, \Delta)$ is klt at $\eta$. 
Then 
we have $\alpha_\eta\left(X,\Delta;V_{\vec{\bullet}}\right)>0$ and 
$\delta_\eta\left(X,\Delta;V_{\vec{\bullet}}\right)>0$. 
\item\label{positivity_proposition2}
Assume that $(X,\Delta)$ is a klt pair. Then we have 
$\alpha\left(X,\Delta;V_{\vec{\bullet}}\right)>0$ and 
$\delta\left(X,\Delta;V_{\vec{\bullet}}\right)>0$. 
\end{enumerate}
\end{proposition}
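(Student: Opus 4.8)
The plan is to reduce the positivity statement to the already-known boundedness of $\alpha$-invariants for log Fano pairs, exactly as in \cite[Theorem A]{BJ}, but via a standard ``cut-off'' device that replaces the graded linear series $V_{\vec{\bullet}}$ by a genuine $\Q$-Cartier $\Q$-divisor. First I would note that by Definition \ref{delta-AZ_definition} we always have $\alpha_\eta\left(V_{\vec{\bullet}}\right)\leq\delta_\eta\left(V_{\vec{\bullet}}\right)$ and $\alpha\left(V_{\vec{\bullet}}\right)\leq\delta\left(V_{\vec{\bullet}}\right)$, so it suffices to prove positivity for the $\alpha$-invariants; and since $\alpha\left(V_{\vec{\bullet}}\right)=\inf_\eta\alpha_\eta\left(V_{\vec{\bullet}}\right)$ it suffices in fact to treat \eqref{positivity_proposition1}, provided the lower bound obtained is uniform in $\eta$. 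Next, using Lemmas \ref{filter_lemma} and \ref{refinement_lemma} (together with passing to a Veronese representative), I would reduce to the case $L_1,\dots,L_r\in\CaCl(X)$, so that a representative $V_{\vec{\bullet}}$ is a $\Z_{\geq 0}^r$-graded linear series.

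The key step is the following comparison: because $V_{\vec{\bullet}}$ contains an ample series, pick $\vec{a}_0\in\interior\left(\Supp\left(V_{\vec{\bullet}}\right)\right)\cap\Z_{\geq 0}^r$ together with a decomposition $\vec{a}_0\cdot\vec{L}\sim A+E$ with $A$ ample Cartier and $E$ effective Cartier, and $pE+H^0(X,pA)\subset V_{p\vec{a}_0}$ for all $p\gg 0$, as in Definition \ref{bdd-ample_definition} \eqref{bdd-ample_definition23}. This gives, for every prime divisor $F$ over $X$, an inequality $T\left(V_{\vec{\bullet}};F\right)\geq c\cdot\tau_A(F)$ for a fixed constant $c>0$ depending only on $\vec{a}_0$ (coming from the $r$-th coordinate of $\vec{a}_0$ after the Veronese normalization built into the definition of $T$). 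In the other direction, $V_{\vec{\bullet}}$ having bounded support gives a constant $C>0$ with $\sF^{\lambda}V_{m\vec{a}}=0$ for $\lambda\geq Cma_1$, which forces $T\left(V_{\vec{\bullet}};F\right)\leq C'\cdot T\left(|L|;F\right)$ for a big Cartier class $L$ built from the $L_i$'s — but for the positivity statement I only need the \emph{lower} bound $T\geq c\,\tau_A$. Combining with the elementary inequality $S_L(F)\leq\tfrac{n}{n+1}\tau_L(F)$ I reviewed after Definition \ref{volume_definition}, and hence $T\left(V_{\vec{\bullet}};F\right)\geq c\,\tau_A(F)\geq c\,\tfrac{n+1}{n}S_A(F)$, I get
\[
\alpha_\eta\left(X,\Delta;V_{\vec{\bullet}}\right)
=\inf_{F:\,\eta\in c_X(F)}\frac{A_{X,\Delta}(F)}{T\left(V_{\vec{\bullet}};F\right)}
\leq\frac{n}{c(n+1)}\inf_{F:\,\eta\in c_X(F)}\frac{A_{X,\Delta}(F)}{S_A(F)}
=\frac{n}{c(n+1)}\,\delta_\eta\left(X,\Delta;A\right),
\]
which is the wrong direction; so instead I would argue the reverse bound: $\alpha_\eta\left(X,\Delta;V_{\vec{\bullet}}\right)\geq \frac{1}{C'}\,\alpha_\eta\left(X,\Delta;L\right)$ using the upper bound $T\left(V_{\vec{\bullet}};F\right)\leq C' T\left(|L|;F\right)=C'\tau_L(F)$.

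So the actual mechanism is: $\alpha_\eta\left(X,\Delta;V_{\vec{\bullet}}\right)\geq\frac{1}{C'}\alpha_\eta\left(X,\Delta;L\right)$ where $L$ is a fixed big $\Q$-Cartier $\Q$-divisor dominating the support of $V_{\vec{\bullet}}$, and $\alpha_\eta\left(X,\Delta;L\right)>0$ because $L$ is big: writing $L\sim_\Q A'+E'$ with $A'$ ample and $E'$ effective, one has $\tau_L(F)\leq\tau_{A'}(F)+\ord_F(E')$ and then local positivity of $\alpha_\eta\left(X,\Delta;A'\right)$ for an \emph{ample} class follows from the classical argument in \cite{BJ} (or by the usual Nadel-vanishing/effective-basepoint-freeness bounds: there is an $m_0$ with $-(K_X+\Delta)+m_0 A'$ very ample, and then any $D\sim_\Q A'$ with high lc-defect at $\eta$ produces a contradiction via a bounded pencil construction), giving a uniform positive lower bound. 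Finally, to get \eqref{positivity_proposition2} I would observe that all the constants $c,C',m_0$ above are independent of $\eta$, so $\alpha\left(X,\Delta;V_{\vec{\bullet}}\right)=\inf_\eta\alpha_\eta\left(X,\Delta;V_{\vec{\bullet}}\right)\geq\frac{1}{C'}\alpha\left(X,\Delta;L\right)>0$, again by \cite[Theorem A]{BJ} applied to the klt pair $(X,\Delta)$ and big class $L$, and then $\delta>0$ by the sandwich $\alpha\leq\delta\leq(r+n)\alpha$. The main obstacle I anticipate is bookkeeping: making the two comparison inequalities between $T\left(V_{\vec{\bullet}};F\right)$ and $\tau_L(F)$ fully precise — getting the constants right given the $(m\Z_{\geq 0})^r$-grading normalization in Definition \ref{filter_definition} and the Veronese-rescaling of Lemma \ref{veronese_lemma} — and handling the possibly non-normal locus of $X$ away from $\eta$ (which is harmless since $A_{X,\Delta}(F)$ and all the invariants only depend on a neighborhood of the generic point of $c_X(F)$, where $X$ is normal by the klt hypothesis at $\eta$ for part \eqref{positivity_proposition1}, and where one restricts to divisors $F$ with $c_X(F)$ meeting the normal locus for part \eqref{positivity_proposition2}).
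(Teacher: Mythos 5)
Your proposal follows essentially the same route as the paper: reduce to the $\alpha$-invariant, pass to a Veronese representative associated to genuine Cartier divisors, majorize $T(V_{\vec{\bullet}};F)$ by $\tau$ of a fixed divisor class via a section-space inclusion, and then cite \cite[Theorem A]{BJ}. The one genuine gap is the step you yourself flag as ``bookkeeping'': the inequality $T(V_{\vec{\bullet}};F)\leq C'\tau_L(F)$ does not fall out of bounded support on its own, since bounded support only tells you that $a_i\leq M a_1$ for $\vec{a}\in\Supp(V_{\vec{\bullet}})$ and does not by itself give the linear boundedness of $\sF_F$ on $V_{\vec{\bullet}}$. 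What is needed — and what the paper supplies — is a concrete very ample Cartier divisor $H$ with $|H-\sum_{i}k_iL_i|\neq\emptyset$ for all $k_i\in\{0,\dots,M\}$, together with the explicit decomposition of $a_1rH-\vec{a}\cdot\vec{L}$ into effective Cartier pieces, which yields a genuine inclusion of section spaces $V_{\vec{a}}\subset H^0(X,a_1rH)$. This immediately gives $\alpha_\eta(X,\Delta;V_{\vec{\bullet}})\geq\alpha_\eta(X,\Delta;rH)$ with constant $1$ and an \emph{ample} comparison class, so your further big-to-ample reduction is unnecessary; likewise, your opening detour through the ample series to produce a \emph{lower} bound $T(V_{\vec{\bullet}};F)\geq c\,\tau_A(F)$ plays no role and should simply be deleted. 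Finally, to apply \cite[Theorem A]{BJ} one should, as the paper does, first pass to the normalization of $X$ (harmless by Example \ref{filter_example} \eqref{filter_example2}), then modify $\Delta$ away from $\eta$ so that $K_X+\Delta$ is $\Q$-Cartier, take a log resolution $\sigma\colon\tilde{X}\to X$, and replace the boundary by its fractional part $\Delta'$ — so that the positivity is read off the log smooth klt pair $(\tilde{X},\Delta')$ with the pulled-back very ample class; your remark that everything is ``uniform in $\eta$'' is then accurate and gives part \eqref{positivity_proposition2}.
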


\begin{proof}
By Definition \ref{delta-AZ_definition}, it is enough to show the positivity of 
$\alpha$-invariants. We may assume that 
$V_{\vec{\bullet}}$ is a $\Z_{\geq 0}^r$-graded linear series on $X$ associated to 
Cartier divisors $L_1,\dots,L_r$. Since $V_{\vec{\bullet}}$ has bounded support, 
there exists $M>0$ such that $V_{\vec{a}}=0$ for any 
$\vec{a}=(a_1,\dots,a_r)\in\Z_{\geq 0}^r$ with $a_i>M a_1$ for some $2\leq i\leq r$. 
Take a very ample Cartier divisor $H$ on $X$ such that 
\[
\left|H-\sum_{i=1}^r k_i L_i\right|\neq\emptyset\quad
\text{for any }k_i\in\{0,1,\dots,M\}.
\]
For any $\vec{a}=(a_1,\dots,a_r)\in \Z_{\geq 0}^r$ with $V_{\vec{a}}\neq 0$, 
since 
\begin{eqnarray*}
a_1 r H-\vec{a}\cdot\vec{L}
&=&a_1\left(H-\sum_{i=1}^r\lfloor a_i/a_1\rfloor L_i\right)\\
&&+\sum_{i=2}^r a_1\{a_i/a_1\}(H-L_i)+a_1\left((r-1)-\sum_{i=2}^r\{a_i/a_1\}\right)H
\end{eqnarray*}
is effective, there is an inclusion $V_{\vec{a}}\subset H^0(X,a_1 r H)$. 
Thus we get the inequality 
\begin{eqnarray*}
\alpha_\eta(X,\Delta;V_{\vec{\bullet}})&\geq&\alpha_\eta(X,\Delta; r H), \\
\alpha(X,\Delta;V_{\vec{\bullet}})&\geq&\alpha(X,\Delta; r H), 
\end{eqnarray*}
where we identify $r H$ and the complete linear series of $r H$. 
Thus we may assume that $V_{\vec{\bullet}}$ is the complete linear series of $r H$. 
By Example \ref{filter_example} \eqref{filter_example2}, 
by taking the normalization of $X$, we may further assume that $X$ is normal. 
Moreover, we may assume that $K_X+\Delta$ is $\Q$-Cartier after replacing 
$\Delta$ suitably outside $\eta\in X$ for \eqref{positivity_proposition1}. 
Let $\sigma\colon\tilde{X}\to X$ be a log resolution of $(X,\Delta)$, and let us set 
$K_{\tilde{X}}+\tilde{\Delta}:=\sigma^*(K_X+\Delta)$. 
Let $\tilde{\Delta}=\sum_id_i\Delta_i$ be the irreducible decomposition and let us set 
$\Delta':=\sum_{d_i\in (0,1)}d_i\Delta_i$. 

\eqref{positivity_proposition1}
For any prime divisor $E$ over $X$ with $\eta\in c_X(E)$, we have 
$A_{X,\Delta}(E)\geq A_{\tilde{X},\Delta'}(E)$. 

\eqref{positivity_proposition2}
For any prime divisor $E$ over $X$, we have 
$A_{X,\Delta}(E)\geq A_{\tilde{X},\Delta'}(E)$. 

Thus, it is enough to show the inequality 
\[
\alpha\left(\tilde{X}, \Delta'; \sigma^*(r H)\right)>0. 
\]
The inequality is well-known. See \cite[Theorem A]{BJ} for example. 
\end{proof}

\subsection{A generalization of an adjunction-type theorem}\label{adj_subsection}

In \cite[Theorem 3.3]{AZ} (see Theorem \ref{AZ_thm}), the authors consider the 
refinements of graded linear series by either \emph{Cartier divisors} or 
\emph{plt-type prime divisors} over klt $(X,\Delta)$. It seems to be 
important to consider the refinements by more singular prime divisors 
for the future studies for K-stability of Fano varieties. 
For example, in \cite{FANO}, in order to consider the Fano threefold $X$ 
in No.2.20 with $\Aut^0(X)=\G_m$, they apply \cite[Theorem 3.3]{AZ} for 
non-plt type (but Cartier) prime divisor $Y$ on $X$. 
In \S \ref{adj_subsection}, we give a slight generalization of Theorem \ref{AZ_thm} 
from another approach by using the notion of \emph{subbasis type divisors}. 
Note that, in order to prove Theorem \ref{mainthm}, the formulation in 
Theorem \ref{AZ_thm} is enough for us.

\begin{definition}\label{subbasis_definition}
Let $\sF$ be a filtration on a finite dimensional complex vector space $W$
(see Definition \ref{filter-vs_definition}). 
\begin{enumerate}
\renewcommand{\theenumi}{\arabic{enumi}}
\renewcommand{\labelenumi}{(\theenumi)}
\item\label{subbasis_definition1}
We set $\sF^{>\lambda}W:=\bigcup_{\lambda'>\lambda}\sF^{\lambda'}W$ and 
$\Gr^\lambda_{\sF}W:=\sF^\lambda W/\sF^{>\lambda}W$ for any $\lambda\in\R$. 
Moreover, for any $s\in W\setminus\{0\}$, we set 
\[
v_{\sF}(s):=\max\{\lambda\in \R_{\geq 0}\,\,|\,\,s\in\sF^\lambda W\}
\]
as in \cite[Definition 2.19]{AZ}. We can naturally get an element 
\[
\bar{s}\in\Gr_\sF^{v_\sF(s)}W\setminus\{0\}
\]
from $s$ and $\sF$. 
\item\label{subbasis_definition2}
Let $\Lambda\subset\R_{\geq 0}$ be any subset. 
A subset $\{s_1,\dots,s_M\}\subset W$ is said to be an 
\emph{$(\sF,\Lambda)$-subbasis of $W$} if there is a decomposition 
\[
\{s_1,\dots,s_M\}=\bigsqcup_{\lambda\in\Lambda}\{s_1^\lambda,\dots,
s_{M_\lambda}^\lambda\}
\]
such that 
\begin{itemize}
\item
for any $\lambda\in\Lambda$ and for any $1\leq i\leq M_\lambda$, 
we have $v_\sF(s_i^\lambda)=\lambda$, and 
\item
the naturally-induced subset 
\[
\{\bar{s}_1^\lambda,\dots,\bar{s}_{M_\lambda}^\lambda\}\subset\Gr_\sF^\lambda W
\]
forms a basis of $\Gr_\sF^\lambda W$ for any $\lambda\in\Lambda$. 
\end{itemize}
Obviously, $s_1,\dots,s_M\in W$ are linearly independent. 
An $(\sF,\R_{\geq 0})$-subbasis of $W$ is said to be a \emph{basis of $W$ 
compatible with $\sF$}. 
\end{enumerate}
We have $\Gr_\sF^\lambda W=0$ for all but finite $\lambda\in\R_{\geq 0}$, since 
we have the equation 
\[
\dim W=\sum_{\lambda\in\R_{\geq 0}}\dim\Gr_\sF^\lambda W.
\]
\end{definition}

\begin{definition}[{cf.\ \cite[Lemma 3.1]{AZ}}]\label{filter-filter_definition}
Let $\sF$ and $\sG$ be filtrations on $W$. 
For any $\mu\in\R_{\geq 0}$, we can naturally take the filtration $\bar{\sF}$ on 
$\Gr_\sG^\mu W$ by 
\[
\bar{\sF}^\lambda\left(\Gr_\sG^\mu W\right):=
\left(\left(\sF^\lambda W+\sG^{>\mu} W\right)\cap \sG^\mu W\right)/\sG^{>\mu}W.
\]
By \cite[Lemma 3.1]{AZ}, for any $\lambda$, $\mu\in\R_{\geq 0}$, 
we have the natural commutative diagram 
\[\xymatrix{
 & \bar{\sF}^\lambda\left(\Gr_\sG^\mu W\right) \ar@{->>}[r]
 & \Gr_{\bar{\sF}}^\lambda\left(\Gr_\sG^\mu W\right) \ar[dd]^\simeq \\
\sF^\lambda W\cap\sG^\mu W \ar[rd] \ar[ru] & & \\
 & \bar{\sG}^\mu\left(\Gr_\sF^\lambda W\right) \ar@{->>}[r]
 & \Gr_{\bar{\sG}}^\mu\left(\Gr_\sF^\lambda W\right),
}\]
and the kernel of the surjection 
$\sF^\lambda W\cap\sG^\mu W\to
\Gr_{\bar{\sF}}^\lambda\left(\Gr_\sG^\mu W\right)$ is equal to 
\[
\left(\sF^{>\lambda}W\cap\sG^\mu W\right)+\left(\sF^\lambda W\cap
\sG^{>\mu}W\right). 
\]
For arbitrary subsets $\Lambda$, $\Xi\subset\R_{\geq 0}$, a subset 
$\{s_1,\dots,s_M\}\subset W$ is said to be an 
\emph{$\left((\sF,\Lambda), (\sG,\Xi)\right)$-subbasis of $W$} if 
there is a decomposition 
\[
\{s_1,\dots,s_M\}=\bigsqcup_{\mu\in\Xi}
\bigsqcup_{\lambda\in\Lambda}\{s_1^{\lambda,\mu},\dots,
s_{N_{\lambda,\mu}}^{\lambda,\mu}\}
\]
such that, for any $\lambda\in\Lambda$ and $\mu\in\Xi$, we have 
\begin{itemize}
\item
$\left\{s_1^{\lambda,\mu},\dots,
s_{N_{\lambda,\mu}}^{\lambda,\mu}\right\}\subset\sF^\lambda W\cap\sG^\mu W$, 
and 
\item
the image $\left\{\tilde{s}_1^{\lambda,\mu},\dots,
\tilde{s}_{N_{\lambda,\mu}}^{\lambda,\mu}\right\}$
of $\left\{s_1^{\lambda,\mu},\dots,
s_{N_{\lambda,\mu}}^{\lambda,\mu}\right\}$
under the surjection $\sF^\lambda W\cap\sG^\mu W\to
\Gr_{\bar{\sF}}^\lambda\left(\Gr_\sG^\mu W\right)$
forms a basis of $\Gr_{\bar{\sF}}^\lambda\left(\Gr_\sG^\mu W\right)$. 
\end{itemize}
Moreover, if $\Lambda=\R_{\geq 0}$ (resp., if $\Lambda=\R_{\geq 0}$ and 
$\Xi=\R_{\geq 0}$), then we call it \emph{a $(\sG,\Xi)$-subbasis of $W$ 
compatible with $\sF$} (resp., \emph{a basis of $W$ compatible with 
$\sF$ and $\sG$}). 
\end{definition}

\begin{lemma}[{cf.\ \cite[Lemma 3.1]{AZ}}]\label{filter-filter_lemma}
Let $\sF$ and $\sG$ be filtrations on $W$. 
\begin{enumerate}
\renewcommand{\theenumi}{\arabic{enumi}}
\renewcommand{\labelenumi}{(\theenumi)}
\item\label{filter-filter_lemma1}
Take any $s\in W\setminus\{0\}$ and we set $\mu:=v_\sG(s)$ and let 
$\bar{s}\in\Gr_\sG^\mu W\setminus\{0\}$ be the element induced by $s$ and $\sG$. 
Then we have the inequality 
$v_{\bar{\sF}}(\bar{s})\geq v_\sF(s)$. 
\item\label{filter-filter_lemma2}
Take arbitrary subsets $\Lambda$, $\Xi\subset\R_{\geq 0}$ and let us take 
any $\left((\sF,\Lambda),(\sG,\Xi)\right)$-subbasis $\{s_1,\dots,s_M\}\subset W$
of $W$. Then, for any $1\leq i\leq M$, if we set 
$\lambda:=v_\sF(s_i)$ and $\mu:=v_\sG(s_i)$, then the naturally-induced elements 
$\bar{s}_i^{\sF}\in\Gr_\sF^\lambda W\setminus\{0\}$ and 
$\bar{s}_i^{\sG}\in\Gr_\sG^\mu W\setminus\{0\}$ satisfy that 
$v_{\bar{\sG}}(\bar{s}_i^{\sF})=\mu$ and $v_{\bar{\sF}}(\bar{s}_i^{\sG})=\lambda$. 
\item\label{filter-filter_lemma3}
Take an arbitrary subset $\Xi\subset\R_{\geq 0}$. Then, any $(\sG,\Xi)$-subbasis 
of $W$ compatible with $\sF$ is a $(\sG,\Xi)$-subbasis of $W$. 
\end{enumerate}
\end{lemma}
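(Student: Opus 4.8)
The plan is to prove the three assertions in order, since \eqref{filter-filter_lemma2} and \eqref{filter-filter_lemma3} are easy consequences of \eqref{filter-filter_lemma1} together with the commutative diagram recorded in Definition \ref{filter-filter_definition}. First I would unwind the definitions. For \eqref{filter-filter_lemma1}, fix $s\in W\setminus\{0\}$ and set $\mu:=v_\sG(s)$, so $s\in\sG^\mu W\setminus\sG^{>\mu}W$ and $\bar s$ is the class of $s$ in $\Gr_\sG^\mu W=\sG^\mu W/\sG^{>\mu}W$; this class is nonzero by the choice of $\mu$. Write $\lambda:=v_\sF(s)$, so $s\in\sF^\lambda W$. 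Then $s\in\sF^\lambda W\cap\sG^\mu W$, hence by the very definition of the induced filtration $\bar\sF$ on $\Gr_\sG^\mu W$, namely
\[
\bar\sF^\lambda\left(\Gr_\sG^\mu W\right)
=\left(\left(\sF^\lambda W+\sG^{>\mu}W\right)\cap\sG^\mu W\right)\big/\sG^{>\mu}W,
\]
the class $\bar s$ lies in $\bar\sF^\lambda\left(\Gr_\sG^\mu W\right)$. Therefore $v_{\bar\sF}(\bar s)\geq\lambda=v_\sF(s)$, which is the claimed inequality. Note that the inequality can indeed be strict: it may happen that $s$ can be modified by an element of $\sG^{>\mu}W$ so as to raise its $\sF$-value while keeping its $\sG$-value equal to $\mu$, and that is exactly why \eqref{filter-filter_lemma2} requires the subbasis hypothesis.

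Next, for \eqref{filter-filter_lemma2}, I would use the symmetric description of the kernel of the surjection $\sF^\lambda W\cap\sG^\mu W\twoheadrightarrow\Gr^\lambda_{\bar\sF}\left(\Gr_\sG^\mu W\right)$ given in Definition \ref{filter-filter_definition}, namely
\[
\ker=\left(\sF^{>\lambda}W\cap\sG^\mu W\right)+\left(\sF^\lambda W\cap\sG^{>\mu}W\right),
\]
together with the analogous statement with the roles of $\sF$ and $\sG$ exchanged. Fix $i$ and put $\lambda:=v_\sF(s_i)$, $\mu:=v_\sG(s_i)$. Since $\{s_1,\dots,s_M\}$ is an $\left((\sF,\Lambda),(\sG,\Xi)\right)$-subbasis, the image of $s_i$ in $\Gr^\lambda_{\bar\sF}\left(\Gr_\sG^\mu W\right)$ is one of the chosen basis vectors, in particular it is nonzero, so $s_i$ does not lie in the above kernel. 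If $v_{\bar\sG}(\bar s_i^{\sF})>\mu$ held, then by the definition of $\bar\sG$ on $\Gr_\sF^\lambda W$ the class $\bar s_i^{\sF}$ would be represented by an element of $\sF^\lambda W\cap\sG^{>\mu}W$ modulo $\sF^{>\lambda}W$; tracing through the commutative diagram this would force $s_i$ into $\left(\sF^{>\lambda}W\cap\sG^\mu W\right)+\left(\sF^\lambda W\cap\sG^{>\mu}W\right)$, a contradiction. Combined with the inequality $v_{\bar\sG}(\bar s_i^{\sF})\geq\mu$ from \eqref{filter-filter_lemma1} (applied with the roles of $\sF$ and $\sG$ swapped) this gives $v_{\bar\sG}(\bar s_i^{\sF})=\mu$, and the identity $v_{\bar\sF}(\bar s_i^{\sG})=\lambda$ follows by the same argument with $\sF$ and $\sG$ interchanged.

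Finally, \eqref{filter-filter_lemma3} is a bookkeeping statement: a $(\sG,\Xi)$-subbasis of $W$ compatible with $\sF$ is by definition an $\left((\sF,\R_{\geq0}),(\sG,\Xi)\right)$-subbasis, and since $\R_{\geq0}\supset\Lambda$ for every subset $\Lambda$, the defining decomposition into the pieces $\{s_1^{\lambda,\mu},\dots,s_{N_{\lambda,\mu}}^{\lambda,\mu}\}$ with $\lambda$ ranging over $\R_{\geq0}$ restricts to give the data required for a $(\sG,\Xi)$-subbasis of $W$ in the sense of Definition \ref{subbasis_definition}; indeed, grouping the $s_i$ only according to their $\sG$-value $\mu$ and using that the chosen images span $\Gr_{\bar\sF}^\lambda\left(\Gr_\sG^\mu W\right)$ over all $\lambda$, one reads off from $\dim\Gr_\sG^\mu W=\sum_\lambda\dim\Gr_{\bar\sF}^\lambda\left(\Gr_\sG^\mu W\right)$ (via the isomorphisms in the diagram) that the corresponding classes form a basis of $\Gr_\sG^\mu W$. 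The one subtlety worth spelling out, and the step I expect to require the most care, is \eqref{filter-filter_lemma2}: there one must be careful that ``raising the $\bar\sG$-value of $\bar s_i^{\sF}$'' really does land $s_i$ in the stated kernel, which is precisely where the explicit kernel formula from \cite[Lemma 3.1]{AZ} is indispensable; the rest is routine diagram-chasing.
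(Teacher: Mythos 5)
Your proposal is correct, and it takes essentially the same route as the paper; the paper simply declares \eqref{filter-filter_lemma1} and \eqref{filter-filter_lemma2} ``trivial from the construction'' and gives only the one-line argument for \eqref{filter-filter_lemma3}, which you reproduce. Your expanded argument for \eqref{filter-filter_lemma2} via the explicit kernel formula $\left(\sF^{>\lambda}W\cap\sG^\mu W\right)+\left(\sF^\lambda W\cap\sG^{>\mu}W\right)$ is exactly the intended diagram chase and is sound (the decomposition $s_i=a+b$ lands $a$ in $\sF^\lambda W\cap\sG^{>\mu}W$ and $b$ in $\sF^{>\lambda}W\cap\sG^\mu W$, as you indicate).
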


\begin{proof}
\eqref{filter-filter_lemma1} and \eqref{filter-filter_lemma2} are trivial from the 
construction. 
For \eqref{filter-filter_lemma3}, for all $\mu\in\Xi$, the images 
\[
\bigsqcup_{\lambda\in\R_{\geq 0}}\{\bar{s}_1^{\lambda,\mu},\dots,
\bar{s}_{N_{\lambda,\mu}}^{\lambda,mu}\}\subset\Gr_{\sG}^\mu W
\]
give bases of $\Gr_{\sG}^\mu W$ compatible with $\bar{\sF}$. 
\end{proof}

\begin{corollary}[{cf.\ \cite[Lemma 3.5]{BJ}}]\label{BJ-type_corollary}
Let $\sF$ and $\sG$ be filtrations on $W$ and let $\Xi\subset\R_{\geq 0}$ 
be any subset. 
\begin{enumerate}
\renewcommand{\theenumi}{\arabic{enumi}}
\renewcommand{\labelenumi}{(\theenumi)}
\item\label{BJ-type_corollary1}
For any $(\sG,\Xi)$-subbasis $\{s_1,\dots,s_M\}\subset W$
of $W$, we have 
\[
\sum_{i=1}^Mv_\sF(s_i)\leq\sum_{\mu\in\Xi}\int_0^\infty
\dim\bar{\sF}^\lambda\left(\Gr_\sG^\mu W\right)d\lambda.
\]
\item\label{BJ-type_corollary2}
For any $(\sG,\Xi)$-subbasis $\{s_1,\dots,s_M\}\subset W$
of $W$ compatible with $\sF$, we have 
\[
\sum_{i=1}^Mv_\sF(s_i)=\sum_{\mu\in\Xi}\int_0^\infty
\dim\bar{\sF}^\lambda\left(\Gr_\sG^\mu W\right)d\lambda.
\]
\end{enumerate}
\end{corollary}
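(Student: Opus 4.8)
The plan is to prove Corollary~\ref{BJ-type_corollary} by reducing both statements to a bookkeeping exercise over the double-graded pieces $\Gr_{\bar\sF}^\lambda(\Gr_\sG^\mu W)$, using Lemma~\ref{filter-filter_lemma} as the essential input. First I would fix a $(\sG,\Xi)$-subbasis $\{s_1,\dots,s_M\}$ of $W$ and organize it according to the decomposition into the pieces $\{s_1^{\lambda,\mu},\dots,s_{N_{\lambda,\mu}}^{\lambda,\mu}\}$ from Definition~\ref{filter-filter_definition}, where by construction each such subfamily lies in $\sF^\lambda W\cap\sG^\mu W$ and maps to a basis of $\Gr_{\bar\sF}^\lambda(\Gr_\sG^\mu W)$. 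The quantity $\sum_{i=1}^M v_\sF(s_i)$ will be compared with $\sum_{\mu\in\Xi}\int_0^\infty \dim\bar\sF^\lambda(\Gr_\sG^\mu W)\,d\lambda$, which can be rewritten as $\sum_{\mu\in\Xi}\sum_{\lambda\in\R_{\geq 0}}\lambda\cdot\dim\Gr_{\bar\sF}^\lambda(\Gr_\sG^\mu W)$, a finite sum since $\Gr_{\bar\sF}^\lambda(\Gr_\sG^\mu W)=0$ for all but finitely many $(\lambda,\mu)$.

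For part~\eqref{BJ-type_corollary1}, the inequality, I would argue that for each element $s_i$ belonging to the subfamily indexed by $(\lambda,\mu)$ we have $v_\sF(s_i)\geq\lambda$ (indeed $s_i\in\sF^\lambda W$), while the number of $s_i$ mapping into a given graded piece $\Gr_{\bar\sF}^{\lambda'}(\Gr_\sG^\mu W)$ via the chain of surjections cannot exceed $\dim\Gr_{\bar\sF}^{\lambda'}(\Gr_\sG^\mu W)$ by linear independence of their images. Combined with Lemma~\ref{filter-filter_lemma}~\eqref{filter-filter_lemma1}, which guarantees $v_{\bar\sF}(\bar s_i^{\sG})\geq v_\sF(s_i)$, one gets that the contribution of each $(\lambda,\mu)$-block is bounded by $\sum_{\lambda'\geq\lambda}\dim\Gr_{\bar\sF}^{\lambda'}(\Gr_\sG^\mu W)\cdot(\text{multiplicity})$; a careful accounting (essentially Abel summation in $\lambda$, exactly as in the proof of \cite[Lemma~3.5]{BJ}) collapses this to the stated integral. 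The cleanest way to present this is to note that $\sum_i v_\sF(s_i)$ is at most $\sum_i v_{\bar\sF}(\bar s_i^{\sG})$ where $\bar s_i^{\sG}\in\Gr_\sG^{v_\sG(s_i)}W$, and then apply Corollary~\ref{BJ-type_corollary} in the single-filtration form (i.e., \cite[Lemma~3.5]{BJ}) to the filtration $\bar\sF$ on each $\Gr_\sG^\mu W$, using that the images of the subbasis elements are linearly independent there.

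For part~\eqref{BJ-type_corollary2}, equality, the additional hypothesis that the subbasis is compatible with $\sF$ forces, via Lemma~\ref{filter-filter_lemma}~\eqref{filter-filter_lemma2}, the sharp identities $v_{\bar\sF}(\bar s_i^{\sG})=v_\sF(s_i)=\lambda$ and $v_{\bar\sG}(\bar s_i^{\sF})=v_\sG(s_i)=\mu$ for the element indexed by $(\lambda,\mu)$; moreover the images $\{\tilde s_i^{\lambda,\mu}\}$ form an actual basis (not merely a linearly independent set) of $\Gr_{\bar\sF}^\lambda(\Gr_\sG^\mu W)$, so the multiplicity counts are exact rather than upper bounds. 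Then the Abel-summation computation that gave an inequality above becomes an equality termwise, yielding $\sum_i v_\sF(s_i)=\sum_{\mu\in\Xi}\sum_\lambda \lambda\cdot\dim\Gr_{\bar\sF}^\lambda(\Gr_\sG^\mu W)=\sum_{\mu\in\Xi}\int_0^\infty\dim\bar\sF^\lambda(\Gr_\sG^\mu W)\,d\lambda$.

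The main obstacle I anticipate is not conceptual but organizational: keeping straight the three filtrations ($\sF$, $\sG$, and the induced $\bar\sF$ on each $\Gr_\sG^\mu W$) and verifying that the multiplicity of subbasis elements landing in a prescribed bigraded piece is controlled correctly — in the inequality direction one must be careful that an element $s_i$ with $v_\sF(s_i)>\lambda$ (strictly larger than its nominal block index) only helps the inequality, and in the equality direction one must invoke compatibility to rule this out. The finiteness of all sums (so that interchanging $\int$ and $\sum$ is trivial) should be remarked once at the outset, citing the dimension identity at the end of Definition~\ref{subbasis_definition}. Once the single-filtration statement \cite[Lemma~3.5]{BJ} is in hand, the two-filtration version follows by applying it fiberwise over the $\sG$-grading, so the write-up can be kept short.
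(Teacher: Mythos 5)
Your closing ``cleanest way'' formulation is exactly the paper's proof: for each $\mu\in\Xi$, apply \cite[Lemma 3.5]{BJ} to the induced filtration $\bar{\sF}$ on $\Gr_\sG^\mu W$, with Lemma~\ref{filter-filter_lemma}~\eqref{filter-filter_lemma1} supplying $v_\sF(s_i)\le v_{\bar{\sF}}(\bar{s}_i^{\sG})$ in general and Lemma~\ref{filter-filter_lemma}~\eqref{filter-filter_lemma2}--\eqref{filter-filter_lemma3} upgrading this to an equality in the compatible case. One caution for the write-up: in part~\eqref{BJ-type_corollary1} a plain $(\sG,\Xi)$-subbasis only carries a decomposition indexed by $\mu\in\Xi$, so your earlier references to a ``subfamily indexed by $(\lambda,\mu)$'' do not apply there; the bigraded decomposition is only part of the data when the subbasis is compatible with $\sF$, and in the general case one should instead sort the images in each $\Gr_\sG^\mu W$ by $v_{\bar{\sF}}$-value, which is what the Abel-summation step of \cite[Lemma 3.5]{BJ} actually uses.
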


\begin{proof}
Fix $\mu\in \Xi$ and let us set $N_\mu:=\dim\Gr_\sG^\mu W$. 
For any $1\leq i\leq N_\mu$, let us set 
\[
e_i:=\max\left\{\lambda\in\R_{\geq 0}\,\,|\,\,\dim\bar{\sF}^\lambda
\left(\Gr_\sG^\mu W\right)\geq N_\mu+1-i\right\}.
\]
Then we have the equality 
\[
\int_0^\infty\dim\bar{\sF}^\lambda\left(\Gr_\sG^\mu W\right)d\lambda
=\sum_{i=1}^{N_\mu}e_i.
\]
Take any basis $\left\{\bar{s}_1,\dots,\bar{s}_{N_\mu}\right\}\subset\Gr_\sG^\mu W$
with $v_{\bar{\sF}}(\bar{s}_1)\leq\cdots\leq v_{\bar{\sF}}(\bar{s}_{N_\mu})$. 
Then, as in the proof of \cite[Lemma 3.5]{BJ}, we have 
$v_{\bar{\sF}}(\bar{s}_i)\leq e_i$. Moreover, if the basis is compatible with 
$\bar{\sF}$, we have $v_{\bar{\sF}}(\bar{s}_i)=e_i$. 
Thus we get the assertion by Lemma \ref{filter-filter_lemma}. 
\end{proof}

From now on, unless otherwise stated, we fix: 
\begin{itemize}
\item
an $n$-dimensional normal projective variety $X$, 
\item
a $\Z_{\geq 0}^r$-graded linear series $W_{\vec{\bullet}}$ on $X$ associated to 
Cartier divisors $L_1,\dots,L_r$ which has bounded support and 
contains an ample series, 
\item
a projective birational morphism $\sigma\colon\tilde{X}\to X$ with $\tilde{X}$ 
normal, 
\item
a prime divisor $Y\subset\tilde{X}$ such that $e Y$ is Cartier for some 
$e\in\Z_{>0}$, 
\item
an admissible flag $Y_\bullet$ on $\tilde{X}$ with $Y_1=Y$ (and let $Y'_\bullet$ be 
the admissible flag on $Y$ induced by $Y_\bullet$), 
\item
the linear transform 
\begin{eqnarray*}
\bar{h}\colon\R^{r-1+n}&\to&\R^{r-1+n}\\
(x_1,\dots,x_{r-1+n})&\mapsto&(x_1,\dots,x_{r-1},e x_r,x_{r+1},\dots,x_{r-1+n}), 
\end{eqnarray*}
\item
$\sG:=\sF_Y$ on $W_{\vec{\bullet}}$, 
and 
\item
the graded linear series $W_{\vec{\bullet}}^{(Y,e)}:=
\sigma^*W_{\vec{\bullet}}^{(Y,e)}$ on $Y$ as in Lemma \ref{refinement_lemma}. 
\end{itemize}
We note that, for any $\vec{a}\in\Z_{\geq 0}^r$ and for any $j\in\Z_{\geq 0}$, 
we have $W_{\vec{a},j}^{(Y,e)}=\Gr_\sG^{j e}W_{\vec{a}}$.

\begin{lemma}[{cf.\ \cite[Lemma 2.9]{AZ21}}]\label{AZ21_lemma}
Let $\sF$ be a linearly bounded filtration on $W_{\vec{\bullet}}$. 
As in \cite[Definition 2.8]{AZ21}, we can naturally get the linearly bounded 
filtration $\bar{\sF}$ on $W_{\vec{\bullet}}^{(Y,e)}$ from $\sF$. 
We have the equality 
\[
T\left(W_{\vec{\bullet}};\sF\right)=T\left(W_{\vec{\bullet}}^{(Y,e)};\bar{\sF}\right). 
\]
Moreover, for any $t\in\left[0,T\left(W_{\vec{\bullet}};\sF\right)\right)$, we have 
\[
\bar{h}\left(\Delta_{Y'_\bullet}\left(W_{\vec{\bullet}}^{(Y,e),\bar{\sF},t}\right)\right)
=\Delta_{Y_\bullet}\left(\sigma^*W_{\vec{\bullet}}^{\sF,t}\right).
\]
In particular, we have $G_{\bar{\sF}}=G_\sF\circ\bar{h}$ and 
\[
S\left(W_{\vec{\bullet}};\sF\right)=S\left(W_{\vec{\bullet}}^{(Y,e)};\bar{\sF}\right).
\]
\end{lemma}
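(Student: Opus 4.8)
The plan is to deduce Lemma \ref{AZ21_lemma} by combining the refinement identity of Lemma \ref{refinement_lemma} with the comparison between a filtration and its refined counterpart, working cone-by-cone in the Okounkov picture. First I would recall from \cite[Definition 2.8]{AZ21} how $\bar{\sF}$ is built on $W_{\vec{\bullet}}^{(Y,e)}$: for $(\vec{a},j)\in\Z_{\geq 0}^{r+1}$ one sets $\bar{\sF}^\lambda W_{\vec{a},j}^{(Y,e)}$ to be the image in $\Gr_\sG^{je}W_{\vec a}$ of $\sF^\lambda W_{\vec a}\cap\sG^{je}W_{\vec a}$, i.e. $\bar{\sF}^\lambda\left(\Gr_\sG^{je}W_{\vec a}\right)$ in the notation of Definition \ref{filter-filter_definition}. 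The key algebraic input is then that, for each fixed $t$, the $t$-twist commutes with refinement: one checks directly from the definitions that
\[
\left(W_{\vec{\bullet}}^{(Y,e)}\right)^{\bar{\sF},t}_{\vec a,j}
=\left(W_{\vec{\bullet}}^{\sF,t}\right)^{(Y,e)}_{\vec a,j}
\]
as linear series on $Y$, up to the usual identification $W^{(Y,e)}=\sigma^*W^{(Y,e)}$. This is the analogue, for the refinement operation, of the identity $W_{\vec{\bullet}}^{(\vec k),\sF^{k_1},t}=\left(W_{\vec{\bullet}}^{\sF,t}\right)^{(\vec k)}$ used in the proof of Lemma \ref{filter_lemma}, and it is where the order of operations matters: both sides arise by intersecting with $\sF^{(\ldots)t}$ and then restricting along $Y$, and Lemma \ref{filter-filter_lemma} guarantees the two orders agree on associated graded pieces.

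Granting that identity, I would then apply Lemma \ref{refinement_lemma}\eqref{refinement_lemma2} to the graded linear series $W_{\vec{\bullet}}^{\sF,t}$ in place of $W_{\vec{\bullet}}$. That lemma gives, for each $t$ with $t\in\left[0,T\left(W_{\vec{\bullet}};\sF\right)\right)$ (so that $W_{\vec{\bullet}}^{\sF,t}$ still contains an ample series by Definition \ref{filter_definition}\eqref{filter_definition2}),
\[
h\left(\Sigma_{Y'_\bullet}\left(\left(W_{\vec{\bullet}}^{\sF,t}\right)^{(Y,e)}\right)\right)
=\Sigma_{Y_\bullet}\left(\sigma^*W_{\vec{\bullet}}^{\sF,t}\right),
\]
and hence after slicing with $\{1\}\times\R_{\geq0}^{r-1+n}$ the Okounkov-body statement
\[
\bar{h}\left(\Delta_{Y'_\bullet}\left(W_{\vec{\bullet}}^{(Y,e),\bar{\sF},t}\right)\right)
=\Delta_{Y_\bullet}\left(\sigma^*W_{\vec{\bullet}}^{\sF,t}\right),
\]
where $\bar h$ is the coordinate rescaling induced by $h$. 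Taking $t=0$ (and invoking Example \ref{pullback_example}, so that pulling back along $\sigma$ does not change $\Delta_{Y_\bullet}$) shows $\bar{h}\left(\Delta_{Y'_\bullet}\left(W_{\vec{\bullet}}^{(Y,e)}\right)\right)=\Delta_{Y_\bullet}\left(W_{\vec{\bullet}}\right)$, which is the base case needed to identify the two functions $G_{\bar\sF}$ and $G_\sF\circ\bar h$ on the respective Okounkov bodies. From the description of $G$ in Definition \ref{filter_definition}\eqref{filter_definition3} as $G(\vec x)=\sup\{t : \vec x\in\Delta^t\}$, and the just-established equality of the $\Delta^t$'s under $\bar h$, we get $G_{\bar\sF}=G_\sF\circ\bar h$ directly.

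The equality of $T$-invariants then follows because $T\left(W_{\vec{\bullet}};\sF\right)$ is the maximum value of $G_\sF$ on its domain, and $T\left(W_{\vec{\bullet}}^{(Y,e)};\bar\sF\right)$ is the maximum of $G_{\bar\sF}$; since $\bar h$ is a bijection of the two bodies, the maxima coincide. Finally, for the $S$-invariants I would integrate: since $\bar h$ has constant Jacobian $e$ and rescales the last coordinate group appropriately, the change-of-variables in
\[
S\left(W_{\vec{\bullet}}^{(Y,e)};\bar\sF\right)
=\frac{1}{\vol\left(\Delta_{Y'_\bullet}(W_{\vec{\bullet}}^{(Y,e)})\right)}
\int_{\Delta_{Y'_\bullet}(W_{\vec{\bullet}}^{(Y,e)})}G_{\bar\sF}(\vec x)\,d\vec x
\]
together with $G_{\bar\sF}=G_\sF\circ\bar h$ and $\vol(W_{\vec{\bullet}}^{(Y,e)})=\frac1e\vol(W_{\vec{\bullet}})$ (Lemma \ref{refinement_lemma}\eqref{refinement_lemma2}) cancels all the Jacobian factors, exactly as in the final display of the proof of Lemma \ref{filter_lemma}, giving $S\left(W_{\vec{\bullet}};\sF\right)=S\left(W_{\vec{\bullet}}^{(Y,e)};\bar\sF\right)$. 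The main obstacle I anticipate is the bookkeeping in the first step — verifying carefully that the refinement and the $t$-twist commute at the level of graded pieces, which requires the surjectivity/kernel description from Lemma \ref{filter-filter_lemma}\eqref{filter-filter_lemma1} and a check that no sections are lost when one intersects $\sF^\lambda W_{\vec a}$ with $\sG^{je}W_{\vec a}$ before restricting; everything after that is the same cone-and-integral manipulation already carried out for Veronese equivalences.
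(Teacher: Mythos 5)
Your proposal is correct and follows essentially the same route as the paper. The paper establishes the $T$-equality first by a direct computation (for $s\in\sF^\lambda W_{m,\vec a}\setminus\{0\}$ with $j:=\ord_Y(s)$, the section $s^e$ induces a nonzero element in $\bar{\sF}^{e\lambda}W^{(Y,e)}_{em,e\vec a,j}$), and then invokes ``as in the proof of Lemma \ref{refinement_lemma}'' to get the $\Sigma$-equality for each $t$; your proposal isolates the key compatibility $\left(W_{\vec{\bullet}}^{(Y,e)}\right)^{\bar{\sF},t}=\left(W_{\vec{\bullet}}^{\sF,t}\right)^{(Y,e)}$ up front and then applies Lemma \ref{refinement_lemma}\eqref{refinement_lemma2} to $W^{\sF,t}_{\vec\bullet}$ (licit since it contains an ample series for $t<T(W_{\vec\bullet};\sF)$), deriving the $T$-equality afterwards from the identification of Okounkov bodies. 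Both hinge on the same ingredients (the refinement/twist compatibility and Lemma \ref{refinement_lemma}), so the two arguments are the same in substance; the only difference is the order in which the $T$- and $\Sigma$-equalities are obtained, and your reference to Lemma \ref{filter-filter_lemma} is slightly more machinery than needed, since the compatibility identity is a direct definitional check using $W^{(Y,e)}_{\vec a,j}=\Gr^{je}_{\sG}W_{\vec a}$.
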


\begin{proof}
Since $\sF^\lambda W_{m,\vec{a}}=0$ trivially implies 
$\bar{\sF}^\lambda W_{m,\vec{a},j}^{(Y,e)}=0$, we get the inequality 
$T_m\left(W_{\vec{\bullet}};\sF\right)\geq 
T_m\left(W_{\vec{\bullet}}^{(Y,e)};\bar{\sF}\right)$. 
For any $s\in\sF^\lambda W_{m,\vec{a}}\setminus\{0\}$, if we set 
$j:=\ord_Y(s)$, then we have 
\[
s^e\in\sF^{e\lambda}W_{e m, e\vec{a},j}\cap\sG^{j e}W_{e m,e\vec{a}}\setminus\{0\}
\]
and the element $s^e$ induces 
\[
\bar{s^e}\in\bar{\sF}^{e\lambda} W_{e m,e\vec{a},j}^{(Y,e)}\setminus\{0\}. 
\]
This implies the inequality 
$e T_m\left(W_{\vec{\bullet}};\sF\right)\leq 
T_{e m}\left(W_{\vec{\bullet}}^{(Y,e)};\bar{\sF}\right)$. 
Thus we get the equality 
$T\left(W_{\vec{\bullet}};\sF\right)=T\left(W_{\vec{\bullet}}^{(Y,e)};\bar{\sF}\right)$. 

As in the proof of Lemma \ref{refinement_lemma}, for any 
$t\in\left[0,T\left(W_{\vec{\bullet}};\sF\right)\right)$, we have 
\[
h\left(\Sigma_{Y'_\bullet}\left(W_{\vec{\bullet}}^{(Y,e),\bar{\sF},t}\right)\right)
=\Sigma_{Y_\bullet}\left(\sigma^*W_{\vec{\bullet}}^{\sF,t}\right),
\]
where $h\colon\R^{r+n}\to\R^{r+n}$ be as in Lemma \ref{refinement_lemma}. 
Thus we get the assertion. 
\end{proof}

\begin{example}\label{AZ21_example}
If $\sF=\sG$, then we have 
\[
\bar{\sG}^\lambda W_{\vec{a},j}^{(Y,e)}=\begin{cases}
W_{\vec{a},j}^{(Y,e)} & \text{if }\lambda\leq j e, \\
0 & \text{if }\lambda>j e.
\end{cases}\]
\end{example}

\begin{definition}\label{subbasis-divisor_definition}
Let $\sF$ be a linearly bounded filtration on $W_{\vec{\bullet}}$. 
\begin{enumerate}
\renewcommand{\theenumi}{\arabic{enumi}}
\renewcommand{\labelenumi}{(\theenumi)}
\item\label{subbasis-divisor_definition1}
Take any $\vec{a}\in\Z_{\geq 0}^r$. An effective Cartier divisor $D$ on $X$ is 
said to be a \emph{$(Y,e)$-subbasis type divisor of $W_{\vec{a}}$}
(resp., a \emph{$(Y,e)$-subbasis type divisor of $W_{\vec{a}}$ compatible with $\sF$}) 
if there is a $(\sG,e\Z_{\geq 0})$-subbasis 
$\{s_1,\dots,s_M\}\subset W_{\vec{a}}$ of $W_{\vec{a}}$ 
(resp., compatible with $\sF$) such that $D$ is of the form 
\[
D=\sum_{i=1}^M\{s_i=0\}. 
\]
From the construction, we have 
\[
M=\sum_{j\in\Z_{\geq 0}}\dim W_{\vec{a},j}^{(Y,e)}, \quad
D\sim M\vec{a}\cdot\vec{L}.
\]
\item\label{subbasis-divisor_definition2}
Take any $m\in\Z_{>0}$ with $M_m:=h^0\left(W_{m,\vec{\bullet}}^{(Y,e)}\right)>0$. 
An effective $\Q$-Cartier $\Q$-divisor $D$ on $X$ is said to be 
an \emph{$m$-$(Y,e)$-subbasis type $\Q$-divisor of $W_{\vec{\bullet}}$} 
(resp., an \emph{$m$-$(Y,e)$-subbasis type $\Q$-divisor of $W_{\vec{\bullet}}$ 
compatible with $\sF$}) if $D$ is of the form 
\[
D=\frac{1}{m M_m}\sum_{\substack{\vec{a}\in\Z_{\geq 0}^{r-1};\\ 
(m,\vec{a})\in\sS(W_{\vec{\bullet}})}}D_{\vec{a}}, 
\]
where each $D_{\vec{a}}$ is a $(Y,e)$-subbasis type divisor of $W_{m,\vec{a}}$ 
(resp., compatible with $\sF$). 
\end{enumerate}
\end{definition}

\begin{proposition}[{cf.\ \cite[\S 3.1]{AZ}}]\label{subbasis-divisor_proposition}
Let $E$ be a prime divisor over $X$ and let 
$D$ be an $m$-$(Y,e)$-subbasis type $\Q$-divisor of $W_{\vec{\bullet}}$. 
\begin{enumerate}
\renewcommand{\theenumi}{\arabic{enumi}}
\renewcommand{\labelenumi}{(\theenumi)}
\item\label{subbasis-divisor_proposition1}
We have 
$\ord_E(D)\leq S_m\left(W_{\vec{\bullet}}^{(Y,e)};\bar{\sF}_E\right)$ and 
$\ord_Y(D)= S_m\left(W_{\vec{\bullet}}^{(Y,e)};\bar{\sG}\right)$. 
\item\label{subbasis-divisor_proposition2}
If $D$ is compatible with $\sF_E$, then we have 
$\ord_E(D)= S_m\left(W_{\vec{\bullet}}^{(Y,e)};\bar{\sF}_E\right)$. 
\item\label{subbasis-divisor_proposition3}
Let us set 
\begin{eqnarray*}
\sigma^*D&=:&S_m\left(W_{\vec{\bullet}}^{(Y,e)};\bar{\sG}\right)\cdot Y+\tilde{D}, \\
D_Y&:=&\tilde{D}|_Y.
\end{eqnarray*}
The $\Q$-Cartier $\Q$-divisor $D_Y$ on $Y$ is an 
\emph{$m$-basis type $\Q$-divisor of $W_{\vec{\bullet}}^{(Y,e)}$} in the sense of 
\cite[Definition 2.18]{AZ}. If moreover $D$ is compatible with $\sF_E$, then 
the $D_Y$ is compatible with $\bar{\sF}_E$ in the sense of \cite[Definition 2.18]{AZ}. 
\end{enumerate}
\end{proposition}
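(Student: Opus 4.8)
The plan is to unwind the definitions so that the three assertions of Proposition~\ref{subbasis-divisor_proposition} all become instances of Corollary~\ref{BJ-type_corollary} applied to the vector spaces $W_{m,\vec{a}}$ with the two filtrations $\sG=\sF_Y$ and $\sF_E$ in play, together with the bookkeeping identity $W_{\vec{a},j}^{(Y,e)}=\Gr_{\sG}^{je}W_{\vec{a}}$ recorded just before Definition~\ref{subbasis-divisor_definition}. First I would reduce everything to a single graded piece: write $D=\frac{1}{mM_m}\sum_{\vec{a}}D_{\vec{a}}$ where $D_{\vec{a}}=\sum_{i}\{s_i^{\vec{a}}=0\}$ for a $(\sG,e\Z_{\geq0})$-subbasis $\{s_i^{\vec{a}}\}$ of $W_{m,\vec{a}}$, and observe $\ord_E(D)=\frac{1}{mM_m}\sum_{\vec{a}}\sum_i\ord_E(s_i^{\vec{a}})=\frac{1}{mM_m}\sum_{\vec{a}}\sum_i v_{\sF_E}(s_i^{\vec{a}})$, and likewise $\ord_Y(D)=\frac{1}{mM_m}\sum_{\vec{a}}\sum_i v_{\sG}(s_i^{\vec{a}})$.

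For \eqref{subbasis-divisor_proposition1}, I would apply Corollary~\ref{BJ-type_corollary}\eqref{BJ-type_corollary1} to each $W_{m,\vec{a}}$ with $\sF=\sF_E$, $\sG=\sG$, $\Xi=e\Z_{\geq0}$: since $\{s_i^{\vec{a}}\}$ is a $(\sG,e\Z_{\geq0})$-subbasis, we get $\sum_i v_{\sF_E}(s_i^{\vec{a}})\leq\sum_{\mu\in e\Z_{\geq0}}\int_0^\infty\dim\bar{\sF_E}^\lambda\bigl(\Gr_\sG^\mu W_{m,\vec{a}}\bigr)\,d\lambda$; identifying $\Gr_\sG^{je}W_{m,\vec{a}}$ with $W_{m,\vec{a},j}^{(Y,e)}$ and the induced filtration with $\bar{\sF}_E$ (this identification is exactly the content of Lemma~\ref{AZ21_lemma} and the paragraph after Example~\ref{AZ21_example}), summing over $\vec{a}$ and dividing by $mM_m$ yields precisely $\ord_E(D)\leq S_m(W_{\vec{\bullet}}^{(Y,e)};\bar{\sF}_E)$ by the defining formula in Definition~\ref{filter_definition}\eqref{filter_definition4}. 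For the $\ord_Y$ statement I would use Example~\ref{AZ21_example}: with $\sF=\sG$ the graded filtration $\bar{\sG}$ on $W_{\vec{a},j}^{(Y,e)}$ is the step function supported on $[0,je]$, so $\int_0^\infty\dim\bar{\sG}^\lambda(\Gr_\sG^{je}W_{m,\vec{a}})\,d\lambda=je\cdot\dim W_{m,\vec{a},j}^{(Y,e)}$, and since every $(\sG,e\Z_{\geq0})$-subbasis is automatically compatible with $\sG$ itself, Corollary~\ref{BJ-type_corollary}\eqref{BJ-type_corollary2} gives the equality $\ord_Y(D)=S_m(W_{\vec{\bullet}}^{(Y,e)};\bar{\sG})$. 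Part \eqref{subbasis-divisor_proposition2} is then the same argument with Corollary~\ref{BJ-type_corollary}\eqref{BJ-type_corollary2} in place of \eqref{BJ-type_corollary1}, once one checks that ``$(Y,e)$-subbasis type compatible with $\sF_E$'' means each $\{s_i^{\vec{a}}\}$ is a $(\sG,e\Z_{\geq0})$-subbasis of $W_{m,\vec{a}}$ compatible with $\sF_E$ in the sense of Definition~\ref{filter-filter_definition}, which is precisely the hypothesis needed for the equality case.

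For \eqref{subbasis-divisor_proposition3}, I would argue that after subtracting the $Y$-part, $\tilde{D}|_Y$ is literally the divisor of an $m$-basis type collection of sections of $W_{\vec{\bullet}}^{(Y,e)}$ on $Y$: for each $\vec{a}$ and each $j$, the subcollection $\{s_i^{\vec{a}}: v_\sG(s_i^{\vec{a}})=je\}$ maps, after restriction to $Y$ (i.e.\ division by $j\sigma^*Y$ followed by restriction, exactly the map defining $W_{\vec{a},j}^{(Y,e)}$ in Lemma~\ref{refinement_lemma}), to a basis of $\Gr_\sG^{je}W_{m,\vec{a}}=W_{m,\vec{a},j}^{(Y,e)}$; assembling these over $j$ and $\vec{a}$ gives, up to the fixed components coming from $\Sigma:=\sigma^*Y-Y$, an $m$-basis type divisor in the sense of \cite[Definition 2.18]{AZ}, and the compatibility with $\bar{\sF}_E$ follows from Lemma~\ref{filter-filter_lemma}\eqref{filter-filter_lemma3} since a basis of each graded piece compatible with $\bar{\sF}_E$ is what $\sF_E$-compatibility of the ambient subbasis induces. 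The main obstacle I anticipate is not any single deep step but the careful matching of the various restriction/normalization conventions --- keeping straight that $W_{\vec{a},j}^{(Y,e)}$ is defined via $eY$ (hence the linear transform $\bar h$ scaling the $r$-th coordinate by $e$) and that on $\tilde X$ one must pass through $\sigma^*Y=Y+\Sigma$, so that the fixed part $j(\Sigma|_Y)$ appears as in Remark~\ref{refinement_remark} --- and verifying that these fixed parts do not disturb the ``$m$-basis type'' structure on $Y$. Everything else is a direct translation of Corollary~\ref{BJ-type_corollary} through the dictionary $\Gr_\sG^{je}\leftrightarrow W^{(Y,e)}_{\vec\bullet}$.
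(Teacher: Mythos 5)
Your proof of parts \eqref{subbasis-divisor_proposition1} and \eqref{subbasis-divisor_proposition2} is essentially identical to the paper's: decompose $D$ into the divisors $D_{\vec a}$ of the constituent sections, apply Corollary~\ref{BJ-type_corollary}\eqref{BJ-type_corollary1} (resp.\ \eqref{BJ-type_corollary2}) graded piece by graded piece with $\Xi = e\Z_{\geq 0}$, and use Example~\ref{AZ21_example} together with the observation that every $(Y,e)$-subbasis type divisor is automatically compatible with $\sG$ to get the exact formula for $\ord_Y(D)$. All of this is right.

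Your treatment of \eqref{subbasis-divisor_proposition3} contains a genuine confusion about the setup. You bring in $\Sigma := \sigma^*Y - Y$ and Remark~\ref{refinement_remark}, worrying that fixed components $j(\Sigma|_Y)$ will pollute the basis-type structure on $Y$. But in the fixed setup of \S\ref{adj_subsection} the prime divisor $Y$ lies on $\tilde X$ itself, not on $X$; there is no strict transform and no $\Sigma$. (Remark~\ref{refinement_remark} treats the different situation where $Y\subset X$ and one compares the refinement by $Y$ with the refinement by its strict transform $\tilde Y\subset\tilde X$.) Here one simply pulls back the Cartier divisor $\{s_i^{\vec a,j}=0\}$ on $X$ to $\tilde X$, notes $\ord_Y\left(\sigma^*\{s_i^{\vec a,j}=0\}\right) = je$, and restricts $\sigma^*\{s_i^{\vec a,j}=0\}-je\,Y$ to $Y$; by the very definition of $W^{(Y,e)}_{m,\vec a,j}=\Gr_\sG^{je}W_{m,\vec a}$ as the image of the restriction map, this restriction is exactly the divisor of the graded class $\bar s_i^{\vec a,j}$. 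Summing and using the coefficient computation from the proof of \eqref{subbasis-divisor_proposition1} (which shows that the total $Y$-multiplicity is exactly $S_m(W^{(Y,e)}_{\vec\bullet};\bar\sG)\cdot m M_m$), one sees $D_Y = \tfrac{1}{mM_m}\sum_{\vec a,j,i}\{\bar s_i^{\vec a,j}=0\}$ on the nose, with no correction term; the paper's proof of \eqref{subbasis-divisor_proposition3} is this one line. Your appeal to Lemma~\ref{filter-filter_lemma}\eqref{filter-filter_lemma3} for the compatibility statement is correct, so once you remove the phantom $\Sigma$ your argument collapses to the intended one.
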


\begin{proof}
Let us write 
\[
D=\frac{1}{m M_m}\sum_{\substack{\vec{a}\in\Z_{\geq 0}^{r-1};\\ 
(m,\vec{a})\in\sS(W_{\vec{\bullet}})}}D_{\vec{a}}, 
\]
with 
\[
D_{\vec{a}}=\sum_{j\in\Z_{\geq 0}}\sum_{i=1}^{M_{\vec{a},j}}\left\{
s_i^{\vec{a},j}=0\right\},
\]
where each 
\[
\bigsqcup_{j\in\Z_{\geq 0}}\left\{s_1^{\vec{a},j},\dots,s_{M_{\vec{a},j}}^{\vec{a},j}\right\}
\]
is a $(\sG,e\Z_{\geq 0})$-subbasis of $W_{m,\vec{a}}$ (resp., compatible with 
$\bar{\sF}_E$) with $\ord_Y\left(s_i^{\vec{a},j}\right)=j e$, that is, the image 
\[
\left\{\bar{s}_1^{\vec{a},j},\dots,\bar{s}_{M_{\vec{a},j}}^{\vec{a},j}
\right\}\subset W_{m,\vec{a},j}^{(Y,e)}
\]
is a basis of $W_{m,\vec{a},j}^{(Y,e)}$ (resp., compatible with $\bar{\sF}_E$). 

\eqref{subbasis-divisor_proposition1}
By Corollary \ref{BJ-type_corollary}, we have 
\begin{eqnarray*}
\ord_E\left(D_{\vec{a}}\right)=\sum_{j\in\Z_{\geq 0}}\sum_{i=1}^{M_{\vec{a},j}}
v_{\sF_E}\left(s_i^{\vec{a},j}\right)
\leq\sum_{j\in\Z_{\geq 0}}\int_0^\infty
\dim\bar{\sF}_E^\lambda W_{m,\vec{a},j}^{(Y,e)}d\lambda.
\end{eqnarray*}
Thus we get
\[
\ord_E\left(D\right)\leq\frac{1}{m M_m}\sum_{\vec{a}, j}\int_0^\infty
\dim\bar{\sF}_E^\lambda W_{m,\vec{a},j}^{(Y,e)}d\lambda
=S_m\left(W_{\vec{\bullet}}^{(Y,e)};\bar{\sF}_E\right). 
\]

\eqref{subbasis-divisor_proposition2}
If $D$ is compatible with $\sF_E$, then the above inequalities are equal. 
Note that, for any $m$-$(Y,e)$-subbasis type $\Q$-divisor $D$ of $W_{\vec{\bullet}}$, 
$D$ is compatible with $\sG$ (see Example \ref{AZ21_example}). 

\eqref{subbasis-divisor_proposition3}
Since $D_Y$ is of the form
\[
D_Y=\frac{1}{m M_m}\sum_{\vec{a},j}\sum_{i=1}^{M_{\vec{a},j}}
\left\{\bar{s}_i^{\vec{a},j}=0\right\}, 
\]
the assertion is trivial. 
\end{proof}

From now on, we fix an effective $\Q$-Weil divisor $\Delta$ on $X$ and 
a scheme-theoretic point $\eta\in X$ such that $(X,\Delta)$ is klt at $\eta$. 
Recall that, in \cite[Definition 2.19]{AZ}, for any $m\in\Z_{>0}$ with 
$h^0\left(W_{m,\vec{\bullet}}\right)>0$, they set 
\[
\delta_{\eta,m}\left(X,\Delta;W_{\vec{\bullet}}\right)
:=\inf_{\substack{D:\text{ $m$-basis type }\\\text{$\Q$-divisor of }W_{\vec{\bullet}}}}
\lct_\eta(X,\Delta; D), 
\]
and showed in \cite[Lemma 2.21]{AZ} that 
\[
\delta_\eta\left(X,\Delta;W_{\vec{\bullet}}\right)
=\lim_{m\to\infty}\delta_{\eta,m}\left(X,\Delta;W_{\vec{\bullet}}\right).
\]
Let us consider its analogue.

\begin{definition}\label{delta-m_definition}
Take any $m\in\Z_{>0}$ with $h^0\left(W_{m,\vec{\bullet}}^{(Y,e)}\right)>0$. 
\begin{enumerate}
\renewcommand{\theenumi}{\arabic{enumi}}
\renewcommand{\labelenumi}{(\theenumi)}
\item\label{delta-m_definition1}
Set 
\[
\delta_{\eta,m}^{(Y,e)}\left(X,\Delta;W_{\vec{\bullet}}\right)
:=\inf_D\lct_\eta(X,\Delta; D), 
\]
where $D$ runs through all $m$-$(Y,e)$-subbasis type $\Q$-divisors 
of $W_{\vec{\bullet}}$. 
\item\label{delta-m_definition2}
Assume that $(X,\Delta)$ is a klt pair. Set 
\[
\delta_{m}^{(Y,e)}\left(X,\Delta;W_{\vec{\bullet}}\right):=\inf_D\lct(X,\Delta; D), 
\]
where $D$ runs through all $m$-$(Y,e)$-subbasis type $\Q$-divisors 
of $W_{\vec{\bullet}}$. 
\end{enumerate}
\end{definition}

\begin{proposition}[{see \cite[Proposition 4.3]{BJ}}]\label{delta-m_proposition}
\begin{enumerate}
\renewcommand{\theenumi}{\arabic{enumi}}
\renewcommand{\labelenumi}{(\theenumi)}
\item\label{delta-m_proposition1}
We have 
\[
\delta_{\eta,m}^{(Y,e)}\left(X,\Delta;W_{\vec{\bullet}}\right)
=\inf_{\substack{E:\text{ prime divisor}\\ \text{over }X\\ \text{with }\eta\in c_X(E)}}
\frac{A_{X,\Delta}(E)}{S_m\left(W_{\vec{\bullet}}^{(Y,e)};\bar{\sF}_E\right)}. 
\]
\item\label{delta-m_proposition2}
Assume that $(X,\Delta)$ is a klt pair. Then we have 
\[
\delta_{m}^{(Y,e)}\left(X,\Delta;W_{\vec{\bullet}}\right)
=\inf_{\substack{E:\text{ prime divisor} \\ \text{over }X}}
\frac{A_{X,\Delta}(E)}{S_m\left(W_{\vec{\bullet}}^{(Y,e)};\bar{\sF}_E\right)}. 
\]
\end{enumerate}
\end{proposition}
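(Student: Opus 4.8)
The plan is to follow the proof of \cite[Proposition 4.3]{BJ} (cf.\ also \cite[Lemma 2.21]{AZ}), combining the valuative description of log canonical thresholds with Proposition \ref{subbasis-divisor_proposition}. Since \eqref{delta-m_proposition1} and \eqref{delta-m_proposition2} are proven in exactly the same way — the only difference being whether one imposes the condition $\eta\in c_X(E)$ and whether one uses $\lct_\eta$ or $\lct$ — I would only write out \eqref{delta-m_proposition1} in detail.

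First I would recall that for any effective $\Q$-Cartier $\Q$-divisor $D$ on $X$,
\[
\lct_\eta(X,\Delta;D)=\inf_{\substack{E:\text{ prime divisor over }X\\ \eta\in c_X(E)}}\frac{A_{X,\Delta}(E)}{\ord_E(D)}.
\]
Letting $D$ range over all $m$-$(Y,e)$-subbasis type $\Q$-divisors of $W_{\vec{\bullet}}$ (a nonempty family, since $h^0(W_{m,\vec{\bullet}}^{(Y,e)})>0$) and interchanging the two infima, one gets
\[
\delta_{\eta,m}^{(Y,e)}\left(X,\Delta;W_{\vec{\bullet}}\right)=\inf_{\substack{E:\ \eta\in c_X(E)}}\frac{A_{X,\Delta}(E)}{\sup_D\ord_E(D)},
\]
where $D$ still runs through all $m$-$(Y,e)$-subbasis type $\Q$-divisors of $W_{\vec{\bullet}}$. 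The interchange is justified simply by noting that each side is bounded above and below by the other — this is the routine part. Thus the statement reduces, for a fixed prime divisor $E$ over $X$, to the identity
\[
\sup_D\ord_E(D)=S_m\left(W_{\vec{\bullet}}^{(Y,e)};\bar{\sF}_E\right).
\]

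The inequality $\leq$ here is precisely Proposition \ref{subbasis-divisor_proposition} \eqref{subbasis-divisor_proposition1}. For the reverse inequality I would exhibit an $m$-$(Y,e)$-subbasis type $\Q$-divisor $D$ compatible with $\sF_E$: by Definition \ref{subbasis-divisor_definition} it suffices, for each $\vec{a}\in\Z_{\geq0}^{r-1}$ with $(m,\vec{a})\in\sS(W_{\vec{\bullet}})$, to produce a $(\sG,e\Z_{\geq0})$-subbasis of $W_{m,\vec{a}}$ compatible with $\sF_E$, i.e.\ a $\left((\sF_E,\R_{\geq0}),(\sG,e\Z_{\geq0})\right)$-subbasis in the sense of Definition \ref{filter-filter_definition}. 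Such a subbasis is obtained by the standard lifting argument: for each $\mu\in e\Z_{\geq0}$, and each $\lambda\in\R_{\geq0}$ (nonzero $\Gr_{\bar{\sF}_E}^\lambda(\Gr_\sG^\mu W_{m,\vec{a}})$ occurring for only finitely many $\lambda$ by Definition \ref{subbasis_definition}), pick a basis of $\Gr_{\bar{\sF}_E}^\lambda(\Gr_\sG^\mu W_{m,\vec{a}})$ and lift each basis vector through the surjection $\sF_E^\lambda W_{m,\vec{a}}\cap\sG^\mu W_{m,\vec{a}}\twoheadrightarrow\Gr_{\bar{\sF}_E}^\lambda(\Gr_\sG^\mu W_{m,\vec{a}})$ of Definition \ref{filter-filter_definition}. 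By Lemma \ref{filter-filter_lemma} \eqref{filter-filter_lemma3} the resulting collection is in particular a $(\sG,e\Z_{\geq0})$-subbasis, so the associated divisor $D$ is an $m$-$(Y,e)$-subbasis type $\Q$-divisor compatible with $\sF_E$, and Proposition \ref{subbasis-divisor_proposition} \eqref{subbasis-divisor_proposition2} yields $\ord_E(D)=S_m(W_{\vec{\bullet}}^{(Y,e)};\bar{\sF}_E)$. Together with the case \eqref{delta-m_proposition2} — identical, replacing $\lct_\eta$ by $\lct$ and removing the condition $\eta\in c_X(E)$ — this completes the proof.

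The only genuinely delicate point I expect is the existence and well-definedness of subbasis type divisors compatible with a prescribed filtration $\sF_E$, that is, the linear-algebra input that a double filtration on a finite-dimensional vector space admits a compatible (sub)basis; but this is exactly what the formalism of Definition \ref{filter-filter_definition} and Lemma \ref{filter-filter_lemma} (following \cite[Lemma 3.1]{AZ}) was set up to provide, so in practice it reduces to citing those statements. Everything else — the valuative formula for $\lct_\eta$, the interchange of infima, and the passage from $\ord_E$ of a compatible subbasis divisor to $S_m$ — is routine given Proposition \ref{subbasis-divisor_proposition}.
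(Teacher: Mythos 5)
Your proof follows essentially the same route as the paper's: rewrite $\delta_{\eta,m}^{(Y,e)}$ via the valuative formula for the log canonical threshold as a double infimum, interchange the two infima, and reduce to the identity $\sup_D\ord_E(D)=S_m\bigl(W_{\vec{\bullet}}^{(Y,e)};\bar{\sF}_E\bigr)$, which is exactly what the paper extracts from Proposition \ref{subbasis-divisor_proposition}. The one thing you spell out that the paper leaves implicit is the existence of a compatible subbasis type divisor achieving the supremum (the explicit lifting construction together with Lemma \ref{filter-filter_lemma} \eqref{filter-filter_lemma3}), which is a legitimate and correct detail to include.
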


\begin{proof}
We only see \eqref{delta-m_proposition1}. By the definition of the log canonical 
threshold, we have 
\[
\delta_{\eta,m}^{(Y,e)}\left(X,\Delta;W_{\vec{\bullet}}\right)
=\inf_D\inf_{\substack{E/X;\\ \eta\in c_X(E)}}\frac{A_{X,\Delta}(E)}{\ord_E(D)}. 
\]
On the other hand, by Proposition \ref{subbasis-divisor_proposition}, we have 
$\sup_D\ord_E(D)=S_m\left(W_{\vec{\bullet}}^{(Y,e)};\bar{\sF}_E\right)$.
\end{proof}

The following lemma is well-known and essentially same as \cite[Corollary 2.10]{BJ}. 
We omit the proof. See also the proof of \cite[Lemma 2.21]{AZ}. 

\begin{lemma}[{\cite[Corollary 2.10]{BJ}}]\label{delta-concave_lemma}
For any $\varepsilon\in\R_{>0}$, there exists $m_0\in\Z_{>0}$ such that, 
for any linearly bounded filtration $\sH$ on $W_{\vec{\bullet}}^{(Y,e)}$ and for any 
$m\geq m_0$, we have the inequality 
\[
S_m\left(W_{\vec{\bullet}}^{(Y,e)};\sH\right)\leq(1+\varepsilon)\cdot
S\left(W_{\vec{\bullet}}^{(Y,e)};\sH\right).
\]
\end{lemma}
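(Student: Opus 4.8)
\textbf{Proof plan for Lemma \ref{delta-concave_lemma}.}
The plan is to reduce the claim directly to the cited result \cite[Corollary 2.10]{BJ}. The key point is that $W_{\vec{\bullet}}^{(Y,e)}$ is, by Lemma \ref{refinement_lemma}, a graded linear series on the normal projective variety $Y$ (of dimension $n-1$) associated to the Cartier divisors $L_1|_Y,\dots,L_r|_Y,-eY|_Y$ which has bounded support and contains an ample series. Hence the quantities $S_m\left(W_{\vec{\bullet}}^{(Y,e)};\sH\right)$ and $S\left(W_{\vec{\bullet}}^{(Y,e)};\sH\right)$ are exactly the quantities $S_m$ and $S$ attached to a linearly bounded filtration on a graded linear series with bounded support and containing an ample series, as in \S\ref{filtration_subsection}. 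This is precisely the setting of \cite[Corollary 2.10]{BJ}, whose content is the existence, for each $\varepsilon>0$, of an $m_0$ that works uniformly over all linearly bounded filtrations.

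First I would fix once and for all a sufficiently divisible positive integer $m_1$ and a representative $W_{m_1\vec{\bullet}}^{(Y,e)}$ of $W_{\vec{\bullet}}^{(Y,e)}$ that contains an ample series, so that all the invariants in the statement are computed with respect to this representative and the subsequent argument only involves multiples of $m_1$ (the dependence on lifts is harmless by Lemma \ref{veronese_lemma} and Lemma \ref{filter_lemma}). Next I would recall, from Definition \ref{filter_definition}\eqref{filter_definition3} and \eqref{filter_definition4}, that
\[
S\left(W_{\vec{\bullet}}^{(Y,e)};\sH\right)=\lim_{l}S_l\left(W_{m_1\vec{\bullet}}^{(Y,e)};\sH\right),
\]
and that both sides lie between $\tfrac{1}{r+n}T\left(W_{\vec{\bullet}}^{(Y,e)};\sH\right)$ and $T\left(W_{\vec{\bullet}}^{(Y,e)};\sH\right)$ by the concavity inequalities there; in particular $S\left(W_{\vec{\bullet}}^{(Y,e)};\sH\right)>0$ whenever $\sH$ is nontrivial, so the ratio in the assertion makes sense. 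Then I would invoke \cite[Corollary 2.10]{BJ}: given $\varepsilon>0$ there is $m_0\in\Z_{>0}$ (a multiple of $m_1$) such that $S_m\left(W_{m_1\vec{\bullet}}^{(Y,e)};\sH\right)\le(1+\varepsilon)S\left(W_{\vec{\bullet}}^{(Y,e)};\sH\right)$ for every linearly bounded filtration $\sH$ and every $m\ge m_0$. Because $S_m\left(W_{\vec{\bullet}}^{(Y,e)};\sH\right)$ in our notation is $S_m\left(W_{m_1\vec{\bullet}}^{(Y,e)};\sH\right)$, this is exactly the desired inequality, and the proof is complete.

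The only genuine subtlety, and the step I would be most careful about, is the uniformity of $m_0$ over all filtrations $\sH$ — this is what distinguishes the lemma from the mere convergence statement $S_m\to S$ for a fixed $\sH$. I would emphasize that this uniformity is precisely the content of \cite[Corollary 2.10]{BJ} (it is obtained there by the Newton–Okounkov body estimates of \cite{BC,BJ}, which bound $S_m-S$ by $C/m$ with $C$ independent of $\sH$), so no additional work is needed beyond checking that $W_{\vec{\bullet}}^{(Y,e)}$ fits the hypotheses of that corollary — which it does by Lemma \ref{refinement_lemma}. Thus the argument is short: verify the hypotheses of \cite[Corollary 2.10]{BJ} for $W_{\vec{\bullet}}^{(Y,e)}$, then quote it verbatim.
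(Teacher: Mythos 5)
Your proposal is correct and matches the paper's treatment: the paper omits the proof, citing exactly \cite[Corollary 2.10]{BJ} (and the proof of \cite[Lemma 2.21]{AZ}), and your argument is just the fleshed-out version of that citation — check via Lemma \ref{refinement_lemma} that $W_{\vec{\bullet}}^{(Y,e)}$ has bounded support and contains an ample series, then invoke the uniform-in-$\sH$ estimate. The only point worth flagging is that \cite[Corollary 2.10]{BJ} is literally stated for singly graded complete linear series, so strictly one needs its multigraded adaptation as in \cite[Lemma 2.21]{AZ}, which is precisely why the paper adds that second reference.
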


Thanks to Lemma \ref{delta-concave_lemma}, we can get the following: 

\begin{proposition}[{cf.\ \cite[Theorem 4.4]{BJ} and 
\cite[Lemma 2.21]{AZ}}]\label{delta-concave_proposition}
\begin{enumerate}
\renewcommand{\theenumi}{\arabic{enumi}}
\renewcommand{\labelenumi}{(\theenumi)}
\item\label{delta-concave_proposition1}
We have 
\[
\delta_\eta\left(X,\Delta;W_{\vec{\bullet}}\right)=\lim_{m\to\infty}
\delta_{\eta,m}^{(Y,e)}\left(X,\Delta;W_{\vec{\bullet}}\right). 
\]
\item\label{delta-concave_proposition2}
Assume that $(X,\Delta)$ is a klt pair. Then we have 
\[
\delta\left(X,\Delta;W_{\vec{\bullet}}\right)=\lim_{m\to\infty}
\delta_{m}^{(Y,e)}\left(X,\Delta;W_{\vec{\bullet}}\right). 
\]
\end{enumerate}
\end{proposition}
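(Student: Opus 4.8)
The plan is to deduce Proposition \ref{delta-concave_proposition} from the corresponding statement for complete linear series on $Y$, namely \cite[Lemma 2.21]{AZ}, applied to the refined graded linear series $W_{\vec{\bullet}}^{(Y,e)}$ on $Y$, together with the ``subbasis type divisor'' machinery developed in this section. First I would fix the normal projective variety $Y$ and the graded linear series $W_{\vec{\bullet}}^{(Y,e)}$, which has bounded support and contains an ample series by Lemma \ref{refinement_lemma}. Applying \cite[Lemma 2.21]{AZ} (in the local version, at the scheme-theoretic points $\eta'\in Y$ lying over $\eta$, resp.\ globally) to $(Y,\Delta_Y')$ for an appropriate auxiliary boundary $\Delta_Y'$ and to $W_{\vec{\bullet}}^{(Y,e)}$ gives
\[
\delta_{\eta'}\!\left(Y,\Delta_Y';W_{\vec{\bullet}}^{(Y,e)}\right)
=\lim_{m\to\infty}\delta_{\eta',m}\!\left(Y,\Delta_Y';W_{\vec{\bullet}}^{(Y,e)}\right),
\]
where the right-hand side is the limit of the $m$-basis type invariants in the sense of \cite[Definition 2.19]{AZ}. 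The key point is then to translate this identity for linear series on $Y$ into the claimed identity for $W_{\vec{\bullet}}$ on $X$.

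The translation has two ingredients. On one hand, Proposition \ref{subbasis-divisor_proposition} \eqref{subbasis-divisor_proposition3} shows that the restriction-to-$Y$ of an $m$-$(Y,e)$-subbasis type $\Q$-divisor of $W_{\vec{\bullet}}$ is exactly an $m$-basis type $\Q$-divisor of $W_{\vec{\bullet}}^{(Y,e)}$, and conversely every $m$-basis type $\Q$-divisor of $W_{\vec{\bullet}}^{(Y,e)}$ arises this way (lift each basis element of $\Gr_\sG^{je}W_{m,\vec{a}}$ through the surjection). By adjunction along $Y\subset\tilde X$ (this is where the auxiliary boundary $\Delta_Y'$ and the plt/log-canonical bookkeeping enter, exactly as in \cite[\S 3]{AZ}), the log canonical threshold $\lct_\eta(X,\Delta;D)$ is controlled by the pair $(\lct$ of the ``$Y$-part'' $S_m(W_{\vec{\bullet}}^{(Y,e)};\bar\sG)$, and $\lct_{\eta'}(Y,\Delta_Y';D_Y))$ over all $\eta'$ mapping to $\eta$. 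On the other hand, Proposition \ref{delta-m_proposition} already rewrites $\delta_{\eta,m}^{(Y,e)}$ purely in terms of the valuative quantities $S_m(W_{\vec{\bullet}}^{(Y,e)};\bar\sF_E)$, and Lemma \ref{AZ21_lemma} identifies these with the corresponding $S$-invariants of the refined series on $Y$. So the next step is to assemble: $\delta_{\eta,m}^{(Y,e)}(X,\Delta;W_{\vec{\bullet}})$ equals $\min\{A_{X,\Delta}(Y)/S_m(W_{\vec{\bullet}}^{(Y,e)};\bar\sG),\ \inf_{\eta'}\delta_{\eta',m}(Y,\Delta_Y';W_{\vec{\bullet}}^{(Y,e)})\}$, up to the usual subtlety that divisors $E$ over $X$ whose center is $Y$ itself contribute the first term and all others factor through $Y$.

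Having this for each $m$, I would pass to the limit $m\to\infty$. By \cite[Lemma 2.21]{AZ} the terms $\delta_{\eta',m}(Y,\Delta_Y';W_{\vec{\bullet}}^{(Y,e)})$ converge to $\delta_{\eta'}(Y,\Delta_Y';W_{\vec{\bullet}}^{(Y,e)})$, and by Lemma \ref{delta-concave_lemma} the quantities $S_m(W_{\vec{\bullet}}^{(Y,e)};\bar\sG)$ converge to $S(W_{\vec{\bullet}}^{(Y,e)};\bar\sG)=S(W_{\vec{\bullet}};Y)=S_{L}(Y)$-type expressions; since finite minima commute with limits, one obtains
\[
\lim_{m\to\infty}\delta_{\eta,m}^{(Y,e)}\!\left(X,\Delta;W_{\vec{\bullet}}\right)
=\min\!\left\{\frac{A_{X,\Delta}(Y)}{S(W_{\vec{\bullet}};Y)},\ \inf_{\eta'}\delta_{\eta'}\!\left(Y,\Delta_Y';W_{\vec{\bullet}}^{(Y,e)}\right)\right\}.
\]
Finally, Theorem \ref{AZ_thm} (equivalently the adjunction inequality, whose reverse inequality is elementary since every prime divisor over $X$ with center meeting $c_X(Y)$ either equals $Y$ or has center on $Y$) identifies the right-hand side with $\delta_\eta(X,\Delta;W_{\vec{\bullet}})$, completing \eqref{delta-concave_proposition1}; the global statement \eqref{delta-concave_proposition2} follows by taking the infimum over $\eta$, or by running the same argument globally. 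The main obstacle I anticipate is the careful adjunction bookkeeping in the first ingredient: making precise, for a general prime divisor $Y$ (not necessarily plt-type), how $\lct_\eta(X,\Delta;D)$ decomposes into the coefficient of $Y$ in $\sigma^*D$ and the log canonical threshold on $Y$ of the restricted divisor, including the correct definition of the boundary $\Delta_Y'$ and the verification that divisors over $X$ are accounted for exactly once. Everything else is a routine limit argument using results already established above.
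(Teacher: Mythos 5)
Your proposal takes a fundamentally different, and much more roundabout, route than the paper, and it contains genuine gaps that are not merely bookkeeping. The paper's proof is a direct limsup/liminf pinching argument, identical in spirit to \cite[Theorem 4.4]{BJ}: by Proposition \ref{delta-m_proposition} one has $\delta_{\eta,m}^{(Y,e)}(X,\Delta;W_{\vec{\bullet}})=\inf_E A_{X,\Delta}(E)/S_m\bigl(W_{\vec{\bullet}}^{(Y,e)};\bar{\sF}_E\bigr)$; by Lemma \ref{AZ21_lemma} one has $S\bigl(W_{\vec{\bullet}}^{(Y,e)};\bar{\sF}_E\bigr)=S\bigl(W_{\vec{\bullet}};\sF_E\bigr)$, so $\delta_\eta(X,\Delta;W_{\vec{\bullet}})$ is the corresponding infimum with $S$ in place of $S_m$; and Lemma \ref{delta-concave_lemma} provides the \emph{uniform} bound $S_m(\,\cdot\,;\sH)\leq(1+\varepsilon)S(\,\cdot\,;\sH)$ over all linearly bounded filtrations $\sH$, which is exactly what makes $\liminf_m\delta_{\eta,m}^{(Y,e)}\geq\frac{1}{1+\varepsilon}\delta_\eta$ go through. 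No adjunction is needed anywhere.

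Your approach, by contrast, tries to establish a finite-$m$ adjunction identity of the form $\delta_{\eta,m}^{(Y,e)}=\min\{A_{X,\Delta}(Y)/S_m(\,\cdot\,;\bar\sG),\ \inf_{\eta'}\delta_{\eta',m}(Y,\Delta_Y';W_{\vec{\bullet}}^{(Y,e)})\}$ and then pass to the limit. There are two real problems. First, as you yourself flag, the adjunction bookkeeping required to decompose $\lct_\eta(X,\Delta;D)$ via $Y$ needs a plt-type hypothesis or at least a usable inversion-of-adjunction setup; making this work for a general prime divisor $Y$ with only $eY$ Cartier is exactly the hard content of Theorem \ref{AZ-R_thm}, which in the paper is proved \emph{after} and \emph{using} the present proposition. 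So the plan is effectively circular within this section. Second, the concluding step is wrong as stated: Theorem \ref{AZ_thm} gives only the inequality $\delta_\eta\geq\min\{\cdots\}$, and your attempted justification of the reverse inequality — "every prime divisor over $X$ with center meeting $c_X(Y)$ either equals $Y$ or has center on $Y$" — is false (a divisor whose center merely meets $c_X(Y)$ need not have center contained in $Y$, and even if it did, equality in the adjunction bound is precisely what \cite[Theorem 3.3]{AZ} treats as a delicate additional statement). You should instead use Proposition \ref{delta-m_proposition} together with Lemma \ref{AZ21_lemma} to identify $\delta_\eta$ and $\delta_{\eta,m}^{(Y,e)}$ as the same infimum with $S$ respectively $S_m$ in the denominator, and then invoke the uniform estimate of Lemma \ref{delta-concave_lemma} to finish, avoiding adjunction entirely.
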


\begin{proof}
The proof is same as the proof of \cite[Theorem 4.4]{BJ}. 
We give the proof of \eqref{delta-concave_proposition1} 
just for the readers' convenience. 
By Proposition \ref{delta-m_proposition} and Lemma \ref{AZ21_lemma}, 
we have 
\begin{eqnarray*}
&&\limsup_{m\to\infty}\delta_{\eta,m}^{(Y,e)}\left(X,\Delta;W_{\vec{\bullet}}\right)
=\limsup_{m\to\infty}\inf_{\substack{E/X;\\ \eta\in c_X(E)}}
\frac{A_{X,\Delta}(E)}{S_m\left(W_{\vec{\bullet}}^{(Y,e)};\bar{\sF}_E\right)}\\
&\leq&\inf_{\substack{E/X; \\ \eta\in c_X(E)}}
\frac{A_{X,\Delta}(E)}{S\left(W_{\vec{\bullet}}^{(Y,e)};\bar{\sF}_E\right)}
=\delta_\eta\left(X,\Delta;W_{\vec{\bullet}}\right). 
\end{eqnarray*}
On the other hand, by Lemma \ref{delta-concave_lemma}, for any 
$\varepsilon\in\R_{>0}$, we get 
\begin{eqnarray*}
\liminf_{m\to\infty}\delta_{\eta,m}^{(Y,e)}\left(X,\Delta;W_{\vec{\bullet}}\right)
\geq\inf_{\substack{E/X;\\ \eta\in c_X(E)}}\frac{1}{1+\varepsilon}\cdot
\frac{A_{X,\Delta}(E)}{S\left(W_{\vec{\bullet}}^{(Y,e)};\bar{\sF}_E\right)}
=\frac{1}{1+\varepsilon}\cdot\delta_\eta\left(X,\Delta;W_{\vec{\bullet}}\right). 
\end{eqnarray*}
Thus we get the assertion. 
\end{proof}

We are ready to generalize Theorem \ref{AZ_thm}. 
Note that, \cite[Theorem 3.3]{AZ} treats the equality case much more. 
We omit to discuss the case since we do not use it in order to prove 
Theorem \ref{mainthm}.

\begin{thm}[{cf.\ \cite[Theorem 3.3]{AZ}}]\label{AZ-R_thm}
Let $\sigma\colon\tilde{X}\to X$ be a birational morphism between 
normal projective varieties and let $Z_0\subset Z\subset X$ be closed subvarieties 
on $X$. Let $\eta_0$, $\eta\in X$ be the generic points of $Z_0$, $Z$, respectively. 
Let $F\subset\tilde{X}$ be a prime $\Q$-Cartier divisor on $\tilde{X}$ with 
$\eta_0\in c_X(F)$ and let $\Delta$ be an effective $\Q$-Weil divisor on $X$. 
Assume that there is an open subset $\eta_0\in U\subset X$ such that 
the pair $(U,\Delta|_U)$ is klt, the prime divisor $F$ is a plt-type prime divisor 
over $(U,\Delta|_U)$, and the morphism $\sigma$ is the plt-blowup of $F$ 
over $(U,\Delta|_U)$. Let us take the Veronese equivalence class $V_{\vec{\bullet}}$ 
of a graded linear series on $X$ associated to 
$L_1,\dots,L_r\in\CaCl(X)\otimes_\Z\Q$ which has bounded support and contains 
an ample series, and let $W_{\vec{\bullet}}$ be the refinement of 
$\sigma^*V_{\vec{\bullet}}$ by $F\subset\tilde{X}$. Take an effective 
$\Q$-Weil divisor $\tilde{\Delta}$ on $\tilde{X}$ and an effective $\Q$-Weil 
divisor $\Delta_F$ on $F$ such that 
\begin{eqnarray*}
K_{\tilde{X}}+\tilde{\Delta}+\left(1-A_{X,\Delta}(F)\right)F&=&\sigma^*(K_X+\Delta),\\
K_F+\Delta_F&=&\left(K_{\tilde{X}}+\tilde{\Delta}+F\right)|_F
\end{eqnarray*}
hold over $U$. 
\begin{enumerate}
\renewcommand{\theenumi}{\arabic{enumi}}
\renewcommand{\labelenumi}{(\theenumi)}
\item\label{AZ-R_thm1}
If $Z\subset c_X(F)$, then we have 
\[
\delta_\eta\left(X,\Delta;V_{\vec{\bullet}}\right)\geq
\min\left\{\frac{A_{X,\Delta}(F)}{S\left(V_{\vec{\bullet}};F\right)},\quad
\inf_{\eta'\in\tilde{X}; \sigma(\eta')=\eta_0}
\delta_{\eta'}\left(F,\Delta_F;W_{\vec{\bullet}}\right)
\right\}.
\]
\item\label{AZ-R_thm2}
If $Z\not\subset c_X(F)$, then we have 
\[
\delta_\eta\left(X,\Delta;V_{\vec{\bullet}}\right)\geq
\inf_{\eta'\in\tilde{X}; \sigma(\eta')=\eta_0}
\delta_{\eta'}\left(F,\Delta_F;W_{\vec{\bullet}}\right).
\]
\end{enumerate}
If the inequality in \eqref{AZ-R_thm1} holds and there exists a prime divisor $E_0$ 
over $X$ with $Z\subset c_X(E_0)$, $c_{\tilde{X}}(E_0)\subset F$ and 
\[
\delta_\eta\left(X,\Delta;V_{\vec{\bullet}}\right)
=\frac{A_{X,\Delta}(E_0)}{S\left(V_{\vec{\bullet}};E_0\right)},
\]
then we must have 
\[
\delta_\eta\left(X,\Delta;V_{\vec{\bullet}}\right)
=\frac{A_{X,\Delta}(F)}{S\left(V_{\vec{\bullet}};F\right)}.
\]
\end{thm}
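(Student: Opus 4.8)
The plan is to run, on the divisor $E_0$ itself, the argument that yields Theorem~\ref{AZ-R_thm}(1), keep track of the two quantitative estimates it produces, and then combine them with the hypothesis that $E_0$ computes $\delta_\eta$ and with the observation that $F$ is itself a competitor. Since we are in the situation of \eqref{AZ-R_thm1}, i.e.\ $Z\subseteq c_X(F)$, the prime divisor $F$ appears in the infimum defining $\delta_\eta(X,\Delta;V_{\vec{\bullet}})$, so that
\[
\delta_\eta(X,\Delta;V_{\vec{\bullet}})\ \le\ \frac{A_{X,\Delta}(F)}{S(V_{\vec{\bullet}};F)}\ =:\ a .
\]
Set $t:=\ord_{E_0}(F)$; because $c_{\tilde{X}}(E_0)\subseteq F$ we have $t\ge 1$, and $E_0$ may be regarded as a prime divisor $E_0'$ over $F$, namely the same divisorial valuation, so that $A_{F,\Delta_F}(E_0')$ and $S(W_{\vec{\bullet}};E_0')$ are defined.

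The two ingredients are as follows. From the defining relations $K_{\tilde{X}}+\tilde{\Delta}+(1-A_{X,\Delta}(F))F=\sigma^*(K_X+\Delta)$ and $K_F+\Delta_F=(K_{\tilde{X}}+\tilde{\Delta}+F)|_F$, valid over $U$, adjunction for log discrepancies gives the identity $A_{X,\Delta}(E_0)=t\,A_{X,\Delta}(F)+A_{F,\Delta_F}(E_0')$. The $S$-comparison estimate that underlies \eqref{AZ-R_thm1} — obtained by decomposing an $m$-$(F,e)$-subbasis type $\Q$-divisor $D$ of $V_{\vec{\bullet}}$ as $\sigma^*D=s_m F+\tilde{D}$, noting (Proposition~\ref{subbasis-divisor_proposition} and Remark~\ref{refinement_remark}) that $\tilde{D}|_F$ is an $m$-basis type $\Q$-divisor of the refinement $W_{\vec{\bullet}}$ and that $s_m\to S(V_{\vec{\bullet}};F)$ by Lemma~\ref{AZ21_lemma} and Example~\ref{AZ21_example}, and letting $m\to\infty$ via Proposition~\ref{delta-concave_proposition} — gives $S(V_{\vec{\bullet}};E_0)=S(\sigma^*V_{\vec{\bullet}};E_0)\le t\,S(V_{\vec{\bullet}};F)+S(W_{\vec{\bullet}};E_0')$. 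Dividing the identity by this inequality and using $\delta_\eta=A_{X,\Delta}(E_0)/S(V_{\vec{\bullet}};E_0)$ yields
\[
\delta_\eta\ \ge\ \frac{t\,A_{X,\Delta}(F)+A_{F,\Delta_F}(E_0')}{t\,S(V_{\vec{\bullet}};F)+S(W_{\vec{\bullet}};E_0')},
\]
the right-hand side being a weighted mediant of $a$ and $c:=A_{F,\Delta_F}(E_0')/S(W_{\vec{\bullet}};E_0')$ with positive weight $t\,S(V_{\vec{\bullet}};F)$ on $a$.

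The endgame is elementary, with one case split. Cross-multiplying $\delta_\eta\le a$ against the mediant bound forces $c\le a$. If $c=a$ the mediant equals $a$, so $\delta_\eta\ge a$ and hence $\delta_\eta=a$, which is the assertion. To exclude $c<a$: first $S(W_{\vec{\bullet}};E_0')=0$ is impossible, since it would make the mediant $a+A_{F,\Delta_F}(E_0')/(t\,S(V_{\vec{\bullet}};F))>a$ (as $A_{F,\Delta_F}(E_0')>0$), contradicting $\delta_\eta\le a$; so the mediant lies strictly between $c$ and $a$ and $\delta_\eta>c$. Choosing a scheme point $\eta'\in c_F(E_0')=c_{\tilde{X}}(E_0)$ with $\sigma(\eta')=\eta_0$ — one exists because $\eta_0\in c_X(E_0)=\sigma(c_{\tilde{X}}(E_0))$ — the divisor $E_0'$ competes for $\delta_{\eta'}(F,\Delta_F;W_{\vec{\bullet}})$, so $c\ge\delta_{\eta'}(F,\Delta_F;W_{\vec{\bullet}})\ge b$, where $b:=\inf_{\eta'':\,\sigma(\eta'')=\eta_0}\delta_{\eta''}(F,\Delta_F;W_{\vec{\bullet}})$. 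Finally $\delta_\eta<a$ forces, via the equality analysis of the bound \eqref{AZ-R_thm1} (the same subbasis-divisor argument carried to the case of equality), $\delta_\eta=b$; then $b\le c<a$ while simultaneously $\delta_\eta\ge(\text{mediant})>c\ge b=\delta_\eta$, a contradiction. Hence $\delta_\eta=a$.

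The step I expect to be the main obstacle is the $S$-invariant comparison $S(V_{\vec{\bullet}};E_0)\le t\,S(V_{\vec{\bullet}};F)+S(W_{\vec{\bullet}};E_0')$ together with the analysis of when it is an equality: this equality case is exactly what drives the exclusion of $c<a$, and it is the part that \cite[Theorem~3.3]{AZ} treats in greater generality. Carrying it out here will require the $(F,e)$-subbasis type divisor formalism of \S\ref{adj_subsection}, the compatibility statements of Remarks~\ref{refinement_remark} and \ref{refinement-AZ_remark} identifying $\tilde{D}|_F$ with a basis type divisor of the refinement, and careful bookkeeping of which filtration the minimizer $E_0$ is — or is forced to be — compatible with.
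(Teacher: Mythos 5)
Your proposal only sketches the final (equality) assertion; parts~(1) and (2), which are the substantive content of the theorem (the $(F,e)$-subbasis-type-divisor machinery of \S\ref{adj_subsection} together with inversion of adjunction), are invoked as a black box rather than proved.

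For the equality assertion there are two concrete problems.  First, the object $E_0'$ --- ``the same divisorial valuation regarded over $F$'' --- is not well-defined.  A divisorial valuation on $\tilde{X}$ with center contained in $F$ does not canonically induce a divisorial valuation on $F$.  For a counterexample, take $\tilde{X}$ a smooth surface, $F$ a smooth curve through a point $p$, and $E_0$ the exceptional divisor of the weighted blowup at $p$ with weight $2$ in the direction transverse to $F$ and weight $1$ along $F$ (two ordinary blowups); on the minimal model extracting $E_0$, the strict transform of $F$ is \emph{disjoint} from $E_0$, so there is no trace of $E_0$ on $F$.  Without $E_0'$ the adjunction identity $A_{X,\Delta}(E_0)=t\,A_{X,\Delta}(F)+A_{F,\Delta_F}(E_0')$ and the comparison $S(V_{\vec{\bullet}};E_0)\le t\,S(V_{\vec{\bullet}};F)+S(W_{\vec{\bullet}};E_0')$ cannot even be formulated, so the mediant argument does not get off the ground.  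The paper never constructs such an $E_0'$: it feeds an $m$-$(F,e)$-subbasis type divisor $D$ through the refinement, obtains log canonicity of $(F,\Delta_F+\lambda'_m D_F)$ near $\sigma^{-1}(\eta_0)$ from the definition of $\lambda'_m$, lifts it to log canonicity of $(\tilde{X},\tilde{\Delta}+F+\lambda_m\tilde{D})$ by inversion of adjunction (Kawakita), and then evaluates the log discrepancy along $E_0$ \emph{on $\tilde{X}$}, yielding directly
\[
A_{X,\Delta}(E_0)-\lambda_m S_m\bigl(W_{\vec{\bullet}};\bar{\sF}_{E_0}\bigr)\ \ge\ \bigl(A_{X,\Delta}(F)-\lambda_m S_m(W_{\vec{\bullet}};\bar{\sF}_F)\bigr)\cdot\ord_{E_0}(F)
\]
when $D$ is compatible with $\sF_{E_0}$.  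That single inequality replaces your adjunction identity and $S$-inequality simultaneously, with no restricted valuation needed.

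Second, the clinching step ``$\delta_\eta<a$ forces $\delta_\eta=b$ via the equality analysis of the bound~(1)'' is not established.  That equality analysis is precisely the part of \cite[Theorem 3.3]{AZ} that this paper explicitly declines to reproduce, so it cannot be invoked as a known consequence of the same subbasis-divisor argument.  The paper's own proof instead plugs the hypothesis $\delta_\eta=A_{X,\Delta}(E_0)/S(V_{\vec{\bullet}};E_0)$ into the limiting inequality and uses $\ord_{E_0}(F)>0$ to squeeze out $A_{X,\Delta}(F)=\lambda\,S(V_{\vec{\bullet}};F)$; it does not pass through a $b$-versus-$c$ comparison.  If you want to keep your mediant framing, you must supply a proof of that ``$\delta_\eta<a\Rightarrow\delta_\eta=b$'' step rather than cite it; as written, this is a gap.
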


\begin{proof}
The core of the proof is essentially same as the proof of \cite[Theorem 3.3]{AZ}. 
Let us fix $e\in\Z_{>0}$ with $e F\subset\tilde{X}$ Cartier. 
By Lemma \ref{filter_lemma}, we may assume that $V_{\vec{\bullet}}$ is 
a $\Z_{\geq 0}^r$-graded linear series on $X$ associated to Cartier divisors, and 
$W_{\vec{\bullet}}=\sigma^*V_{\vec{\bullet}}^{(F,e)}$ as in 
Lemma \ref{refinement_lemma}. Take any $m\in\Z_{>0}$ with 
$h^0\left(W_{m,\vec{\bullet}}\right)>0$. Set 
\begin{eqnarray*}
\lambda'_m &:=&\inf_{\eta'\in\tilde{X}; \sigma(\eta')=\eta_0}
\delta_{\eta',m}\left(F,\Delta_F;W_{\vec{\bullet}}\right), \\
\lambda_m &:=&\min\left\{
\frac{A_{X,\Delta}(F)}{S_m\left(W_{\vec{\bullet}};\bar{\sF}_F\right)}, 
\quad \lambda'_m\right\}.
\end{eqnarray*}
Take any $m$-$(F,e)$-subbasis type $\Q$-divisor $D$ of $V_{\vec{\bullet}}$. 
As in Proposition \ref{subbasis-divisor_proposition}, let us set
\begin{eqnarray*}
\sigma^*D&=:&S_m\left(W_{\vec{\bullet}};\bar{\sF}_F\right)\cdot F+\tilde{D}, \\
D_F&:=&\tilde{D}|_F.
\end{eqnarray*}
We know that the $\Q$-divisor $D_F$ is an $m$-basis type $\Q$-divisor 
of $W_{\vec{\bullet}}$. By the definition of $\lambda'_m$, 
the pair $\left(F,\Delta_F+\lambda'_m D_F\right)$ 
is log canonical around a neighborhood of $\sigma^{-1}(\eta_0)$. 
By inversion of adjunction \cite{kawakita}, the pair 
$\left(\tilde{X},\tilde{\Delta}+F+\lambda'_m\tilde{D}\right)$ is log canonical 
around a neighborhood of $\sigma^{-1}(\eta_0)$. 

\eqref{AZ-R_thm2}
For any prime divisor $E$ over $X$ with $\eta\in c_X(E)$, we have 
$c_{\tilde{X}}(E)\not\subset F$ since $Z\not\subset c_X(F)$. 
Thus, if we set 
\[
a'_m:=1-A_{X,\Delta}(F)+\lambda'_m\cdot S_m\left(W_{\vec{\bullet}};\bar{\sF}_F\right), 
\]
then we have 
\[
0\leq A_{\tilde{X},\tilde{\Delta}+F+\lambda'_m\tilde{D}}(E)
=A_{\tilde{X},\tilde{\Delta}+a'_m F+\lambda'_m\tilde{D}}(E)
=A_{X,\Delta+\lambda'_m D}(E).
\]
Thus the pair $(X,\Delta+\lambda'_m D)$ is log canonical at $\eta$. This implies that 
$\delta_{\eta,m}^{(F,e)}\left(X,\Delta;V_{\vec{\bullet}}\right)\geq \lambda'_m$. 
Thus we get the assertion by Proposition \ref{delta-concave_proposition}. 

\eqref{AZ-R_thm1}
Set 
\[
a_m:=1-A_{X,\Delta}(F)+\lambda_m\cdot S_m\left(W_{\vec{\bullet}};\bar{\sF}_F\right)
\leq 1. 
\]
Since the pair $\left(\tilde{X},\tilde{\Delta}+a_m F+\lambda_m\tilde{D}\right)$ 
is log canonical around a neighborhood of $\sigma^{-1}(\eta_0)$, the pair 
$(X,\Delta+\lambda_m D)$ is log canonical at $\eta$. This implies that 
$\delta_{\eta,m}^{(F,e)}\left(X,\Delta;V_{\vec{\bullet}}\right)\geq \lambda_m$. 

Let us consider the equality case. 
We have 
\[
0\leq A_{\tilde{X},\tilde{\Delta}+F+\lambda_m\tilde{D}}(E_0)
=A_{X,\Delta+\lambda_m D}(E_0)
+\left(-A_{X,\Delta}(F)+\lambda_m\cdot 
S_m\left(W_{\vec{\bullet}};\bar{\sF}_F\right)\right)\cdot \ord_{E_0}(F). 
\]
If $D$ is compatible with $\sF_{E_0}$, we get 
\[
A_{X,\Delta}(E_0)-\lambda_m\cdot S_m\left(W_{\vec{\bullet}};\bar{\sF}_{E_0}\right)
\geq \left(A_{X,\Delta}(F)-\lambda_m\cdot 
S_m\left(W_{\vec{\bullet}};\bar{\sF}_F\right)\right)\cdot\ord_{E_0}(F).
\]
Set $\lambda:=\lim_{m\to\infty}\lambda_m$. Then we get 
\[
0=A_{X,\Delta}(E_0)-\lambda\cdot S\left(V_{\vec{\bullet}};E_0\right)
\geq \left(A_{X,\Delta}(F)-\lambda\cdot 
S\left(V_{\vec{\bullet}};F\right)\right)\cdot\ord_{E_0}(F)\geq 0.
\]
Since $\ord_{E_0}(F)$ is positive, we get the assertion. 
\end{proof}

\subsection{The barycenters of Okounkov bodies}\label{barycenter_subsection}

We see a relationship between local $\delta$-invariants and Okounkov bodies. 

\begin{thm}\label{pl_thm}
Let $X$ be an $n$-dimensional normal projective variety, let $Y_\bullet$ be an 
admissible flag on $X$, and let $V_{\vec{\bullet}}$ be the Veronese equivalence class 
of a graded linear series on $X$ associated to 
$L_1,\dots,L_r\in \CaCl(X)\otimes_\Z\Q$ which has bounded support and 
contains an ample series. Let us consider the Okounkov body 
$\Delta_{Y_\bullet}\left(V_{\vec{\bullet}}\right)\subset\R_{\geq 0}^{r-1+n}$ of 
$V_{\vec{\bullet}}$ associated to $Y_\bullet$. 
Let $\eta_j\in X$ be the generic point of $Y_j$ for $1\leq j\leq n$. 
Let 
$\vec{b}=(b_1,\dots,b_{r-1+n})\in\R_{\geq 0}^{r-1+n}$ be the barycenter of 
$\Delta_{Y_\bullet}\left(V_{\vec{\bullet}}\right)$. Then we have the inequalities 
\[
\min\left\{\frac{1}{b_r},\dots,\frac{1}{b_{r-1+j}}\right\}\leq
\delta_{\eta_j}\left(X; V_{\vec{\bullet}}\right)\leq \frac{1}{b_r}
\]
for any $1\leq j\leq n$. In particular, we have the inequalities
\[
\min\left\{\frac{1}{b_r},\dots,\frac{1}{b_{r-1+n}}\right\}\leq
\delta_{Y_n}\left(X; V_{\vec{\bullet}}\right)\leq \frac{1}{b_r}.
\]
\end{thm}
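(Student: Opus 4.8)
The plan is to handle the two inequalities separately: the upper bound drops straight out of Proposition \ref{barycenter_proposition}, while the lower bound I would get by iterating the Abban--Zhuang inequality (Theorem \ref{AZ_thm}) along the flag $Y_\bullet$, reading off each adjunction term again from Proposition \ref{barycenter_proposition}.

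First, the upper bound. Since $X$ is normal it is regular in codimension one, so the prime divisor $Y_1$ on $X$ satisfies $A_X(Y_1)=1$, and $\eta_j$ lies in $Y_j\subseteq Y_1=c_X(Y_1)$. Hence $Y_1$ competes in the infimum defining $\delta_{\eta_j}(X;V_{\vec{\bullet}})$, and since $S(V_{\vec{\bullet}};Y_1)=b_r$ by Proposition \ref{barycenter_proposition} \eqref{barycenter_proposition2} (applied to $V_{\vec{\bullet}}$ and $Y_\bullet$), we obtain $\delta_{\eta_j}(X;V_{\vec{\bullet}})\leq A_X(Y_1)/S(V_{\vec{\bullet}};Y_1)=1/b_r$.

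For the lower bound I would set $Y_0:=X$, $V^{(0)}_{\vec{\bullet}}:=V_{\vec{\bullet}}$, and for $1\leq i\leq j$ let $V^{(i)}_{\vec{\bullet}}$ be the refinement of $V^{(i-1)}_{\vec{\bullet}}$ by $Y_i\subset Y_{i-1}$ (Definition \ref{refinement_definition}), after replacing the series by Veronese representatives associated to Cartier divisors and, where necessary, passing to a model on which $Y_{i-1}$ is normal and $Y_i$ is plt-type, so that Theorem \ref{AZ_thm} — or its local form Theorem \ref{AZ-R_thm} — applies. By Lemma \ref{refinement_lemma} \eqref{refinement_lemma1} each $V^{(i)}_{\vec{\bullet}}$ still has bounded support and contains an ample series, and iterating the Okounkov-body identity in Definition \ref{refinement_definition} identifies $\Delta_{Y^{(i)}_\bullet}(V^{(i)}_{\vec{\bullet}})$ with $\Delta:=\Delta_{Y_\bullet}(V_{\vec{\bullet}})$, where $Y^{(i)}_\bullet\colon Y_i\supsetneq\cdots\supsetneq Y_n$ is the induced flag. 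Since $\eta_j\in Y_j\subseteq Y_i=c_{Y_{i-1}}(Y_i)$ for $1\leq i\leq j$, applying Theorem \ref{AZ_thm} (trivial boundary, plt-blowup the identity) at each step and chaining the resulting inequalities gives
\[
\delta_{\eta_j}\left(X;V_{\vec{\bullet}}\right)\geq\min\left\{\frac{A_{Y_0}(Y_1)}{S(V^{(0)}_{\vec{\bullet}};Y_1)},\ \dots,\ \frac{A_{Y_{j-1}}(Y_j)}{S(V^{(j-1)}_{\vec{\bullet}};Y_j)},\ \delta_{\eta_j}\left(Y_j;V^{(j)}_{\vec{\bullet}}\right)\right\},
\]
where $A_{Y_{i-1}}(Y_i)=1$ and, by Proposition \ref{barycenter_proposition} \eqref{barycenter_proposition2} applied to $V^{(i-1)}_{\vec{\bullet}}$ and $Y^{(i-1)}_\bullet$ together with the identification of Okounkov bodies above and a check of which coordinate is the first valuation coordinate after $i-1$ refinements, $S(V^{(i-1)}_{\vec{\bullet}};Y_i)=b_{r-1+i}$.

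It then remains to note that $\eta_j$ is the generic point of $Y_j$, so no prime divisor over $Y_j$ has $\eta_j$ in its center (centers of such divisors are proper closed subsets of $Y_j$); hence $\delta_{\eta_j}(Y_j;V^{(j)}_{\vec{\bullet}})$ is an infimum over the empty set, equals $+\infty$, and drops out of the minimum, yielding $\delta_{\eta_j}(X;V_{\vec{\bullet}})\geq\min\{1/b_r,\dots,1/b_{r-1+j}\}$. The displayed ``in particular'' statement is the case $j=n$, where $Y_n$ is the point $\eta_n$. I expect the iteration itself to be formal; the delicate part will be the technical groundwork — arranging that the successive refinements and Theorem \ref{AZ_thm}/\ref{AZ-R_thm} genuinely apply at $\eta_j$ (the normality and $\Q$-Cartier/plt-type hypotheses on the flag members) — together with the bookkeeping that pins $S(V^{(i-1)}_{\vec{\bullet}};Y_i)$ to exactly $b_{r-1+i}$ through the chain of Okounkov-body identifications. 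The observation that makes the bound close without a spurious $\delta_{\eta_j}(Y_j;V^{(j)}_{\vec{\bullet}})$ term is precisely that refining down to $Y_j$ leaves a $\delta$-invariant evaluated at a generic point.
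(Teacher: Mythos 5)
Your proof is correct and follows essentially the same approach as the paper: reduce to the case where $X$ and the flag members are smooth, iterate Theorem \ref{AZ_thm} along the flag with the $S$-values read off from Proposition \ref{barycenter_proposition} via the preservation of the Okounkov body under refinement, and note that the leftover local $\delta$-invariant $\delta_{\eta_j}(Y_j;V^{(j)}_{\vec{\bullet}})$ is an empty infimum since $\eta_j$ is the generic point of $Y_j$. The only difference is stylistic: the paper carries out the reduction to a smooth model once at the outset, by choosing a resolution $\sigma\colon\tilde{X}\to X$ that is an isomorphism over $Y_n$ and working with the strict-transform flag, rather than arranging a suitable model at each step of the iteration as you suggest.
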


\begin{proof}
Since $\eta_j\in X$ is a smooth point, the value 
$\delta_{\eta_j}\left(X; V_{\vec{\bullet}}\right)$ makes sense. 
Take a resolution $\sigma\colon\tilde{X}\to X$ of singularities such that 
$\sigma$ is an isomorphism over $Y_n$. Let $\tilde{Y}_\bullet$ be the admissible 
flag on $\tilde{X}$ defined by $\tilde{Y}_i:=\sigma^{-1}_*Y_i$. 
Let $\tilde{\eta}_j\in\tilde{X}$ be the generic point of $\tilde{Y}_j$. 
Obviously, we have $\delta_{\eta_j}\left(X; V_{\vec{\bullet}}\right)
=\delta_{\tilde{\eta}_j}\left(\tilde{X}; \sigma^*V_{\vec{\bullet}}\right)$. Moreover, 
from the definition of the function $\nu_{Y_\bullet}$ in 
Definition \ref{okounkov_definition}, we have 
$\Delta_{Y_\bullet}\left(V_{\vec{\bullet}}\right)
=\Delta_{\tilde{Y}_\bullet}\left(\sigma^*V_{\vec{\bullet}}\right)$. 
Therefore, we may assume that $X$ and $Y_i$ are smooth. 

Let $V_{\vec{\bullet}}^i$ ($0\leq i\leq n-1$) be the Veronese equivalence class of 
graded linear series on $Y_i$ defined inductively as follows: 
\begin{itemize}
\item
When $i=0$, then $V_{\vec{\bullet}}^0:=V_{\vec{\bullet}}$.
\item
When $i\geq 1$, then $V_{\vec{\bullet}}^i$ is the refinement of 
$V_{\vec{\bullet}}^{i-1}$ by $Y_i$.
\end{itemize}
As we have already seen in Definition \ref{refinement_definition}, we have 
$\Delta_{Y_\bullet}\left(V_{\vec{\bullet}}\right)
=\Delta_{Y^i_\bullet}\left(V^i_{\vec{\bullet}}\right)\subset\R^{r-1+n}$ 
for any $0\leq i\leq n-1$. Moreover, by Proposition \ref{barycenter_proposition}, 
we have $b_{r+i}=S(V^i_{\vec{\bullet}};Y_{i+1})$ for any $0\leq i\leq n-1$. 
Since $A_{Y_i}(Y_{i+1})=1$, we get the assertion by applying 
Theorem \ref{AZ_thm} (or Theorem \ref{AZ-R_thm}) $j$ times. (The inequality 
$\delta_{\eta_j}\left(X; V_{\vec{\bullet}}\right)\leq 
1/b_r$ is trivial since $1/b_r=A_{X}(Y_1)/S\left(V_{\vec{\bullet}};Y_1\right)$ holds.)
\end{proof}

\begin{corollary}\label{pl_corollary}
Let $X$ be an $n$-dimensional normal projective variety, let $Y_\bullet$ be an 
admissible flag on $X$, let $L$ be a big $\Q$-Cartier $\Q$-divisor on $X$, and 
let $\Delta_{Y_\bullet}(L)\subset\R_{\geq 0}^n$ be the Okounkov body of $L$ 
associated to $Y_\bullet$. Let $\eta_j\in X$ be the generic point of $Y_j$ 
for $1\leq j\leq n$. 
\begin{enumerate}
\renewcommand{\theenumi}{\arabic{enumi}}
\renewcommand{\labelenumi}{(\theenumi)}
\item\label{pl_corollary1}
Let 
$\vec{b}=(b_1,\dots,b_{n})\in\R_{\geq 0}^{n}$ be the barycenter of 
$\Delta_{Y_\bullet}\left(L\right)$. Then we have the inequalities 
\[
\min\left\{\frac{1}{b_1},\dots,\frac{1}{b_{j}}\right\}\leq
\delta_{\eta_j}\left(X; L\right)\leq \frac{1}{b_1}
\]
for any $1\leq j\leq n$. In particular, we have 
\[
\min\left\{\frac{1}{b_1},\dots,\frac{1}{b_{n}}\right\}\leq
\delta_{Y_n}\left(X; L\right)\leq \frac{1}{b_1}.
\]
\item\label{pl_corollary2}
Assume moreover that 
$Y_n\not\in\B_-(L)$ $($e.g., L is nef$)$. Let $T_i\in\R_{> 0}$ be the maximum 
of the $i$-th coordinate of the Okounkov body $\Delta_{Y_\bullet}(L)
\subset\R_{\geq 0}^n$ for $1\leq i\leq n$. 
Then we have the inequality
\[
\delta_{\eta_j}(X; L)\geq \min\left\{\frac{n+1}{n T_1},\dots,\frac{n+1}{n T_j}\right\}.
\]
for any $1\leq j\leq n$. 
\end{enumerate}
\end{corollary}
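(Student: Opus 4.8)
The plan is to deduce both parts directly from Theorem \ref{pl_thm} by specializing to $r=1$. For Part \eqref{pl_corollary1}, I would take $V_{\vec{\bullet}}$ to be the Veronese equivalence class of the complete linear series of $L$; this is a graded linear series associated to the single $\Q$-Cartier $\Q$-divisor $L$, so $r=1$, $r-1+n=n$, and the Okounkov body $\Delta_{Y_\bullet}(V_{\vec{\bullet}})$ is precisely $\Delta_{Y_\bullet}(L)\subset\R_{\geq 0}^n$. Since $V_\bullet$ has bounded support ($r=1$) and contains an ample series ($L$ is big), Theorem \ref{pl_thm} applies with $b_r=b_1$ and $b_{r-1+j}=b_j$, and it yields exactly the asserted inequalities $\min\{1/b_1,\dots,1/b_j\}\leq\delta_{\eta_j}(X;L)\leq 1/b_1$, together with the special case $j=n$. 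So Part \eqref{pl_corollary1} needs no further work.

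For Part \eqref{pl_corollary2} I would first record the lower bound coming from Part \eqref{pl_corollary1}, namely $\delta_{\eta_j}(X;L)\geq\min\{1/b_1,\dots,1/b_j\}$, and then bound each $b_i$ from above in terms of $T_i$. The key point is that the hypothesis $Y_n\not\in\B_-(L)$ forces the Okounkov body $\Delta:=\Delta_{Y_\bullet}(L)$ to contain the origin, by \cite[Theorem 4.2]{CHPW}; this is the same input used in the proof of Corollary \ref{barycenter_corollary}. Consequently, for each $1\leq i\leq n$ the minimum of the $i$-th coordinate over $\Delta$ is zero, while its maximum is $T_i$ (which is positive because $\Delta$ is a full-dimensional convex body).

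It then remains to invoke the standard estimate for barycenters of convex bodies (see \cite{hammer}): for a linear functional on an $n$-dimensional convex body $\Delta$ whose range is $[U_i,T_i]$, its value $b_i$ at the barycenter satisfies $U_i+\frac{T_i-U_i}{n+1}\leq b_i\leq T_i-\frac{T_i-U_i}{n+1}$. Applying this to the $i$-th coordinate functional and using $U_i=0$ gives $b_i\leq\frac{n}{n+1}T_i$, hence $b_i>0$ and $1/b_i\geq\frac{n+1}{nT_i}$. Combining with Part \eqref{pl_corollary1} yields
\[
\delta_{\eta_j}(X;L)\geq\min\{1/b_1,\dots,1/b_j\}\geq\min\Bigl\{\tfrac{n+1}{nT_1},\dots,\tfrac{n+1}{nT_j}\Bigr\},
\]
which is the claim; the remark that $L$ nef implies $\B_-(L)=\emptyset$ is then immediate from \cite{ELMNP}.

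Since every step is a direct invocation of a result already available (Theorem \ref{pl_thm}, \cite[Theorem 4.2]{CHPW}, \cite{hammer}), I do not expect a serious obstacle. The only point requiring a little care is matching normalizations — confirming that with $r=1$ the refinement construction underlying Theorem \ref{pl_thm} reproduces $\Delta_{Y_\bullet}(L)$ and that the coordinates appearing there are the $b_i$ used above — but this is exactly what the proof of Theorem \ref{pl_thm} establishes, so no new argument is needed.
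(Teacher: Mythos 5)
Your proposal is correct and follows essentially the same path as the paper's proof: Part (1) is the $r=1$ specialization of Theorem \ref{pl_thm}, and Part (2) combines Part (1) with the fact (from \cite[Theorem 4.2]{CHPW}) that the Okounkov body contains the origin, plus the Hammer barycenter estimate $b_i\leq\frac{n}{n+1}T_i$ for each coordinate. The only cosmetic difference is that the paper routes the barycenter estimate through Corollary \ref{barycenter_corollary} (setting $U_i=0$ there) rather than citing \cite{hammer} directly, but the underlying inequality is identical.
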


\begin{proof}
\eqref{pl_corollary1} is a direct consequence of Theorem \ref{pl_thm}. 
For \eqref{pl_corollary2}, 
as we have already seen in \cite[Theorem 4.2]{CHPW}, the Okounkov body 
$\Delta_{Y_\bullet}(L)$ contains the origin. Therefore, the value $U_i$ in 
Corollary \ref{barycenter_corollary} is equal to $0$ for any $1\leq i\leq n$. 
Thus we get the assertion from Corollary \ref{barycenter_corollary} 
and \eqref{pl_corollary1}. 
\end{proof}

\end{document}